\providecommand{\R}{\mathbb{R}}
\providecommand{\C}{\mathbb{C}}
\providecommand{\N}{\mathbb{N}}
\providecommand{\Z}{\mathbb{Z}}
\providecommand{\Sp}{\mathbb{S}}
\providecommand{\B}{\mathbb{B}}
\providecommand{\K}{\mathbb{K}}
\providecommand{\tow}{\mathbb{M}}
\providecommand{\towquot}{\widetilde{\tow}}
\providecommand{\towbent}{\widehat{\tow}}
\providecommand{\dih}{\mathbb{D}}
\providecommand{\apr}{\mathbb{A}}
\providecommand{\pri}{\mathbb{P}}
\providecommand{\pyr}{\mathbb{Y}}
\providecommand{\cyc}{\mathbb{Z}}
\providecommand{\st}{\, :\ }
\providecommand{\axis}[1]{\widehat{#1}}
\providecommand{\dist}{d}
\newcommand{\cutoff}[2]{\Psi_{#1,\,#2}}
\providecommand{\cyl}{\Lambda}
\providecommand{\catr}{K}
\providecommand{\towr}{M}
\providecommand{\discr}{B}
\providecommand{\towcokergen}{v_{\tow^+}^{\mathrm{dislocate}}}
\providecommand{\towcoker}{H_{\tow^+}^{\mathrm{dislocate}}}
\providecommand{\cokerval}[1]{H_{\Sigma_{m,#1}}^{\mathrm{dislocate}}}
\providecommand{\coker}{\cokerval{\xi}}
\providecommand{\dvol}[1]{dV_{#1}}
\providecommand{\Riem}{\mathrm{Riem}}
\DeclarePairedDelimiter\abs{\lvert}{\rvert}
\DeclarePairedDelimiter\nm{\lVert}{\rVert}
\DeclarePairedDelimiter\sk{\langle}{\rangle}
\DeclarePairedDelimiter\interval{]}{[}
\DeclarePairedDelimiter\Interval{[}{[}
\DeclarePairedDelimiter\intervaL{]}{]}
\DeclarePairedDelimiter\IntervaL{[}{]}
\DeclarePairedDelimiter\set{\{}{\}}
\renewcommand{\Re}{\operatorname{Re}}
\renewcommand{\Im}{\operatorname{Im}}
\DeclareMathOperator{\Arg}{Arg}
\DeclareMathOperator{\Aut}{Aut}
\DeclareMathOperator{\dom}{dom}
\DeclareMathOperator{\tubularexp}{Exp}
\DeclareMathOperator{\Ogroup}{O}
\DeclareMathOperator{\SOgroup}{SO}
\DeclareMathOperator{\genus}{genus}
\DeclareMathOperator{\refl}{\underline{\mathsf{R}}}
\DeclareMathOperator{\rot}{\mathsf{R}}
\DeclareMathOperator{\sgn}{sgn}
\DeclareMathOperator{\spt}{spt}
\DeclareMathOperator{\trans}{\mathsf{T}}
\DeclareMathOperator{\graph}{graph}
\DeclareMathOperator{\area}{area}
\DeclareMathOperator{\End}{End}
\DeclareMathOperator{\morseindex}{index}
\tikzset{
    scale plot marks/.is choice,
    scale plot marks/true/.style={},	
    scale plot marks/false/.code={
        \def\pgfuseplotmark##1{\pgftransformresetnontranslations\csname pgf@plot@mark@##1\endcsname}
    },
every mark/.append style={scale plot marks=false},
plus/.style={mark=+,mark size=2.25pt},
vdash/.style={mark=|,mark size=2.25pt},
hdash/.style={mark=-,mark size=2.25pt},
bullet/.style={mark=*,mark size=1.125pt},
declare function={acosh(\x)=ln(\x+sqrt(\x*\x-1));},
}
\pgfmathsetmacro{\unitscale}{3.33} 
\pgfmathsetmacro{\fbmsscale}{0.46}
\newglossaryentry{ball}{
name={\ensuremath{\B^3}, \ensuremath{\Sp^2}, \ensuremath{\B^2}, \ensuremath{\Sp^1}},
description={closed unit ball and sphere, equatorial disc and circle},
user1={\ensuremath{\B^3}},
user2={\ensuremath{\Sp^2}},
user3={\ensuremath{\B^2}},
user4={\ensuremath{\Sp^1}},
sort={A a},
}
\newglossaryentry{axis}{
name={\ensuremath{\axis{x},\axis{y},\axis{z}}},
description={positively directed coordinate axis},
sort={A b},
}
\newglossaryentry{tube}{
name={\ensuremath{S_{>s}}, \ensuremath{S_{=s}}, ...},
description={(complements and boundaries of) tubular neighborhoods},
user1={\ensuremath{S_{\sim s}}}, 
sort={A c},
}
\newglossaryentry{refl}{
name={\ensuremath{\refl_V}, \ensuremath{\rot_{\axis{\ell}}^t}, \ensuremath{\trans^{\axis{\ell}}_t}},
description={reflections, rotations, translations},
user1={\ensuremath{\refl_V}},
user2={\ensuremath{\rot_{\axis{\ell}}^t}},
user3={\ensuremath{\trans^{\axis{\ell}}_t}},
sort={A d},
}
\newglossaryentry{Aut}{
name={\ensuremath{\Aut_M(S)}},
description={group of self congruences of $S$ in $M$}, 
sort={A e},
}
\newglossaryentry{sym}{
name={\ensuremath{\apr_m}, \ensuremath{\pri_m}, \ensuremath{\pyr_m}},
description={(anti)prismatic and pyramidal symmetry groups},
user1={\ensuremath{\apr_m}},
user2={\ensuremath{\pri_m}},
user3={\ensuremath{\pyr_m}},
sort={A f},
}
\newglossaryentry{nu}{
name={\ensuremath{\nu_\Sigma}, \ensuremath{A_\Sigma}, \ensuremath{H_\Sigma}},
description={unit normal, second fundamental form, mean curvature},
user1={\ensuremath{\nu_\Sigma}},
user2={\ensuremath{A_\Sigma}},
user3={\ensuremath{H_\Sigma}},
sort={B a},
}
\newglossaryentry{Jacobi}{
name={\ensuremath{J_\Sigma}},
description={Jacobi operator}, 
sort={B b},
}
\newglossaryentry{conormal}{
name={\ensuremath{\eta_\Sigma}, \ensuremath{B^{\mathrm{Robin}}_\Sigma}},
description={outward unit conormal, Robin boundary operator},
user1={\ensuremath{\eta_\Sigma}},
user2={\ensuremath{B^{\mathrm{Robin}}_\Sigma}}, 
sort={B c},
}
\newglossaryentry{indexform}{
name={\ensuremath{Q_\Sigma}, \ensuremath{\morseindex(\Sigma)}},
description={Jacobi (or index) form, Morse index of $\Sigma$}, 
user1={\ensuremath{Q_\Sigma}},
user2={\ensuremath{\morseindex(\Sigma)}}, 
sort={B d},
}
\newglossaryentry{norm}{
name={\ensuremath{\nm{{}\cdot{}}_{k,\alpha,\beta}}},
description={weighted H\"{o}lder norm (on the cylinder, on the tower, on $\Sigma_{m,\xi}$)},
user1={\ensuremath{\nm{u}_{k,\alpha,\beta}}}, 
sort={B f},
}
\newglossaryentry{symsign}{
name={\ensuremath{\sgn_\Sigma(\mathsf{M})}},
description={sign of self congruence $\mathsf{M}$ of two-sided hypersurface $\Sigma$}, 
sort={B g},
}
\newglossaryentry{CG}{
name={\ensuremath{C^{k,\alpha}_G(\Sigma)}},
description={H\"{o}lder space of $G$-equivariant functions},
sort={B h},
}
\newglossaryentry{cutoff}{
name={\ensuremath{\cutoff{a}{b}}},
description={cutoff function},
sort={B i},
}
\newglossaryentry{Kb}{
name={\ensuremath{\K_b}},
description={one-parameter family of catenoidal annuli in $\B^3$},
sort={C a},
}
\newglossaryentry{Kcrit}{
name={\ensuremath{\K_{\mathrm{crit}}}},
description={critical catenoid},
sort={C ab},
}
\newglossaryentry{omegab}{
name={\ensuremath{\omega_b}},
description={intersection angle between $\K_b$ and plane $\{z=b\}$},
sort={C b},
}
\newglossaryentry{tow}{
name={\ensuremath{\tow_\vartheta}, \ensuremath{\tow}, \ensuremath{\tow^+}},
description={Karcher--Scherk towers and half tower},
user1={\ensuremath{\tow_\vartheta}},
user2={\ensuremath{\tow}},
user3={\ensuremath{\tow^+}},
sort={D a},
}
\newglossaryentry{btow}{
name={\ensuremath{b^{\mathrm{tow}}_\vartheta}},
description={$y$-intercept of an asymptotic plane of $\tow$},
sort={D b},
}
\newglossaryentry{varpi}{
name={\ensuremath{\varpi_{(n)}}, \ensuremath{\varpi}},
description={canonical projections from $\R^3$ onto quotients by translations}, 
user1={\ensuremath{\varpi_{(n)}}},
user2={\ensuremath{\varpi}},
sort={D c},
}
\newglossaryentry{towquot}{
name={\ensuremath{\towquot_{(n)}}, \ensuremath{\towquot_{(n)}^+}, \ensuremath{\towquot}, \ensuremath{\towquot^+}},
description={quotients of towers and half towers by translations},
user1={\ensuremath{\towquot_{(n)}}},
user2={\ensuremath{\towquot_{(n)}^+}},
user3={\ensuremath{\towquot}},
user4={\ensuremath{\towquot^+}},
sort={D d},
}
\newglossaryentry{piAut}{
name={\ensuremath{\pi_{\Aut(\towquot)}}, \ensuremath{\pi_{\Aut(\towquot^+)}}},
description={projections onto subspace of equivariant functions on $\towquot$, $\towquot^+$},
user1={\ensuremath{\pi_{\Aut(\towquot)}}},
user2={\ensuremath{\pi_{\Aut(\towquot^+)}}}, 
sort={D e},
}
\newglossaryentry{towbent}{
name={\ensuremath{\towbent_{m,\xi}^+}},
description={towers with dislocated and straightened wings},
sort={D f},
}
\newglossaryentry{towcoker}{
name={\ensuremath{\towcokergen}, \ensuremath{\towcoker}},
description={generator of dislocations and induced mean curvature
   on towers},
user1={\ensuremath{\towcokergen}},
user2={\ensuremath{\towcoker}},  
sort={D g},
}
\newglossaryentry{m}{
name={\ensuremath{m}, \ensuremath{n}},
description={large prescribed integer data for the two constructions},
user1={\ensuremath{m}},
user2={\ensuremath{n}}, 
sort={E a},
}
\newglossaryentry{xi}{
name={\ensuremath{\xi}},
description={real parameter for the constructions, to be solved for in terms of $m$ or $n$}, 
sort={E b},
}
\newglossaryentry{Phi}{
name={\ensuremath{\Phi}},
description={spherical-coordinate covering map of a neighborhood of $\Sp^1$}, 
sort={E c},
}
\newglossaryentry{bmxi}{
name={\ensuremath{b_{m,\xi}}},
description={height parameter for $\K_b$ used in construction with data $(m,\xi)$}, 
sort={E d},
}
\newglossaryentry{Sigmamxi}{
name={\ensuremath{\Sigma_{m,\xi}}},
description={initial surface with genus $m-1$ and $3$ boundary components}, 
sort={E e},
}
\newglossaryentry{Xinxi}{
name={\ensuremath{\Xi_{n,\xi}}},
description={initial surface with genus $0$ and $n+2$ boundary components}, 
sort={E f},
}
\newglossaryentry{region}{
name={\ensuremath{\discr_{m,\xi}}, \ensuremath{\catr_{m,\xi}}, \ensuremath{\towr_{m,\xi}}},
description={disc, catenoid, and tower regions on $\Sigma_{m,\xi}$},
user1={\ensuremath{\discr_{m,\xi}}},
user2={\ensuremath{\catr_{m,\xi}}}, 
user3={\ensuremath{\towr_{m,\xi}}}, 
sort={E g},
}
\newglossaryentry{regionalprojections}{
name={\ensuremath{\varpi_{\discr_{m,\xi}},\varpi_{\catr_{m,\xi}},\varpi_{\towr_{m,\xi}}}},
description={diffeomorphism of regions onto subsets of models $\B^2,\K_{b_{m,\xi}},\tow^+$},
user1={\ensuremath{\varpi_{\discr_{m,\xi}}}},
user2={\ensuremath{\varpi_{\catr_{m,\xi}}}}, 
user3={\ensuremath{\varpi_{\towr_{m,\xi}}}}, 
sort={E h},
}
\newglossaryentry{coker}{
name={\ensuremath{\coker}},
description={mean curvature induced by dislocation on $\Sigma_{m,\xi}$},
sort={E i},
}
\newglossaryentry{varsigma}{
name={\ensuremath{\varsigma_{m,\xi}}},
description={identification diffeomorphism $\Sigma_{m,0} \to \Sigma_{m,\xi}$},
sort={E ea},
}
\newglossaryentry{modelresol}{
name={\ensuremath{P^m_{\B^2}}, \ensuremath{P^m_{\K_b}}, \ensuremath{P_{\tow^+}}},
description={resolvent operators on models},
user1={\ensuremath{P^m_{\B^2}}},
user2={\ensuremath{P^m_{\K_b}}}, 
user3={\ensuremath{P_{\tow^+}}}, 
sort={F ab},
}
\newglossaryentry{initialresol}{
name={\ensuremath{P_{\Sigma_{m,\xi}}}, \ensuremath{P_{\Xi_{n,\xi}}}},
description={resolvent operators on initial surfaces},
user1={\ensuremath{P_{\Sigma_{m,\xi}}}},
user2={\ensuremath{P_{\Xi_{n,\xi}}}},  
sort={F ac},
}
\newglossaryentry{geuc}{
name={\ensuremath{g_{\mathrm{euc}}}},
description={Euclidean metric on $\R^3$},
sort={F ax},
} 
\newglossaryentry{auxmet}{
name={\ensuremath{h=\Omega^4 g_{\mathrm{euc}}}},
description={the auxiliary metric, a conformal metric on $\R^3$},
user1={\ensuremath{h}},
sort={F b},
}
\newglossaryentry{iota}{
name={\ensuremath{\iota_{m,\xi,u}}},
description={graphical deformation of the inclusion $\Sigma_{m,\xi} \hookrightarrow \R^3$},
sort={F c},
}
\newglossaryentry{deformedMC}{
name={\ensuremath{H_{m,\xi,u}}, \ensuremath{Q_{m,\xi,u}}},
description={mean curvature of \gls{iota} and its nonlinear part},
user1={\ensuremath{H_{m,\xi,u}}},
user2={\ensuremath{Q_{m,\xi,u}}}, 
sort={F d},
}
\theoremstyle{plain}
\newtheorem{theorem}{Theorem}[section]
\newtheorem{lemma}[theorem]{Lemma}
\newtheorem{corollary}[theorem]{Corollary} 
\newtheorem{proposition}[theorem]{Proposition}
\newtheorem{conjecture}[theorem]{Conjecture}
\theoremstyle{definition}
\theoremstyle{remark}
\newtheorem{remark}[theorem]{Remark}
\numberwithin{equation}{section}
\title{\vspace*{-3ex}Infinitely many pairs of free boundary minimal surfaces  \\ 
with the same topology and symmetry group}
\author{Alessandro Carlotto, Mario B. Schulz and David Wiygul}
\date{
\vspace*{-3ex}
} 
\newcommand\printaddress{{
\setlength{\parindent}{17pt}
%\small
\bigskip
%\hfill\printdate{}
\par
\vbox{
{\scshape Alessandro Carlotto}
\newline ETH D-Math, 
R\"amistrasse 101, 
8092 Z\"urich, 
Switzerland 
\newline 
Universit\`{a} di Trento,
Dipartimento di Matematica,
via Sommarive 14,
38123 Povo di Trento,
Italy
\newline
\textit{E-mail address:} 
\texttt{alessandro.carlotto@math.ethz.ch, alessandro.carlotto@unitn.it}
\par\medskip
{\scshape Mario B. Schulz}
\newline 
University of M\"unster, 
Mathematisches Institut, 
Einsteinstrasse 62,
48149 M\"unster,
Germany
\newline
\textit{E-mail address:} 
\texttt{mario.schulz@uni-muenster.de}
\par\medskip
{\scshape David Wiygul}
\newline ETH D-Math, 
R\"amistrasse 101, 
8092 Z\"urich, 
Switzerland 
\newline 
Universit\`{a} di Trento,
Dipartimento di Matematica,
via Sommarive 14,
38123 Povo di Trento,
Italy
\newline
\textit{E-mail address:} 
\texttt{david.wiygul@math.ethz.ch, davidjames.wiygul@unitn.it}
\par
}
}}
\begin{document}

\maketitle

\begin{abstract} 
The topology and symmetry group of a free boundary minimal surface in the three-dimensional Euclidean unit ball do not determine the surface uniquely. 
We provide pairs of non-isometric free boundary minimal surfaces having any sufficiently large genus $g$, three boundary components and antiprismatic symmetry group of order $4(g+1)$.  
\end{abstract}

\tableofcontents

\section{Introduction}\label{sec:Intro} 

In the last decade, we have witnessed a striking development of the theory of free boundary minimal surfaces, to an extent that cannot be properly accounted for here. We refer the reader to the recent survey \cite{LiSurvey}, to the lecture notes \cite{FraserLectureNotes} and to the introduction of the PhD thesis \cite{FranzThesis} -- among others -- for an overview of some of the most significant advances in the field. Nevertheless, one cannot but note the abundance of open problems, including a few that have proven to be very elusive. In this smaller circle, there remains the question whether it is possible to realize any compact (orientable) surface as a properly embedded, free boundary minimal surface in the Euclidean unit ball (henceforth denoted $\B^3$): some years ago, the first two authors of the present article proved -- in joint work with Franz \cite{CarlottoFranzSchulz20} -- that the answer is affirmative for the infinite subclass of surfaces having connected boundary and any genus. Here, we shall instead be concerned with the related question whether such embeddings -- when they exist -- are in fact \emph{unique} modulo ambient isometry. In contrast with the case of low topological complexity -- as in Nitsche's theorem about discs \cite{Nitsche1985} and for the conjectural uniqueness of the critical catenoid in the class of topological annuli -- we answer here in the strongest negative terms:

\begin{theorem}\label{thm:Main}
For any sufficiently large integer $g$ there exist in the unit ball of Euclidean $\R^3$ two distinct (non-isometric), properly embedded, free boundary minimal surfaces having genus $g$, three boundary components and symmetry group coinciding with the antiprismatic group of order $4(g+1)$.
\end{theorem}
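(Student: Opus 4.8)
The plan is to construct the two families of surfaces by gluing (desingularization) methods, producing in each case a sequence indexed by $g$ that converges, as $g\to\infty$, to a prescribed singular configuration in $\B^3$ invariant under the antiprismatic group $\apr_{g+1}$ of order $4(g+1)$. Concretely, I would start from a pair of model configurations built out of standard pieces: a critical catenoid (or a suitable piece thereof) together with one or two flat equatorial discs, arranged so that the catenoidal neck sits near the center and the discs meet the catenoid along circles. The antiprismatic symmetry forces the desingularizing necks to be placed at the $g+1$ vertices of a regular configuration, so the genus of the glued surface is automatically $g$ and the boundary has three components (the outer boundary circle of the catenoidal annulus together with the two boundary circles of the discs, or an analogous count depending on the chosen model). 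The key point is that there are \emph{two genuinely different} ways to perform this desingularization at the same singular circles --- differing, say, in the relative position of the catenoid along the axis, or in whether the central region is modeled on a catenoid opening "up" versus "down" --- yielding non-isometric limits and hence, for $g$ large, non-isometric smooth surfaces with identical genus, number of boundary components, and symmetry group.

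The key steps, in order, would be: (i) fix the abstract symmetry group $\apr_{g+1}\subset\Ogroup(3)$ and describe its action on $\B^3$, identifying the equivariant catenoidal and planar building blocks and the circles along which they must be desingularized; (ii) for each of the two models, write down an approximate solution $\Sigma_g^{\mathrm{approx}}$ by inserting a scaled, bent, and truncated Scherk-type or catenoidal-bridge tower at each of the $g+1$ symmetric locations, using the cutoff functions $\cutoff{\cdot}{\cdot}$ to interpolate between the tower and the background pieces; (iii) set up the linearized free boundary problem (the Jacobi operator with Robin-type boundary condition coming from the free boundary condition $\partial_\eta = 0$ on $\partial\B^3$) on the approximate surface, prove uniform invertibility modulo the small-dimensional approximate cokernel coming from "dislocation" directions --- this is what the macros $\towcoker$, $\coker$, $\cokergen$ in the preamble are evidently set up to track --- and absorb the cokernel by adjusting finitely many geometric parameters (neck sizes, positions along the axis); (iv) run a fixed-point/implicit-function-theorem argument in a weighted Hölder or Sobolev space to perturb $\Sigma_g^{\mathrm{approx}}$ to an exact free boundary minimal surface $\Sigma_g$, with quantitative control ensuring embeddedness and that the symmetry group is \emph{exactly} $\apr_{g+1}$ and not larger; (v) finally, compare the two resulting families and show they are non-isometric, e.g.\ by comparing the limit as $g\to\infty$ (the two distinct singular models), or by comparing area, or the length/geometry of the boundary components, for $g$ sufficiently large.

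I expect the main obstacle to be Step (iii)–(iv): establishing \emph{uniform} (in $g$) invertibility of the Jacobi operator on $\Sigma_g^{\mathrm{approx}}$ with the correct free boundary condition, and correctly identifying and killing the cokernel. Two features make this delicate here. First, the free boundary condition couples the interior gluing analysis to boundary behavior, so the Robin boundary term must be handled carefully in the weighted estimates --- in particular one must check that the boundary circles of the discs, where the desingularization meets $\partial\B^3$, do not generate additional slowly-decaying Jacobi fields. Second, because the number of necks grows linearly with $g$, the approximate kernel/cokernel has dimension growing with $g$, so one needs a robust, essentially block-diagonal structure (exploiting the symmetry group to reduce to a single fundamental domain, hence a \emph{fixed}-dimensional problem on the quotient) together with precise knowledge of the catenoidal/Scherk tower's geometry to ensure the "dislocation" parameters actually span the cokernel. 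Controlling embeddedness of the glued surface near the necks and near $\partial\B^3$, uniformly in $g$, is an additional technical point but should follow from the smallness of the perturbation in the weighted norm.

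Finally, for Step (v), the cleanest argument is likely that the two sequences $\{\Sigma_g\}$ and $\{\Sigma_g'\}$ converge (in the appropriate varifold or smooth-away-from-necks sense) to two \emph{different} limiting configurations; since isometries of $\B^3$ form a compact group and an isometry $\Sigma_g\to\Sigma_g'$ would pass to the limit, non-isometry of the limits forces non-isometry of $\Sigma_g$ and $\Sigma_g'$ for all large $g$. One must of course verify at the outset that the two models are themselves not related by an element of $\Ogroup(3)$, which is immediate from their differing coarse geometry.
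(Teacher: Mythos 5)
Your overall route matches the paper's: produce two infinite families of $\apr_{g+1}$-equivariant free boundary minimal surfaces by desingularization, converging to two distinct singular configurations, and conclude non-isometry from the differing limits via compactness of $\Ogroup(3)$. That final step (v), and the structure of steps (iii)--(iv) --- quotient by the symmetry to reduce to a fixed-dimensional problem, dislocation parameters to kill the approximate cokernel, weighted Hölder fixed-point argument --- are all in line with the paper. One structural simplification you miss: the paper does \emph{not} construct both families. It constructs only the new one; the other is taken off the shelf from Kapouleas--Li (\cite{KapouleasLiDiscCCdesing}, complemented by Ketover \cite{KetoverFB} and the paper's Appendix \ref{app:convergence_large_g}).

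There is, however, a genuine gap in your identification of the two models, and it is not a cosmetic one. You describe them as two desingularizations of ``the same singular circles,'' differing by where the catenoid sits along the axis. In fact the two families desingularize entirely different singular configurations with singular loci in different places. The Kapouleas--Li model is $\B^2 \cup \K_{\mathrm{crit}}$, a disc and the critical catenoid; the singular locle is the \emph{interior} circle where they intersect, and the desingularization uses the standard four-ended ($k=2$) Scherk tower, entirely away from $\partial\B^3$. The new model is $\K_0 \cup \B^2 \cup -\K_0$, the disc together with \emph{two} catenoidal annuli (not pieces of the critical catenoid: $\K_0$ is a catenoid that meets $\Sp^2$ along the equator and orthogonally at a higher latitude), and all three pieces meet along the \emph{boundary} circle $\Sp^1 = \partial\B^2 \subset \partial\B^3$. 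Because the singular set lies on the free boundary, the desingularization must be done \emph{at} the free boundary, using a half of the \emph{six-ended} ($k=3$) Karcher--Scherk tower --- a tower whose two ``catenoidal'' wings are not contained in any symmetry plane. This is the paper's central technical novelty, and it drives nearly all the new analysis (the geometry of $\tow_{\omega_0}$ in Proposition \ref{karcher-scherk}, the asymptotically cylindrical linear theory in Appendix \ref{app:CylAnalysis}, the auxiliary metric of Appendix \ref{app:graphs} needed to make normal graphs at the free boundary well-posed). Without recognizing that the two models differ in this way --- and in particular that the new one requires a gluing \emph{at} $\partial\B^3$ rather than in the interior --- your plan does not reach the actual problem being solved.
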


Some comments are appropriate. Firstly, the surfaces mentioned in the statement come in two infinite families, each of them being parametrized by the integer $g$. 
One of the two families of free boundary minimal surfaces in question has been constructed by Kapouleas and Li in \cite{KapouleasLiDiscCCdesing} (and by variational methods in \cite{KetoverFB}, but see Appendix \ref{app:convergence_large_g}), while we shall be concerned here with the construction of a second, new infinite family of surfaces displaying a different asymptotic behavior (as one can detect e.\,g. by looking at the corresponding varifold limit). 
Secondly, the symmetry group of a surface is here defined, in the setting of the theorem, as the subgroup of $\Ogroup(3)$ preserving it as a set; we note that the antiprismatic group $\apr_m$ of order $4m$ is (intrinsically) isomorphic to the more familiar dihedral group $\dih_{2m}$, although their standard actions on $\R^3$ are \emph{not} conjugate, so that our terminology has been chosen to provide a more accurate account of this matter (see Section \ref{sec:Nota} for further details). 

One important motivation for the present work (that lies behind the precise formulation of Theorem~\ref{thm:Main}) is the recent result, obtained by the third-named author and Kapouleas (see \cite{KapouleasWiygulUnique}), asserting the uniqueness of each Lawson surface, in the round three-dimensional sphere, given its topology \emph{and symmetry group}. So, our statement above should indeed be viewed in that perspective, and contrasted with such a theorem in the context of the comparative study of closed minimal surfaces in $\Sp^3$ and of free boundary minimal surfaces in $\B^3$.

We also wish to remark that it is still unclear whether the phenomenon described in Theorem \ref{thm:Main} also happens for complete, embedded, minimal surfaces in $\R^3$ having, say, finite total curvature; indeed, it seems hard to tell whether one can construct a non-isometric twin for each surface belonging to the Costa--Hoffman--Meeks family. 
In this sense, the main result of the present paper suggests some sort of additional \emph{flexibility} of free boundary minimal surfaces in the three-dimensional Euclidean unit ball.

\paragraph{Outline of the main construction.}

These important clarifications being made, let us proceed with a synthetic description of the geometric idea lying behind the construction that is the object of most of the present paper. Such an idea leads to obtaining a genuinely new family of free boundary minimal surfaces, as encoded in the statement of Theorem \ref{thm:ConstructFirst}, by means of a \emph{gluing procedure at the free boundary}.

We set $\B^2 \vcentcolon= \B^3 \cap \{z=0\}$, the closed equatorial disc.
Away from the equatorial circle $\partial \B^2$ the surfaces we construct will approximate the union of $\B^2$ with two catenoidal annuli, mirror images of one another by means of the reflection across $z=0$, each of which has $\partial \B^2$ as a boundary component and along its other component meets $\partial \B^3$ orthogonally. 
In the course of the construction we will need to consider also pairs of catenoidal annuli, henceforth denoted by $\pm\K_b$, close to these two and lying at height $\pm b$ respectively (we refer the reader to Section \ref{sec:Initial} for the precise definition).
We will construct sequences of free boundary minimal surfaces converging to
$\K_0 \cup \B^2 \cup -\K_0$ in two steps.
In the first step we replace $\K_0 \cup \B^2 \cup -\K_0$ by a smooth surface $\Sigma$, properly embedded in $\B^3$, which is nearly minimal, satisfies the free boundary condition, and approximates $\K_0 \cup \B^2 \cup -\K_0$ away from the latter's singular set, the equator.
In the second step $\Sigma$ is perturbed to exact minimality without sacrificing embeddedness or the free boundary condition.
In fact, we will need to construct not just one surface $\Sigma$ as above but a family of such surfaces, each called an \emph{initial surface} for the construction.
The family of initial surfaces will be indexed by the genus of its members and by one real parameter (that is roughly equivalent to the height parameter $b$, so that the preliminary configuration is basically given by the disjoint union $\K_b \cup \B^2 \cup -\K_b$).
The second step, deformation to exact minimality, will be possible only for certain members of the family, as we are about to explain.

First, we will need high genus, because -- as will become apparent from the construction of the initial surfaces and the estimates to follow -- the initial surfaces will be assembled by deforming and gluing together well-understood model surfaces, and only by taking the genus large will we be able to ensure that the initial surfaces well approximate these constituent models and in particular are approximately minimal.
Second, we will need the aforementioned parameter in order to properly handle the approximate cokernel we will encounter, at the linear level, when tackling the problem of perturbation to an exact free boundary minimal surface. The reader is referred to the preamble of Section \ref{sec:Nonlin} for a more detailed, technical account of this issue and an indication of how it is overcome.

We have already introduced the model surfaces $\B^2$ and $\K_b$, but are still missing
the other family of models that will be needed to glue the former together to produce a smooth surface.
Specifically, each initial surface will be obtained from some $\K_b \cup \B^2 \cup -\K_b$ by cutting out a neighborhood of the equator and smoothly gluing in -- essentially by an accurate interpolation procedure -- a suitably truncated and deformed surface from a subfamily of the singly periodic minimal surfaces having $2k$ asymptotic half planes that Karcher presented in \cite{KarcherScherk} as generalizations of Scherk's classical $k=2$ example in \cite{Scherk}. We will refer to any member of this family (with $k \geq 2$) as a \emph{Karcher--Scherk tower}.

The first gluing constructions utilizing the $k=2$ Karcher--Scherk towers were performed, independently, by Kapouleas in \cite{KapouleasEuclideanDesing} and by Traizet in \cite{TraizetPlanes}.
In this article we follow the gluing methodology of Kapouleas, which originated in \cite{KapouleasDelaunay};
in particular we apply the framework developed in \cite{KapouleasWenteTori} to manage the approximate cokernel confronted in the linearized problem (see also \cite{KapouleasClay} for a pedagogical overview and further references).
Higher-order ($k \geq 3$) Karcher--Scherk towers have featured in the gluing constructions (by differing techniques) \cites{KapouleasWiygulTordesing, HellerHellerTraizetLawsonArea, ChenTraizetKSI}.
For what specifically pertains to free boundary minimal surfaces in $\B^3$, Kapouleas and Li carried out the first construction via $k=2$ Karcher--Scherk towers in \cite{KapouleasLiDiscCCdesing}; the singular locus they need to deal with is the intersection of the horizontal disc $\B^2$ and the standard critical catenoid, so it lies at positive distance from the boundary of $\B^3$.
More recently, in fact very recently, Kapouleas and Zou have performed in \cite{KapouleasZouCloseToBdy} a construction of genus zero free boundary minimal surfaces in $\B^3$ using $k=2$ Karcher--Scherk towers at the boundary.
It is also appropriate to mention that Kapouleas and Li have further proposed
(as described in Section 3.3 of \cite{LiSurvey})
constructions of free boundary minimal surfaces
in $\B^3$ by desingularizing maximally symmetric unions of $k \geq 2$ free boundary discs
intersecting along a diameter of the ball, using maximally symmetric Karcher--Scherk towers.
That being said, the present article employs higher-order Karcher--Scherk towers in the free boundary setting for the first time and is also the first desingularization construction to apply the Kapouleas approach to $k \geq 3$ Karcher--Scherk towers having wings that do not contain lines of symmetry (unlike the examples in \cite{KapouleasWiygulTordesing}).

\begin{figure}%
\includegraphics[width=\fbmsscale\textwidth,page=1]{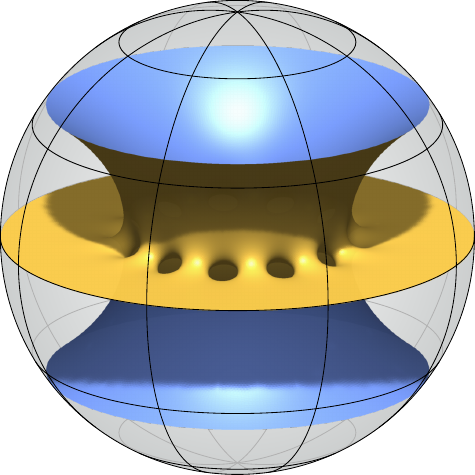}\hfill
\includegraphics[width=\fbmsscale\textwidth,page=2]{figures-fmbs}%
\caption{A pair of free boundary minimal surfaces with the same topology and symmetry group, here depicted in the case of genus $g=11$.}%
\label{fig:genus11}%
\end{figure}

\paragraph{Contents and structure of the paper.}

Besides Section \ref{sec:Nota}, that is devoted to defining the notation employed in the rest of the paper, the proof of Theorem \ref{thm:ConstructFirst} (which, in view of the above discussion, implies at once Theorem \ref{thm:Main}), is developed in Section \ref{sec:Initial} for what concerns the construction of the initial surfaces, in Section \ref{sec:Linear} for what concerns the linearized perturbation theory, and finally in Section \ref{sec:Nonlin} for the nonlinear iteration scheme which completes the construction in question.  

In fact, our work allows us -- with relatively straightforward modifications -- to obtain yet another infinite family of free boundary minimal surfaces in $\B^3$, again parametrizable by an integer $n\geq n_0$ but this time converging in the sense of varifolds (as one lets $n\to\infty$) to the union $\K_0\cup-\K_0$. Such surfaces have genus \emph{zero}, exactly $n+2$ boundary components and symmetry group coinciding with the prismatic group of order $4n$; see the statement of Theorem \ref{thm:ConstructSecond}. The corresponding proof is outlined in Section \ref{sec:OnlyCatsDesing}; here we shall limit ourselves to note how the $k=3$ Karcher--Scherk towers need to be replaced by the more familiar towers corresponding to $k=2$ (also known just as \emph{singly periodic Scherk surfaces} in $\R^3$). The surfaces we provide there should then be compared e.\,g. to those constructed in by Folha--Pacard--Zolotareva in \cite{FolPacZol17}, that converge in the sense of varifolds to a double equatorial disc, and to those recently devised by Kapouleas and Zou, as we just explained.

Setting aside for a moment Section \ref{sec:Index}, which deserves a more detailed presentation (purposefully postponed to the very end of this introduction), the core of the paper is then complemented by four important appendices, whose contents we are about to describe. 

Firstly, Appendix \ref{app:Karcher--Scherk} contains some ancillary results -- of independent interest -- about the geometry of the Karcher--Scherk towers: in particular, we give a detailed proof, appealing solely to the Enneper--Weierstrass representation, of the (proper) embeddedness of these minimal surfaces, whose fine properties are then carefully investigated. The reader is referred to Proposition \ref{prop:towerEWsummary} and to (the resulting) Proposition \ref{karcher-scherk} for more accurate statements, as well as for a discussion about the novel aspects of what we obtain here. 

Secondly, Appendix \ref{app:CylAnalysis} collects a series of results concerning the solvability (in weighted H\"older spaces) of linear elliptic problems on asymptotically cylindrical ends (such as the actual \emph{wings} of the Karcher--Scherk towers when quotiented by the action of period translations along the axis of periodicity). Although probably well-known to the experts, these results are typically scattered through the literature and often presented with rather non-self-contained proofs (or frequently in a different functional setting, that is not what we need here). These ancillary results are then repeatedly employed in the arguments we present to prove Theorem \ref{thm:ConstructFirst}, in particular for the linear analysis we perform in Section \ref{sec:Linear}.

Thirdly, Appendix \ref{app:graphs} is devoted some preparatory results and fairly technical material pertaining the deformation of immersions through normal graphs, possibly dealing with situations when different Riemannian metrics are actually into play. One (most important) reason for such a discussion is the fact that the perturbations of the aforementioned initial surfaces are conveniently phrased with respect to an auxiliary metric (that is \emph{not} the Euclidean one), so to avoid obvious well-posedness issues when working at boundary points. Those aspects are described in more detail at the beginning of Section \ref{sec:Nonlin}.

The motivation behind the content of Appendix \ref{app:convergence_large_g} is instead as follows. 
As already mentioned in the beginning of this introduction, Ketover \cite{KetoverFB} used equivariant min-max theory to construct a family of free boundary minimal surfaces in $\B^3$ whose varifold limit he claimed to be equal to the union of the equatorial disc $\B^2$ and the critical catenoid. 
In the proof of this statement however (cf. section~4.3 of \cite{KetoverFB}) the possibility that the asymptotic behavior is the same as for our surfaces, namely convergence to $\K_0\cup\B^2\cup -\K_0$, is actually not excluded.  
The scope of our Appendix \ref{app:convergence_large_g} is to identify the argument which seems to be missing in \cite{KetoverFB} and to fill that gap; 
in the same context, this also gives a complete proof that such a variational construction also produces free boundary minimal surfaces with exactly three boundary components and any sufficiently large genus. We wish to stress that our argument relates to the analysis of the catenoidal annuli in Section \ref{subsec:building_blocks} of the present article (cf. Corollary \ref{cor:area_K}). 
We further note, parenthetically, that it is in fact reasonable to conjecture that the surfaces produced by Ketover should actually coincide with those constructed via gluing-desingularization methods by Kapouleas--Li, cf. Open Question 4 of \cite{LiSurvey}.

Let us then conclude by describing the content of Section \ref{sec:Index}. There, we collect some results, data and conjectures about the Morse index of the free boundary minimal surfaces we produce. To begin with, we prove in Proposition \ref{prop:DistiguishByEquivIndex} that one can distinguish the surfaces in our family from Ketover's (and thus, conjecturally, from the elements of the Kapouleas--Li family) by their \emph{equivariant} Morse index as defined in \cite{FranzIndex}: while the Ketover minimal surfaces  have equivariant index equal to one, we prove that the elements in our surfaces have equivariant index \emph{at least two}. This is a fascinating result, which indicates (among other things) that the surfaces in our family \emph{cannot} possibly be obtained by means of a one-parameter min-max scheme, but would rather need the use of $p$-sweepouts for some $p\geq 2$ (with numerical evidence indicating that in fact $p=2$), modulo the very delicate problem of fully controlling the topology in the procedure.
We then proceed and present some data, based on numerical simulations, about what we expect to be the values of the (standard, i.\,e. non-equivariant) Morse index of both the Kapouleas--Li surfaces as well as ours. Very surprisingly, while the former values lie on an affine line (in analogy with what was proven by the third-named author and Kapouleas in \cite{KapouleasWiygulIndex} for the Lawson surfaces $\xi_{g,1}$, $g\geq 2$), the latter ones display a seemingly mysterious (non-periodic) pattern, which (to the best of our knowledge) had not been previously observed for minimal surfaces in \emph{any} framework. Such data motivate a series of open questions and conjectures that, we hope, have the potential of generating further advances in the field.

%===== THANKS & FUNDING =====================================
\paragraph{Acknowledgments.} 

This project has received funding from the European Research Council (ERC) under the European Union’s Horizon 2020 research and innovation programme (grant agreement No. 947923). 
The research of M.\,S. was partly funded by the EPSRC grant EP/S012907/1 
and by the Deutsche Forschungsgemeinschaft (DFG, German Research Foundation) under Germany's Excellence Strategy EXC 2044 -- 390685587, Mathematics M\"unster: Dynamics--Geometry--Structure, and the Collaborative Research Centre CRC 1442, Geometry: Deformations and Rigidity.

\section{Notation and preliminaries}\label{sec:Nota}

\paragraph{Basic notation for $\R^3$ and distinguished subsets.}
We set
\label{gls:ball_etc}
\begin{align*}
\glsuseri{ball}&\vcentcolon=\{(x,y,z)\in\R^3\st x^2+y^2+z^2\leq 1\}, & 
\glsuserii{ball}&\vcentcolon=\partial\B^3, \\
\glsuseriii{ball}&\vcentcolon=\B^3\cap\{z=0\}, &
\glsuseriv{ball}&\vcentcolon=\partial\B^2=\Sp^2\cap\{z=0\}.
\end{align*}
Given vectors $U, V \in \R^3$, we write $U \cdot V$ for their standard, Euclidean inner product.
For Cartesian coordinates $(x, y, z)$ on $\R^3$
we denote by $\gls{axis}$ \label{gls:axis}
the corresponding positively oriented coordinate axes.
Given a set $S \subset \R^3$,
we define the distance function
$d_S\colon\R^3 \to \Interval{0,\infty}$ by
\begin{equation}\label{eqn:definition_distancefunction}
\dist_S(x)\vcentcolon=\inf_{y \in S} \abs{x-y}.
\end{equation}
Given additionally a real number $s \geq 0$
and $\sim$ a binary relation on $\R$,
we define
\begin{equation}\label{tubularnbhd}
\glsuseri{tube} %S_{\sim s}
\vcentcolon=\bigl\{x \in \R^3 \st \inf_{y \in S} \abs{x-y} \sim s\bigr\}=\{\dist_S \sim s\},
\end{equation}
so that for example
$\axis{z}_{\geq R}
  =\{x=y=0\}_{\geq R}
  =\{(x,y,z)\in \R^3 \st x^2+y^2 \geq R^2\}
$.
Finally, if $X\colon S \to \R^3$ is a vector field on $S$,
we define the graph of $X$ over $S$
\begin{equation}
\label{eqn:definition_graph}
\graph(X)\vcentcolon=\{x+X(x) \st x \in S\}.
\end{equation}
Of course, in general $\graph(X)$ is nothing more than a set (i.\,e. it does not naturally come with additional structures, such as that of submanifold) unless we place additional assumptions on the set $S$ and the vector field $X$.

\paragraph{Homotheties and isometries of $\R^3$.}
For any set $S \subset \R^3$ and any $\lambda \in \R$
we define
$\lambda S\vcentcolon=\{(\lambda x, \lambda y, \lambda z) \st (x,y,z) \in S\}$;
for example $\lambda \B^3$ is the closed origin-centered ball of radius $\abs{\lambda}$.
For any affine subspace $V$ of $\R^3$ we write
\glsuseri{refl}\label{gls:refl_etc}
for reflection through $V$.
For any directed line $\hat{\ell}$ in $\R^3$
and parameter $t \in \R$
we write \glsuseriii{refl}
for translation by $t$
in the positive direction of $\hat{\ell}$
and \glsuserii{refl}
for counterclockwise rotation about $\hat{\ell}$.
Thus, for example, $\rot_{\axis{z}}^\phi$
stands for the rotation in $\R^3$ about the $z$-axis through angle $\phi$
in the conventional positive sense,
and we have
\[
\refl_{\axis{z}}
=\rot_{\axis{z}}^\pi
=\rot_{\axis{z}}^{-\pi}.
\]
Given a Riemannian manifold $M$
(possibly with boundary and having unnamed but understood metric)
and $S \subseteq M$,
we write \label{gls:aut}\gls{Aut} for the group of all isometries of $M$
that take $S$ to itself (globally, i.\,e. as a set). In this case we will sometimes say that $M$ is invariant under any such isometry.
Thus $\Aut_{\R^3}(S)$, with $S \subseteq \R^3$,
consists of all rigid motions  $\mathsf{M}$ of $\R^3$
(not necessarily preserving orientation)
such that $\mathsf{M}S=S$,
while $\Aut_{\B^3}(S)$, now with $S \subseteq \B^3$,
consists of all $\mathsf{R} \in\Ogroup(3)$
such that $\mathsf{R}S=S$. In particular, the symmetry group of a properly embedded surface $S\subseteq\B^3$ is precisely defined as $\Aut_{\B^3}(S)$.

For any group $G$
and elements
$g_1, \ldots, g_n \in G$
we write 
$\sk{g_1, \ldots, g_n}$
for the subgroup of $G$
generated by $g_1, \ldots, g_n$. 
Throughout this article
we shall conveniently identify $\Ogroup(2)$ with the subgroup of $\Ogroup(3)$ defined by
\begin{equation*}
\Ogroup(2) \vcentcolon= \Aut_{\B^3}(\{z \geq 0\} \cap \B^3),
\end{equation*}
and, for any integer $m \geq 2$,
we define the following concrete subgroups of $\Ogroup(3)$ (see Figures \ref{fig:symmetries1}--\ref{fig:symmetries2}):
\begin{align}
\label{cycdef}
 \text{cyclic group }&&
\cyc_m&\vcentcolon=
\sk[\Big]{\rot_{\axis{z}}^{2\pi/m}} < \Ogroup(2) 
&&\text{ of order $m$,}
\\
\label{dihdef}
\text{dihedral group }&&
\dih_m&\vcentcolon=
\sk[\Big]{\rot_{\axis{x}}^\pi,~
    \rot_{\axis{z}}^{2\pi/m}} < \SOgroup(3) 
&&\text{ of order $2m$,}
\\
\label{pyrdef}
\text{pyramidal group }&&
\glsuseriii{sym}&\vcentcolon=
\sk[\Big]{\rot_{\axis{z}}^{2\pi/m},~\refl_{\{y=x \tan(\pi/(2m))\}}} < \Ogroup(2) 
&&\text{ of order $2m$,}
\\
\label{aprdef}
\text{antiprismatic group }&&
\glsuseri{sym}&\vcentcolon=
\sk[\Big]{\rot_{\axis{x}}^\pi,~
    \refl_{\{y=x \tan(\pi/(2m))\}}} < \Ogroup(3)  
&&\text{ of order $4m$,}
\\
\text{prismatic group }&&
\glsuserii{sym}&\vcentcolon=\sk[\Big]{\rot_{\axis{x}}^\pi,~\rot_{\axis{z}}^{2\pi/m},~\refl_{\{y=0\}}} < \Ogroup(3)  
&&\text{ of order $4m$.}
\end{align}
Setting
\[ 
Q_m\vcentcolon=\Bigl\{\Bigl(
  \cos(j\tfrac{2\pi}{m}+\tfrac{\pi}{2m}\bigr),~
  \sin\bigl(j\tfrac{2\pi}{m}+\tfrac{\pi}{2m}\bigr),~
  0\Bigr)\st j\in\{0,\ldots,m-1\}\Bigr\},
\]
we can interpret
$\dih_m$ as the intersection
$
\SOgroup(3)
\cap
\Aut_{\R^3}(\rot_{\axis{z}}^{-\pi/(2m)}Q_m)
$
of $\SOgroup(3)$ with the symmetry group
of the regular $m$-gon inscribed in $\Sp^1$
and having $(1,0,0)$ as a vertex,
$\pyr_m$ as the (full $\Ogroup(3)$) symmetry group
of the pyramid having vertices $Q_m \cup \{(0,0,5)\}$,
and $\apr_m$ as the (full $\Ogroup(3)$) symmetry group
of the antiprism having vertices
$\trans^{\axis{z}}_5 Q_m\cup\rot_{\axis{x}}^\pi\trans^{\axis{z}}_5 Q_m$.
Note that $\dih_m < \apr_m$ and $\pyr_m < \apr_m$  (see Figure~\ref{fig:symmetries2}). 
However, it is to be remarked -- as a standard fact in basic group theory -- that there exist isomorphisms $\pyr_m \cong \dih_m$ and $\apr_m \cong \dih_{2m}$ as well as $\pri_m\cong\dih_m\times\cyc_2$.

\begin{figure}%
\pgfmathsetmacro{\thetaO}{72}
\pgfmathsetmacro{\phiO}{108}
\tdplotsetmaincoords{\thetaO}{\phiO}
\pgfmathsetmacro{\hoehe}{3/4}
\begin{tikzpicture}[scale=\unitscale,tdplot_main_coords,line cap=round,line join=round,thick,baseline={(0,0,0)}] 
\foreach\k in {0,...,5}{
\coordinate (T\k) at ({cos((\k-1/8)*360/5)},{sin((\k-1/8)*360/5)},{\hoehe});
\coordinate (B\k) at ({cos((\k-1/8)*360/5)},{sin((\k-1/8)*360/5)},{-\hoehe});
}
\foreach[count=\i]\k in {0,...,4}{
\path(T\k)--(T\i)coordinate[pos=0.5](t\i);
\path(B\k)--(B\i)coordinate[pos=0.5](b\i); 
}
\tdplotsetthetaplanecoords{(5-1/8)*360/5}
\filldraw[fill=black!30](B1)--(B2)--(B3)--(B4)--(B5)--cycle;
\begin{scope}[fill=black!20]
\filldraw(T1)--(T2)--(B2)--(B1)--cycle;
\filldraw(T4)--(T5)--(B5)--(B4)--cycle;
\filldraw(T5)--(T1)--(B1)--(B5)--cycle;
\end{scope}
\filldraw[fill=black!10](T1)--(T2)--(T3)--(T4)--(T5)--cycle;
\begin{scope}[dotted]
\draw(T3)--(T4)--(B4)--(B3)--cycle (B3)--(B2);
\end{scope}
\draw[dashed,color={cmyk,1:magenta,0.5;yellow,1}](T5)--(t3)--(b3)--(B5);
\draw(T1)--(T2)--(T3)--(T4)--(T5)--cycle;
\path(T5)--(B5)coordinate[midway](x);
\path(B3)--(T2)coordinate[midway](-x);
\draw[dashed](0,0,0)--(-x)(0,0,0)--(x)coordinate[pos=1.4](x->);
\draw[-latex](x)--(x->)node[left,inner sep=1pt]{$\widehat{x}$};
\draw plot[bullet](x);
\draw plot[bullet](-x);
\draw[dashed](0,0,0)--(0,0,-\hoehe)coordinate(-z)(0,0,0)--(0,0,\hoehe)coordinate(z);
\draw[-latex](z)--++(0,0,0.2)node[right=1.3pt,circle,fill=black!10,inner sep=0.2pt]{$\widehat{z}$};
\draw plot[bullet](z);
\draw plot[bullet](-z);
\end{tikzpicture}
%%%%%%%%%%%%%%
\hfill
\begin{tikzpicture}[baseline={(0,0)}]
\matrix[matrix of math nodes,row sep=3ex]
{
|(P)|\pri_m &   & |(A)|\apr_m \\
& |(Z)|\cyc_m & \\
};
\path(P)--node[midway,sloped]{$>$}(Z);
\path(Z)--node[midway,sloped]{$<$}(A);
\end{tikzpicture}
\hfill
%%%%%%%%%%%%%%
\begin{tikzpicture}[scale=\unitscale,tdplot_main_coords,line cap=round,line join=round,thick,baseline={(0,0,0)}] 
\foreach\k in {0,...,5}{
\coordinate (T\k) at ({cos((\k-1/4)*360/5)},{sin((\k-1/4)*360/5)},{\hoehe});
\coordinate (B\k) at ({cos((\k+1/4)*360/5)},{sin((\k+1/4)*360/5)},{-\hoehe});
}
\foreach[count=\i]\k in {0,...,4}{
\path(T\k)--(T\i)coordinate[midway](t\i);
\path(B\k)--(B\i)coordinate[midway](b\i); 
}
\filldraw[fill=black!30](B1)--(B2)--(B3)--(B4)--(B5)--cycle;
\begin{scope}[fill=black!20]
\filldraw(T1)--(B1)--(B5)--cycle;
\filldraw(T1)--(T2)--(B1)--cycle;
\filldraw(T4)--(T5)--(B4)--cycle;
\filldraw(T5)--(B5)--(B4)--cycle;
\filldraw(T5)--(T1)--(B5)--cycle;
\end{scope}
\filldraw[fill=black!10](T1)--(T2)--(T3)--(T4)--(T5)--cycle;
\begin{scope}[dotted]
\draw(T2)--(B2)--(B1)--cycle;
\draw(T3)--(B3)--(B2)--cycle;
\draw(T4)--(B4)--(B3)--cycle;
\end{scope}
\draw[dashed,color={cmyk,1:magenta,0.5;yellow,1}](T5)--(t3)--(B2)--(b5)--cycle;
\path(T5)--(b5)coordinate[midway](p)(t3)--(B2)coordinate[midway](-p);
\path(T5)--(B4)coordinate[midway](x)(B2)--(T2)coordinate[midway](-x);
\draw[dashed](0,0,0)--(x)coordinate[pos=1.4](x->);
\draw[dashed,color={cmyk,1:magenta,0.5;yellow,1}](0,0,0)--(-p);
\draw[dashed](0,0,0)--(0,0,-\hoehe)coordinate(-z)(0,0,0)--(0,0,\hoehe)coordinate(z);
\draw[dashed](0,0,0)--(-x);
\draw[dashed,color={cmyk,1:magenta,0.5;yellow,1}](0,0,0)--(p);
\draw(B5)--(T5)--(B4)--cycle(B5)--(T1)(T1)--(T2)--(T3)--(T4)--(T5)--cycle;
\draw[-latex](x)--(x->)node[below]{$\widehat{x}$};
\draw plot[bullet](x);
\draw plot[bullet](-x);
\draw[-latex](z)--++(0,0,0.2)node[right=1.3pt,circle,fill=black!10,inner sep=0.2pt]{$\widehat{z}$};
\draw plot[bullet](z);
\draw plot[bullet](-z);
\pgfresetboundingbox
\path(T1)--(T2)--(T3)--(T4)--(T5)--cycle(B1)--(B2)--(B3)--(B4)--(B5)--cycle;
\end{tikzpicture}%
\caption{Difference between prismatic (left) and antiprismatic (right) symmetry in the case $m=5$. 
\\[1ex]
\textbf{(Anti-) Prismatic group} of order $4m$. 
Both the prism and the antiprism (unicolored) are symmetric with respect to the rotation $\rot_{\axis{x}}^\pi$ by angle $\pi$ around the axis $\widehat{x}$ and with respect to reflection across any vertical plane containing the axis $\widehat{z}$ and one of the vertices. 
In the case of the prism, the axis $\widehat{x}$ is also \emph{contained} in such a plane of symmetry, but in the case of the antiprism $\widehat{x}$ is \emph{in between} two such planes. 
}%
\label{fig:symmetries1}%
\bigskip
\bigskip
\begin{tikzpicture}[scale=\unitscale,tdplot_main_coords,line cap=round,line join=round,thick,baseline={(0,0,0)}] 
\foreach\k in {0,...,5}{
\coordinate (T\k) at ({cos((\k-1/4)*360/5)},{sin((\k-1/4)*360/5)},{\hoehe});
\coordinate (B\k) at ({cos((\k+1/4)*360/5)},{sin((\k+1/4)*360/5)},{-\hoehe});
}
\foreach[count=\i]\k in {0,...,4}{
\path(T\k)--(T\i)coordinate[midway](t\i);
\path(B\k)--(B\i)coordinate[midway](b\i); 
}
\filldraw[fill=black!30](B1)--(B2)--(B3)--(B4)--(B5)--cycle;
\begin{scope}[fill=black!20]
\fill(T1)--(B1)--(B5)--cycle;
\fill(T1)--(T2)--(B1)--cycle;
\fill(T4)--(T5)--(B4)--cycle;
\fill(T5)--(B5)--(B4)--cycle;
\fill(T5)--(T1)--(B5)--cycle;
\end{scope}
\begin{scope}[fill=black!60]
\fill(t1)--(T5)--(B5)--cycle;
\fill(t2)--(T1)--(B1)--cycle;
\fill(t5)--(T4)--(B4)--cycle;
\fill(T1)--(b1)--(B1)--cycle;
\fill(T5)--(b5)--(B5)--cycle;
\end{scope}
\draw(T1)--(B1)--(B5)--cycle;
\draw(T1)--(T2)--(B1)--cycle;
\draw(T4)--(T5)--(B4)--cycle;
\draw(T5)--(B5)--(B4)--cycle;
\draw(T5)--(T1)--(B5)--cycle;
\filldraw[fill=black!10] (T1)--(T2)--(T3)--(T4)--(T5)--cycle;
\begin{scope}[dotted]
\draw(T2)--(B2)--(B1)--cycle;
\draw(T3)--(B3)--(B2)--cycle;
\draw(T4)--(B4)--(B3)--cycle;
\end{scope} 
\end{tikzpicture} 
%%%%%%%%%%%%%%
\hfill
\begin{tikzpicture}[baseline={(0,0)}]
\matrix[matrix of math nodes,row sep=3ex]
{&|(A)|\apr_{m} & \\
|(D)|\dih_{m} && |(Y)|\pyr_{m} \\
& |(Z)|\cyc_{m} & \\
};
\path(D)--node[midway,sloped]{$\cong$}(Y);
\path(D)--node[midway,sloped]{$<$}(A);
\path(Y)--node[midway,sloped]{$>$}(A);
\path(D)--node[midway,sloped]{$>$}(Z);
\path(Y)--node[midway,sloped]{$<$}(Z);
\end{tikzpicture}
\hfill
%%%%%%%%%%%%%%
\begin{tikzpicture}[scale=\unitscale,tdplot_main_coords,line cap=round,line join=round,thick,baseline={(0,0,0)}] 
\foreach\k in {0,...,5}{
\coordinate (T\k) at ({cos((\k-1/4)*360/5)},{sin((\k-1/4)*360/5)},{\hoehe});
\coordinate (B\k) at ({cos((\k+1/4)*360/5)},{sin((\k+1/4)*360/5)},{-\hoehe});
}
\foreach[count=\i]\k in {0,...,4}{
\path(T\k)--(T\i)coordinate[midway](t\i);
\path(B\k)--(B\i)coordinate[midway](b\i); 
}
\filldraw[fill=black!30](B1)--(B2)--(B3)--(B4)--(B5)--cycle;
\begin{scope}[fill=black!20]
\filldraw(T1)--(B1)--(B5)--cycle;
\filldraw(T1)--(T2)--(B1)--cycle;
\filldraw(T4)--(T5)--(B4)--cycle;
\filldraw(T5)--(B5)--(B4)--cycle;
\filldraw(T5)--(T1)--(B5)--cycle;
\end{scope}
\begin{scope}[fill=black!60]
\filldraw(T1)--(T2)--(B1)--cycle;
\filldraw(T4)--(T5)--(B4)--cycle;
\filldraw(T5)--(T1)--(B5)--cycle;
\end{scope}
\filldraw[fill=black!10](T1)--(T2)--(T3)--(T4)--(T5)--cycle;
\begin{scope}[dotted]
\draw(T2)--(B2)--(B1)--cycle;
\draw(T3)--(B3)--(B2)--cycle;
\draw(T4)--(B4)--(B3)--cycle;
\end{scope}
\end{tikzpicture}%
\caption{Difference between dihedral (left) and pyramidal (right) symmetry in the case $m=5$. 
\\[1ex]
\textbf{Dihedral subgroup} $\dih_m$ of order $2m$. 
Recoloring half of each triangle of an antiprism as shown on the left suppresses the reflection symmetries but preserves the $\rot_{\axis{x}}^\pi$-invariance. 
\\[1ex]
\textbf{Pyramidal subgroup} $\pyr_m$ of order $2m$.  
Recoloring every triangle adjacent to the top $m$-gonal face of an antiprism preserves the reflection symmetries but suppresses  the $\rot_{\axis{x}}^\pi$-invariance. 
}%
\label{fig:symmetries2}%
\end{figure}

\begin{remark}
The literature employs
multiple common systems of notation
(with no clear single standard)
to refer to finite subgroups
of $\Ogroup(3)$.
In Table \ref{table:group-notation}
we note some
alternative names,
according to three different schemes,
for the three groups
which play the most prominent roles
in the sequel of this article.
\begin{table} 
\centering\renewcommand{\arraystretch}{1.2}%
\begin{tabular}{
>{\centering\arraybackslash}p{0.096\textwidth-2\tabcolsep}|
>{\centering\arraybackslash}p{0.3\textwidth-2\tabcolsep}|
>{\centering\arraybackslash}p{0.3\textwidth-2\tabcolsep}|
>{\centering\arraybackslash}p{0.3\textwidth-2\tabcolsep}}
& {Schoenflies} & {Coxeter} & {Conway--Thurston} \\
\hline
$\pyr_m$ & $C_{mv}$ & $[m]$ & $(*mm)$ \\
\hline
$\pri_m$ & $D_{mh}$ & $[m,2]$ & $(*22m)$ \\
\hline
$\apr_m$ & $D_{md}$ & $[2m,2^+]$ & $(2*m)$ \\
%\hline
\end{tabular}
\caption{Alternative notation for the pyramidal, prismatic, and antiprismatic groups.}
\label{table:group-notation}
\end{table}
\end{remark}

\paragraph{Tubular coordinates.}
Let $(M,g)$ be a complete smooth Riemannian manifold, and
$\Sigma \subset M$ be
a two-sided embedded smooth hypersurface. (In fact, throughout this article, we agree all hypersurfaces to be smooth unless otherwise explicitly stated.)
Denoting by
$\nu$ a unit normal to $\Sigma$, 
we define the map
\begin{equation}\label{tubularcoordscodim1}
\begin{aligned}
\tubularexp_{(M,g),(\Sigma,\nu)} \colon \Sigma \times \R &\to M \\
(p,t) &\mapsto \exp^{(M,g)}_p t\nu(p),
\end{aligned}
\end{equation}
where $\exp^{(M,g)}\colon TM \to M$
is the exponential map on $(M,g)$.
Observe that if $(M,g)$
is Euclidean $\R^3$
and $u$ is a function on $\Sigma$,
then the image of $p \mapsto \tubularexp_{(M,g),(\Sigma,\nu)}(p,u(p))$
is precisely what we had previously denoted
$\graph(u\nu)$.
If $\Gamma \subset \Sigma$ 
satisfies the above assumptions with
$(\Sigma,\iota^*g, \Gamma, \eta)$
in place of $(M, g, \Sigma, \nu)$,
with $\iota\colon \Sigma \to M$ the inclusion map,
then we further define
\begin{equation}\label{tubularcoordscodim2}
\begin{aligned}
\tubularexp_{(M,g),(\Sigma,\nu),(\Gamma,\eta)}
 \colon  \Gamma \times \R \times \R &\to M \\
(p,s,t) &\mapsto \tubularexp_{(M,g),(\Sigma,\nu)}
  (\tubularexp_{(\Sigma, \iota^*g), (\Gamma, \eta)}(p,s),t).
\end{aligned}
\end{equation}
Note that in situations where completeness does not hold
the above maps may still be defined but with restricted domains.

\paragraph {Mean curvature and Jacobi operators.}
Let $M$ be a Riemannian manifold
(as above: possibly with boundary and having unnamed but understood metric), which for our purposes we can assume to have dimension three, and let $\Sigma$ be a properly embedded, two-sided, surface in $M$. We further stipulate that the boundary of $\Sigma$, if not empty, is contained in the boundary of the ambient manifold $M$ (namely: $\partial \Sigma \subset \partial M$).
 We agree to \emph{choose a side} of $\Sigma$ i.\,e. to perform a choice of a unit normal vector field
 \label{gls:nu-A}\glsuseri{nu}. 
 Hence, there is a well-defined notion of \emph{mean curvature} of $\Sigma$, which we shall denote by 
\glsuseriii{nu}, 
that is the trace (with respect to the background Riemannian metric in question) of the \emph{(scalar-valued) second fundamental form} 
\glsuserii{nu};  
we adopt the convention that it equals the first fundamental form in the case of $\Sp^2$ the unit sphere of Euclidean $\R^3$. 
That said -- as it is customary -- we call \emph{minimal} those surfaces for which the mean curvature function vanishes at all points. 

Given any surface as above (not necessarily minimal) one defines the so-called \emph{Jacobi operator}
\begin{equation}\label{eq:JacobiOp}
\gls{Jacobi} u \vcentcolon= \Delta_\Sigma u+(\abs{A_\Sigma}^2+\text{Ric}(\nu_{\Sigma},\nu_{\Sigma}))u
\end{equation}
where $\Delta_\Sigma$ denotes the Laplace--Beltrami operator of $\Sigma$, and $\text{Ric}(\cdot,\cdot)$ is the \emph{Ricci tensor} of the ambient Riemannian manifold.
As it is well-known, this operator relates to the second variation formula for the area functional (see below); equivalently, and significantly for our scopes, one has the pointwise equation
\begin{equation}\label{eq:PointwiseJacobi}
    \left[\frac{dH(t)}{dt}\right]_{t=0}=-J_{\Sigma} u
\end{equation}
where $H(t)=H_{\Sigma(t)}$ denotes the mean curvature of the surface $\Sigma(t)$ obtained by flowing $\Sigma$ according to the vector field $u\nu_{\Sigma}$ for (suitably small) time $t$. With slight abuse of language, we shall refer to any element in the kernel of $J_{\Sigma}$ as a Jacobi field of $\Sigma$ (although that is rather the projection along the normal $\nu_{\Sigma}$ of an actual vector field, which is in fact not uniquely determined).

\paragraph{Robin boundary operators.}\label{gls:conormal-Robin}
In the setting above, suppose further that $\Sigma$ has boundary $\partial\Sigma$ with outward unit conormal
\glsuseri{conormal} along $\partial \Sigma$,
and that $\eta^{\vphantom{|}}_\Sigma$ is everywhere orthogonal to $\partial M$.
Consistently with the convention we just stipulated, let $A_{\partial M}$ be the second fundamental form of $\partial M$.
Let further \glsuseri{nu} be a 
choice of unit normal vector field on $\Sigma$.
Then $A_{\partial M}(\nu^{\vphantom{|}}_\Sigma,\nu^{\vphantom{|}}_\Sigma)$
is independent of the choice of $\nu^{\vphantom{|}}_\Sigma$ and smooth.
Writing $\cdot$ for the ambient inner product,
we define the first-order differential operator
\begin{equation}
\label{Robin_op_general_def}
\glsuserii{conormal}
\vcentcolon=
\eta^{\vphantom{|}}_\Sigma \cdot \nabla_{\Sigma} 
  -A_{\partial M}(\nu^{\vphantom{|}}_\Sigma,\nu^{\vphantom{|}}_\Sigma)
\end{equation}
taking differentiable functions on a neighborhood, in $\Sigma$, of $\partial \Sigma$
to functions on $\partial \Sigma$.
In the special case that $M=\B^3$
we have $B^{\mathrm{Robin}}_\Sigma = \eta^{\vphantom{|}}_\Sigma \cdot \nabla_{\Sigma} - 1$.

\paragraph {Jacobi quadratic form and Morse index.} 

Still in the setting of the previous two paragraphs, 
we may then consider the quadratic form, henceforth named \emph{Jacobi quadratic form} (or, sometimes, just \emph{index form})
\begin{align}\label{eq:JacobiQuadraticForm}
\begin{aligned}
\glsuseri{indexform}(u,u)
&=\int_{\Sigma}\Bigl(\abs{\nabla_{\Sigma} u}^2-\bigl(\abs{A_\Sigma}^2+\text{Ric}(\nu_{\Sigma},\nu_{\Sigma})\bigr)u^2\Bigr)-\int_{\partial\Sigma}A_{\partial M}(\nu^{\vphantom{|}}_\Sigma,\nu^{\vphantom{|}}_\Sigma) u^2
\\
&=\int_{\Sigma}\Bigl(-u\Delta_{\Sigma} u-\bigl(\abs{A_\Sigma}^2+\text{Ric}(\nu_{\Sigma},\nu_{\Sigma})\bigr)u^2\Bigr)+\int_{\partial\Sigma} u\, B^{\mathrm{Robin}}_\Sigma u,
\end{aligned}
\end{align}
which, in the minimal case, arises when considering the second variation of the area functional along the normal variation generated by the function $u$, exactly as we explained above. Of course, in general one should assume the function $u$ in question to be compactly supported for such a relation to make sense.

For the purposes of the present work, when proceeding further to the definition of Morse index it is convenient to just focus on three special cases.
Let first $\Sigma$ be a compact, properly embedded, free boundary minimal surface in the unit ball $\B^3$; we remark that any such surface is necessarily two-sided. (To avoid ambiguities, let us stress that a compact surface $\Sigma$, minimal or otherwise, is said to be properly embedded in $\B^3$ if $\Sigma\cap \partial \B^3=\partial\Sigma$.)
The \emph{Morse index} of $\Sigma$ is defined as the maximal dimension of a vector space of smooth functions on $\Sigma$ on which the form $Q_{\Sigma}$ is negative definite; equivalently, it is the number of \emph{negative} eigenvalues $\lambda$ of the Robin eigenvalue problem 
\begin{equation}\label{eq:RobinEingenvProblem}
\left\{\begin{aligned}
\gls{Jacobi}u&=-\lambda u   &&\text{ in }\Sigma,\\ 
\glsuserii{conormal} u&=0 &&\text{ on }\partial\Sigma.
\end{aligned}\right.
\end{equation}
In a partly similar fashion, let us now consider instead the second case that is relevant for our discussion: 
let $\Sigma$ be a complete, properly embedded, two-sided boundaryless minimal surface in $\R^3$. 
We may then again consider the same quadratic form $Q_{\Sigma}(\cdot,\cdot)$, with no boundary term, which corresponds to the second variation of the area functional under, say, compactly supported deformations. 
If $\Sigma$ has finite total curvature, or (most relevant to our discussion) is a quotient under an isometric action and has then finite total curvature (which happens e.\,g. when we deal with quotiented Karcher--Scherk towers), then it has been shown in \cite{Fis85} that one can equivalently define the Morse index either by exhaustion or by simply looking at the spectrum in the space of square-integrable Sobolev functions. The latter perspective is patently more convenient, for indeed the standard spectral theorem applies, providing again a discrete spectrum for the Jacobi operator $J_{\Sigma}$ hence a diagonalization of $Q_{\Sigma}$.

Lastly, we will also deal with a complete, properly embedded, two-sided free boundary minimal surface in the half space $\R^3_{+}$: in this case $A_{\partial M}=0$ and thus  $B^{\mathrm{Robin}}_\Sigma$ reduces to a homogeneous normal derivative operator. It is possible to extend to this setting the results in \cite{Fis85}, and so define the Morse index either by exhaustion or -- perhaps more simply -- by looking at the spectrum of the Jacobi operator $J_{\Sigma}$ on global Sobolev functions on $\Sigma$ subject to Neumann boundary conditions.

\paragraph{H\"{o}lder norms and spaces of functions.}
Suppose $\Sigma \subset \R^3$ is a smooth,
properly embedded surface or curve.
Given an integer $k \geq 0$,
a real number $\alpha \in \Interval{0, 1}$,
and functions
$u\colon \Sigma \to \R$
and
$f\colon \Sigma \to \interval{0,\infty}$,
we define the weighted norm
\begin{equation}
\label{eqn:definition_weighted_norm}
\nm{u: C^{k,\alpha}(\Sigma, f)}
\vcentcolon=
\sum_{j=0}^k \sup_{x \in \Sigma} 
\frac{\abs{D^j\overline{u}(x)}}{f(x)}
  + \sup_{x \neq y \in \Sigma}
      \frac{
        \abs{D^k\overline{u}(x)-D^k\overline{u}(y)}
      }
      {
        \abs{x-y}^\alpha \min \{f(x),f(y)\}
      },
\end{equation}
where $\abs{{}\cdot{}}$ is the standard Euclidean norm on tensors
and $D^j\overline{u}$ is the $j$\textsuperscript{th}
Euclidean covariant derivative of 
any extension $\overline{u}$ of $u$
to a tubular neighborhood of $\Sigma$ in $\R^3$
such that $\overline{u}$ is constant on line segments
intersecting $\Sigma$ orthogonally.
We also agree that
$\nm{{}\cdot{}: C^k(\Sigma,f)}\vcentcolon
=
\nm{{}\cdot{}: C^{k,0}(\Sigma, f)}
$.

We wish to highlight two special cases, for which we convene to employ a somewhat lighter notation whenever ambiguity is unlikely to arise:
\begin{itemize}
    \item When the weight function is $f=1$, we define the norm
$\nm{{}\cdot{}}_{k,\alpha}\vcentcolon=\nm{{}\cdot{}: C^{k,\alpha}(\Sigma,1)}$
(equivalent to the usual H\"{o}lder norm)
along with the corresponding Banach spaces
 $C^{k,\alpha}(\Sigma)$
of functions on $S$ with finite respective norm;
in this same case
we write $[u]_\alpha$
for the last term of \eqref{eqn:definition_weighted_norm} i.\,e. the H\"older seminorm;
\item When the weight function is $f=e^{-\beta |z|}$, where $\beta\in\R$ is a fixed number and $z$ is the standard third coordinate in $\R^3$ we define the norm
\label{gls:cyl_norms}
$
\gls{norm}
 \vcentcolon=
 \nm{{}\cdot{}: C^{k,\alpha}(\Sigma,e^{-\beta |z|})}
$
along with the corresponding Banach spaces
$
 C^{k,\alpha,\beta}(\Sigma)
 \vcentcolon=
C^{k,\alpha}(\Sigma,e^{-\beta |z|})
$ of functions on $\Sigma$ with finite respective norm; we can equally well adopt this notation -- with obvious changes -- in the case when $|z|$ is replaced e.\, g. by the distance function from a suitable submanifold of $\R^3$.
\end{itemize}

We will make occasional use
of the more general notions of
(possibly weighted)
$C^k$ and $C^{k,\alpha}$
norms on tensor fields
defined on an open subset
of a complete Riemannian manifold,
for which,
referring to definition
\eqref{eqn:definition_weighted_norm},
we dispense with the extension of $u$
(now a tensor field),
interpret each $\abs{\cdot}$ and $D$
as the (intriniscally defined)
norm and connection induced
by the given Riemannian metric,
and appropriately reinterpret
the second term using parallel transport
along uniquely minimizing geodesics.
In all cases of interest to us
this broader definition
coincides with the narrower one,
whenever both are applicable,
up to equivalence of norms.

Assume now that $\Sigma$ is a two-sided hypersurface.
For any rigid motion $\mathsf{M}$
of $\R^3$ that preserves $\Sigma$ as set, we define
\begin{equation}
\label{symsign}
\gls{symsign}
\vcentcolon=
\begin{cases}
\hphantom{-}1
  &
  \text{ if $\mathsf{M}$ preserves each side of $\Sigma$,}
\\
-1
  &
  \text{ if $\mathsf{M}$ exchanges the sides of $\Sigma$.}  
\end{cases}  
\end{equation}
For any group $G$ of rigid motions of $\R^3$ preserving
$\Sigma$ as a set
(that is $G \leq \Aut_{\R^3}(\Sigma)$)
we call a function $u\colon \Sigma \to \R$
\emph{$G$-equivariant} if
$u \circ \mathsf{M}=(\sgn_\Sigma \mathsf{M})u$
for all $\mathsf{M} \in G$
and we define
\label{gls:Gequiv_Hoelder}
\begin{equation*}
\gls{CG}
\vcentcolon=
\{u \in C^{k,\alpha}(\Sigma) \st
  \mbox{$u$ is $G$-equivariant}\}.
\end{equation*}
If instead
$u \circ \mathsf{M}=u$
for all
$\mathsf{M} \in G$,
then $u$ is said to be \emph{$G$-invariant}.
Note that $G$-equivariance is preserved
under multiplication by any $G$-invariant function. Since any rigid motion preserving $\Sigma$
must also preserve $\partial \Sigma$,
by replacing in the definitions just made
each $u \colon \Sigma \to \R$
by $v \colon \partial \Sigma \to \R$,
we define
$G$-equivariance (and $G$-invariance)
for functions on $\partial \Sigma$
as well as
the spaces $C^{k,\alpha}_G(\partial \Sigma)$ in the very same fashion.
More generally, we may append a suffix
of $G$ to any space of functions
defined on $\Sigma$ or $\partial \Sigma$
to designate the corresponding
$G$-equivariant subspace.

Note that if $G \leq \Aut_{\R^3}(\Sigma)$,
then $\abs{A_\Sigma}^2$ is $G$-invariant
and $\Delta_\Sigma$ commutes with every element of $G$,
so $J_{\Sigma}$ preserves $G$-equivariance.
Likewise, for the purpose
of considering boundary value problems
when $\Sigma$ has boundary,
it is important to observe that conormal differentiation also preserves
$G$-equivariance.

The reader is also referred to Section 3 of \cite{FranzIndex} for a broader discussion of the equivariance constraints.

\paragraph{Spaces of square integrable functions.}
Let $(M,g)$ be a Riemannian manifold.
We write $\dvol{g}$ (regardless of the dimension of $M$)
for the volume measure
induced by $g$;
we write $L^2(M,g)$ for the corresponding space
of (equivalence classes of)
real-valued square integrable functions on $M$;
and we write
$\sk{{}\cdot{},{}\cdot{}}_{L^2(M,g)}$
for the corresponding inner product.
(Although we write $U \cdot V$ for the inner product of vectors
$U,V \in \R^3$,
we reserve $u \cdot v$ for the pointwise product
of two real-valued functions $u$ and $v$ on a common domain,
even when an $L^2$ inner product $\sk{u,v}_{L^2(g)}$ is also defined.)
When $M=\Sigma$ is a two-sided surface in $\R^3$
and $G \leq \Aut_{\R^3}(\Sigma)$,
much as for H\"{o}lder spaces
we write $L^2_G(M,g)$ for the subspace
of $L^2(M,g)$ consisting of those elements
which have a $G$-equivariant representative.
Context permitting, we may replace $L^2(M,g)$
by $L^2(M)$ with Riemannian metric tacitly understood.

\paragraph{Cutoff functions.}
We first fix a function $\Psi\colon \R \to \R$
such that
\begin{enumerate}[label={\normalfont(\roman*)}]
  \item $\Psi$ is $C^\infty$ and monotonically non-decreasing,
  \item $\Psi-\frac{1}{2}$ is odd,
  \item $\Psi$ is constantly $0$ 
        on $\interval{-\infty,-1}$, and
  \item $\Psi$ is constantly $1$ on $\interval{1,\infty}$.
\end{enumerate}
Given any $a \neq b \in \R$, we let $L_{a,b}\colon \R \to \R$ be the unique affine function such that $L_{a,b}(a)=-2$ and $L_{a,b}(b)=2$, and we define the function $\gls{cutoff}\colon \R \to \R$ by 
\begin{equation}
\label{eqn:definition_cutoff}
\gls{cutoff} \vcentcolon= \Psi \circ L_{a,b}. 
\end{equation}
Note that when $a>b$ the function $\cutoff{a}{b}$ is monotonically non-increasing.

\paragraph{Use of constants.}

Throughout this article, we shall typically employ the letter $C$ to denote any positive constant that appears in our estimates; when we wish to stress the functional dependence of such a constant in terms of some parameters, we will explicitly indicate them in brackets, so to obtain expressions such as $C(k)$ or $C(m,\xi)$ and so on. Within a given proof, the exact value of a constant is allowed to vary from line to line or even in the same line. 
In those rare cases when one needs to keep track of the specific value of a constant, often just for expository convenience within a given proof, we do so by numbering the constants in question.

\section{Initial surfaces}\label{sec:Initial}

\subsection{Building blocks for the initial surfaces}
\label{subsec:building_blocks}

In the following lemmata we work with coordinates $(z,r)$ on $\R^2$; a posteriori these are to be understood as standard cylindrical coordinates in the Euclidean space.
In equation \eqref{eqn:r_a,b} (and throughout this section) we use the positive branch of the inverse of the hyperbolic cosine which is defined for all $x\geq1$ by 
\[
\cosh^{-1}(x)\vcentcolon=\log\bigl(x+\sqrt{x^2-1}\bigr).
\]

\begin{lemma}\label{lem:catenoids}
For any $b\in\Interval{0,1}$ and any 
\begin{align}\label{eqn:condition_a,b}
a>\frac{1}{1-b^2}
\end{align}
the graph of the function $r_{a,b}\colon[0,1]\to\interval{0,\infty}$ given by 
\begin{align}\label{eqn:r_a,b}
r_{a,b}(z)&=\frac{1}{a}\cosh\bigl(a z-s_{a,b}\bigr)
, & \text{where } 
s_{a,b}&\vcentcolon=a b+\cosh^{-1}\Bigl(a\sqrt{1-b^2}\Bigr)
\end{align}
intersects the unit circle around the origin at $z=b$ and again at $z=h_{a,b}>b$.
Moreover, $h_{a,b}$ depends smoothly on $a,b$. 
\end{lemma}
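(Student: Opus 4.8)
The plan is to reduce every assertion to the study of the zero set of the single real function $F\colon\R\to\R$, $F(z)\vcentcolon= z^2+r_{a,b}(z)^2-1$, where $r_{a,b}$ is extended to all of $\R$ by the same formula: a point $(z,r_{a,b}(z))$ with $z\in[0,1]$ lies on the unit circle exactly when $F(z)=0$, and since $r_{a,b}>0$ it then automatically lies in the open half-plane $\{r>0\}$. First I would verify the intersection at $z=b$ by direct substitution: the constant $s_{a,b}$ is chosen precisely so that $ab-s_{a,b}=-\cosh^{-1}\bigl(a\sqrt{1-b^2}\bigr)$, whence $r_{a,b}(b)=\tfrac1a\cosh\bigl(\cosh^{-1}(a\sqrt{1-b^2})\bigr)=\sqrt{1-b^2}$ and $F(b)=0$. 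In the same step I would record that condition \eqref{eqn:condition_a,b} forces $a\sqrt{1-b^2}>(1-b^2)^{-1/2}\geq 1$, so that $\cosh^{-1}$ is evaluated strictly inside the interval $\interval{1,\infty}$ on which it is smooth; hence $s_{a,b}$, and with it $r_{a,b}$ and $F$, depends smoothly on $(z,a,b)$ throughout the open region $\{b\in\Interval{0,1},\ a>(1-b^2)^{-1}\}$.

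The core of the argument is a convexity observation. Using $\cosh^2 t=\tfrac12(1+\cosh 2t)$ one computes $F''(z)=2+2\cosh\bigl(2(az-s_{a,b})\bigr)>0$, so $F$ is strictly convex and has at most two zeros. Since $F'(z)=2z+2r_{a,b}(z)\sinh(az-s_{a,b})$ and $\sinh(\cosh^{-1}t)=\sqrt{t^2-1}$ for $t\geq 1$, evaluation at $z=b$ gives
\[
F'(b)=2b-2\sqrt{(1-b^2)\bigl(a^2(1-b^2)-1\bigr)} ;
\]
here the radical is real and nonnegative, so $F'(b)<0$ is equivalent, after squaring, to $a^2(1-b^2)^2>1$, that is, to precisely \eqref{eqn:condition_a,b}. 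Therefore $F$ is strictly decreasing on $\intervaL{-\infty,b}$ — hence nowhere zero there — and strictly negative immediately to the right of $b$; since $r_{a,b}(z)\to\infty$ as $z\to\infty$ we have $F(z)\to\infty$, so strict convexity forces exactly one further zero, which I would take as the definition of $h_{a,b}$, with $h_{a,b}>b$ and $F<0$ on $\interval{b,h_{a,b}}$. Finally $F(1)=r_{a,b}(1)^2>0$ yields $h_{a,b}<1$, so this second intersection genuinely lies over the interval $[0,1]$ on which $r_{a,b}$ is defined.

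For the smooth dependence of $h_{a,b}$ on $(a,b)$ I would invoke the implicit function theorem for $F(\,\cdot\,;a,b)$ on the open region identified in the first step: $F$ is smooth there, $h_{a,b}$ is a zero, and the pertinent partial derivative is $F'(h_{a,b})$, which is strictly positive because $h_{a,b}$ is the larger of the two zeros of a strictly convex function whose derivative at the smaller zero $b$ is negative. As $h_{a,b}$ is characterised uniquely, it follows at once that it is a smooth (indeed real-analytic) function of $(a,b)$ on the whole region.

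The only genuinely delicate point I anticipate is the sign of $F'(b)$: this is exactly where the sharp form of \eqref{eqn:condition_a,b} is used, since the weaker inequality $a\geq(1-b^2)^{-1/2}$ already suffices merely to \emph{define} $s_{a,b}$, whereas the strict negativity $F'(b)<0$ is what simultaneously produces the second intersection and — via convexity — guarantees the non-degeneracy $F'(h_{a,b})\neq 0$ needed to run the implicit function theorem. Everything else is elementary one-variable calculus.
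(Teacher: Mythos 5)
Your proof is correct and takes essentially the same approach as the paper's: your function $F(z)=z^2+r_{a,b}(z)^2-1$ coincides with the paper's $u=r_{a,b}^2-f^2$ (with $f(z)=\sqrt{1-z^2}$), and the argument via strict convexity of this function together with the sign computation for $F'(b)$ is the same. The one point you add explicitly --- deducing smooth dependence of $h_{a,b}$ from the implicit function theorem, using that strict convexity and $F'(b)<0$ force $F'(h_{a,b})>0$ --- is a small but welcome supplement, since the paper's printed proof leaves the smoothness claim unaddressed.
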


\begin{figure}
\flushright
\pgfmathsetmacro{\globalscale}{2}
\pgfmathsetmacro{\xmax}{(\textwidth-2.5em)/\globalscale/1cm}
\begin{tikzpicture}[line cap=round,line join=round,baseline={(0,0)},scale=\globalscale]
\begin{scope}
\clip(0,0)rectangle(\xmax,1);
\fill[black!30,domain=0:sqrt(1-0.9*\xmax^(-2)),variable=\b,samples=50]plot({1/(1-\b*\b)},\b)|-cycle;  
\draw[thick,dotted] plot coordinates {
( 2.3328,     0 )( 2.3883,0.0276 )( 2.4491,0.0552 )( 2.5156,0.0828 )( 2.5884,0.1103 )( 2.6680,0.1379 )( 2.7550,0.1655 )( 2.8502,0.1931 )( 2.9545,0.2207 )( 3.0689,0.2483 )( 3.1945,0.2759 )( 3.3329,0.3034 )( 3.4855,0.3310 )( 3.6544,0.358 )( 3.8419,0.3862 )( 4.0508,0.4138 )( 4.2845,0.4414 )( 4.5471,0.4690 )( 4.8437,0.4966 )( 5.1806,0.5241 )( 5.5659,0.551 )( 6.0097,0.5793 )( 6.5252,0.6069 )( 7.1300,0.6345 )( 7.8473,0.6621 )( 8.7094,0.6897 )( 9.7617,0.7172 )(11.0702,0.7448 )(12.7344,0.7724 )(14.9119,0.8000 )};
\end{scope}
\draw[->](0,0)--(\xmax,0)node[below left]{$a$};
\draw[->](0,0)--(0,1.15)node[right]{$b$}; 
\draw[dashed](0,1)--(\xmax,1);
\draw plot[plus](0,0)node[below]{$0$};
\draw plot[hdash](0,1)node[left]{$1$};
\draw plot[vdash](1,0)node[below]{$1$};
\draw plot[vdash](2.3328,0)node[below]{$a_0$};
\end{tikzpicture}
\caption{If $(a,b)$ is in the shaded region then condition \eqref{eqn:condition_a,b} is satisfied.}%
\label{fig:condition_a,b}%
\end{figure}
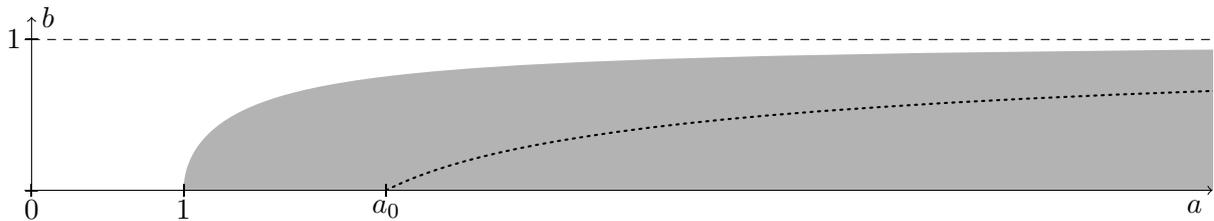

\begin{remark}
The surface of revolution given by the rotation of the curve $(z,r_{a,b}(z))$ around the vertical axis is a vertically shifted and rescaled catenoid which intersects the unit sphere $\partial\B^3$ along the circle at height $z=b$ and along another circle at height $z=h_{a,b}$. 
\end{remark}

\begin{proof}
The upper half of the unit circle in the coordinates $(z,r)$ is the graph of the function $f\colon[-1,1]\to[0,1]$ given by $f(z)=\sqrt{1-z^2}$. 
By construction $r_{a,b}(b)=f(b)$. 
We introduce the function $u=r_{a,b}^2-f^2$ defined on $[0,1]$ and differentiate with respect to $z$ twice:
\begin{align*}
u'&=2r_{a,b}\,r'_{a,b}+2z,
\\
u''&=2r_{a,b}'\,r'_{a,b}+2r_{a,b}\,r''_{a,b}+2 
\\
&=2\sinh^2(a z-s_{a,b})
+2\cosh^2(a z-s_{a,b})+2>0.
\end{align*}
Hence, the function $u$ is strictly convex with $u(b)=0$
and $u(1)=r_{a,b}^2(1)\geq a^{-2}> 0$. 
Consequently, the function $u$ has a unique second zero at $z=h_{a,b}\in\interval{b,1}$ if and only if $u'(b)<0$. 
Since $r_{a,b}(b)=\sqrt{1-b^2}$ and  $r_{a,b}'(b)=-\sinh\bigl(\cosh(a\sqrt{1-b^2})\bigr)=-\sqrt{(1-b^2)a^2-1} $, we obtain
\begin{align*}
u'(b)&=2b-2\sqrt{1-b^2}\sqrt{(1-b^2)a^2-1}<0 
\end{align*}
precisely if $a,b$ satisfy \eqref{eqn:condition_a,b}; see Figure \ref{fig:condition_a,b} for a visualization of this condition. 
\end{proof}

\begin{lemma}\label{lem:orthogonal-0}
A suitable rescaling and vertical translation of a catenoid along its axis of rotation intersects the unit sphere along the equator and orthogonally along another circle of latitude. 
\end{lemma}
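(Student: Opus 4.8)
The plan is to specialise the family of Lemma~\ref{lem:catenoids} to $b=0$ and then to tune the single remaining parameter $a$. For $b=0$ condition \eqref{eqn:condition_a,b} reduces to $a>1$, while \eqref{eqn:r_a,b} gives $s_{a,0}=\cosh^{-1}(a)$, so that $r_{a,0}(z)=\tfrac1a\cosh\!\bigl(az-\cosh^{-1}(a)\bigr)$ and $r_{a,0}'(z)=\sinh\!\bigl(az-\cosh^{-1}(a)\bigr)$. The surface of revolution generated by $z\mapsto(z,r_{a,0}(z))$ is, by construction, the standard catenoid $\{r=\cosh z\}$ rescaled by $1/a$ and vertically translated by $\cosh^{-1}(a)/a$, and it meets $\partial\B^3$ along the equator since $r_{a,0}(0)=1$; by Lemma~\ref{lem:catenoids} it meets $\partial\B^3$ again along the circle of latitude $z=h_{a,0}\in(0,1)$, with $h_{a,0}$ depending smoothly on $a$. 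Since two coaxial surfaces of revolution meet orthogonally along a common circle of latitude precisely when their generating curves do (angles in the meridian half--plane being Euclidean), the task reduces to finding $a>1$ with $r_{a,0}'(h_{a,0})=r_{a,0}(h_{a,0})/h_{a,0}$.

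I would then convert this into a scalar equation. Combining $r_{a,0}(h_{a,0})^2+h_{a,0}^2=1$ with the identity $(r_{a,0}')^2=a^2 r_{a,0}^2-1$ and with the fact that $r_{a,0}'(h_{a,0})>0$ --- the neck of $r_{a,0}$ lies strictly inside $\B^3$, hence $h_{a,0}$ exceeds the neck height $\cosh^{-1}(a)/a$ --- the orthogonality condition becomes equivalent to the vanishing of $\Theta(a)\vcentcolon=a^2 h_{a,0}^2\,(1-h_{a,0}^2)-1$, a continuous function of $a\in(1,\infty)$. I would finish by the intermediate value theorem after determining the sign of $\Theta$ near the two ends. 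As $a\to1^+$ the generating curves converge, uniformly on compact subsets of $(0,1]$, to $z\mapsto\cosh z$, whose square exceeds $1-z^2$ away from $z=0$; since $h_{a,0}$ is the second zero of $u_a(z)\vcentcolon=r_{a,0}(z)^2+z^2-1$, this forces $h_{a,0}\to0$ and hence $\Theta(a)\le a^2 h_{a,0}^2-1\to-1$. As $a\to\infty$ one has $r_{a,0}(z)\ge\tfrac1{4a^2}e^{az}\to\infty$ for each fixed $z\in(0,1)$, so again $h_{a,0}\to0$; moreover $\cosh\!\bigl(ah_{a,0}-\cosh^{-1}(a)\bigr)=a\,r_{a,0}(h_{a,0})\to\infty$ with positive argument, which forces $ah_{a,0}\to\infty$ and therefore $\Theta(a)=(ah_{a,0})^2(1-h_{a,0}^2)-1\to+\infty$. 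Hence $\Theta$ has a zero $a^\ast\in(1,\infty)$, and $\{r=r_{a^\ast,0}(z)\}$ is the asserted catenoid (and, since $u_{a^\ast}<0$ on $(0,h_{a^\ast,0})$ and $u_{a^\ast}>0$ elsewhere on $\B^3$, it meets $\partial\B^3$ in no further circles).

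The algebra with the explicit catenary is routine; the step I expect to require genuine care is the two endpoint asymptotics of $h_{a,0}$, and in particular the bootstrap $ah_{a,0}\to\infty$ as $a\to\infty$, which has to be read off the defining relation $\cosh(ah_{a,0}-\cosh^{-1}(a))=a\,r_{a,0}(h_{a,0})$ rather than from a cruder estimate --- alongside the bookkeeping, already exploited in the proof of Lemma~\ref{lem:catenoids}, that $h_{a,0}$ is pinned down as the second, convexly attained, zero of $u_a$.
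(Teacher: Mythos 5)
Your proof is correct, and it takes a route that is recognizably close to the paper's but reorganized in a way worth pointing out. You first reduce orthogonality to the algebraic condition $\Theta(a)\vcentcolon=a^2h_{a,0}^2(1-h_{a,0}^2)-1=0$ together with the sign condition $r_{a,0}'(h_{a,0})>0$; this is precisely the equivalence the paper records afterwards, in Lemma~\ref{lem:orthogonal-b} (specialized to $b=0$), and your $\Theta$ is the paper's $-F(a,0)$ from the proof of Lemma~\ref{lem:catenoid-K_b}. The paper, by contrast, proves Lemma~\ref{lem:orthogonal-0} by working directly with the product $-f'(h_{a,0})\,r'_{a,0}(h_{a,0})$ and comparing it to $1$: Claim~1 gives the explicit range $1<a<\sqrt2$ where this product is $<1$, Claim~2 shows it is $>1$ for $a$ large. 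One consequence of the paper's formulation is that the sign of $r'_{a,0}$ at the IVT solution comes for free, because $r'_{a,0}(h_{a_0,0})=-1/f'(h_{a_0,0})>0$ automatically; your formulation needs $r'_{a,0}(h_{a,0})>0$ as a separate input to pass back from $\Theta(a^\ast)=0$ to orthogonality, and also to fix the sign of $ah_{a,0}-\cosh^{-1}(a)$ in the $a\to\infty$ bootstrap. You assert this via ``the neck lies strictly inside $\B^3$'' but do not verify it; the verification is one line, namely that the neck point $(\cosh^{-1}(a)/a,1/a)$ satisfies $(\cosh^{-1}(a))^2+1<a^2$, which with $t=\cosh^{-1}(a)$ reduces to the elementary $t<\sinh t$ for $t>0$. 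To be fair, the paper's own Claim~2 relies on the same inequality $ah_{a,0}-\cosh^{-1}(a)>0$ and only says ``it is readily checked'', so neither argument spells it out.

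Your endpoint analysis at $a\to1^+$ (a soft limit argument giving $h_{a,0}\to0$ and hence $\Theta\to-1$) is a clean alternative to the paper's more quantitative Claim~1; the $a\to\infty$ analysis proceeds in essentially the same spirit as the paper's Claim~2, namely by reading $ah_{a,0}\to\infty$ off the identity $\cosh\bigl(ah_{a,0}-\cosh^{-1}(a)\bigr)=a\sqrt{1-h_{a,0}^2}$, which you state explicitly and the paper encodes via the bound \eqref{eqn:20211117}. Minor cosmetic point: the convergence $r_{a,0}\to\cosh$ as $a\to1^+$ is in fact uniform on all of $[0,1]$, not merely on compact subsets of $(0,1]$ --- the restriction serves no purpose. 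Overall the proposal is sound; you should just supply the short verification of $t<\sinh t$ (equivalently, that the neck sits inside $\B^3$ for every $a>1$) to close the one place where you merely assert.
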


\begin{proof}
Let $r_{a,0}\colon[0,1]\to\interval{0,\infty}$ and $h_{a,0}$ be as in Lemma \ref{lem:catenoids} for $b=0$. 
In this case, condition \eqref{eqn:condition_a,b} reads $a>1$. 
The surface of revolution given by the rotation of the curve $(r_{a,0}(z),z)$ around the vertical axis is a vertically shifted and rescaled catenoid which intersects the unit sphere $\partial\B^3$ along the equator and along another circle at height $z=h_{a,0}\in\interval{0,1}$. 
The unit sphere itself is a surface of revolution with profile function $f\colon[-1,1]\to[0,1]$ given by $f(z)=\sqrt{1-z^2}$. 
Note that $f'(h_{a,0})<0$ is well-defined since $0<h_{a,0}<1$. 
Therefore, it is sufficient to find $a_0>1$ such that 
\begin{align}\label{eqn:orthogonality_condition}
r_{a_0,0}'(h_{a_0,0})=-\frac{1}{f'(h_{a_0,0})}.
\end{align} 
\emph{Claim 1} (see Figure \ref{fig:r_a-plot}, left image). If $1<a<\sqrt{2}$, then $r_{a,0}'(h_{a,0})<-1/f'(h_{a,0})$. 

\begin{figure}%
\centering
\pgfmathsetmacro{\a}{sqrt(2)}
\begin{tikzpicture}[line cap=round,line join=round,baseline={(0,0)},scale=\unitscale]
\draw[->](-1.1,0)--(1.1,0)node[right]{$z$};
\draw[->](0,-0)--(0,1.1)node[left]{$r$};
\draw plot[vdash](0,0)node[below]{$0$};
\draw plot[vdash](1,0)node[below]{$1$};
\draw[name path=sphere](1,0)arc(0:180:1)node[pos=3/4,anchor=135]{$f$}; 
\draw[dashed,domain=0.8703:0.001,variable=\z,samples=50,name path=catenoid]plot({\z},{cosh(2.3328*\z-acosh(2.3328))/2.3328}); 
\pgfmathsetmacro{\zmax}{min(2.01*acosh(\a)/\a,1)}
\draw[semithick,domain=0.001:\zmax,variable=\z,samples=50,name path=catenoid]plot({\z},{cosh(\a*\z-acosh(\a))/\a})node[right]{$r_{a,0}$}; 
\draw [name intersections={of=catenoid and sphere}]
plot[bullet](intersection-1);
\draw plot[bullet](0,1);
\draw[dotted](intersection-1)--($(0,0)!(intersection-1)!(1,0)$)coordinate(h);
\draw plot[vdash](h)node[below]{$h_{a,0}$};  
\end{tikzpicture}
\hfill
\pgfmathsetmacro{\a}{25}
\begin{tikzpicture}[line cap=round,line join=round,baseline={(0,0)},scale=\unitscale]
\draw[->](-1.1,0)--(1.1,0)node[right]{$z$};
\draw[->](0,-0)--(0,1.1)node[left]{$r$};
\draw plot[vdash](0,0)node[below]{$0$};
\draw plot[vdash](1,0)node[below]{$1$};
\draw[name path=sphere](1,0)arc(0:180:1)node[pos=3/4,anchor=135]{$f$}; 
\draw[dashed,domain=0.8703:0.001,variable=\z,samples=50,name path=catenoid]plot({\z},{cosh(2.3328*\z-acosh(2.3328))/2.3328});  
\pgfmathsetmacro{\zmax}{min(2.01*acosh(\a)/\a,1)}
\draw[semithick,domain=0.001:\zmax,variable=\z,samples=50,name path=catenoid]plot({\z},{cosh(\a*\z-acosh(\a))/\a})node[right]{$r_{a,0}$}; 
\draw [name intersections={of=catenoid and sphere}]
plot[bullet](intersection-1);
\draw plot[bullet](0,1);
\draw[dotted](intersection-1)--($(0,0)!(intersection-1)!(1,0)$)coordinate(h);
\draw plot[vdash](h)node[below]{$h_{a,0}$};  
\end{tikzpicture}
\caption{Plot of $r_{a,0}$ for $a=\sqrt{2}$ (left image) and $a=\a$ (right image). We seek the dashed curve.}%
\label{fig:r_a-plot}%
\end{figure}
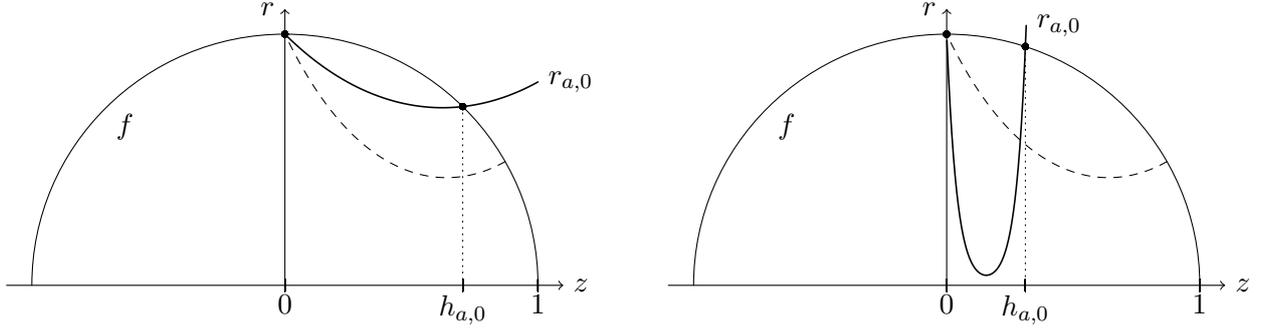

\begin{proof}[Proof of Claim 1.] 
For any $a>1$ and all $z\geq(2/a)\cosh^{-1}(a)$ we have 
\[
r_{a,0}(z)=\frac{1}{a}\cosh\bigl(a z-\cosh^{-1}(a)\bigr)
\geq 1.
\]
Therefore, $h_{a,0}<(2/a)\cosh^{-1}(a)$.  
Since $r_{a,0}'$ is increasing (by convexity, as we saw above), we obtain  
\begin{align*}
r_{a,0}'(h_{a,0})< r_{a,0}'\Bigl(\frac{2}{a}\cosh^{-1}(a)\Bigr)=\sinh\bigl(\cosh^{-1}(a)\bigr)=\sqrt{a^2-1}.
\end{align*}
Moreover, since $r_{a,0}(z)\geq1/a$ we have $h_{a,0}^2=1-\bigl(r_{a,0}(h_{a,0})\bigr)^2\leq1-a^{-2}$ 
which  implies for all $z\in[0,1]$
\begin{align*}
0\geq f'(h_{a,0})\geq f'\Bigl(\sqrt{1-a^{-2}}\Bigr)=-\sqrt{a^2-1}
\end{align*}
since $f'$ is decreasing. 
Consequently, $-f'(h_{a,0})\,r_{a,0}'(h_{a,0})\leq a^2-1<1$ for $a\in\interval{1,\sqrt{2}}$.
\end{proof}

\emph{Claim 2} (see Figure \ref{fig:r_a-plot}, right image). If $a>1$ is chosen sufficiently large, then $r_{a,0}'(h_{a,0})>-1/f'(h_{a,0})$. 

\begin{proof}[Proof of Claim 2.]
Assume $a>1$.
By Lemma \ref{lem:catenoids}
the equation
$r_{a,0}^2(z)+z^2=1$
has two solutions:
$z=0$ and $z=h_{a,0}>0$.
Since
$r_{a,0}(a^{-1} \cosh^{-1}(a))=a^{-1}$
and $\lim_{a \to \infty} a^{-1} \cosh^{-1}(a) = 0$,
we know that
for $a$ sufficiently large
the waist of the corresponding catenoid,
which lies at height $z=a^{-1} \cosh^{-1}(a)$,
is contained in the interior of $\B^3$.
It follows that
for $a$ sufficiently large
$h_{a,0}>a^{-1} \cosh^{-1}(a)$,
and so
$a\,h_{a,0}-\cosh^{-1}(a)>0$.

Next we claim that $\lim_{a \to \infty} h_{a,0} = 0$. This is geometrically evident, but let us see a formal justification.
If not, there would exist $\epsilon>0$
such that we have $h_{a,0} \geq \epsilon$
for a sequence of values of $a$ diverging to $\infty$.
However,
from the definition of $r_{a,b}$
we find
$\lim_{a \to \infty} r_{a,0}(z)=\infty$
for any $z>a^{-1} \cosh^{-1}(a)$,
so in particular for $z=\epsilon$.
Thus, using also the monotonicity of $r_{a,0}$ in $z$,
we can make $r_{a,0}(z)$ arbitrarily large
for all $z \geq \epsilon$
by taking $a$ large.
On the other hand,
by definition of $h_{a,b}$,
we also have $h_{a,0}^2+r^2_{a,0}(h_{a,0})=1$,
establishing a contradiction
and thus proving the asserted limit.
The last identity then also implies that
$\lim_{a \to \infty} r_{a,0}(h_{a,0})=1$,
In particular,
for sufficiently large $a$
we have
$r_{a,0}(h_{a,0})>1/2$.

The two inequalities terminating
the last two paragraphs
then imply (still keeping in mind the very definition of the function $r_{a,b}$)
\begin{align}\label{eqn:20211117}
a\,h_{a,0}-\cosh^{-1}(a)&>\cosh^{-1}\Bigl(\frac{a}{2}\Bigr).
\end{align}
In particular, \eqref{eqn:20211117} yields 
\begin{align*}
r_{a,0}'(h_{a,0}) 
&>\sinh\Bigl(\cosh^{-1}\Bigl(\frac{a}{2}\Bigr)\Bigr)
=\sqrt{\frac{a^2}{4}-1},
\\
-f'(h_{a,0})&= \frac{h_{a,0}}{\sqrt{1-h_{a,0}^2}}
>h_{a,0}>\frac{1}{a}\cosh^{-1}(a).
\end{align*}
Since $\cosh^{-1}(a)\to\infty$ as $a\to\infty$ we have 
$-f'(h_{a,0})\,r_{a,0}'(h_{a,0})\to\infty$ as $a\to\infty$ and Claim 2 follows. 
\end{proof}
The statement of Lemma \ref{lem:orthogonal-0} follows from Claims 1 and 2 by means of a straightforward continuity argument. 
\end{proof}

\begin{lemma}\label{lem:orthogonal-b}
Given $0\leq b<1<a$ satisfying \eqref{eqn:condition_a,b} let $r_{a,b}\colon[0,1]\to\R$ and $h_{a,b}$ be as in Lemma \ref{lem:catenoids}. 
The graph of $z\mapsto r_{a,b}(z)$ intersects the unit circle orthogonally at $z=h_{a,b}$ if and only if  
\begin{align*}
r_{a,b}'(h_{a,b})>0\quad\text{ and }\quad
(1-h_{a,b}^2)h_{a,b}^2=a^{-2}.
\end{align*}
\end{lemma}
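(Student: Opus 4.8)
The plan is to reduce the orthogonality condition to an elementary relation between slopes and then to exploit the first integral of the catenary equation satisfied by $r_{a,b}$.

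First I would recall that near $z=h_{a,b}$ the upper half of the unit circle in the $(z,r)$-plane is the graph of $f\colon[-1,1]\to[0,1]$, $f(z)=\sqrt{1-z^2}$, and that by Lemma~\ref{lem:catenoids} one has $0\le b<h_{a,b}<1$, so $h_{a,b}$ lies in the open interval on which $f$ is smooth with $f'(h_{a,b})=-h_{a,b}/\sqrt{1-h_{a,b}^2}$, a finite negative number. Since the graph of $r_{a,b}$ also has finite slope $r_{a,b}'(h_{a,b})$ there, neither tangent line is vertical, and the two curves meet orthogonally at $z=h_{a,b}$ exactly when the product of their slopes equals $-1$, i.e. when $r_{a,b}'(h_{a,b})f'(h_{a,b})=-1$, equivalently
\[
r_{a,b}'(h_{a,b})=\frac{\sqrt{1-h_{a,b}^2}}{h_{a,b}}.
\]
In particular, since the right-hand side is positive, orthogonality forces $r_{a,b}'(h_{a,b})>0$, which explains the presence of that condition in the statement.

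Next I would use that $r_{a,b}(z)=\tfrac{1}{a}\cosh(az-s_{a,b})$ implies $r_{a,b}'(z)=\sinh(az-s_{a,b})$ and hence the first integral $a^2r_{a,b}(z)^2-r_{a,b}'(z)^2=1$ for all $z$. Evaluating at $z=h_{a,b}$, where the point $(h_{a,b},r_{a,b}(h_{a,b}))$ lies on the unit circle and therefore $r_{a,b}(h_{a,b})^2=1-h_{a,b}^2$, gives $r_{a,b}'(h_{a,b})^2=a^2(1-h_{a,b}^2)-1$. Combining this with the slope relation above, and noting that both $r_{a,b}'(h_{a,b})$ and $\sqrt{1-h_{a,b}^2}/h_{a,b}$ are determined (the former up to sign) by their squares, orthogonality holds if and only if $r_{a,b}'(h_{a,b})>0$ and $a^2(1-h_{a,b}^2)-1=(1-h_{a,b}^2)/h_{a,b}^2$. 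Multiplying the last identity by $h_{a,b}^2$ and simplifying turns it into $a^2h_{a,b}^2(1-h_{a,b}^2)=1$, that is, $(1-h_{a,b}^2)h_{a,b}^2=a^{-2}$, which is precisely the asserted equivalence.

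I do not expect any real obstacle here: the argument is a short computation once the right reformulation is in place. The only points needing a word of care are (i) invoking Lemma~\ref{lem:catenoids} to guarantee $h_{a,b}\in\interval{0,1}$, so that $f$ is differentiable there with the stated derivative and no tangent line is vertical, and (ii) bookkeeping of the sign of $r_{a,b}'(h_{a,b})$ — a catenoid graph crossing the unit circle from inside with nonpositive slope cannot be orthogonal to it, so the hypothesis $r_{a,b}'(h_{a,b})>0$ is genuinely part of the characterization and cannot be dropped.
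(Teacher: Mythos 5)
Your proof is correct and takes essentially the same approach as the paper: orthogonality is reduced to the slope condition $h_{a,b}\,r_{a,b}'(h_{a,b})=\sqrt{1-h_{a,b}^2}$, which forces $r_{a,b}'(h_{a,b})>0$, and then the identity $a^2 r_{a,b}^2-(r_{a,b}')^2=1$ (equivalently $\cosh^2-\sinh^2=1$) together with the circle equation yields $(1-h_{a,b}^2)h_{a,b}^2=a^{-2}$ in both directions. Your explicit remark that $h_{a,b}\in\interval{0,1}$ guarantees a finite nonvertical tangent is a useful bit of care that the paper handles implicitly.
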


\begin{proof}
By Lemma \ref{lem:catenoids} the graph of $z\mapsto r_{a,b}(z)$ intersects the unit circle at $z=h_{a,b}$. 
In particular, 
\begin{align}\label{eqn:circle}
r_{a,b}^2(h_{a,b})+h_{a,b}^2&=1.
\end{align}
This intersection is orthogonal if and only if the vectors $\bigl(1,r_{a,b}'(h_{a,b})\bigr)$ and $\bigl(h_{a,b},\sqrt{1-h_{a,b}^2}\bigr)$ are parallel or, equivalently, 
\begin{align}\label{eqn:orthogonality_condition2}
h_{a,b}\, r_{a,b}'(h_{a,b})=\sqrt{1-h_{a,b}^2}. 
\end{align}
Equation \eqref{eqn:orthogonality_condition2} directly implies $r_{a,b}'(h_{a,b})>0$. 
To ease notation, we omit the indices $a,b$ in the following computation and recall $r'(h)=\sinh(ah-s)$. 
Squaring \eqref{eqn:orthogonality_condition2} therefore yields 
\begin{align*}
1=h^2\sinh^2(ah-s)+h^2
=h^2\cosh^2(ah-s)
=h^2a^2r^2(h)
=h^2a^2(1-h^2).
\end{align*}
Conversely, given any $a,b$ such that \eqref{eqn:condition_a,b} and $(1-h_{a,b}^2)h_{a,b}^2=a^{-2}$ are satisfied, we obtain with \eqref{eqn:circle}
\begin{align*}
1=h^2 a^2 r^2(h)=
h^2\cosh^2(ah-s)=h^2+h^2\sinh^2(ah-s)
\end{align*}
which implies $h^2(r'(h))^2=1-h^2$. 
Assuming $r'(h)>0$, equation \eqref{eqn:orthogonality_condition2} follows. 
\end{proof}

\begin{corollary}\label{cor:area_K}
A rotationally symmetric minimal annulus intersecting the unit sphere along the equator and again orthogonally at some positive height has area greater than $\pi$.
\end{corollary}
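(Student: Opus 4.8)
The plan is to recognise the annulus as an explicit catenoidal piece, compute its area in closed form, and then verify an elementary inequality.

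First I would pin down the shape of the annulus. The only rotationally symmetric minimal surfaces in $\R^3$ are open subsets of catenoids and of planes orthogonal to the axis of revolution. A planar annulus meets $\Sp^2$ orthogonally along a circle only when that circle is a great circle, hence only along the equator -- and then it \emph{contains}, rather than meets, the equator and has no boundary circle at positive height; so the annulus in the statement is a piece of a catenoid. Since one of its boundary circles is the equator, the axis of revolution must be $\axis z$ and the generating curve is a catenoid through $(z,r)=(0,1)$; up to replacing $z$ by $-z$ this curve is the graph of $r_{a,0}$ for some $a>1$ in the notation of Lemma~\ref{lem:catenoids}, the second boundary circle sits at $z=h_{a,0}$, and by Lemma~\ref{lem:orthogonal-b} (applied with $b=0$) the orthogonal intersection there forces both $(1-h^2)h^2=a^{-2}$ and $r'_{a,0}(h)>0$, where I abbreviate $h\vcentcolon= h_{a,0}\in\interval{0,1}$. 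Moreover the annulus is precisely the surface of revolution of $r_{a,0}$ over $[0,h]$ (the only piece of the complete catenoid bounded by exactly those two circles), so that
\[
\area(\catr)=2\pi\int_0^h r_{a,0}\sqrt{1+(r'_{a,0})^2}\,dz.
\]

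Next I would evaluate this integral. For the catenoid one has $1+(r'_{a,0})^2=\cosh^2(a z-s_{a,0})=a^2 r_{a,0}^2$, so the integrand equals $a\,r_{a,0}^2=a^{-1}\cosh^2(az-s_{a,0})$; integrating $\cosh^2$ and evaluating gives
\[
\area(\catr)=\frac{\pi h}{a}+\frac{\pi}{a^2}\Bigl(\sinh(ah-s_{a,0})\cosh(ah-s_{a,0})+\sinh s_{a,0}\cosh s_{a,0}\Bigr).
\]
Now I would feed in the boundary data: $r_{a,0}(0)=1$ yields $\cosh s_{a,0}=a$; the relations $r_{a,0}(h)^2+h^2=1$, $a^2(1-h^2)h^2=1$ and $r'_{a,0}(h)=\sinh(ah-s_{a,0})>0$ yield $\cosh(ah-s_{a,0})=a\sqrt{1-h^2}=1/h$ and $\sinh(ah-s_{a,0})=\sqrt{1-h^2}/h$. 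Substituting, and using $a^{-1}=h\sqrt{1-h^2}$ once more, the expression collapses to
\[
\area(\catr)=\pi\bigl(\sqrt{1-h^2}+\sqrt{1-a^{-2}}\bigr).
\]
Finally, since $a^{-2}=h^2(1-h^2)$, setting $u\vcentcolon= h^2\in\interval{0,1}$ the desired bound $\area(\catr)>\pi$ becomes $\sqrt{1-u}+\sqrt{1-u+u^2}>1$; isolating the first radical and squaring (legitimate because $1-\sqrt{1-u}\in\interval{0,1}$) this is equivalent to $2\sqrt{1-u}>(1-u)(1+u)$, i.e. to $\sqrt{1-u}\,(1+u)<2$, which holds because $\sqrt{1-u}<1$ and $1+u<2$ for $u\in\interval{0,1}$.

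I do not anticipate a genuine difficulty. The two points requiring a little care are: first, confirming that the relevant piece of the catenoid is exactly $\{0\le z\le h\}$ and lies in $\B^3$ -- this follows from the strict convexity of $z\mapsto r_{a,0}^2(z)+z^2-1$ already exploited in the proof of Lemma~\ref{lem:catenoids}, which shows that this function is negative on $\interval{0,h}$ and vanishes at the endpoints; and second, the hyperbolic-identity bookkeeping at the two boundary circles that produces the clean formula $\area(\catr)=\pi(\sqrt{1-h^2}+\sqrt{1-a^{-2}})$, which is really the crux of the argument -- once it is in hand, the remaining inequality is immediate.
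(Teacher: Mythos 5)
Your proposal is correct and follows essentially the same route as the paper: identify the annulus as a catenoidal piece, compute the area integral, and feed in the orthogonality constraint $a^{-2}=h^2(1-h^2)$ to reduce to an elementary inequality. The one refinement is that you carry the hyperbolic bookkeeping all the way through to the clean exact formula $\area(\catr)=\pi\bigl(\sqrt{1-h^2}+\sqrt{1-a^{-2}}\bigr)$ (consistent with the paper's numerical value $\approx1.3960\,\pi$), whereas the paper simply discards the positive term $\tfrac{\pi}{a^2}\sinh(ah-s_{a,0})\cosh(ah-s_{a,0})$ and bounds the remainder from below; both reductions end in the same kind of one-line algebraic check.
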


\begin{proof}
It is well-known that the rotationally symmetric minimal annulus in question must be  catenoidal, i.\,e. it  is a surface of revolution (say $\K$) with profile function $r_{a,0}\colon[0,h_{a,0}]\to\R$ for some $a>1$ as given in Lemma \ref{lem:catenoids}. 
Hence,  
\begin{align*}
\area(\K)&=2\pi\int^{h_{a,0}}_{0}r_{a,0}\sqrt{(r_{a,0}')^2+1}\,dz
=\frac{2\pi}{a}\int^{h_{a,0}}_{0}\!\cosh^2(a z-s_{a,0}\bigr)\,dz
=\frac{2\pi}{a^2}\int^{ah_{a,0}-s_{a,0}}_{-s_{a,0}}\!\cosh^2(x)\,dx.
\end{align*} 
A primitive for $f(x)=\cosh^2(x)$ is $F(x)=\frac{1}{2}\bigl(x+\sinh(x)\cosh(x)\bigr)$. 
Recalling $s_{a,0}=\cosh^{-1}(a)$ from definition \eqref{eqn:r_a,b}, we also note that $\cosh(-s_{a,0})=a$ and $\sinh(-s_{a,0})=-\sqrt{a^2-1}$. Hence, 
\begin{align}\label{eqn:area_K}
\area(\K)&=\frac{\pi}{a^2} \Bigl(a h_{a,0}+\sinh\bigl(a h_{a,0}-s_{a,0}\bigr)\cosh\bigl(a h_{a,0}-s_{a,0}\bigr)+a\sqrt{a^2-1}\Bigr) 
\\\notag
&>\Bigl(\frac{h_{a,0}}{a}+\sqrt{1-a^{-2}}\Bigr)\pi
=\Bigl(h_{a,0}^2\sqrt{1-h_{a,0}^2}+\sqrt{1-(1-h_{a,0}^2)h_{a,0}^2}\Bigr)\pi
\end{align} 
by Lemma~\ref{lem:orthogonal-b}, and in particular using $\sinh\bigl(a h_{a,0}-s_{a,0}\bigr)=r_{a,0}'(h_{a,0})>0$.  
Since $h_{a,0}^2\leq1$ and $\sqrt{x}\geq x$ for any $x\in[0,1]$ the claim follows. 
\end{proof}

The previous corollary will be employed in Appendix \ref{app:convergence_large_g} when discussing the limit behavior of the free boundary minimal surfaces constructed by Ketover in \cite{KetoverFB}.

\begin{lemma}\label{lem:catenoid-K_b}
One can find $\beta>0$ such that for each $b\in\Interval{0,\beta}$ there exists a catenoid $\gls{Kb}$ intersecting $\partial \B^3$ exactly along the circle at height $z=b$ and orthogonally along a higher circle of latitude in the upper hemisphere. 
Moreover, $\K_b$ depends smoothly on $b$. 
\end{lemma}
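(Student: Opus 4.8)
The plan is to construct $\K_b$, for $b>0$ small, by continuation from the catenoid given by Lemma~\ref{lem:orthogonal-0} (the case $b=0$), using the implicit function theorem with $b$ as parameter. Let $\mathcal{U}\vcentcolon=\{(a,b)\st 0\le b<1,\ a>\tfrac{1}{1-b^2}\}$, an open subset of $\R^2$, on which Lemma~\ref{lem:catenoids} applies. For $(a,b)\in\mathcal{U}$ the graph of $r_{a,b}$ over $[b,h_{a,b}]$ meets the unit circle precisely at its two endpoints $z=b$ and $z=h_{a,b}\in\interval{b,1}$ (strict convexity of $u_{a,b}\vcentcolon=r_{a,b}^2-(1-z^2)$ rules out intersections in between), and the functions $h_{a,b}$, $r_{a,b}(h_{a,b})$ and $r_{a,b}'(h_{a,b})$ are smooth on $\mathcal{U}$. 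By Lemma~\ref{lem:orthogonal-b}, the graph of $r_{a,b}$ meets the unit circle orthogonally at $z=h_{a,b}$ if and only if $r_{a,b}'(h_{a,b})>0$ and $G(a,b)=0$, where $G\colon\mathcal{U}\to\R$, $G(a,b)\vcentcolon=a^2(1-h_{a,b}^2)h_{a,b}^2-1$, is smooth. Lemma~\ref{lem:orthogonal-0} provides $a_0>1$ with $(a_0,0)\in\mathcal{U}$, $G(a_0,0)=0$, and $r_{a_0,0}'(h_{a_0,0})>0$.

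Assuming for the moment the nondegeneracy $\partial_a G(a_0,0)\neq0$, the implicit function theorem furnishes $\beta>0$ and a smooth function $a\colon\Interval{0,\beta}\to\interval{1,\infty}$ with $a(0)=a_0$, $(a(b),b)\in\mathcal{U}$ and $G(a(b),b)=0$ for every $b\in\Interval{0,\beta}$; after shrinking $\beta$, the continuity of $(a,b)\mapsto r_{a,b}'(h_{a,b})$ together with $r_{a_0,0}'(h_{a_0,0})>0$ ensures $r_{a(b),b}'(h_{a(b),b})>0$ throughout. For $b\in\Interval{0,\beta}$, define $\K_b$ to be the catenoidal annulus obtained by rotating $\graph(r_{a(b),b}|_{[b,\,h_{a(b),b}]})$ about the $z$-axis. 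Then $\K_b\subset\B^3$, and $\K_b$ meets $\partial\B^3$ precisely along the circle of latitude at height $z=b$ and, orthogonally (by Lemma~\ref{lem:orthogonal-b}), along the circle of latitude at height $z=h_{a(b),b}\in\interval{b,1}$, the latter lying in the open upper hemisphere since $h_{a(b),b}>b\ge0$. Finally, $\K_b$ depends smoothly on $b$, since $b\mapsto a(b)$ is smooth and $r_{a,b}$, $h_{a,b}$ depend smoothly on $(a,b)$.

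It remains to verify $\partial_a G(a_0,0)\neq0$, which I expect to be the only genuinely technical point. Set $s\vcentcolon=h_{a_0,0}^2\in\interval{0,1}$; at $z=h_{a_0,0}$ one has $r_{a_0,0}=\sqrt{1-s}$ and, by orthogonality, $\sqrt{s}\,r_{a_0,0}'=\sqrt{1-s}$, while Lemma~\ref{lem:orthogonal-b} gives $a_0^{-2}=s(1-s)$. Differentiating the explicit formula $r_{a,0}(z)=\tfrac1a\cosh(az-\cosh^{-1}a)$ in $a$ yields $\partial_a r_{a,0}=\tfrac1a\bigl(-r_{a,0}+r_{a,0}'\,(z-(a^2-1)^{-1/2})\bigr)$; differentiating implicitly the defining relation $r_{a,0}^2(h_{a,0})=1-h_{a,0}^2$ — whose $z$-derivative at $z=h_{a,0}$ is $2r_{a,0}r_{a,0}'+2z>0$ (the positivity of $u'(h_{a,0})$ used in Lemma~\ref{lem:orthogonal-0}, here equal to $2/\sqrt{s}$ at $a_0$) — gives $\partial_a h_{a,0}\big|_{a_0}=s(1-s)^2(1-s+s^2)^{-1/2}$. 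Substituting into $\partial_a G=2a(1-h_{a,0}^2)h_{a,0}^2+2a^2 h_{a,0}(1-2h_{a,0}^2)\partial_a h_{a,0}$ and simplifying, one obtains
\[
\partial_a G(a_0,0)=2\sqrt{s(1-s)}+\frac{2\sqrt{s}\,(1-2s)(1-s)}{\sqrt{1-s+s^2}}.
\]
This is strictly positive for every $s\in\interval{0,1}$: if $s\le\tfrac12$ both summands are nonnegative and the first is positive, while if $s>\tfrac12$ the inequality is equivalent — after dividing by $2\sqrt{s}$, isolating the negative term and squaring — to $s(4s^2-7s+4)>0$, which holds since $4s^2-7s+4$ has negative discriminant. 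Hence $\partial_a G(a_0,0)>0$ and the proof is complete. (A softer alternative would be to note that $h_{a,0}\le\tfrac2a\cosh^{-1}a\to0$ as $a\to1^+$ — from the proof of Claim~1 — so $G(\,\cdot\,,0)\to-1$ there, whereas $G(\,\cdot\,,0)\to+\infty$ as $a\to+\infty$ by Claim~2, and then pick $a_0$ at a sign change; but transversality of that crossing still seems to need the computation above, so I would proceed via the explicit formula.)
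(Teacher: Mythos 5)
Your proof is correct and follows essentially the same route as the paper: apply the implicit function theorem to the orthogonality constraint near $(a_0,0)$, with the key technical point being the non-degeneracy of the partial derivative with respect to $a$, verified by implicit differentiation of $r_{a,0}(h_{a,0})^2+h_{a,0}^2=1$ together with the orthogonality and balancing relations at $(a_0,0)$. The paper works with $F(a,b)=(1-h_{a,b}^2)h_{a,b}^2-a^{-2}$ (your $G$ divided by $a^2$, which is equivalent for the IFT since $F(a_0,0)=0$) and closes the positivity a bit more slickly -- after clearing denominators the claim reduces to $\sqrt{(1-h_0^2+h_0^4)/(1-h_0^2)}\geq 1$, i.e.\ $h_0^4\geq 0$ -- whereas you split into $s\lessgtr\tfrac12$ and invoke a negative discriminant; both are valid.
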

 
\begin{proof}
Let $r_{a,b}(z)=(1/a)\cosh(a z-s_{a,b})$ and $h_{a,b}$ be as in Lemma \ref{lem:catenoids}. 
Lemmata \ref{lem:orthogonal-0} and \ref{lem:orthogonal-b} imply that there exists $a_0>1$ such that $r_{a_0,0}'(h_{a_0,0})>0$ and such that the function 
\begin{align*}
F(a,b)&\vcentcolon=(1-h^2_{a,b})h_{a,b}^2-a^{-2}
\end{align*}
vanishes at $(a,b)=(a_0,0)$. 
Moreover, by Lemma \ref{lem:catenoids}, $F$ inherits the smooth dependence on $(a,b)$ in a neighborhood of $(a_0,0)$. 
We claim 
\begin{align}\label{eqn:20211125}
\frac{\partial F}{\partial a}(a_0,0)>0.
\end{align}
If the claim is true, then the implicit function theorem yields $\beta>0$ and a uniquely defined, differentiable function $\alpha\colon\Interval{0,\beta}\to\R$ such that $\alpha(0)=a_0$ and $F(\alpha(b),b)=0$ for all $b\in\Interval{0,\beta}$. 
Moreover, since $r_{a,b}'(h_{a,b})$ depends continuously on $a,b$ and is positive at $(a_0,0)$ we may assume $r_{\alpha(b),b}'(h_{\alpha(b),b})>0$ for all $b\in\Interval{0,\beta}$ by reducing $\beta>0$ if necessary. 
The existence of $\K_b$ then follows by virtue of the characterization given in Lemma \ref{lem:orthogonal-b}. 

\begin{remark}
In fact
$\alpha(b)$ can be defined for all $b\in\Interval{0,1}$ (see Figure \ref{fig:condition_a,b}, dotted curve),
as follows from
\cite[Proposition 2.10]{KapouleasZouCloseToBdy}.
\end{remark}

A proof of \eqref{eqn:20211125} requires control on $\partial h_{a,b}/\partial a$. 
To ease notation, we omit the indices $a,b$ during the following computations keeping in mind that $r(h),h,s$ all depend on $a$ and $b$.   
Differentiating the identity $h^2=1-r^2(h)$ with respect to $a$, we obtain 
\begin{align*}
h\frac{\partial h}{\partial a}
&=-r(h)\frac{\partial}{\partial a}\biggl(\frac{1}{a}\cosh(ah-s)\biggr)
=-r(h)\biggl(-\frac{r(h)}{a}+\frac{r'(h)}{a}\Bigl(h+a\frac{\partial h}{\partial a}-\frac{\partial s}{\partial a}\Bigr)\biggr)
\end{align*}
or equivalently, since $r(h)=\sqrt{1-h^2}$,
\begin{align}\label{eqn:20211124-2}
\Bigl(h+\sqrt{1-h^2}\,r'(h)\Bigr)\frac{\partial h}{\partial a}&=\frac{1-h^2}{a}
-\frac{\sqrt{1-h^2}\,r'(h)}{a}\Bigl(h-\frac{\partial s}{\partial a}\Bigr).
\end{align}
Recalling that $s=a b+\cosh^{-1}\bigl(a\sqrt{1-b^2}\bigr)$ we have 
\begin{equation}\label{eqn:ds/da}
\frac{\partial s}{\partial a}=b+\sqrt{\frac{1-b^2}{ (1-b^2)a^2-1}}.
\end{equation}
After setting, for notational convenience, $h_0\vcentcolon=h_{a_0,0}$, the orthogonality condition \eqref{eqn:orthogonality_condition2} implies 
\begin{align}\label{eqn:20211124-3}
r_{a_0,0}'(h_0)&=\frac{\sqrt{1-h_0^2}}{h_0}. 
\end{align}
and equation \eqref{eqn:20211124-2} combined with \eqref{eqn:ds/da} thus reads 
\begin{align}\label{eqn:dh/da}
 \frac{1}{h_0} \frac{\partial h}{\partial a}(a_0,0)
&= \frac{1-h_0^2}{a_0 h_0\sqrt{a_0^2-1}}. 
\end{align} 
Recalling $F(a,b)=h_{a,b}^2-h_{a,b}^4-a^{-2}$ equation \eqref{eqn:dh/da} implies
\begin{align*}
\frac{\partial F}{\partial a} (a_0,0)  
%&=(2h_0-4h_0^3)\frac{\partial h}{\partial a}(a_0,0)+\frac{2}{a_0^3}
=\frac{(2h_0-4h_0^3)(1-h_0^2)}{a_0\sqrt{a_0^2-1}}
+\frac{2}{a_0^3}.
\end{align*}   
At this stage, we
use the identity $a_0^{-2}=(1-h_0^2)h_0^2$ (which is $F(a_0,0)=0$) to compute 
\begin{align*}
a_0^3\sqrt{a_0^2-1}\frac{\partial F}{\partial a} (a_0,0)
&=(2h_0-4h_0^3)(1-h_0^2)a_0^2
+2\sqrt{a_0^2-1}
\\
&=\frac{(2-4h_0^2)}{h_0 }
+\frac{2}{h_0}\sqrt{\frac{1-h_0^2+h_0^4}{1-h_0^2} }
\geq\frac{(4-4h_0^2)}{h_0}.
\end{align*}
Therefore, since $h_0\in\interval{0,1}$ claim \eqref{eqn:20211125} follows. 
\end{proof}

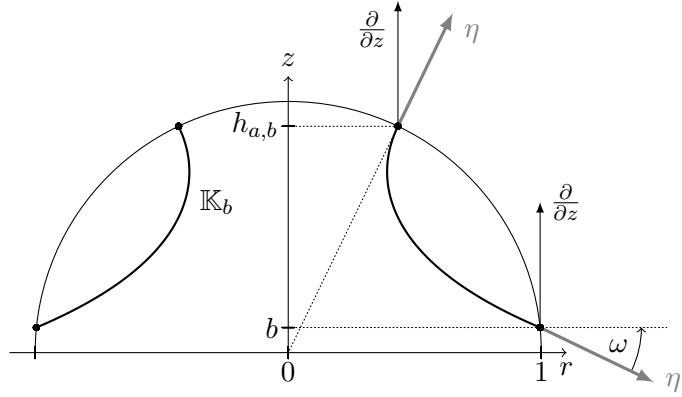
\begin{figure}%
\centering
\begin{tikzpicture}[line cap=round,line join=round,baseline={(0,0)},scale=\unitscale]
\pgfmathsetmacro{\b}{0.1}
\pgfmathsetmacro{\a}{2.5603}
\pgfmathsetmacro{\h}{0.9012}
\pgfmathsetmacro{\w}{asin(sqrt((1-\h*\h)/(1-\b*\b)))}
\draw[thick,domain=\b:\h,variable=\z,samples=50]plot({cosh(\a*\z-\a*\b-acosh(\a*sqrt(1-\b*\b)))/\a},{\z});
\draw[thick,domain=\b:\h,variable=\z,samples=50]plot({-cosh(\a*\z-\a*\b-acosh(\a*sqrt(1-\b*\b)))/\a},{\z});
\draw(-1/\a,0.6)node[right]{$\K_b$};
\draw[densely dotted](0,\h)--({sqrt(1-\h*\h)},\h)coordinate(h)--(0,0)
(0,\b)--({sqrt(1-\b*\b)},\b)coordinate(b)--++(0.5,0);
\draw plot[hdash](0,\h)node[left]{$h_{a,b}$};
\draw plot[hdash](0,\b)node[left]{$b$}; 
\draw[<-](b)+(0.4,0)arc(0:-\w:0.4)node[anchor=-\w/2,midway]{$\omega$};
\draw[-latex,very thick,black!50](b)--++(-\w:0.5)node[right]{$\eta$};
\draw[-latex,very thick,black!50](h)--++({sqrt(1-\h*\h)/2},\h/2)node[below right]{$\eta$};
\draw[-latex](b)--++(0,0.5)node[right]{$\frac{\partial}{\partial z}$};
\draw[-latex](h)--++(0,0.5)node[below left]{$\frac{\partial}{\partial z}$};
\draw plot[bullet](b);
\draw plot[bullet](h);
\draw plot[bullet]({-sqrt(1-\b*\b)},\b);
\draw plot[bullet]({-sqrt(1-\h*\h)},\h);
\draw[->](-1.1,0)--(1.1,0)node[below]{$r$};
\draw[->](0,-0)--(0,1.1)node[above]{$z$};
\draw plot[vdash](0,0)node[below]{$0$};
\draw plot[vdash](1,0)node[below]{$1$};
\draw plot[vdash](-1,0);
\draw(1,0)arc(0:180:1); 
\end{tikzpicture}
\caption{Angle $\omega$ between the conormal $\eta$ to $\K_b$ and the horizontal plane of equation $z=b$.}%
\label{fig:omega}%
\end{figure}

Here and throughout the article we let
\begin{align}\label{eqn:defininiton_omega}
\omega = \gls{omegab} \in\interval{0,\pi/2}
\end{align}
denote the angle between the outward unit conormal to $\K_b$ and the horizontal plane of equation $z=b$ as shown in Figure~\ref{fig:omega}. 
 
\begin{remark}
\label{catangleatb}
By applying an elementary balancing principle, i.\,e. considering the so-called \emph{flux homomorphism} associated to the vector field $\partial/\partial z$ in $\R^3$ (cf. Corollary 1.8 in \cite{CM11}) we find at once the equation
\begin{equation}\label{eq:BalanceKb}
    \sqrt{1-b^2} \sin(\omega_b) = h\sqrt{1-h^2},
\end{equation}
which then allows us, at least implicitly, to express $\omega=\omega_b$ in terms of the parameter $b\ll 1$ only, by means of 
the implicit functions for $a=\alpha(b)$ and $h_{a,b}$ (cf. Lemmata \ref{lem:orthogonal-b} and \ref{lem:catenoid-K_b}). 
\end{remark}
 
\begin{remark}\label{rem:values_for_b=0}
In the case $b=0$, equation \eqref{eq:BalanceKb} immediately implies $\omega_0\in\interval{0,\pi/6}$ since $h\sqrt{1-h^2}\in\interval{0, 1/2}$ for $h\in \interval{0,1}$.
Solving equations \eqref{eqn:circle}, \eqref{eqn:orthogonality_condition2}, \eqref{eq:BalanceKb} numerically for $a$, $h_{a,b}$ and $\omega$ we obtain in the case $b=0$ the approximate values
\begin{align}\label{eqn:values_a0_h0}
a_0&\approx2.3328, & 
h_{a_0,0}&\approx0.8703, & 
\omega_0&\approx 0.141\,\pi\approx 25.38^{\circ}.
\end{align}
Plugging in the values \eqref{eqn:values_a0_h0} in equation \eqref{eqn:area_K} we obtain  
\(\area(\K_0)\approx 1.3960\,\pi\), to be compared with the general lower bound proven in Corollary \ref{cor:area_K} above.
\end{remark} 
 
We now switch to another building block for our construction, i.\,e. the Karcher--Scherk towers that we alluded to in the introduction.
The following lemma describes
a certain one-parameter subfamily
of the singly periodic surfaces
discovered by Karcher \cite{KarcherScherk}
and generalizing the classical example
of Scherk \cite{Scherk}.
Actually our construction will eventually employ
just a single member
of this subfamily,
but we will make that specialization
after the proof of the proposition.

\begin{proposition}
[Definition and basic properties
 of the Karcher--Scherk tower $\tow_\vartheta$]
\label{karcher-scherk}
For each $\vartheta \in \interval{0, \pi/2}$
there exists $\glsuseri{tow} \subset \R^3$
with the following properties. 
\begin{enumerate}[label={\normalfont(\roman*)}]
\item\label{item:karcher-i}
    $\tow_\vartheta$ is a complete, connected,
   properly embedded minimal surface.
\item\label{towperiodicityandquotient} 
$\tow_\vartheta$ is $2\pi$-periodic in the vertical direction, i.\,e. $\trans^{\axis{z}}_{2\pi} \in \Aut_{\R^3}(\tow_\vartheta)$, and
     the quotient
     $\tow_\vartheta / \sk{\trans^{\axis{z}}_{2\pi}}$
     has genus $0$ and $6$ ends.
\item\label{towasymp} 
      $\tow_\vartheta$ has six ends,
      each asymptotically planar
      in the sense that,
      outside of a compact set,
      $\tow_\vartheta$ consists of six normal graphs
      over their respective
      asymptotic planes
      and the corresponding defining functions
      converge to zero exponentially with rate one
      (together with their derivatives of all orders). 
      Two of the asymptotic planes are contained in $\{y=0\}$ 
      and each of the other four is parallel to the vertical axis $\axis{z}$
      and makes an angle of $\vartheta$
      with the plane $\{y=0\}$.
      (See Remark \ref{towasymptotics}
      for a more detailed statement,
      with the particular value of $\vartheta$
      we need in our construction.)
\item\label{item:towhorizontalplanesofsym}
  $\tow_\vartheta$ is invariant under reflection
  $\refl_{\{z=\pi/2\}}$ through the plane $\{z=\pi/2\}$.
\item\label{item:towverticalplaneofsym}
  $\tow_\vartheta$ is invariant under reflection
  $\refl_{\{x=0\}}$ through the plane $\{x=0\}$.
\item\label{towchoice}
       $\tow_\vartheta
        \cap \{x=0\} 
        \cap \{z=\pi/2\}
       $
       consists of only one point
       and is contained in
       $\{y>0\}$.
\item\label{towsymgroup}
      $
        \Aut_{\R^3}(\tow_\vartheta)
        \geq
        \sk{
          \refl_{\{x=0\}},~
          \refl_{\axis{x}},~
          \refl_{\{z=\pi/2\}}
        }
      $,
      with equality provided $\vartheta \neq \pi/3$.
\item\label{towsymsigns}
      Recalling \eqref{symsign},
      $\sgn_{\tow_\vartheta} \refl_{\{x=0\}}
        =
        \sgn_{\tow_\vartheta} \refl_{\{z=\pi/2\}}
        =
        -\sgn_{\tow_\vartheta} \refl_{\axis{x}}
        =1
      $.
\item\label{towcontainslines}
  $\tow_\vartheta$
  contains every line
  $
   \trans^{\axis{z}}_{n\pi}\axis{x}
   =
   \{y=0\} \cap \{z=n\pi\}
  $
  with $n \in \Z$;
  $\tow_\vartheta$ contains no other lines
  when $\vartheta \neq \pi/3$.
\item\label{towcapxeq0}
     The half tower
     $\tow_\vartheta^+\vcentcolon=\tow_\vartheta \cap \{x \geq 0\}$
     is a connected free boundary minimal surface in the half space
     $\{x \geq 0\}$ and has connected, smooth, embedded boundary.
\item\label{genusofnontrivialquotient}
      For any integer $m \geq 1$ the quotient
      $\tow_\vartheta / \sk{\trans^{\axis{z}}_{2m\pi}}$
      has genus $2(m-1)$ and $6$ ends.
\item\label{towEWrep}
      $\tow_\vartheta/\sk{\trans^{\axis{z}}_{2\pi}}$
      is parametrized by \eqref{standardtowerparam}
      (itself based on \eqref{eqn:original_parametrisation}
      and \eqref{eqn:rescaled_parametrization}).
\end{enumerate}
\end{proposition}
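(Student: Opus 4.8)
The plan is to derive every assertion from the Enneper--Weierstrass representation of $\tow_\vartheta$, following the approach of Appendix~\ref{app:Karcher--Scherk} of appealing \emph{solely} to that representation rather than to a conjugate-surface argument. The starting point is the more technical Proposition~\ref{prop:towerEWsummary}: for each $\vartheta\in\interval{0,\pi/2}$ it produces a compact Riemann surface of genus $0$ with six marked points together with Weierstrass data $(g,\,dh)$ whose periods over loops in the six-punctured sphere all vanish except for a single vertical period $2\pi$; the resulting Weierstrass integral is the parametrization \eqref{standardtowerparam} of $\tow_\vartheta/\sk{\trans^{\axis{z}}_{2\pi}}$ assembled from \eqref{eqn:original_parametrisation} and \eqref{eqn:rescaled_parametrization}. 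This is exactly item \ref{towEWrep}, and it is the input for everything else. From the representation formula, $\tow_\vartheta$ is a conformal harmonic --- hence minimal --- real-analytic immersion; connectedness is inherited from the domain; and completeness (part of \ref{item:karcher-i}) follows because the induced metric $\tfrac12(\abs{g}+\abs{g}^{-1})\abs{dh}$ has divergent length integral at each of the six punctures. So the first concrete task is a local analysis of $(g,\,dh)$ near each puncture: for two of them one reads off an end asymptotic to a half of $\{y=0\}$, and for the remaining four the asymptotic planes are parallel to $\axis{z}$ and tilted by $\pm\vartheta$ relative to $\{y=0\}$, and in every case the defining graph function over the asymptotic plane is shown to decay like $e^{-\abs{z}}$ together with all its derivatives. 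This proves \ref{towasymp} (the refined version being Remark~\ref{towasymptotics}). Counting the six punctures on a genus-$0$ quotient gives \ref{towperiodicityandquotient}; and since $\tow_\vartheta/\sk{\trans^{\axis{z}}_{2m\pi}}\to\tow_\vartheta/\sk{\trans^{\axis{z}}_{2\pi}}$ is an unbranched $m$-fold cyclic cover that is connected over every end, multiplicativity of the Euler characteristic, $m\,(2-0-6)=2-2\gamma-6$, forces the genus $\gamma=2(m-1)$, which is \ref{genusofnontrivialquotient}.

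Next I would establish the symmetries \ref{item:towhorizontalplanesofsym}--\ref{towcontainslines}. Each of the three reflections $\refl_{\{x=0\}}$, $\refl_{\axis{x}}$, $\refl_{\{z=\pi/2\}}$ is realized by an antiholomorphic automorphism of the domain sphere that permutes the six marked points and pulls back $(g,\,dh)$ in the manner dictated by how the ambient isometry acts on $\R^3$; substituting these automorphisms into the representation formula reproduces $\tow_\vartheta$ as a set, which gives the inclusion in \ref{towsymgroup}. The signs in \ref{towsymsigns} are then read off \eqref{symsign} by recording whether the relevant automorphism preserves or reverses the complex orientation of the domain (equivalently whether it fixes $g$ or sends it to $-1/\bar g$, etc.), and \ref{towchoice} is a direct inspection of the common fixed-point locus of the two commuting reflections $\refl_{\{x=0\}}$ and $\refl_{\{z=\pi/2\}}$. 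For \ref{towcontainslines}: a straight line lying in a minimal surface is precisely the fixed-point set of an ambient $180^\circ$ rotation leaving the surface invariant, and applying this to $\refl_{\axis{x}}$ together with its vertical translates exhibits the lines $\trans^{\axis{z}}_{n\pi}\axis{x}$; conversely the Schwarz reflection principle matches such a line with a fixed geodesic arc of the corresponding automorphism on the domain. The harder half --- that $\Aut_{\R^3}(\tow_\vartheta)$ contains nothing more, and $\tow_\vartheta$ no further lines, once $\vartheta\neq\pi/3$ --- I would prove by contradiction: an extra symmetry (respectively line) would force a rigid motion preserving the distinguished vertical direction and permuting the three horizontal line-directions $\{0,\vartheta,-\vartheta\}\subset\R/\pi\Z$ of the asymptotic planes by more than the already present reflection $\theta\mapsto-\theta$, and an elementary check shows this can happen only when $3\vartheta\in\pi\Z$, i.e. $\vartheta=\pi/3$.

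The real obstacle is proper embeddedness --- items \ref{item:karcher-i} and \ref{towcapxeq0} --- which I would address directly from the parametrization. Using the reflection symmetries (which, composed, also generate the vertical period translations) it suffices to show that the closure of a suitable fundamental piece, for instance the part of the half tower $\tow_\vartheta^+$ contained in one quarter-period slab $\{0\leq z\leq\pi/2\}$, is embedded and meets its reflected copies only along the shared boundary arcs and the lines of \ref{towcontainslines}. I expect the cleanest route to be to exhibit, on this fundamental piece, a (possibly rotated) horizontal coordinate function whose differential --- corresponding at the level of the Weierstrass data to a form of the type $\tfrac12(1-g^2)\,dh$ or $\tfrac{i}{2}(1+g^2)\,dh$ --- has no interior zeros, the location of its zeros being controlled by the explicit divisors of $g$ and $dh$ prescribed by \eqref{eqn:original_parametrisation}; the piece is then a graph over a planar domain, hence embedded, and one assembles $\tow_\vartheta$ globally by Schwarz reflection across its lines and planes of symmetry. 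The asymptotic description from \ref{towasymp} then serves both to rule out accumulation of the image at finite points (properness) and to prevent distinct pieces, or the ends of distinct wings, from overlapping far out. Finally \ref{towcapxeq0} follows: $\tow_\vartheta^+$ is a free boundary minimal surface in $\{x\geq0\}$ because $\tow_\vartheta$ is $\refl_{\{x=0\}}$-invariant, its boundary is $\tow_\vartheta\cap\{x=0\}$, and connectedness of both $\tow_\vartheta^+$ and this boundary curve follows from connectedness of the domain and of the fixed-point arc of $\refl_{\{x=0\}}$ (consistent with \ref{towchoice}). I anticipate that the bookkeeping in the embeddedness step --- choosing the right coordinate to graph over on each sub-slab and matching the ends of adjacent fundamental pieces --- will be by far the most delicate part; all the remaining items reduce to local computations with the explicit Weierstrass data.
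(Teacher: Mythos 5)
Your framework matches the paper's: everything is derived from the Enneper--Weierstrass representation of Appendix~\ref{app:Karcher--Scherk}, and your derivations of items \ref{towEWrep}, \ref{item:karcher-i}--\ref{towchoice}, \ref{towsymsigns}, \ref{towcontainslines}, \ref{genusofnontrivialquotient}, and \ref{towcapxeq0} track what the paper does (it packages most of these as direct consequences of Proposition~\ref{prop:towerEWsummary}, Lemma~\ref{lem:Karcher-Scherk_symmetries}, and Lemma~\ref{lem:CylMSE}). On embeddedness, the route differs: the paper's Lemma~\ref{lem:graphicality} shows the fundamental piece is a graph in the \emph{vertical} direction, i.e.\ of $Z$ over the $(X,Y)$-plane, by combining connectedness of level sets of the harmonic coordinate $X$ with Lemma~\ref{lem:gaussmap} (the Gauss map stays off the equator in the interior). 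Your plan --- a \emph{horizontal} coordinate whose differential has no interior zeros --- is not obviously wrong, but note that a nonvanishing differential alone does not give global graphicality: you must still rule out closed level curves, which is precisely what the paper's level-set argument accomplishes.

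The genuine gap is in the equality $\Aut_{\R^3}(\tow_\vartheta)=\sk{\refl_{\{x=0\}},\,\refl_{\axis{x}},\,\refl_{\{z=\pi/2\}}}$ for $\vartheta\neq\pi/3$. You argue that any extra symmetry would have to permute the three horizontal directions $\{0,\vartheta,-\vartheta\}\subset\R/\pi\Z$ of the asymptotic planes ``by more than the already present reflection $\theta\mapsto-\theta$,'' and that this forces $3\vartheta\in\pi\Z$. This premise is false: $\refl_{\{y=0\}}$ induces \emph{exactly} the permutation $\theta\mapsto-\theta$ (the same one as $\refl_{\{x=0\}}$), and $\rot_{\axis{z}}^{\pi}$ induces the \emph{trivial} permutation on $\R/\pi\Z$; yet neither is a symmetry of $\tow_\vartheta$, and your criterion never fires on them. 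What actually excludes them is a new ingredient your proposal never establishes: $\tow_\vartheta$ has minimal vertical period exactly $2\pi$. The paper proves this from the genus-$0$ hypothesis via Gauss--Bonnet (a smaller period $\tau$ would force $2\pi/\tau=2/(2+\gamma)$ for a nonnegative integer genus $\gamma$, leaving only $\tau=2\pi$), and then observes that $\refl_{\{y=0\}}\in\Aut_{\R^3}(\tow_\vartheta)$ would, via composition with $\refl_{\axis{x}}$ and $\refl_{\{z=\pi/2\}}$, make $\trans^{\axis{z}}_{\pi}$ a symmetry --- a contradiction. (One can also exclude $\refl_{\{y=0\}}$ and $\rot_{\axis{z}}^\pi$ directly from item~\ref{towchoice}, since both would move the unique point of $\tow_\vartheta\cap\{x=0\}\cap\{z=\pi/2\}$ out of $\{y>0\}$, but you do not invoke this in the maximality step.) Without some such period or pointwise argument, the permutation-of-directions criterion is insufficient.
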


\begin{proof}
First we observe that the final few items,
excepting \ref{towEWrep},
follow easily from the earlier ones.
Items \ref{towsymsigns} and \ref{towcontainslines}
follow from items \ref{towasymp} and \ref{towsymgroup},
the line $\axis{x}$ lying on $\tow_\vartheta$
since reflection through it is a symmetry and
$\{y=0\}$ is an asymptotic plane,
and containment of the other lines then following
by the symmetry $\trans^{\axis{z}}_{2\pi}$ and \ref{item:towhorizontalplanesofsym};
by the reflection principle for minimal surfaces
any line contained in $\tow_\vartheta$
is a line of reflectional symmetry,
so item \ref{towsymgroup} precludes
the possibility of other lines
when $\vartheta\neq\pi/3$.
Item \ref{genusofnontrivialquotient}
follows from the Gauss--Bonnet theorem,
using items \ref{towperiodicityandquotient}
and \ref{towasymp}.
Aside from the connectedness of $\partial \tow_\vartheta^+$,
all other claims of item \ref{towcapxeq0}
follow from
just items \ref{item:karcher-i} and \ref{towsymgroup}.
To see the connectedness of $\partial \tow_\vartheta^+ = \tow_\vartheta \cap \{x=0\}$
first note $\partial \tow_\vartheta^+/\sk{\trans^{\axis{z}}_{2\pi}}$
is connected, since $\tow_\vartheta/\sk{\trans^{\axis{z}}_{2\pi}}$
has genus $0$.
It follows that $\partial \tow_\vartheta^+ \cap \{\abs{z} \leq \pi/2\}$
is also connected
and moreover an arc,
with endpoints on $\{\abs{z}=\pi/2\}$.
Since $\refl_{\axis{x}}$ is a symmetry,
in fact precisely one endpoint lies in $\{z=\pi/2\}$,
and since $\refl_{\{z=n\pi/2\}}$ is a symmetry for each odd $n$,
this proves the connectedness of $\partial \tow_\vartheta^+$.
Note also that
\ref{towsymgroup}
subsumes items
\ref{item:towhorizontalplanesofsym}
and \ref{item:towverticalplaneofsym}
(which we have stated separately
merely to facilitate
the incidental Remark \ref{towuniqueness} below).

To prove the outstanding items
we refer to Appendix \ref{app:Karcher--Scherk}
(as well as Appendix \ref{app:CylAnalysis}
for the detailed asymptotics),
where we proceed directly
from Enneper-Weierstrass data
presented by Karcher in \cite{KarcherScherk}
to construct a family of surfaces
satisfying all the asserted properties.
In the remainder of the proof
we link these properties
to the results of Appendix \ref{app:Karcher--Scherk}
(and Appendix \ref{app:CylAnalysis}).
Specifically,
given $\vartheta \in \interval{0,\pi/2}$,
we define $\phi \in \interval{0,\pi/2}$
by equation \eqref{eqn:theta_phi},
and we set
$\tow_\vartheta \vcentcolon= \Gamma_\phi$,
where $\Gamma_\phi$ is constructed as in the proof
of Proposition \ref{prop:towerEWsummary}.
Items
\ref{towEWrep}
and
\ref{item:karcher-i}--\ref{towchoice}
are now immediate consequences of
Proposition \ref{prop:towerEWsummary}
and its proof,
except for the exponential decay
asserted in item \ref{towasymp},
which follows from Lemma \ref{lem:CylMSE}
in conjunction with Proposition \ref{prop:towerEWsummary}.
That $\refl_{\axis{x}} \in \Aut_{\R^3}(\tow_\vartheta)$
follows from Lemma~\ref{lem:Karcher-Scherk_symmetries}
(simply by composing the two symmetries in its statement,
and bearing in mind the exchange of the $x$ and $y$
coordinates in definitions
\eqref{standardtowerparamPRELIM}
and \eqref{standardtowerparam}).
Thus we have established containment in one direction
in item \ref{towsymgroup}.

It remains only to upgrade the containment
just discussed to the equality asserted
in item \ref{towsymgroup}.
To this end note
that the genus $0$ condition on the quotient
implies that $\tow_\vartheta$ does not 
also have a period smaller than $2\pi$.
Indeed, let $\tau$ be the infimum of
all $c > 0$ such that $\trans^{\axis{z}}_c$
is a symmetry of $\tow_\vartheta$.
Then $\trans^{\axis{z}}_\tau$ is itself a symmetry,
and of course $\tau \leq 2\pi$.
Since $\tow_\vartheta$ is not a plane,
we also have $\tau>0$.
If $2\pi$ were not an integer multiple of $\tau$,
then there would be a positive integer $n$
such that $n\tau<2\pi<(n+1)\tau$,
but then
$\tau' \vcentcolon= (n+1)\tau-2\pi=\tau-(2\pi-n\tau)$
would lie in $\interval{0,\tau}$
and $\trans^{\axis{z}}_{\tau'}$ would be a symmetry
of $\tow_\vartheta$,
contradicting the definition of $\tau$.
Thus $2\pi$ is an integer multiple of $\tau$,
but then items
\ref{towperiodicityandquotient}
and
\ref{towasymp}
together with the Gauss--Bonnet theorem
force $2\pi/\tau=2/(2+\gamma)$,
where $\gamma \geq 0$ is the genus of
$\tow_\vartheta/\sk{\trans^{\axis{z}}_\tau}$,
and so we conclude that $\tau=2\pi$.

In particular, since the composition of two reflections through parallel planes at distance $\delta$ equals a translation by an orthogonal vector of length $2\delta$, $\tow_\vartheta$ 
contains infinitely many horizontal planes of symmetry
and the distance between any two closest horizontal planes
of symmetry is $\pi$.
Now suppose that
$\mathsf{S} \in \Aut_{\R^3}(\tow_\vartheta)$
and assume $\vartheta \neq \pi/3$;
we will show that $\mathsf{S}$
is generated by the reflections appearing in
item \ref{towsymgroup}.
First, $\mathsf{S}$
must permute the ends,
all asymptotic to vertical planes,
and so in particular must also
permute the above horizontal planes of symmetry.
It follows that $\mathsf{S}\axis{z}=\axis{z}$
and (now using $\vartheta \neq \pi/3$)
$\mathsf{S}\{y=0\}=\{y=0\}$.
By composing if necessary with reflections
we have already identified as symmetries
we may further assume
that $\mathsf{S}$
preserves each of $\{z \geq 0\}$
and $\{x \geq 0\}$.
Thus we need only rule out the possibility
that $\mathsf{S}=\refl_{\{y=0\}}$.
However, if $\refl_{\{y=0\}}$
were a symmetry of $\tow_\vartheta$,
then $\refl_{\{z=0\}}$ would be too
(since $\refl_{\axis{x}}$ is)
and in turn $\trans^{\axis{z}}_\pi$
would be another symmetry
(since $\refl_{\{z=\pi/2\}}$ is),
in contradiction to the conclusion of
the previous paragraph.
This completes the proof.
\end{proof}

\begin{remark}[Uniqueness of $\tow_\vartheta$]
\label{towuniqueness}
In \cite{PerezTraizetClassificationSP}
P\'{e}rez and Traizet obtained a classification
of complete, embedded,
singly periodic minimal surfaces
with genus $0$ in the quotient (by the period translation)
and finitely many ends,
all asymptotically planar.
One can make use of their classification
to prove that $\tow_\vartheta$
is in fact \emph{characterized} by the properties
enumerated in Proposition \ref{karcher-scherk}.
More precisely,
$\tow_\vartheta$ is uniquely determined by
items \ref{item:karcher-i}--\ref{towchoice}
of Proposition \ref{karcher-scherk};
any surface satisfying
\ref{item:karcher-i}--\ref{towasymp}
is a translate of $\tow_\vartheta$;
any surface satisfying
\ref{item:karcher-i}--\ref{towasymp}
and \ref{item:towverticalplaneofsym}
is a translate of $\tow_\vartheta$
in the $\axis{z}$ direction;
and
there are precisely two surfaces satisfying
\ref{item:karcher-i}--\ref{item:towverticalplaneofsym}, namely $\tow_\vartheta$
and $\trans^{\axis{z}}_\pi\tow_\vartheta$
(the latter of which satisfies \ref{towchoice}
with $\{y<0\}$ in place of $\{y>0\}$).
Since, however,
we make no use of such characterizations
in this article,
we omit the proof.
\end{remark}

\begin{remark}[Alternative construction of $\tow_\vartheta$]
In \cite{KarcherScherk}
Karcher constructs the family $\tow_\vartheta$
(up to congruence) in two different ways
(whose equivalence,
though not needed in this article,
can be confirmed
with the aid of Remark \ref{towuniqueness}).
Instead of starting with Enneper-Weierstrass data
as we do in Appendix \ref{app:Karcher--Scherk},
one can follow the so-called conjugate construction,
whereby (in this particular application)
one
starts with the Dirichlet problem
for minimal graphs 
with infinite boundary data
on a certain family of convex equilateral hexagons
having data alternatingly $\pm\infty$
from side to side
(see Figure \ref{fig:hexagon}),
then takes the solution (unique up to vertical translation)
guaranteed by a result
of Jenkins and Serrin \cite{JenkinsSerrinII},
next passes to the conjugate minimal surface of this last graph,
and finally from this conjugate produces a complete surface
by repeated reflection.
We refer the reader to \cite{KarcherScherk}
or the lecture notes \cite{KarcherTokyo}
for details.
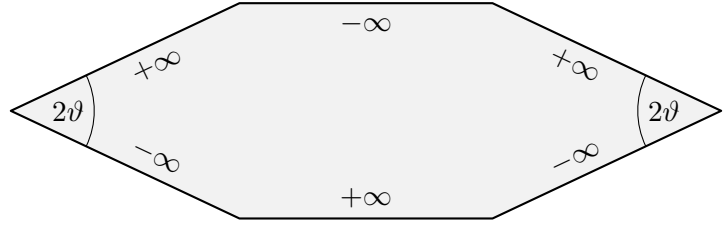
\begin{figure}%
\centering
\pgfmathsetmacro{\thetapar}{25.38}
\begin{tikzpicture}[line cap=round,line join=round,scale=\unitscale]
\draw[thick,fill=black!5](-0.5,0)
--++(1,0)node[midway,sloped,above]{$+\infty$}
--++(\thetapar:1)coordinate(vertex1)node[midway,sloped,above,pos=0.4]{$-\infty$}
--++(180-\thetapar:1)node[pos=0.6,sloped,below]{$+\infty$}
--++(-1,0)node[midway,sloped,below]{$-\infty$}
--++(180+\thetapar:1)coordinate(vertex2)node[pos=0.4,sloped,below]{$+\infty$}
--(-0.5,0)node[midway,sloped,above,pos=0.6]{$-\infty$}
--cycle;
\draw
(vertex1)++(180-\thetapar:0.33)arc(180-\thetapar:180+\thetapar:0.33)node[midway,right]{$2\vartheta$}
(vertex2)++(-\thetapar:0.33)arc(-\thetapar:\thetapar:0.33)node[midway,left]{$2\vartheta$};
\end{tikzpicture}
\caption{Equilateral hexagon with possible choice of Jenkins--Serrin boundary data.}%
\label{fig:hexagon}%
\end{figure}
\end{remark}

\begin{remark}[Maximally symmetric case]
It is not difficult to prove that
the special case $\vartheta=\pi/3$
admits additional symmetries.
Indeed we have
$
        \Aut_{\R^3}(\tow_{\pi/3})
        =
        \sk[\big]{
          \refl_{\{x=0\}},\,
          \refl_{\axis{x}},\,
          \refl_{\{z=\pi/2\}},\,
          \refl_{\{y = \sqrt{3}x\} \cap \{z=0\}}
        }
$,
but we omit the proof,
as the surface $\tow_{\pi/3}$
plays no role in our construction.
\end{remark}

In fact, recalling the definition of $\omega_0$ from Remark \ref{rem:values_for_b=0}, in this article we will only need to work with the tower 
\label{gls:tow_0}$\glsuserii{tow}\vcentcolon= \tow_{\omega_0}$ and the corresponding half tower (see Figure \ref{fig:halftower}) 
\begin{equation*}
\glsuseriii{tow} \vcentcolon= \tow \cap \{x \geq 0\}.
\end{equation*}

\begin{figure}%
\centering
\pgfmathsetmacro{\globalscale}{0.85}
\pgfmathsetmacro{\thetapar}{25.38}
\pgfmathsetmacro{\btow}{(sin(\thetapar)-tan(\thetapar)/2)*ln(2*sec(\thetapar)-1)-2*sin(\thetapar)*ln(sec(\thetapar)-1)}
\pgfmathsetmacro{\Rtow}{3.4}
\pgfmathsetmacro{\xmax}{(\textwidth-1pt)/2cm/\globalscale}
\pgfmathsetmacro{\ymax}{4.4}
\pgfmathsetmacro{\xint}{(sqrt((tan(\thetapar)^2+1)*\Rtow*\Rtow-\btow*\btow)-\btow*tan(\thetapar))/(tan(\thetapar)^2+1)}
\begin{tikzpicture}[line cap=round,line join=round,semithick,scale=\globalscale]
\clip(-\xmax,-\ymax)rectangle(\xmax,\ymax); 
\draw[line width=\globalscale*2.02cm,black]
(\Rtow,0)--(\xmax-0.1,0)
(\xint,{\btow+\xint*tan(\thetapar)})--(\xmax,{\btow+\xmax*tan(\thetapar)});
\draw[line width=\globalscale*1.98cm,black!8]
(\Rtow,0)--(\xmax-0.1,0)
(\xint,{\btow+\xint*tan(\thetapar)})--(\xmax,{\btow+\xmax*tan(\thetapar)}); 
\node[scale=\globalscale]{\includegraphics[page=5]{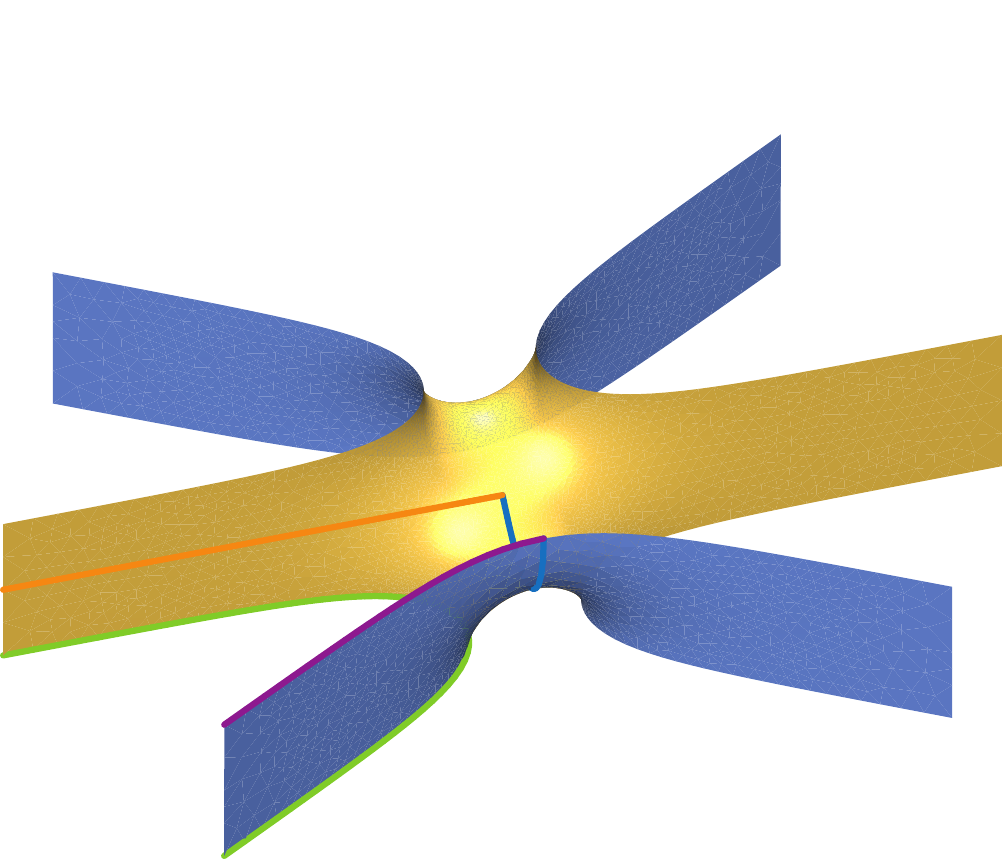}};
\draw[->](-\xmax,0)--(\xmax,0)node[below left]{$x$};
\draw[->](0,-\ymax)--(0,0)(0,\btow)--(0,\ymax)node[below right]{$y$};
\begin{scope}[color={cmyk,1:magenta,1;yellow,0.5},very thick]
\path(0,0)--(0,\btow)node[pos=0.62,inner sep=2pt](b){$b^{\mathrlap{\mathrm{tow}}}_{\mathrlap{\omega_0}}$};
\draw[stealth-](0,0)to(b);
\draw[stealth-](0,\btow)to(b);
\end{scope}
\draw[dashed]
(0, \btow)--({ (\ymax-\btow)/tan(\thetapar)}, \ymax)coordinate[pos=0.62](wing1)
(0, \btow)--({-(\ymax-\btow)/tan(\thetapar)}, \ymax)coordinate[pos=0.62](wing2)
(0,-\btow)--({ (\ymax-\btow)/tan(\thetapar)},-\ymax)coordinate[pos=0.62](wing5)
(0,-\btow)--({-(\ymax-\btow)/tan(\thetapar)},-\ymax)coordinate[pos=0.62](wing4);
\path
(0,0)--(\xmax,0)coordinate[pos=0.6](wing0)
(0,0)--(-\xmax,0)coordinate[pos=0.6](wing3);
\begin{scope}[black!50,very thick]
\draw[-latex](wing2)--++(-\thetapar-90:-1);
\draw[-latex](wing3)--++(90:-1);
\draw[-latex](wing4)--++(\thetapar-90:-1);
\draw[-latex](wing5)--++(-\thetapar-90:-1);
\end{scope}
\draw[ultra thick,color={cmyk,1:magenta,0.5;cyan,1}]
(\Rtow,0)--(\xmax-0.1,0)node[above,midway](H0){$\Pi^0$}
(\xint,{\btow+\xint*tan(\thetapar)})--(\xmax,{\btow+\xmax*tan(\thetapar)})node[above,sloped,pos=0.2](H1){$\Pi^1$};
\path(\xmax-1,0)--++(0,1)node[midway,inner sep=2pt](1m){$1$}coordinate[pos=0](1s)coordinate[pos=1](1e);
\draw[-stealth,very thin](1m)--(1s);
\draw[-stealth,very thin](1m)--(1e);
\draw[black!80](H0)++(0,-1)node{$\Pi^0_{\leq1}$};
\draw[black!80](H1)++(\thetapar-90:1)node[rotate=\thetapar]{$\Pi^1_{\leq1}$};
\draw[dotted](0,\btow)--++(\xmax,0);
\draw(1.75,\btow)arc(0:\thetapar:1.75)node[anchor=\thetapar/2,midway]{$\omega_0$};
\draw[dotted](\Rtow,0)arc(0:360:\Rtow); 	
\path(0,0)--(-40:\Rtow)node[pos=0.66,inner sep=2pt](Rtow){$\frac{1}{2}R_{\mathrm{tow}}$}coordinate[pos=1](Rtow1);
\draw[-stealth](Rtow)--(0,0);
\draw[-stealth](Rtow)--(Rtow1);
\draw(b)++(-1,0)node[left]{$\tow$};
\end{tikzpicture}
\caption{Asymptotics of $\tow$ in plan view and visualization of Remark \ref{towasymptotics}.}%
\label{fig:tow_asymptotics}%
%
%%%%%%%%%%%%%%%%%%%%%%%%%%%%%%%%%%%%%%%%%%%%%%%%%%%%
\medskip
%%%%%%%%%%%%%%%%%%%%%%%%%%%%%%%%%%%%%%%%%%%%%%%%%%%%
%
\pgfmathsetmacro{\thetaO}{45}
\pgfmathsetmacro{\phiO}{-123}
\pgfmathsetmacro{\radius}{9}
\tdplotsetmaincoords{\thetaO}{\phiO}
\begin{tikzpicture}[tdplot_main_coords,semithick,line cap=round,line join=round,scale=\globalscale]
\draw(0,0,-2*pi)
node[scale=\globalscale]{\includegraphics[page=7]{figures-KarcherScherk}};
\draw(0,0,0)
node[scale=\globalscale]{\includegraphics[page=6]{figures-KarcherScherk}}
node[scale=\globalscale]{\includegraphics[page=7]{figures-KarcherScherk}};
\draw[-latex](0,0,0)--(\radius,0,0)node[above left,inner sep=2pt,circle]{$x$};
\draw(0,0,2*pi)
node[scale=\globalscale]{\includegraphics[page=6]{figures-KarcherScherk}}
;
\pgfmathsetmacro{\radius}{\radius*0.77}
\draw[densely dashed]
(0,\btow,3*pi/2)--++({\radius*cos(\thetapar)},{\radius*sin(\thetapar)},0)
(0,-\btow,3*pi/2)--++({\radius*cos(\thetapar)},{-\radius*sin(\thetapar)},0)
(0,0,3*pi/2)--(0,-\btow,3*pi/2)
;
\tdplottransformmainscreen{0}{-1}{0}
\pgfmathsetmacro{\VecHx}{\tdplotresx}
\pgfmathsetmacro{\VecHy}{\tdplotresy}
\tdplottransformmainscreen{0}{0}{1}
\pgfmathsetmacro{\VecVx}{\tdplotresx}
\pgfmathsetmacro{\VecVy}{\tdplotresy}
\path[color={cmyk,1:magenta,1;yellow,0.5}] (0,0,3*pi/2)--++(0,\btow,0)node[pos=0.5,cm={\VecHx ,\VecHy ,\VecVx ,\VecVy ,(0,0)},inner sep=1pt](b){\large$b^{\mathrm{tow}}_{\omega_0}$};
\draw[stealth-,color={cmyk,1:magenta,1;yellow,0.5},very thick](0,0,3*pi/2)to(b);
\draw[stealth-,color={cmyk,1:magenta,1;yellow,0.5},very thick](0,\btow,3*pi/2)to(b);
\draw[dotted](0,2.5965,pi/2)--(0,0,pi/2);
\draw[](0,0.08,pi/2)--(0,-0.08,pi/2)node[inner sep=1pt,right,cm={\VecHx ,\VecHy ,\VecVx ,\VecVy ,(0,0)}]{$\dfrac{\pi}{2}$};
\tdplottransformmainscreen{0}{-1}{0}
\pgfmathsetmacro{\VecHx}{\tdplotresx}
\pgfmathsetmacro{\VecHy}{\tdplotresy}
\tdplottransformmainscreen{1}{0}{0}
\pgfmathsetmacro{\VecVx}{\tdplotresx}
\pgfmathsetmacro{\VecVy}{\tdplotresy}
\tdplotdrawarc[]{(0,-\btow,3*pi/2)}{1/2*\radius}{0}{0-\thetapar}{cm={\VecHx ,\VecHy ,\VecVx ,\VecVy ,(0,0)},anchor=60}{\large$\omega_0$}
\draw[densely dashed](0,-\btow,3*pi/2)--++(\radius,0,0);
\coordinate(eckpunkt)at (current bounding box.north west);
\pgfresetboundingbox
\path(eckpunkt);
\draw[-latex](0,0,-3*pi+pi/2)--(0,0,2*pi)node[above]{$z$};
\draw[-latex](0,-2/3*\radius,0)--(0,2/3*\radius,0)node[below left,inner sep=2pt,circle]{$y$};
\end{tikzpicture}
\caption{Two vertical periods of the half tower $\tow^+$. }%
\label{fig:halftower}%
\end{figure}

\begin{remark}[Asymptotics of $\tow$]
\label{towasymptotics}
As promised, we now elaborate on
item \ref{towasymp} of Proposition \ref{karcher-scherk},
in the special case of $\vartheta=\omega_0$.
In the following we refer to
Lemma \ref{lem:btow_theta}
for the definition of $\gls{btow}$
and we observe that
$b^{\mathrm{tow}}_\vartheta$
is strictly positive
for $\vartheta \in \interval{0,\pi/3}$,
so that
by Remark \ref{rem:values_for_b=0}
we have in particular
$b^{\mathrm{tow}}_{\omega_0}>0$.
In fact, we may compute $b^{\text{\text{tow}}}_{\omega_0} \approx 1.95$ numerically, using formula \eqref{eqn:btow_theta} from Appendix \ref{app:Karcher--Scherk}. 

Item \ref{towEWrep} of Proposition \ref{karcher-scherk},
Proposition \ref{prop:towerEWsummary},
and Lemma \ref{lem:CylMSE}
now imply the existence of
$R_{\mathrm{tow}}>1$
such that $\tow$ has the following asymptotic
description.
First,
recalling the notation introduced in \eqref{tubularnbhd},
we have
\begin{equation*}
\tow \cap \axis{z}_{\geq R_{\mathrm{tow}}/2}
\cap \{x \geq 0\} \cap \{y \geq -1\}
\subset(\Pi^0 \cup \Pi^1)_{\leq 1},
\end{equation*}
for which we define the half planes
\begin{align*}
\Pi^0
  &\vcentcolon=
  \{y=0\} \cap \{x \geq 0\}
  \cap \axis{z}_{\geq R_{\mathrm{tow}}/2},
\\
\Pi^1&\vcentcolon=
\{y=b^{\mathrm{tow}}_{\omega_0}+x \tan \omega_0\}
\cap \{x \geq 0\}
\cap \axis{z}_{\geq R_{\mathrm{tow}}/2}.
\end{align*}
In more detail, for each $i=0,1$ there is a smooth $\Aut_{\R^3}(\Pi^i)\cap \Aut_{\R^3}(\tow)$-equivariant function $w^i\colon \Pi^i \to \R$
satisfying, for any integer $k \geq 0$, the estimate 
\begin{equation}\label{eq:ExpDecayDefFunctions}
\nm{w^i}_{k,0,1} \leq C(k),
\end{equation}
for some $C(k)>0$
(and for which we recall the notation
introduced below \eqref{eqn:definition_weighted_norm}),
and such that
(recalling the notation \eqref{eqn:definition_graph})
each of the graphs
\begin{equation*}
W^i \vcentcolon= \graph(w^i \nu^i)
\end{equation*}
for $i\in\{0,1\}$ with $\nu^0\vcentcolon =-\partial_y$ and
$\nu^1 \vcentcolon=
 \cos \omega_0 \, \partial_y
 -\sin \omega_0 \, \partial_x$, 
is contained in $\tow$,
and conversely
\begin{equation*}
\tow \cap \axis{z}_{\geq R_{\mathrm{tow}}} \cap \{x \geq 0\}
\subset W^0 \cup W^1 \cup \refl_{\axis{x}}W^1.
\end{equation*}
We call $W^0$, $W^1$, $\refl_{\axis{x}}W^1$ and the three corresponding images under $\refl_{\{x=0\}}$ the \emph{wings} of $\tow$.
For the sake of notational convenience we agree, from now onwards, to write $W^{-1}$ in lieu of $\refl_{\axis{x}}W^1$ for the ``lower'' wing of $\tow^+$.
\end{remark}

Given any positive integer $n\in\N^{\ast}$ we define also the canonical projection 
\begin{equation}
\label{quotientbytrans} 
\glsuseri{varpi}
\colon
\R^3 \to \R^3/\sk{\trans^{\axis{z}}_{2n \pi}}
\end{equation}
and the quotients
\begin{equation}
\label{towquotient}
\glsuseri{towquot} %\towquot_{(n)}
\vcentcolon=\varpi_{(n)}(\tow)
\quad \mbox{ and } \quad
\glsuserii{towquot} %\towquot_{(n)}^+
\vcentcolon=\varpi_{(n)}(\tow^+).
\end{equation}
Note that the translational symmetries
of $\tow$ descend (trivially when $n=1$)
to $\towquot_{(n)}$ and $\towquot_{(n)}^+$.
In the simplest and most important case $n=1$ we agree to simply write $\glsuserii{varpi}$ in lieu of $\varpi_{(1)}$ and 
\glsuseriii{towquot} %$\towquot$
in lieu of $\towquot_{(1)}$.
Focusing on that case, we further note that the reflectional symmetries also descend to the quotient, 
in the sense that there exist unique isometries
$\mathsf{H}$, $\mathsf{L}$, and $\mathsf{V}$
of $\R^3/\sk{\trans^{\axis{z}}_{2\pi}}$ such that
\begin{equation}
\label{HLV}
\begin{aligned}
\mathsf{H} \circ \varpi
&=
  \varpi \circ \refl_{\{z=n\pi/2\}}
  \quad \forall n \in 2\Z+1, \\
\mathsf{V} \circ \varpi
&=
  \varpi \circ \refl_{\{x=0\}}, \\
\mathsf{L} \circ \varpi
&=
  \varpi \circ \refl_{\{y=0\} \cap \{z=n\pi\}}
  \quad \forall n \in \Z.  \\
\end{aligned}
\end{equation}
Note also that $\mathsf{L}$ and $\mathsf{H}$ restrict to isometries of 
\glsuseriv{towquot} %$\towquot^+$,
while $\towquot=\towquot^+ \cup \mathsf{V}\towquot^+$
with
$
 \towquot^+ \cap \mathsf{V}\towquot^+
 =
 \partial \towquot^+
$,
that is the fixed point set of $\mathsf{V}$.
Given any function $u$ on $\towquot$ and for each $\mathsf{S} \in \{\mathsf{H},\mathsf{L},\mathsf{V}\}$
we further define the projection
\begin{equation*}
\pi_{\mathsf{S}}^\pm u
\vcentcolon=
\frac{1}{2}\left(u\pm \bigl( u \circ \mathsf{S}|_{\towquot} \bigr)\right);
\end{equation*}
when $u$ is instead a function on $\towquot^+$
or $\partial \towquot^+$, the same equation also makes sense provided
$\mathsf{S} \in \{\mathsf{L}, \mathsf{H}\}$. 
Since $\mathsf{H}$, $\mathsf{L}$, and $\mathsf{V}$
commute pairwise,
the operators
\begin{equation}
\label{equivariantprojectorsonquotient}
\glsuserii{piAut}%\pi_{\Aut(\towquot^+)}
  \vcentcolon=
  \pi_{\mathsf{L}}^- \pi_{\mathsf{H}}^+
\quad \mbox{and} \quad
\glsuseri{piAut}%\pi_{\Aut(\towquot)}
  \vcentcolon=
  \pi_{\mathsf{V}}^+\pi_{\Aut(\towquot^+)}
\end{equation}
are also projectors, i.\,e. they are idempotent operators.

\begin{remark}[Equivariance in the quotient]
\label{rem:equiv_in_quot}
Note that if $u$ is a function on $\towquot$
(or $\towquot^+$ or $\partial \towquot^+$),
then $\varpi|_{\towquot}^*u$
(or $\varpi|_{\tow^+}^*u$
or $\varpi|_{\partial \tow^+}^*u$)
is $\Aut_{\R^3}(\tow)$-equivariant
(or $\Aut_{\R^3}(\tow^+)$-equivariant)
if and only if
$u=\pi_{\Aut(\towquot)}u$
(or $u=\pi_{\Aut(\towquot^+)}u$).
\end{remark}

Note also that
if $u$ belongs to the kernel of $\pi_{\Aut(\towquot)}$
and $v$ to its image,
then the product $uv$ is odd under at least
one of $\mathsf{H}, \mathsf{L}, \mathsf{V}$.
In particular the kernel and image
of $\pi_{\Aut(\towquot)}|_{L^2(\towquot)}$
are $L^2(\towquot)$-orthogonal;
equivalently
$\pi_{\Aut(\towquot)}|_{L^2(\towquot)}$
is self-adjoint: 
\begin{equation}
\label{equivariant_projector_is_self_adjoint}
\sk[\Big]{\pi_{\Aut(\towquot)}u,~ v}_{L^2(\towquot)}
=\sk [\Big]{u,~ \pi_{\Aut(\towquot)}v}_{L^2(\towquot)} 
\text{ for any $u,v \in L^2(\towquot)$.}
\end{equation}

Finally, for any integer $k \geq 0$,
any $\alpha \in \Interval{0,1}$,
and any $\beta \in \R$
we define on $\tow$ the norm
\begin{equation}
\label{weighted_norm_on_tow}
%\nm{{}\cdot{}}_{k,\alpha,\beta}
\gls{norm}
\vcentcolon=
\nm[\Big]{{}\cdot{}
    \colon C^{k,\alpha}\bigl(\tow, e^{-\beta \dist_{\axis{z}}}\bigr)},
\end{equation}
recalling \eqref{eqn:definition_weighted_norm}.
For domains of $\tow$
(and for $\tow^+$ in particular)
we apply exactly the same
notation with the same weight function.
One could equivalently employ on each wing of the tower a definition in the spirit of that introduced in the second bullet following \eqref{eqn:definition_weighted_norm} (e.\,g. using the coordinate $x$, hence the weight $|x|$, along the two horizontal asymptotic half planes, and similarly for the other four half planes) and design a global norm on $\tow$ using cutoff functions.
Of course, it is readily checked that the two norms in question
are equivalent.
Note last that
the function $\dist_{\axis{z}}$ descends to
each quotient $\towquot_{(n)}$ and $\towquot_{(n)}^+$,
so we apply the notation
\eqref{weighted_norm_on_tow}
on these surfaces as well.

\subsection{Construction of the initial surfaces}

\paragraph{Half towers with broken and straightened wings.}
In a neighborhood of the equator the initial surfaces will be modeled on the half tower $\tow^+$ visualized in Figure \ref{fig:halftower}.
Below we will define maps to scale down
and transplant $\tow^+$ from $\R^3$
to $\B^3$, wrapping its axis of periodicity around the equator.
Obviously, such maps will need to deform the tower substantially,
but actually it will be convenient to perform two
preliminary deformations now, in the preimage.
First, we introduce a prescribed \emph{dislocation}
on the wing $W^1$,
breaking it at some distance from $\axis{z}$,
translating the resulting noncompact component
in the $y$-direction as desired,
and then rejoining the two pieces by smooth interpolation.
This operation adjusts the mean curvature
of the initial surface, near the dislocation site,
in a way that will be needed to control the approximate
cokernel encountered in the linearized problem
on the towers.
Second, we simply straighten all the wings
to coincide exactly with their respective asymptotic planes
far away from $\axis{z}$.

We will define the towers thus deformed as graphs over $\tow^+$.
To localize the modifications as described
we will make use  of the smooth $\Aut_{\R^3}(\tow^+)$-equivariant
cutoff function (see Figure \ref{fig:Psi-dislocate}) 
\begin{equation}
\label{Psidislocate}
\Psi^{\mathrm{dislocate}}
  \vcentcolon=
  \left.
  \frac{y}{\abs{y}}
    \cdot
    \left(
      \cutoff{0}{1} \circ \abs{y}
    \right)
    \cdot
    \left(
      \cutoff{R_{\mathrm{tow}}}{R_{\mathrm{tow}}+1}
      \circ \dist_{\axis{z}}
    \right)
    \right|_{\tow^+}
\end{equation}
as well as, for any $\glsuseri{m}>0$, the smooth $\Aut_{\R^3}(\tow^+)$-invariant
cutoff function
\begin{equation}
\Psi^{\mathrm{straighten}}_m
  \vcentcolon=
      \cutoff{m^{3/4}}{m^{3/4}+1}
      \circ \dist_{\axis{z}}|_{\tow^+}.
\end{equation}
Write $\nu_{\tow^+}$ for the global unit normal on $\tow^+$
having positive inner product on $W^1$ with $\partial_y$ (which, in particular, is consistent with the choice of $\nu^0$ on $W^0$ and $\nu^1$ on $W^1$ as given in Remark \ref{towasymptotics}).
As a consequence of Remark \ref{towasymptotics},
for $m$ sufficiently large there exists a function $f$
on $\tow^+ \cap \axis{z}_{\geq m^{3/4}}$
such that $\graph(f\nu_{\tow^+}|_{\dom(f)}) \subset \Pi^i$
for $i=0,1$
and for each integer $k \geq 0$ there holds an estimate of the form
$\nm{f}_{k,0,1} \leq C(k)$
for some positive constant $C(k)$.
(Here, and throughout the paper, we write $\dom(F)$
for the domain of a given function $F$.)

\begin{figure}%
\centering
\pgfmathsetmacro{\globalscale}{0.5}
\pgfmathsetmacro{\thetapar}{25.38}
\pgfmathsetmacro{\btow}{(sin(\thetapar)-tan(\thetapar)/2)*ln(2*sec(\thetapar)-1)-2*sin(\thetapar)*ln(sec(\thetapar)-1)}
\pgfmathsetmacro{\Rtow}{2*3.4}
\pgfmathsetmacro{\xmax}{(\textwidth-1pt)/2cm/\globalscale}
\pgfmathsetmacro{\ymax}{\Rtow+2}
\pgfmathsetmacro{\xint}{(sqrt((tan(\thetapar)^2+1)*\Rtow*\Rtow-\btow*\btow)-\btow*tan(\thetapar))/(tan(\thetapar)^2+1)}
\begin{tikzpicture}[line cap=round,line join=round,semithick,scale=\globalscale]
\draw[->](0,0)--(\xmax,0)node[below left]{$x$};
\draw 
(0, \btow)--({ (\Rtow-\btow)/tan(\thetapar)}, \Rtow) 
(0,-\btow)--({ (\Rtow-\btow)/tan(\thetapar)},-\Rtow) 
; 
\draw[dotted](0,\Rtow)arc(90:-90:\Rtow); 	
\draw[dotted](0,\Rtow+1)arc(90:-90:\Rtow+1); 	
\draw(\Rtow*2/3,\btow)arc(0:\thetapar:\Rtow*2/3)node[anchor=\thetapar/2,midway]{$\omega_0$}
;
\begin{scope}[ultra thick,black!50]
\draw(0,0)--(\xmax-0.2,0)node[below=0.0pt,pos=0.4,fill=white]{$\Psi^{\mathrm{dislocate}}=0$};
\clip(-0,-\ymax)rectangle(\xmax,\ymax);  
\clip(0,0)circle(\Rtow);
\draw
(0, \btow)--({ (\ymax-\btow)/tan(\thetapar)}, \ymax)
(0,-\btow)--({ (\ymax-\btow)/tan(\thetapar)},-\ymax)
;
\end{scope}
\begin{scope}
\clip(-0,-\ymax)rectangle(\xmax,\ymax);  
\node[scale=\globalscale]{\includegraphics[page=5]{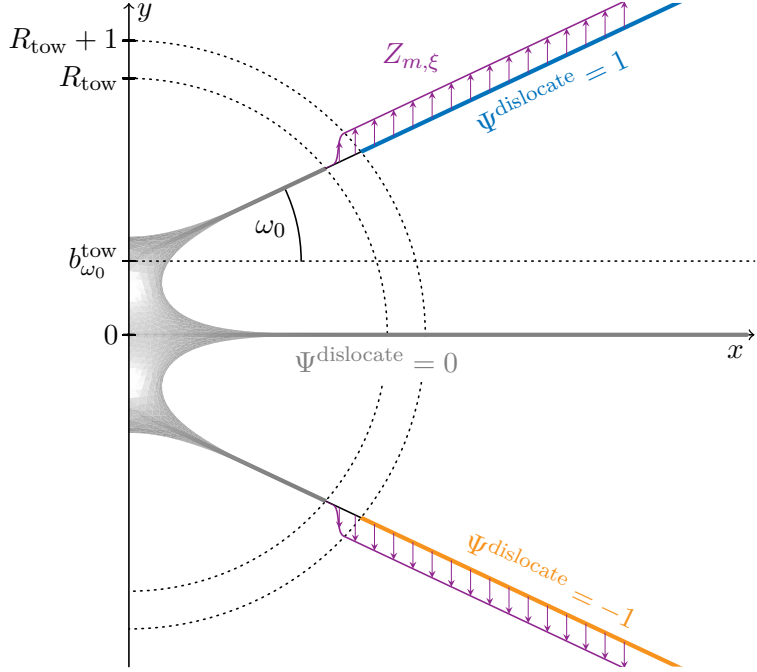}};
\clip(0,\Rtow+1)arc(90:-90:\Rtow+1)--(0,-\ymax)-|(\xmax,\ymax)-|cycle;
\draw[ultra thick,color={cmyk,1:magenta,0.5;cyan,1}]
(\xint,{\btow+\xint*tan(\thetapar)})--(\xmax,{\btow+\xmax*tan(\thetapar)})node[below,sloped,pos=0.5]{$\Psi^{\mathrm{dislocate}}=1$};
\draw[ultra thick,color={cmyk,1:magenta,0.5;yellow,1}]
(\xint,{-\btow-\xint*tan(\thetapar)})--(\xmax,{-\btow-\xmax*tan(\thetapar)})node[above,sloped,pos=0.5]{$\Psi^{\mathrm{dislocate}}=-1$}
;
\end{scope}
\draw[->](0,-\ymax)--(0,\ymax);
\draw(0.2,\ymax)node[below right,inner sep=0]{$y$}; 
\draw[dotted](0,\btow)--++(\xmax,0);
\draw plot[hdash](0,\Rtow)node[left]{$R_{\mathrm{tow}}$};
\draw plot[hdash](0,\Rtow+1)node[left]{$R_{\mathrm{tow}}+1$};
\draw plot[hdash](0,\btow)node[left]{$b^{\mathrm{tow}}_{\omega_0}$};
\draw plot[hdash](0,0)node[left]{$0$}; 
\begin{scope}[color={cmyk,1:magenta,1;cyan,0.5}]
\clip(-0,-\ymax)rectangle(\xmax,\ymax);  
\clip (0, \btow)--({ (\xmax-\btow)/tan(\thetapar)}, \xmax)-|cycle 
;
\draw(\xint,{\btow+\xint*tan(\thetapar)})--++(\thetapar:0.2)..controls+(\thetapar:0.25) and+(\thetapar:-0.25)..++(\thetapar+45:0.9)coordinate[pos=0.7](p0)--++(\thetapar:\xmax)node foreach[count =\i] \t in {0.02,0.054,...,1}[pos=\t](p\i){};
\foreach\i in {0,...,15}{
\draw[stealth-,very thin](p\i.center)--++(0,-1); 
}
\draw(p6)node[above left]{$Z_{m,\xi}$};
\end{scope}
\begin{scope}[color={cmyk,1:magenta,1;cyan,0.5},yscale=-1]
\clip(-0,-\ymax)rectangle(\xmax,\ymax);  
\clip (0, \btow)--({ (\xmax-\btow)/tan(\thetapar)}, \xmax)-|cycle 
;
\draw(\xint,{\btow+\xint*tan(\thetapar)})--++(\thetapar:0.2)..controls+(\thetapar:0.25) and+(\thetapar:-0.25)..++(\thetapar+45:0.9)coordinate[pos=0.7](p0)--++(\thetapar:\xmax)node foreach[count =\i] \t in {0.02,0.054,...,1}[pos=\t](p\i){};
\foreach\i in {0,...,15}{
\draw[stealth-,very thin](p\i.center)--++(0,-1); 
}
\end{scope}
\end{tikzpicture}
\caption{Plan view of the cutoff function $\Psi^{\mathrm{dislocate}}$ and the vector field $Z_{m,\xi}$ on the half tower $\tow^+$. }%
\label{fig:Psi-dislocate}%
\end{figure}

Given also $\gls{xi}\in \R$,
we define on $\tow^+$ the
$\Aut_{\R^3}(\tow^+)$-equivariant vector field (see Figure \ref{fig:Psi-dislocate}) 
\begin{equation}
\label{vectorfieldefiningbenttower}
Z_{m,\xi}
\vcentcolon=
\frac{\xi}{m}\Psi^{\mathrm{dislocate}} \, \partial_y
  + f \Psi^{\mathrm{straighten}}_m
    \, \nu_{\tow^+} 
\end{equation}
and, in turn, 
the deformed half tower 
$\towbent_{m,\xi}^+$
and parametrization
$\widehat{\phi}_{m,\xi}\colon\tow^+\to\towbent_{m,\xi}^+$ by
\begin{equation}
\label{towbent}
\gls{towbent}%\towbent_{m,\xi}^+
  \vcentcolon=
  \graph(Z_{m,\xi})
\quad \mbox{and} \quad
\widehat{\phi}_{m,\xi}(p) 
  \vcentcolon=
  p + Z_{m,\xi}(p).
\end{equation}
Finally we also define
\begin{equation}
\label{towcokerandgen}
\glsuseri{towcoker}%\towcokergen
  \vcentcolon=
  (\nu_{{\tow^+}} \cdot \partial_y)\Psi^{\mathrm{dislocate}}
\quad \mbox{and} \quad
\glsuserii{towcoker}%\towcoker
  \vcentcolon=-
  J_{\tow^+} \towcokergen.
\end{equation}
The function $\nu_{{\tow^+}} \cdot \partial_y$
is of course the Jacobi field on $\tow^+$ generating translations
in the $y$-direction,
so $\towcokergen$ can be interpreted as the generator of
\emph{dislocations} (following the terminology of Kapouleas)
of the wings $W^1$ and $W^{-1}=\refl_{\axis{x}}W^1$ -- fixing
$W^0$ and the core of $\tow^+$ but translating most of
these other two wings in opposite directions -- and
$\towcoker$ as the correspondingly induced mean curvature, to first order,
by virtue of the minimality of $\tow^+$.

In the following lemma we summarize the properties of $\towbent_{m,\xi}^+$
that are important for our construction.

\begin{lemma}[Dislocation and straightening estimates]
\label{lemmatowbent}
For each $c>0$ there exists $m_0(c)>0$
such that the following hold
for every $m>m_0(c)$
and every $\xi \in \IntervaL{-c,c}$:
\begin{enumerate}[label={\normalfont(\roman*)}]
\item $\towbent_{m,\xi}^+$
      is a connected, smooth,
      properly embedded
      surface meeting $\{x=0\}$ orthogonally
      along a smooth connected curve;
\item $\Aut_{\R^3}(\towbent_{m,\xi}^+)
       =\Aut_{\R^3}(\tow^+)$;
\item \label{towbent_action_of_phi_hat}
      $\widehat{\phi}_{m,\xi}$ is a diffeomorphism
      restricting to the identity on
      $\tow^+
       \setminus 
       \bigl((W^0 \cap \{\dist_{\axis{z}}>m^{3/4}\}) \cup W^1 \cup W^{-1}\bigr),
      $
      to $\trans^{\axis{y}}_{\xi/m}$ on
      $
       W^1 \cap 
        \{
         R_{\mathrm{tow}}+1<\dist_{\axis{z}}<m^{3/4}
        \}
      $ and to $\trans^{\axis{y}}_{-\xi/m}$ on
      $
       W^{-1} \cap 
        \{
         R_{\mathrm{tow}}+1<\dist_{\axis{z}}<m^{3/4}
        \}
      $;
\item the map
      $
       \iota_{{\towbent_{m,\xi}^+}}
       \circ
       \widehat{\phi}_{m,\xi}
       \colon
       \tow^+ \to \R^3
      $
      is smooth in $\xi$,
      $
       \iota_{{\towbent_{m,\xi}^+}}
       \colon
       \towbent_{m,\xi}^+ \to \R^3
      $ being the inclusion;
\item\label{lemmatowbent-wings} the wing
      $
       \widehat{W}^0_{m,\xi}
       =\widehat{W}^0_m
       \vcentcolon=
       \widehat{\phi}_{m,\xi}(W^0)
       \subset \towbent_{m,\xi}^+
      $
      is asymptotic to (and eventually coincides with) the half plane
      \[\widehat{\Pi}^0_{m,\xi}
       =\widehat{\Pi}^0
       \vcentcolon=
       \{y=0\}
        \cap 
        \{x \geq 0\} \cap \axis{z}_{\geq R_{\mathrm{tow}}/2},
      \]
      the wing
      $
       \widehat{W}^1_{m,\xi}
       \vcentcolon=
       \widehat{\phi}_{m,\xi}(W^1)
       \subset \towbent_{m,\xi}^+
      $
      is asymptotic to (and eventually coincides with) the half plane
      \[\widehat{\Pi}^1_{m,\xi}
       \vcentcolon=
       \{y=b_{\omega_0}^{\mathrm{tow}} +m^{-1}\xi +x \tan \omega_0\}
        \cap 
        \{x \geq 0\} \cap \axis{z}_{\geq R_{\mathrm{tow}}/2};
      \]
      for each $i=0,1$ there exists a function $\widehat{w}^i_{m,\xi}\in C^\infty(\widehat{\Pi}_{m,\xi}^i)$
      such that $\widehat{W}_{m,\xi}^i = \graph(\widehat{w}^i_{m,\xi} \widehat{\nu}^i)$,
      with
      $\widehat{\nu}^0 \vcentcolon=-\partial_y$
      and
      $
       \widehat{\nu}^1
       \vcentcolon=
       (\cos \omega_0 \, \partial_y
        - \sin \omega_0 \, \partial_x)
      $;  
\item
\label{towbent_quantitative}
    for each integer $k\geq0$
    there exists a constant $C(k)>0$
    (independent of $m$, $c$, and $\xi$)
    such that we have the estimates
\begin{enumerate}[label={\normalfont(\roman{enumi}.\roman*)}]
\item
\label{towbent_wing_defining_func_est}
$
   \nm{\widehat{w}^i_{m,\xi}}_{k,0,1}
   \leq C(k)
  $
  for each defining function
  $\widehat{w}^0_{m,\xi}$, $\widehat{w}^1_{m,\xi}$
  of the wings as in \ref{lemmatowbent-wings},
\item
\label{towbent_metric_est}
$
   \nm[\big]{
         \widehat{\phi}_{m,\xi}^*
           g^{\vphantom{|}}_{\towbent_{m,\xi}^+}
           -g^{\vphantom{|}}_{\tow^+}
    }_{k,0,0}
  \leq
  C(k)\bigl(m^{-1}\abs{\xi}+e^{-m^{3/4}}\bigr)
  $
  for the metrics
  $g^{\vphantom{|}}_{\towbent_{m,\xi}^+}$
  and $g^{\vphantom{|}}_{\tow^+}$
  induced on $\towbent_{m,\xi}^+$
  and $\tow^+$ respectively by the ambient
  Euclidean metric,
\item
\label{towbent_mc_disloc_est}
  $
  \nm[\Big]{
    \bigl(\widehat{\phi}_{m,\xi}^*H_{\towbent_{m,\xi}^+}
    -m^{-1}\xi\towcoker\bigr)
      \big\vert_{\axis{z}_{\leq 3R_{\mathrm{tow}}}}
  }_{k,0,0}
  \leq C(k)m^{-2}\xi^2$
  and
\item
\label{towbent_mc_straighten_est}
  $
  \nm[\Big]{
    \bigl(\widehat{\phi}_{m,\xi}^*
    H_{\towbent_{m,\xi}^+}\bigr)
      \big\vert_{\axis{z}_{\geq 2R_{\mathrm{tow}}}}
  }_{k,0,0}
  \leq C(k)e^{-m^{3/4}}
  $
  for the mean curvature
  $H_{\towbent_{m,\xi}^+}$
  of $\towbent_{m,\xi}^+$;
  and 
\end{enumerate}
\item
 $\widehat{\phi}_{m,\xi}^*H_{\towbent_{m,\xi}^+}$
      has support contained in
\[
 S_m
 \vcentcolon=
 \left\{\dist_{\axis{z}}|_{\tow^+}
     \in
      \interval{m^{3/4},m^{3/4}+1}
  \right\}
  \cup
  \left(
    (W^1\cup W^{-1})
    \cap
     \{
       \dist_{\axis{z}}
       \in
       \interval{R_{\mathrm{tow}}, R_{\mathrm{tow}}+1}
     \}
  \right).
  \]
\end{enumerate}
\end{lemma}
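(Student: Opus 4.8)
The plan is to read off the qualitative assertions directly from the explicit form of $Z_{m,\xi}$ and its two cutoff factors, and to obtain the estimates by Taylor-expanding the relevant geometric quantities in the two small amplitudes $m^{-1}\abs{\xi}$ (the dislocation) and $e^{-m^{3/4}}$ (the size, in every $C^k$, of the straightening correction $f\Psi^{\mathrm{straighten}}_m$, which is supported where $\dist_{\axis{z}}\geq m^{3/4}$ and where $\nm{f}_{k,0,1}\leq C(k)$). The starting point is that $\Psi^{\mathrm{dislocate}}$ and $\Psi^{\mathrm{straighten}}_m$ have $C^k$-norms bounded independently of $m$ (their transitions occur over coordinate intervals of unit length, and the functions $\dist_{\axis{z}}$ and $\abs{y}$, restricted to $\tow^+$, are smooth with bounded covariant derivatives on the respective transition sets), so that $\nm{Z_{m,\xi}}_{k,0,0}\leq C(k)\bigl(m^{-1}\abs{\xi}+e^{-m^{3/4}}\bigr)$ for every $k$, uniformly in $\xi\in\IntervaL{-c,c}$. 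Consequently, for $m>m_0(c)$ the map $\widehat{\phi}_{m,\xi}=\iota_{\tow^+}+Z_{m,\xi}$ is a $C^k$-small perturbation, for all $k$, of the proper embedding $\iota_{\tow^+}\colon\tow^+\to\R^3$, hence is itself a proper embedding; together with the connectedness and smoothness of $\tow^+$ and of $\partial\tow^+$ (Proposition~\ref{karcher-scherk}, item~\ref{towcapxeq0}) this gives the corresponding assertions in item~(i). Item~(iv) is immediate since $Z_{m,\xi}$ is affine in $\xi$; item~(iii) follows by inspection of the cutoffs, noting that on $\tow^+\setminus\bigl((W^0\cap\{\dist_{\axis{z}}>m^{3/4}\})\cup W^1\cup W^{-1}\bigr)$ both cutoffs vanish (using that $\abs{y}<1/4$ on $W^0$, which holds after enlarging $R_{\mathrm{tow}}$, and that $R_{\mathrm{tow}}<m^{3/4}$), while on $W^{\pm1}\cap\{R_{\mathrm{tow}}+1<\dist_{\axis{z}}<m^{3/4}\}$ one has $\Psi^{\mathrm{straighten}}_m=0$ and $\Psi^{\mathrm{dislocate}}=\pm1$ (here $b^{\mathrm{tow}}_{\omega_0}>3/4$ is used to get $\cutoff{0}{1}\circ\abs{y}=1$ on $W^1$), so that $Z_{m,\xi}=\pm m^{-1}\xi\,\partial_y$ there. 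For the orthogonality claim in item~(i) I would extend $Z_{m,\xi}$ to all of $\tow$ by the same formula (with the global normal $\nu_\tow$ in place of $\nu_{\tow^+}$, for which $\sgn_\tow\refl_{\{x=0\}}=1$ by Proposition~\ref{karcher-scherk}, item~\ref{towsymsigns}): since $\Psi^{\mathrm{dislocate}}$, $\Psi^{\mathrm{straighten}}_m$ and $f$ are $\refl_{\{x=0\}}$-invariant while $d\refl_{\{x=0\}}$ fixes $\partial_y$ and maps $\nu_\tow$ equivariantly, the extended field is $\refl_{\{x=0\}}$-equivariant, so $\graph$ of it is $\refl_{\{x=0\}}$-symmetric, and, since at any fixed point of $\refl_{\{x=0\}}$ the field has no $\partial_x$-component, its intersection with $\{x\geq0\}$ is exactly $\towbent_{m,\xi}^+$ and meets $\{x=0\}$ orthogonally along $\widehat{\phi}_{m,\xi}(\partial\tow^+)$, a smooth connected curve.

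For item~(ii), the inclusion $\Aut_{\R^3}(\tow^+)\leq\Aut_{\R^3}(\towbent_{m,\xi}^+)$ follows at once from the $\Aut_{\R^3}(\tow^+)$-equivariance of $Z_{m,\xi}$, since a rigid motion equivariant for a vector field preserves the graph of that field. For the reverse inclusion I would note, from item~\ref{lemmatowbent-wings}, that the ends of $\towbent_{m,\xi}^+$ are asymptotic to the three half-planes $\widehat{\Pi}^0$, $\widehat{\Pi}^1_{m,\xi}$ and $\refl_{\axis{x}}\widehat{\Pi}^1_{m,\xi}$, which differ from $\Pi^0,\Pi^1,\refl_{\axis{x}}\Pi^1$ only by a parallel translation of $\Pi^1$ (and of its reflection) in the $y$-direction; in particular the relevant angle is still $\omega_0\neq\pi/3$. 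One then repeats, essentially verbatim in the half-space, the argument from the last two paragraphs of the proof of Proposition~\ref{karcher-scherk}: any symmetry of $\towbent_{m,\xi}^+$ permutes these ends and hence the associated infinite family of horizontal planes of symmetry (at spacing $\pi$), so it fixes $\axis{z}$ and $\{y=0\}$, and---after composing with reflections already identified---cannot be $\refl_{\{y=0\}}$ without producing a horizontal plane of symmetry at spacing $\pi/2$, a contradiction. Item~(v) is then obtained by writing $\widehat{W}^i_{m,\xi}=\widehat{\phi}_{m,\xi}(W^i)$ as a graph over $\widehat{\Pi}^i_{m,\xi}$ (itself close to $\Pi^i$, resp.\ to $\trans^{\axis{y}}_{m^{-1}\xi}\Pi^1$, and $\widehat{\phi}_{m,\xi}$ being $C^1$-close to the inclusion), and observing that $\Psi^{\mathrm{dislocate}}$ vanishes on $W^0$, so that $\widehat{W}^0_{m,\xi}$ is independent of $\xi$.

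The estimates in item~(vi) require more care. For~\ref{towbent_wing_defining_func_est}: since the dislocation acts on $W^1$ as the $y$-translation by the varying amount $m^{-1}\xi\Psi^{\mathrm{dislocate}}$ and the straightening adds $f\Psi^{\mathrm{straighten}}_m\nu_{\tow^+}$, the defining function $\widehat{w}^i_{m,\xi}$ equals, up to a near-identity reparametrization of the base half-plane, $w^i$ plus terms controlled by $m^{-1}\abs{\xi}$ on the set $\{\dist_{\axis{z}}\lesssim R_{\mathrm{tow}}\}$ and by $f\Psi^{\mathrm{straighten}}_m$; all of these have $C^{k}(\,\cdot\,,e^{-\dist_{\axis{z}}})$-norm $\leq C(k)$ by \eqref{eq:ExpDecayDefFunctions} (and, for the first term, by the choice $m_0(c)\geq c$, which also renders the constant independent of $c$). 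For~\ref{towbent_metric_est}: the difference $\widehat{\phi}_{m,\xi}^*g^{\vphantom{|}}_{\towbent_{m,\xi}^+}-g^{\vphantom{|}}_{\tow^+}$ is a universal linear-plus-quadratic expression in $\nabla Z_{m,\xi}$, and it vanishes wherever $Z_{m,\xi}$ is locally identically zero or a constant $\partial_y$-translation (translations being ambient isometries); it is therefore supported in the two transition sets, on which $\abs{\nabla Z_{m,\xi}}$ and its derivatives are $\leq C(k)m^{-1}\abs{\xi}$ on the dislocation set (boundedness of $\nabla\Psi^{\mathrm{dislocate}}$) and $\leq C(k)e^{-m^{3/4}}$ on the straightening set (exponential decay of $f$ and of $\abs{A_{\tow^+}}$ on the wings, both from \eqref{eq:ExpDecayDefFunctions}). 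For~\ref{towbent_mc_disloc_est}: on $\axis{z}_{\leq 3R_{\mathrm{tow}}}$, where $\Psi^{\mathrm{straighten}}_m\equiv0$ for $m$ large, one has $\widehat{\phi}_{m,\xi}=\iota_{\tow^+}+(m^{-1}\xi)V$ with $V\vcentcolon=\Psi^{\mathrm{dislocate}}\partial_y$ a fixed $C^\infty$-bounded field, so, setting $\mathcal{H}(t)\vcentcolon=H_{\graph(tV)}\circ(\iota_{\tow^+}+tV)$, Taylor's theorem gives $\mathcal{H}(t)=\mathcal{H}(0)+t\,\mathcal{H}'(0)+O(t^2)$ with remainder uniform in $C^k$ by the bounded geometry of $\tow^+\cap\axis{z}_{\leq 3R_{\mathrm{tow}}}$; here $\mathcal{H}(0)=H_{\tow^+}=0$ and, since the first variation of mean curvature depends only on the normal part of the variation field, $\mathcal{H}'(0)=-J_{\tow^+}(V\cdot\nu_{\tow^+})=-J_{\tow^+}\towcokergen=\towcoker$ by \eqref{eq:PointwiseJacobi} and \eqref{towcokerandgen}; evaluation at $t=m^{-1}\xi$ yields the bound $C(k)m^{-2}\xi^2$.

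Finally, for~\ref{towbent_mc_straighten_est} and item~(vii): away from the dislocation transition set $(W^1\cup W^{-1})\cap\{\dist_{\axis{z}}\in\interval{R_{\mathrm{tow}},R_{\mathrm{tow}}+1}\}$ and the straightening transition set $\{\dist_{\axis{z}}|_{\tow^+}\in\interval{m^{3/4},m^{3/4}+1}\}$, the map $\widehat{\phi}_{m,\xi}$ is locally the identity, a constant translation (where $\Psi^{\mathrm{dislocate}}$ is locally constant and $\Psi^{\mathrm{straighten}}_m=0$), or a constant translation composed with the exact flattening of a wing onto its asymptotic plane (where $\Psi^{\mathrm{straighten}}_m=1$, using the defining property of $f$); in each case $\towbent_{m,\xi}^+$ is locally a rigid image of a piece of the minimal surface $\tow^+$ or a subset of a plane, hence minimal, which gives (vii), and on the straightening transition set $Z_{m,\xi}$ differs from a constant translation only by $f\Psi^{\mathrm{straighten}}_m\nu_{\tow^+}$, whose $C^{k+2}$-norm is $\leq C(k)e^{-m^{3/4}}$, so the smooth dependence of mean curvature on the surface yields~\ref{towbent_mc_straighten_est}. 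I expect the two genuine difficulties to be: upgrading the symmetry inclusion to an equality in item~(ii), which means transplanting the end-permutation argument of Proposition~\ref{karcher-scherk} to the perturbed free-boundary setting and again invoking $\omega_0\neq\pi/3$; and the bookkeeping in~\ref{towbent_mc_disloc_est}, namely pinning down the first-order mean-curvature response to the dislocation as exactly $m^{-1}\xi\,\towcoker$ while controlling the remainder quadratically and uniformly over the noncompact region $\axis{z}_{\leq 3R_{\mathrm{tow}}}$.
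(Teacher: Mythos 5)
Your proposal is correct and follows essentially the same strategy as the paper's (quite terse) proof: read the qualitative items off the cutoff structure of $Z_{m,\xi}$ and derive the estimates from smallness of $Z_{m,\xi}$ together with smooth dependence of induced metric and mean curvature on the graphing vector field, isolating the dislocation contribution (of order $m^{-1}\abs{\xi}$) from the straightening contribution (of order $e^{-m^{3/4}}$). Your Taylor expansion of $t\mapsto\mathcal H(t)$ at $t=0$ with $\mathcal H(0)=0$, $\mathcal H'(0)=-J_{\tow^+}\towcokergen=\towcoker$, and uniformly bounded second derivative is the same mechanism as the paper's fundamental-theorem-of-calculus identity (and in fact treats the constant-in-$\xi$ straightening term more cleanly, since the family $\widehat\phi_{m,t\xi}$ does not reduce to the identity at $t=0$), so the two accounts are mathematically interchangeable.
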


\begin{proof}
All claims,
except for item \ref{towbent_quantitative},
are immediate consequences of the definition \eqref{towbent}, based on the results proven in Proposition \ref{karcher-scherk} and Remark \ref{towasymptotics}.
Item \ref{towbent_wing_defining_func_est}
we know by virtue of the corresponding estimate
for the defining functions in Remark \ref{towasymptotics};
the distortion introduced by the second term of
\eqref{vectorfieldefiningbenttower}
is of order $e^{-m^{3/4}}$
while that introduced by the first term
has support contained in $\axis{z}_{\leq 2R_{\mathrm{tow}}}$
and is of order $\xi/m$
(and our assumptions allow us to take $m$ large in terms of $\xi$).
For items
\ref{towbent_metric_est}--\ref{towbent_mc_straighten_est}
we use the facts
that the induced metric and mean curvature
of the graph of a vector field
(in the sense of definition \eqref{eqn:definition_graph})
depend smoothly on the vector field
and that the background geometry
(the geometry of $\tow^+$)
is bounded.
For example, item \ref{towbent_metric_est}
can be obtained from the identity
\begin{equation*}
\widehat{\phi}_{m,\xi}^*g^{\vphantom{|}}_{\towbent_{m,\xi}^+}
  -g^{\vphantom{|}}_{\tow^+}
=
\int_0^1 \partial_t
  \widehat{\phi}_{m,t\xi}^*g^{\vphantom{|}}_{\towbent_{m,t\xi}^+}
  \, dt,
\end{equation*}
because the integrand is controlled by $Z_{m,\xi}$,
which we can estimate directly from
\eqref{vectorfieldefiningbenttower} and the supporting definitions.
The mean curvature estimates can established
in similar fashion;
the estimate \ref{towbent_mc_disloc_est}
is quadratic in $\xi/m$
because we have subtracted the first-order term
on the left-hand side.
\end{proof}

As previewed above,
the initial surfaces
will be constructed,
in part,
by scaling down
the deformed towers
$\towbent_{m,\xi}^+$
and wrapping them
(as described in detail below)
around the equator.
In fact the parameter $m$,
up to now a sufficiently large real number,
will also play the role of scale factor
-- that is we will work with
$\frac{1}{m}\towbent_{m,\xi}^+$ --
and to accommodate the wrapping
we will henceforth restrict to positive integral values of $m$.
For future reference,
we recall definitions \eqref{quotientbytrans}
and \eqref{towquotient}
and we define the diffeomorphism
$
 \widetilde{\phi}_{m,\xi}
 \colon
 \towquot^+_{(m)}
 \to
 \varpi(\frac{1}{m}\towbent_{m,\xi}^+)
$
by
\begin{equation}
\label{scaledphibentquot}
\widetilde{\phi}_{m,\xi}(x,y,z+2\pi m\Z)
\vcentcolon=
\frac{1}{m}\widehat{\phi}_{m,\xi}(x,y,z)
  + (0,0,2\pi\Z).
\end{equation}

\paragraph{Equatorial coordinates.}
In order to import and deform the towers as needed
from $\R^3$ to $\B^3$
we define a map
$\gls{Phi}\colon \R^3 \to \R^3$ as follows.
First of all, to help distinguish the domain and target 
we reserve the coordinate labels $(x,y,z)$ for the target
and relabel to $(\sigma, \psi, \theta)$ for the domain.
The  map $\Phi$
will then furnish (local) coordinates on the target,
whereby $\theta$ and $\psi$ are respectively
longitudinal and latitudinal angles
and $\sigma$ is directed distance from $\partial \B^3$,
increasing toward the origin
(see Figure \ref{fig:equatorial_coordinates}). 
To emphasize the general idea behind the definition
we first formulate it somewhat abstractly as
\begin{equation}
\label{PhiR3def}
  \Phi(\sigma, \psi, \theta)
  \vcentcolon=
  \tubularexp_{(\R^3,g_{\mathrm{euc}}),(\Sp^2,\nu),(\Sp^1,\eta)}
    (\gamma(\theta),\psi,\sigma)
  =
  \exp^{\R^3}_{\exp^{\Sp^2}_{\gamma(\theta)} \psi \eta(\theta)}
    \sigma \nu,
\end{equation}
where $\gamma$ is the unit-speed, positively directed parametrization of the equator,
$\eta$ is the upward unit conormal along $\gamma$ in $\Sp^2$,
$\nu$ is the inward unit normal to $\Sp^2$ in $\R^3$,
each $\exp$ is the exponential map on the indicated manifold with its standard metric,
and we recall definition \eqref{tubularcoordscodim2}.
In particular, we remark that $\Phi$ (suitably restricted as below) descends to a diffeomorphism
\begin{equation}
\label{PhiOnQuotient}
\Phi\colon
\set[\big]
{
  (\sigma, \psi, \theta) \in
  \interval{-\infty,1}
  \times \interval{-\tfrac{\pi}{2},\tfrac{\pi}{2}} \times \R
} \big/ \sk[\big]{\trans^{\axis{\theta}}_{2\pi}}
\to
\R^3 \setminus \axis{z}.
\end{equation}%
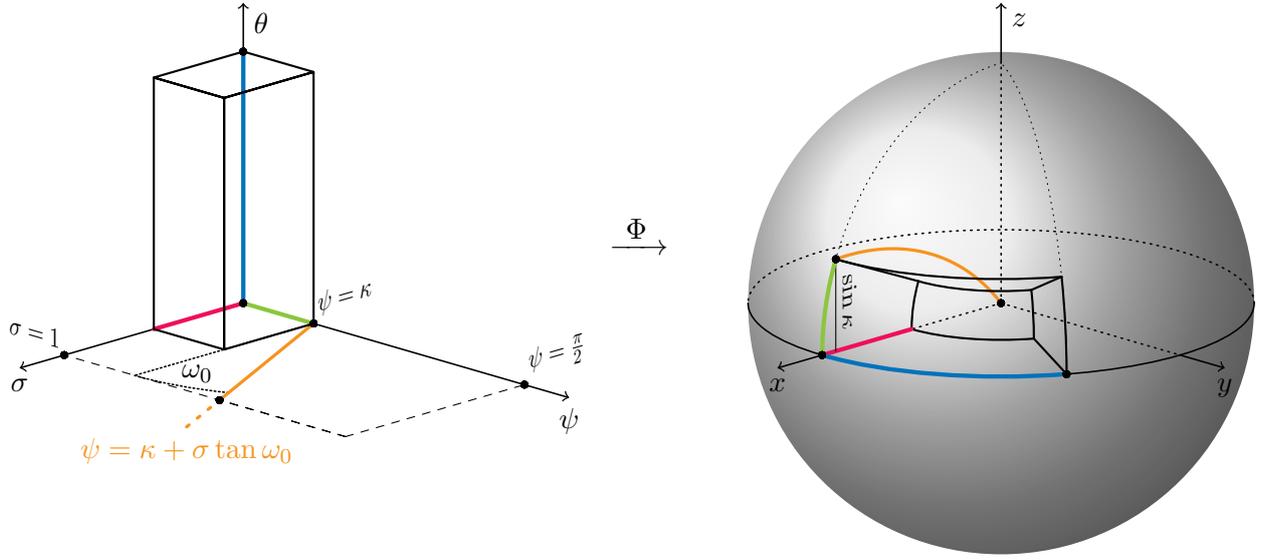
\begin{figure}%
\pgfmathsetmacro{\thetaO}{73}
\pgfmathsetmacro{\phiO}{135}
\tdplotsetmaincoords{\thetaO}{\phiO}
\pgfmathsetmacro{\axis}{1.25}
\pgfmathsetmacro{\sigmapar}{0.5}
\pgfmathsetmacro{\psipar}{pi/8}
\pgfmathsetmacro{\thetapar}{pi/3}
\pgfmathsetmacro{\alphapar}{25.38}
\pgfmathsetmacro{\bpar}{\psipar}
\pgfmathsetmacro{\psistart}{deg(\psipar)}
\pgfmathsetmacro{\psiend}{deg(\bpar+tan(\alphapar))}
\definecolor{xcol}{cmyk}{0,1,0.5,0}
\definecolor{ycol}{cmyk}{0.5,0,1,0}	
\definecolor{zcol}{cmyk}{1,0.5,0,0}	
\definecolor{Pcol}{cmyk}{0,0.5,1,0}	
\begin{tikzpicture}[line cap=round,scale=\unitscale, line join=round,tdplot_main_coords,semithick,baseline={(0,0,-\unitscale/2)}]
\tdplotdrawarc[tdplot_main_coords,densely dotted]{(0,\bpar,0)}{1}{0}{\alphapar}{anchor=south west,inner sep=1pt}{$\omega_0$}
\draw[dashed,thin]
(1,pi/2,0)--(1,0,0)
(1,pi/2,0)--(0,pi/2,0)
(1,pi/2,0)--(1,{rad(\psiend)},0)
;
\tdplottransformmainscreen{0}{1}{0}
\pgfmathsetmacro{\VecHx}{\tdplotresx}
\pgfmathsetmacro{\VecHy}{\tdplotresy}
\tdplottransformmainscreen{0}{0}{1}
\pgfmathsetmacro{\VecVx}{\tdplotresx}
\pgfmathsetmacro{\VecVy}{\tdplotresy}
\draw plot[bullet](1,0,0)node[inner sep=2pt,above left,cm={\VecHx ,\VecHy ,\VecVx ,\VecVy ,(0,0)}]{$\sigma=1$};
\draw[Pcol,very thick]({-\bpar*cot(\alphapar)},0,0)(0,\psipar,0)--(1,{rad(\psiend)},0)coordinate(P1)coordinate[pos=1.35](P2);
\draw[Pcol,very thick,loosely dotted](P2)node[below]{$\psi=\kappa+\sigma\tan\omega_0$}--(P1);
\draw[densely dotted](0,\bpar,0)--(1,\bpar,0);
\draw plot[bullet](1,{rad(\psiend)},0);
\pgfresetboundingbox
\draw[->](0,0,0)--(\axis,0,0)node[below]{$\sigma$};
\draw[->](0,0,0)--(0,pi/2+0.25,0)node[below]{$\psi$};
\draw[->](0,0,0)--(0,0,\axis)node[below right]{$\theta$};
\coordinate(000)at(0,0,0);
\coordinate(100)at(\sigmapar,0,0);
\coordinate(010)at(0,\psipar,0);
\coordinate(001)at(0,0,\thetapar);
\coordinate(110)at(\sigmapar,\psipar,0);
\coordinate(011)at(0,\psipar,\thetapar);
\coordinate(101)at(\sigmapar,0,\thetapar);
\coordinate(111)at(\sigmapar,\psipar,\thetapar);
\draw[ultra thick,xcol](000)--(100);
\draw[ultra thick,ycol](000)--(010);
\draw[ultra thick,zcol](000)--(001); 
\draw[thick]
(111)--(011)--(001)--(101)--cycle
(111)--(101)--(100)--(110)--cycle
(111)--(110)--(010)--(011)--cycle
;
\draw plot[bullet](000) plot[bullet](001); 
\tdplottransformmainscreen{-1}{0}{0}
\pgfmathsetmacro{\VecHx}{\tdplotresx}
\pgfmathsetmacro{\VecHy}{\tdplotresy}
\tdplottransformmainscreen{0}{0}{1}
\pgfmathsetmacro{\VecVx}{\tdplotresx}
\pgfmathsetmacro{\VecVy}{\tdplotresy}
\draw[] plot[bullet](010)node[inner sep=2pt,above right,cm={\VecHx ,\VecHy ,\VecVx ,\VecVy ,(0,0)}]{$\psi=\kappa$};
\draw[] plot[bullet](0,pi/2,0)node[inner sep=2pt,above right,cm={\VecHx ,\VecHy ,\VecVx ,\VecVy ,(0,0)}]{$\psi=\frac{\pi}{2}$};
\end{tikzpicture}
\hfill
$\mathllap{\xrightarrow{~\displaystyle\Phi~}}$
\hfill
\begin{tikzpicture}[scale=\unitscale,line cap=round,line join=round,tdplot_main_coords,semithick,baseline={(0,0,-\unitscale/2)}] 
\shade[tdplot_screen_coords,ball color=black!10](0,0,0)circle(1);
\draw[dotted]
(0,0,0)--(1,0,0)
(0,0,0)--(0,1,0)
(0,0,0)--(0,0,1);
\draw[->](1,0,0)--(\axis,0,0)node[below]{$x$};
\pgfresetboundingbox\path[tdplot_screen_coords](0,0,0)circle(1);
\draw[->](0,1,0)--(0,\axis,0)node[below]{$y$};
\draw[->](0,0,1)--(0,0,\axis)node[below right]{$z$};
\tdplotdrawarc[dotted]{(0,0,0)}{1}{\phiO}{\phiO+180}{}{} 
\tdplotdrawarc{(0,0,0)}{1}{\phiO}{\phiO-180}{}{} 
\draw[domain=\psistart:\psiend,smooth,variable=\psipar,samples=30,Pcol,very thick]
plot({((\bpar-rad(\psipar))*cot(\alphapar)+1)*cos(\psipar)},0,{((\bpar-rad(\psipar))*cot(\alphapar)+1)*sin(\psipar)});
\tdplotsetcoord{000}{1}{90-0}{0}
\tdplotsetcoord{001}{1}{90-0}{deg(\thetapar)}
\tdplotsetcoord{010}{1}{90-deg(\psipar)}{0}
\tdplotsetcoord{011}{1}{90-deg(\psipar)}{deg(\thetapar)}
\tdplotsetcoord{100}{1-\sigmapar}{90-0}{0}
\tdplotsetcoord{101}{1-\sigmapar}{90-0}{deg(\thetapar)}
\tdplotsetcoord{110}{1-\sigmapar}{90-deg(\psipar)}{0}
\tdplotsetcoord{111}{1-\sigmapar}{90-deg(\psipar)}{deg(\thetapar)}
\begin{scope}[thick]
\tdplotdrawarc{(0,0,0)}{1-\sigmapar}{0}{deg(\thetapar)}{}{} 
\tdplotdrawarc{(0,0,{(1-\sigmapar)*sin(deg(\psipar))})}{{(1-\sigmapar)*cos(deg(\psipar))}}{0}{deg(\thetapar)}{}{}
\tdplotsetthetaplanecoords{0} 
\tdplotdrawarc[tdplot_rotated_coords]
{(0,0,0)}{1-\sigmapar}{90}{90-deg(\psipar)}{}{}
\tdplotsetthetaplanecoords{deg(\thetapar)} 
\tdplotdrawarc[tdplot_rotated_coords]
{(0,0,0)}{1-\sigmapar}{90}{90-deg(\psipar)}{}{}
\draw[ultra thick,xcol](000)--(100);
\draw(001)--(101)(010)--(110)(011)--(111);
\tdplottransformmainscreen{0}{0}{-1}
\pgfmathsetmacro{\VecHx}{\tdplotresx}
\pgfmathsetmacro{\VecHy}{\tdplotresy}
\tdplottransformmainscreen{-1}{0}{0}
\pgfmathsetmacro{\VecVx}{\tdplotresx}
\pgfmathsetmacro{\VecVy}{\tdplotresy}
\draw[thin]({cos(deg(\psipar))},0,{sin(deg(\psipar))})--({cos(deg(\psipar))},0,0)node[midway,above,cm={\VecHx ,\VecHy ,\VecVx ,\VecVy ,(0,0)}]{$\sin\kappa$};
\tdplotdrawarc[ultra thick,zcol]{(0,0,0)}{1}{0}{deg(\thetapar)}{}{} 
\tdplotdrawarc[]{(0,0,{sin(deg(\psipar))})}{{cos(deg(\psipar))}}{0}{deg(\thetapar)}{}{} 
\tdplotsetthetaplanecoords{0} 
\tdplotdrawarc[tdplot_rotated_coords,dotted,thin]{(0,0,0)}{1}{90}{0}{}{}
\tdplotdrawarc[tdplot_rotated_coords,ultra thick,ycol]
{(0,0,0)}{1}{90}{90-deg(\psipar)}{}{}
\tdplotsetthetaplanecoords{deg(\thetapar)} 
\tdplotdrawarc[tdplot_rotated_coords,dotted,thin]{(0,0,0)}{1}{90}{0}{}{}
\tdplotdrawarc[tdplot_rotated_coords]
{(0,0,0)}{1}{90}{90-deg(\psipar)}{}{}
\end{scope}
\draw plot[bullet](1,0,0);
\draw plot[bullet](0,0,0);
\draw plot[bullet]({cos(deg(\psipar))},0,{sin(deg(\psipar))});
\draw plot[bullet]({cos(deg(\thetapar))},{sin(deg(\thetapar))},0)
;
\end{tikzpicture}
\caption{Visualization of equatorial coordinates.}%
\label{fig:equatorial_coordinates}%
\end{figure}%
More explicitly,   
\[
\Phi(\sigma, \psi, \theta)
=(1-\sigma)
\bigl(
  \cos \theta \cos \psi, \;
  \sin \theta \cos \psi, \;
  \sin \psi
\bigr) 
\]
so that
\begin{equation}
\label{Phullback}
\Phi^*g_{\mathrm{euc}}
=
d\sigma^2 + (1-\sigma)^2 \, d\psi^2
  + (1-\sigma)^2(\cos\psi)^2 \, d\theta^2,
  \end{equation}
where of course
\begin{equation}
\label{eqn_definition_geuc}
\gls{geuc} %g_{\mathrm{euc}}
=dx^2+dy^2+dz^2.
\end{equation}
Note also that
$\Phi(\{\sigma \in [0,1[\})=\B^3\setminus\axis{z}$,
with
$\Phi(\{\sigma=0\}) = \partial\B^3\setminus\axis{z}$
and $\Phi$ taking lines parallel to the $\sigma$-axis
to radial segments in $\R^3$ (meeting $\partial \B^3$ orthogonally).
Additionally
$\Phi(\{\theta=\theta_0\}) = \{y=x \tan \theta_0\} \setminus \axis{z}$
for each $\theta_0 \in \R$ and
$\Phi(\{\psi=0\}) = \{z=0\} \setminus \axis{z} \supset \B^2 \setminus \axis{z}$.
In particular then $\Phi$ takes lines of constant $\theta$ in $\{\psi=0\}$
to horizontal lines through the origin
(with the origin deleted).
Moreover $\Phi$ intertwines
the corresponding symmetries in the domain and in the target, in the sense that
\begin{equation}
\label{Phintertwine}
\begin{aligned}
\Phi \circ \refl_{\{\psi=0\}}
&=\refl_{\{z=0\}} {}\circ \Phi,
\\
\Phi \circ \trans^{\axis{\theta}}_t
&=\rot_{\axis{z}}^t {}\circ \Phi,
\\
\Phi \circ \refl_{\axis{\sigma}}
&=\refl_{\axis{x}} \circ \Phi,
\\
\Phi \circ \refl_{\{\theta=\theta_0\}}
&=\refl_{\{y=x \tan \theta_0\}} {}\circ \Phi.
\end{aligned}
\end{equation}
For later reference, we shall convene to set
\[
\dom(\Phi)\vcentcolon=\set[\big]
{(\sigma, \psi, \theta) \in
  \R
  \times \R \times \R
} \big/ \sk[\big]{\trans^{\axis{\theta}}_{2\pi}}.
\]

\paragraph{Wings over the catenoids and the disc.}
As above, let $m \geq 1$ be an integer and let $\xi \in \R$.
Set
\begin{equation}
\label{bmxi}
\begin{aligned}
\kappa_{m,\xi}&\vcentcolon=
  \frac{b^{\mathrm{tow}}_{\omega_0}}{m} +\frac{\xi}{m^2}, 
  \\
\gls{bmxi}%b_{m,\xi}
&\vcentcolon=\sin \kappa_{m,\xi}, 
  \\[.5ex]
P^0&\vcentcolon=\set[\big]
{(\sigma, \psi, \theta) \in
  \R
  \times \R \times \R \st \psi=0
} \big/ \sk[\big]{\trans^{\axis{\theta}}_{2\pi}}, \quad \mbox{and} 
  \\[.5ex]
P^1=P^1_{m,\xi}
  &\vcentcolon=\set[\big]
{(\sigma, \psi, \theta) \in
  \R
  \times \R \times \R \st \psi=\kappa_{m,\xi} + \sigma \tan \omega_0
} \big/ \sk[\big]{\trans^{\axis{\theta}}_{2\pi}}.
  \end{aligned}
\end{equation}
Thus, recalling Lemma \ref{lemmatowbent},
the asymptotic half planes
$\widehat{\Pi}^0_{m,\xi}$ and $\frac{1}{m}\widehat{\Pi}^1_{m,\xi}$
of $\frac{1}{m}\towbent_{m,\xi}$
have images in the quotient contained in $P^0$ and $P^1$ respectively.
Note also that $\Phi(P^1 \cap \{\sigma=0\})$ is the lower component of
$\partial \K_{b_{m,\xi}}$. 
As visualized in Figure \ref{fig:tubular-parametrization} we then set
\begin{align*}
\K=\K_{m,\xi}
  &\vcentcolon=
  \mbox{the complete catenoid containing } \K_{b_{m,\xi}}, \\
\nu^{\vphantom{|}}_{\K}
  &\vcentcolon=
  \mbox{the upward unit normal on } \K, \\
\check\eta^{\vphantom{|}}_{\K}
  &\vcentcolon=
  \mbox{the inward unit conormal on }
    \Phi(P^1 \cap \{\sigma=0\})
      \subset \partial \K_{b_{m,\xi}}, \\
\nu^{\vphantom{|}}_{{P^1}}
  &\vcentcolon=
  (\cos \omega_0)  \partial_\psi
    -(\sin \omega_0)  \partial_\sigma, \\
\check\eta^{\vphantom{|}}_{{P^1}}
  &\vcentcolon=
  (\cos \omega_0)  \partial_\sigma
    + (\sin \omega_0)  \partial_\psi.
\end{align*}
(To avoid confusion, we specify that the requirement on $\nu^{\vphantom{|}}_{\K}$ of being \emph{upward} pointing is understood, say, about the nearly-equatorial connected component of $\partial\K_{b_{m,\xi}}$.)
Recalling \eqref{tubularcoordscodim2} and \eqref{eqn_definition_geuc}, and analogously letting $\widehat{g}_{\mathrm{euc}}
\vcentcolon=
d\sigma^2 + d\psi^2 + d\theta^2$,
we also define the maps
\begin{align*}
E_{\B^2}
  &\vcentcolon=
  \tubularexp_{
     (\R^3,g_{\mathrm{euc}}),~
     (\B^2,\partial_z),~
     (\Sp^1,d\Phi \, \partial_\sigma)
     }, \\[.3ex]
E_{\K}
   &\vcentcolon=
  \tubularexp_{
    (\R^3,g_{\mathrm{euc}}),~
    (\K, \nu^{\vphantom{|}}_{{\K}}),~
    (\Phi(P^1 \cap \{\sigma=0\}),\check\eta^{\vphantom{|}}_{\K})
    }, 
\\
E_{{P^1}}
  &\vcentcolon=
  \tubularexp_{
    (\dom(\Phi),\widehat{g}_{\mathrm{euc}}),~
    (P^1,\nu_{{P^1}}),~
    (\partial P^1,\check\eta_{{P^1}})
    },
\intertext{as well as the maps $\varphi_{P^1},
~\varphi^{\vphantom{|}}_{\K}\colon\R^3 \to \R^3$ by} 
\varphi^{\vphantom{1}}_{P^1}(\theta,s,t)
  &\vcentcolon=
  E_{{P^1}}\bigl((0,\kappa_{m,\xi},\theta),s,t\bigr), 
\\
\varphi^{\vphantom{|}}_{\K}(\theta,s,t)
  &\vcentcolon=
  E_{{\K}}\bigl(\Phi(0,\kappa_{m,\xi},\theta),s,t\bigr),
\end{align*}
and, finally, corresponding respectively to the wings $W^0$, $W^1$, the maps
\begin{align}\label{initsurfphidef}
&\begin{aligned}
&\varphi^0\colon\dom(\Phi) \cap \{\sigma < 1\}\to\R^3
\\
&\varphi^0(\sigma,\psi,\theta)
\vcentcolon=
E_{{\B^2}}\bigl(\Phi(0,0,\theta),\sigma,\psi\bigr),
\end{aligned}
&
&\begin{aligned}
&\varphi^1\colon\dom(\Phi)\to\R^3
\\
&\varphi^1=\varphi^1_{m,\xi}
\vcentcolon=
\varphi^{\vphantom{|}}_{\K} \circ \varphi_{P^1}^{-1}.
\end{aligned}
\end{align}
In short, the map $\varphi^0$ provides a natural parametrization of a tubular neighborhood of $\B^2$ over a tubular neighborhood of $P^0$, and similarly $\varphi^1$ provides a natural parametrization of a tubular neighborhood of $\K$ over a tubular neighborhood of $P^1$, as visualized in Figure \ref{fig:tubular-parametrization}.

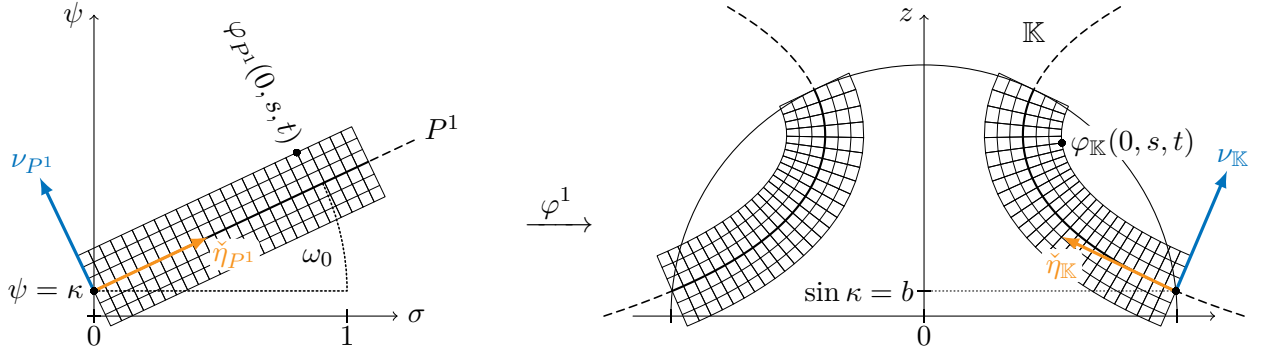
\begin{figure}%
\centering
\pgfmathsetmacro{\anz}{25}
\pgfmathsetmacro{\anzm}{\anz-1}	
\pgfmathsetmacro{\b}{0.1}
\pgfmathsetmacro{\k}{rad(asin(\b))}
\pgfmathsetmacro{\omegao}{25.38}
\definecolor{etacol}{cmyk}{0,0.5,1,0}	
\definecolor{nucol}{cmyk}{1,0.5,0,0}	
\begin{tikzpicture}[line cap=round,line join=round,baseline={(0,\unitscale/3)},scale=\unitscale]
\draw[->](0,0)--(1.2,0)node[right]{$\sigma$};
\draw[->](0,0)--(0,1.2)node[left]{$\psi$};
\draw plot[plus](0,0)node[below]{$0$};
\draw plot[vdash](1,0)node[below]{$1$};
\draw[densely dotted,semithick](0,\k)--++(1,0)arc(0:\omegao:1)node[pos=0.35,anchor=\omegao/2]{$\omega_0$};
\draw[densely dashed,semithick](0,\k)++(\omegao:1.4)node[anchor=\omegao+180]{$P^1$}--++(\omegao:-.2);
\draw[thick](0,\k)--++(\omegao:1.2)
node foreach \i in {0,...,\anz}[pos={(\i/\anz)},sloped,allow upside down,minimum size=0.1*\unitscale cm](A\i){}
node foreach \i in {0,...,\anz}[pos={(\i/\anz)},sloped,allow upside down,minimum size=0.2*\unitscale cm](B\i){}
node foreach \i in {0,...,\anz}[pos={(\i/\anz)},sloped,allow upside down,minimum size=0.3*\unitscale cm](C\i){}
;
\begin{scope}[very thin]
\foreach[count=\k]\i in {0,...,\anzm}{
\draw(A\i.center)--(A\i.north)--(A\k.north)--(A\k.center);
\draw(A\i.north)--(B\i.north)--(B\k.north)--(A\k.north);
\draw(B\i.north)--(C\i.north)--(C\k.north)--(B\k.north);
\draw(A\i.center)--(A\i.south)--(A\k.south)--(A\k.center);
\draw(A\i.south)--(B\i.south)--(B\k.south)--(A\k.south);
\draw(B\i.south)--(C\i.south)--(C\k.south)--(B\k.south);
}
\end{scope}
\draw[-latex,very thick,etacol](0,\k)--++(\omegao:1/2)node[below right,fill=white,inner sep=1pt,rectangle,rounded corners=.5ex]{$\check\eta_{P^1}$};
\draw[-latex,very thick,nucol](0,\k)--++(\omegao+90:1/2)node[anchor=-90+\omegao,inner sep=1pt]{$\nu_{P^1}$};
\draw plot[bullet](0,\k)node[left]{$\psi=\kappa$};
\draw plot[bullet](C20.north)node[left,rotate=\omegao-90]{$\varphi_{P^1}(0,s,t)$};
\end{tikzpicture}
\hfill
$\mathllap{\xrightarrow{~\displaystyle\varphi^1~}}$
\begin{tikzpicture}[line cap=round,line join=round,baseline={(0,\unitscale/3)},scale=\unitscale]
\pgfmathsetmacro{\dscale}{0.245}
\pgfmathsetmacro{\a}{2.5603}
\pgfmathsetmacro{\h}{0.9012}
\pgfmathsetmacro{\w}{asin(sqrt((1-\h*\h)/(1-\b*\b)))}
\draw[->](-1.15,0)--(1.15,0);
\draw[->](0,-0)--(0,1.2)node[left]{$z$};
\draw plot[vdash](0,0)node[below]{$0$};
\draw plot[vdash](1,0);
\draw plot[vdash](-1,0);
\draw(1,0)arc(0:180:1); 
\draw[thick,domain=\b:\h,variable=\z,samples=50]plot({cosh(\a*\z-\a*\b-acosh(\a*sqrt(1-\b*\b)))/\a},{\z});
\draw[semithick,densely dashed,domain=\h+0.33:\h,variable=\z,samples=10]plot({cosh(\a*\z-\a*\b-acosh(\a*sqrt(1-\b*\b)))/\a},{\z})
node[above=3ex]{$\K$};
\draw[semithick,densely dashed,domain=\b-0.1:\b,variable=\z,samples=5]plot({cosh(\a*\z-\a*\b-acosh(\a*sqrt(1-\b*\b)))/\a},{\z});
\draw[xscale=-1,thick,domain=\b:\h,variable=\z,samples=50]plot({cosh(\a*\z-\a*\b-acosh(\a*sqrt(1-\b*\b)))/\a},{\z});
\draw[xscale=-1,semithick,densely dashed,domain=\h+0.33:\h,variable=\z,samples=10]plot({cosh(\a*\z-\a*\b-acosh(\a*sqrt(1-\b*\b)))/\a},{\z});
\draw[xscale=-1,semithick,densely dashed,domain=\b-0.1:\b,variable=\z,samples=5]plot({cosh(\a*\z-\a*\b-acosh(\a*sqrt(1-\b*\b)))/\a},{\z});
\path
({ sqrt(1-\b*\b)},{\b})..controls+({-sinh(acosh(\a*sqrt(1-\b*\b)))*\dscale},\dscale)and+({-sinh(\a*\h-\a*\b-acosh(\a*sqrt(1-\b*\b)))*\dscale},-\dscale)..({ cosh(\a*\h-\a*\b-acosh(\a*sqrt(1-\b*\b)))/\a},{\h})
node foreach \i in {0,...,\anz}[pos={(1/3)*(\i/\anz)^3+(1-1/3)*(\i/\anz)(1/3)*(\i/\anz)^3+(1-1/3)*(\i/\anz)},sloped,allow upside down,minimum size=0.1*\unitscale cm](A\i){}
node foreach \i in {0,...,\anz}[pos={(1/3)*(\i/\anz)^3+(1-1/3)*(\i/\anz)(1/3)*(\i/\anz)^3+(1-1/3)*(\i/\anz)},sloped,allow upside down,minimum size=0.2*\unitscale cm](B\i){}
node foreach \i in {0,...,\anz}[pos={(1/3)*(\i/\anz)^3+(1-1/3)*(\i/\anz)(1/3)*(\i/\anz)^3+(1-1/3)*(\i/\anz)},sloped,allow upside down,minimum size=0.3*\unitscale cm](C\i){}
;
\begin{scope}[very thin]
\foreach[count=\k]\i in {0,...,\anzm}{
\draw(A\i.center)--(A\i.north)--(A\k.north)--(A\k.center);
\draw(A\i.north)--(B\i.north)--(B\k.north)--(A\k.north);
\draw(B\i.north)--(C\i.north)--(C\k.north)--(B\k.north);
\draw(A\i.center)--(A\i.south)--(A\k.south)--(A\k.center);
\draw(A\i.south)--(B\i.south)--(B\k.south)--(A\k.south);
\draw(B\i.south)--(C\i.south)--(C\k.south)--(B\k.south);
}
\end{scope}
\begin{scope}[shift={(A0.center)},x={(A0.east)}, y={(A0.south)}]
\draw[-latex,very thick,nucol](0,0)--(0,10)node[anchor=-90-\w,inner sep=1pt]{$\nu_{\K}$};
\end{scope}
\draw plot[bullet](C20.south)node[right=2pt,fill=white, inner sep=1pt,rounded corners=.5ex]{$\varphi_{\K}(0,s,t)$};
\path
({-sqrt(1-\b*\b)},{\b})..controls+({ sinh(acosh(\a*sqrt(1-\b*\b)))*\dscale},\dscale)and+({ sinh(\a*\h-\a*\b-acosh(\a*sqrt(1-\b*\b)))*\dscale},-\dscale)..({-cosh(\a*\h-\a*\b-acosh(\a*sqrt(1-\b*\b)))/\a},{\h})
node foreach \i in {0,...,\anz}[pos={(1/3)*(\i/\anz)^3+(1-1/3)*(\i/\anz)},sloped,allow upside down,minimum size=0.1*\unitscale cm](A\i){}
node foreach \i in {0,...,\anz}[pos={(1/3)*(\i/\anz)^3+(1-1/3)*(\i/\anz)},sloped,allow upside down,minimum size=0.2*\unitscale cm](B\i){}
node foreach \i in {0,...,\anz}[pos={(1/3)*(\i/\anz)^3+(1-1/3)*(\i/\anz)},sloped,allow upside down,minimum size=0.3*\unitscale cm](C\i){}
;
\begin{scope}[very thin]
\foreach[count=\k]\i in {0,...,\anzm}{
\draw(A\i.center)--(A\i.north)--(A\k.north)--(A\k.center);
\draw(A\i.north)--(B\i.north)--(B\k.north)--(A\k.north);
\draw(B\i.north)--(C\i.north)--(C\k.north)--(B\k.north);
\draw(A\i.center)--(A\i.south)--(A\k.south)--(A\k.center);
\draw(A\i.south)--(B\i.south)--(B\k.south)--(A\k.south);
\draw(B\i.south)--(C\i.south)--(C\k.south)--(B\k.south);
} 
\end{scope}
\draw[densely dotted](0,\b)--({sqrt(1-\b*\b)},\b);
\draw plot[hdash](0,\b)node[left]{$\sin\kappa=b$}; 
\draw[-latex,very thick,etacol]({ sqrt(1-\b*\b)},\b*0.98)--++(-\w:-0.5)node[below=4pt,fill=white,inner sep=1.5pt,rectangle,rounded corners=.5ex]{$\check\eta_{\K}$};
\draw plot[bullet]({ sqrt(1-\b*\b)},\b);
\end{tikzpicture}
\caption{Visualization of the maps $\varphi_{P^1}$, $\varphi_{\K}$ and $\varphi^1=\varphi_{\K}\circ\varphi_{P^1}^{-1}$.}%
\label{fig:tubular-parametrization}%
\end{figure}

We observe that all equations in
\eqref{Phintertwine}
hold with $\Phi$
replaced either throughout by $\varphi^0 \circ \varpi$
or throughout by $\varphi^1 \circ \varpi$
and that furthermore
\begin{equation}
\label{phivsPhi}
\begin{aligned}
&\varphi^0|_{\axis{\theta}}=\Phi|_{\axis{\theta}},
\quad
d\varphi^0|_{\axis{\theta}}=d\Phi|_{\axis{\theta}},
\quad
\varphi^1|_{P^1 \cap \{\sigma=0\}}
  =\Phi|_{P^1 \cap \{\sigma=0\}}, \quad \mbox{and} \\
&d(\Phi^{-1} \circ \varphi^1)|_{(0,\kappa_{m,\xi},0)}
  \mbox{ is a rotation through angle }
  \omega_{b_{m,\xi}} - \omega_0 + \kappa_{m,\xi}.
\end{aligned}
\end{equation}
Next we define
$\Phi^i_{m,\xi}\colon \dom(\varphi^i) \to \R^3$
for $i=0,1$ by
\begin{equation}
\label{Phii}
\Phi^i_{m,\xi}
\vcentcolon=
(\cutoff{3m^{-1}R_{\mathrm{tow}}}{2m^{-1}R_{\mathrm{tow}}}
    \circ \dist_{\axis{\theta}}) \Phi
  +(\cutoff{2m^{-1}R_{\mathrm{tow}}}{3m^{-1}R_{\mathrm{tow}}}
    \circ \dist_{\axis{\theta}}) \varphi^i,
\end{equation}
a convex interpolation between $\Phi$ and $\varphi^i$.
It follows, making use of \eqref{phivsPhi},
that there exists $\epsilon>0$,
independent of $m$ and $\xi$,
such that $\Phi^i_{m,\xi}|_{P^i_{<\epsilon}}$
is a diffeomorphism onto its image
(for each $i=0,1$) provided that $m$ is sufficiently large
in terms of $\xi$ and a universal constant.

Last, recalling the definition of 
$\widetilde{\phi}_{m,\xi}
 \colon
 \tow^+/\sk{\trans^{\axis{\theta}}_{2m\pi}}
 \to
 \frac{1}{m}\towbent_{m,\xi}^+ /\sk{\trans^{\axis{\theta}}_{2\pi}}$ 
from \eqref{scaledphibentquot}, we define the wings
\begin{equation}
\label{wingsinball}
\begin{aligned}
W^{\K}_{m,\xi}
  &\vcentcolon=
  \left(
    \Phi^1_{m,\xi} \circ \widetilde{\phi}_{m,\xi}
  \right)\bigl(W^1/\sk{\trans^{\axis{\theta}}_{2m\pi}}\bigr) \cap \B^3,
\\
W^{\B^2}_{m,\xi}
  &\vcentcolon=
  \{(0,0,0)\}
  \cup
  \left(\left(
    \Phi^0_{m,\xi} \circ \widetilde{\phi}_{m,\xi}
  \right)\bigl(W^0/\sk{\trans^{\axis{\theta}}_{2m\pi}}\bigr) \cap \B^3\right)
\end{aligned}
\end{equation}
in $\B^3$ (see Figure \ref{fig:initialsurface})
along with the corresponding identification maps
\begin{align}\notag
&\varpi_{W^{\B^2}_{m,\xi}}
 \colon
 W^{\B^2}_{m,\xi} \setminus \{(0,0,0)\}
 \to
 \mathring{W}^0/\sk{\trans^{\axis{\theta}}_{2m\pi}}, 
&
&\varpi_{W^{\K}_{m,\xi}}
 \colon W^{\K}_{m,\xi}
 \to
 \mathring{W}^1/\sk{\trans^{\axis{\theta}}_{2m\pi}},
\\
\label{projectionsontowings}
&\varpi_{W^{\B^2}_{m,\xi}}
\vcentcolon=\left(\Phi^0_{m,\xi} \circ \widetilde{\phi}_{m,\xi}\right)\Big|_{\mathring{W}^0/\sk{\trans^{\axis{\theta}}_{2m\pi}}}^{-1},
&
&\varpi_{W^{\K}_{m,\xi}}
\vcentcolon=\left(\Phi^1_{m,\xi} \circ \widetilde{\phi}_{m,\xi}\right)\Big|_{\mathring{W}^1/\sk{\trans^{\axis{\theta}}_{2m\pi}}}^{-1}. 
\end{align}
We explicitly remark that the domains $\mathring{W}^i \subset W^i$, for $i\in\{0,1\}$, are defined by the requirement that the maps in question (obtained, in turn, by restriction of $\Phi^i_{m,\xi} \circ \widetilde{\phi}_{m,\xi}$) be bijective.
We similarly define the core
\begin{align}
\label{coreinball}
M^{\mathrm{core}}_{m,\xi}
&\vcentcolon=
\left(\Phi \circ \widetilde{\phi}_{m,\xi}\right)
\Bigl((\tow^+ \cap \axis{\theta}_{\leq R_{\mathrm{tow}}})/\sk{\trans^{\axis{\theta}}_{2m\pi}}\Bigr) 
\end{align}
and its accompanying map $\varpi_{M^{\mathrm{core}}_{m,\xi}}\colon M^{\mathrm{core}}_{m,\xi}\to\towquot^+_{(m)}$ by
\begin{align*} 
\varpi^{\vphantom{1}}_{M^{\mathrm{core}}_{m,\xi}}
&\vcentcolon=
\left(\Phi \circ \widetilde{\phi}_{m,\xi}\right)
  \Big|_{
    (\tow^+ \cap \axis{\theta}_{
    \leq R_{\mathrm{tow}}}
    )/\sk{\trans^{\axis{\theta}}_{2m\pi}}
   }^{-1}. 
\end{align*}

\begin{figure}\centering
\providecommand{\initialsurfacetikzcode}[1][]{
\draw[#1] plot[smooth] coordinates{
(0.9508,0.0676)(0.9475,0.0688)(0.9442,0.0700)(0.9409,0.0712)(0.9376,0.0724)(0.9343,0.0736)(0.9309,0.0747)(0.9276,0.0759)(0.9243,0.0770)(0.9210,0.0782)
(0.9177,0.0793)(0.9143,0.0804)(0.9110,0.0815)(0.9077,0.0826)(0.9044,0.0837)(0.9011,0.0848)(0.8977,0.0858)(0.8944,0.0869)(0.8911,0.0879)(0.8877,0.0890)
(0.8844,0.0900)(0.8811,0.0910)(0.8777,0.0920)(0.8744,0.0930)(0.8711,0.0940)(0.8677,0.0950)(0.8644,0.0960)(0.8611,0.0969)(0.8577,0.0979)(0.8536,0.1051)
(0.8488,0.1173)(0.8449,0.1218)(0.8416,0.1226)(0.8382,0.1235)(0.8348,0.1243)(0.8315,0.1251)(0.8281,0.1258)(0.8247,0.1266)(0.8214,0.1274)(0.8180,0.1281)
(0.8146,0.1289)(0.8113,0.1296)(0.8079,0.1303)(0.8045,0.1310)(0.8011,0.1318)(0.7978,0.1324)(0.7944,0.1331)(0.7910,0.1338)(0.7877,0.1345)(0.7843,0.1351)
(0.7809,0.1358)(0.7776,0.1364)(0.7742,0.1370)(0.7708,0.1376)(0.7674,0.1382)(0.7641,0.1388)(0.7607,0.1394)(0.7573,0.1400)(0.7540,0.1406)(0.7506,0.1411)
(0.7472,0.1417)(0.7438,0.1422)(0.7405,0.1427)(0.7371,0.1432)(0.7337,0.1438)(0.7304,0.1442)(0.7270,0.1447)(0.7236,0.1452)(0.7203,0.1457)(0.7169,0.1461)
(0.7135,0.1466)(0.7101,0.1470)(0.7068,0.1474)(0.7034,0.1479)(0.7000,0.1483)(0.6967,0.1487)(0.6933,0.1490)(0.6899,0.1494)(0.6866,0.1498)(0.6832,0.1501)
(0.6798,0.1505)(0.6765,0.1508)(0.6731,0.1512)(0.6698,0.1515)(0.6664,0.1518)(0.6630,0.1521)(0.6597,0.1527)(0.6567,0.1541)(0.6541,0.1571)(0.6519,0.1617)
(0.6502,0.1680)(0.6488,0.1756)(0.6477,0.1842)(0.6468,0.1935)(0.6461,0.2034)(0.6454,0.2136)(0.6449,0.2242)(0.6444,0.2348)(0.6439,0.2454)(0.6432,0.2556)
(0.6423,0.2648)(0.6411,0.2727)(0.6393,0.2787)(0.6371,0.2828)(0.6345,0.2856)(0.6317,0.2879)(0.6290,0.2902)(0.6263,0.2925)(0.6235,0.2948)(0.6208,0.2970)
(0.6181,0.2993)(0.6154,0.3017)(0.6127,0.3040)(0.6100,0.3063)(0.6074,0.3087)(0.6047,0.3110)(0.6020,0.3134)(0.5994,0.3157)(0.5968,0.3181)(0.5941,0.3205)
(0.5915,0.3229)(0.5889,0.3253)(0.5863,0.3278)(0.5837,0.3302)(0.5811,0.3326)(0.5786,0.3351)(0.5760,0.3376)(0.5735,0.3401)(0.5709,0.3426)(0.5684,0.3451)
(0.5659,0.3476)(0.5634,0.3501)(0.5609,0.3526)(0.5584,0.3552)(0.5560,0.3577)(0.5535,0.3603)(0.5511,0.3629)(0.5486,0.3655)(0.5462,0.3681)(0.5438,0.3707)
(0.5414,0.3733)(0.5390,0.3760)(0.5367,0.3786)(0.5343,0.3813)(0.5320,0.3840)(0.5296,0.3866)(0.5273,0.3893)(0.5250,0.3920)(0.5227,0.3948)(0.5205,0.3975)
(0.5182,0.4002)(0.5160,0.4030)(0.5137,0.4058)(0.5115,0.4086)(0.5093,0.4113)(0.5071,0.4142)(0.5050,0.4170)(0.5028,0.4198)(0.5007,0.4226)(0.4986,0.4255)
(0.4965,0.4284)(0.4944,0.4312)(0.4923,0.4341)(0.4903,0.4370)(0.4882,0.4399)(0.4862,0.4429)(0.4842,0.4458)(0.4822,0.4488)(0.4803,0.4517)(0.4783,0.4547)
(0.4764,0.4577)(0.4745,0.4607)(0.4726,0.4637)(0.4707,0.4667)(0.4689,0.4697)(0.4670,0.4728)(0.4652,0.4759)(0.4634,0.4789)(0.4617,0.4820)(0.4599,0.4851)
(0.4582,0.4882)(0.4565,0.4913)(0.4548,0.4945)(0.4531,0.4976)(0.4515,0.5007)(0.4499,0.5039)(0.4483,0.5071)(0.4467,0.5103)(0.4451,0.5135)(0.4436,0.5167)
(0.4421,0.5199)(0.4406,0.5231)(0.4392,0.5264)(0.4377,0.5296)(0.4363,0.5329)(0.4349,0.5361)(0.4336,0.5394)(0.4322,0.5427)(0.4309,0.5460)(0.4296,0.5493)
(0.4284,0.5527)(0.4271,0.5560)(0.4259,0.5593)(0.4248,0.5627)(0.4236,0.5661)(0.4225,0.5694)(0.4214,0.5728)(0.4203,0.5762)(0.4192,0.5796)(0.4182,0.5830)
(0.4172,0.5864)(0.4163,0.5898)(0.4153,0.5933)(0.4144,0.5967)(0.4135,0.6001)(0.4127,0.6036)(0.4119,0.6070)(0.4111,0.6105)(0.4103,0.6140)(0.4096,0.6175)
(0.4089,0.6209)(0.4082,0.6244)(0.4076,0.6279)(0.4069,0.6314)(0.4064,0.6349)(0.4058,0.6384)(0.4053,0.6420)(0.4048,0.6455)(0.4043,0.6490)(0.4039,0.6525)
(0.4035,0.6561)(0.4031,0.6596)(0.4028,0.6631)(0.4025,0.6667)(0.4022,0.6702)(0.4020,0.6738)(0.4017,0.6773)(0.4016,0.6809)(0.4014,0.6844)(0.4013,0.6880)
(0.4012,0.6915)(0.4011,0.6951)(0.4011,0.6986)(0.4011,0.7022)(0.4012,0.7057)(0.4012,0.7093)(0.4013,0.7128)(0.4015,0.7164)(0.4016,0.7199)(0.4018,0.7235)
(0.4020,0.7270)(0.4023,0.7306)(0.4026,0.7341)(0.4029,0.7377)(0.4033,0.7412)(0.4036,0.7447)(0.4041,0.7483)(0.4045,0.7518)(0.4050,0.7553)(0.4055,0.7588)
(0.4060,0.7624)(0.4066,0.7659)(0.4072,0.7694)(0.4078,0.7729)(0.4085,0.7764)(0.4091,0.7798)(0.4099,0.7833)(0.4106,0.7868)(0.4114,0.7903)(0.4122,0.7937)
(0.4130,0.7972)(0.4139,0.8006)(0.4148,0.8041)(0.4157,0.8075)(0.4166,0.8109)(0.4176,0.8144)(0.4186,0.8178)(0.4196,0.8212)(0.4207,0.8246)(0.4218,0.8279)
(0.4229,0.8313)(0.4240,0.8347)(0.4252,0.8380)(0.4264,0.8414)(0.4276,0.8447)(0.4289,0.8481)(0.4301,0.8514)(0.4314,0.8547)(0.4327,0.8580)(0.4341,0.8613)
(0.4355,0.8646)(0.4369,0.8678)(0.4383,0.8711)(0.4397,0.8743)(0.4412,0.8776)(0.4427,0.8808)(0.4442,0.8840)(0.4457,0.8872)(0.4473,0.8904)(0.4489,0.8936)
};
}
\pgfmathsetmacro{\m}{40}
\begin{tikzpicture}[line join=round,baseline={(0,0)},scale=6.3,semithick]
\pgfmathsetmacro{\Rtow}{2*3.4}
\pgfmathsetmacro{\bpar}{1.95/\m}
\draw[densely dotted,xscale=-1](1,1*\Rtow/\m)arc(90:180:1*\Rtow/\m);
\draw[densely dotted,xscale=-1](1,2*\Rtow/\m)arc(90:180:2*\Rtow/\m);
\draw[densely dotted,xscale=-1](1,3*\Rtow/\m)arc(90:180:3*\Rtow/\m);
\draw[densely dotted](1,0.5*\Rtow/\m)arc(90:180:0.5*\Rtow/\m);
\draw[densely dotted](1,0.5*\Rtow/\m)--++(0.05,0)coordinate(R/2); 
\draw[latex-latex](R/2)--++(0,-\Rtow/2/\m)node[pos=0.5,right]{$\frac{1}{2m}R_{\mathrm{tow}}$};

\draw[densely dotted,xscale=-1](1,\Rtow/\m)--++(0.1,0)coordinate(R);
\draw[densely dotted,xscale=-1](1,2*\Rtow/\m)--++(0.05,0)coordinate(2R);
\draw[densely dotted,xscale=-1](1,3*\Rtow/\m)--++(0.05,0)coordinate(3R);
\draw[latex-latex,xscale=-1](R)--++(0,-\Rtow/\m)node[midway,left]{$\frac{1}{m}R_{\mathrm{tow}}$};
\draw[latex-latex,xscale=-1](3R)--++(0,-3*\Rtow/\m)node[pos=0.4,left]{$\frac{3}{m}R_{\mathrm{tow}}$};
\begin{scope}[thick]
\draw(0,0)--(1,0);
\filldraw(0.9508,0.0676)..controls({sqrt(1-\bpar*\bpar)},\bpar)and+(1.5*\bpar,0)..(1-1.5*\bpar,0)
--(1,0)arc(0:{deg(\bpar)}:1)--cycle;
\initialsurfacetikzcode[color={cmyk,1:magenta,0.5;yellow,1}]
\draw[color={cmyk,1:magenta,0.5;yellow,1}](0,0)--(1-\Rtow/\m/2,0)coordinate[midway](WB); 
\draw[color={cmyk,1:magenta,0.5;yellow,1}](WB)++(0,-0.08)node[anchor=base]{$\subset W^{\B^2}_{m,\xi}$};
\draw[color={cmyk,1:magenta,0.5;yellow,1}](0.4986,0.4255)node[right]{$\subset W^{\K}_{m,\xi}$};
\end{scope}
\begin{scope}[xscale=-1,thick]
\draw(0,0)--(1,0);
\initialsurfacetikzcode
\draw[color={cmyk,1:magenta,0.5;cyan,1}] plot[smooth]coordinates{(0.9508,0.0676)(0.9224,0.0777)(0.8939,0.0871)(0.8653,0.0957)};
\draw[color={cmyk,1:magenta,0.5;cyan,1}] (1,0)--++(-\Rtow/\m,0)coordinate[midway](WK);
\draw[color={cmyk,1:magenta,0.5;cyan,1}](WK)++(0,-0.08)node[anchor=base]{$\subset M^{\mathrm{core}}_{m,\xi}$};
\filldraw[color={cmyk,1:magenta,0.5;cyan,1}](0.9508,0.0676)..controls({sqrt(1-\bpar*\bpar)},\bpar)and+(1.5*\bpar,0)..(1-1.5*\bpar,0)
--(1,0)arc(0:{deg(\bpar)}:1)--cycle;
\end{scope}
\draw[->](0,-0)--(0,1.066)node[below left,inner sep=0]{$z~$};
\draw plot[vdash](0,0)node[below]{$0$};
\draw(1+0.001,0)arc(0:180:1+0.001); 
\draw[latex-,thin](0.8653-0.02,0.0957+0.01)--++(-0.305,0)node[left,fill=white]{dislocation of the wing};
\draw[latex-,thin,rounded corners=1ex](0.6436-0.01,0.24-0.01)-|++(-0.5,0.1)node[above,fill=white,align=left]{interpolation \\ between $\Phi$ and $\varphi^1$};
\end{tikzpicture}
\caption{Vertical cut through the upper half of the initial surface $\Sigma_{m,\xi}$ for $m=40$ and $\xi=m$. 
(For $m\gg 40$, the ``interpolation error'' becomes small compared to the dislocation.)
} %
\label{fig:initialsurface}%
\end{figure}

\paragraph{Definition of the initial surfaces and regional projections to the models.}
Given $\xi \in \R$ and a sufficiently large integer $m$, we recall definitions \eqref{wingsinball} and \eqref{coreinball} and define the \emph{initial surface}
\begin{equation}
\label{initsurfdef}
\gls{Sigmamxi}%\Sigma_{m,\xi}
\vcentcolon=
  M^{\mathrm{core}}_{m,\xi} 
  \cup W^{\K}_{m,\xi}
    \cup W^{\B^2}_{m,\xi}
    \cup \refl_{\axis{x}}W^{\K}_{m,\xi}
\end{equation}
as visualized in Figure \ref{fig:initialsurface}. 
Recalling also the identification maps \eqref{projectionsontowings}, we define the regions
\begin{equation}
\begin{aligned}
\label{eqn:definition_regions}
\glsuseri{region} %\discr_{m,\xi}
    &\vcentcolon=
\{(0,0,0)\}
  \cup
  \varpi_{W^{\B^2}_{m,\xi}}^{-1}
    \left(
      (\mathring{W}^0 \cap \axis{\theta}_{\geq m^{1/4}})
      /\sk{\trans^{\axis{\theta}}_{2m\pi}}
    \right)
    \subset
    W^{\B^2}_{m,\xi}, \\
\glsuserii{region} %\catr_{m,\xi}
    &\vcentcolon=
 \varpi_{W^{\K}_{m,\xi}}^{-1}
    \left(
    (\mathring{W}^1 \cap \axis{\theta}_{\geq m^{1/4}})
      /\sk{\trans^{\axis{\theta}}_{2m\pi}}
    \right)
    \subset
    W^{\K}_{m,\xi},
\\
\glsuseriii{region} %\towr_{m,\xi}
    &\vcentcolon=
M^{\mathrm{core}}_{m,\xi}
    \cup
    \varpi_{W^{\B^2}_{m,\xi}}^{-1}
    \left(
      (\mathring{W}^0 \cap \axis{\theta}_{\leq m^{1/2}})
      /\sk{\trans^{\axis{\theta}}_{2m\pi}}
    \right) \\
    &\hphantom{{}\vcentcolon={}}\cup
    \bigcup_{j=0}^1
    \refl_{\axis{x}}^j
    \varpi_{W^{\K}_{m,\xi}}^{-1}
    \left(
      (\mathring{W}^1 \cap \axis{\theta}_{\leq m^{1/2}})
      /\sk{\trans^{\axis{\theta}}_{2m\pi}}
    \right),
\end{aligned}
\end{equation}
and corresponding maps
$
\varpi_{\discr_{m,\xi}}\colon 
  \discr_{m,\xi} \to \B^2
$,
$
\varpi_{\catr_{m,\xi}}\colon
  \catr_{m,\xi} \to \K_{b_{m,\xi}}
$,
and
$
\varpi_{\towr_{m,\xi}}\colon
  \towr_{m,\xi} \to \towquot^+_{(m)}
$
by
\begin{equation}
\label{regionalprojections}
\begin{aligned}
\glsuseri{regionalprojections} %\varpi_{\discr_{m,\xi}}
&\vcentcolon=
  \mbox{nearest-point projection to } \B^2,
\\
\glsuserii{regionalprojections} %\varpi_{\catr_{m,\xi}}
&\vcentcolon=
  \mbox{nearest-point projection to } \K_{b_{m,\xi}},
\\
\glsuseriii{regionalprojections} %\varpi_{\towr_{m,\xi}}
  &\vcentcolon=
  \begin{cases}
    \varpi_{M^{\mathrm{core}}_{m,\xi}}
      & \mbox{ on }  
      M^{\mathrm{core}}_{m,\xi}
    \\
    \varpi_{W^{\B^2}_{m,\xi}} 
      & \mbox{ on }  
      \towr_{m,\xi} \cap W^{\B^2}_{m,\xi}
    \\
    \varpi_{W^{\K}_{m,\xi}}
      & \mbox{ on }  
      \towr_{m,\xi} \cap W^{\K}_{m,\xi}
    \\
    \refl_{\axis{\sigma}}\varpi_{W^{\K}_{m,\xi}}\refl_{\axis{x}}
      & \mbox{ on }  
      \towr_{m,\xi} \cap \refl_{\axis{x}}W^{\K}_{m,\xi}.
  \end{cases}
\end{aligned}
\end{equation}
We also define the region
\begin{equation}
\label{towr1}
\towr_{m,\xi}^1
  \vcentcolon=
  \varpi_{\towr_{m,\xi}}^{-1}
    \left(\axis{\theta}_{\leq m^{1/2}-1}\right)
  \subset
  \towr_{m,\xi} 
\end{equation}
and supplement definition \eqref{eqn:definition_regions} with a perhaps more intuitive description: 
\begin{itemize}
\item $\discr_{m,\xi}$ is  close to a flat horizontal disc of radius $1-m^{-3/4}$ centered at the origin. 
\item $\catr_{m,\xi}$ is close to the catenoidal annulus $\K_{b_{m,\xi}}$ minus the tubular neighbourhood of radius $m^{-3/4}$ around the equator. 
If $m$ is sufficiently large, $\catr_{m,\xi}$ does \emph{not} contain the region of interpolation between $\Phi$ and $\varphi^1$ or the dislocation of the wing (cf. Figure \ref{fig:initialsurface}).
\item $\towr_{m,\xi}$ is (roughly speaking) the region of the initial surface inside the tubular neighbourhood of radius $m^{-1/2}$ around the equator. 
$\towr_{m,\xi}$ overlaps with $\discr_{m,\xi}$ and $\catr_{m,\xi}$, and contains both the region of interpolation between $\Phi$ and $\varphi^1$ and the dislocation of the wing if $m$ is sufficiently large.
\end{itemize}
Note that for $m$ sufficiently large
$\varpi_{\catr_{m,\xi}}$,
$\varpi_{\discr_{m,\xi}}$,
and
$\varpi_{\towr_{m,\xi}}$
are all diffeomorphisms onto their images, and, if we let
$
 \pi_{(m)}
 \colon
 \R^3/\sk{\trans^{\axis{z}}_{2m\pi}}
 \to 
 \R^3/\sk{\trans^{\axis{z}}_{2\pi}}
$
be the unique map such that
$\varpi_{(1)}=\pi_{(m)} \circ \varpi_{(m)}$,
then
$\pi_{(m)} \circ \varpi_{\towr_{m,\xi}}$
is an $m$-fold covering of its image.
Additionally,
$\varpi_{\discr_{m,\xi}}$
commutes with each element of $\apr_m$
and $\varpi_{\catr_{m,\xi}}$
commutes with each element of $\pyr_m$,
while,
recalling \eqref{HLV},
\begin{align*}
\pi_{(m)} \circ \varpi_{\towr_{m,\xi}} \circ \refl_{\axis{x}}|_{\towr_{m,\xi}}
  &=
  \mathsf{L} \circ \pi_{(m)} \circ \varpi_{\towr_{m,\xi}},
  %\quad \mbox{and} 
  \\
\pi_{(m)} \circ
 \varpi_{\towr_{m,\xi}} \circ \refl_{\{y = x \tan (\pi/2m)\}}|_{\towr_{m,\xi}}
  &=
  \mathsf{H} \circ \pi_{(m)} \circ \varpi_{\towr_{m,\xi}}.
\end{align*}

\subsection{Basic properties of the initial surfaces and comparison with the models}

\begin{proposition}
[Basic properties of the initial surfaces]
\label{initsurfbasicprops}
For each $c>0$
there exists $m_0=m_0(c)>0$
such that for every $\xi \in \IntervaL{-c,c}$
and every integer $\glsuseri{m}>m_0$
the initial surface 
\gls{Sigmamxi} %$\Sigma_{m,\xi}$
 defined in \eqref{initsurfdef} 
has the following properties.
\begin{enumerate}[label={\normalfont(\roman*)}]
  \item \label{initsurfproperembedding}
        $\Sigma_{m,\xi}$ is a connected
        smooth surface with boundary
        and is properly embedded in $\B^3$.
  \item \label{initsurfgenus}
        $\Sigma_{m,\xi}$ has genus $m-1$.
  \item \label{initsurfbdy}
        $\partial \Sigma_{m,\xi}$ has three connected components.
  \item \label{initsurforthoint}
        $\Sigma_{m,\xi}$ meets $\partial \B^3$ orthogonally.
  \item \label{initsurfsymgroup}
        Recalling \eqref{aprdef},
        $\Aut_{\B^3}(\Sigma_{m,\xi})=\apr_m$.
  \item \label{initsurfsymsigns}
        Recalling \eqref{symsign},
        $\sgn_{\Sigma_{m,\xi}} \refl_{\{y=x \tan(\pi/(2m))\}}
          =
          -\sgn_{\Sigma_{m,\xi}} \refl_{\axis{x}}
          =1
        $.
\item\label{initsurfuniformgeometry}
      For each integer $k \geq 0$
      the $k$-th covariant derivative of the second
      fundamental form of the dilated surface $m\Sigma_{m,\xi}$ (in $m\B^3$)
      has norm bounded by some $C(k)>0$
      independent of $m$, $c$, and $\xi$.
\end{enumerate}
\end{proposition}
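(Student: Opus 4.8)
The plan is to verify \ref{initsurfproperembedding}--\ref{initsurfuniformgeometry} by decomposing $\Sigma_{m,\xi}$ along the four pieces of \eqref{initsurfdef} and treating each piece and each overlap region in turn, taking $m$ large in terms of $c$ (hence of $\xi$) wherever needed. On the core region $\axis{\theta}_{\leq R_{\mathrm{tow}}}$ neither the dislocation nor the straightening is active -- their supports lie in $\dist_{\axis{z}}>R_{\mathrm{tow}}$ -- so $M^{\mathrm{core}}_{m,\xi}$ is just the image of a fixed compact piece of $\tow^+$ under $\Phi$ composed with scaling by $1/m$ and the period projection; by item \ref{towcapxeq0} of Proposition~\ref{karcher-scherk} this is smoothly embedded, and since $\tow^+$ meets $\{x=0\}$ orthogonally and, along $\{\sigma=0\}$, $\Phi$ carries this to orthogonality with $\partial\B^3$ (which one reads off from \eqref{Phullback}, the relevant condition ``$T_pS\ni\partial_\sigma$'' being metric independent), the core meets $\partial\B^3$ orthogonally along the wrapped image of $\partial\tow^+$. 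Each wing coincides, far from the core, with an exact piece of its model -- of $\B^2$ for $W^{\B^2}_{m,\xi}$, of $\K_{b_{m,\xi}}$ for $W^{\K}_{m,\xi}$ -- because the wings of $\towbent^+_{m,\xi}$ have been straightened to flat half planes and $\varphi^0,\varphi^1$ carry these to exact pieces of $\B^2$ and $\K$; near the core the wing is the image of $\mathring{W}^i$ under $\Phi^i_{m,\xi}\circ\widetilde{\phi}_{m,\xi}$, where $\Phi^i_{m,\xi}$ is a diffeomorphism on $P^i_{<\epsilon}$ for $m$ large as noted after \eqref{Phii}, and on overlaps the interpolating cutoffs of \eqref{Phii} force $\Phi^i_{m,\xi}=\Phi$ so that the pieces agree there. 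Hence $\Sigma_{m,\xi}$ is a single smooth surface, connected because $\tow^+$ is and $(0,0,0)$ is a limit point of $W^{\B^2}_{m,\xi}$; it is properly embedded for $m$ large since, away from the equator, its three wings are pairwise disjoint exact pieces of $\B^2$, $\K_{b_{m,\xi}}$ and $\refl_{\axis{x}}\K_{b_{m,\xi}}$ (these models being pairwise disjoint away from the equator), while near the equator it is a $C/m$-small normal graph over the embedded wrapped half tower $\Phi\bigl(\tfrac{1}{m}\tow^+/\sk{\trans^{\axis{z}}_{2\pi}}\bigr)$. Therefore $\Sigma_{m,\xi}\cap\partial\B^3=\partial\Sigma_{m,\xi}$ is the union of exactly three curves -- the wrapped $\partial\tow^+$, a non-round curve close to the equator, together with the ``upper'' circles of $\partial\K_{b_{m,\xi}}$ and of its reflection, which meet $\partial\B^3$ orthogonally by Lemma~\ref{lem:catenoid-K_b} -- proving \ref{initsurfproperembedding}, \ref{initsurfbdy} and \ref{initsurforthoint}.

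For \ref{initsurfgenus} I would run an Euler characteristic bookkeeping. Writing $\tow/\sk{\trans^{\axis{z}}_{2m\pi}}=\towquot^+_{(m)}\cup\refl_{\{x=0\}}\towquot^+_{(m)}$, glued along the single circle $\partial\tow^+/\sk{\trans^{\axis{z}}_{2m\pi}}$, item \ref{genusofnontrivialquotient} of Proposition~\ref{karcher-scherk} forces $\towquot^+_{(m)}$ to have genus $m-1$, three ends and one boundary circle, so $\chi(M^{\mathrm{core}}_{m,\xi})=2-2(m-1)-4=-2m$. Capping one end of the core with the topological disc $W^{\B^2}_{m,\xi}$ and the other two with the annular catenoidal wings contributes $1+0+0$ to the Euler characteristic (the overlaps being annuli), so $\chi(\Sigma_{m,\xi})=1-2m$; together with \ref{initsurfbdy} this gives genus $m-1$.

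For \ref{initsurfsymgroup} and \ref{initsurfsymsigns}, the containment $\apr_m\subseteq\Aut_{\B^3}(\Sigma_{m,\xi})$ is a consequence of equivariance of the construction: $Z_{m,\xi}$ is $\Aut_{\R^3}(\tow^+)$-equivariant, $\Phi$ (hence $\Phi^i_{m,\xi}$) intertwines the relevant domain and target symmetries via \eqref{Phintertwine}, the models $\B^2$ and $\K_{b_{m,\xi}}$ are invariant under $\apr_m$ respectively $\pyr_m$, and the coset decomposition $\apr_m=\pyr_m\sqcup\rot_{\axis{x}}^\pi\pyr_m$ acts compatibly on the four pieces, with $\rot_{\axis{x}}^\pi=\refl_{\axis{x}}$ exchanging the two catenoidal wings. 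The signs in \ref{initsurfsymsigns} are then inherited from item \ref{towsymsigns} of Proposition~\ref{karcher-scherk}, and are globally consistent because each of the two reflections at issue simultaneously fixes, respectively reverses, the chosen global unit normal on $\B^2$, on $\K_{b_{m,\xi}}$ and on $\tow^+$. For the reverse inclusion, any $\mathsf{S}\in\Aut_{\B^3}(\Sigma_{m,\xi})\subset\Ogroup(3)$ permutes the three boundary components and cannot send the non-round equatorial component to a round latitude circle, so it preserves the former; restricting $\mathsf{S}$ there and using that, for $m$ large, a neighbourhood of the equator in $m\Sigma_{m,\xi}$ is $C^\infty_{\mathrm{loc}}$-close to the wrapped half tower -- whose smallest vertical period is exactly $2\pi$ and whose full symmetry group equals $\sk{\refl_{\{x=0\}},\,\refl_{\axis{x}},\,\refl_{\{z=\pi/2\}}}$ precisely because $\omega_0\neq\pi/3$ (item \ref{towsymgroup} of Proposition~\ref{karcher-scherk} and the rigidity argument in its proof) -- one concludes that $\mathsf{S}$ lies within a fixed small distance of an element of the discrete group $\apr_m$, hence equals it.

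Finally, for \ref{initsurfuniformgeometry} one bounds $\abs{\nabla^k A}$ of $m\Sigma_{m,\xi}$ region by region: near the equator $m\Sigma_{m,\xi}$ is the image of $\towbent^+_{m,\xi}$ under $m\Phi(\,\cdot\,/m)$, which converges in $C^\infty_{\mathrm{loc}}$ to a rigid motion, so the bounds follow from the estimates of Lemma~\ref{lemmatowbent} (with constants independent of $m$, $c$, $\xi$) together with the bounded geometry of $\tow^+$; in the wing regions $m\Sigma_{m,\xi}$ is flat (a piece of $m\B^2$) or a piece of the catenoid $m\K_{b_{m,\xi}}$, whose neck stays bounded below as $m\to\infty$ (since $b_{m,\xi}\to0$ and $\alpha(b_{m,\xi})\to a_0$ by Lemma~\ref{lem:catenoid-K_b}), so that its curvature tends to $0$; and in the transition annuli, at bounded distance from the equator after dilation, the maps $\Phi^i_{m,\xi}$ and their cutoffs are controlled uniformly in $m$, and a partition-of-unity patching produces the constants $C(k)$. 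The main obstacles I anticipate are, on the one hand, verifying in \ref{initsurfproperembedding} that the many pieces and overlaps genuinely assemble into a single smooth embedded surface -- each sub-step requiring $m$ large and the diffeomorphism property of $\Phi^i_{m,\xi}$ -- and, on the other hand, proving the \emph{equality} rather than merely the containment in \ref{initsurfsymgroup}, where the rigidity of the Karcher--Scherk tower $\tow_{\omega_0}$ must be combined with a quantitative closeness estimate valid only for large $m$.
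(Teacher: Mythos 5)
Your proposal is correct and follows essentially the same plan as the paper's proof, working piece by piece from the definition \eqref{initsurfdef} and invoking the same inputs (Proposition \ref{karcher-scherk}, Lemma \ref{lem:catenoid-K_b}, equation \eqref{Phintertwine}). Two points of comparison are worth flagging.

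For item \ref{initsurfgenus}, you compute the genus via Euler characteristic bookkeeping on the pieces ($\chi(\Sigma_{m,\xi})=1-2m$, combined with three boundary components), whereas the paper passes to the topological doubling of $\Sigma_{m,\xi}$ and reads its genus $2m$ off item \ref{genusofnontrivialquotient} of Proposition~\ref{karcher-scherk}. These are equivalent, and yours is arguably more transparent; but note that you tacitly use that $\towquot^+_{(m)}$ has genus $m-1$ with one boundary circle and three ends, which is not asserted by item \ref{genusofnontrivialquotient} (it concerns the full quotient $\towquot_{(m)}$) and requires its own doubling deduction, the same bit of topology you are trying to avoid.

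For item \ref{initsurfsymgroup}, you attempt to prove the full equality $\Aut_{\B^3}(\Sigma_{m,\xi})=\apr_m$, whereas the paper explicitly remarks that only the containment $\apr_m\leq\Aut_{\B^3}(\Sigma_{m,\xi})$ is needed in the sequel and leaves the equality to the reader. Your sketch of the reverse inclusion has a gap: the final step, that an isometry ``within a fixed small distance of an element of the discrete group $\apr_m$ equals it'', does not hold as stated, because the minimum separation between distinct elements of $\apr_m$ in $\Ogroup(3)$ shrinks like $1/m$, so a distance bound that is uniform in $m$ does not single out a unique element. To make this rigorous one would need an argument in the spirit of what the paper actually does later (in the proof of Theorem~\ref{thm:ConstructFirst}) for the final minimal surface, where the diameters contained in the surface are pinned down via the reflection principle and the spacing of lines on the Karcher--Scherk tower; or one would have to make the closeness estimate scale-sensitive. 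Since the paper does not prove the equality either and only uses the containment, this is not a fatal flaw, but as written the step is not correct.
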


\begin{proof}
Item \ref{initsurfproperembedding}
follows directly from the definition of $\Sigma_{m,\xi}$,
assuming $m$ sufficiently large.
Item \ref{initsurfbdy} is also straightforward from the definition
of $\Sigma_{m,\xi}$,
making use of
item \ref{towcapxeq0}
of Proposition \ref{karcher-scherk}.
The topological doubling of $\Sigma_{m,\xi}$
therefore has, 
using also
item \ref{genusofnontrivialquotient}
of Proposition \ref{karcher-scherk},
genus $2m$,
proving item \ref{initsurfgenus},
in view of item \ref{initsurfbdy}.
Item \ref{initsurforthoint}
is a consequence of the definition of $\Sigma_{m,\xi}$,
the definition of $\K_b$ (as per Lemma \ref{lem:catenoid-K_b}),
the orthogonality,
as in item \ref{towcapxeq0}
of Proposition \ref{karcher-scherk},
of the intersection of
$\tow$ with the symmetry plane containing the axis of periodicity,
and the fact, clear from \eqref{Phullback},
that this orthogonality is preserved
by $\frac{1}{m} \circ \Phi$.
Items \ref{initsurfsymgroup} 
and \ref{initsurfsymsigns}
follow readily from the definition of $\Sigma_{m,\xi}$,
the $\Ogroup(2)$-invariance of $\K_b$, equation
\eqref{Phintertwine},
and items \ref{towsymgroup} and \ref{towsymsigns}
of Proposition \ref{karcher-scherk}.
To help verify that $\Sigma_{m,\xi}$
has no other symmetries
note that any symmetry must preserve, as a set,
the component of $\partial \Sigma_{m,\xi}$
closest to the equator.
Actually, in the sequel (cf. end of Section \ref{subs:ProofMainThm}) we will need merely
the containment
$\Aut_{\B^3}(\Sigma_{m,\xi}) \geq \apr_m$, which is indeed clear from the previous remarks,
and so we leave the (straightforward)
details of verifying equality to the interested reader.
Finally, item \ref{initsurfuniformgeometry}
is obvious since each blown-up initial surface
$m\Sigma_{m,\xi}$
is covered by finitely many regions each of which
is for any integer $k \geq 0$
a $C^k$ perturbation,
uniformly bounded in $m$ and $\xi$,
of a region of $\tow^+$, $m\K_0$, or $m\B^2$.
\end{proof}

For use in the following lemma
and later use in Proposition \ref{globalsol}
we define
\begin{equation}
\label{equatorialbdy}
\partial^0\Sigma_{m,\xi}
\vcentcolon=
\towr_{m,\xi} \cap \partial \Sigma_{m,\xi},
\end{equation}
the middle
(that is: closest to the equator $\Sp^1$)
boundary component of $\Sigma_{m,\xi}$.
Recalling \eqref{towcokerandgen},
we also define on $\Sigma_{m,\xi}$
the smooth, compactly supported,
$\apr_m$-equivariant function
\begin{equation}
\label{coker}
\gls{coker} %\coker
\vcentcolon= \varpi_{\towr_{m,\xi}}^*\towcoker,
\end{equation}
extended to be constantly zero on
$\Sigma_{m,\xi}$ outside its support in $\towr_{m,\xi}$.
(Note that, by its $\Aut_{\R^3}(\tow^+)$-equivariance,
$\towcoker$ obviously descends to a function
on the quotient $\towquot_{(m)}^+$,
and it is really this function
we mean in place of $\towcoker$
in the above definition.)
Finally,
in the statement and proof of Proposition \ref{initsurfcomp},
given a surface $\Sigma$ embedded in $\R^3$
(or $\B^3$),
we agree to write
$g^{\vphantom{|}}_\Sigma$
for the metric on $\Sigma$
induced by the ambient Euclidean metric.

\begin{proposition}
[Regionwise comparison of the initial surfaces with the models]
\label{initsurfcomp}
There exists $C>0$
and
for each $c>0$
there exists $m_0=m_0(c)>0$
such that 
for every $\xi \in \IntervaL{-c,c}$,
every integer $m>m_0$,
and every $\alpha,\beta \in \interval{0,1}$
the following estimates hold.
\begin{enumerate}[label={\normalfont(\roman*)}]
\item \emph{Riemannian metric comparison.}
\begin{enumerate}[label={\normalfont(\roman{enumi}.\roman*)}]
\item\label{MetCompTow}
       $
        \nm[\Big]{
          m^2g^{\vphantom{|}}_{\Sigma_{m,\xi}}
          -\varpi_{\towr_{m,\xi}}^*
            g^{\vphantom{|}}_{\towquot_{(m)}^+}
        }_{2,\alpha}
        \leq
        Cm^{-1/4}
       $ 
\item\label{MetCompDiscAndCat}
       $
        \nm[\Big]{
          g^{\vphantom{|}}_{\Sigma_{m,\xi}}
          -\varpi_{\discr_{m,\xi}}^*
            g^{\vphantom{|}}_{\B^2}
        }_{2,\alpha}
        +
        \nm[\Big]{
          g^{\vphantom{|}}_{\Sigma_{m,\xi}}
          -\varpi_{\catr_{m,\xi}}^*
            g^{\vphantom{|}}_{\K_{b_m,\xi}}
        }_{2,\alpha}
        \leq
        Cm^{2+\alpha}e^{-m^{1/4}}
       $ 
\end{enumerate}   
\item \emph{Mean curvature comparison.}
\begin{enumerate}[label={\normalfont(\roman{enumi}.\roman*)}]
\item\label{HestTow}
       $
       \nm[\big]{
           (1+\dist_{\axis{\theta}})^{-1}
           \varpi_{\towr_{m,\xi}}^{-1*}
             (H_{\Sigma_{m,\xi}}-\xi \coker)
          }_{0,\alpha,1}
       \leq
       C
       $, and so
       
       $
       \nm[\big]{
           \varpi_{\towr_{m,\xi}}^{-1*}
             (H_{\Sigma_{m,\xi}}-\xi \coker)
          }_{0,\alpha,\beta}
       \leq
       C/(1-\beta)
      $ 
\item\label{HestOffTow} 
      $
       \nm[\Big]{
           (\varpi_{\catr_{m,\xi}}^{-1*}H_{\Sigma_{m,\xi}})|_{
             \varpi_{\catr_{m,\xi}}
               (
                 \Sigma_{m,\xi}
                 \setminus
                 \towr_{m,\xi}^1
                )
             }
          }_{0,\alpha}
      \!+\nm[\Big]{
           (\varpi_{\discr_{m,\xi}}^{-1*}H_{\Sigma_{m,\xi}})|_{
             \varpi_{\discr_{m,\xi}}
               (
                 \Sigma_{m,\xi}
                 \setminus
                 \towr_{m,\xi}^1
                )
             }
          }_{0,\alpha}
        \leq 
        Cm^2e^{-m^{1/2}}
      $ 
\end{enumerate}   
\item \emph{Pull-back action on H\"older norms.} For each $S_{m,\xi} \in \{\discr_{m,\xi}, \catr_{m,\xi}\}$ and $k=0,1,2$ we have 
\begin{enumerate}[label={\normalfont(\roman{enumi}.\roman*)}]
\item\label{TowrToNotTowr}
      $\nm[\big]{\varpi_{S_{m,\xi}}^{-1*}\varpi_{\towr_{m,\xi}}^*u}_{k,\alpha}\leq Cm^{k+\alpha}\nm{u}_{k,\alpha}$
       for all compactly supported $u \in C^{k,\alpha}(\varpi_{\towr_{m,\xi}}(S_{m,\xi} \cap \towr_{m,\xi}))$
\item\label{NotTowrToTowr}
      $\nm[\big]{\varpi_{\towr_{m,\xi}}^{-1*}\varpi_{S_{m,\xi}}^*u}_{k,\alpha}\leq C\nm{u}_{k,\alpha}$
      for all compactly supported $u \in C^{k,\alpha}(\varpi_{S_{m,\xi}}(S_{m,\xi} \cap \towr_{m,\xi}))$
\end{enumerate}      
\item \emph{Jacobi operators comparison.} Recall \eqref{eq:JacobiOp}. 
\begin{enumerate}[label={\normalfont(\roman{enumi}.\roman*)}]
\item\label{JacOpCompTow}
      $
       \nm[\Big]{
         \Big(
           m^{-2}\varpi_{\towr_{m,\xi}}^{-1*}J_{\Sigma_{m,\xi}}\varpi_{\towr_{m,\xi}}^*
           -J_{\towquot_{(m)}^+}
          \Big)u
       }_{0,\alpha,\beta}
       \leq
       Cm^{-1/4}\nm{u}_{2,\alpha,\beta}
      $ 
      $~\forall u \in C^{2,\alpha}\bigl(\varpi_{\towr_{m,\xi}}(\towr_{m,\xi})\bigr)
     $ 
\item\label{JacOpCompDisc} 
      $
       \nm[\Big]{
         \left(
           \varpi_{\discr_{m,\xi}}^{-1*}J_{\Sigma_{m,\xi}}\varpi_{\discr_{m,\xi}}^*
           -J_{\B^2}
          \right)u
       }_{0,\alpha}
       \leq
       Cm^2e^{-m^{1/4}}\nm{u}_{2,\alpha}
       $
     $~\forall u \in C^{2,\alpha}(\varpi_{\discr_{m,\xi}}(\discr_{m,\xi}))$ 
\item\label{JacOpCompCat}
      $
       \nm[\Big]{
         \left(
           \varpi_{\catr_{m,\xi}}^{-1*}J_{\Sigma_{m,\xi}}\varpi_{\catr_{m,\xi}}^*
           -J_{\K_{b_m,\xi}}
          \right)u
       }_{0,\alpha}
       \leq
       Cm^2e^{-m^{1/4}}\nm{u}_{2,\alpha}
       $
      $~\forall u \in C^{2,\alpha}(\varpi_{\catr_{m,\xi}}(\catr_{m,\xi}))$ 
    \end{enumerate}
 \item \emph{Robin operators comparison.} Recall \eqref{Robin_op_general_def}.    
 For all $u \in C^{2,\alpha}\bigl(\towquot_{(m)}^+\bigr)$  we have
\begin{enumerate}[label={\normalfont(\roman{enumi}.\roman*)}]
\item\label{BdyOpCompTow} 
      $
       \nm[\Big]{
         m^{-1}\varpi^{-1}_{\towr_{m,\xi}}|_{\partial \towquot_{(m)}^+}^{*}
           (B^{\mathrm{Robin}}_{\Sigma_{m,\xi}}
             \varpi_{\towr_{m,\xi}}^*u)\big|_{\partial^0\Sigma_{m,\xi}}
         -B^{\mathrm{Robin}}_{\towquot_{(m)}^+}u
       }_{1,\alpha}
       \leq
       Cm^{-1}\nm{u}_{2,\alpha}$

\item\label{BdyOpCompCat} 
      $
       B^{\mathrm{Robin}}_{\Sigma_{m,\xi}}
         \varpi_{\catr_{m,\xi}}^*
       =
       \varpi_{\catr_{m,\xi}}^*
         B^{\mathrm{Robin}}_{\K_{b_m,\xi}}
       $\quad
       on $\Sp^2 \cap \partial \catr_{m,\xi}$.
\end{enumerate}
\end{enumerate}
We emphasize that while $m_0$ depends on $c$,
the constant $C$ is independent of $c$, $\xi$, and $m$.
\end{proposition}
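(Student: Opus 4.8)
\section*{Proof proposal for Proposition \ref{initsurfcomp}}

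The plan is to verify every one of the asserted inequalities by working separately on the three (overlapping) regions $\towr_{m,\xi}$, $\discr_{m,\xi}$ and $\catr_{m,\xi}$, reducing in each case the relevant geometric quantity of $\Sigma_{m,\xi}$ to the corresponding quantity of the appropriate model surface ($\towquot^+_{(m)}$, $\B^2$, or $\K_{b_{m,\xi}}$) through the explicit maps used in the construction: $\Phi$ and the $\varphi^i$, $\Phi^i_{m,\xi}$ of \eqref{PhiR3def}, \eqref{initsurfphidef} and \eqref{Phii}, the dislocation/straightening diffeomorphism $\widehat{\phi}_{m,\xi}$ of \eqref{towbent}, and the rescaling $\widetilde{\phi}_{m,\xi}$ of \eqref{scaledphibentquot}. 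The genuine content then lies entirely in bookkeeping the two competing length scales: the \emph{tower scale}, at which $\towquot^+_{(m)}$ has uniformly bounded geometry and on which $\varpi_{\towr_{m,\xi}}$ dilates lengths by a factor comparable to $m$, versus the \emph{ball scale} of $\Sigma_{m,\xi}$ itself (recall Proposition \ref{initsurfbasicprops}\,\ref{initsurfuniformgeometry}). The claimed uniformity of $C$ in $c,\xi,m$, with $m_0$ allowed to depend on $c$, is inherited from the analogous features of Lemma \ref{lemmatowbent} and Proposition \ref{initsurfbasicprops}, since each estimate below is a combination of finitely many of those with the explicit, $\xi$-independent maps $\Phi$, $\varphi^i$, $\Phi^i_{m,\xi}$.

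For the estimates on $\towr_{m,\xi}$ (items \ref{MetCompTow}, \ref{HestTow}, \ref{JacOpCompTow}, \ref{BdyOpCompTow}) I would first observe that, by \eqref{coreinball}, \eqref{projectionsontowings} and \eqref{regionalprojections}, the map $\varpi_{\towr_{m,\xi}}^{-1}$ equals $\Phi^i_{m,\xi}\circ\widetilde{\phi}_{m,\xi}$ on each wing piece and $\Phi\circ\widetilde{\phi}_{m,\xi}$ on $M^{\mathrm{core}}_{m,\xi}$. Pulling back and rescaling by $m^2$ then turns \ref{MetCompTow} into a comparison between $\widehat{\phi}_{m,\xi}^*$ of the metric that the Euclidean metric induces on $\tow^+$ through the wrapping and the flat metric $g_{\tow^+}$. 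The $\widehat{\phi}_{m,\xi}^*$ factor is controlled by Lemma \ref{lemmatowbent} (items \ref{towbent_metric_est}--\ref{towbent_mc_straighten_est}), which in particular identifies the first-order, dislocation-induced mean curvature as $m^{-1}\xi\,\towcoker$ in $\towbent_{m,\xi}^+$, hence after rescaling as $\xi\coker$ on $\Sigma_{m,\xi}$ by definition \eqref{coker}. The remaining distortion, coming from the wrapping and the interpolation \eqref{Phii}, is extracted from the explicit formula \eqref{Phullback}, whose deviation from the flat metric is $O(\dist_{\axis{\theta}})$ near $\axis{\theta}$ and therefore $O(m^{-1/4})$ (in fact smaller) on $\towr_{m,\xi}$, where $\dist_{\axis{\theta}}\lesssim m^{-1/2}$; and from \eqref{phivsPhi}, which says that $\Phi$ and $\varphi^1$ agree on $P^1\cap\{\sigma=0\}$ and that their differentials differ there by a rotation of angle $O(m^{-1})$, so that the convex interpolation $\Phi^1_{m,\xi}$ adds only a further controlled power of $m$ once one differentiates the cutoffs twice. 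The mean curvature estimate \ref{HestTow} uses in addition that the wings of $\tow$ are graphs over vertical planes with defining functions decaying like $e^{-\dist_{\axis{z}}}$ (Remark \ref{towasymptotics}) and that the wrapped base surfaces $\Phi(P^0)\subset\{z=0\}$ and $\varphi^1(P^1)\subset\K_{m,\xi}$ are minimal, so that off the dislocation and interpolation sites the residual mean curvature is a product of the (small) metric distortion with the (exponentially small) Hessian of a defining function. Granting the resulting $C^{2,\alpha}$-closeness of the rescaled metrics and immersions to their tower models, \ref{JacOpCompTow} follows because $J$ is a fixed algebraic/differential expression in the metric, its first two derivatives and $\abs{A}^2$, with the factor $m^{-2}$ accounting for the identity $J_{m\Sigma}=m^{-2}J_\Sigma$; and \ref{BdyOpCompTow} follows similarly, the only new input being that $B^{\mathrm{Robin}}_{\Sigma_{m,\xi}}=\eta\cdot\nabla_{\Sigma_{m,\xi}}-1$ on $\partial^0\Sigma_{m,\xi}\subset\Sp^2$ whereas $B^{\mathrm{Robin}}_{\towquot^+_{(m)}}=\eta\cdot\nabla$ (the tower's ambient half-space boundary being totally geodesic), so that after the $m^{-1}$ rescaling the conormal-derivative parts match up to the $O(m^{-1})$ metric error while the $-1$ term contributes a further $O(m^{-1})$.

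On $\discr_{m,\xi}$ and $\catr_{m,\xi}$ everything is easier, since there $\Sigma_{m,\xi}$ actually coincides with the exactly minimal models: on the component of $\partial\catr_{m,\xi}$ lying on $\Sp^2$ it is literally $\K_{b_{m,\xi}}$, which gives \ref{BdyOpCompCat} at once (the nearest-point projection $\varpi_{\catr_{m,\xi}}$ being the identity there), and elsewhere it is a normal graph over $\B^2$ or $\K_{b_{m,\xi}}$ whose defining function is inherited, via $\widetilde{\phi}_{m,\xi}$ and $\varphi^i$, from the wing profiles of $\tow$. Because these regions lie, by their definition in \eqref{eqn:definition_regions}, at distance $\geq m^{1/4}$ from the equator, Remark \ref{towasymptotics} forces that defining function to be of size $e^{-m^{1/4}}$; converting the corresponding H\"older norms from the tower scale to the ball scale — each derivative costing a factor comparable to $m$, which is where the prefactors $m^{2}$, $m^{2+\alpha}$ originate — then yields \ref{MetCompDiscAndCat}, \ref{HestOffTow}, \ref{JacOpCompDisc} and \ref{JacOpCompCat}, the Jacobi ones again because $J$ is a controlled expression in the graph and its derivatives relative to the minimal background. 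Finally the pull-back estimates \ref{TowrToNotTowr} and \ref{NotTowrToTowr} are pure scale conversions: on the overlap $S_{m,\xi}\cap\towr_{m,\xi}$ the composition $\varpi_{S_{m,\xi}}\circ\varpi_{\towr_{m,\xi}}^{-1}$ is, up to errors exponentially small in $m^{1/4}$, a homothety of ratio comparable to $1/m$ (a point of $\Sigma_{m,\xi}$ at distance $t$ from $\Sp^2$ corresponding to a point of $\towquot^+_{(m)}$ at distance $\sim mt$ from $\axis{z}$), and a homothety of ratio $\lambda$ multiplies the $C^{k,\alpha}$ norm of a pulled-back function by $\lambda^{k+\alpha}$, so the expanding direction costs $m^{k+\alpha}$ and the contracting direction costs nothing.

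I expect the real difficulty to be concentrated in the $\towr_{m,\xi}$ estimates, and within them in the zone where $\dist_{\axis{z}}$ is of order $R_{\mathrm{tow}}$: there $\Phi^i_{m,\xi}$ interpolates between $\Phi$ and $\varphi^i$, and the wing dislocations also take place, so $\Sigma_{m,\xi}$ is not a small graph over any single clean model and the needed $C^{2,\alpha}$ control must be obtained by differentiating the explicit interpolation \eqref{Phii} and carefully cancelling leading-order terms — exactly as in the proof of item \ref{towbent_mc_disloc_est} of Lemma \ref{lemmatowbent} — keeping track of which contributions are quadratic in $\xi/m$ and which only linear. Everything outside that zone is a mechanical, if lengthy, composition of estimates already in hand.
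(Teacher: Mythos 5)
Your proposal follows essentially the same approach as the paper: reduce to the three overlapping regions, control the tower region through Lemma \ref{lemmatowbent} together with \eqref{Phullback}, \eqref{phivsPhi} and the convex interpolation \eqref{Phii}, control the disc and catenoidal regions via the exponential decay of Remark \ref{towasymptotics} and the tower-to-ball scale conversion, and derive the Jacobi and Robin comparisons from the metric and second-fundamental-form comparisons together with the scaling laws. The only appreciable difference is that the paper packages the core graph-comparison estimates (which you argue for informally, e.g.\ ``converting the H\"older norms from the tower scale to the ball scale, each derivative costing a factor comparable to $m$'') into the structured Lemma \ref{lem:mc_and_met_of_graphs_euc} of Appendix \ref{app:graphs}, applied via the two parametrizations $\phi^i_1$, $\phi^i_2$ over the asymptotic half-planes $\Lambda^i$; this is the concrete form in which the ``mechanical composition'' step you defer to is actually carried out, and it is worth flagging that this lemma is a nontrivial ingredient rather than an estimate already in hand at that point.
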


\begin{proof}
We start with the items
that follow with little computation
or that are immediate consequences of other items.
Item \ref{BdyOpCompCat}
is clear since
$\catr_{m,\xi}$ and $\K_{b_{m,\xi}}$ coincide
on a neighborhood of their upper boundary circle.
To check item \ref{BdyOpCompTow}
we merely observe that on $\partial^0\Sigma_{m,\xi}$
the blown-up derivative $m\,d\varpi_{\towr_{m,\xi}}$,
by virtue of \eqref{Phullback}
and the definition, \eqref{initsurfdef}, of $\Sigma_{m,\xi}$,
takes the outward unit conormal of $\Sigma_{m,\xi}$
to the outward unit conormal of $\tow^+_{(m)}$ 
and that the boundary of 
$\varpi_{(m)}(\{\sigma\geq0\})$
is totally geodesic
whilst the second fundamental form of the boundary of $m\B^3$
has norm $\sqrt{2}/m$.
Items \ref{TowrToNotTowr}
and \ref{NotTowrToTowr}
follow at once from
items \ref{MetCompTow}
and \ref{MetCompDiscAndCat}.

Next note that the various Jacobi operators
appearing in the statement
are of course local operators,
so the estimates for them can be proven by
neighborhoodwise comparison of the induced metrics
and second fundamental forms defining the operators
in question
(and the decay estimate in item \ref{JacOpCompTow}
is an immediate consequence of corresponding local
H\"{o}lder estimates).
Naturally items \ref{HestTow} and \ref{HestOffTow}
will be obtained by comparison of the induced mean curvatures
(identically zero of course for
$\towquot^+_{(m)}$,
$\B^2$,
and $\K_{b_{m,\xi}}$),
but then in fact items
\ref{JacOpCompTow}--\ref{JacOpCompCat}
follow from these same comparisons
supplemented by comparisons of the induced metrics,
since via the Gauss equation
we thereby obtain comparisons for the Schr\"odinger potentials
of the Jacobi operators.

Thus, as a result of this discussion, it only remains to prove items
\ref{MetCompTow}--\ref{HestOffTow}.
Let us write, as above, $g_{\mathrm{euc}}$
(respectively: $\widehat{g}_{\mathrm{euc}}$)
for the standard Euclidean metrics on $\R^3$ with coordinates $(x,y,z)$ (respectively: with coordinates $(\sigma,\psi,\theta)$)
and let us also agree not to modify the notation (understood as coordinates and metrics) when passing to quotients; in particular we will equally employ $\widehat{g}_{\mathrm{euc}}$ as a Riemannian metric on $\R^3/\sk{\trans^{\axis{\theta}}_{2m\pi}}$.
The homothety $m^{-1}\colon\R^3 \to \R^3$ 
descends to a map
$
 \R^3/\sk{\trans^{\axis{\theta}}_{2m\pi}}
 \to
 \R^3/\sk{\trans^{\axis{\theta}}_{2\pi}}
$,
which we will also call $m^{-1}$.
From \eqref{Phullback} we have
\begin{equation*}
\nm[\big]{
  (
    m \circ \Phi \circ m^{-1}
  )^*g_{\mathrm{euc}}
  -\widehat{g}_{\mathrm{euc}}
}_{C^k(\axis{\theta}_{\leq 3R_{\mathrm{tow}}})} 
\leq
\frac{C(k)}{m},
\end{equation*}
where
the $C^k$ norm is defined using
the metric $\widehat{g}_{\mathrm{euc}}$.
Recalling \eqref{bmxi}
and using \eqref{phivsPhi}
and the bound
\begin{equation*}
\abs{\omega_{b_{m,\xi}} - \omega_0 + \kappa_{m,\xi}}
\leq
C \kappa_{m,\xi}
\leq
\frac{C}{m}\Bigl(1+\frac{c}{m}\Bigr),
\end{equation*}
where $C>0$ is independent of $m$, $c$, and $\xi$,
we find, recalling also \eqref{Phii},
that 
\begin{equation*}
\nm[\big]{
  (m \circ \Phi^i_{m,\xi} \circ m^{-1})^*g_{\mathrm{euc}}
  -\widehat{g}_{\mathrm{euc}}
}_{
    C^k(mP^i_{<1} \cap \axis{\theta}_{
        \leq 2R})
  } 
\leq
\frac{C(k,R)}{m}
\end{equation*}
for each $i=0,1$,
every $R \geq 3R_{\mathrm{tow}}$,
and some $C(k,R)>0$ independent of $m$, $c$, and $\xi$
(assuming $m$ large enough in terms of $c$ and $R$).
We will fix $R$ later in the proof
(independently of $m$, $c$, and $\xi$).
From the above estimates, Lemma \ref{lemmatowbent}
(to relate $\towbent^+_{m,\xi}$ to $\tow^+_m$),
and the definition \eqref{initsurfdef} of the initial surfaces,
it then follows for each integer $k \geq 0$
\begin{align*}
\nm[\big]{
  (m \circ \varpi_{\towr_{m,\xi}}^{-1})^*
    g^{\vphantom{|}}_{m\Sigma_{m,\xi}}
  -g^{\vphantom{|}}_{\towquot^+_{(m)}}
}_{C^k(\axis{\theta}_{\leq 2R})}
&\leq
\frac{C(k,R)}{m}, 
\\[1ex]
\nm[\Big]{
  (m \circ \varpi_{\towr_{m,\xi}}^{-1})^*H_{m\Sigma_{m,\xi}}
  -\frac{\xi}{m} \towcoker
}_{C^k(\axis{\theta}_{\leq 2R})}
&\leq 
\frac{C(k,R)}{m},
\end{align*}
where the $C^k$ norms are defined using the metric $\widehat{g}_{\mathrm{euc}}$,
each constant $C(k,R)>0$ is independent of $c$, $\xi$, and $m$,
and we assume $m$ sufficiently large in terms of $c$ and $R$.
This proves \ref{MetCompTow} and \ref{HestTow} appropriately restricted to
$\varpi_{\towr_{m,\xi}}^{-1} (\axis{\theta}_{\leq 2R})$.
Clearly items \ref{MetCompDiscAndCat} and \ref{HestOffTow} hold trivially when correspondingly restricted
to the regions where $\Sigma_{m,\xi}$ agrees
exactly with $\K_{b_{m,\xi}}$ or $\B^2$.

To complete the estimates on the remainder of $\Sigma_{m,\xi}$
we will appeal to Lemma \ref{lem:mc_and_met_of_graphs_euc}.
Away from $\Sp^1$
and modulo the symmetries,
$\Sigma_{m,\xi}$
has been constructed by transferring
the defining functions
of the wings of $\towbent_{m,\xi}$
over their asymptotic half planes
to (subsets of) $\B^2$ and $\K_{b_{m,\xi}}$
to generate the corresponding graphs
over the latter surfaces.
We will therefore use Lemma \ref{lem:mc_and_met_of_graphs_euc}
to compare (subsets of) the initial surfaces
to $\B^2$ and $\K_{b_{m,\xi}}$
and also to the wings of $\towbent_{m,\xi}$.
Specifically,
we will apply Lemma \ref{lem:mc_and_met_of_graphs_euc}
multiple times
with
\begin{equation*}
\begin{aligned}
\Sigma
  &=
  \Lambda^i
  \vcentcolon=
  \widehat{\Pi}^i
    \cap
    \{
      R \leq \dist_{\axis{\theta}} \leq m^{3/4}+2
    \},
\\
\phi_1
  &=
  \phi^i_1
  \vcentcolon=
  \mbox{the inclusion map of $\Lambda^i$ in $\R^3$},
\\
\phi_2
  &=
  \phi^i_2
  \vcentcolon=
  m \circ \varphi^i 
    \circ \frac{1}{m}
    \circ \varpi_{(m)}|_{\Lambda^i},
\\
u
  &=
  u^i
  \vcentcolon=
  \widehat{w}^i|_{\Lambda^i},
\end{aligned}
\end{equation*}
for each $i=0,1$,
and also in some instances with $u=0$
or with $\phi_1=\phi_2^i$;
for the preceding
we recall
from \eqref{initsurfphidef}
the definitions of the maps $\varphi^i$
and
from Lemma \ref{lemmatowbent}
of the asymptotic half planes
$\widehat{\Pi}^0$ and 
$\widehat{\Pi}^1=\widehat{\Pi}^1_{m,\xi}$
to $\towbent^+_{m,\xi}$
and on each of these
the defining function
$\widehat{w}^i=\widehat{w}^i_{m,\xi}$
of the corresponding wing
$\widehat{W}^i$
of $\towbent^+_{m,\xi}$.
Thus $\phi_2^0$
is a parametrization over $\Lambda^0$
of a subset of $m\B^2$,
while $\phi_2^1$
is a parametrization over $\Lambda^1$
of a subset of $m\K_{b_{m,\xi}}$.

We continue to assume
that $R \geq 3R_{\mathrm{tow}}$,
and, by taking $m$ large enough,
we can without loss of generality
assume also
$R<m^{3/4}$.
To prepare for the application of Lemma \ref{lem:mc_and_met_of_graphs_euc}
we first interpret each map $\phi[u]$
(corresponding to the various choices of $\phi$ and $u$ above)
and observe some preliminary, supporting estimates.

For each $i=0,1$ let
$\varpi_{\widehat{W}^i}\colon \widehat{W}^i \to \widehat{\Pi}^i$
be nearest-point projection in Euclidean $\R^3$.
Then
\begin{equation}
\label{phi1w}
\phi^i_1[\widehat{w}^i]
 =
  \varpi_{\widehat{W}^i}^{-1}
\qquad
\mbox{for } i=0,1,
\end{equation}
and moreover,
referring to
\eqref{towbent}
and
\eqref{regionalprojections},
\begin{equation}
\label{phi2w}
\phi^i_2[\widehat{w}^i]
=
\begin{cases}
  m \circ \varpi_{\towr_{m,\xi}}^{-1}
    \circ \varpi_{(m)}
    \circ \widehat{\phi}_{m,\xi}^{-1}
    \circ \phi^i_1[\widehat{w}^i]
  &\mbox{on }
    \phi_2^i[\widehat{w}^i]^{-1}(m\towr_{m,\xi})
\\
  m \circ \varpi_{\discr_{m,\xi}}^{-1} 
    \circ \frac{1}{m} \circ \phi^0_2
  &\mbox{on }
    \phi^0_2[\widehat{w}^0]^{-1}(m\discr_{m,\xi})
    \quad
    \mbox{when }
    i=0
\\
  m \circ \varpi_{\catr_{m,\xi}}^{-1}
    \circ \frac{1}{m} \circ \phi^1_2
   &\mbox{on }
    \phi^1_2[\widehat{w}^1]^{-1}(m\catr_{m,\xi})
    \quad
    \mbox{when }
    i=1. 
\end{cases}
\end{equation}
Thus,
recalling also \eqref{wingsinball},
$\phi_1^0[\widehat{w}^0]$
and $\phi_2^0[\widehat{w}^0]$
are parametrizations
over $\Lambda^0$
of subsets of
$\widehat{W}^0$
and $mW^{\B^2}_{m,\xi}$ respectively,
while
$\phi_1^1[\widehat{w}^1]$
and $\phi_2^1[\widehat{w}^1]$
are parametrizations
over $\Lambda^1$
of subsets of
$\widehat{W}^1$
and $mW^{\K}_{m,\xi}$ respectively.
In particular we have
\begin{equation}
\label{ests_for_phi_1_phi_2_and_phi_1_w}
\begin{aligned}
&A_{\phi^0_1}=A_{\phi^0_2}=A_{\phi^1_1}=0,  
\\[1ex]
&H[\phi^i_j,0]
=0
\qquad \mbox{for } i=0,1, \quad j=1,2,
\\[1ex]
&H[\phi^i_1,\widehat{w}^i]
=(\varpi_{\widehat{W}^i}^{-1*}
    H_{\towbent^+_{m,\xi}}
  )|_{\Lambda^i}
\quad \mbox{for } i=0,1.
\end{aligned}
\end{equation}
We also observe
the pointwise bounds,
for $i=0,1$ and every integer $k \geq 0$,
\begin{align}
\label{phi_2_vs_phi_1}
&\abs{
  g_{\phi^i_2}-g_{\phi^i_1}
  }_{g_{\phi^i_1}}
  \leq
  C\frac{1+\dist_{\axis{\theta}} \circ \phi^i_1}{m},
&\abs{
    D_{g_{\phi^i_1}}^{k+1}
    g_{\phi^i_2}
  }_{g_{\phi^i_1}}
  +
  \abs{
    D_{g_{\phi^1_2}}^k
    A_{\phi^1_2}
  }_{g_{\phi^1_2}}
  \leq
  \frac{C(k)}{m^{k+1}},
\end{align}
which follow easily from the definitions
of $\phi^i_1$ and $\phi^i_2$.
Given the extent of $\Lambda^i$,
the bounds of \eqref{phi_2_vs_phi_1}
obviously imply
\begin{equation*}
\nm{
    g[\phi^i_2,0]
    -g[\phi^i_1,0]
  }_3
  \leq
  Cm^{-1/4}
\qquad
\mbox{for } i=0,1,
\end{equation*}
where each norm is the $C^3$ norm defined by $g_{\phi^i_1}$;
in particular, for the estimates below
we have equivalence
(through constants independent of $m$, $c$, and $\xi$)
of the norms induced by $g_{\phi^i_1}$ and $g_{\phi^i_2}$. 

We now apply
Lemma \ref{lem:mc_and_met_of_graphs_euc}
in conjunction with
\eqref{ests_for_phi_1_phi_2_and_phi_1_w},
\eqref{phi_2_vs_phi_1},
and the estimates of $\widehat{w}^i$
and $H_{\towbent^+}$ from Lemma~\ref{lemmatowbent}
(and also the fact that
$\dist_{\axis{\theta}} \leq m^{3/4}+2$
on both $\Lambda^0$ and $\Lambda^1$)
to obtain the following further estimates,
in which each norm is defined
using the metric $g_{\phi^i_1}$.
For these applications of Lemma \ref{lem:mc_and_met_of_graphs_euc}
we take $k=2$, $n=3$ and
$R$ large enough
(but independent of $m$, $c$, and $\xi$)
that $\nm{\widehat{w}^i|_{\Lambda^i}}_{4}$ is so small as required by the assumption of such lemma,
as ensured by item \ref{towbent_wing_defining_func_est}
of Lemma \ref{lemmatowbent};
we also take $m$ large enough
(but independent of $c$ and $\xi$)
that
$
 \nm{g_{\phi^i_2}-g_{\phi^i_1}}_3
 +\nm{A_{\phi^i_2}-A_{\phi^i_1}}_3
$
is likewise
small for the application of the same lemma,
as \eqref{phi_2_vs_phi_1} ensures we can do.
Then, in particular:
\begin{itemize}
\item{by item \ref{met_comp_euc_same_def_funcs}
of Lemma \ref{lem:mc_and_met_of_graphs_euc}
we have
\begin{equation}
\label{gphi1w_vs_gphi2w}
\nm{
    g[\phi^i_2,\widehat{w}^i]
    -g[\phi^i_1,\widehat{w}^i]
  }_3
  \leq
  Cm^{-1/4}
\qquad
\mbox{for } i=0,1;
\end{equation}
}
\item{
by item \ref{met_comp_euc_same_background}
of Lemma \ref{lem:mc_and_met_of_graphs_euc}
(actually with
$\phi_1\vcentcolon=\phi_j^i$
for both cases of $j=1,2$)
\begin{equation}
\label{gphiw_vs_gphi}
\nm{
    g[\phi^i_j,\widehat{w}^i]
    -g[\phi^i_j,0]
  }_{3,0,1}
  \leq
  C
\qquad
\mbox{for } i=0,1,
  \quad j=1,2;
\end{equation}
}
\item{
by item \ref{mc_comp_euc_same_background}
of Lemma \ref{lem:mc_and_met_of_graphs_euc}
(actually with
$\phi_1\vcentcolon=\phi^i_2$)
\begin{equation}
\label{Hphi2w}
\nm{H[\phi_2^i,\widehat{w}^i]}_{1,0,1}
\leq
C
\qquad
\mbox{for } i=0,1;
\end{equation}
}
\item{by item \ref{mc_quad_comp_euc}
of Lemma \ref{lem:mc_and_met_of_graphs_euc}
\begin{equation}
\label{Hphi2w_quad}
\nm{
    (1+\dist_{\axis{\theta}} \circ \phi^i_1)^{-1}
    \cdot 
    H[\phi^i_2,\widehat{w}^i]
  }_{1,0,1}
  \leq
  Cm^{-1}
\qquad
\mbox{for } i=0,1.
\end{equation}
}
\end{itemize}

We now complete the verification of items
\ref{MetCompDiscAndCat}
and \ref{HestOffTow}.
First, making use of the final two lines of
\eqref{phi2w},
we observe the equalities
\begin{align*}
g[\phi^0_2,0]
&=g_{\phi^0_2}
={\phi^0_2}^*m^{-1*}g^{\vphantom{|}}_{\B^2},
&
g[\phi^0_2,\widehat{w}^0]
&={\phi^0_2}^*m^{-1*}\varpi_{\discr_{m,\xi}}^{-1*}
g^{\vphantom{|}}_{\Sigma_{m,\xi}},
\shortintertext{as well as}
g[\phi^1_2,0]
&=g_{\phi^1_2}
={\phi^1_2}^*m^{-1*}g^{\vphantom{|}}_{\K_{b_{m,\xi}}},
&
g[\phi^1_2,\widehat{w}^1]
&={\phi^1_2}^*m^{-1*}\varpi_{\catr_{m,\xi}}^{-1*}
g^{\vphantom{|}}_{\Sigma_{m,\xi}}.
\end{align*}
By applying these in the estimate
\eqref{gphiw_vs_gphi}
with $j=2$
(for both cases $i=0$ and $i=1$)
the proof of item
\ref{MetCompDiscAndCat}
is completed.
To pass from the weighted estimate
of \eqref{gphiw_vs_gphi}
to the unweighted estimate
of item \ref{MetCompDiscAndCat}
we have also made use
of the definition
of the disc and catenoidal regions
in \eqref{eqn:definition_regions}.
Similarly we complete the proof of item \ref{HestOffTow}
by applying the equalities
\begin{align*}
H[\phi^0_2,\widehat{w}^0]
&=m^{-1}
  {\phi^0_2}^*
      m^{-1*} 
      \varpi_{\discr_{m,\xi}}^{-1*}
  H_{\Sigma_{m,\xi}},
&
H[\phi^1_2,\widehat{w}^1]
&=m^{-1}
  {\phi^1_2}^*
      m^{-1*} 
      \varpi_{\catr_{m,\xi}}^{-1*}
  H_{\Sigma_{m,\xi}}
\end{align*}
in the estimate \eqref{Hphi2w}.

To complete the verification of item \ref{MetCompTow} we apply in estimate \eqref{gphi1w_vs_gphi2w} the equalities
\begin{align*}
g[\phi^i_1,\widehat{w}^i]
&=\varpi_{\widehat{W}^i}^{-1*}
  g^{\vphantom{|}}_{\towbent^+_{m,\xi}},
&
g[\phi^i_2,\widehat{w}^i]
&=\varpi_{\widehat{W}^i}^{-1*}
      \widehat{\phi}_{m,\xi}^{-1*}
      \varpi_{(m)}^*
      \varpi_{\towr_{m,\xi}}^{-1*}
    g^{\vphantom{|}}_{\Sigma_{m,\xi}}
\end{align*}
(referring to \eqref{phi1w}
and the first line of \eqref{phi2w});
in fact,
by virtue of
\eqref{gphiw_vs_gphi}
with $j=1$,
we can take $R$ large enough
(but independent of $m$, $c$, and $\xi$)
so that
\eqref{gphi1w_vs_gphi2w}
continues to hold
(with possibly different choices of $C$,
still independent of $m$, $c$, and $\xi$)
if the norm there is defined
by $g[\phi^i_1,\widehat{w}^i]$
instead of $g[\phi^i_1,0]=g_{\phi^i_1}$.
From this estimate
(again referring to \eqref{eqn:definition_regions}
to pass from the weighted estimate
to the unweighted one)
we obtain
item \ref{MetCompTow}
(on the complement of a suitably large
neighborhood of $\Sp^1$,
where we have already established the estimate) but with
$g_{\towquot_{(m)}^+}$
replaced by
$\widehat{\phi}_{m,\xi}^*g_{\towbent_{m,\xi}^+}$
(understood on the quotient $\towquot_{(m)}^+$).
The proof of item \ref{MetCompTow}
is then completed by invoking
the triangle inequality
along with items
\ref{towbent_action_of_phi_hat}
and
\ref{towbent_metric_est}
of Lemma \ref{lemmatowbent}.
Similarly
we complete the proof of
item \ref{HestTow}
by relying upon
item \ref{towbent_mc_straighten_est}
of Lemma \ref{lemmatowbent}
in conjunction with
the estimate \eqref{Hphi2w_quad}
and the equalities
\begin{align*}
H[\phi^i_1,\widehat{w}^i]
&=\varpi_{\widehat{W}^i}^{-1*} 
   H^{\vphantom{|}}_{\towbent^+_{m,\xi}},
&
H[\phi^i_2,\widehat{w}^i]
&=m^{-1}
 \varpi_{\widehat{W}^i}^{-1*}
    \widehat{\phi}_{m,\xi}^{-1*}
    \varpi_{(m)}^*
    \varpi_{\towr_{m,\xi}}^{-1*} 
  H^{\vphantom{|}}_{\Sigma_{m,\xi}}.
\qedhere
\end{align*}
\end{proof}

\section{Linearized problem}\label{sec:Linear}

We will start our discussion by recalling a few relatively standard facts that will be employed both in Subsection \ref{subs:LinDisc} and then later in the article.
 
Let $\Sigma \subset \B^3$ be a properly embedded surface (thus with smooth boundary contained in $\Sp^2$).
Suppose also that the boundary of $\Sigma$ is partitioned as 
$\partial \Sigma = \partial_D \Sigma \cup \partial_R \Sigma$,
where $\partial_D \Sigma \cap \partial_R \Sigma = \emptyset$
and each of $\partial_D \Sigma$, $\partial_R \Sigma$
is a (possibly empty) union of connected components of $\partial \Sigma$. 
We also allow for a (possibly trivial) finite subgroup of isometries $G< \Aut_{\B^3}(\Sigma)$ and we tacitly assume the surface $\Sigma$, as well as each of its boundary components, to be invariant under the action of the elements of $G$. 
(Note that, a priori, there could be isometries that leave $\Sigma$ invariant while interchanging some of its boundary components; we only place the tacit restriction that whenever such a phenomenon happens, then the action on the connected components of the boundary is just a permutation of the components of $\partial_D\Sigma$, and of those of $\partial_R\Sigma$ so without ever changing the type of the boundary condition in question.) 
One can then consider the linear map
\begin{equation}\label{eq:MapT}
\begin{aligned}
T\colon C^{2,\alpha}_{G}(\Sigma)
&\to
  C^{0,\alpha}_{G}(\Sigma)
    \oplus C^{2,\alpha}_{G}(\partial_D \Sigma)
    \oplus C^{1,\alpha}_{G}(\partial_R \Sigma) \\
u
&\mapsto
  \left(
    J_{\Sigma}u,~
    u|_{\partial_D \Sigma},~
    B_{\Sigma}^{\text{Robin}}u|_{\partial_R \Sigma}
  \right)
\end{aligned}
\end{equation} 
where $B^{\text{Robin}}_\Sigma \vcentcolon= \eta_{\Sigma}\cdot\nabla_\Sigma-1$, the Robin boundary operator,
and it is agreed that, in case either $\partial_D\Sigma$ or $\partial_R\Sigma$ is empty,
we simply omit the corresponding slot in the above equations. 

Firstly, the operator in question has a discrete spectrum, as encoded in the following statement (for its proof see e.\,g. Appendix A in \cite{FranzIndex}).

\begin{lemma}\label{lem:BasicLinTheory-Spectrum}
In the setting above
there exists a Hilbertian basis $(\psi_k)_{k\geq 0}\subset C^{\infty}_G(\Sigma)$ of $L^2_G(\Sigma)$ and a non-decreasing sequence $(\lambda_k)_{k\geq 0}\subset \R$ diverging to $+\infty$ such that
\[
\left\{
\begin{aligned}
J_{\Sigma}\psi_k &= -\lambda_k\psi_k &&\text{ on }\Sigma,
\\
\psi_k&=0 &&\text{ on }\partial_D\Sigma,
\\
B_{\Sigma}^{\mathrm{Robin}}\psi_k&=0 &&\text{ on }\partial_R\Sigma.
\end{aligned}
\right.
\]
\end{lemma}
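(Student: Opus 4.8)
The plan is to reduce the claim to a standard spectral theorem for a self-adjoint operator with compact resolvent, after setting up the right functional-analytic framework. First I would introduce the appropriate Hilbert space, namely the closure $\mathcal{H}$ in $L^2_G(\Sigma)$ of the space of smooth $G$-equivariant functions on $\Sigma$ vanishing on $\partial_D\Sigma$ (no constraint on $\partial_R\Sigma$, since the Robin condition is natural rather than essential), together with the bilinear form obtained from $Q_\Sigma$ by a harmless shift: set
\[
\mathcal{B}(u,v)\vcentcolon=\int_\Sigma\bigl(\nabla_\Sigma u\cdot\nabla_\Sigma v+Ku\,v\bigr)-\int_{\partial_R\Sigma}A_{\partial M}(\nu_\Sigma,\nu_\Sigma)\,u\,v+\Lambda\int_\Sigma u\,v,
\]
where $K\vcentcolon=-(|A_\Sigma|^2+\mathrm{Ric}(\nu_\Sigma,\nu_\Sigma))$ and $\Lambda>0$ is chosen large enough that $\mathcal{B}$ is coercive on $\mathcal{H}$ (this is possible because $\Sigma$ is compact, so $K$ and the boundary curvature are bounded, and the trace inequality on $\partial_R\Sigma$ is controlled by a small multiple of the Dirichlet energy plus a large multiple of the $L^2$ norm). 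The form $\mathcal{B}$ is symmetric, continuous, and coercive on $\mathcal{H}$, so by Lax--Milgram it defines an isomorphism, whose inverse composed with the compact inclusion $\mathcal{H}\hookrightarrow L^2_G(\Sigma)$ (Rellich--Kondrachov, valid on the compact manifold-with-boundary $\Sigma$) yields a compact, self-adjoint, positive operator $\mathcal{G}$ on $L^2_G(\Sigma)$.

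Next I would apply the spectral theorem for compact self-adjoint operators to $\mathcal{G}$: this produces an orthonormal basis $(\psi_k)_{k\geq0}$ of $L^2_G(\Sigma)$ of eigenfunctions of $\mathcal{G}$ with eigenvalues $\mu_k\downarrow 0$, $\mu_k>0$. Setting $\lambda_k\vcentcolon=\mu_k^{-1}-\Lambda$ gives a non-decreasing sequence diverging to $+\infty$, and unwinding the definitions shows that each $\psi_k\in\mathcal{H}$ satisfies the weak form of the eigenvalue problem $J_\Sigma\psi_k=-\lambda_k\psi_k$ with the Dirichlet condition on $\partial_D\Sigma$ built in and the Robin condition on $\partial_R\Sigma$ appearing as the natural boundary condition of the variational formulation. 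The equivariance is automatic: since $G$ acts by isometries commuting with $\Delta_\Sigma$ and preserving $|A_\Sigma|^2$, $\mathrm{Ric}(\nu_\Sigma,\nu_\Sigma)$, and $A_{\partial M}(\nu_\Sigma,\nu_\Sigma)$ (as noted in the paragraph on equivariance preceding the lemma), the operator $\mathcal{G}$ maps $L^2_G(\Sigma)$ to itself, so working throughout in the $G$-equivariant subspace is legitimate and one need not project afterwards.

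The remaining point is elliptic regularity: the eigenfunctions $\psi_k$, a priori only in $\mathcal{H}\subset H^1$, must be shown to be smooth up to the boundary. Here I would invoke interior and boundary Schauder (or $L^p$) estimates for the elliptic operator $J_\Sigma+\Lambda$ with Dirichlet data on $\partial_D\Sigma$ and with the oblique/Robin condition $B^{\mathrm{Robin}}_\Sigma=\eta_\Sigma\cdot\nabla_\Sigma-1$ on $\partial_R\Sigma$ — the latter is a regular (non-tangential) first-order boundary operator, so the standard theory applies — and bootstrap: $\psi_k\in H^1\Rightarrow\psi_k\in H^2\Rightarrow\cdots\Rightarrow\psi_k\in C^\infty(\Sigma)$, using that $\Sigma$ has smooth boundary and all coefficients are smooth. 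The main obstacle, such as it is, is purely bookkeeping: verifying that the weak solutions of the mixed Dirichlet--Robin problem one obtains from $\mathcal{G}$ genuinely satisfy the natural (Robin) boundary condition in the classical sense once regularity is in hand, and checking the coercivity constant $\Lambda$ can be chosen uniformly — but since $\Sigma$ is compact this is routine. For these reasons, and since the argument is entirely standard, one may simply cite Appendix A of \cite{FranzIndex} as the statement does, where exactly this construction is carried out.
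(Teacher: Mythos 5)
Your argument is correct and is precisely the standard variational-spectral construction that the paper itself delegates to Appendix~A of \cite{FranzIndex} rather than writing out; so it is the same approach. One small phrasing slip to fix: you describe $\mathcal{H}$ as ``the closure \emph{in} $L^2_G(\Sigma)$'' of the smooth $G$-equivariant functions vanishing on $\partial_D\Sigma$ — taken literally that closure is all of $L^2_G(\Sigma)$ (the Dirichlet condition on a set of measure zero is not preserved under $L^2$ limits), which would ruin both the coercivity of $\mathcal{B}$ and the compactness of the inclusion $\mathcal{H}\hookrightarrow L^2_G(\Sigma)$; what you mean, and what the rest of your argument clearly uses, is the closure with respect to the $H^1$ norm, i.e.\ the $G$-equivariant part of $\{u\in H^1(\Sigma) \st u|_{\partial_D\Sigma}=0\}$, sitting densely inside $L^2_G(\Sigma)$.
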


We recall, parenthetically, that if $\Sigma$ is minimal, $\partial_D \Sigma=\emptyset$ and $G$ is the trivial group, then the number of (strictly) negative eigenvalues is precisely the standard \emph{Morse index} of $\Sigma$. If instead one considers non-trivial symmetry groups then one defines the \emph{equivariant} Morse index, cf. \cite{FranzIndex}. We will get back to these notions in Section \ref{sec:Index}.

Secondly, we can also rely upon the basic $C^{2,\alpha}$ Schauder estimates (cf. Section 6.7 of \cite{GilTru2001}); for our purposes we need this (special) result.

\begin{lemma}\label{lem:BasicLinTheory-Schauder}
In the setting above, if $T$ defined in \eqref{eq:MapT} is injective then it is an isomorphism and there exists $C>0$ such that for all $u \in C^{2,\alpha}(\Sigma)$
\begin{equation}\label{eq:AprioriLinEstimate}
\nm{u}_{2,\alpha}\leq C \nm{Tu}
\vcentcolon=C\left[\nm{J_{\Sigma} u}_{0,\alpha}+\nm{u|_{\partial_D \Sigma}}_{2,\alpha}+\nm{B_{\Sigma}^{\mathrm{Robin}}u|_{\partial_R \Sigma}}_{1,\alpha} \right].
\end{equation}
\end{lemma}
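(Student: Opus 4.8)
The plan is to deduce Lemma~\ref{lem:BasicLinTheory-Schauder} from the combination of three ingredients already available: the spectral decomposition of Lemma~\ref{lem:BasicLinTheory-Spectrum}, the standard interior-plus-boundary Schauder theory for second-order elliptic operators with mixed Dirichlet/Robin data, and a Fredholm/compactness argument to promote an a~priori estimate into surjectivity. First I would record the \emph{elliptic estimate with error term}: by the classical Schauder estimates (Section~6.7 of \cite{GilTru2001}), applied in the interior via $J_\Sigma$ and near each boundary component via either the Dirichlet condition on $\partial_D\Sigma$ or the oblique (Robin) condition on $\partial_R\Sigma$ --- the latter being a genuinely oblique, hence covered, boundary operator since $\eta_\Sigma\cdot\nabla_\Sigma$ is transverse to $\partial\Sigma$ --- one obtains a constant $C_0>0$ such that
\begin{equation*}
\nm{u}_{2,\alpha}
\leq
C_0\left(\nm{Tu} + \nm{u}_{0}\right)
\qquad\text{for all } u\in C^{2,\alpha}_G(\Sigma),
\end{equation*}
where $\nm{u}_0$ is the sup norm and the $G$-equivariance is preserved throughout because $J_\Sigma$, conormal differentiation, and the boundary restrictions all commute with the $G$-action (as noted in Section~\ref{sec:Nota}). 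The equivariance is harmless here: one may simply work on a fundamental domain, or invoke the $G$-equivariant version of the Schauder estimates.

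Next I would upgrade this to the \emph{clean} estimate \eqref{eq:AprioriLinEstimate} by absorbing the lower-order term, using injectivity of $T$. This is the standard argument by contradiction: if no such $C$ existed, there would be a sequence $u_j\in C^{2,\alpha}_G(\Sigma)$ with $\nm{u_j}_{2,\alpha}=1$ and $\nm{Tu_j}\to 0$. By the error-term estimate, $\nm{u_j}_0$ stays bounded below away from zero (otherwise $\nm{u_j}_{2,\alpha}\to0$), and by Arzel\`a--Ascoli together with the compact embedding $C^{2,\alpha}\hookrightarrow C^{2,\alpha'}$ for $\alpha'<\alpha$ (on the compact surface $\Sigma$), a subsequence converges in $C^{2,\alpha'}_G(\Sigma)$ to some $u_\infty$ with $\nm{u_\infty}_0>0$; feeding this back into the error-term estimate shows the convergence is actually in $C^{2,\alpha}$, so $\nm{u_\infty}_{2,\alpha}=1$ and $Tu_\infty=0$, contradicting injectivity. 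Hence \eqref{eq:AprioriLinEstimate} holds.

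It then remains to show that $T$ is \emph{surjective}, i.e.\ an isomorphism. I would argue that $T$ is Fredholm of index zero and then use injectivity. One route: the operator $u\mapsto J_\Sigma u=\Delta_\Sigma u + (\abs{A_\Sigma}^2+\mathrm{Ric})u$ with these mixed boundary conditions is a zeroth-order perturbation of the Laplacian with Dirichlet/Robin data, which is a self-adjoint Fredholm problem by Lemma~\ref{lem:BasicLinTheory-Spectrum}; the eigenfunction basis $(\psi_k)$ and eigenvalues $(\lambda_k)\to+\infty$ give, for any datum $(f,h_D,h_R)$, first a solution of the inhomogeneous boundary problem (by solving the boundary data off, using a harmonic-type extension) reducing to the case $h_D=h_R=0$, and then a solution $u=\sum_k \langle f,\psi_k\rangle_{L^2}\lambda_k^{-1}(-\psi_k)$ \emph{provided no $\lambda_k$ vanishes} --- but $\lambda_k=0$ would mean $\psi_k\in\ker T$, contradicting injectivity of $T$. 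Elliptic regularity (bootstrapping the Schauder estimate) promotes this $L^2$ solution to $C^{2,\alpha}_G(\Sigma)$, and $G$-equivariance is inherited since each $\psi_k$ lies in $C^\infty_G(\Sigma)$. Thus $T$ is onto, hence an isomorphism, and the estimate \eqref{eq:AprioriLinEstimate} is exactly the continuity of $T^{-1}$.

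The main obstacle --- really the only subtle point --- is handling the Robin (oblique derivative) boundary condition rigorously in the Schauder framework, together with the interaction between the two boundary pieces and the group action: one must be careful that $\partial_R\Sigma$ carries a genuine oblique-derivative problem (which it does, since $B^{\mathrm{Robin}}_\Sigma = \eta_\Sigma\cdot\nabla_\Sigma - 1$ with $\eta_\Sigma$ transverse) so that the Lopatinski--Shapiro condition holds and Schauder estimates up to the boundary apply, and that reducing inhomogeneous boundary data to the homogeneous case can be done $G$-equivariantly (e.g.\ by averaging an extension over $G$). Everything else is routine functional analysis, and I would not belabor it in the write-up beyond citing \cite{GilTru2001} and Lemma~\ref{lem:BasicLinTheory-Spectrum}.
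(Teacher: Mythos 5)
Your argument is correct, and it fills in exactly the details the paper delegates to Section~6.7 of Gilbarg--Trudinger: the Schauder estimate with a compact error term, absorption of the error via injectivity and an Arzel\`a--Ascoli contradiction, and surjectivity via the Fredholm/spectral structure already packaged in Lemma~\ref{lem:BasicLinTheory-Spectrum}. The paper itself states the lemma without proof beyond the citation, so your proposal is essentially the intended argument, carried out with appropriate care about the oblique boundary operator and $G$-equivariance.
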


The scaling behavior of the Jacobi operator and Robin boundary operator also plays a critical role in our construction.
For any $m>0$ let $m\colon\R^3 \to \R^3$ be the homothety sending $x \in \R^3$ to $mx$.
Then
\begin{equation*}
m^* J_{m\Sigma} = m^{-2}J_\Sigma m^*,
\end{equation*}
and if $\partial \Sigma$ is nonempty
with outward unit conormal $\eta_{\Sigma}$,
interpreted as the Neumann boundary operator,
so that $\partial(m\Sigma)=m\partial \Sigma$
has outward unit conormal $\eta_{{m\Sigma}}=m^{-1}\, dm \, \eta_{\Sigma}$,
then we also have
\begin{equation*}
m^* \eta_{{m\Sigma}} = m^{-1}\eta_{\Sigma} m^*.
\end{equation*}
In particular if $\partial \Sigma$
has a component contained in $\Sp^2$
with Robin boundary operator
$B^{\text{Robin}}_\Sigma$,
then the corresponding Robin boundary operator
on the corresponding component of $m\partial \Sigma$ in $m\Sp^2$ is
$B^{\text{Robin}}_{m\Sigma}= \eta_{m\Sigma}\cdot\nabla_{m\Sigma}-m^{-1}$,
and we have
\begin{equation}\label{BRobin_scaling}
m^*B^{\text{Robin}}_{m\Sigma} = m^{-1}B^{\text{Robin}}_\Sigma m^*.
\end{equation}
Note that, as a result of these simple facts, $B^{\text{Robin}}_{m\Sigma}$
is a small perturbation of the standard Neumann boundary operator
when $m$ is large
(as the second fundamental form of $m\Sp^2$ tends to $0$).

\subsection{Linearized problem on the disc and on catenoidal annuli}\label{subs:LinDisc}

Those general preliminaries being given, we start here our discussion of the linear analysis with the case of the simplest ``block'' in our construction, i.\,e. the central disc $\B^2$, and  recall the definition of the antiprismatic group $\apr_m$ from \eqref{aprdef}. 

\begin{lemma}\label{lem:KeyLinDisc}
For any $m\geq 1$ the map
\begin{equation*}
\begin{aligned}
T_m\colon C^{2,\alpha}_{\apr_m}(\B^2)
&\to 
  C^{0,\alpha}_{\apr_m}(\B^2) 
  \oplus C^{2,\alpha}_{\apr_m}(\partial \B^2) \\
u &\mapsto \left(J_{\B^2}u, u|_{\partial \B^2}\right)
\end{aligned}
\end{equation*}
is invertible, and considered -- as above -- the product Banach norm on the target given by $\nm{(f,\varphi)}=\nm{f}_{0,\alpha}+\nm{\varphi}_{2,\alpha}$ there holds the estimate 
\begin{equation}\label{eq:DiscBound}
\nm{u}_{2,\alpha}\leq C \nm{T_m u}
\end{equation}
for a constant $C>0$ that is independent of $m$.
\end{lemma}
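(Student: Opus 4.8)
The plan is to exploit the fact that $J_{\B^2}$ on the flat equatorial disc is an explicitly diagonalizable operator, so that the equivariance constraint imposed by $\apr_m$ removes precisely the low modes responsible for any kernel, and then to extract an $m$-independent constant from the resulting spectral gap. First I would recall that $\B^2$ is totally geodesic in $\B^3$, so $A_{\B^2}=0$ and $\mathrm{Ric}(\nu,\nu)=0$; hence $J_{\B^2}=\Delta_{\B^2}$ is just the flat Laplacian, and $B^{\mathrm{Robin}}_{\B^2}=\eta_{\B^2}\cdot\nabla_{\B^2}-1=\partial_r-1$ in polar coordinates $(r,\phi)$ on $\B^2$. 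The relevant eigenfunctions of $\Delta_{\B^2}$ with the Dirichlet condition imposed on $\partial\B^2$ (which is the boundary condition appearing in $T_m$, since here $\partial_D\Sigma=\partial\B^2$ and $\partial_R\Sigma=\emptyset$) are $J_n(\sqrt{\lambda}\,r)e^{\pm i n\phi}$, $n\geq 0$, with $\sqrt{\lambda}$ ranging over the positive zeros of the Bessel function $J_n$. Therefore injectivity of $T_m$ amounts to checking that $0$ is not a Dirichlet eigenvalue of $\Delta_{\B^2}$ restricted to the $\apr_m$-equivariant functions; but a harmonic function on $\B^2$ vanishing on $\partial\B^2$ is identically $0$, so $T_m$ is injective for every $m\geq 1$ with no equivariance needed, and by Lemma~\ref{lem:BasicLinTheory-Schauder} it is an isomorphism with some estimate \eqref{eq:AprioriLinEstimate}.

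The substance of the lemma is the $m$-independence of the constant $C$ in \eqref{eq:DiscBound}, and this is where I expect the main obstacle to lie: Schauder theory alone gives, a priori, a constant that could degenerate as $m\to\infty$, since the equivariant spaces change with $m$. The key observation to rule this out is that $\apr_m$-equivariant functions on $\B^2$ have a restricted angular spectrum. Concretely, using \eqref{symsign} and $\sgn_{\B^2}\refl_{\axis{x}}=-1$, $\sgn_{\B^2}\refl_{\{y=x\tan(\pi/2m)\}}=1$ (the signs computed as in Proposition~\ref{initsurfbasicprops}\ref{initsurfsymsigns}), an $\apr_m$-equivariant function $u$ must satisfy $u\circ\rot_{\axis z}^{2\pi/m}=u$ together with an odd-type reflection condition; expanding $u=\sum_n u_n(r)e^{in\phi}$, the $\cyc_m$-invariance forces $u_n\equiv 0$ unless $m\mid n$, and the reflection sign further restricts which of those survive. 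In particular the lowest nonzero angular mode present has $|n|\geq m$ (and in fact the invariant-under-rotation mode $n=0$ is killed by the odd reflection symmetry $\refl_{\axis x}$, so really $|n|\geq m$). This is exactly the structural input that produces a uniform estimate.

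Given this, I would argue as follows. Decompose $u=\sum_{n\colon m\mid n} u_n(r)e^{in\phi}$ and solve the boundary value problem mode by mode: $u_n$ satisfies the ODE $u_n''+\tfrac1r u_n'-\tfrac{n^2}{r^2}u_n = f_n$ on $(0,1)$ with $u_n(1)=\varphi_n$ and regularity at $r=0$. For $|n|\geq m\geq 1$ this is a uniformly (in $n$) coercive problem because the potential term $n^2/r^2$ only helps; a direct energy estimate or an explicit Green's function bound — using that the homogeneous solutions are $r^{|n|}$ and $r^{-|n|}$ — gives $\|u_n\|_{C^{2,\alpha}}\le C(\|f_n\|_{C^{0,\alpha}}+\|\varphi_n\|_{C^{2,\alpha}})$ with $C$ \emph{independent of $n\geq 1$}, hence independent of $m$. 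Summing over the (sparse) surviving modes, or more cleanly reassembling via a partition argument and the fact that all present modes lie in the "good" range, yields \eqref{eq:DiscBound} with a universal $C$. An even slicker route, avoiding mode-by-mode ODE analysis, is a rescaling/blow-up contradiction argument: if $C$ were not uniform there would be $m_j\to\infty$ and $\apr_{m_j}$-equivariant $u_j$ with $\|u_j\|_{2,\alpha}=1$ and $\|T_{m_j}u_j\|\to 0$; since every such $u_j$ has vanishing angular modes below $m_j$, standard elliptic compactness on the fixed domain $\B^2$ forces a subsequential $C^2_{\mathrm{loc}}$ limit $u_\infty$ which is harmonic, vanishes on $\partial\B^2$, hence is $0$, while the angular-mode restriction forces $u_\infty$ to carry no mass near $\partial\B^2$ in a way incompatible with $\|u_j\|_{2,\alpha}=1$ — the slight delicacy being to control the $C^{2,\alpha}$ norm (not merely $C^2_{\mathrm{loc}}$) up to the boundary, which the Dirichlet Schauder estimate on the fixed domain supplies. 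Either way, the heart of the matter is the angular-mode localization coming from $\apr_m$-equivariance, and turning that into a genuinely $m$-uniform Schauder bound is the one step that needs care.
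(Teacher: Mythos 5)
Your injectivity argument is correct, and your mode‑by‑mode/blow‑up machinery would, with care, produce the estimate; but you have misidentified where the $m$‑uniformity comes from, and in doing so you have built a much longer argument than the paper needs. You write that ``Schauder theory alone gives, a priori, a constant that could degenerate as $m\to\infty$, since the equivariant spaces change with $m$,'' and that the angular‑mode localization forced by $\apr_m$‑equivariance is ``the heart of the matter.'' Neither claim is right in this case. The decisive point — and the whole content of the paper's proof — is that on $\B^2$ the Dirichlet kernel of $J_{\B^2}=\Delta_{\B^2}$ is trivial on \emph{all} of $C^{2,\alpha}(\B^2)$, with no symmetry imposed whatsoever. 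Consequently the map $u\mapsto(\Delta_{\B^2}u,\,u|_{\partial\B^2})$ is an isomorphism on the full space $C^{2,\alpha}(\B^2)\to C^{0,\alpha}(\B^2)\oplus C^{2,\alpha}(\partial\B^2)$, with a Schauder constant $C_0$ depending only on the fixed domain $\B^2$ (this is exactly the content of Lemma~\ref{lem:BasicLinTheory-Schauder}). The operator $T_m$ is then just the restriction of this fixed isomorphism to the closed subspace $C^{2,\alpha}_{\apr_m}(\B^2)$, and the a priori estimate passes to any such subspace with the \emph{same} constant $C_0$. So \eqref{eq:DiscBound} holds with $C=C_0$ independently of $m$, full stop; there is no degeneration to rule out, and no spectral gap, Bessel analysis, or compactness/blow‑up argument is required.

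This distinction matters because it is precisely what separates the present lemma from the catenoidal case, Lemma~\ref{lem:catker}. There the unrestricted Jacobi operator on $\K_b$ does have a kernel (the Jacobi fields generated by vertical translation and by varying the waist), so the injectivity genuinely hinges on the $\pyr_m$‑equivariance, and a compactness argument of the kind you sketched is then unavoidable to extract an $m$‑uniform constant. By transplanting that style of argument to the disc you are, in effect, solving a harder problem than the one posed; the mode restriction $|n|\geq m$ that you emphasize never has to be invoked, because the $n=0,\dots,m-1$ modes cause no trouble for the Dirichlet Laplacian to begin with. Your proposal is therefore not incorrect, but its framing of the obstacle is, and the simple one‑step closure — a fixed Schauder constant for the unrestricted Dirichlet problem, inherited by every equivariant subspace — is the argument the lemma is actually asking for.
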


\begin{proof}
Firstly, we note that the Jacobi operator of $\B^2$ (that is just the Laplace operator) acting on the space of smooth functions vanishing on $\partial \B^2$ has trivial kernel: indeed, by the maximum principle for harmonic functions, we have $\Delta_{\B^2} u=0$ in $\B^2$ and $u=0$ on $\partial \B^2$ if and only if $u$ vanishes identically.  Of course, the same conclusion holds true for the domain $C^{2,\alpha}(\B^2)$, to greater extent if we impose additional symmetries (thus \emph{restricting} the domain in question).
Thus, Lemma \ref{lem:BasicLinTheory-Schauder}, applied for $\Sigma=\B^2$ and $G=\apr_m$, gives the desired conclusion and appropriate estimate.
\end{proof}

In the sequel of this article, we let 
\label{gls:PB2}
$\glsuseri{modelresol} %P^{m}_{\B^2}
\colon C^{0,\alpha}_{\apr_m}(\B^2)\oplus C^{2,\alpha}_{\apr_m}(\partial \B^2)\to C^{2,\alpha}_{\apr_m}(\B^2)$ 
denote the inverse of the operator $T_m$, i.\,e. the resolvent operator for the associated elliptic problem; in Section \ref{subs:LinConclusion} we will employ the corresponding continuity estimate in the special case of zero boundary data: 
\begin{equation}\label{eq:ResolveDisc}
\nm{P^{m}_{\B^2}E}_{2,\alpha}\leq C\nm{E}_{0,\alpha}.
\end{equation}

The discussion for catenoidal annuli is similar, at the level of outcome, although somewhat more elaborate and relying on the imposed symmetry group. 
Recalling the definition \eqref{pyrdef} of the pyramidal group $\pyr_m$ we consider the Jacobi operator on $\pyr_m$-equivariant functions on $\K_0$ with Dirichlet data $u|_{C_0}=0$ on the lower boundary circle $C_0$ and Robin data $B^{\mathrm{Robin}}_{\K_{0}}u=0$ as defined in \eqref{Robin_op_general_def} on the upper boundary circle $C_\perp$. 
Note that $\pyr_m=\Aut_{\B^3}(\K_0) \cap \apr_m$ is the largest subgroup of $\apr_m$ preserving $\K_0$.
 
\begin{lemma}[Kernel on $\K_0$]
\label{lem:catker}
There exist $m_0$ and $b_0\in (0,\beta)$ (where $\beta>0$ is provided by Lemma~\ref{lem:catenoid-K_b}), independent of $m_0$, such that for any $m\geq m_0$ and any $0\leq b\leq b_0$ 
the map
\begin{equation}\label{eqn:definition_JacBdyOpOnK0}
\begin{aligned}
T_{m,b}\colon C^{2,\alpha}_{\pyr_m}(\K_{b})
&\to 
  C^{0,\alpha}_{\pyr_m}(\K_{b})
  \oplus C^{2,\alpha}_{\pyr_m}(C_0) \oplus C^{1,\alpha}_{\pyr_m}(C_{\perp})
  \\
u &\mapsto 
  \left(
    J_{\K_{b}}u,~
    u|_{C_0},~
    B^{\mathrm{Robin}}_{\K_{b}}u|_{C_\perp}
  \right)
\end{aligned}
\end{equation}
is invertible, and considered the product Banach norm on the target given by $\nm{(f,\varphi,\psi)}\vcentcolon=\nm{f}_{0,\alpha}+\nm{\varphi}_{2,\alpha}+\nm{\psi}_{1,\alpha}$ there holds the estimate 
\begin{equation}\label{eq:Bound}
\nm{u}_{2,\alpha}\leq C \nm{T_{m,b} u}    
\end{equation}
for a constant $C>0$ that is independent of $m$ and $b$.
\end{lemma}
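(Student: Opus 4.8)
The plan is to reduce the statement to Lemma~\ref{lem:BasicLinTheory-Schauder}, which applies to $\Sigma = \K_b$ with $\partial_D\Sigma = C_0$, $\partial_R\Sigma = C_\perp$, and $G = \pyr_m$, \emph{provided} one can establish that $T_{m,b}$ is injective on $C^{2,\alpha}_{\pyr_m}(\K_b)$ and — crucially — that the Schauder constant $C$ in \eqref{eq:AprioriLinEstimate} can be chosen independently of $m$ and of $b\in[0,b_0]$. The first reduction is: by the smooth dependence of $\K_b$ on $b$ (Lemma~\ref{lem:catenoid-K_b}) and a perturbation/continuity argument, it suffices to treat the case $b=0$ and then absorb small $b$ by taking $b_0$ small. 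For the $b=0$ case I would first prove injectivity of $T_{m,0}$ on the \emph{full} (non-equivariant) space is \emph{false} — the catenoid carries Jacobi fields — so the symmetry reduction is essential: I would show that no nonzero $u\in C^{2,\alpha}_{\pyr_m}(\K_0)$ solves $J_{\K_0}u = 0$, $u|_{C_0}=0$, $B^{\mathrm{Robin}}_{\K_0}u|_{C_\perp}=0$, by separating variables in the angular coordinate and using that $\pyr_m$-equivariance forces the lowest angular modes present to have frequency $\geq m$ (for $m\geq m_0$ large), so the relevant ODE has no admissible solution with those boundary conditions.

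The heart of the argument, and the main obstacle, is the \emph{uniformity in $m$} of the estimate \eqref{eq:Bound}. A naive application of Lemma~\ref{lem:BasicLinTheory-Schauder} gives, for each fixed $m$, a constant $C_m$, but one must rule out $C_m\to\infty$. The standard way to do this is a contradiction/compactness argument: suppose there are $m_j\to\infty$, $b_j\to b_\infty$, and $u_j\in C^{2,\alpha}_{\pyr_{m_j}}(\K_{b_j})$ with $\nm{u_j}_{2,\alpha}=1$ but $\nm{T_{m_j,b_j}u_j}\to 0$. One extracts a subsequential limit; the difficulty is that the domain groups $\pyr_{m_j}$ are changing, so the limit function $u_\infty$ would be invariant under arbitrarily high rotational frequency, hence rotationally invariant, and solves the \emph{rotationally symmetric} Jacobi problem on $\K_{b_\infty}$ with zero Dirichlet data on $C_0$ and zero Robin data on $C_\perp$. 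One then must show this limit problem has only the trivial solution — this is where the specific geometry of the catenoid $\K_b$ (its orthogonal intersection with $\Sp^2$, as set up in Lemmata~\ref{lem:orthogonal-b} and \ref{lem:catenoid-K_b}) enters: the rotationally invariant Jacobi fields on a catenoid are explicit (generated by dilations and vertical translations), and one checks directly that none of them satisfies both the Dirichlet condition at $C_0$ and the free-boundary-type Robin condition at $C_\perp$ unless identically zero. But the compactness step is delicate because the angular frequencies are escaping to infinity, so I would instead argue \emph{modewise}: decompose $u_j$ into angular Fourier modes (only frequencies $0$ and multiples of $m_j$ survive by equivariance), prove a uniform-in-frequency one-dimensional estimate for the mode-$n$ ODE boundary value problem for $n=0$ and for $n\geq m_0$, with the high-frequency estimate being uniform and in fact improving as $n$ grows (since the zeroth-order potential $|A_{\K_b}|^2$ is bounded and the $-n^2/r^2$ term dominates), and then reassemble.

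Concretely I would organize the write-up as follows. Step 1: reduce to $b=0$ via the implicit-function-theorem dependence of $\K_b$ on $b$ and continuity of the operator norm, shrinking $b_0$. Step 2: set up the angular mode decomposition on $\K_0\cong$ cylinder, noting $\pyr_m$-equivariance restricts the modes to $n\in\{0\}\cup m\Z$, and reduce the problem and the estimate to the family of ODE boundary value problems $-(pu')' + (q + n^2 w)u = f$ on an interval with a Dirichlet condition at one end and a Robin condition at the other. Step 3: prove injectivity and the uniform estimate for $n=0$ by exhibiting the explicit kernel of the rotationally invariant Jacobi operator on the catenoid and checking the boundary conditions exclude it (using the orthogonality characterization from Lemma~\ref{lem:orthogonal-b}), then invoke the standard one-dimensional Schauder/resolvent bound. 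Step 4: prove injectivity and a uniform estimate for $n\geq m_0$ by a maximum-principle / energy argument exploiting that the potential $q+n^2w$ is positive and bounded below by a multiple of $n^2$, giving a bound with constant \emph{decreasing} in $n$. Step 5: sum the modewise estimates, using that Hölder norms are comparable to the $\ell^2$-aggregated modewise norms up to universal constants, to conclude \eqref{eq:Bound} with $C$ independent of $m$; then invertibility follows from Lemma~\ref{lem:BasicLinTheory-Schauder}. The step I expect to fight hardest with is Step 4 combined with Step 5 — controlling the Hölder (not just $L^2$) norm of the reassembled solution uniformly in $m$, since Hölder norms do not split cleanly across Fourier modes; one typically handles this by working on fixed-size coordinate patches where finitely many modes interact and applying interior Schauder estimates patchwise, which is routine in principle but requires care to keep all constants $m$-independent.
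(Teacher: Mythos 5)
Your proposal is essentially correct and recognizes the key structural ingredients of the argument, but it diverges from the paper's proof in two places, and in both places the paper takes a cleaner route.

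First, to show that for large $m$ every $\pyr_m$-equivariant Jacobi field on $\K_0$ with the stated boundary conditions is in fact $\Ogroup(2)$-invariant, you propose an explicit angular mode decomposition (frequencies $n\in\{0\}\cup m\Z$) and an ODE analysis of each mode. The paper instead invokes the Courant nodal domain theorem: any $u$ that is $L^2$-orthogonal to the $\Ogroup(2)$-invariant functions restricts to each latitude circle with at least $m$ zeros, hence has at least $m$ nodal domains, so its eigenvalue is at least the $m$\textsuperscript{th} eigenvalue of $J_{\K_0}$, which goes to infinity. This reaches the same conclusion as your mode-counting without ever needing to set up or solve the mode ODEs, and it uses only that the spectrum of $J_{\K_0}$ is unbounded above. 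Your route is workable but strictly more effort.

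Second, and more significantly: for the uniformity of the constant $C$ in $m$ and $b$ you explicitly consider the contradiction/compactness argument (take $u_j$ with $\nm{u_j}_{2,\alpha}=1$, $\nm{T_{m_j,b_j}u_j}\to 0$, extract a subsequential limit $u_\infty$), but then reject it because \emph{``the angular frequencies are escaping to infinity''} and switch to a modewise resolvent estimate. This concern is unfounded, and the paper runs exactly the argument you abandoned. The escaping frequencies are a feature, not a bug: $u_j$ is invariant under rotation by $2\pi/m_j$, and a $C^2$ limit of such functions along $m_j\to\infty$ is $\Ogroup(2)$-invariant (approximate any fixed angle $\phi$ by integer multiples of $2\pi/m_j$ and pass to the limit using uniform continuity). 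Thus $u_\infty$ falls directly into the rotationally-invariant case you already handled via the explicit Jacobi fields, and is zero — contradiction. Non-triviality of $u_\infty$ comes from the Schauder estimate on the fixed surface $\K_0$: if $\nm{u_j}_{C^0}\to 0$ then $\nm{u_j}_{2,\alpha}\to 0$, impossible. The modewise alternative you sketch in Steps 2–5 can presumably be pushed through, but you correctly flag its weak point yourself: Hölder norms do not reassemble across Fourier modes in any simple way, and patching them back together with $m$-independent constants is precisely the kind of technical overhead the compactness argument is designed to avoid.

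One small point of precision: for the rotationally-invariant kernel, you refer to ``the orthogonality characterization from Lemma~\ref{lem:orthogonal-b},'' but the specific fact the paper actually leans on is the strict inequality $\partial F/\partial a(a_0,0)>0$ (equation \eqref{eqn:20211125}) established \emph{inside the proof} of Lemma~\ref{lem:catenoid-K_b}: it is this that shows the dilation Jacobi field $v_{\mathrm{d}}$ (the one that does vanish on $C_0$) fails the Robin condition on $C_\perp$. You should also be explicit, as the paper is, that the rotationally-invariant Jacobi fields form a two-dimensional space spanned by the translation field $v_{\mathrm{t}}$ and the dilation field $v_{\mathrm{d}}$, that $v_{\mathrm{t}}$ is nonzero on $C_0$, and that this is why the Dirichlet condition leaves only $v_{\mathrm{d}}$ to check.
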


\begin{proof}
As a first step we prove that if $b=0$ and $m\in\N$ is sufficiently large, then any $\pyr_m$-equivariant eigenfunction of $J_{\K_0}$ with eigenvalue $0$ must be rotationally symmetric. 

Let $u \in C^\infty_{\pyr_m}(\K_0)$ be $L^2(\K_0)$-orthogonal (with respect to the metric on $\K_0$ induced by the ambient Euclidean metric) to the $\Ogroup(2)$-invariant functions on $\K_0$. 
Then, the restriction of $u$ to any circle of constant height $z$ has at least $m$ zeroes. 
If $u$ is an eigenfunction of $J_{\K_0}$, it then has at least $m$ nodal domains, so by the Courant nodal domain theorem the eigenvalue corresponding to $u$ is at least the $m$\textsuperscript{th} eigenvalue of $J_{\K_0}$ (without imposing symmetries and counting with multiplicity as usual). 
Since the eigenvalues of $J_{\K_0}$ tend to infinity, by taking $m$ sufficiently large we conclude that any $\pyr_m$-equivariant eigenfunction of $J_{\K_0}$ with eigenvalue $0$ must be $\Ogroup(2)$-invariant. 

Let $v_{\mathrm{t}},v_{\mathrm{d}}\in C^\infty(\K_0)$ be the (functions associated, by taking the normal component to the surface, to) Jacobi fields of $\K_0$ generated respectively by vertical translation along the axis of $\K_0$, and by varying the waist parameter $a>0$ with $b$ constantly $0$ (as described in Lemma \ref{lem:catenoids}).
Then it is readily checked that $\{v_{\mathrm{t}},v_{\mathrm{d}}\}$ is a linearly independent subset of $C^\infty_{\Ogroup(2)}(\K_0)$. 
On the other hand, since $J_{\K_0}$ is second-order, it follows from an elementary ODE argument that $v_{\mathrm{t}}$ and $v_{\mathrm{d}}$ span the space $V\vcentcolon=\{v \in C^\infty_{\Ogroup(2)}(\K_0) \st J_{\K_0} v=0\}$. 
The subspace $V_0\vcentcolon=\{v \in V \st v|_{C_0}=0\}$ has dimension $1$ since, for example, it is clear from the definition
of $\K_0$ that $v_{\mathrm{t}}$ is nonzero on $C_0$.  On the other hand, it is clear that $v_d$ belongs to $V_0$ and thus it is in fact a generator for $V_0$.
However, equation \eqref{eqn:20211125} implies that $v_d$ does not satisfy the Robin condition $B^{\mathrm{Robin}}_{\K_{0}} u= 0$ on $C_\perp$.
We conclude that the operator defined in \eqref{eqn:definition_JacBdyOpOnK0} has trivial $\Ogroup(2)$-invariant kernel.

At this stage, the invertibility of the operator $T_{m,b}$ and the Schauder estimate, in the case $b=0$, come at once via application of Lemma \ref{lem:BasicLinTheory-Schauder}. 
Then, one needs to note that, for $0\leq b\leq \beta$ (where $\beta$ is provided by Lemma \ref{lem:catenoid-K_b}) the catenoidal annulus $\K_{b}$ becomes an arbitrarily small perturbation of $\K_0$ and so (by openness) it is standard to derive that $T_{m,b}$ is also an isomorphism provided we take $b\leq b(m)$; hence it follows that the claimed inequality is still true in such a range for a marginally larger multiplicative constant on the right-hand side. 
We still need to check that $b(m)$ admits a positive (uniform) lower bound, say $b_0$ as one varies $m\geq m_0$, and jointly that the constant $C$ in \eqref{eq:Bound} can be chosen uniformly for $m\geq m_0$ and $0\leq b\leq b_0$. 
For that purpose, assume towards a contradiction the existence of a sequence of functions $(u_m)_{m\geq m_0}$ in $C^{2,\alpha}_{\pyr_m}(\K_{0})$ such that $\nm{u_m}_{2,\alpha}=1$ for all $m\geq m_0$ but $\nm{T_{m,0}u_m}\to 0$ as $m\to\infty$. 
Appealing to the Arzel\`a--Ascoli compactness theorem we could extract a subsequence converging in $C^2(\K_{0})$ to a non-trivial limit function $u_{\infty}$ (the non-triviality following from the standard Schauder estimates); on the other hand, an elementary argument gives at once that $u_\infty$ must in fact be rotationally symmetric, i.\,e. an $\Ogroup(2)$-invariant functions on $\K_0$. Hence, we appeal to the proof given above to conclude that necessarily $u_{\infty}=0$, a contradiction. 
This completes the proof.
\end{proof}

In analogy with what we did above, we let 
\label{gls:PK}
$\glsuserii{modelresol} %P^{m}_{\K_b}
\colon C^{0,\alpha}_{\pyr_m}(\K_{b})\oplus C^{2,\alpha}_{\pyr_m}(C_0) \oplus C^{1,\alpha}_{\pyr_m}(C_{\perp}) \to C^{2,\alpha}_{\pyr_m}(\K_{b})$
denote the resolvent operator; in Section \ref{subs:LinConclusion} we will employ the corresponding continuity estimate in the special case of homogeneous Dirichlet condition on the lower component of $\K_b$ and homogeneous Robin condition on the upper component of $\partial \K_b$, namely: 
\begin{equation}\label{eq:ResolveCat}
\nm{P^{m}_{\K_b}E}_{2,\alpha}\leq C \nm{E}_{0,\alpha}.
\end{equation}

\subsection{Linearized problem on the towers}\label{subs:LinTow}
We now analyze the Jacobi operator $J_{\tow}$ on the tower $\tow$.
In fact, we explicitly note that the proofs below do not at all depend on the particular value $\vartheta$,
so the main results of this subsection, Lemma~\ref{towker} and Lemma~\ref{towsol}, hold with $\tow$ replaced by $\tow_\vartheta$ for any $\vartheta \in \interval{0,\pi/2}$. 
Although the arguments to follow could be reformulated avoiding use of the Enneper--Weierstrass representation of $\tow$, 
we opted for that approach as it allows for rather simple arguments. 
The reader is referred to the discussion presented in Appendix \ref{app:Karcher--Scherk}.

For the purposes of this discussion we identify $\Sp^2$ with the Riemann sphere $\C \cup \{\infty\}$ via stereographic projection. 
We recall the notation $\towquot\vcentcolon=\tow/\sk{\trans^{\axis{z}}_{2\pi}}$, and (sticking for convenience to Appendix \ref{app:Karcher--Scherk})
we write $N\colon\tow\to\Sp^2$ for the corresponding Gauss map; we observe that $N$ descends to a map,
which we will also denote by $N$, on the quotient $\towquot$.
The Enneper--Weierstrass representation of $\tow$
(see in particular equation \eqref{standardtowerparam}, in turn based on \eqref{eqn:original_parametrisation}
      and \eqref{eqn:rescaled_parametrization}, and the statement of Proposition~\ref{prop:towerEWsummary})
yields a conformal diffeomorphism
\begin{equation}
\label{Gdec}
G\colon \Sp^2 \setminus \{\pm i, \pm e^{i\phi}, \pm e^{-i\phi}\}
 \xrightarrow{\cong}
 \towquot
\end{equation}
where $\phi \in \R$
is determined by \eqref{eqn:theta_phi}
with $\vartheta=\omega_0$.
Moreover, the map
$N \circ G$ extends to a smooth map
$\overline{N}$ on all of $\Sp^2$, which map is moreover
surjective and conformal
but with branch points $0$ and $\infty$;
in fact, this extension coincides with the map
 $v$ specified in \eqref{eqn:Karcher-data},
up to an orthogonal transformation in the target.
It is established in Appendix \ref{app:Karcher--Scherk} that
\begin{align}
\label{images_under_G}
G(\{\abs{w}=1\}) &= \{z = \pm \pi/2\}, &
G(\{\Re w = 0\}) &= \axis{x},  &
G(\{\Im w = 0\}) &= \{x = 0\},
\end{align}
where it is to be stressed that, by \eqref{Gdec}, the target of the map $G$ is $\towquot$ (not $\R^3)$ and so equations like $\{x = 0\}$ need to be interpreted accordingly, with (quotiented) coordinates on $\R^3/\sk{\trans^{\axis{z}}_{2\pi}}$, then restricted to $\towquot$.

We write $g_{{\Sp^2}}$ and $g_{{\tow}}$ respectively for the round metric on $\Sp^2$
and the metric on $\tow$ induced by the ambient Euclidean metric.
Of course $g_{{\tow}}$ and $\abs{A_{\tow}}^2$ descend to $\towquot$,
where we refer to the corresponding objects by these same names.
As it is well-known, the minimality of $\tow$ implies that
\begin{equation}\label{eq:ConfGaussMap}
N^*g_{{\Sp^2}}=\tfrac{1}{2}\abs{A_{\tow}}^2g_{{\tow}},
\end{equation}
while the conformality of $G$ means that there exists
a smooth function $\rho>0$ on $\dom(G)$
such that
\begin{equation}
\label{rhodef}
G^*g_{{\tow}}=\rho^{-2}g_{{\Sp^2}}.
\end{equation}
In fact, it is readily checked that $\rho$ extends to a smooth function
(which we give the same name)
on $\Sp^2$ vanishing on $\Sp^2 \setminus \dom(G)$, the six points which correspond under $G$ to the ends of $\tow$.
Note that (cf. Lemma \ref{lem:CylMSE}) $\abs{A_{\tow}}^2$ also tends to zero exponentially along the ends of $\tow$;
$G^*\abs{A_{\tow}}^2$ vanishes on $\{0,\infty\}$ only
(extending smoothly to zero on $\Sp^2 \setminus \dom(G)$).

As observed above, the extended Gauss map
$\overline{N}\colon\Sp^2 \to \Sp^2$ (pulled back via $G$)
is conformal, with two branch points,
and we can now express the conformal factor
in terms of $\rho$ and $\abs{A_\tow}^2$:
\begin{equation}
\label{extended_N_conformality}
\overline{N}^*g_{\Sp^2}
=
\frac{G^*\abs{A_\tow}^2}{2\rho^2}g_{\Sp^2}.
\end{equation}
Since $\overline{N}$ and $v$ in \eqref{eqn:Karcher-data}
agree up to an isometry of $\Sp^2\subset \R^3$,
the form of $v$ confirms that the conformal factor
$\rho^{-2}G^*\abs{A_{\tow}}^2$ is bounded
on all of $\Sp^2$ and vanishes on $\{0,\infty\}$ exactly.

Last we define on $\Sp^2$ the Schr\"{o}dinger operators
\begin{align}
\label{LGLNdef}
L_G&\vcentcolon=\Delta_{\Sp^2}+\rho^{-2}G^*\abs{A_{\tow}}^2,
&
L_N&\vcentcolon=\Delta_{\Sp^2}+2,
\end{align}
where we stress that the latter equals the Jacobi operator of a totally equatorial $2$-sphere in round $\Sp^3$, for which of course the spectrum is well-known.

Now, we recall two basic facts: if $F\colon(M_1,g_1)\to (M_2, g_2)$ is a smooth map then $F^* \Delta_{g_2}=\Delta_{F^* g_2}F^*$ and, if $(M,g)$ is a two-dimensional Riemannian manifold then $\Delta_{e^{2f}g}=e^{-2f}\Delta_g$. 
As a result, appealing to equations \eqref{rhodef}, \eqref{eq:ConfGaussMap} and \eqref{extended_N_conformality} respectively, one can easily check that 
\begin{align}
\nonumber
L_G G^*
  &=
  \rho^{-2}
    G^*J_{\tow}, \\
\nonumber
J_{\tow}N^*
  &=
  \frac{1}{2}\abs{A_\tow}^2
    N^*L_N,
  \\
\label{LG_LN_conformality}
L_G\overline{N}^*
  &=
  \frac{G^*\abs{A_\tow}^2}{2\rho^2}
    \overline{N}^*L_N
\end{align}
where (consistently with the other conventions we have so far adopted in this discussion) by $J_\tow$ we really mean the corresponding
operator on the quotient $\towquot$,
equivalently the Jacobi operator $J_{\towquot}$.

With this notation in place we are ready to investigate
the bounded equivariant Jacobi fields on $\tow$.
In \cite{KapouleasEuclideanDesing}
Kapouleas analyzed the Jacobi operator on any Karcher--Scherk tower with two wings, acting on the space of functions invariant
under reflection through every $n$\textsuperscript{th}
symmetry plane orthogonal to the axis of periodicity,
for any strictly positive integer $n$;
using arguments from \cite{MontielRos} of Montiel and Ros
he showed that the space of bounded Jacobi fields
with such symmetries
is generated by translations orthogonal to the axis of periodicity.
A later result of Cos\'{i}n and Ros,
Theorem 4.2 in \cite{CosinRos},
can be applied to all Karcher--Scherk towers,
and implies that the space of
$2\pi$-periodic bounded Jacobi fields on any $2\pi$-periodic tower
is generated by translations;
in our construction we impose enough symmetry
to exclude all these functions.
In fact, because of this high symmetry,
it is not difficult to give a brief and direct proof,
without invoking either of the above approaches,
though partly in the spirit of both.

\begin{lemma}[Kernel on $\tow$] 
\label{towker}
There are no nontrivial $\Aut_{\R^3}(\tow)$-equivariant bounded Jacobi fields on $\tow$.
\end{lemma}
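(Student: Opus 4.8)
The plan is to work on the quotient $\towquot$ and transfer the problem via the conformal diffeomorphism $G$ to the round sphere $\Sp^2$, where the relevant Schrödinger operator becomes $L_G = \Delta_{\Sp^2} + \rho^{-2}G^*\abs{A_\tow}^2$. First I would observe that an $\Aut_{\R^3}(\tow)$-equivariant bounded Jacobi field $u$ on $\tow$ is in particular $\trans^{\axis{z}}_{2\pi}$-invariant, hence descends to a bounded function on $\towquot$; by elliptic regularity (and the exponential decay of $\abs{A_\tow}^2$ along the ends, via Lemma~\ref{lem:CylMSE}), $u$ extends to a bounded function on $\towquot$ vanishing suitably at the ends. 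Pulling back by $G$, the function $\tilde u \vcentcolon= G^*u$ satisfies $L_G \tilde u = \rho^{-2}G^*(J_\tow u) = 0$ on $\dom(G) = \Sp^2 \setminus \{\pm i, \pm e^{\pm i\phi}\}$, is bounded, and since the conformal factor $\rho^{-2}G^*\abs{A_\tow}^2$ extends smoothly and boundedly across the six puncture points, a removable-singularity argument for bounded solutions of the uniformly elliptic equation $L_G\tilde u = 0$ shows $\tilde u \in C^\infty(\Sp^2)$ with $L_G\tilde u = 0$ on all of $\Sp^2$.

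Next I would identify the kernel of $L_G$ on $\Sp^2$. The key fact is equation \eqref{LG_LN_conformality}: $L_G \overline{N}^* = \frac{G^*\abs{A_\tow}^2}{2\rho^2}\,\overline{N}^*L_N$, where $L_N = \Delta_{\Sp^2}+2$ is the Jacobi operator of the equatorial $\Sp^2 \subset \Sp^3$, whose kernel is exactly the three-dimensional space of restrictions of linear functions on $\R^3$ to $\Sp^2$ (the coordinate functions). Since $\overline{N}$ is surjective and conformal with only the branch points $0,\infty$, pulling back the $L_N$-kernel produces three explicit elements of $\ker L_G$; these correspond precisely to the three Jacobi fields $\nu_\tow \cdot \partial_x$, $\nu_\tow \cdot \partial_y$, $\nu_\tow \cdot \partial_z$ generating the ambient translations of $\tow$. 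I would then argue that $\ker L_G$ on $C^\infty(\Sp^2)$ is \emph{exactly} three-dimensional: this follows because $G^*J_\tow = \rho^2 L_G G^*$ identifies $\ker(L_G)$ with the space of bounded Jacobi fields on $\towquot$ (extended across the ends), and via the conformal factor $\overline{N}$ one can compare $L_G$ to $L_N$ — more precisely, writing $\psi \vcentcolon= \frac{G^*\abs{A_\tow}^2}{2\rho^2} > 0$ away from $\{0,\infty\}$, any $\tilde u \in \ker L_G$ pulls back from a bounded solution on the base and a spectral/index argument (or direct comparison with the known $\ker L_N$) bounds $\dim\ker L_G \leq 3$. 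An alternative, perhaps cleaner, route: invoke that the three translation Jacobi fields already exhaust the bounded $2\pi$-periodic Jacobi fields by the Cosín–Ros result cited before the lemma, so $\ker L_G$ is spanned by the three pullbacks of coordinate functions.

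Finally I would impose the symmetry. The group $\Aut_{\R^3}(\tow) \geq \langle \refl_{\{x=0\}}, \refl_{\axis{x}}, \refl_{\{z=\pi/2\}}\rangle$ acts on the three-dimensional kernel, and by \eqref{towsymsigns} the signs are $\sgn_\tow\refl_{\{x=0\}} = \sgn_\tow\refl_{\{z=\pi/2\}} = -\sgn_\tow\refl_{\axis{x}} = 1$. The three translation Jacobi fields $\nu_\tow\cdot\partial_x$, $\nu_\tow\cdot\partial_y$, $\nu_\tow\cdot\partial_z$ transform under these reflections according to how the reflections act on $\partial_x,\partial_y,\partial_z$ combined with the sign with which the reflection acts on the normal. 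A direct check — using the explicit images \eqref{images_under_G} and the asymptotic plane data from Remark~\ref{towasymptotics}, or simply tracking the action of each reflection on the coordinate vector fields and on $\nu_\tow$ — shows that each of the three translation fields fails to be equivariant with the prescribed signs: for instance $\nu_\tow \cdot \partial_z$ is odd under $\refl_{\{z=\pi/2\}}$ composed with the normal sign, hence not of the required parity, and similarly for the other two. Therefore no nontrivial linear combination lies in the equivariant subspace, proving the lemma.

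The main obstacle I anticipate is the removable-singularity step and, more substantively, establishing that $\dim\ker L_G = 3$ rigorously rather than just producing the three explicit elements: one must rule out extra bounded Jacobi fields on $\towquot$ that are not translational, which is exactly where either the Cosín–Ros theorem or a careful comparison argument through the branched conformal map $\overline{N}$ is essential; the branch points $0,\infty$ of $\overline{N}$ mean the comparison $L_G \leftrightarrow L_N$ degenerates there, so some care is needed to conclude the dimension count. Given the paper explicitly promises a "brief and direct proof... partly in the spirit of both" approaches, I expect the intended argument leans on the conformal correspondence with $L_N = \Delta_{\Sp^2}+2$ and a Courant-type or direct eigenvalue comparison to pin down the kernel dimension, followed by the elementary symmetry-sign bookkeeping above.
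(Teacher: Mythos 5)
Your proposal takes a genuinely different route from the paper's, and the two routes have a different status. The first approach you sketch — pull back $\ker L_G$ to $\Sp^2$ and bound $\dim\ker L_G\leq 3$ by comparison with $\ker L_N$ "through $\overline N$" — has a real gap: $\overline N$ is a degree-two \emph{branched cover} of $\Sp^2$ by $\Sp^2$, so not every $\tilde u\in\ker L_G$ is a pullback $\overline N^*f$ of a function on the base; the functions odd under the deck involution are missed, and the conformal degeneration at the branch points $\{0,\infty\}$ only compounds the problem. As written, that route does not close. Your "cleaner alternative" — invoke the Cosín–Ros theorem to conclude that the bounded $2\pi$-periodic Jacobi fields on $\tow$ are exactly the three translation fields, and then kill all of them by the equivariance signs in item~\ref{towsymsigns} — is correct; the sign computation does rule out $\nu_\tow\cdot\partial_x$, $\nu_\tow\cdot\partial_y$ and $\nu_\tow\cdot\partial_z$ (and hence every nontrivial linear combination), since no nonzero constant vector is fixed by the linear parts of all three generating reflections. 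The paper explicitly flags that this route is available, but deliberately does not take it.

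What the paper does instead is precisely designed to sidestep the branched-cover issue. Rather than identifying $\ker L_G$ globally on $\Sp^2$, it restricts the equivariant putative Jacobi field $v$ to a single fundamental triangle $T$ cut out by $\{\Re w=0\}$, $\{\Im w=0\}$ and $\{\abs{w}=1\}$, and recasts the hypothesis as "$0$ is an eigenvalue of $(L_G,\alpha,\beta\cup\gamma)$" with mixed Dirichlet/Neumann data encoded by the symmetry conditions. It then pins down the position of $0$ in the spectrum by two comparisons: the boundary-condition comparison $(L_G,\alpha,\beta\cup\gamma)\prec(L_G,\alpha\cup\beta,\gamma)$ together with the explicit ground state $\overline N\cdot V$ shows the first eigenvalue is strictly negative; and the comparison $(L_G,\alpha,\beta\cup\gamma)\succ(L_G,\emptyset,\partial T)$, transported via $\overline N$ \emph{restricted to $T$} (on which it \emph{is} injective), reduces the index and nullity of the latter to those of the explicitly understood $L_N=\Delta_{\Sp^2}+2$ on a symmetry-reduced subspace of spherical harmonics, giving index~$1$ and nullity~$1$ and hence a strictly positive second eigenvalue. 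Together these bracket $0$ out of the spectrum. In short: you either import the classification theorem of Cosín–Ros and then do the sign bookkeeping (which works, but is not self-contained), or you work on the fundamental domain where the Gauss-map comparison is honest and obtain the conclusion from elementary eigenvalue monotonicity — the paper chooses the latter, which buys a short, self-contained argument and avoids the degree-two subtlety your first route runs into.
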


\begin{proof}
Suppose $u$ is a bounded $\Aut_{\R^3}(\tow)$-equivariant
Jacobi field on $\tow$.
Then,
$u$ descends to a function $\widetilde{u}$
on $\towquot$
and,
thanks to the first equation in \eqref{LG_LN_conformality} and appealing to the analysis contained in Appendix \ref{app:CylAnalysis} (as discussed in Remark \ref{rem:J_on_wings}),
$G^*\widetilde{u}$
extends to a smooth function $v$ on $\Sp^2$
that satisfies $L_G v=0$,
vanishes on the imaginary line,
and has conormal derivative vanishing along
the real line and unit circle.
Here we have used the conformality of $G$
and \eqref{images_under_G}.
Fix a component $T$ of
$\Sp^2
 \setminus (\{\Re w=0\} \cup \{\Im w=0\} \cup \{\abs{w}=1\})
$,
so that $\partial T$ is a geodesic triangle (of course understood in unit round metric),
and let $\alpha,\beta,\gamma$ be the sides obtained
by taking the intersection of $\partial T$
with respectively the imaginary axis, real axis, and unit circle.

Thus, if $v$ is nontrivial,
then $v|_T$ is an eigenfunction, with eigenvalue $0$, of $L_G$
with Dirichlet condition on $\alpha$
and Neumann condition on $\beta \cup \gamma$.
We will show, however, that $0$ cannot possibly be an eigenvalue
of this boundary value problem.
In doing so, it will be convenient to use the following notation: we shall write
$(L_G,\partial_D T,\partial_N T)$
to denote the operator $L_G$ on $T$ with Dirichlet condition on $\partial_D T$
and Neumann condition on $\partial_N T$,
where
$\partial_D T$ and $\partial_N T$ are
unions of sides of $\partial T$
that themselves
have union $\partial T$ but disjoint interiors.
When we refer to the eigenvalues
of $(L_G,\partial_D T,\partial_N T)$,
we follow the sign convention that
the eigenvalues are bounded below.

To begin with, note that the least eigenvalue of
$(L_G, \alpha, \beta \cup \gamma)$
is strictly less than the least eigenvalue of 
$(L_G, \alpha \cup \beta, \gamma)$.
Recalling the extended Gauss map $\overline{N}$, defined just below \eqref{Gdec},
and using equation \eqref{eqn:Karcher-data},
we find that $\overline{N}(T)$ is a quarter sphere
bounded by a half equator $\overline{N}(\gamma)$
and an orthogonally intersecting
meridian (another half great circle)
$\overline{N}(\alpha \cup \beta)$.
In particular if $V$ is a nontrivial
constant vector field on $\R^3$ orthogonal
to the plane containing $\overline{N}(\alpha \cup \beta)$,
then the function $\overline{N} \cdot V$
is an eigenfunction of eigenvalue $0$
for $(L_G,\alpha \cup \beta,\gamma)$
that has constant sign on $T$.
(The Jacobi field $N \cdot V$
on $\tow$ generates translations along $\axis{x}$.)
Consequently $0$ is the least eigenvalue of
$(L_G,\alpha \cup \beta, \gamma)$,
and so the least eigenvalue
of $(L_G, \alpha, \beta \cup \gamma)$
is strictly negative.

Next, note that -- by the same comparison principle -- the second least eigenvalue
of $(L_G,\alpha, \beta \cup \gamma)$
is strictly greater than the second least eigenvalue
of $(L_G,\emptyset, \partial T)$.
We claim that
the index and nullity of $(L_G,\emptyset,\partial T)$
coincide with the index and nullity (respectively)
of $L_N$
(recalling \eqref{LGLNdef})
on the space $W$ of Sobolev $H^1$ functions
on $\Sp^2$ which are even with respect to each
reflection through a pair of orthogonal great circles,
namely the circles containing
$\overline{N}(\alpha \cup \beta)$
and $\overline{N}(\gamma)$.
From standard results on spherical
harmonics it then follows
that $(L_G,\emptyset,\partial T)$
has index and nullity both equal to $1$.
Hence, the second least eigenvalue
of $(L_G,\alpha,\beta \cup \gamma)$ is strictly positive.
Since its least eigenvalue is strictly negative,
we conclude that
$(L_G,\alpha, \beta \cup \gamma)$ has nullity $0$,
so the only bounded $\Aut_{\R^3}(\tow)$-equivariant
Jacobi field $u$ on $\tow$ is $u=0$.

It remains to verify the previous claim, namely that $(L_G,\emptyset,\partial T)$ has the same index and nullity as $L_N$ on $W$.
In one direction note that if $f \in W$, then,
by \eqref{extended_N_conformality}
and the third equation of \eqref{LG_LN_conformality},
the $L_N$-Rayleigh quotient for $f$
and the $L_G$-Rayleigh quotient for $\overline{N}^*f|_T$
have the same sign
(either strictly or else they are both zero).
This implies that
the index and nullity of
$(L_G,\emptyset,\partial T)$
are respectively at least the index and nullity
of $L_N$ on $W$.
In the other direction,
first note that the restriction of $\overline{N}$
to the closure of $T$ is injective,
as follows from the form of $v$ in \eqref{eqn:Karcher-data}
(or otherwise establishing that the map $\overline{N}$
has degree $2$ and exploiting the symmetries of the problem).
As a result, an eigenfunction
of $(L_G,\emptyset,\partial T)$,
can be transplanted to a unique function
$f$ on the closure of $\overline{N}(T)$
which extends by even reflection
to a function $\overline{f} \in W$.
Again the conformality ensures
that the $L_N$-Rayleigh quotient of $\overline{f}$
on $\Sp^2$
has the same sign as the $L_G$-Rayleigh quotient of the eigenfunction on $T$ we had started with.
In view of the conclusion of the preceding paragraph,
this completes the proof.
\end{proof}

\begin{remark}\label{rem:NegativeDirection}
For later reference, we explicitly note that the argument above (in particular: its third paragraph) imply the existence of a smooth function, say $w$, on the geodesic triangle $T$ solving $L_G w=-\lambda w$ for some $\lambda<0$ and satisfying Dirichlet boundary conditions on $\alpha$ as well as Neumann boundary conditions on $\beta\cup\gamma$. In fact, we may take it to be the first such eigenfunction, so that (among other things) it does not change its sign in the domain in question. Hence, by suitably reflecting across the sides of such a geodesic triangle we obtain a (smooth) function on the round sphere that is an eigenfunction for the operator $L_G$ and we still denote by $w$. At that stage, the function $u\vcentcolon =(G^{-1})^{\ast}w \in C^{\infty}(\towquot)$ is bounded, $\Aut(\towquot)$-equivariant and, based on the first equation in \eqref{LG_LN_conformality} and the conformal invariance of the Jacobi quadratic form (we are working with surfaces), it also satisfies 
$Q_{\towquot}(u,u)<0$.

Because of the exponential decay of the second fundamental form along the six wings of $\towquot$ and the fact that (now by virtue of its very definition) $u$ has a finite limit along each wing, we immediately conclude that both summands 
\[
\int_{\towquot}\abs{A_{\tow}}^2 u^2  \text{ and } \int_{\towquot}|\nabla_{\tow}u|^2
\]
are finite. 
As a result, a standard cutoff argument allows us to construct compactly supported functions where the quadratic form $Q_{\towquot}(\cdot,\cdot)$ is negative. 
This fact will be crucially employed in Section~\ref{sec:Index}, when discussing about the equivariant Morse index of the free boundary minimal surfaces we construct in Theorem \ref{thm:ConstructFirst}. 
\end{remark}

\begin{corollary}
\label{orthogonality_to_ker_LG}
Recall the definitions
\eqref{equivariantprojectorsonquotient},
\eqref{Gdec},
\eqref{rhodef}, and \eqref{LGLNdef}
of the projection $\pi_{\Aut(\towquot)}$,
the map $G$, 
the conformal factor $\rho$,
and the Schr\"{o}dinger operator $L_G$.
Let $E$ be a continuous compactly supported
function on $\towquot$
in the image of $\pi_{\Aut(\towquot)}$
(so that the pullback $\varpi^*E$ under the canonical projection is $\Aut_{\R^3}(\tow)$-equivariant).
Then the function $\rho^{-2}G^*E$
belongs to $L^2(\Sp^2)$
and is $L^2(\Sp^2)$-orthogonal
to the kernel of $L_G$.
\end{corollary}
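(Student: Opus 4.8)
The plan is to deduce Corollary \ref{orthogonality_to_ker_LG} from Lemma \ref{towker} together with the conformal intertwining relations \eqref{LG_LN_conformality}, being careful about two separate issues: membership of $\rho^{-2}G^*E$ in $L^2(\Sp^2)$, and orthogonality to $\ker L_G$. For the first point, recall from the discussion preceding \eqref{LGLNdef} that $\rho$ extends smoothly to all of $\Sp^2$ and vanishes precisely on the six points $\Sp^2 \setminus \dom(G)$, which correspond to the ends of $\tow$. Since $E$ is compactly supported on $\towquot$, its support stays at bounded distance from the ends, so $G^*E$ is supported away from the six zeros of $\rho$; hence $\rho^{-2}G^*E$ is a compactly supported continuous function on $\Sp^2$ (extended by zero) and therefore lies in $L^2(\Sp^2)$. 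This is the easy part.

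For the orthogonality statement, first I would recall that by the results of Appendix \ref{app:CylAnalysis} (and the discussion in Remark \ref{rem:J_on_wings} already invoked in the proof of Lemma \ref{towker}), any element $v \in \ker L_G \subset L^2(\Sp^2)$ is automatically smooth, so that $(G^{-1})^*v$ is a smooth function on $\towquot$; using the first equation of \eqref{LG_LN_conformality}, namely $L_G G^* = \rho^{-2} G^* J_{\towquot}$, one sees that $(G^{-1})^*v$ is a Jacobi field on $\towquot$. Moreover, $v \in L^2(\Sp^2)$ together with \eqref{rhodef} and the smoothness of $\rho$ forces $(G^{-1})^*v$ to be bounded on each wing of $\towquot$ (the conformal factor degenerating at the ends means $L^2$ in the round metric corresponds to decay, hence in particular boundedness, of the transplanted function near the ends). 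Pulling back to $\tow$ via the covering $\varpi$ yields a bounded Jacobi field on $\tow$. If $v$ were additionally required to be $\Aut(\towquot)$-equivariant then Lemma \ref{towker} would immediately give $v=0$; but a general element of $\ker L_G$ need not have this symmetry, so one must argue more carefully.

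The key observation is that $L_G$, being a Schr\"odinger operator with potential $\rho^{-2}G^*\abs{A_\tow}^2$ invariant under the reflections generating $\Aut_{\R^3}(\tow)$ (acting on $\Sp^2$ through $G$, and on conormal data through the signs of \eqref{symsign}), commutes with the corresponding equivariant projection; equivalently $\ker L_G$ decomposes into the various symmetry eigenspaces. By Remark \ref{rem:equiv_in_quot}, the component of $\ker L_G$ consisting of functions $v$ with $(G^{-1})^*v$ in the image of $\pi_{\Aut(\towquot)}$ is exactly $\{v \in \ker L_G \st (G^{-1})^*v = \pi_{\Aut(\towquot)}(G^{-1})^*v\}$, and by the argument just given together with Lemma \ref{towker} this component is trivial. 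Now $\rho^{-2}G^*E$ is, by hypothesis, the pullback via $G$ (up to the harmless factor $\rho^{-2}$, which is $\Aut$-invariant since $\rho$ is) of an element in the image of $\pi_{\Aut(\towquot)}$; using the self-adjointness and idempotency of $\pi_{\Aut(\towquot)}$ on $L^2$ recorded in \eqref{equivariant_projector_is_self_adjoint}, together with the fact that multiplication by the $\Aut$-invariant positive function $\rho^2$ identifies $L^2(\Sp^2)$ with $L^2(\towquot)$ up to a bounded weight, one gets for any $v \in \ker L_G$ that the $L^2(\Sp^2)$-pairing $\langle \rho^{-2}G^*E, v\rangle$ only sees the equivariant component of $v$; since that component vanishes, the pairing is zero.

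The main obstacle I expect is bookkeeping the precise sense in which $\pi_{\Aut(\towquot)}$ transplants to a projection on $L^2(\Sp^2)$ commuting with $L_G$: one must match the signs $\sgn_\tow \mathsf{R}$ in \eqref{symsign} (which enter the definition of equivariance and hence the boundary conditions Dirichlet-on-$\alpha$ / Neumann-on-$\beta\cup\gamma$ seen in the proof of Lemma \ref{towker}) with the action of $G$ on $\Sp^2$ via \eqref{images_under_G}, and check that under this dictionary the potential of $L_G$ is genuinely invariant and that $L^2$-boundedness of the transplant is exactly the condition making Lemma \ref{towker} applicable. Once that dictionary is set up cleanly, the orthogonality is a one-line consequence of self-adjointness of the projection; the rest is routine.
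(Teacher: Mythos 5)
Your argument is correct and follows essentially the same route as the paper's proof: transplant a kernel element $K$ of $L_G$ to a bounded Jacobi field $K_{\towquot}$ on $\towquot$ via the first equation of \eqref{LG_LN_conformality}, invoke Lemma \ref{towker} to conclude $\pi_{\Aut(\towquot)}K_{\towquot}=0$, and then use the self-adjointness \eqref{equivariant_projector_is_self_adjoint} together with the conformal relation \eqref{rhodef} to evaluate the pairing. The ``main obstacle'' you anticipate in your last paragraph — transplanting $\pi_{\Aut(\towquot)}$ to a projection on $L^2(\Sp^2)$ commuting with $L_G$ — is actually unnecessary and the paper never does it: one simply transplants the kernel element $K$ down to $\towquot$ and computes the $L^2(\towquot)$-pairing $\sk{E,K_{\towquot}}_{L^2(\towquot)}$ there, which the two-dimensional conformal change identity shows equals $\sk{\rho^{-2}G^*E,K}_{L^2(\Sp^2)}$; no sign dictionary between $\Aut$-equivariance and mixed Dirichlet/Neumann data is needed. (A small phrasing caveat: $\rho^{-2}$ is \emph{not} a bounded weight — it blows up at the punctures — but this is harmless exactly because $E$ is compactly supported, so its transplant $\rho^{-2}G^*E$ vanishes near the punctures.)
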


\begin{proof}
Since $\rho^{-2} \in C^\infty(\dom(G))$
and $E$ is compactly supported and continuous,
we indeed have $\rho^{-2}G^*E \in L^2(\Sp^2)$.
Now suppose $K$ belongs to the kernel of $L_G$.
Then, recalling that $\rho^{-2}G^*|A_{\tow}|^2$ is actually a smooth function on $\Sp^2$, we get that $K \in C^\infty(\Sp^2)$,
so in particular $K$ is bounded
and
$K_{\towquot} \vcentcolon= (G^{-1})^* K|_{\dom(G)}$
is also smooth and bounded,
and by the first equation in \eqref{LG_LN_conformality}
satisfies 
\[
J_{\towquot} K_{\towquot} = 0.
\]
Thus $K_{\towquot}$ is a bounded Jacobi field on $\towquot$.
Then $\varpi^*\pi_{\Aut(\towquot)}K_{\towquot}$
(recalling \eqref{quotientbytrans}
and \eqref{equivariantprojectorsonquotient})
is an $\Aut_{\R^3}(\tow)$-equivariant Jacobi field on $\tow$,
and so by Lemma \ref{towker}
we must have 
$\pi_{\Aut(\towquot)}K_{\towquot}=0$. 
Equivalently, 
\[
K_{\towquot}=(I-\pi_{\Aut(\towquot)})K_{\towquot},
\]
where $I\colon L^2(\towquot) \to L^2(\towquot)$ is the identity map,
while on the other hand by assumption 
\[
E=\pi_{\Aut(\towquot)}E.
\]
By \eqref{equivariant_projector_is_self_adjoint}
the images of the operators 
\[
I-\pi_{\Aut(\towquot)}
\quad\text{ and }\quad
\pi_{\Aut(\towquot)},
\]
restricted to $L^2(\towquot)$, are $L^2(\towquot)$-orthogonal.
By the conformality equation \eqref{rhodef} of $g_{\towquot}$ and $g_{\Sp^2}$ 
(and the two-dimensionality of $\towquot$ and $\Sp^2$)
we thus have
\[
\sk[\Big]{\rho^{-2}G^*E,~ K}_{L^2(\Sp^2)}
=\sk[\Big]{E,~ K_{\towquot}}_{L^2(\towquot)} 
=\sk[\Big]{\pi_{\Aut(\towquot)}E,~
  \bigl(I-\pi_{\Aut(\towquot)}\bigr)K_{\towquot}} 
  =0. 
  \qedhere
\]
\end{proof}

\begin{remark}[Jacobi equation on the wings]
\label{rem:J_on_wings}
We observe that
Corollary \ref{cor:wingsol} holds with $\Delta_W$ replaced by the Jacobi operator $J_W$ of $W$.
Indeed, exactly the same proof
of item \ref{cor:wingsol-i}
goes through with 
$\Delta_W$ replaced by $J_W$,
$\Delta_\cyl$ replaced by $\Delta_W$,
the map $\varphi$ replaced by the identity map on $W$,
Lemma \ref{lem:cylsol} replaced by Corollary \ref{cor:wingsol},
and
the estimate
\eqref{eqn:20220405-1}
replaced by
\begin{equation*}
\nm{(J_W-\Delta_W)(v+\mu)}_{0,\alpha,\beta}
\leq
C
  \left(
    e^{(\beta-\gamma-2)R}\nm{v}_{2,\alpha,\gamma}
      + e^{(\beta-2)R}\abs{\mu}
  \right),
\end{equation*}
a consequence of \eqref{eqn:20220405-1}
and the exponential decay of $\abs{A_W}$,
itself ensured by the exponential decay
of the defining functions
of the wings over their asymptotic planes.
Note that, as in Corollary \ref{cor:wingsol},
we assume $\beta \in \interval{0,1}$,
so in particular we have $\beta-2<0$.
The $J_W$-analogue of item \ref{cor:wingsol-ii}
we obtain not as a corollary
of the preceding $J_W$-analogue of item \ref{cor:wingsol-i}
but rather by a variation of the same argument, as follows.
By item \ref{cor:wingsol-i} of Corollary \ref{cor:wingsol}
the map
\begin{align*}
T_W
  \colon
  \R
    \oplus C^{2,\alpha,\beta}(W)
  &\to
  C^{0,\alpha,\beta}(W)
    \oplus  C^{1,\alpha}(\partial W)
\\
(\mu, v)
  &\mapsto
  \left(
    \Delta_W v,~
    (v+\mu)|_{\partial W}
  \right)
\end{align*}
is invertible.
(Of course $\Delta_W v = \Delta_W (v+\mu)$.)
Using the same estimate as above,
now with $\gamma=\beta$ and $\mu=0$,
and taking $R$ sufficiently large,
we therefore obtain
invertibility of the map $S_W$
having the same definition as $T_W$
but with $J_W$ in place of $\Delta_W$.
For the $J_W$-analogue of item \ref{cor:wingsol-ii}
we can then take $P_W \vcentcolon= S_W^{-1}(E,0)$.
\end{remark}

The following lemma is obtained by combining the ancillary result above (Corollary \ref{orthogonality_to_ker_LG}) with Remark \ref{rem:J_on_wings}.

Recalling \eqref{towcokerandgen}, we are about to show that
 $\towcoker$
spans the cokernel
(the \emph{extended substitute kernel}
in the terminology of Kapouleas)
of the linearized operator
on the weighted spaces we choose on the towers.
Note that $\nu_{\tow^+} \cdot \partial_y$
is $\Aut_{\R^3}(\tow^+)$-invariant,
while $\Psi^{\mathrm{dislocate}}$
is $\Aut_{\R^3}(\tow^+)$-equivariant,
so that $\towcokergen$ is also $\Aut_{\R^3}(\tow^+)$-equivariant.
Furthermore, 
$\towcokergen$ 
is smooth
and has support contained in
$W^1 \cup W^{-1}$
and,
as follows from Remark \ref{towasymptotics},
is asymptotically a nonzero constant
on $W^1$ (solely determined by the angle $\vartheta$), with exponential convergence,
while $\towcoker$ is of course compactly supported
and also smooth.
In particular
\[
\towcoker \in C^{0,\alpha,\beta}_{\Aut_{\R^3}(\tow^+)}(\tow^+)
\]
for all $\alpha,\beta \in \interval{0,1}$,
but $\towcokergen$, though bounded and smooth,
does not belong to $C^{2,\alpha,\beta}(\tow^+)$
for any $\beta \in \interval{0,1}$.

\begin{lemma}[Solutions on the tower modulo cokernel]
\label{towsol}
Let $\alpha, \beta \in \interval{0,1}$ and let $\eta^{\vphantom{|}}_{\tow^+}$ denote the outward unit conormal to $\tow^+$.
Then:
\begin{enumerate}[label={\normalfont(\roman*)}]
\item\label{lem:towsol-i} 
For any $E \in C_{\Aut_{\R^3}(\tow^+)}^{0,\alpha,\beta}(\tow^+)$ and
$f \in C_{\Aut_{\R^3}(\tow^+)}^{1,\alpha}(\partial \tow^+)$
there exists a unique (bounded) function
$u \in C_{\Aut_{\R^3}(\tow^+)}^{2,\alpha}(\tow^+)$
such that
\begin{equation*}
J_{\tow^+}u = E 
\quad \mbox{ and } \quad
B^{\mathrm{Robin}}_{\tow^+}u
=\eta^{\vphantom{|}}_{\tow^+}\cdot\nabla_{\tow^+} u
=f;
\end{equation*}
moreover there is a (unique) $\mu \in \R$ such that
\begin{equation*}
\abs{\mu}
  +\nm[\big]{u|_{W^1}-\mu}_{2,\alpha,\beta}
  +\nm[\big]{u|_{\tow^+ \setminus
    (W^1 \cup W^{-1})}}_{2,\alpha,\beta}
\leq C\bigl(\nm{E}_{0,\alpha,\beta}+\nm{f}_{1,\alpha}\bigr)
\end{equation*}
for some constant $C=C(\alpha,\beta)>0$ independent of the data.
\item\label{lem:towsol-ii}
The image of the map
\begin{equation*}
\bigl(J_{\tow+},B^{\mathrm{Robin}}_{\tow^+}\bigr)\colon
  C^{2,\alpha,\beta}_{\Aut_{\R^3}(\tow^+)}(\tow^+) 
  \to
  C^{0,\alpha,\beta}_{\Aut_{\R^3}(\tow^+)}(\tow^+)
    \oplus C^{1,\alpha}_{\Aut_{\R^3}(\tow^+)}(\partial \tow^+)
\end{equation*}
is the kernel of the surjective map $(E,f) \mapsto \mu$ given in \ref{lem:towsol-i},
so the map $\bigl(J_{\tow^+},B^{\mathrm{Robin}}_{\tow^+}\bigr)$, with domain and target as above, is Fredholm with Fredholm index $-1$.
\item\label{lem:towsol-iii}
There exists a bounded linear map
\begin{equation*}
\glsuseriii{modelresol} %P_{\tow^+}
\colon
C^{0,\alpha,\beta}_{\Aut_{\R^3}(\tow^+)}(\tow^+)\oplus C^{1,\alpha}_{\Aut_{\R^3}(\tow^+)}(\partial \tow^+)
\to C^{2,\alpha,\beta}_{\Aut_{\R^3}(\tow^+)}(\tow^+)\oplus\R
\end{equation*}
such that for any $(E,f)$ in the domain of $P_{\tow^+}$ the pair $(u,\mu)\vcentcolon=P_{\tow^+}(E,f)$ satisfies
\begin{equation*}
J_{\tow^+}u = E + \mu \towcoker
\quad \mbox{ and } \quad
B^{\mathrm{Robin}}_{\tow^+}u = f.
\end{equation*}
\end{enumerate}
\end{lemma}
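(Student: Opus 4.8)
\textbf{Proof plan for Lemma \ref{towsol}.}

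The plan is to analyze the operator on the quotient $\towquot^+ = \tow^+/\sk{\trans^{\axis{z}}_{2\pi}}$ rather than on $\tow^+$ itself: since we are working with $\Aut_{\R^3}(\tow^+)$-equivariant functions and $\trans^{\axis{z}}_{2\pi}$ descends trivially, functions on $\tow^+$ correspond to functions on $\towquot^+$, and the weighted spaces $C^{k,\alpha,\beta}$ are defined using $\dist_{\axis{z}}$, which also descends. After quotienting, $\towquot^+$ has finitely many asymptotically cylindrical ends (the quotiented wings), so the machinery of Appendix \ref{app:CylAnalysis}, together with Remark \ref{rem:J_on_wings}, applies directly. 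The decomposition of $\tow^+$ into a compact core plus the wings $W^0$, $W^1$, $W^{-1}$ will be the organizing principle throughout.

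\emph{First}, I would prove item \ref{lem:towsol-i}. One uses Corollary \ref{orthogonality_to_ker_LG} together with the analysis of the Schr\"odinger operator $L_G$ on $\Sp^2$ from Subsection \ref{subs:LinTow}: the relevant boundary value problem $(L_G, \alpha, \beta\cup\gamma)$ on the geodesic triangle $T$ has nullity $0$ by (the proof of) Lemma \ref{towker}, hence $L_G$ is invertible on the $L^2(\Sp^2)$-orthogonal complement of its kernel in the appropriate equivariant class, and the conformality relations \eqref{rhodef}, \eqref{extended_N_conformality}, \eqref{LG_LN_conformality} transfer this to solvability of $J_{\towquot^+} u = E$ on the compact-core part. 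Near the ends one uses Remark \ref{rem:J_on_wings} (the $J_W$-analogue of Corollary \ref{cor:wingsol}): on $W^0$ and on $W^{-1}$ the only bounded equivariant solution of the homogeneous equation is $0$ (because equivariance forces the asymptotic constant to vanish there — the relevant reflection $\refl_{\{x=0\}}$ or $\refl_{\axis{x}}$ acts with the wrong sign), while on $W^1$ there is a one-parameter family of bounded solutions indexed by the asymptotic constant, which is exactly the parameter $\mu$. Patching the core solution with the wing solutions via cutoff functions and absorbing the error with a fixed-point/Neumann-series argument (using that the patching error is supported in a fixed compact region and has small weighted norm for the weights $\beta\in\interval{0,1}$) gives existence; the estimate is then the weighted Schauder estimate on $\towquot^+$ from Appendix \ref{app:CylAnalysis} combined with the core estimate, and uniqueness of the bounded solution follows from Lemma \ref{towker} (no nontrivial bounded equivariant Jacobi field) plus the wing analysis pinning down $\mu$ uniquely from $u$.

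\emph{Second}, item \ref{lem:towsol-ii} is essentially a reinterpretation. A pair $(u,\mu)$ with $u \in C^{2,\alpha,\beta}$ means precisely that the bounded solution $u$ produced in \ref{lem:towsol-i} \emph{decays} on $W^1$, i.e. its asymptotic constant there is $0$; so the image of $\bigl(J_{\tow^+}, B^{\mathrm{Robin}}_{\tow^+}\bigr)$ on the weighted space is exactly $\{(E,f) : \mu(E,f) = 0\}$. The map $(E,f)\mapsto \mu$ is linear and bounded by \ref{lem:towsol-i}, and it is surjective onto $\R$ because one can exhibit a single pair with $\mu\neq 0$ (e.g. apply $J_{\tow^+}$ to a bounded equivariant function equal to a nonzero constant on $W^1$ near infinity and cut off; its image is compactly supported with nonzero $\mu$, as is visible from the construction of $\towcokergen$ and $\towcoker$ in \eqref{towcokerandgen}). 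Hence the kernel of this functional has codimension $1$, and since the map $\bigl(J_{\tow^+}, B^{\mathrm{Robin}}_{\tow^+}\bigr)$ is injective on the weighted space (again by Lemma \ref{towker}, a decaying equivariant Jacobi field being in particular bounded hence zero), it is Fredholm of index $-1$.

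\emph{Third}, item \ref{lem:towsol-iii}: given $(E,f)$, apply \ref{lem:towsol-i} to obtain the bounded $u_0$ and its asymptotic constant $\mu_0$. Then $u_0$ differs from a function $u \in C^{2,\alpha,\beta}$ by $\mu_0$ times a fixed bounded function agreeing with $\towcokergen$ near infinity on $W^1$; more cleanly, set $\mu \vcentcolon= -\mu_0 \cdot (\text{normalization})$ and $u \vcentcolon= u_0 - \mu_0\,\towcokergen$, so that $J_{\tow^+} u = E - \mu_0 J_{\tow^+}\towcokergen = E + \mu\,\towcoker$ (using $\towcoker = -J_{\tow^+}\towcokergen$ from \eqref{towcokerandgen}, after fixing the sign of the normalization so that $u$ is exactly the weighted-space representative), and $B^{\mathrm{Robin}}_{\tow^+} u = f$ because $\towcokergen$ has conormal derivative zero along $\partial\tow^+$ (its support lies in $W^1\cup W^{-1}$, away from $\{x=0\}$, except where it is a $y$-translation which respects the orthogonal meeting). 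Boundedness and linearity of $P_{\tow^+}$ follow from those of the solution operator in \ref{lem:towsol-i} together with the estimate there.

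\textbf{Main obstacle.} The crux is item \ref{lem:towsol-i}, and within it the interface between the global (core) solvability — which rests on the delicate spectral computation for $(L_G,\alpha,\beta\cup\gamma)$ already carried out in Lemma \ref{towker} — and the asymptotic (wing) analysis of Appendix \ref{app:CylAnalysis}. The subtle point is book-keeping the asymptotic constants: one must check that equivariance genuinely kills the bounded-but-nondecaying solutions on \emph{five} of the six wings and leaves exactly a one-dimensional freedom, consistently oriented by $\refl_{\axis{x}}$ (which exchanges $W^1$ and $W^{-1}$ with $\sgn = -1$, forcing the constants to be opposite, hence the single parameter $\mu$), so that the dislocation generator $\towcokergen$ of \eqref{towcokerandgen} is precisely the right object. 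Once that is set up correctly, the patching and the weighted Schauder estimates are routine given Appendix \ref{app:CylAnalysis}.
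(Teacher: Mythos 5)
Your overall framework --- descend to the quotient $\towquot^+$, treat the wings via Appendix~\ref{app:CylAnalysis} and Remark~\ref{rem:J_on_wings}, handle the core via the conformal map $G$ to $\Sp^2$ together with Corollary~\ref{orthogonality_to_ker_LG}, and track the single asymptotic constant $\mu$ on $W^1$ --- is exactly the paper's strategy, and your reduction of items~(ii) and~(iii) to~(i) is also essentially what the paper does.

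However, your patching step in~(i) has a genuine gap. You propose to patch an independently obtained ``core solution'' with the wing solutions and then ``absorb the error with a fixed-point/Neumann-series argument, using that the patching error is supported in a fixed compact region and has small weighted norm.'' The compact support is fine, but the smallness is not available: a cutoff error supported near $\dist_{\axis{z}}=R$ with pointwise size $c$ has $C^{0,\alpha,\beta}$-weighted norm $\sim c\,e^{\beta R}$ (the weight is $e^{-\beta\dist_{\axis{z}}}$ and the weighted norm divides by it), so pushing $R$ out makes the error \emph{larger} in weighted norm, not smaller; and for fixed $R$ there is no smallness parameter in this lemma at all. A Neumann series would not converge. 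The paper sidesteps iteration entirely by reversing the order of operations: it first solves on each wing with the \emph{cut-off data} $\cutoff{R}{R+1}\cdot\overline{E}$ using $P_W$, and then observes that the residual $\overline{E}-J_{\towquot}\overline{u}_{\mathrm{wings}}$ (after equivariant projection) is \emph{compactly supported}. That residual is then solved \emph{exactly} --- not approximately --- by transporting it via $G$ to $\Sp^2$ and invoking the Fredholm alternative for $L_G$, whose cokernel is avoided precisely by Corollary~\ref{orthogonality_to_ker_LG}. There is therefore no ``boundary value problem on the compact core,'' and no patching error to absorb: the core is the full compactified surface $\Sp^2$.

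There is also an internal inconsistency in your book-keeping of the asymptotic constants. In the ``First'' paragraph you assert that on $W^{-1}$ ``equivariance forces the asymptotic constant to vanish,'' while in the ``Main obstacle'' paragraph you correctly say $\refl_{\axis{x}}$ exchanges $W^1$ and $W^{-1}$ with sign $-1$, ``forcing the constants to be opposite.'' The latter is right, the former wrong: the asymptotic constant on $W^{-1}$ is $-\mu$, not $0$, which is why the estimate in the lemma controls $u|_{W^1}-\mu$ and $u$ on $\tow^+\setminus(W^1\cup W^{-1})$ but does not separately bound $u$ on $W^{-1}$; the bound there comes for free by equivariance. It is only on $W^0$ (which contains $\axis{x}$, a fixed set of $\refl_{\axis{x}}$, on which equivariant functions vanish) that the asymptotic constant is genuinely forced to be zero.
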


\begin{proof}
Assuming item \ref{lem:towsol-i}, the surjectivity of the map $(E,f) \mapsto \mu$ is clear from the observation that the image
of $(\towcoker,0)$ under this map can only be $\towcokergen|_{W^1}(\infty) \neq 0$.
Item \ref{lem:towsol-iii} and the remainder of item \ref{lem:towsol-ii} then follow immediately from item \ref{lem:towsol-i}. 
To verify the uniqueness assertion in \ref{lem:towsol-i} note that if we have two bounded solutions with the same data,
then we can extend their difference by even reflection to a bounded $\Aut_{\R^3}(\tow)$-equivariant $C^2$ (so in fact $C^\infty$) Jacobi field on $\tow$, but Lemma \ref{towker} then implies that this difference vanishes identically.

Turning to existence, we recall definitions
\eqref{towquotient}
and \eqref{equivariantprojectorsonquotient}.
Since the data and operators are
$\trans^{\axis{z}}_{2\pi}$-invariant,
the problem descends to the quotient
$\towquot^+$,
with data denoted by $(\widetilde{E},\widetilde{f})$,
interior operator $J_{\towquot^+}$
(simply the Jacobi operator on the quotient surface
 in $\R^3/\sk{\trans^{\axis{z}}_{2\pi}}$),
and boundary operator $B^{\mathrm{Robin}}_{\towquot^+}$
(simply the outward conormal derivative on the quotient).
Thus we assume
\begin{alignat*}{2}
\widetilde{E}&=\pi_{\Aut(\towquot^+)}\widetilde{E}, &
\widetilde{f}&=\pi_{\Aut(\towquot^+)}\widetilde{f}
\intertext{and we seek a function $\widetilde{u}$ on $\towquot^+$ such that }
J_{\towquot^+}\widetilde{u}&=\widetilde{E}, \quad &
B^{\mathrm{Robin}}_{\towquot^+}\widetilde{u}&=\widetilde{f}, 
\end{alignat*}
$\widetilde{u}$ is bounded, satisfies the appropriate estimates, 
and $\widetilde{u}=\pi_{\Aut(\towquot^+)}\widetilde{u}$
(because of Remark \ref{rem:equiv_in_quot}).
Then we can take
$u\vcentcolon= \varpi|_{\tow^+}^{\ast}\widetilde{u}$ 
as the solution to the given problem on $\tow^+$.

Before continuing, we point out that although it would be easy to enforce the necessary symmetries throughout the construction of the solution, it is unnecessary to do so.
In fact, since %$J_{\towquot^+}$ and $B^{\mathrm{Robin}}_{\towquot^+}$
both opearators on $\towquot^+$ commute with $\pi_{\Aut(\towquot^+)}$,
if $\widetilde{F}$ and $\widetilde{u}$ are functions on $\towquot^+$
satisfying
\begin{alignat*}{3}
\pi_{\Aut(\towquot^+)}\widetilde{F}&=\widetilde{E}, &
J_{\towquot^+}\widetilde{u}&=\widetilde{F}, & 
B^{\mathrm{Robin}}_{\towquot^+}\widetilde{u}&=\widetilde{f},
\shortintertext{then}
&&
J_{\towquot^+}\pi_{\Aut(\towquot^+)}\widetilde{u}&=\widetilde{E}, \qquad &
B^{\mathrm{Robin}}_{\towquot^+}\pi_{\Aut(\towquot^+)}\widetilde{u}
 &=\widetilde{f}.
\end{alignat*}
We further reduce to the case of homogeneous Neumann boundary data by taking a function $v \in C^{2,\alpha}(\towquot^+)$
which has support contained in $\{x=0\}_{\leq 1}$ and satisfies 
\[
B^{\mathrm{Robin}}_{\towquot^+}v=\widetilde{f} \quad\text{ and } \quad \nm{v}_{2,\alpha} \leq C\nm{\widetilde{f}}_{1,\alpha}
\]
for some $C>0$ independent of the data.
(In light of the comments in the preceding paragraph we need not bother to take $v$ equivariant.)
By replacing $\widetilde{E}$ with $\widetilde{E}-J_{\towquot^+}v$ we may thus assume $\widetilde{f}=0$, so $f=0$ in the statement of the lemma.
Next we extend (the updated) $\widetilde{E}$ by even reflection to $\overline{E} \in C^{0,\alpha}(\towquot)$.
We now seek a function $\overline{u}$ on $\towquot$ such that $J_{\towquot}\overline{u}=\overline{E}$;
in fact, by the same considerations as in the preceding paragraph, in the following construction we may -- whenever convenient -- replace $\overline{E}$ by another function $F$ satisfying $\pi_{\Aut(\towquot)}F=\overline{E}$
and we will eventually conclude by taking (with respect to the task of proving statement \ref{lem:towsol-i} of the present lemma)
\[
u\vcentcolon=
 \varpi|_{\towquot}^*(
  \pi_{\Aut(\towquot)}
  \overline{u})|_{\tow^+}.
\]
The function $\dist_{\axis{z}}$ measuring the distance from the $z$-axis descends to $\towquot$ (which we shall freely employ without renaming),
so we can define the spaces
$C^{k,\alpha,\beta}(\towquot)$
and corresponding norms $\nm{\cdot}_{k,\alpha,\beta}$
in the obvious way.
Thus $\overline{E} \in C^{0,\alpha,\beta}(\towquot)$
and, in view of the previous discussion,
it only remains to find $\overline{u}$
as above so that (in addition) the desired estimates hold.

We will next decompose $\overline{E}$ into a part supported on the core and six more parts, each supported on a wing.
Referring to the content (and notation) of Remark~\ref{towasymptotics} we have six well-defined wings (each being a normal graph over a cylindrical base), whose union we shall henceforth denote by $\towquot'$. Furthermore, for any $R\geq R_{\mathrm{tow}}$ we have that $\towquot \cap \{\dist_{\axis{z}} \geq R\}\subset \towquot'$. 
For each component $W$,
by appealing to Remark \ref{rem:J_on_wings} (and possibly increasing $R_{\mathrm{tow}}$, and redefining the wings accordingly) 
we obtain a bounded right inverse
$P_W$ to
$
 J_{W}\colon C^{2,\alpha,\beta}(W)
 \to
 C^{0,\alpha,\beta}(W)
$.
The number $R_{\mathrm{tow}}$ is understood as fixed from now onwards, and we allow the constants in the estimates we are about to present to possibly depend on such a value.

%(Here and in the sequel of this proof,
%with slight abuse of notation we shall simply write $J_{\towquot}$
%to refer indiscriminately to the Jacobi operator
%of different subsets of $\towquot$, 
%such as e.\,g. the wing $W$.)

Having defined $P_W$ in this way
for each component $W$
of $\towquot'$,
we in turn define
the linear map
\begin{equation*}
P_{\mathrm{wings}}
\colon
C^{0,\alpha,\beta}\bigl(\towquot'\bigr)
\to
C^{2,\alpha,\beta}\bigl(\towquot'\bigr)
\end{equation*}
such that for any $F$ in its domain
and any component $W$ as above
we have
$(P_{\mathrm{wings}}F)|_W = P_W(F|_W)$.
We also define
\begin{equation*}
\overline{u}_{\mathrm{wings}}
\vcentcolon=
  (\cutoff{R}{R+1} \circ \dist_{\axis{z}})
  \cdot
  P_{\mathrm{wings}}
  \bigl((\cutoff{R}{R+1} \circ \dist_{\axis{z}})
    \cdot \overline{E}\bigr)|_{\towquot'},
\end{equation*}
extended to be equal to the constant zero inside the set $\{\dist_{\axis{z}} \leq R \}$, thereby defining
a function in $C^{2,\alpha,\beta}(\towquot)$.
Then
\begin{equation}
\label{uwingsest}
\nm{\overline{u}_{\mathrm{wings}}}_{2,\alpha,\beta}
\leq
C\nm{\overline{E}}_{0,\alpha,\beta}
\end{equation}
and the function
\begin{equation*}
\overline{E}_{\mathrm{core}}
\vcentcolon=
\pi_{\Aut(\towquot)}
\left(
  \overline{E}-J_{\towquot}\overline{u}_{\mathrm{wings}}
\right)
\end{equation*}
has support contained in
$\{\dist_{\axis{z}}\leq R+1\}$
and satisfies
\begin{equation}
\label{Ecore_decay_est}
\nm{\overline{E}_{\mathrm{core}}}_{0,\alpha,\beta}
\leq
C\nm{\overline{E}}_{0,\alpha,\beta}.
\end{equation}
To proceed, we shall recall the conformal diffeomorphism $G$, the
associated conformal factor $\rho$ 
and Schr\"{o}dinger operator $L_G$ defined at the beginning of this section (see, in particular, equations \eqref{rhodef}, \eqref{LGLNdef}).
The function
$\rho^{-2}G^*\overline{E}_{\mathrm{core}}$
has a unique continuous extension
$E_{\Sp^2}$
to all of $\Sp^2$
(vanishing around the punctures in $\dom(G)$
corresponding to the ends of $\towquot$),
which satisfies
\begin{equation}
\label{ES2est}
\nm{E_{\Sp^2}}_{0,\alpha}
\leq
C\nm{\overline{E}}_{0,\alpha,\beta}.
\end{equation}
Furthermore $\overline{E}_{\mathrm{core}}$
by construction lies in the image of
$\pi_{\Aut(\towquot)}$, so that
Corollary \ref{orthogonality_to_ker_LG}
implies that $E_{\Sp^2}$
is $L^2(\Sp^2)$-orthogonal to the kernel of $L_G$.
Hence, by the standard Fredholm alternative, the equation
\begin{equation*}
L_G u_{\Sp^2}
=
E_{\Sp^2}
\end{equation*}
has a unique solution $u_{\Sp^2}$
orthogonal (in $L^2(\Sp^2)$) to the kernel of $L_G$,
which solution satisfies the Schauder estimate
\begin{equation}
\label{uS2est}
\nm{u_{\Sp^2}}_{2,\alpha}
\leq
C\nm{E_{\Sp^2}}_{0,\alpha}.
\end{equation}
As a result, the function
\begin{equation*}
\overline{u}_{\mathrm{core}}
\vcentcolon=
\pi_{\Aut(\towquot)}
\left((G^{-1})^* u_{\Sp^2}|_{\dom(G)}\right)
\end{equation*}
satisfies, thanks to the first equation in \eqref{LG_LN_conformality}
\begin{equation}
\label{ucore_eqn}
J_{\towquot}\overline{u}_{\mathrm{core}}
=
\overline{E}_{\mathrm{core}}.
\end{equation}
By the standard local Schauder estimates on $\towquot$
(which has bounded geometry)
and \eqref{Ecore_decay_est} we further have
\[
\nm{\overline{u}_{\mathrm{core}}}_{2,\alpha}
\leq C\left(\nm{\overline{u}_{\mathrm{core}}}_0 + \nm{\overline{E}}_{0,\alpha,\beta}\right),
\]
but by the estimates \eqref{ES2est} and \eqref{uS2est} we also have in particular
\[
\nm{\overline{u}_{\mathrm{core}}}_0 \leq C\nm{\overline{E}}_{0,\alpha,\beta},
\]
so that in the end we actually obtain
\begin{equation}
\label{corest}
\nm{\overline{u}_{\mathrm{core}}}_{2,\alpha} \leq C\nm{\overline{E}}_{0,\alpha,\beta}.
\end{equation}
In view of the considerations concerning equivariance at the beginning of the proof we may take
$
 \overline{u}
 \vcentcolon=
 \overline{u}_{\mathrm{core}}+\overline{u}_{\mathrm{wings}},
$
and it remains only to verify the asserted asymptotics
for $\overline{u}_{\mathrm{core}}$.
To this end let $W$ be a wing as defined above.
By appealing to 
Remark \ref{rem:J_on_wings}
it then follows from
\eqref{ucore_eqn}
that there exists $\mu_{W} \in \R$ such that
\begin{equation*}
\nm{\overline{u}_{\mathrm{core}}|_W - \mu_{W}}_{2,\alpha,\beta}
  +\abs{\mu_{W}}
\leq
C\nm{\overline{E}}_{0,\alpha,\beta},
\end{equation*}
where
we have made use of
\eqref{Ecore_decay_est}
and
\eqref{corest}
to obtain the bound.
Recall that we already have the estimate \eqref{uwingsest}.
By the symmetries we need only consider the cases that
$W=W^0$ and $W=W^1$,
but in the former case the symmetries imply
that $\mu_{W}=0$
(since $W^0 \supset \axis{x}$, for example,
and $\refl_{\axis{x}}$ is a symmetry of $\tow$).
Taking $\mu\vcentcolon=\mu_{W}$
for $W=W^1$
completes the proof.
\end{proof}

\subsection{Global solutions on the initial surfaces modulo approximate cokernel}
\label{subs:LinConclusion}

We will now exploit the resolvents
$P_{\B^2}^m$, $P_{\K_{b}}^m$ and $P_{\tow^+}$
obtained earlier on the model surfaces
to construct approximate solutions
to the linearized problem on the initial surfaces.
Later, in Section \ref{sec:Nonlin}, by perturbation (or iteration)
we will obtain exact solutions,
modulo a one-dimensional subspace
inherited from the cokernel
confronted on $\tow^+$
in Lemma \ref{towsol}.
We will refer to this subspace as
the approximate cokernel to the linearized problem
on the initial surfaces
(since, in a sense which we do not attempt to make precise here,
it converges in the large-$m$ limit to the cokernel on $\tow^+$,
which by item \ref{lem:towsol-ii} of Lemma \ref{towsol}
has dimension $1$).

We do not claim that the
(exact) cokernel of the
linearized problem on $\Sigma_{m,\xi}$
is nontrivial; rather, the approximate cokernel is an inevitable
consequence of the strategy we follow
(involving a comparison
of the region $\towr_{m,\xi} \subset \Sigma_{m,\xi}$
to $\tow^+$) to construct solutions,
and in fact in the next section
we will show that at the nonlinear level
we can solve
in the direction of the approximate cokernel
by variation of the parameter $\xi$.

It would be possible to consider
the effect of the variation of $\xi$
at the linear level.
The complication is that the function
generating the family $\Sigma_{m,\xi+t}$ (for any $\xi$ and for $\abs{t}$ small), i.\,e.
the normal projection of the velocity,
pulled back to $\tow^+$,
does not vanish at infinity.
Since we will exploit
exponential decay along the wings
to ensure convergence
in constructing solutions
both to the linear and nonlinear problems,
it is necessary to isolate the effect
of the dislocations somehow,
and we have chosen one particular scheme to do so.

In order to obtain bounds uniform in $m$
(and in particular to bring $\towr_{m,\xi}$
to the scale of its model $\tow^+$)
it is natural to consider our problem
(at both the nonlinear and linearized levels)
on the rescaled initial surfaces $m\Sigma_{m,\xi}$.
Equivalently, at the linearized level,
we can consider the rescaled operators
$m^{-2}J_{\Sigma_{m,\xi}}$
and $m^{-1}B^{\mathrm{Robin}}_{\Sigma_{m,\xi}}$;
see \eqref{BRobin_scaling} and the discussion right before it.
In fact, for our purposes it suffices to consider
the case of homogeneous boundary data,
though our proof will entail the more general
situation of inhomogeneous data on the boundary
component closest to the equator $\Sp^1$,
namely $\partial^0\Sigma_{m,\xi}$
defined in \eqref{equatorialbdy}.

In view of Lemma \ref{towsol},
for the purposes of solving the linearized problem
on $\Sigma_{m,\xi}$ it is also natural to equip the space
of data with norms having exponentially decaying weights
on $\towr_{m,\xi}$,
and to consider such norms
on the space of candidate solutions too.
To this end,
for any $\alpha, \beta \in \interval{0,1}$,
any integer $k \geq 0$,
and any 
$\apr_m$-equivariant function $u$ on $\Sigma_{m,\xi}$
we define the norm
\begin{equation}
\label{globalnorms}
\begin{aligned}
\glsuseri{norm} %\nm{u}_{k,\alpha,\beta}
&\vcentcolon=
\nm[\Big]{
    \left(
      \varpi_{\towr_{m,\xi}}^{-1*}u
    \right)\Big|_{\varpi_{\towr_{m,\xi}}(\towr_{m,\xi})}
  }_{k,\alpha,\beta} \\
&\hphantom{{}\vcentcolon={}}+e^{\beta m^{1/2}}
  \nm[\Big]{
    \left(
      \varpi_{\catr_{m,\xi}}^{-1*}u
    \right)\Big|_{
      \varpi_{\catr_{m,\xi}}
        (
          \catr_{m,\xi}
          \setminus
          \towr_{m,\xi}^1
        )
    }
  }_{k,\alpha} \\
&\hphantom{{}\vcentcolon={}}+e^{\beta m^{1/2}}
  \nm[\Big]{
    \left(
      \varpi_{\discr_{m,\xi}}^{-1*}u
    \right)\Big|_{
      \varpi_{\discr_{m,\xi}}
        (
          \discr_{m,\xi}
          \setminus
          \towr_{m,\xi}^1
        )
    }
  }_{k,\alpha} \\
\end{aligned}
\end{equation}
recalling the regions \eqref{eqn:definition_regions}
and \eqref{towr1}
and regional projections \eqref{regionalprojections}.
Note that on the right-hand side of definition \eqref{globalnorms}
$\nm{\cdot}_{k,\alpha,\beta}$
refers to the standard definition made in
\eqref{weighted_norm_on_tow};
for a function $u$ on an initial surface $\Sigma_{m,\xi}$
by $\nm{u}_{k,\alpha,\beta}$
we will always mean the norm defined in \eqref{globalnorms}.

Of course, since $\Sigma_{m,\xi}$ is compact,
the norm $\nm{\cdot}_{k,\alpha,\beta}$
is equivalent (though not uniformly in $m$)
to the usual H\"{o}lder norm $\nm{\cdot}_{k,\alpha}$
on the space of $\apr_m$-equivariant functions on $\Sigma_{m,\xi}$.
Nevertheless,
the weighted norms will be indispensable in the sequel,
since for large $m$, in a neighborhood of the equator
our initial surfaces tend, after rescaling,
to the complete half tower $\tow^+$,
on which our analysis of the linearized problem
required the corresponding weighted spaces.
Recalling the ``equatorial'' boundary component
$\partial^0\Sigma_{m,\xi}$ from \eqref{equatorialbdy},
we also define for any $\apr_m$-equivariant functions
$E$ and $f$ on $\Sigma_{m,\xi}$ and $\partial^0\Sigma_{m,\xi}$
respectively
the norm
\begin{equation}
\label{global_data_norm}
\nm{(E,f)}_{\alpha,\beta}
\vcentcolon=
\nm{E}_{0,\alpha,\beta}
  +\nm[\big]{
    \varpi_{\towr_{m,\xi}}|_{\partial^0\Sigma_{m,\xi}}^{-1*}f
   }_{1,\alpha}.
\end{equation}
In the following proposition
we sacrifice some decay in the construction of the solution
in order to account
for the discrepancy of scale ($m^2$ in ratio)
between the tower region on the one hand
and the disc and catenoidal regions on the other.
This loss, though suboptimal, is entirely acceptable
because the first-order correction
to the initial surfaces will still be as small as needed
and because in the nonlinear problem
the solution operator of the proposition
will be applied to quadratic terms,
for which the faster decay is anyway recovered.

Finally we recall definition \eqref{coker} and write 
$\iota^{\vphantom{|}}_{\Sigma_{m,\xi}}$ for the inclusion map
of $\Sigma_{m,\xi}$ in $\B^3$,
and for each $\xi \in \R$ and sufficiently large integer $m$ we choose a diffeomorphism 
$\gls{varsigma} %\varsigma_{m,\xi}
\colon \Sigma_{m,0} \to \Sigma_{m,\xi}$ such that
\begin{equation}
\begin{aligned}
\label{Sigma_0_to_Sigma_xi}
%&\varsigma_{m,\xi}\colon \Sigma_{m,0} \to \Sigma_{m,\xi}\mbox{ such that} \\
&\varsigma_{m,0} \mbox{ is the identity}, \\
&(\xi,p) \mapsto \iota^{\vphantom{|}}_{\Sigma_{m,\xi}} \circ \varsigma_{m,\xi}(p)
  \mbox{ is smooth,} \\
&\varsigma_{m,\xi}
  \mbox{ is $\apr_m$-equivariant, and} \\
&\varsigma_{m,\xi}^*\coker = \cokerval{0}.
\end{aligned}
\end{equation}
In the following section we will require
further properties of $\varsigma_{m,\xi}$.
Namely, we select $\varsigma_{m,\xi}$
satisfying not only \eqref{Sigma_0_to_Sigma_xi}
but also this additional requirement: for any real $c>0$
and any integer $k \geq 0$
and any $\alpha,\beta \in \interval{0,1}$
there exist $C(c,k,\alpha,\beta)>0$
and $m_0=m_0(c,k,\alpha,\beta)>0$
such that for every $\xi \in \IntervaL{-c,c}$
and every integer $m>m_0$
and any functions
$u \in C^{k,\alpha}_{\apr_m}(\Sigma_{m,\xi})$
and $v \in C^{k,\alpha}_{\apr_m}(\Sigma_{m,0})$
we have the estimates
\begin{equation}
\label{weighted_ests_for_Sigma_0_to_Sigma_xi}
\begin{aligned}
\nm{\varsigma_{m,\xi}^*u}_{k,\alpha,\beta}
  &\leq
  C(c,k,\alpha,\beta)\nm{u}_{k,\alpha,\beta}
  \quad \mbox{and}
\\
\nm{\varsigma_{m,\xi}^{-1*}v}_{k,\alpha,\beta}
  &\leq
  C(c,k,\alpha,\beta)\nm{v}_{k,\alpha,\beta},
\end{aligned}
\end{equation}
where, we emphasize,
the constant $C(c,k,\alpha,\beta)$
is independent of $m$.
One way to achieve all of the conditions
in \eqref{Sigma_0_to_Sigma_xi}
and \eqref{weighted_ests_for_Sigma_0_to_Sigma_xi}
is
to choose a suitable family of diffeomorphisms
$\lambda_{m,\xi}\colon\K_{b_{m,0}}\to\K_{b_{m,\xi}}$
and to set
\begin{equation*}
\varsigma_{m,\xi}
\vcentcolon=
\begin{cases}
\varpi_{\discr_{m,\xi}}^{-1}\circ\varpi_{\discr_{m,0}}
  & \mbox{on} \quad
  \discr_{m,0}\setminus \towr_{m,0}
\\
\varpi_{\towr_{m,\xi}}^{-1}\circ \varpi_{\towr_{m,0}}
& \mbox{on} \quad
  \towr_{m,0}^1
\\
\varpi_{\catr_{m,\xi}}^{-1}
  \circ \lambda_{m,\xi}
  \circ \varpi_{\catr_{m,0}}
  & \mbox{on} \quad
  \catr_{m,0}\setminus \towr_{m,0}
\end{cases}
\end{equation*}
and then to complete the definition by smooth interpolation using cutoff functions, enforcing $\apr_m$-equivariance.

\begin{proposition}
[Solutions on the initial surface modulo
approximate cokernel]
\label{prop:OutputLinearTheory}
\label{globalsol}
Recall \eqref{equatorialbdy},
\eqref{globalnorms},
\eqref{global_data_norm},
and \eqref{Sigma_0_to_Sigma_xi}.
Assume $0<\alpha<1$, $0<\beta<\gamma<1$, and $c>0$.
There exists $m_0=m_0(c)>0$ such that
for any integer $m>m_0$ and any $\xi \in \IntervaL{-c,c}$
there is a linear map
\begin{equation*}
\glsuseri{initialresol} %P_{\Sigma_{m,\xi}}
\colon
C^{0,\alpha}_{\apr_m}(\Sigma_{m,\xi})
  \oplus
  C^{1,\alpha}_{\apr_m}(\partial^0\Sigma_{m,\xi})
\to
C^{2,\alpha}_{\apr_m}(\Sigma_{m,\xi}) \oplus \R
\end{equation*}
such that if $(E,f) \in \dom(P_{\Sigma_{m,\xi}})$
and $(u,\mu)=P_{\Sigma_{m,\xi}}(E,f)$,
then
\begin{enumerate}[label={\normalfont(\roman*)}]
\item
\(
\nm{u}_{2,\alpha,\beta}+\abs{\mu}
  \leq 
  C\nm{(E,f)}_{\alpha,\gamma}
\)
for some constant $C>0$ independent of
$c$, $m$, $m_0$, $\xi$, and the data $(E,f)$;
\item
\(\displaystyle
\left\{
\begin{aligned}
m^{-2}J_{\Sigma_{m,\xi}}u &=E+\mu\coker 
&&\text{ in }\Sigma_{m,\xi},
\\
B_{\Sigma_{m,\xi}}^{\mathrm{Robin}}u &=0 &&\text{ on }\partial\Sigma_{m,\xi}\setminus\partial^0\Sigma_{m,\xi}, 
\\
m^{-1}B_{\Sigma_{m,\xi}}^{\mathrm{Robin}}u &=f &&\text{ on }\partial^0\Sigma_{m,\xi};
\end{aligned}\right.
\)
\item
the map
\begin{align*}
  \R
  \oplus C^{0,\alpha}_{\apr_m}(\Sigma_{m,0})
  \oplus C^{1,\alpha}_{\apr_m}(\partial^0 \Sigma_{m,0})
&\to 
C^{2,\alpha}_{\apr_m}(\Sigma_{m,0})
  \oplus \R
\\
(
  \xi,
  E_0,
  f_0
)
&\mapsto
(
  \varsigma_{m,\xi}^*u_\xi,
  \mu_\xi
),
\end{align*}
where $(u_\xi,\mu_\xi)\vcentcolon=P_{\Sigma_{m,\xi}}\bigl(\varsigma_{m,\xi}^{-1*}E_0,~\varsigma_{m,\xi}|_{\partial^0\Sigma_{m,\xi}}^{-1*}f_0\bigr)$, is continuous.
\end{enumerate}
\end{proposition}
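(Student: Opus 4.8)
The plan is to follow the standard semilocal scheme for linearized gluing problems: first construct an approximate right inverse (a parametrix) for the rescaled operator $u\mapsto\bigl(m^{-2}J_{\Sigma_{m,\xi}}u,\,m^{-1}B^{\mathrm{Robin}}_{\Sigma_{m,\xi}}u\bigr)$ by patching together the model resolvents $P_{\tow^+}$ from Lemma \ref{towsol}, $P^{m}_{\B^2}$ from Lemma \ref{lem:KeyLinDisc}, and $P^{m}_{\K_{b_{m,\xi}}}$ from Lemma \ref{lem:catker}, and then upgrade it to an exact solution operator, modulo the one-dimensional span of $\coker$, by a convergent Neumann series. The weighted norms \eqref{globalnorms} and \eqref{global_data_norm} are tailored precisely so that this patching is bounded uniformly in $m$: on $\towr_{m,\xi}$ they reproduce the exponentially weighted norms on $\tow^+$ under which Lemma \ref{towsol} was established, while on the disc and catenoidal regions they use the unweighted norms of Lemmata \ref{lem:KeyLinDisc} and \ref{lem:catker}, amplified by $e^{\beta m^{1/2}}$ so that the amplifications cancel between input and output across the transition region $\towr_{m,\xi}\setminus\towr_{m,\xi}^1$. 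Throughout, $c>0$ is fixed and one works with $\xi\in\IntervaL{-c,c}$ and $m$ large in terms of $c$.

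Given $(E,f)$, I would first split $E=E^{\mathrm{tow}}+E^{\mathrm{out}}$ using a fixed cutoff equal to $1$ on $\towr_{m,\xi}^1$ and vanishing off $\towr_{m,\xi}$ (cf. \eqref{towr1}), so that $E^{\mathrm{out}}$ is supported where $\Sigma_{m,\xi}$ coincides exactly with $\B^2$ or $\K_{b_{m,\xi}}$. Transplanting $E^{\mathrm{tow}}$ and $f$ to $\towquot^+_{(m)}$ through $\varpi_{\towr_{m,\xi}}$ and applying $P_{\tow^+}$ gives, by item \ref{lem:towsol-iii} of Lemma \ref{towsol}, a bounded function with a well-defined asymptotic constant $\mu$ on the wing lying over the catenoid; cutting it off near the outer edge of $\towr_{m,\xi}$ and transplanting it back produces a contribution to the solution satisfying the tower equation up to $\mu\coker$ and up to commutator terms supported where the cutoff varies. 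Transplanting $E^{\mathrm{out}}$, the remaining part of $E$, and these commutator terms to $\B^2$ and $\K_{b_{m,\xi}}$, applying $P^{m}_{\B^2}$ and $P^{m}_{\K_{b_{m,\xi}}}$ with homogeneous boundary data, and cutting off, produces the remaining contribution; here one uses that the two boundary components of $\Sigma_{m,\xi}$ other than $\partial^0\Sigma_{m,\xi}$ lie in the region where $\Sigma_{m,\xi}=\K_{b_{m,\xi}}$, so the homogeneous Robin condition there is exactly the one enforced by $P^{m}_{\K_{b_{m,\xi}}}$ (item \ref{BdyOpCompCat} of Proposition \ref{initsurfcomp}), whereas on $\partial^0\Sigma_{m,\xi}$ the data $f$ is already carried by the tower piece. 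Summing these and recording $\mu$ defines a bounded linear parametrix $\widetilde{P}\colon(E,f)\mapsto(u_1,\mu)$ for which the boundary conditions in (ii) hold exactly and $m^{-2}J_{\Sigma_{m,\xi}}u_1=E+\mu\coker+E'$, with $E'$ supported in the transition annuli and the model overlaps.

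The quantitative heart is to bound $E'$ so that the correction converges. Using \eqref{eq:ResolveDisc}, \eqref{eq:ResolveCat}, and item \ref{lem:towsol-iii} of Lemma \ref{towsol} for the resolvents, the operator comparisons of Proposition \ref{initsurfcomp} (items \ref{JacOpCompTow}--\ref{JacOpCompCat} and \ref{BdyOpCompTow}--\ref{BdyOpCompCat}), which bound the initial-surface operators by the model ones up to $O(m^{-1/4})$ on $\towr_{m,\xi}$ and $O(m^2e^{-m^{1/4}})$ off it, and the pull-back estimates (items \ref{TowrToNotTowr} and \ref{NotTowrToTowr} of Proposition \ref{initsurfcomp}), one finds that $E'$ is small in the relevant weighted norm. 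There is one structural loss of decay, from $\gamma$ to $\beta$ — this is the role of the hypothesis $\beta<\gamma$: data measured with the stronger weight $\gamma$ produce, on the tower, a solution whose non-constant part decays like $e^{-\gamma\dist_{\axis{\theta}}}$, so the commutator terms transferred across the transition at $\dist_{\axis{\theta}}\approx m^{1/2}$ have size $O(e^{-\gamma m^{1/2}})$, which beats the amplification $e^{\beta m^{1/2}}$ built into \eqref{globalnorms} and leaves an exponentially small surplus, thereby decoupling the tower region from the disc and catenoidal regions. Thereafter the correction runs as a Neumann series entirely within the $\beta$-weighted spaces, where the operator comparisons translate into a genuine contraction once $m$ is large. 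Iterating, the series $u\vcentcolon=\sum_{j\geq0}u_1^{(j)}$ and $\mu\vcentcolon=\sum_{j\geq0}\mu^{(j)}$ converge geometrically; setting $P_{\Sigma_{m,\xi}}(E,f)\vcentcolon=(u,\mu)$ defines a linear map for which (ii) holds by construction and (i) follows from summing the geometric series, with standard interior and boundary Schauder estimates supplying the $C^{2,\alpha}$ regularity.

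For the continuity assertion (iii), the observation is that every ingredient of $\widetilde{P}$ depends continuously on $\xi$: the cutoff functions are fixed; the identification maps $\varpi_{\towr_{m,\xi}}$, $\varpi_{\discr_{m,\xi}}$, $\varpi_{\catr_{m,\xi}}$ and their inverses depend smoothly on $\xi$ via \eqref{scaledphibentquot} and Lemma \ref{lemmatowbent}; the resolvent $P^{m}_{\K_{b_{m,\xi}}}$ depends continuously on $\xi$ because $\K_b$ varies smoothly with $b$ (Lemma \ref{lem:catenoid-K_b}) and $b_{m,\xi}$ with $\xi$; and $P_{\tow^+}$, $P^{m}_{\B^2}$ are independent of $\xi$. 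Conjugating by the diffeomorphisms $\varsigma_{m,\xi}$ of \eqref{Sigma_0_to_Sigma_xi} and invoking the uniform bounds \eqref{weighted_ests_for_Sigma_0_to_Sigma_xi}, one verifies that each finite-step approximation of the map in (iii) is continuous with values in $C^{2,\alpha}_{\apr_m}(\Sigma_{m,0})\oplus\R$, and since the Neumann series converges uniformly for $\xi\in\IntervaL{-c,c}$, so does its limit, which is the map in (iii). I expect the main obstacle to be the third paragraph, namely certifying that the power-of-$m$ losses intrinsic to the scale mismatch between $\towr_{m,\xi}$ and the disc and catenoidal regions are genuinely harmless; this is the structural reason for working with weighted spaces carrying a decay gap and for carrying out the entire analysis at the level of the rescaled operator $m^{-2}J_{\Sigma_{m,\xi}}$ rather than $J_{\Sigma_{m,\xi}}$ itself.
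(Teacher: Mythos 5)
Your proposal is correct and follows essentially the same route as the paper's proof: a parametrix $\widetilde{P}_{m,\xi}$ patched from the model resolvents $P^m_\tow$, $P^m_{\B^2}$, $P^m_{\K_{b_{m,\xi}}}$ via cutoffs adapted to the regions of \eqref{eqn:definition_regions}, with the decay gap $\beta<\gamma$ absorbing the $e^{\beta m^{1/2}}$ amplification in \eqref{globalnorms}, and the upgrade to an exact solution operator via a Neumann series — the paper writes this as $P_{\Sigma_{m,\xi}}=\widetilde{P}_{m,\xi}(L_{m,\xi}\widetilde{P}_{m,\xi})^{-1}$, which is your geometric series in compressed form. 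The continuity argument via the continuous dependence of the ingredients on $\xi$ is likewise the paper's.
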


\begin{proof}
Recall the quantity $b_{m,\xi}$
from \eqref{bmxi}.
Recall also that
$
 \towquot_{(m)}^+
 \vcentcolon=
 \tow^+/\sk{\trans^{\axis{\theta}}_{2m\pi}}
$
and for each nonnegative integer $k$ let 
\begin{alignat*}{3}
&C^{k,\alpha,\gamma}_{\Aut(\towquot_{(m)}^+)}\bigl(\towquot_{(m)}^+\bigr) &
\quad&\text{ and }\quad &
&C^{k,\alpha}_{\Aut(\towquot_{(m)}^+)}\bigl(\partial \towquot_{(m)}^+\bigr)
\intertext{be the Banach spaces, equipped with the obvious norms, consisting of all functions 
%on $\towquot_{(m)}^+$ and $\partial \towquot_{(m)}^+$ respectively 
whose pullbacks under the canonical projection belong to}
&C^{k,\alpha,\gamma}_{\Aut_{\R^3}(\tow^+)}(\tow^+) &
&\text{ and } &
&C^{k,\alpha}_{\Aut_{\R^3}(\tow^+)}(\partial \tow^+)
\end{alignat*}
respectively.
Lemmata \ref{lem:KeyLinDisc}, \ref{lem:catker}, and \ref{towsol} imply the existence of linear maps
\begin{equation*}
\begin{aligned}
P^m_{\B^2}\colon 
  C^{0,\alpha}_{\apr_m}(\B^2)
  &\to C^{2,\alpha}_{\apr_m}(\B^2), \\[1ex]
P^m_{\K_{b_{m,\xi}}}\colon
  C^{0,\alpha}_{\pyr_m}(\K_{b_{m,\xi}})
  &\to 
  C^{2,\alpha}_{\pyr_m}(\K_{b_{m,\xi}}), \quad \mbox{and} \\[1ex]
P^m_{\tow}\colon
  C^{0,\alpha,\gamma}_{\Aut(\towquot_{(m)}^+)}\bigl(\towquot_{(m)}^+\bigr)
    \oplus 
    C^{1,\alpha}_{\Aut(\towquot_{(m)}^+)}\bigl(\partial \towquot_{(m)}^+\bigr)
  &\to 
  C^{2,\alpha,\gamma}_{\Aut(\towquot_{(m)}^+)}\bigl(\towquot_{(m)}^+\bigr)
    \oplus 
    \R
\end{aligned}
\end{equation*}
that have operator norms bounded by a constant independent of $m$ and such that
\begin{enumerate}[label={(\arabic*)},wide]
\item $P^m_{\B^2}$ is the inverse of $J_{\B^2}$
with homogeneous Dirichlet condition on $\partial \B^2$,
\item $P^m_{\K_{b_{m,\xi}}}$ is the inverse of $J_{\K_{b_{m,\xi}}}$
with homogeneous Dirichlet condition on
the lower component of $\partial \K_{b_{m,\xi}}$
and homogeneous Robin condition
on the upper component of $\partial \K_{b_{m,\xi}}$,
and
\item for any $(E,f) \in \dom(P^m_\tow)$, if $(u,\mu)= P^m_\tow (E,f)$,
then
\begin{equation*}
\left\{
\begin{aligned}
J_{\towquot_{(m)}^+}u
  &=
  E+\mu H^{\mathrm{dislocate}}_{\towquot_{(m)}^+},
\\
B^{\mathrm{Robin}}_{\towquot_m^+}u
  &=
  f,
\end{aligned}
\right.
\end{equation*}
where $H^{\mathrm{dislocate}}_{\towquot_m^+}$
is the unique function whose pullback under the canonical projection
is $\towcoker$.
\end{enumerate}
Roughly speaking, we will ``paste together'' these three operators to obtain $P_{\Sigma_{m,\xi}}$.

Now suppose
$
 (E,f)
 \in
 C^{0,\alpha}(\Sigma_{m,\xi})
   \oplus
   C^{1,\alpha}(\partial^0\Sigma_{m,\xi})
$.
Throughout the proof we will often tacitly
extend, without renaming,
a function $q\colon A \to \R$ on a set $A \subset B$ in a given manifold
to all of $B$
by decreeing $q|_{B \setminus A}=0$;
in all such instances the extension will
be smooth, preserving $C^{k,\alpha}$ regularity,
because $q$ will vanish identically on a neighborhood
of $\partial A \subset B$.
With this understanding,
and bearing in mind the extents
of the regions \eqref{eqn:definition_regions},
we start by setting
\begin{equation*}
\begin{aligned}
\Psi_\tow
  &\vcentcolon=
  (\cutoff{m^{1/2}}{m^{1/2}-1} \circ \dist_{\axis{\theta}}
  )|_{\towquot_{(m)}^+},
  \\
E_\tow
  &\vcentcolon=
  (\varpi_{\towr_{m,\xi}}^{-1*}E|_{\towr_{m,\xi}})
    \Psi_\tow, \\
f_\tow
  &\vcentcolon=
  \varpi_{\towr_{m,\xi}}|_{\partial^0\Sigma_{m,\xi}}^{-1*}f, \\
(u^{\vphantom{|}}_\tow,\mu)
  &\vcentcolon=
  P^m_{\tow}(E_\tow, f_\tow), \mbox{ and} \\
u^{\vphantom{|}}_\towr
  &\vcentcolon=
  \varpi_{\towr_{m,\xi}}^*(u_\tow\Psi_\tow).
\end{aligned}
\end{equation*}
Then
\(\nm{\Psi_\tow}_k\leq C(k)\) and 
\(\abs{\mu} + \nm{u^{\vphantom{|}}_\tow}_{2,\alpha,\gamma}
\leq C\bigl(\nm{E_\tow}_{0,\alpha,\gamma}+\nm{f_\tow}_{1,\alpha}\bigr)\), 
so by \eqref{globalnorms} and \eqref{global_data_norm}
\begin{equation}
\label{mu_u_towr_bound}
\abs{\mu} + \nm{u^{\vphantom{|}}_\towr}_{2,\alpha,\gamma}
\leq
\nm{(E,f)}_{\alpha,\gamma}.
\end{equation}
By item \ref{BdyOpCompTow}
of Proposition \ref{initsurfcomp}
we also have
\begin{equation}
\label{approx_bdy_cond}
\nm[\big]{
    f
    -m^{-1}B_{\Sigma_{m,\xi}}^{\mathrm{Robin}}
      u^{\vphantom{|}}_\towr
    }_{1,\alpha}
  \leq
Cm^{-1}\nm{(E,f)}_{\alpha,\gamma},
\end{equation}
so that $u_\towr$ \emph{approximately}
satisfies the desired
boundary condition, with the error controlled by the
discrepancy between the region $\towr_{m,\xi}$
and its model $\towquot_{(m)}^+$
(compared via $\varpi_{\towr_{m,\xi}}$).
In the same way $u_\towr$
is also an approximate solution
to the problem posed on the interior, appropriately restricted,
except that there is an additional source of error
originating from the cutoff applied in defining
$u_\towr$ from $u_\tow$.
More precisely, we shall conveniently define
\begin{equation*}
E_\towr\vcentcolon=\varpi_{\towr_{m,\xi}}^*
  \left(
    \Psi_\tow^2 \varpi_{\towr_{m,\xi}}^{-1*}E
  \right)
  +\varpi_{\towr_{m,\xi}}^*
    \left( 
      \left[ 
        J_{\towquot_{(m)}^+},
        \Psi_\tow
      \right]
      u^{\vphantom{|}}_\tow
    \right),
\end{equation*}
where $[J_{\towquot_{(m)}^+}, \Psi_\tow]$
is the commutator of $J_{\towquot_{(m)}^+}$
with the operator that multiplies its argument by $\Psi_\tow$;
using the above definitions, item \ref{JacOpCompTow} of Proposition \ref{initsurfcomp}, and definitions
\eqref{eqn:definition_regions},
\eqref{towr1},
\eqref{globalnorms}, and \eqref{global_data_norm}, we then have 
$\spt E_\towr \subseteq \towr_{m,\xi}$ and $\spt (E-E_\towr)\subset\Sigma_{m,\xi} \setminus \towr_{m,\xi}^1$, where $\spt(\cdot)$ denotes the support of its argument, as well as the estimates 
\begin{align}
\nm{E_\towr}_{0,\alpha,\gamma}
&\leq C\nm{(E,f)}_{\alpha,\gamma}, 
\nonumber
\\\label{u_towr_approx_soln}
\nm[\big]{E_\towr + \mu \coker -m^{-2}J_{\Sigma_{m,\xi}} u^{\vphantom{|}}_\towr}_{0,\alpha,\gamma}
&\leq Cm^{-1/4}\nm{(E,f)}_{\alpha,\gamma}. 
\end{align}
Next we set
\(
E_{\K}\vcentcolon=\varpi_{\catr_{m,\xi}}^{-1*}(E-E_\towr)|_{\catr_{m,\xi}}
\) and \(
E_{\B^2}\vcentcolon=\varpi_{\discr_{m,\xi}}^{-1*}(E-E_\towr)|_{\discr_{m,\xi}}
\). 
From these facts, item \ref{TowrToNotTowr} of Proposition \ref{initsurfcomp}, and definitions \eqref{globalnorms} and \eqref{global_data_norm} we then have
\begin{equation*}
\nm{E_\K}_{0,\alpha} + \nm{E_{\B^2}}_{0,\alpha}
\leq
C m^\alpha e^{-\gamma m^{1/2}} \nm{(E,f)}_{\alpha,\gamma}.
\end{equation*}
Then
\(
u^{\vphantom{|}}_\K\vcentcolon= m^2P_{\K_{b_{m,\xi}}}^m E_\K
\) and \(
u_{\B^2}\vcentcolon=m^2P_{\K_{b_{m,\xi}}}^m E_{\B^2}
\) 
satisfy 
\begin{equation}
\label{u_cat_and_u_disc_bounds}
\nm{u^{\vphantom{|}}_{\K}}_{2,\alpha}
  + \nm{u_{\B^2}}_{2,\alpha}
\leq 
Cm^{2+\alpha}e^{-\gamma m^{1/2}}\nm{(E,f)}_{\alpha,\gamma}.
\end{equation}
Defining in turn
\begin{align*}
\Psi_{\Sigma \setminus \towr}
&\vcentcolon=1-\varpi_{\towr_{m,\xi}}^*
    (\cutoff{m^{1/4}}{m^{1/4}-1} \circ \dist_{\axis{\theta}}),
&
u^{\vphantom{|}}_\catr
&\vcentcolon=\Psi_{\Sigma \setminus \towr}\varpi_{\catr_{m,\xi}}^* u^{\vphantom{|}}_\K,
&
u^{\vphantom{|}}_\discr
&\vcentcolon= \Psi_{\Sigma \setminus \towr} \varpi_{\discr_{m,\xi}}^* u_{\B^2},
\end{align*}
it follows, also using
item \ref{NotTowrToTowr} of Proposition \ref{initsurfcomp}
and $\nm{\Psi_{\Sigma \setminus \towr}}_k\leq C(k)$, that
\begin{align}
\label{u_catr_and_u_discr_gamma_bounds}
\nm{u^{\vphantom{|}}_\catr}_{2,\alpha,\gamma}
  +\nm{u^{\vphantom{|}}_\discr}_{2,\alpha,\gamma}
&\leq 
Cm^{2+\alpha}\nm{(E,f)}_{\alpha,\gamma},
\\
\label{u_catr_and_u_discr_beta_bounds}
\nm{u^{\vphantom{|}}_\catr}_{2,\alpha,\beta}
  +\nm{u^{\vphantom{|}}_\discr}_{2,\alpha,\beta}
&\leq 
Cm^{2+\alpha}e^{(\beta-\gamma)m^{1/2}}
  \nm{(E,f)}_{\alpha,\gamma},
\end{align}
where the second line is a consequence of the first
and the definitions
\eqref{globalnorms} and \eqref{global_data_norm}
of the relevant norms.
Moreover, $u^{\vphantom{|}}_\catr$
and $u^{\vphantom{|}}_\discr$
are approximate solutions to
the interior problem
\[m^{-2}J_{\Sigma_{m,\xi}} u = E-E_\towr,\]
appropriately restricted,
where, just as for $u^{\vphantom{|}}_\towr$ above,
the error has two components,
one driven by the deviation
of the regions $\catr_{m,\xi}$ and $\discr_{m,\xi}$
from their models ${\K_{b_{m,\xi}}}$ and $\B^2$
and the other the cutoff error introduced
by $\Psi_{\Sigma \setminus \towr}$.
Indeed,
setting here
\begin{align*}
E_{\Sigma \setminus \towr}^{\mathrm{geometric}}
&\vcentcolon=
m^{-2}
  \left(
    J_{\Sigma_{m,\xi}} - \varpi_{\discr_{m,\xi}}^* J_{\B^2} \varpi_{\discr_{m,\xi}}^{-1*}
  \right)
  u^{\vphantom{|}}_\discr
+
m^{-2}
  \left(
    J_{\Sigma_{m,\xi}} - \varpi_{\catr_{m,\xi}}^* J_{\K_{b_{m,\xi}}} \varpi_{\catr_{m,\xi}}^{-1*}
  \right)
  u^{\vphantom{|}}_\catr
\quad \mbox{and} \quad
\\
E_{\Sigma \setminus \towr}^{\mathrm{cutoff}}
&\vcentcolon= 
m^{-2}\varpi_{\discr_{m,\xi}}^*
  \left(
    \left[
      J_{\B^2},~
      \varpi_{\discr_{m,\xi}}^{-1*} \Psi_{\Sigma \setminus \towr} 
    \right]
    u_{\B^2}
  \right)
+m^{-2}\varpi_{\catr_{m,\xi}}^*
  \left(
    \left[
      J_{\K_{b_{m,\xi}}},~
      \varpi_{\catr_{m,\xi}}^{-1*} \Psi_{\Sigma \setminus \towr} 
    \right]
    u^{\vphantom{|}}_\K
  \right),
\end{align*}
we have
\begin{equation}
\label{u_discr_and_catr_approx_soln}
m^{-2}J_{\Sigma_{m,\xi}}(u^{\vphantom{|}}_\discr+u^{\vphantom{|}}_\catr)
=
E-E_\towr
  +E_{\Sigma \setminus \towr}^{\mathrm{geometric}}
  +E_{\Sigma \setminus \towr}^{\mathrm{cutoff}},
\end{equation}
having observed that
$(E-E_\towr)\Psi^{\Sigma \setminus \towr}=E-E_\towr$.
Using \ref{JacOpCompCat}, \ref{JacOpCompDisc} and \ref{NotTowrToTowr} of Proposition~\ref{initsurfcomp},
definition \eqref{globalnorms},
and the estimates
\eqref{u_catr_and_u_discr_gamma_bounds},
we obtain as well
\begin{equation}
\label{E_geometric_est}
\nm[\big]{E_{\Sigma \setminus \towr}^{\mathrm{geometric}}}_{0,\alpha,\gamma}
\leq
Cm^{2+\alpha}e^{-m^{1/4}}\nm{(E,f)}_{\alpha,\gamma}.
\end{equation}
To estimate $E_{\Sigma \setminus \towr}^{\mathrm{cutoff}}$
note that
\begin{equation}\label{eq:SupportAnnulus}
\spt 
  \left[
    J_{\B^2},~
    \varpi_{\discr_{m,\xi}}^{-1*} \Psi_{\Sigma \setminus \towr} 
  \right]
\subset
\varpi_{\discr_{m,\xi}}
  \left(
    \varpi_{\towr_{m,\xi}}^{-1}
      \left(
        \{m^{1/4}-1 \leq \dist_{\axis{\theta}} \leq m^{1/4}\}
      \right)
  \right) 
 \end{equation} 
 as well as 
 \begin{equation}\label{eq:CheapCommutatorEst}
\nm[\Big]{
  \left[
    J_{\B^2},~
    \varpi_{\discr_{m,\xi}}^{-1*}
      \Psi_{\Sigma \setminus \towr} 
  \right]
  v
}_{0,\alpha}
\leq
Cm^{2+\alpha}
  \nm{v}_{2,\alpha}
\quad
\forall v \in C^{2,\alpha}(\B^2),
\end{equation}
and likewise if we simultaneously replace $\discr_{m,\xi}$ by $\catr_{m,\xi}$ and $\B^2$ by $\K_{b_{m,\xi}}$.
Hence, using item \ref{NotTowrToTowr} of Proposition~\ref{initsurfcomp},
definition \eqref{globalnorms},
and the estimates \eqref{eq:CheapCommutatorEst} and \eqref{u_cat_and_u_disc_bounds},
we then get
\begin{equation}
\label{E_cutoff_est}
\nm{
  E_{\Sigma \setminus \towr}^{\mathrm{cutoff}}
}_{0,\alpha,\gamma}
\leq
Cm^{2+2\alpha}
  e^{\gamma(m^{1/4}-m^{1/2})}
  \nm{(E,f)}_{\alpha,\gamma}.
\end{equation}
Concerning this bound, we warn the reader that actually (by the very definition of the cutoff function $\Psi_{\Sigma \setminus \towr}$, which directly implies \eqref{eq:SupportAnnulus}) it is only the \emph{first} summand on the right-hand side of  \eqref{globalnorms} that actually contributes to $\nm{
  E_{\Sigma \setminus \towr}^{\mathrm{cutoff}}
}_{0,\alpha,\gamma}.$

By following the foregoing construction
of $\mu$, 
$u^{\vphantom{|}}_\towr$,
$u^{\vphantom{|}}_\discr$,
and $u^{\vphantom{|}}_\catr$
for arbitrary data $(E,f)$
we define
\begin{align*}
\widetilde{P}_{m,\xi}\colon
C^{0,\alpha}_{\apr_m}(\Sigma_{m,\xi})
    \oplus 
    C^{1,\alpha}_{\apr_m}(\partial^0\Sigma_{m,\xi})
  &\to
  C^{2,\alpha}_{\apr_m}(\Sigma_{m,\xi})
    \oplus \R \\
(E,f)
  &\mapsto
  \bigl(\underbrace{u^{\vphantom{|}}_\towr
  +u^{\vphantom{|}}_\discr
  +u^{\vphantom{|}}_\catr}_{\vphantom{|}\displaystyle u}
  ,~\mu\bigr).
\end{align*}
The map $\widetilde{P}_{m,\xi}$ is clearly linear by construction, and by \eqref{mu_u_towr_bound} and \eqref{u_catr_and_u_discr_beta_bounds} we have
\begin{equation*}
\abs{\mu}+\nm{u}_{2,\alpha,\beta}
\leq
C\nm{(E,f)}_{\alpha,\gamma}
\end{equation*}
for 
$(u,\mu)=\widetilde{P}_{m,\xi}(E,f)$ and
any data $(E,f)$.
Moreover, if we define the map
\begin{align*}
L_{m,\xi}\colon
C^{2,\alpha}_{\apr_m}(\Sigma_{m,\xi}) \oplus \R
  &\to
 C^{0,\alpha}_{\apr_m}(\Sigma_{m,\xi})
    \oplus 
    C^{1,\alpha}_{\apr_m}(\partial^0\Sigma_{m,\xi}) \\
(u,\mu)
  &\mapsto
  \left(
    m^{-2}J_{\Sigma_{m,\xi}}u-\mu\coker,~
    \left(m^{-1}B_{\Sigma_{m,\xi}}^{\mathrm{Robin}}u\right)
      \Big|_{\partial^0\Sigma_{m,\xi}}
  \right)
\end{align*}
and write $I_{\dom(\widetilde{P}_{m,\xi})}$
for the identity map
on the domain of $\widetilde{P}_{m,\xi}$
(coinciding with the target of $L_{m,\xi}$),
then we find from
\eqref{approx_bdy_cond},
\eqref{u_towr_approx_soln},
\eqref{u_discr_and_catr_approx_soln},
\eqref{E_geometric_est},
and \eqref{E_cutoff_est}
that
\begin{equation*}
\lim_{m \to \infty}
  \nm[\Big]{
    I_{\dom(\widetilde{P}_{m,\xi})}
    -L_{m,\xi}\widetilde{P}_{m,\xi}
  }_{
  \End(\dom(\widetilde{P}_{m,\xi}))}
=0,
\end{equation*}
where the norm is the operator norm on the space of linear
maps from the domain of $\widetilde{P}_{m,\xi}$ to itself
equipped with the data norm $\nm{\cdot}_{\alpha,\gamma}$
as defined by
\eqref{global_data_norm}.
As a result,
the composite $L_{m,\xi}\widetilde{P}_{m,\xi}$
is invertible, for $m$ sufficiently large,
with inverse bounded
(under the same norm)
independently of $m$.
We conclude by taking
$
 P_{\Sigma_{m,\xi}}
 \vcentcolon=
 \widetilde{P}_{m,\xi}(L_{m,\xi} \widetilde{P}_{m,\xi})^{-1}
$.
\end{proof}

\section{Solution to the nonlinear problem}\label{sec:Nonlin}

In this section we shall exploit all the results obtained above to prove the following statement, which -- as explained in the introduction -- immediately implies Theorem \ref{thm:Main}.

\begin{theorem}\label{thm:ConstructFirst}
There exists a sequence $\{\Sigma_g\}_{g\geq g_0}$ of  properly embedded, free boundary minimal surfaces in $\B^3$ such that:
\begin{enumerate}[label={\normalfont(\alph*)}]
    \item  $\Sigma_g$ has genus $g$, three boundary components and symmetry group coinciding with the antiprismatic group of order $4(g+1)$;
    \item as one lets $g\to\infty$ the surface $\Sigma_g$ converges, in the sense of varifolds to the union $\K_0 \cup \B^2 \cup -\K_0$; the convergence is smooth, with multiplicity one, away from the intersection $\K_0 \cap \B^2 \cap -\K_0$.
\end{enumerate}
\end{theorem}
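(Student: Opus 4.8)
The plan is to run a standard Kapouleas-type fixed-point argument on the family of initial surfaces $\Sigma_{m,\xi}$, using the linear solution operator $P_{\Sigma_{m,\xi}}$ furnished by Proposition \ref{globalsol} together with the mean curvature estimates of Proposition \ref{initsurfcomp}, and to absorb the one-dimensional approximate cokernel by varying the dislocation parameter $\xi$. Concretely, I would first set up the correct geometric framework for perturbations: by Proposition \ref{initsurfbasicprops} each $\Sigma_{m,\xi}$ is properly embedded, meets $\partial \B^3$ orthogonally along three boundary circles, and has $\apr_m$ symmetry; following the remarks at the beginning of this section (and Appendix \ref{app:graphs}) one perturbs $\Sigma_{m,\xi}$ through normal graphs taken with respect to an auxiliary metric $\widetilde{g}$ (not the Euclidean one) chosen so that the free boundary condition is preserved automatically for graphs of functions satisfying the linearized Robin condition, thereby avoiding well-posedness issues at the boundary. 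For $u \in C^{2,\alpha}_{\apr_m}(\Sigma_{m,\xi})$ with $\nm{u}_{2,\alpha,\beta}$ small, write $\Sigma_{m,\xi}(u)$ for the corresponding graph; it is automatically $\apr_m$-invariant, properly embedded, with three boundary components, and meets $\partial\B^3$ orthogonally. The mean curvature of $\Sigma_{m,\xi}(u)$, pulled back to $\Sigma_{m,\xi}$, then has the form $H_{\Sigma_{m,\xi}} - J_{\Sigma_{m,\xi}}u + Q_{m,\xi}(u)$, where $Q_{m,\xi}$ collects the quadratic-and-higher remainder and satisfies, by the uniform geometry bound in item \ref{initsurfuniformgeometry} of Proposition \ref{initsurfbasicprops} and Lemma \ref{lem:mc_and_met_of_graphs_euc}, an estimate of the shape $\nm{Q_{m,\xi}(u_1)-Q_{m,\xi}(u_2)}_{0,\alpha,\gamma} \leq C m^{a}\bigl(\nm{u_1}_{2,\alpha,\beta}+\nm{u_2}_{2,\alpha,\beta}\bigr)\nm{u_1-u_2}_{2,\alpha,\beta}$ for some fixed power $a$; the loss of powers of $m$ here is exactly why the suboptimal decay built into Proposition \ref{globalsol} is acceptable.

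Next I would recast the problem as a fixed-point equation. Rescaling to $m\Sigma_{m,\xi}$ and using the rescaled operators $m^{-2}J_{\Sigma_{m,\xi}}$, $m^{-1}B^{\mathrm{Robin}}_{\Sigma_{m,\xi}}$ as in the preamble to Proposition \ref{globalsol}, the equation ``$\Sigma_{m,\xi}(u)$ is minimal with free boundary'' becomes, after applying $P_{\Sigma_{m,\xi}}$, the equation
\begin{equation*}
(u,\mu)
=
P_{\Sigma_{m,\xi}}\!\left(-m^{-2}H_{\Sigma_{m,\xi}} + m^{-2}Q_{m,\xi}(u),~0\right)
=:
\mathcal{F}_{m,\xi}(u).
\end{equation*}
By item \ref{HestTow} of Proposition \ref{initsurfcomp} we have $H_{\Sigma_{m,\xi}} = \xi\coker + \mathcal{E}_{m,\xi}$ with $\nm{\mathcal{E}_{m,\xi}}_{0,\alpha,\gamma}$ controlled (uniformly in $\xi \in \IntervaL{-c,c}$) by a negative power of $m$ — more precisely by $C/(1-\gamma)$ times the weight discrepancy, together with the off-tower contribution $Cm^2 e^{-m^{1/2}}$ from item \ref{HestOffTow}; since $P_{\Sigma_{m,\xi}}(\coker,0)$ produces precisely the approximate-cokernel term, the point is that $-m^{-2}H_{\Sigma_{m,\xi}}$ lies, up to a genuinely small error, in the ``good'' directions, so that $\mathcal{F}_{m,\xi}$ maps a ball $\{\nm{u}_{2,\alpha,\beta} \leq \rho_m\}$ (with $\rho_m$ a suitable negative power of $m$) into itself and is a contraction there, for $m$ large depending on $c$. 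The contraction mapping theorem then yields, for each integer $m > m_0(c)$ and each $\xi \in \IntervaL{-c,c}$, a unique small solution $(u_{m,\xi},\mu_{m,\xi})$ with $\nm{u_{m,\xi}}_{2,\alpha,\beta}+\abs{\mu_{m,\xi}}$ small; by construction $\Sigma_{m,\xi}(u_{m,\xi})$ then has mean curvature $\mu_{m,\xi}\coker$ and satisfies the free boundary condition.

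The remaining — and genuinely delicate — step is to kill the residual term $\mu_{m,\xi}\coker$ by choosing $\xi$ appropriately for each fixed large $m$. Here I would use the continuity statement in item (iii) of Proposition \ref{globalsol}, together with the continuity in $\xi$ built into $\varsigma_{m,\xi}$ via \eqref{Sigma_0_to_Sigma_xi}–\eqref{weighted_ests_for_Sigma_0_to_Sigma_xi}, to conclude that $\xi \mapsto \mu_{m,\xi}$ is continuous on $\IntervaL{-c,c}$ (the fixed point $u_{m,\xi}$ depends continuously on $\xi$ because it is obtained by iteration of the $\xi$-continuous map $\mathcal{F}_{m,\xi}$, transplanted to the fixed surface $\Sigma_{m,0}$). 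The crux is then a sign/intermediate-value argument: one shows that $\mu_{m,\xi}$ is dominated by its linear-in-$\xi$ part, which — because $\towcokergen|_{W^1}(\infty)\neq 0$, equivalently the pairing of $\coker$ with the dislocation direction is a nonzero constant independent of $m$ (cf. item \ref{lem:towsol-ii} of Lemma \ref{towsol} and the proof of Proposition \ref{initsurfcomp}) — is a strictly monotone function of $\xi$ with a definite slope, so that $\mu_{m,\xi}$ changes sign as $\xi$ ranges over $\IntervaL{-c,c}$ for a fixed, $m$-independent choice of $c$. By the intermediate value theorem there exists $\xi_m \in \interval{-c,c}$ with $\mu_{m,\xi_m}=0$; then $\Sigma_g := \Sigma_{m,\xi_m}(u_{m,\xi_m})$ with $g := m-1$ is a properly embedded free boundary minimal surface in $\B^3$. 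Its genus, number of boundary components, and symmetry group are inherited from $\Sigma_{m,\xi_m}$ by Proposition \ref{initsurfbasicprops} (the symmetry group cannot grow under a small equivariant perturbation since, as noted there, any symmetry must preserve the equator-closest boundary component), giving part (a) with $g_0 = m_0(c)-1$. For part (b), the varifold convergence to $\K_0 \cup \B^2 \cup -\K_0$ follows because, as $m\to\infty$, $\kappa_{m,\xi_m}\to 0$ and $b_{m,\xi_m}\to 0$, so $\K_{b_{m,\xi_m}}\to\K_0$, while the tower region $\towr_{m,\xi_m}$, rescaled by $m$, converges to $\tow^+$ but occupies a neighborhood of the equator of extrinsic size $O(m^{-1/2})\to 0$; away from $\Sp^1$ the smooth multiplicity-one convergence then follows from Proposition \ref{initsurfcomp} together with the bound $\nm{u_{m,\xi_m}}_{2,\alpha,\beta}\to 0$. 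The main obstacle is precisely the sign-change argument for $\mu_{m,\xi}$: one must verify that the linear response dominates uniformly in $m$ and that the constant $c$ can be fixed independently of $m$, which requires combining the $m$-independent lower bound on the dislocation pairing from Lemma \ref{towsol} with the $m$-independent operator norm bounds on $P_{\Sigma_{m,\xi}}$ from Proposition \ref{globalsol} and the quadratic smallness of the remainder.
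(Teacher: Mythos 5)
You propose a two-step scheme --- first a contraction mapping fixed point in $u$ for each fixed $\xi$, then an intermediate-value argument in $\xi$ to kill the residual cokernel coefficient $\mu_{m,\xi}$ --- whereas the paper solves for $(v,\xi)$ \emph{simultaneously} by applying Schauder's fixed point theorem to the map $F_{m,\alpha}$ defined in \eqref{map_to_prove_main_theorem} on the compact convex set $D_m \subset C^{2,\alpha/2}_{\apr_m}(\Sigma_{m,0}) \oplus \R$. These are genuinely different routes, both of which appear in the gluing literature, and yours is not wrong in principle, but it carries two costs the paper's route is designed to avoid. First, a contraction requires a Lipschitz estimate of the shape $\nm{Q_{m,\xi,u_1}-Q_{m,\xi,u_2}}_{0,\alpha,\gamma}\lesssim (\nm{u_1}+\nm{u_2})\nm{u_1-u_2}$, which you assert but which is not proved in the paper; Lemma~\ref{quad_ests} gives only a bound on $Q_{m,\xi,u}$ itself and the continuity needed for Schauder, not a Lipschitz modulus. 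Second --- and this is the one the paper explicitly flags in the discussion preceding \eqref{sol_to_first_order} --- your intermediate-value argument needs a monotonicity/dominance statement for $\xi\mapsto\mu_{m,\xi}$ uniform in $m$, which would require detailed control of the $\xi$-dependence of the mean curvature residual $\mathcal{E}_{m,\xi}$ and of the resolvent $P_{\Sigma_{m,\xi}}$ beyond what is established. The paper's trick of subtracting $\xi\coker$ in \eqref{sol_to_first_order}, making $\mu^{(1)}_{m,\xi}$ bounded uniformly in $\xi$ (Lemma~\ref{first-order_ests}), and then encoding the cokernel condition as the self-consistency equation $\xi = m^2(\mu^{(1)}_{m,\xi}+\mu^{(2)}_{m,\xi})$ built into the Schauder fixed point, sidesteps both issues with only continuity in $\xi$ (item (iii) of Proposition~\ref{globalsol}).

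Separately, your treatment of part (a) has a real gap. You say the symmetry group is ``inherited from $\Sigma_{m,\xi_m}$ by Proposition~\ref{initsurfbasicprops}.'' But the paper deliberately uses only the \emph{containment} $\Aut_{\B^3}(\Sigma_{m,\xi})\geq\apr_m$ from that proposition (the equality there is stated but its proof is left to the reader precisely because it is not used), and instead gives a direct argument at the end of Subsection~\ref{subs:ProofMainThm} that $\Aut_{\B^3}(\Sigma_{m-1})=\apr_m$ for the \emph{final} minimal surface: one first shows via the reflection principle and the spacing $\pi$ of the lines on $\tow$ (item~\ref{towcontainslines} of Proposition~\ref{karcher-scherk}), combined with the $O(m^{-2})$ closeness of $\Sigma_{m-1}$ to $\Sigma_{m,\xi_m}$ from \eqref{est_on_defining_function_of_final_surf}, that $\Sigma_{m-1}$ contains no diameters of $\B^2$ beyond the $m$ in $L_m$; and then one rules out the extra reflection $\refl_{\{y=0\}}$ by observing that it, together with $\refl_{\axis{x}}\in\apr_m$, would force $\Sigma_{m-1}$ to be planar. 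A small equivariant perturbation can in general \emph{enlarge} the symmetry group, so this step cannot be waved away by citing the initial surface's symmetries; it needs the argument above, which your proposal omits.
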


\begin{remark}\label{rem:AreaConj}
The area of the limit varifold is $\approx3.7921\,\pi$ (cf. Corollary~\ref{cor:area_K} and Remark~\ref{rem:values_for_b=0}); 
in particular, such a varifold has larger area than the union of the critical catenoid 
\gls{Kcrit} %$\K_{\mathrm{crit}}$ 
with the horizontal disc $\B^2$ that is approximately $2.6671\,\pi$, which in turn is relevant for the convergence result stated in Appendix \ref{app:convergence_large_g}. Incidentally, we mention here that,
by the monotonicity formula, a symmetric portion of the Karcher--Scherk tower has less area than the corresponding portion of its asymptotic planes, and therefore we would expect the area our free boundary minimal surfaces to be increasing in $g$ and in particular to be uniformly bounded from above by the area of the limit varifold.
\end{remark}

We will first discuss in Subsection \ref{subs:NonlinSetup}
(referring as needed to Appendix \ref{app:graphs}
for supporting technical details)
how to conveniently set up the construction as a nonlinear elliptic problem with oblique boundary conditions,
and we will later
describe in Subsection \ref{subs:ProofMainThm}  how to solve the problem in question by means of a suitable iteration scheme,
based on the ancillary results in Section \ref{sec:Linear}
(in particular relying on Proposition \ref{prop:OutputLinearTheory}) and on the preliminary estimates in Subsection \ref{subs:FirstOrderCorr}.

\subsection{Graphical deformation under the auxiliary metric}\label{subs:NonlinSetup}
Each initial surface $\Sigma_{m,\xi}$
has been constructed so as to intersect
$\Sp^2=\partial \B^3$ 
precisely along $\partial \Sigma_{m,\xi}$
and at a constant, right angle.
We wish to deform $\Sigma_{m,\xi}$
to a minimal surface while maintaining
these last conditions on the boundary.
To do so with minimum effort,
as in \cites{KapouleasWiygul2017,KapouleasZouCloseToBdy}
we will make these deformations graphically,
in the normal direction to $\Sigma_{m,\xi}$,
but with respect to an alternative
ambient metric, to be called the auxiliary metric, 
designed to preserve the above boundary conditions. The reason for that is easy to explain: if one even takes a flat equatorial disc in Euclidean $\B^3$ then \emph{any} normal graph will be ill-defined (in the sense that it does not correspond to a surface in $\B^3$) unless the defining function vanishes along the boundary of the disc in question, which is not the natural geometric boundary condition we wish to impose.

\paragraph{Definition of the auxiliary metric.}
In view of items \ref{deform_stays_on_Gamma}
and \ref{deform_Robin_cond_implies_orthogonality}
of Lemma \ref{basic_MC_bdy_lemma}
we wish to define our auxiliary metric $h$ on $\R^3$
so that $h|_{\Sp^2}=g_{\mathrm{euc}}|_{\Sp^2}$
and so that $\Sp^2$ is totally geodesic under $h$.
This way we ensure that every (normal) graphical
deformation of $\Sigma_{m,\xi}$,
defined with respect to $h$,
keeps its boundary on $\partial \B^3$
and meets $\partial \B^3$
orthogonally,
provided the defining function
$u_{h,g_{\mathrm{euc}}}$
satisfies the homogeneous Neumann condition,
again defined with respect to $h$.
As explained in the general setting
of Appendix \ref{app:graphs}
and as we will shortly clarify
in our specific application below,
this last boundary condition on $u_{h,g_{\mathrm{euc}}}$
is equivalent to the homogeneous Robin condition,
now with respect to $g_{\mathrm{euc}}$,
on a function $u$
which can be recovered from $u_{h,g_{\mathrm{euc}}}$
(and vice versa).

We can achieve the above two conditions on $h$ by a simple conformal change.
Specifically, we recall the notation \eqref{eqn:definition_cutoff} for cutoff functions and choose the conformal factor
\begin{equation}
\label{aux_conf_fact}
\Omega^4
\vcentcolon=
  \left(
    \cutoff{\frac{1}{3}}{\frac{2}{3}}
    \circ \dist_{\Sp^2}
  \right)
  + \left(
      \cutoff{\frac{2}{3}}{\frac{1}{3}}
      \circ \dist_{\Sp^2}
    \right)  \dist_{\{(0,0,0)\}}^{-2},
\end{equation}
where, according to definition
\eqref{eqn:definition_distancefunction},
$\dist_{\{(0,0,0)\}}(x,y,z)
 =
 r(x,y,z)
 \vcentcolon=
 \sqrt{x^2+y^2+z^2}
$
and $\dist_{\Sp^2}=\abs{1-r}$.
Thus $\Omega^4$ is identically $1$
on $\Sp^2_{\geq 2/3}$
(that is: near the origin, as well as very far away from it)
and identically $r^{-2}$
on $\Sp^2_{\leq 1/3}$
(that is: near the boundary unit sphere)
.
We in turn define the auxiliary metric
\begin{equation}
\label{auxmet}
\glsuseri{auxmet} %h
\vcentcolon=
\Omega^4 g_{\mathrm{euc}}.
\end{equation}%
Since $\Omega^4|_{\Sp^2}=1$,
clearly $h|_{\Sp^2}=g|_{\Sp^2}$.
We recall that if one considers a conformally flat Riemannian metric of the form
$h=\Omega^{4}g_{\mathrm{euc}}$ on $\B^3$
then the scalar-valued second fundamental form $A_{\Gamma}^h$
(respectively: $A_{\Gamma}^{g_{\mathrm{euc}}}$)
of a surface $\Gamma$ in metric $h$
(respectively: in Euclidean metric $g_{\mathrm{euc}}$)
obeys the equation
\begin{equation*}
A_{\Gamma}^h
=
\Omega
\left(
  \Omega A_{\Gamma}^{g_{\mathrm{euc}}}
  + 2\frac{\partial \Omega}{\partial \nu}g_{\mathrm{euc}}
\right),
\end{equation*}
where $\nu$ denotes
a $g_{\mathrm{euc}}$ unit normal
to $\Gamma$
(its orientation being chosen consistently
with the sign of $A_{\Gamma}^{g_{\mathrm{euc}}}$
according to our sign convention).
It follows at once from \eqref{aux_conf_fact} that
the  boundary of our ball is totally geodesic
with respect to $h$.

\begin{remark}
The conformality is not really essential
for our purposes here.
One very natural alternative,
more readily generalized to settings
other than $\B^3$,
to the definition we chose above
for the auxiliary metric $h$,
would be a smooth interpolation
(using cutoff functions again)
between the Euclidean metric $g_{\mathrm{euc}}$
and the cylindrical metric
$
 h^{\mathrm{cyl}}_{\Sp^2}
 \vcentcolon=
 d\sigma^2 + g^{\vphantom{|}}_{\Sp^2},
$
on a neighborhood of $\Sp^2$,
where $\sigma$ is directed distance from $\Sp^2$
toward (say) the origin,
$g_{\Sp^2}$ is the standard metric on $\Sp^2$,
and we are identifying a neighborhood of $\Sp^2$
with a cylinder $\Sp^2 \times \interval{-\epsilon,\epsilon}$
via the map
$
 \tubularexp_{
 (\R^3,g_{\mathrm{euc}}),
 (\Sp^2, \nabla_{\R^3} \sigma)
 }
$,
recalling definition \eqref{tubularcoordscodim1}.
Then, getting back to the setting above (and the specific auxiliary metric we defined), we remark that $h$ and $h^{\mathrm{cyl}}_{\Sp^2}$
are related,
on suitable neighborhoods of $\Sp^2$,
by a diffeomorphism.
In a more general setting,
with $\Sp^2$ replaced by a hypersurface
$\Gamma$ in a Riemannian manifold,
the analogously constructed
$h^{\mathrm{cyl}}_\Gamma$
would,
unlike our $h$,
in general entail
the consideration of
nonzero
tangential components
$u^{\top}_{h^{\mathrm{cyl}}_\Gamma,g}$
in an application of Lemma \ref{basic_MC_bdy_lemma}.
An alternative (but fundamentally similar) framework
for constructing graphs with controlled boundary angle
and mean curvature
over surfaces in general Riemannian manifolds with boundary
is presented in detail in
\cite[Section~2]{KapouleasLiDiscCCdesing}.
\end{remark}

Before proceeding we point out that
since the conformal factor $\Omega^4$
is manifestly $\Ogroup(3)$-invariant,
in particular every member of $\apr_m$,
for every integer $m \geq 1$,
is an isometry of the auxiliary metric $h$.

\paragraph{Formulation of the nonlinear problem.}
With the above notation and definitions in place
we can precisely formulate the nonlinear boundary value
problem we will solve in order to construct our family
of free boundary minimal surfaces.
Let $\Sigma_{m,\xi}$ be an initial surface as constructed in \eqref{initsurfdef}, 
let $\nu_{m,\xi}$ denote a unit normal to $\Sigma_{m,\xi}$ 
and let $\iota^{\vphantom{|}}_{\Sigma_{m,\xi}}\colon \Sigma_{m,\xi} \to \B^3 \subset \R^3$ be the inclusion map. 
Then 
\begin{equation}
\label{normal_in_aux_met}
\nu^h_{m,\xi}\vcentcolon=\Omega^{-2}\nu_{m,\xi} 
\end{equation}
is a unit normal to $\Sigma_{m,\xi}$ with respect to the conformal metric $h=\Omega^4 g_{\mathrm{euc}}$ on $\B^3$ defined in \eqref{aux_conf_fact}--\eqref{auxmet}.  
Given a function $u\colon \Sigma_{m,\xi} \to \R$ and using the notation of Appendix \ref{app:graphs}
(see \eqref{u_h} in particular), 
we observe that
\begin{equation*}
u_{h,g_{\mathrm{euc}}}
=
\frac{u}{g_{\mathrm{euc}}(\nu^h_{m,\xi},\nu_{m,\xi})}
=
\Omega^2u
\quad \mbox{and} \quad
u_{h,g_{\mathrm{euc}}}^\top = 0.
\end{equation*}
Let then the map 
$\gls{iota}%\iota_{m,\xi,u}
\colon \Sigma_{m,\xi} \to \R^3$ be given by
\begin{equation}
\label{deformed_init_surf_inc}
\iota_{m,\xi,u}(p)
\vcentcolon=
\exp_{\iota^{\vphantom{|}}_{\Sigma_{m,\xi}}(p)}^{(\R^3,h)} u_{h,g_{\mathrm{euc}}}(p)\nu^h_{m,\xi}(p).
\end{equation}
For $u \in C^2(\Sigma_{m,\xi})$ sufficiently small, 
the mean curvature
\begin{equation}
\label{deformed_MC_def}
\glsuseri{deformedMC} %H_{m,\xi,u}
\end{equation}
of $\iota_{m,\xi,u}$
(as measured with respect to $g_{\mathrm{euc}}$)
is well-defined
as in item \ref{deform_well_defined}
of Lemma \ref{basic_MC_bdy_lemma}.
We seek $u$ such that
$H_{m,\xi,u}=0$ on $\Sigma_{m,\xi}$.
In this event,
because $\Sigma_{m,\xi}$ was constructed
to meet $\partial \B^3$ orthogonally
and precisely along $\partial \Sigma_{m,\xi}$,
item \ref{deform_stays_on_Gamma}
of Lemma \ref{basic_MC_bdy_lemma}
ensures that
$
 \iota_{m,\xi,u}(\partial \Sigma_{m,\xi})
 \subset
 \partial \B^3
$,
and the maximum principle then
guarantees that
$\iota_{m,\xi,u}(\Sigma_{m,\xi}) \subset \B^3$
and
$\iota_{m,\xi,u}^{-1}(\partial \B^3)=\partial\Sigma_{m,\xi}$.
Item \ref{deform_Robin_cond_implies_orthogonality}
of Lemma \ref{basic_MC_bdy_lemma}
moreover implies that
$\iota_{m,\xi,u}(\Sigma_{m,\xi})$
will intersect $\partial \B^3$ orthogonally
provided we impose the corresponding
homogeneous Robin condition on $u$.

Of course, we will enforce the symmetries of the construction.
Since $h$ and $\Omega^4$ are $\apr_m$-invariant,
$\iota_{m,\xi,u}$ will commute with every element
of $\apr_m$ whenever $u$ is $\apr_m$-equivariant,
and in this case $H_{m,\xi,u}$ will also be
$\apr_m$-equivariant.
We therefore require 
\begin{align}
\label{system_to_prove_main_theorem}
u &\in C^2_{\apr_m}(\Sigma_{m,\xi}),
& \hspace{-1cm}
\left\{\begin{aligned}
H_{m,\xi,u}&=0
&& \text{ on } \Sigma_{m,\xi}
\\
B^{\mathrm{Robin}}_{\Sigma_{m,\xi}}u&=0
&& \text{ on } \partial \Sigma_{m,\xi}.
\end{aligned}\right.
\end{align}
As previously emphasized,
it will be necessary to solve for the parameter $\xi$
as well and to take $m$ large.
By taking $m$ large enough
we will see that we can indeed ensure solvability
and further ensure that the solution
$u$ is small enough to guarantee
not only that $\iota_{m,\xi,u}$
is an immersion
with $H_{m,\xi,u}$ well-defined
(and zero for the solution)
but also that $\iota_{m,\xi,u}$
is in fact an embedding
and that its image has no symmetries
outside $\apr_m$.
We will return to these points
in Subsection \ref{subs:ProofMainThm},
after we have obtained our solutions
and estimates.
These will be obtained
via an iteration scheme,
applying our estimates
(Proposition \ref{initsurfcomp})
of the initial mean curvature
and our results
(Proposition \ref{globalsol})
on the linearized problem.
We now turn to the details of this iteration scheme.

\subsection{First-order correction and quadratic terms}\label{subs:FirstOrderCorr}

For each initial surface $\Sigma_{m,\xi}$,
with $m$ large enough
(in terms of an upper bound on $\xi$
and a universal constant)
to ensure the existence
of the operator $P_{\Sigma_{m,\xi}}$
of Proposition \ref{globalsol},
we define the linear operator
\begin{equation}
\label{global_resolvent_with_hom_bdy_data}
\begin{aligned}
P_{m,\xi}
  \colon
  C^{0,\alpha}_{\apr_m}(\Sigma_{m,\xi})
  &\to 
  C^{2,\alpha}_{\apr_m}(\Sigma_{m,\xi})
    \oplus \R \\
E 
  &\mapsto
  P_{\Sigma_{m,\xi}}(E,0)
\end{aligned}
\end{equation}
and,
keeping in mind the issues related to the non-trivial approximate cokernel,
the function
$u_{m,\xi}^{(1)} \in C^\infty_{\apr_m}(\Sigma_{m,\xi})$
and real number $\mu_{m,\xi}^{(1)}$
by
\begin{equation}
\label{sol_to_first_order}
\left(u_{m,\xi}^{(1)},\mu_{m,\xi}^{(1)}\right)
\vcentcolon=
m^{-2}P_{m,\xi}\left(H_{\Sigma_{m,\xi}}-\xi\coker\right).
\end{equation}
Here,
although we have no Hilbertian structure in place,
heuristically
one can think of the second summand on the right-hand side
as the projection of the initial mean curvature
onto the approximate cokernel.
We subtract this term,
because,
in the course of the construction of the solution,
at each application of $P_{m,\xi}$
we incur error in the direction
of the approximate cokernel,
and we need to reserve this term now
so that we can later control such error
by varying $\xi$.
If the second term were absent from \eqref{sol_to_first_order},
the corresponding bound in Lemma \ref{first-order_ests} below
would depend on $\xi$,
and we would then require
more detailed information on that dependence
in order to solve our problem
in the direction of the approximate cokernel.

Recalling \eqref{deformed_init_surf_inc} and \eqref{deformed_MC_def}
defining $H_{m,\xi,u}$ we of course have
\begin{equation*}
H_{\Sigma_{m,\xi}}=H_{m,\xi,0},
\end{equation*}
and for $u \in C^2(\Sigma_{m,\xi})$
sufficiently small we further define the function
\begin{equation}
\label{nonlinear_terms_def}
\glsuserii{deformedMC} %Q_{m,\xi,u}
\vcentcolon=
H_{m,\xi,u} - H_{m,\xi,0} + J_{\Sigma_{m,\xi}}u,
\end{equation}
so that, for each $m$ and $\xi$,
$u \mapsto Q_{m,\xi,u}$
is a nonlinear, second-order partial differential operator in $u$.
We denote by
\begin{equation*}
\dom(Q_m)
\end{equation*}
the collection of all
$(\xi,u) \in \R \oplus C^2(\Sigma_{m,\xi})$
such that $H_{m,\xi,u}$ and so also $Q_{m,\xi,u}$
are defined.

By Proposition \ref{globalsol} and definition \eqref{sol_to_first_order}, we have
\begin{align*}
\left\{\begin{aligned}
J_{\Sigma_{m,\xi}}u_{m,\xi}^{(1)}&=H_{m,\xi,0}-\xi\coker+m^2\mu_{m,\xi}^{(1)}\coker
\\[.5ex]
B^{\mathrm{Robin}}_{\Sigma_{m,\xi}}u_{m,\xi}^{(1)}
&=0.
\end{aligned}\right.
\end{align*}
In particular, using \eqref{nonlinear_terms_def}, 
\begin{align*}
H_{m,\xi,u_{m,\xi}^{(1)}}
&=\left(\xi-m^2\mu_{m,\xi}^{(1)}\right)\coker+Q_{m,\xi,u_{m,\xi}^{(1)}}.
\end{align*}
In view of \eqref{system_to_prove_main_theorem} we seek
a higher-order correction
$u_{m,\xi}^{(2)} \in C^\infty_{\apr_m}(\Sigma_{m,\xi})$
such that
\begin{equation}
\label{desired_correction}
\left\{\begin{aligned}
H_{m,\xi,u_{m,\xi}^{(1)}+u_{m,\xi}^{(2)}}
&=0
\\[.5ex]
B^{\mathrm{Robin}}_{\Sigma_{m,\xi}}u_{m,\xi}^{(2)}
&=0.
\end{aligned}\right.
\end{equation}
Appealing to \eqref{nonlinear_terms_def} with $u_{m,\xi}^{(1)}+u_{m,\xi}^{(2)}$ in lieu of $u$ we obtain 
\begin{equation}\label{eqn:Hu1+u2}
H_{m,\xi,u_{m,\xi}^{(1)}+u_{m,\xi}^{(2)}}
=
\left(\xi-m^2\mu_{m,\xi}^{(1)}\right)\coker
  -J_{\Sigma_{m,\xi}}u_{m,\xi}^{(2)}
  + Q_{m,\xi,u_{m,\xi}^{(1)}+u_{m,\xi}^{(2)}}. 
\end{equation}
Thus, we can secure
\eqref{desired_correction}
if we require the pair
\( 
\bigl(v_{m,\xi},\mu_{m,\xi}^{(2)}\bigr)
\vcentcolon=
m^{-2}P_{m,\xi}
  Q_{m,\xi,u_{m,\xi}^{(1)}+u_{m,\xi}^{(2)}}
\) 
to satisfy
\begin{equation}\label{eqn:conditions2}
v_{m,\xi}=u_{m,\xi}^{(2)}
\quad \mbox{and} \quad
\mu_{m,\xi}^{(1)}+\mu_{m,\xi}^{(2)} = m^{-2}\xi.
\end{equation}
Indeed, if \eqref{eqn:conditions2} holds, then again by Proposition \ref{globalsol}
\begin{align*}
J_{\Sigma_{m,\xi}}u_{m,\xi}^{(2)}&=   Q_{m,\xi,u_{m,\xi}^{(1)}+u_{m,\xi}^{(2)}}
+m^2\mu_{m,\xi}^{(2)}\coker
\shortintertext{which combined with \eqref{eqn:Hu1+u2} yields}
H_{m,\xi,u_{m,\xi}^{(1)}+u_{m,\xi}^{(2)}}
&=
\left(\xi-m^2\mu_{m,\xi}^{(1)}-m^2\mu_{m,\xi}^{(2)}\right)\coker
=0.
\end{align*}
In order to achieve the two conditions in \eqref{eqn:conditions2}
we must solve for
$\xi$ and $u_{m,\xi}^{(2)}$
simultaneously.
Since $\xi$ is also an unknown,
we will use the maps
$
 \varsigma_{m,\xi}
 \colon
 \Sigma_{m,0}
 \to
 \Sigma_{m,\xi}
$
defined in \eqref{Sigma_0_to_Sigma_xi}
to identify candidates for $u_{m,\xi}^{(2)}$
with functions on $\Sigma_{m,0}$.
Specifically,
we fix $\alpha \in \interval{0,1}$
and
for each sufficiently large $m$
we pursue a fixed point
(of small norm)
to the nonlinear map
\begin{align}
\label{map_to_prove_main_theorem}
F_{m,\alpha}
  \colon
  \dom(F_{m,\alpha})
  &\to
  C^{2,\alpha}_{\apr_m}(\Sigma_{m,0})
    \oplus
    \R
\\\notag
(v,~ \xi)
  &\mapsto
  \left(
    \varsigma_{m,\xi}^*u_{m,\xi,v},~
    m^2\mu_{m,\xi}^{(1)}
    +m^2\mu_{m,\xi,v}
  \right),
\shortintertext{where}
\notag
\left( u_{m,\xi,v},~ \mu_{m,\xi,v} \right)
  &\vcentcolon= 
  m^{-2}P_{m,\xi}Q_{
    m,\,\xi,\,u_{m,\xi}^{(1)}
      +\varsigma_{m,\xi}^{-1*}v
  }
\\\notag
\dom(F_{m,\alpha})
&\vcentcolon=
\left\{ 
  (v,\xi)
  \in 
  C^{2,\alpha}_{\apr_m}(\Sigma_{m,0})
    \oplus 
    \R
  \st
  \left(
    \xi,~
    u_{m,\xi}^{(1)}
      +\varsigma_{m,\xi}^{-1*}v
  \right)
  \in
  \dom Q_m
\right\}.
\end{align}
In the next subsection we will identify
such a fixed point by using
Schauder's fixed point theorem
(stated for example as Theorem 11.1 in \cite{GilTru2001}),
for whose application
we will require
the following estimates.

\begin{lemma}
[Estimates for the first-order correction]
\label{first-order_ests}
Let $c>0$ and $\alpha,\beta \in \interval{0,1}$.
There exist $C=C(\alpha,\beta)>0$
(independent of $c$)
and $m_0=m_0(c)>0$
such that for any $\xi \in \IntervaL{-c,c}$
and any integer $m>m_0$
the function
$
 u_{m,\xi}^{(1)}
 \in C^\infty_{\apr_m}(\Sigma_{m,\xi})
$
and real number $\mu_{m,\xi}^{(1)}$
are well-defined
by \eqref{sol_to_first_order}
and satisfy the estimate
\begin{equation*}
\nm[\Big]{u_{m,\xi}^{(1)}}_{2,\alpha,\beta} 
  + \abs[\Big]{\mu_{m,\xi}^{(1)}}
\leq 
Cm^{-2}.
\end{equation*}
Furthermore, for each $m>m_0$ the map
\begin{align*}
\IntervaL{-c,c}
  &\to
  C^{2,\alpha}(\Sigma_{m,0}) \\
\xi
  &\mapsto
  \varsigma_{m,\xi}^*u_{m,\xi}^{(1)}
\end{align*}
is continuous.
\end{lemma}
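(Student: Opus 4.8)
The plan is to combine the uniform linear estimate from Proposition~\ref{globalsol} with the mean curvature bounds of Proposition~\ref{initsurfcomp}, after checking that the relevant right-hand side is genuinely controlled in the weighted data norm. Recall that by definition \eqref{sol_to_first_order} we have $\bigl(u_{m,\xi}^{(1)},\mu_{m,\xi}^{(1)}\bigr)=m^{-2}P_{m,\xi}\bigl(H_{\Sigma_{m,\xi}}-\xi\coker\bigr)=m^{-2}P_{\Sigma_{m,\xi}}\bigl(H_{\Sigma_{m,\xi}}-\xi\coker,~0\bigr)$, so that item (i) of Proposition~\ref{globalsol} gives, for $0<\beta<\gamma<1$,
\[
\nm[\Big]{u_{m,\xi}^{(1)}}_{2,\alpha,\beta}+\abs[\Big]{\mu_{m,\xi}^{(1)}}
\leq Cm^{-2}\nm[\big]{\bigl(H_{\Sigma_{m,\xi}}-\xi\coker,~0\bigr)}_{\alpha,\gamma}
=Cm^{-2}\nm[\big]{H_{\Sigma_{m,\xi}}-\xi\coker}_{0,\alpha,\gamma},
\]
with $C$ independent of $c$, $m$, $\xi$. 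Hence everything reduces to the claim that $\nm[\big]{H_{\Sigma_{m,\xi}}-\xi\coker}_{0,\alpha,\gamma}\leq C$ uniformly, for $m$ large in terms of $c$ and $|\xi|\leq c$; note that $H_{\Sigma_{m,\xi}}-\xi\coker=H_{\Sigma_{m,\xi}}-H_{m,\xi,0}$ adjusted by the dislocation term, but it is cleanest to estimate $H_{\Sigma_{m,\xi}}-\xi\coker$ directly via the regionwise comparison.

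First I would unwind the definition \eqref{globalnorms} of $\nm{\cdot}_{0,\alpha,\gamma}$ on $\Sigma_{m,\xi}$, which splits the surface into the tower region $\towr_{m,\xi}$ and the portions of $\discr_{m,\xi}$, $\catr_{m,\xi}$ lying outside $\towr_{m,\xi}^1$. On the tower region, item \ref{HestTow} of Proposition~\ref{initsurfcomp} gives precisely $\nm[\big]{\varpi_{\towr_{m,\xi}}^{-1*}(H_{\Sigma_{m,\xi}}-\xi\coker)}_{0,\alpha,\gamma}\leq C/(1-\gamma)$, which is the first summand. For the disc and catenoidal contributions one uses that $\coker$ is supported inside $\towr_{m,\xi}$, so off $\towr_{m,\xi}^1$ the term $\xi\coker$ vanishes and item \ref{HestOffTow} of Proposition~\ref{initsurfcomp} bounds the relevant pulled-back mean curvatures by $Cm^2e^{-m^{1/2}}$; multiplying by the weight prefactor $e^{\gamma m^{1/2}}$ appearing in \eqref{globalnorms} still yields $Cm^2e^{(\gamma-1)m^{1/2}}\to 0$ since $\gamma<1$, hence is $\leq C$ for $m$ large. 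Summing the three contributions gives $\nm[\big]{H_{\Sigma_{m,\xi}}-\xi\coker}_{0,\alpha,\gamma}\leq C$ with $C$ independent of $c$, $m$, $\xi$ (only $m_0$ depends on $c$, through Proposition~\ref{globalsol} and through the requirement $m$ large enough that the exponential factors are absorbed). Combined with the displayed inequality above this proves the asserted bound $\nm[\big]{u_{m,\xi}^{(1)}}_{2,\alpha,\beta}+\abs[\big]{\mu_{m,\xi}^{(1)}}\leq Cm^{-2}$.

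Finally, for the continuity of $\xi\mapsto\varsigma_{m,\xi}^*u_{m,\xi}^{(1)}$ into $C^{2,\alpha}(\Sigma_{m,0})$ I would invoke item (iii) of Proposition~\ref{globalsol} applied with $(E_0,f_0)$ depending on $\xi$. Concretely, write $u_{m,\xi}^{(1)}=m^{-2}u_\xi$ where $(u_\xi,m^2\mu_{m,\xi}^{(1)})=P_{\Sigma_{m,\xi}}\bigl(H_{\Sigma_{m,\xi}}-\xi\coker,~0\bigr)$; using the third bullet of \eqref{Sigma_0_to_Sigma_xi}, namely $\varsigma_{m,\xi}^*\coker=\cokerval{0}$, together with the smoothness in $\xi$ of $\iota^{\vphantom{|}}_{\Sigma_{m,\xi}}\circ\varsigma_{m,\xi}$ (hence of the initial data $\varsigma_{m,\xi}^{-1*}H_{\Sigma_{m,\xi}}=H_{\Sigma_{m,0}}\circ\cdots$ as a $\xi$-family of $C^{0,\alpha}$ functions on $\Sigma_{m,0}$, which follows from item \ref{initsurfuniformgeometry} of Proposition~\ref{initsurfbasicprops} and the construction of $\varsigma_{m,\xi}$), one has $\varsigma_{m,\xi}^{-1*}\bigl(H_{\Sigma_{m,\xi}}-\xi\coker\bigr)=H_{m,\xi}^{\flat}-\xi\cokerval{0}$ for a family $H_{m,\xi}^\flat\in C^{0,\alpha}_{\apr_m}(\Sigma_{m,0})$ continuous in $\xi$. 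Applying the map of item (iii) of Proposition~\ref{globalsol} with $E_0=H_{m,\xi}^\flat-\xi\cokerval{0}$ and $f_0=0$ then shows $\xi\mapsto\bigl(\varsigma_{m,\xi}^*u_\xi,\,m^2\mu_{m,\xi}^{(1)}\bigr)$ is continuous on $\IntervaL{-c,c}$, and scaling by $m^{-2}$ gives the stated continuity. The only mildly delicate point — and the one I would treat most carefully — is verifying that $\xi\mapsto H_{m,\xi}^\flat$ is continuous into $C^{0,\alpha}(\Sigma_{m,0})$, i.e.\ that the initial mean curvature transported back to the fixed surface $\Sigma_{m,0}$ depends continuously (indeed smoothly) on $\xi$; this is where one uses that $\iota_{\Sigma_{m,\xi}}\circ\varsigma_{m,\xi}$ is smooth in $\xi$ (first bullet of \eqref{Sigma_0_to_Sigma_xi}) so that all the metric and curvature quantities defining $H_{\Sigma_{m,\xi}}$ vary smoothly, together with item \ref{towbent_action_of_phi_hat} of Lemma~\ref{lemmatowbent} to handle the $\xi$-dependence localized near the dislocation site. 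This is routine given the machinery already set up and needs no new ideas.
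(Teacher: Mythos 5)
Your proof is correct and follows essentially the same route as the paper's: reduce via Proposition~\ref{globalsol}(i) with some $\gamma\in\interval{\beta,1}$ to the bound $\nm{H_{\Sigma_{m,\xi}}-\xi\coker}_{0,\alpha,\gamma}\leq C$, which you then establish regionwise from items~\ref{HestTow} and~\ref{HestOffTow} of Proposition~\ref{initsurfcomp}, and obtain the continuity from Proposition~\ref{globalsol}(iii) together with the properties \eqref{Sigma_0_to_Sigma_xi} of $\varsigma_{m,\xi}$. You add welcome detail — explicitly unwinding the weighted norm \eqref{globalnorms}, checking that the weight prefactor $e^{\gamma m^{1/2}}$ is dominated by the exponential decay from \ref{HestOffTow}, and verifying that the transported data $\varsigma_{m,\xi}^{-1*}(H_{\Sigma_{m,\xi}}-\xi\coker)$ depends continuously on $\xi$ — whereas the paper is terse on all three points; but the structure is the same. (One minor slip: your aside that $H_{\Sigma_{m,\xi}}-\xi\coker=H_{\Sigma_{m,\xi}}-H_{m,\xi,0}$ ``adjusted by the dislocation term'' is confusing since $H_{\Sigma_{m,\xi}}=H_{m,\xi,0}$ identically; you rightly abandon this remark immediately and it does not affect the argument.)
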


\begin{proof}
That $u_{m,\xi}^{(1)}$
and $\mu_{m,\xi}^{(1)}$
are well-defined
and bounded as stated
are corollaries of 
definition \eqref{sol_to_first_order}
(of $u_{m,\xi}^{(1)}$ and $\mu_{m,\xi}^{(1)}$),
definition \eqref{global_resolvent_with_hom_bdy_data}
(of $P_{m,\xi}$),
Proposition \ref{globalsol}
(providing existence and estimates for $P_{m,\xi}$),
definitions \eqref{globalnorms} and \eqref{global_data_norm}
(of the norms $\nm{\cdot}_{k,\alpha,\beta}$
and $\nm{(\cdot, \cdot)}_{\alpha,\beta}$),
and items \ref{HestTow} and \ref{HestOffTow}
of Proposition \ref{initsurfcomp}
(estimating the mean curvature
of the initial surfaces).
In particular,
in applying Proposition \ref{globalsol},
we may choose
any $\gamma \in \interval{\beta,1}$
(for example $\gamma=(1+\beta)/2$),
since items \ref{HestTow}
and \ref{HestOffTow} of Proposition \ref{initsurfcomp}
ensure the estimate
\begin{equation*}
\nm{H_{\Sigma_{m,\xi}}-\xi \coker}_{0,\alpha,\gamma}
\leq
C(\gamma).
\end{equation*}
The continuity claim
follows from the continuity statement
in Proposition \ref{globalsol}
for $P_{\Sigma_{m,\xi}}$
and
the properties \eqref{Sigma_0_to_Sigma_xi}
of $\varsigma_{m,\xi}$.
\end{proof}

\begin{lemma}
[Estimate of the nonlinear terms]
\label{quad_ests}
Let $C,c>0$,
$\alpha, \beta, \gamma \in \interval{0,1}$,
with
$\gamma < 2\beta$.
There exists
$m_0=m_0(C,c)$
such that for any
integer $m>m_0$,
any real $\xi \in \IntervaL{-c,c}$,
and any function
$u \in C^{2,\alpha}_{\apr_m}(\Sigma_{m,\xi})$
satisfying
$
 \nm{u}_{2,\alpha,\beta}
 \leq
 Cm^{-2}
$
the corresponding function
$H_{m,\xi,u} \in C^{0,\alpha}_{\apr_m}(\Sigma_{m,\xi})$
is well-defined by \eqref{deformed_init_surf_inc}, \eqref{deformed_MC_def}, the function
$Q_{m,\xi,u} \in C^{0,\alpha}_{\apr_m}(\Sigma_{m,\xi})$
is well-defined by \eqref{nonlinear_terms_def}
and moreover satisfies the estimate
\begin{equation*}
\nm{Q_{m,\xi,u}}_{0,\alpha,\gamma}
\leq
m^{-3/4}.
\end{equation*}
In addition, for each $m$,
the map
\begin{equation*}
\begin{aligned}
\{
  (
    \varsigma_{m,\xi}^*u,
    \xi
  )
  \st 
  (\xi,u) \in \dom(Q_m)
\}
&\to 
C^{0,\alpha}(\Sigma_{m,0})
\\
(v,\xi)
&\mapsto
\varsigma_{m,\xi}^*Q_{m,\xi,\varsigma_{m,\xi}^{-1*}v}
\end{aligned}
\end{equation*}
is continuous.
\end{lemma}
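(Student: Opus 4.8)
\textbf{Proof plan for Lemma~\ref{quad_ests}.} The plan is to derive the estimate on $Q_{m,\xi,u}$ from a Taylor expansion of the mean curvature operator $u\mapsto H_{m,\xi,u}$ about $u=0$, exploiting that $Q_{m,\xi,u}$ is by definition the quadratic-and-higher remainder. First I would record the well-definedness of $H_{m,\xi,u}$ and hence of $Q_{m,\xi,u}$: for $\nm{u}_{2,\alpha,\beta}\le Cm^{-2}$ the unweighted $C^{2,\alpha}$ norm of $u$ is also small provided $m$ is large (here one uses the equivalence of norms together with item~\ref{initsurfuniformgeometry} of Proposition~\ref{initsurfbasicprops}, which gives uniform control on the geometry of $m\Sigma_{m,\xi}$), so the normal graph map $\iota_{m,\xi,u}$ of \eqref{deformed_init_surf_inc} is an immersion and item~\ref{deform_well_defined} of Lemma~\ref{basic_MC_bdy_lemma} applies. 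Then I would invoke the quadratic estimate for the mean curvature of normal graphs (the relevant statement in Appendix~\ref{app:graphs}, applied in the blown-up picture so that the background geometry is uniformly bounded): schematically, at a point $p$,
\[
\abs{Q_{m,\xi,u}(p)}
\le
C\bigl(\abs{A_{\Sigma_{m,\xi}}}(p)^2+\abs{\mathrm{(ambient\ curvature)}}\bigr)\abs{\nabla^{\le 2}u(p)}^2+\text{(higher order)},
\]
with $C$ depending only on the uniform geometry, and similarly for the H\"older seminorm of $Q_{m,\xi,u}$.

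The crucial point is then the bookkeeping of the weights, which is where the hypothesis $\gamma<2\beta$ enters. On the tower region $\towr_{m,\xi}$ one works, after rescaling by $m$, with the model $\towquot^+_{(m)}$, whose second fundamental form and ambient curvature are bounded; there the weighted norm $\nm{u}_{2,\alpha,\beta}$ already carries a factor $e^{-\beta\dist_{\axis{\theta}}}$, so a product of two such gives a gain of $e^{-2\beta\dist_{\axis{\theta}}}$, which dominates the single weight $e^{-\gamma\dist_{\axis{\theta}}}$ exactly because $\gamma<2\beta$. Keeping track of the scale factors: the Jacobi-operator-type quadratic term scales like $m^{-2}$ relative to the blown-up surface (cf.\ the scaling $m^*J_{m\Sigma}=m^{-2}J_\Sigma m^*$ recalled in Section~\ref{sec:Linear}), and with $\nm{u}_{2,\alpha,\beta}\le Cm^{-2}$ one obtains a bound of order $m^{2}\cdot(m^{-2})^2=m^{-2}$ for the contribution of $Q$ on $\towr_{m,\xi}$, which is $\le m^{-3/4}$ for $m$ large. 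On the disc and catenoidal regions $\discr_{m,\xi}$, $\catr_{m,\xi}$ the relevant norm $\nm{u}_{2,\alpha,\beta}$ (via the definition \eqref{globalnorms}) already contains the large prefactor $e^{\beta m^{1/2}}$, so that the pointwise size of $u$ there is at most $Cm^{-2}e^{-\beta m^{1/2}}$, its square is $Cm^{-4}e^{-2\beta m^{1/2}}$, and even after paying back a factor $e^{\gamma m^{1/2}}$ in the norm of $Q$ and absorbing whatever polynomial-in-$m$ losses arise from the pull-back comparisons (items~\ref{TowrToNotTowr}--\ref{NotTowrToTowr} of Proposition~\ref{initsurfcomp}), the exponential $e^{(\gamma-2\beta)m^{1/2}}$ wins, again thanks to $\gamma<2\beta$; hence the contribution from those regions is negligible. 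Combining the three regions (and handling the overlaps with cutoff functions exactly as in the proof of Proposition~\ref{globalsol}) yields $\nm{Q_{m,\xi,u}}_{0,\alpha,\gamma}\le m^{-3/4}$ for $m>m_0(C,c)$.

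Finally, the continuity of $(v,\xi)\mapsto \varsigma_{m,\xi}^*Q_{m,\xi,\varsigma_{m,\xi}^{-1*}v}$ follows from smoothness of the mean curvature of normal graphs in the defining function and smoothness of $(\xi,p)\mapsto\iota_{\Sigma_{m,\xi}}\circ\varsigma_{m,\xi}(p)$ together with $\iota^{\vphantom{|}}_{\Sigma_{m,\xi}}\circ\varsigma_{m,\xi}$ being smooth in $\xi$ (the properties \eqref{Sigma_0_to_Sigma_xi}, \eqref{weighted_ests_for_Sigma_0_to_Sigma_xi} of $\varsigma_{m,\xi}$), composed with the continuity of $u^{(1)}_{m,\xi}$ in $\xi$ from Lemma~\ref{first-order_ests}; here $m$ is fixed, so $\Sigma_{m,0}$ is compact and all the weighted norms are equivalent to ordinary H\"older norms, making this step routine. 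The main obstacle is the weighted estimate, and specifically organizing the interplay between the three geometric regions, their differing scales, and the exponential weights so that the single condition $\gamma<2\beta$ suffices uniformly in $m$; once that is set up, everything else is a direct application of the quadratic mean-curvature estimates from Appendix~\ref{app:graphs}.
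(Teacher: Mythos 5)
Your approach is essentially the same as the paper's: apply the quadratic mean-curvature estimate of Lemma~\ref{basic_MC_bdy_lemma} (items~\ref{deform_well_defined} and~\ref{deform_quad_est_of_H}) with the pair $(m^2g_{\mathrm{euc}},m^2h)$ so that the background geometry is uniformly bounded (via item~\ref{initsurfuniformgeometry} of Proposition~\ref{initsurfbasicprops}), and then use $\gamma<2\beta$ to beat the weight both on the tower region (where $f(p)=1$) and on the disc/catenoidal regions (where the exponential $e^{(\gamma-2\beta)m^{1/2}}$ wins). This is the right skeleton, and the key insight about $\gamma<2\beta$ is correctly deployed. Two small imprecisions are worth noting. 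First, the tower-region power count $m^{2}\cdot(m^{-2})^2=m^{-2}$ is off by one power of $m$: after rescaling one has $\|mu\|_{C^{2,\alpha}(m^2g)}\lesssim m^{-1}e^{-\beta s(p)}$ (not $m^{-2}$), and the quadratic estimate then produces $\|m^{-1}Q\|_{0,\alpha}\lesssim m^{-2}e^{-2\beta s(p)}$, i.e.\ $\|Q\|_{0,\alpha}\lesssim m^{-1}e^{-2\beta s(p)}$; the dominant quadratic terms ($u^2A^3$, $uA\,D^2u$, $A|Du|^2$) each carry an odd power of $A\sim m$, giving $m^{-1}$ rather than $m^{-2}$. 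This matches the remark in the paper that the sharp bound is $C(\alpha,\beta,\gamma)m^{-1}$; since $m^{-1}\le m^{-3/4}$ for $m$ large, your conclusion is unaffected, but the intermediate claim $m^{-2}$ is not obtainable by this route. Second, there is no need to "handle the overlaps with cutoff functions" as in Proposition~\ref{globalsol}: since $H_{m,\xi,u}$ is a local operator one can simply assume $u$ is supported in a unit geodesic ball of the blown-up surface and take a supremum over centers $p$, which is what the paper does; the cutoff machinery of Proposition~\ref{globalsol} is for solving an equation, not for estimating a pointwise quantity. Finally, the continuity statement requires no invocation of Lemma~\ref{first-order_ests} (the function $u^{(1)}_{m,\xi}$ does not appear in the map in question); at fixed $m$ the surface is compact and the map is continuous directly from the smooth dependence of mean curvature of normal graphs on the defining function together with the smoothness in $\xi$ of $\iota_{\Sigma_{m,\xi}}\circ\varsigma_{m,\xi}$.
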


\begin{remark}
The proof of Lemma \ref{quad_ests}
will show that
in the stated estimate for $Q_{m,\xi,u}$
we could in fact
replace
$m^{-3/4}$
by $C(\alpha,\beta,\gamma)m^{-1}$,
but this improvement
is irrelevant to the proof of the main theorem.
\end{remark}

\begin{proof}
The continuity
(assuming existence of $Q_{m,\xi,u}$)
is clear from the definitions.
For the existence of $H_{m,\xi,u}$
(whence follows the existence of $Q_{m,\xi,u}$)
and for the estimate
we will appeal to items
\ref{deform_well_defined}
and
\ref{deform_quad_est_of_H}
respectively
of Lemma \ref{basic_MC_bdy_lemma}.
The role of the pair $(g,h)$
of metrics in Lemma \ref{basic_MC_bdy_lemma}
will be played by
$(m^2g_{\mathrm{euc}},m^2h)$,
this latter $h$ referring
of course to the auxiliary metric \eqref{auxmet}.
The role of $\phi$ in Lemma \ref{basic_MC_bdy_lemma}
will be played
by the inclusion map 
$\iota^{\vphantom{|}}_{\Sigma_{m,\xi}}$
of $\Sigma_{m,\xi}$ in $\B^3 \subset \R^3$.

We want the role of $\phi_u$
in Lemma \ref{basic_MC_bdy_lemma}
to be played by the deformed inclusion
$\iota_{m,\xi,u}$,
defined by \eqref{deformed_init_surf_inc}.
In the notation of Appendix \ref{app:graphs}
(specifically equation \eqref{deformed_phi})
\begin{equation*}
\iota_{m,\xi,u}
=
\iota^{\vphantom{|}}_{\Sigma_{m,\xi}}
  [u_{h,g_{\mathrm{euc}}},h,\nu_{m,\xi}^h],
\end{equation*}
where $\nu^h_{m,\xi}$ is the $h$ unit normal
to $\Sigma_{m,\xi}$,
as defined in \eqref{normal_in_aux_met}.
On the other hand we have the scaling identity
(as follows for example from
Remarks
\ref{scaling_of_MC_and_deformed_map}
and \ref{scaling_of_u_h})
\begin{equation*}
\iota^{\vphantom{|}}_{\Sigma_{m,\xi}}
  [u_{h,g_{\mathrm{euc}}},h,\nu_{m,\xi}^h]
=
\iota^{\vphantom{|}}_{\Sigma_{m,\xi}}
  [mu_{m^2h,m^2g_{\mathrm{euc}}},m^2h,\nu^{m^2h}_{m,\xi}],
\end{equation*}
$\nu^{m^2h}_{m,\xi}=m^{-1}\nu^h_{m,\xi}$
being the $m^2h$ unit normal to $\Sigma_{m,\xi}$
which is parallel to $\nu^h_{m,\xi}$.
Of course
$v_{m^2h,m^2g_{\mathrm{euc}}}$ is linear in $v$,
so
$
 mu_{m^2h,m^2g_{\mathrm{euc}}}
 =
 (mu)_{m^2h,m^2g_{\mathrm{euc}}}
$.
Accordingly,
the role of $u$ in Lemma~\ref{basic_MC_bdy_lemma}
will be played by $mu$,
with the latter $u$ the one in the statement
of the present lemma.
By the scaling law
for mean curvature
(as in Remark \ref{scaling_of_MC_and_deformed_map}),
the role of $H_u$ in Lemma \ref{basic_MC_bdy_lemma}
will be played by $m^{-1}H_{m,\xi,u}$,
as defined by
\eqref{deformed_init_surf_inc}--\eqref{deformed_MC_def},
when it exists.
We summarize the correspondence in Table \ref{table:correspondence}.

\begin{table}[htbp]
\centering\renewcommand{\arraystretch}{1.2} 
\begin{tabular}{>{\centering\arraybackslash}p{0.2\textwidth}|>{\centering\arraybackslash}p{0.2\textwidth}}
Lemma \ref{basic_MC_bdy_lemma} & Lemma \ref{quad_ests} \\
\hline
$g$ & $m^2g_{\mathrm{euc}}$ \\
\hline
$h$ & $m^2h$ \\
\hline
$\phi$ & $\iota^{\vphantom{|}}_{\Sigma_{m,\xi}}$ \\
\hline
$u$ & $mu$ \\
\hline
$\phi_u$ & $\iota_{m,\xi,u}$ \\
\hline
$H_u-H_0+J_\Sigma u$ & $m^{-1}Q_{m,\xi,u}$ \\
\hline
\end{tabular}
\caption{Application of
Lemma \ref{basic_MC_bdy_lemma}
to
Lemma \ref{quad_ests}.}
\label{table:correspondence}
\end{table}

Referring to 
item \ref{initsurfuniformgeometry}
of Proposition \ref{initsurfbasicprops}
(which in particular asserts the $C^2$ boundedness,
independently of $m$, of the second fundamental
form of $m\Sigma_{m,\xi}$)
and
the definition 
\eqref{aux_conf_fact}
of the conformal factor defining $h$,
it is clear that the assumptions
\eqref{assumptions_for_basic_MC_bdy_lemma}
of Lemma \ref{basic_MC_bdy_lemma}
are satisfied
with
$
 (g,h,\phi)
 =
 (
  m^2g_{\mathrm{euc}},
  m^2h,
  \iota^{\vphantom{|}}_{\Sigma_{m,\xi}}
 )
$,
possibly after further scaling $m^2g_{\mathrm{euc}}$
and $m^2h$ by a factor of the square of
$1$ plus the norm of second fundamental form
of $\Sigma_{m,\xi}$
with respect to $m^2g_{\mathrm{euc}}$,
which would introduce to the below estimates
a multiplicative constant
independent of $c$, $m$, and $\xi$
that we can safely suppress.
Let $\epsilon$ and $C_1$
be respectively the constants $\epsilon$ and $C$
provided by Lemma \ref{basic_MC_bdy_lemma}.

Next, in order to express the estimate
of item \ref{deform_quad_est_of_H}
of Lemma \ref{basic_MC_bdy_lemma}
in terms of the weighted norms
on the initial surfaces,
we want to compare the
$
  C^{\ell,\alpha}
  (
   \Sigma_{m,\xi},
   m^2\iota_{\Sigma_{m,\xi}}^*g_{\mathrm{euc}}
 )
$
norm,
written $\nm{\cdot}_{\ell,\alpha}$
for the remainder of this proof,
to the
$\nm{\cdot}_{\ell,\alpha,\delta}$
norm defined by \eqref{globalnorms}.
Because of the nonuniform weight function
appearing in this last definition
we do not really want a global comparison.
Rather, in order to complete the proof,
it suffices to estimate $Q_{m,\xi,u}$
on a neighborhood of each point of $\Sigma_{m,\xi}$.
Therefore,
because $H_{m,\xi,u}$
is a local operator,
we may assume
that $u$ has support contained
in an open disc $B_p$ in $\Sigma_{m,\xi}$
with $m^2g_{\mathrm{euc}}$ radius $1$
and center some $p \in \Sigma_{m,\xi}$.
Then $Q_{m,\xi,u}$,
whenever it exists,
will share this property.

Now suppose that $v$
is a function on $\Sigma_{m,\xi}$
with support contained in $B_p$.
Recall that the definition \eqref{globalnorms}
of $\nm{\cdot}_{\ell,\alpha,\delta}$
employs the metrics on the various
model surfaces used to construct
$\Sigma_{m,\xi}$,
and recall further the comparisons
that items \ref{MetCompTow}
and \ref{MetCompDiscAndCat}
of Proposition \ref{initsurfcomp}
make between these metrics
and the induced metric
$
 g^{\vphantom{|}}_{\Sigma_{m,\xi}}
 =\iota_{\Sigma_{m,\xi}}^*g_{\mathrm{euc}}
$.
Using these comparisons
and definition \eqref{globalnorms},
we obtain,
whenever $\ell \leq 2$,
$m > m_1$,
and $\abs{\xi} \leq c$,
\begin{equation*}
\frac{1}{10}
  e^{\delta s(p)}
  \nm{v}_{\ell,\alpha}
\leq
\nm{v}_{\ell,\alpha,\delta}
\leq
10
  (f(p))^{\ell+\alpha}
  e^{\delta s(p)}
  \nm{v}_{\ell,\alpha},
\end{equation*}
where
\begin{align*}
f(p)
  &=
  \begin{cases}
    1 
      & \mbox{if }
      p \in \towr_{m,\xi}^1
    \\
    m 
      & \mbox{if }
      p \in \Sigma_{m,\xi} \setminus \towr_{m,\xi}^1
  \end{cases}
&
s(p)
  &=
  \begin{cases}
    \varpi_{\towr_{m,\xi}}^{-1*}\dist_{\axis{\theta}}
      & \mbox{if }
      p \in \towr_{m,\xi}
    \\ 
    m^{1/2}
      & \mbox{if }
      p \in \Sigma_{m,\xi} \setminus \towr_{m,\xi}.
  \end{cases}
\end{align*}
(The function $f$ is needed
due to the difference in scale
between the region $\towr_{m,\xi}$
and the regions $\catr_{m,\xi}$
and $\discr_{m,\xi}$.)

For $v=mu$ we then have
\begin{equation*}
\nm{mu}_{2,\alpha}
\leq
10Cm^{-1}e^{-\beta s(p)}
\end{equation*}
(with $C$ as in the statement
of the present lemma).
In particular
$\lim_{m \to \infty} \nm{mu}_{2,\alpha} = 0$,
so there exists $m_2 \geq m_1$
such that whenever $m > m_2$
and $\abs{\xi} \leq c$,
we have
$\nm{mu}_{2,\alpha} < \epsilon$.
Item \ref{deform_well_defined}
of Lemma \ref{basic_MC_bdy_lemma}
therefore ensures
$H_{m,\xi,u}$ and so too $Q_{m,\xi,u}$
are well-defined under these same assumptions.

Moreover,
item \ref{deform_quad_est_of_H}
of Lemma \ref{basic_MC_bdy_lemma}
(bearing in mind the conformality
of $h$ to $g_{\mathrm{euc}}$
and Remark \ref{conformal_special_case})
implies the bound
\begin{equation*}
\nm{m^{-1}Q_{m,\xi,u}}_{0,\alpha} 
\leq
100\,C^2C_1m^{-2}e^{-2\beta s(p)},
\end{equation*}
whence
\begin{equation*}
\nm{Q_{m,\xi,u}}_{0,\alpha,\gamma}
\leq
\sup_{p \in \Sigma_{m,\xi}}
1000\,C^2C_1m^{-1}(f(p))^\alpha e^{(\gamma-2\beta)s(p)}.
\end{equation*}
Since we assume $\gamma<2\beta$,
there exists $m_0 \geq m_2$,
large enough in terms of $C$ and $C_1$
(itself independent of $m$),
such that the estimate
asserted in the statement of the lemma
holds whenever $m>m_0$ and $\abs{\xi} \leq c$.
\end{proof}

\subsection{Proof of the main theorem}\label{subs:ProofMainThm}

Most of the remaining work in proving Theorem \ref{thm:ConstructFirst} is done by the following lemma.

\begin{lemma}
[Existence of a fixed point]
\label{existence_of_fixed_point}
Let $\alpha \in \interval{0,1}$
and $\beta \in \interval{1/2,1}$.
There are constants $C,m_0>0$ such that
for every integer $m>m_0$
the map
$F_{m,\alpha/2}$
is well-defined by \eqref{map_to_prove_main_theorem}
and
has a fixed point
$
 \left(
   v_m,
   \xi_m 
 \right)
 \in
 C^{2,\alpha}_{\apr_m}(\Sigma_{m,0})
 \oplus
 \R
$
such that
\begin{equation}\label{eq:Est_Fixed_Point}
m^2\nm{\varsigma_{m,\xi}^{-1*} v_m}_{2,\alpha,\beta}
  +\abs{\xi_m}
\leq
C.
\end{equation}
\end{lemma}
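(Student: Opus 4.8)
The plan is to produce $(v_m,\xi_m)$ as a fixed point of the map $F_{m,\alpha/2}$ of \eqref{map_to_prove_main_theorem} by means of Schauder's fixed point theorem, applied on a ball of radius $\asymp m^{-2}$; the relevant inputs are the linear solution operator of Proposition \ref{globalsol}, the estimate for the first-order correction (Lemma \ref{first-order_ests}), and the quadratic estimate (Lemma \ref{quad_ests}). A fixed point $(v_m,\xi_m)$ automatically realizes the two conditions in \eqref{eqn:conditions2}, with $u_{m,\xi_m}^{(2)}\vcentcolon=\varsigma_{m,\xi_m}^{-1*}v_m$, so that $u_{m,\xi_m}^{(1)}+u_{m,\xi_m}^{(2)}$ solves \eqref{system_to_prove_main_theorem}, though here we only need the fixed point and the bound \eqref{eq:Est_Fixed_Point}. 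First, fix $\gamma\vcentcolon=\tfrac12(1+\beta)$, so that $0<\beta<\gamma<1$ (as required by Proposition \ref{globalsol}) and $\gamma<1<2\beta$ (as required by Lemma \ref{quad_ests}, using $\beta>\tfrac12$). Let $C_0=C_0(\alpha,\beta)$ be the constant of Lemma \ref{first-order_ests} — which, crucially, is \emph{independent} of the upper bound on $\xi$ — and let $C'=C'(\alpha,\beta)$ be the constant in \eqref{weighted_ests_for_Sigma_0_to_Sigma_xi} for $k=2$. Set $c\vcentcolon=C_0+1$ once and for all, and apply Lemmata \ref{first-order_ests} and \ref{quad_ests} (the latter with its constant taken to be $C_0+C'$) and Proposition \ref{globalsol}, all with this $c$, to obtain a threshold $m_0>0$ depending only on $c$ and universal constants beyond which all objects below are defined. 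For each integer $m>m_0$ let
\[
\mathcal B_m\vcentcolon=\set[\big]{(v,\xi)\in C^{2,\alpha}_{\apr_m}(\Sigma_{m,0})\oplus\R \st \nm{v}_{2,\alpha,\beta}\leq m^{-2},\ \abs{\xi}\leq c}.
\]

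I then claim that, after enlarging $m_0$ (still depending only on $c$), one has $\mathcal B_m\subseteq\dom(F_{m,\alpha/2})$ and $F_{m,\alpha/2}(\mathcal B_m)\subseteq\mathcal B_m$. Indeed, for $(v,\xi)\in\mathcal B_m$ the second estimate of \eqref{weighted_ests_for_Sigma_0_to_Sigma_xi} and Lemma \ref{first-order_ests} give $\nm{u_{m,\xi}^{(1)}+\varsigma_{m,\xi}^{-1*}v}_{2,\alpha,\beta}\leq(C_0+C')m^{-2}$, so Lemma \ref{quad_ests} (with $C=C_0+C'$) yields $(\xi,u_{m,\xi}^{(1)}+\varsigma_{m,\xi}^{-1*}v)\in\dom(Q_m)$ — hence $(v,\xi)\in\dom(F_{m,\alpha/2})$ — together with the bound $\nm{Q_{m,\xi,u_{m,\xi}^{(1)}+\varsigma_{m,\xi}^{-1*}v}}_{0,\alpha,\gamma}\leq m^{-3/4}$. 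Feeding this (with zero boundary datum) into Proposition \ref{globalsol}(i), and recalling \eqref{global_data_norm} and the definition \eqref{map_to_prove_main_theorem} of $(u_{m,\xi,v},\mu_{m,\xi,v})$, we obtain $\nm{u_{m,\xi,v}}_{2,\alpha,\beta}\leq Cm^{-11/4}$ and $\abs{m^2\mu_{m,\xi,v}}\leq Cm^{-3/4}$ for a constant $C$ independent of $m,c,\xi$. Consequently, writing $(v',\xi')\vcentcolon=F_{m,\alpha/2}(v,\xi)$, the first estimate of \eqref{weighted_ests_for_Sigma_0_to_Sigma_xi} gives $\nm{v'}_{2,\alpha,\beta}=\nm{\varsigma_{m,\xi}^*u_{m,\xi,v}}_{2,\alpha,\beta}\leq C'Cm^{-11/4}\leq m^{-2}$ for $m$ large, while $\abs{\xi'}\leq m^2\abs{\mu_{m,\xi}^{(1)}}+\abs{m^2\mu_{m,\xi,v}}\leq C_0+Cm^{-3/4}\leq C_0+1=c$ for $m$ large; moreover the output indeed lies in $C^{2,\alpha}$, since $u_{m,\xi,v}=m^{-2}P_{\Sigma_{m,\xi}}(Q_{m,\xi,\cdots},0)$ and $Q_{m,\xi,\cdots}\in C^{0,\alpha}$ for a $C^{2,\alpha}$ argument. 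This proves the claim.

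To finish, regard $\mathcal B_m$ as a subset of the Banach space $C^{2,\alpha/2}_{\apr_m}(\Sigma_{m,0})\oplus\R$. It is convex; it is bounded therein (a $C^{2,\alpha}$-ball is $C^{2,\alpha/2}$-bounded); it is closed therein (if $(v_j,\xi_j)\to(v,\xi)$ in $C^{2,\alpha/2}\oplus\R$ then $v_j$ converges together with its first two derivatives uniformly, and — since $\Sigma_{m,\xi}$ and the projections $\varpi_{\towr_{m,\xi}},\varpi_{\catr_{m,\xi}},\varpi_{\discr_{m,\xi}}$ depend continuously on $\xi$ — every supremum and every pointwise difference quotient entering \eqref{globalnorms} passes to the limit, and $\abs{\xi}\leq c$ is preserved); and it is relatively compact therein by Arzel\`a--Ascoli, hence compact. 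The map $F_{m,\alpha/2}\colon\mathcal B_m\to\mathcal B_m$ is continuous for the $C^{2,\alpha/2}$-topology, by composing the continuity assertions of Lemma \ref{first-order_ests}, Lemma \ref{quad_ests} and Proposition \ref{globalsol}(iii) (the last with vanishing boundary datum, using $\varsigma_{m,\xi}^*\coker=\cokerval{0}$ from \eqref{Sigma_0_to_Sigma_xi} to handle the $m^2\mu_{m,\xi}^{(1)}$ term), and since $C^{2,\alpha}$-convergence implies $C^{2,\alpha/2}$-convergence. Schauder's fixed point theorem (Theorem 11.1 in \cite{GilTru2001}) then furnishes a fixed point $(v_m,\xi_m)\in\mathcal B_m$. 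At this point $\varsigma_{m,\xi_m}^{-1*}v_m=u_{m,\xi_m,v_m}$, whence $m^2\nm{\varsigma_{m,\xi_m}^{-1*}v_m}_{2,\alpha,\beta}\leq Cm^{-3/4}\leq C$; together with $\abs{\xi_m}\leq c$ this gives \eqref{eq:Est_Fixed_Point}.

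The essential analytic difficulties are already resolved in Propositions \ref{initsurfcomp} and \ref{globalsol} and in Lemmata \ref{first-order_ests}--\ref{quad_ests}; in the argument above the two points deserving attention are the bookkeeping of constants — it is vital that the constant in Lemma \ref{first-order_ests} not depend on $c$, so that $c$ may be fixed as $C_0+1$ before the threshold $m_0(c)$ is chosen, and that the gain $m^{-3/4}$ in the quadratic estimate dominate the ball radius $m^{-2}$ — and the routine device of running Schauder's argument in the weaker topology $C^{2,\alpha/2}$ (for Arzel\`a--Ascoli compactness) while all a priori bounds remain in the stronger weighted norm $\nm{\cdot}_{2,\alpha,\beta}$.
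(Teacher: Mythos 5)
Your proof is correct and follows essentially the same approach as the paper's: Schauder's fixed point theorem applied to a closed ball times an interval in $C^{2,\alpha/2}_{\apr_m}(\Sigma_{m,0})\oplus\R$, with the invariance of the domain secured by exactly the same chain of estimates (Lemma \ref{first-order_ests} for the linear piece, Lemma \ref{quad_ests} for the quadratic piece, Proposition \ref{globalsol} for the solution operator) and with the same crucial bookkeeping observation that the constant in Lemma \ref{first-order_ests} is independent of $c$, so that $c$ may be fixed before the threshold $m_0(c)$ is chosen. The only deviations are cosmetic: you take ball radius $m^{-2}$ rather than the paper's $m^{-5/2}$ (both choices are dominated by the $m^{-11/4}$ gain and both satisfy the hypothesis of Lemma \ref{quad_ests}), you set $c\vcentcolon=C_0+1$ rather than $2C_1$, and you extract the sharper quantitative bound $m^2\nm{\varsigma_{m,\xi_m}^{-1*}v_m}_{2,\alpha,\beta}\leq Cm^{-3/4}$ directly from the fixed-point identity rather than from membership in the ball. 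One minor wart: in your closedness argument the appeal to the $\xi$-dependence of $\varpi_{\towr_{m,\xi}}$ etc. is spurious, since the norm $\nm{\cdot}_{2,\alpha,\beta}$ entering the definition of $\mathcal B_m$ is the fixed one on $\Sigma_{m,0}$; the correct and simpler justification is lower semicontinuity of that fixed weighted Hölder norm under $C^{2,\alpha/2}$ convergence.
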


\begin{proof}
Fix $\alpha \in \interval{0,1}$,
$\beta \in \interval{1/2,1}$,
and $\gamma \in \interval{\beta,1}$,
so that in particular
we have $0<\beta<\gamma<1<2\beta$.
For the remainder of the proof
we write $C_1$ for the constant $C(\alpha,\beta)$
from Lemma \ref{first-order_ests}
and we set $c\vcentcolon=2C_1$.
We will consider the map
$F_{m,\alpha/2}$
defined by \eqref{map_to_prove_main_theorem}
(with $\alpha/2$ in place of $\alpha$
as part of preparation to apply
the Schauder fixed point theorem below).
It follows from Proposition \ref{globalsol},
Lemma \ref{first-order_ests},
and Lemma \ref{quad_ests}
that
$F_{m,\alpha/2}$
is defined for sufficiently large $m$
and
is continuous
(with respect to the norm $\nm{\cdot}_{2,\alpha/2}+\abs{\cdot}$).
The same references along with
\eqref{weighted_ests_for_Sigma_0_to_Sigma_xi}
imply the existence
of
$m_1,C_2,C_3>0$
such that
for every integer $m>m_1$
and
every $\xi \in \IntervaL{-c,c}$
we have
\begin{align}
\label{C_one}
&\text{by Lemma \ref{first-order_ests}}
\qquad
\nm[\Big]{u_{m,\xi}^{(1)}}_{2,\alpha,\beta}
  +\abs[\Big]{\mu_{m,\xi}^{(1)}}
  \leq
  C_1m^{-2},
\\
\label{C_sigma}
&\text{by \eqref{weighted_ests_for_Sigma_0_to_Sigma_xi}}
\qquad
\begin{aligned}
\nm{\varsigma_{m,\xi}^{-1*}v}_{2,\alpha,\beta}
  &\leq
  C_2 \nm{v}_{2,\alpha,\beta}
  \quad \forall
  v \in C^{2,\alpha}_{\apr_m}(\Sigma_{m,0}),
\\
\nm{\varsigma_{m,\xi}^*u}_{2,\alpha,\beta}
  &\leq
  C_2 \nm{u}_{2,\alpha,\beta}
  \quad \forall
  u \in C^{2,\alpha}_{\apr_m}(\Sigma_{m,\xi}),
\end{aligned}
\\
\label{C_P}
&\text{by \eqref{global_resolvent_with_hom_bdy_data}
  and Proposition \ref{globalsol}
  }
\qquad
\begin{aligned}
&\nm{u}_{2,\alpha,\beta}
  +\abs{\mu}
\leq
C_3\nm{E}_{0,\alpha,\gamma}
\\
&\text{for} \quad
  (u,\mu) \vcentcolon= P_{m,\xi}E
\end{aligned}
\quad \forall
  E \in C^{0,\alpha}_{\apr_m}(\Sigma_{m,\xi}),
\\
\label{C_Q}
&\text{by Lemma \ref{quad_ests} }
\qquad
 \nm{u}_{2,\alpha,\beta}
  \leq cm^{-2}
  \Rightarrow
  \nm{Q_{m,\xi,u}}_{0,\alpha,\gamma}
  \leq
  m^{-3/4}
\quad \forall
u \in C^{2,\alpha}_{\apr_m}(\Sigma_{m,\xi})
\end{align}
(where we take $C$ in Lemma \ref{quad_ests}
to be the quantity $c$ in the present proof);
note that
$\varsigma_{m,\xi}$,
$\varsigma_{m,\xi}^{-1}$,
$P_{m,\xi}$,
$u_{m,\xi}^{(1)}$,
$\mu_{m,\xi}^{(1)}$,
and $Q_{m,\xi,u}$
are indeed all well-defined
under the above assumptions.

Now for each integer $m>m_1$ set
\begin{equation*}
D_m
\vcentcolon=
\left\{
  v \in C^{2,\alpha}_{\apr_m}(\Sigma_{m,0})
  \st
  \nm{v}_{2,\alpha,\beta} \leq m^{-5/2}
\right\}
\times
\IntervaL{-c,c}.
\end{equation*}
Then $D_m$ is a nonempty,
compact (by the Ascoli--Arzel\`{a} theorem),
convex
subset of $C^{2,\alpha/2}(\Sigma_{m,0}) \oplus \R$,
and,
by the preceding paragraph,
for $m$ large enough
$F_{m,\alpha/2}$ is defined on $D_m$
and
continuous under the
$\nm{\cdot}_{2,\alpha/2}+\abs{\cdot}$
norm.
If we can establish that
$F_{m,\alpha/2}(D_m) \subseteq D_m$
when $m$ is sufficiently large,
then the Schauder fixed point theorem
will be applicable and will ensure
that $F_{m,\alpha/2}(D_m)$
has a fixed point in $D_m$.
To that end suppose $(v,\xi) \in D_m$.
We then have
\begin{equation*}
\text{using \eqref{C_one} and \eqref{C_sigma}}
\qquad
\nm[\Big]{u_{m,\xi}^{(1)}+\varsigma_{m,\xi}^{-1*}v}_{2,\alpha,\beta} 
\leq
C_1m^{-2} + C_2 m^{-5/2}
\leq
cm^{-2},
\end{equation*}
assuming $m>m_2$, for some $m_2 \geq m_1$
sufficiently large in terms of $C_1=c/2$ and $C_2$.
We in turn obtain
\begin{equation}
\label{Q_bd}
\text{using \eqref{C_Q}}
\qquad
\nm[\Big]{Q_{m,\xi,u_{m,\xi}^{(1)}+\varsigma_{m,\xi}^{-1*}v}}_{0,\alpha,\gamma}
\leq
m^{-3/4}. 
\end{equation}
Defining $(u_{m,\xi,v},\mu_{m,\xi,v})$
as in \eqref{map_to_prove_main_theorem},
we next find
\begin{align*}
&\text{using \eqref{C_P} and \eqref{Q_bd}}
\qquad
\nm{u_{m,\xi,v}}_{2,\alpha,\beta}
  +\abs{\mu_{m,\xi,v}}
  \leq
  C_3m^{-11/4},
\\
&\text{using \eqref{C_sigma}}
\qquad
\nm{\varsigma_{m,\xi}^*u_{m,\xi,v}}_{2,\alpha,\beta}
  \leq
  C_2 C_3 m^{-11/4}
  \leq
  m^{-5/2},
\\
&\text{using \eqref{C_one}}
\qquad
\abs[\Big]{\mu_{m,\xi}^{(1)}+\mu_{m,\xi,v}}
  \leq
  C_1m^{-2} + C_3m^{-11/4}
  \leq
  cm^{-2},
\end{align*}
assuming $m>m_3$
for some $m_3 \geq m_2$
sufficiently large
in terms of $C_2$, $C_3$, and $C_1=c/2$.

Referring to the definition,
\eqref{map_to_prove_main_theorem},
of $F_{m,\alpha/2}$,
we have just verified that
$F_{m,\alpha/2}(D_m) \subseteq D_m$.
The proof is now completed by applying
the Schauder fixed point theorem,
as anticipated above,
and taking $m_0\vcentcolon=m_3$
and $C\vcentcolon=2c$.
\end{proof}

\begin{proof}[Proof of Theorem \ref{thm:ConstructFirst}]
Fix $\alpha \in \interval{0,1}$
and $\beta \in \interval{1/2,1}$.
Applying Lemma \ref{existence_of_fixed_point},
we have $m_0>0$ and $C>0$
such that for every integer $m>m_0$
there exist $\xi_m \in \R$
and $v_m \in C^{2,\alpha}_{\apr_m}(\Sigma_{m,0})$
such that the pair
$(v_m,\xi_m)$ is a fixed point of
$F_{m,\alpha/2}$,
given by \eqref{map_to_prove_main_theorem},
and satisfies estimate \eqref{eq:Est_Fixed_Point}.
We set
\begin{align*}
u_m^{(1)}&\vcentcolon=
u_{m,\xi_m}^{(1)},
&
u_m^{(2)}&\vcentcolon=
\varsigma_{m,\xi_m}^{-1*}v_m,
&
u_m&\vcentcolon=
u_m^{(1)}+u_m^{(2)},
\\
\mu_m^{(1)}&\vcentcolon=
\mu_{m,\xi_m}^{(1)},
&
\mu_m^{(2)}&\vcentcolon=
\mu_{m,\xi_m,v_m},
&
\mu_m&\vcentcolon=
\mu_m^{(1)}+\mu_m^{(2)}
\end{align*}
(for which we refer to \eqref{map_to_prove_main_theorem}
for the definition of $\mu_{m,\xi,v}$).
Then
\begin{align*}
H_{m,\xi_m,u_m}
&=
  H_{m,\xi_m,0}
  -J_{\Sigma_{m,\xi_m}}u_m
  +Q_{m,\xi_m,u_m}
\\[.5ex]
&=
  (\xi_m-m^2\mu_m^{(1)})\cokerval{\xi_m}
%\\&\hphantom{=(}
  -J_{\Sigma_{m,\xi_m}}u_m^{(2)}
  +Q_{m,\xi_m,u_m}
\\[.5ex]
&=
  (\xi_m-m^2\mu_m)\cokerval{\xi_m}
=0,
\end{align*}
where the first equality
is just definition \eqref{nonlinear_terms_def},
the second equality follows from
definition \eqref{sol_to_first_order},
and the third and fourth equalities
follow from definition \eqref{map_to_prove_main_theorem}
and the fact that $(v_m,\xi_m)$ is a fixed point.
It is also immediate from definitions
\eqref{sol_to_first_order}
and \eqref{map_to_prove_main_theorem}
that
for each $i=0,1$
the pair $(u_m^{(i)},\mu_m^{(i)})$
lies in the image of $P_{m,\xi}$,
so we also have
\begin{equation*}
B^{\mathrm{Robin}}_{\Sigma_{m,\xi_m}}u_m = 0.
\end{equation*}
Thus $u_m$ is a solution to
\eqref{system_to_prove_main_theorem}
with $\xi=\xi_m$.
It follows (using elliptic regularity in a by now standard fashion)
that in fact $u \in C^\infty_{\apr_m}(\Sigma_{m,\xi_m})$
and that the image
\begin{equation*}
\Sigma_{m-1}
\vcentcolon=
\iota_{m,\xi_m,u_m}(\Sigma_{m,\xi_m})
\end{equation*}
of \eqref{deformed_init_surf_inc}
is a 
free boundary minimal surface in $\B^3$
(though, a priori, not necessarily embedded),
as explained between
\eqref{deformed_MC_def}
and \eqref{system_to_prove_main_theorem}.
From Lemma \ref{first-order_ests}
and Lemma \ref{existence_of_fixed_point} (equation \eqref{eq:Est_Fixed_Point})
we obtain
\begin{equation}
\label{est_on_defining_function_of_final_surf}
\nm{u_m}_{2,\alpha,\beta}
  +m^{-2}\abs{\xi_m}
\leq
C_0m^{-2}
\end{equation}
for some $C_0 \geq C>0$ independent of $m$.

We claim that the remaining assertions
of Theorem \ref{thm:ConstructFirst}
now follow from
the properties
of the initial surfaces,
as summarized in
Proposition \ref{initsurfbasicprops},
and estimate
\eqref{est_on_defining_function_of_final_surf},
according to which
$\Sigma_{m-1}$ is a $C^{2,\alpha}$-small perturbation
of $\Sigma_{m,\xi_m}$.

In particular we can choose $m$ 
large enough
that \eqref{est_on_defining_function_of_final_surf}
and the properties
of the initial surface $\Sigma_{m,\xi_m}$
guarantee
embeddedness of $\Sigma_{m-1}$.
Indeed,
it is clear from the construction
of the initial surfaces
and from the embeddedness
of $\B^2$, $\K_0$, and $\tow$
that there exists $\delta>0$
such that at each point
$p \in m_k\Sigma_{m_k,\xi_{m_k}}$
we have embeddedness of
any smooth perturbation
of size less than $\delta$ (i.\,e. of any smooth normal graph whose defining function is $\delta$-small in $C^0$).
On the other hand, we know by \eqref{est_on_defining_function_of_final_surf} that the defining function $u_m$ of $\Sigma_{m-1}$ decays \emph{quadratically} with respect to $m$ and so, even magnifying by a factor $m$, the $\delta$-smallness condition will be satisfied for $m$ large enough, whence the embeddedness follows.

At that stage, having gained embeddedness, the fact that $\Sigma_{m-1}$
has genus $m-1$
and three boundary components
follows from
items \ref{initsurfgenus} and \ref{initsurfbdy}
of Proposition \ref{initsurfbasicprops}
(i.\,e. from the corresponding facts for the initial surface
$\Sigma_{m,\xi_m}$).
Lastly, that the symmetry group of $\Sigma_{m-1}$
contains $\apr_m$
is immediate from the corresponding property of the associated initial surface $\Sigma_{m,\xi_m}$, the $\apr_m$-equivariance of $u_m$
and the $\apr_m$-invariance of the auxiliary metric
and defining conformal factor.
Furthermore, that the symmetry group of $\Sigma_{m-1}$
is contained in $\apr_m$
then follows from
\eqref{est_on_defining_function_of_final_surf}
and
item \ref{initsurfsymgroup}
of Proposition \ref{initsurfbasicprops};
this can be established on rather abstract
and general grounds,
but instead of explaining the details of that approach
we offer a short ad hoc argument
that requires just the containment
$\Aut_{\B^3}(\Sigma_{m,\xi}) \geq \apr_m$
from item \ref{initsurfsymgroup}
of Proposition~\ref{initsurfbasicprops}.

Indeed, it is clear from the construction
(and in particular from the fact
that we have already established the containment
$\Aut_{\B^3}(\Sigma_{m-1}) \geq \apr_m$)
that
$\Sigma_{m-1}$ contains $m$ diameters of $\B^2$,
each contained in a line of reflectional symmetry
for $\Sigma_{m-1}$,
having equiangular intersections at the origin,
and one of which is $\axis{x} \cap \B^2$.
We call the collection of these diameters $L_m$,
and we will now show that,
for all sufficiently large $m$,
$\Sigma_{m-1}$ contains
no diameters outside $L_m$,
so that in particular
any element
of $\Aut_{\B^3}(\Sigma_{m-1})$
preserves $\bigcup L_m$.

To start, assuming that $m \geq 2$,
any symmetry of $\Sigma_{m-1}$
must take an element of $L_m$
to another diameter of $\B^2$
(given that the symmetry must preserve
the tangent plane to $\Sigma_{m-1}$ at the origin),
though a priori the image diameter
need not belong to $L_m$.
Since
$\apr_m$
contains all the reflections through
vertical planes bisecting the
intersection angles of $L_m$ and the cyclic subgroup of rotations (of angle a multiple of $2\pi/m$),
if $\Sigma_{m-1}$ were to contain
a diameter not in $L_m$,
then 
it would also have to contain a diameter
intersecting $\axis{x}$
at an angle in the interval
$\intervaL{0,\pi/2m}$.
Call the line containing this diameter $\ell_1$,
and call the intersection angle $\theta_0$.
Setting $\axis{x} \vcentcolon= \ell_0$
and inductively defining
$\ell_{j+1} \vcentcolon= \refl_{\ell_j}\ell_{j-1}$
for each integer $j \geq 1$,
it follows from
the reflection principle for minimal surfaces
(and induction on $j$)
that $\ell_j \cap \B^2 \subset \Sigma_{m-1}$
and $\refl_{\ell_j} \in \Aut_{\B^3}(\Sigma_{m-1})$
for all $j$,
and of course $\ell_j$ intersects $\axis{x}$
at angle $j\theta_0$.
Let $n$ be the least positive integer
such that $n\theta_0 \geq \pi/4m$.
Then $(n-1)\theta_0 < \pi/4m$
and, if $n>1$,
$\theta_0 < \pi/4m$,
so $n\theta_0 < \pi/2m$.
In all cases then
$\ell_n \subset \Sigma_{m-1}$
and $\ell_n$ meets $\axis{x}$
at some angle in the closed interval
$\IntervaL{\pi/(4m),\pi/(2m)}$.

On the other hand,
$m\Phi^{-1}(\Sigma_{m-1})$
is, on a neighborhood of $\axis{\theta}$,
a perturbation of $\tow^+$ which is of order
$m^{-1}$ in $C^0$ (here we are crucially appealing to \eqref{est_on_defining_function_of_final_surf}).
By item \ref{towcontainslines}
of Proposition \ref{karcher-scherk}
the spacing between the straight lines on
$\tow \cap \{\psi=0\}$ is $\pi$,
and we recall that
$\Phi$ takes straight lines
of constant $\theta$ in $\{\psi=0\}$
to straight lines in $\{z=0\}$ through the origin.
We conclude that in fact $\Sigma_{m-1}$
contains no diameters of $\B^2$
except those in $L_m$,
assuming of course that $m$ is sufficiently large
in terms depending on just the geometry of $\tow$.

Finally we assume
$\mathsf{S} \in \Aut_{\B^3}(\Sigma_{m-1})$
and we intend to show that in fact
$\mathsf{S} \in \apr_m$.
From the foregoing discussion we already know
that $\mathsf{S}$ preserves
$\bigcup L_m$.
Since $\apr_m$ acts transitively
on (the endpoints) $\Sp^1 \cap \bigcup L_m$
and is contained in
$\Aut_{\B^3}(\Sigma_{m-1})$,
we may without loss of generality assume
(by composing with elements of $\apr_m$ if necessary)
that $\mathsf{S}$ preserves each of the points
$(1,0,0) \in \axis{x}$ and $(0,0,1) \in \axis{z}$.
Then $\mathsf{S}$, which is an element of $\Ogroup(3)$ (cf. Section \ref{sec:Nota}), can be only the identity
(which belongs to $\apr_m$)
or instead the reflection through the plane $\{y=0\}$
(which does not).
Suppose that $\refl_{\{y=0\}}$
belongs to $\Aut_{\B^3}(\Sigma_{m-1})$.
Then $\refl_{\{z=0\}}$ does too,
since $\refl_{\axis{x}} \in \apr_m \leq \Aut_{\B^3}(\Sigma_{m-1})$ and $\refl_{\{y=0\}}\refl_{\{z=0\}}=\refl_{\axis{x}}$.
Because $\Sigma_{m-1}$ contains $\axis{x} \cap \B^2$,
a unit normal $\nu^{\vphantom{|}}_{\Sigma_{m-1}}$
to $\Sigma_{m-1}$ along $\axis{x} \cap \B^2$
cannot be contained in both $\{y=0\}$ and $\{z=0\}$.
Since, under the present assumptions,
reflection through each of these planes
is a symmetry of $\Sigma_{m-1}$,
$\nu^{\vphantom{|}}_{\Sigma_{m-1}}$
must everywhere along $\axis{x} \cap \B^2$
be orthogonal to one of these two planes.
In other words $\nu^{\vphantom{|}}_{\Sigma_{m-1}}$
is constant, regarded as a map to $\Sp^2$,
on $\axis{x} \cap \B^2$,
and this readily forces $\Sigma_{m-1}$ to be planar. 
Since however $\Sigma_{m-1}$ is, after all, not a totally geodesic disc (e.\,g. by virtue of its different topological type),
we conclude that in fact $\refl_{\{y=0\}}$
is not a symmetry of $\Sigma_{m-1}$,
and therefore
$\Aut_{\B^3}(\Sigma_{m-1})=\apr_m$,
ending the proof.
\end{proof}

\section{Desingularizing the union of two catenoidal annuli}\label{sec:OnlyCatsDesing}

As anticipated in the introduction, we will briefly describe here how to modify the construction above to obtain yet another novel family of free boundary minimal surfaces in the unit ball of $\R^3$. In particular, we will outline the proof of the following assertion:

\begin{theorem}\label{thm:ConstructSecond}
There exists a sequence $\{\Xi_n\}_{n\geq n_0}$ of  properly embedded, free boundary minimal surfaces in $\B^3$ such that:
\begin{enumerate}[label={\normalfont(\alph*)}]
    \item  $\Xi_n$ has genus zero, exactly $n+2$ boundary components and symmetry group coinciding with the prismatic group of order $4n$;
    \item as one lets $n\to\infty$ the surface $\Xi_n$ converges, in the sense of varifolds, to the union $\K_0 \cup -\K_0$; the convergence is smooth, with multiplicity one, away from the intersection $\K_0 \cap -\K_0$.
\end{enumerate}
\end{theorem}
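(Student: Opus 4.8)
The plan is to follow the same architecture as the proof of Theorem~\ref{thm:ConstructFirst}, replacing the $k=3$ Karcher--Scherk tower $\tow$ by the classical singly periodic Scherk surface, i.\,e. the member $\tow_{\vartheta}$ of the family of Proposition~\ref{karcher-scherk} with $k=2$ wings on each side, and replacing the central disc $\B^2$ by nothing: the preliminary configuration is now the union of \emph{two} catenoidal annuli $\K_0 \cup -\K_0$ meeting along the equator $\Sp^1$ only. First I would recall that the appropriate $k=2$ tower (Scherk's doubly periodic surface restricted to one vertical period) has $4$ asymptotic half planes when quotiented by the vertical period translation: two contained in $\{y=0\}$ and two making angle $\vartheta$ with $\{y=0\}$. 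Wrapping its axis of periodicity around the equator via the map $\Phi$ of \eqref{PhiR3def} then produces, away from $\Sp^1$, a surface approximating $\K_b \cup -\K_b$ rather than $\K_b \cup \B^2 \cup -\K_b$; the angle $\vartheta$ is again chosen equal to $\omega_0$ so that the wings match the conormal angle of $\K_0$ along the equator (Remark~\ref{rem:values_for_b=0}). This yields, for each sufficiently large integer $n$ and each $\xi$ in a fixed interval, an initial surface $\Xi_{n,\xi}$ which is a connected, properly embedded surface in $\B^3$ meeting $\Sp^2$ orthogonally, with symmetry group containing (and in fact equal to) the prismatic group $\pri_n$ of \eqref{aprdef}: the extra reflection $\refl_{\{y=0\}}$ now \emph{is} a symmetry, precisely because the two wings in $\{y=0\}$ align to form straight lines through the origin in the horizontal plane, so we are in the prismatic rather than the antiprismatic case. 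A Gauss--Bonnet / Euler characteristic count (as in the proof of Proposition~\ref{initsurfbasicprops}, using the genus-$0$, $4$-ended quotient of the $k=2$ tower) gives genus $0$ and $n+2$ boundary components: two ``polar'' circles coming from the upper and lower components of $\partial(\pm\K_0)$ away from the equator, and $n$ circles arising in the desingularizing neck.

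Next I would redo the linear analysis. The catenoidal blocks require no change: Lemma~\ref{lem:catenoid-K_b}, Lemma~\ref{lem:orthogonal-b} and Lemma~\ref{lem:catker} apply verbatim (now with the symmetry group $\pri_n \cap \Aut_{\B^3}(\K_0)$ in place of $\pyr_n$, which only enlarges the imposed equivariance and hence preserves injectivity of the relevant boundary value problem). The disc block simply disappears. For the tower block, Lemma~\ref{towker}, Corollary~\ref{orthogonality_to_ker_LG} and Lemma~\ref{towsol} are proved in Subsection~\ref{subs:LinTow} in a way that is insensitive to the value of $\vartheta$ and in fact to the number of wings, provided the quotient has genus $0$; I would check that the Enneper--Weierstrass analysis of Appendix~\ref{app:Karcher--Scherk} supplies the analogous conformal diffeomorphism $G$ for the $k=2$ surface (here $G$ is defined on $\Sp^2$ minus $4$ points, and the geodesic ``triangle'' argument is replaced by the even simpler rectangle/bigon argument, or one can invoke the classical fact that the only bounded $2\pi$-periodic Jacobi fields on Scherk's surface with the imposed symmetries are generated by horizontal translations, which are excluded). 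The dislocation generator $\towcokergen$ and the induced $\towcoker$ are defined exactly as in \eqref{towcokerandgen} on the single deformed wing, giving again a one-dimensional approximate cokernel. Assembling the resolvents $P^n_{\K_{b}}$ (two copies, one per catenoidal annulus) and $P_{\tow^+}$ via cutoff functions yields the analogue of Proposition~\ref{globalsol}: a linear solution operator on $\Xi_{n,\xi}$ modulo the approximate cokernel, with the same bounds.

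Finally I would run the nonlinear iteration of Section~\ref{sec:Nonlin} unchanged: the auxiliary metric $h$ of \eqref{auxmet} is $\Ogroup(3)$-invariant, hence $\pri_n$-invariant, so the formulation \eqref{system_to_prove_main_theorem}, the first-order correction Lemma~\ref{first-order_ests}, the quadratic estimate Lemma~\ref{quad_ests}, and the Schauder fixed-point argument Lemma~\ref{existence_of_fixed_point} all go through with $\apr_m$ replaced by $\pri_n$ and $\Sigma_{m,\xi}$ by $\Xi_{n,\xi}$, producing a free boundary minimal surface $\Xi_n$ that is a $C^{2,\alpha}$-small (quadratically in $n$) perturbation of $\Xi_{n,\xi_n}$. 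Embeddedness, genus $0$, the count of $n+2$ boundary components, and the varifold convergence to $\K_0 \cup -\K_0$ with smooth multiplicity-one convergence away from $\K_0 \cap -\K_0$ then follow from the corresponding properties of the initial surfaces together with the smallness estimate, exactly as in the proof of Theorem~\ref{thm:ConstructFirst}. The only genuinely new point to verify carefully is the symmetry group computation: one must show $\Aut_{\B^3}(\Xi_n) = \pri_n$ and not something larger. Here the argument of Subsection~\ref{subs:ProofMainThm} adapts --- any symmetry preserves the family $L_n$ of $n$ horizontal diameters lying on $\Xi_n$ (coming from the straight lines on the $k=2$ tower, whose spacing on $\{\psi=0\}$ is again a fixed multiple of the period by item~\ref{towcontainslines} of Proposition~\ref{karcher-scherk} applied with $k=2$), so $\Aut_{\B^3}(\Xi_n)$ permutes $\bigcup L_n$; reducing as before to a symmetry fixing $(1,0,0)$ and $(0,0,1)$ leaves only the identity and the reflections compatible with the prismatic structure, and one rules out any further reflection by the same ``constant Gauss map along a diameter forces planarity'' contradiction.

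\textbf{Main obstacle.} I expect the principal difficulty, such as it is, to be purely bookkeeping: confirming that the Enneper--Weierstrass material of Appendices~\ref{app:Karcher--Scherk} and~\ref{app:CylAnalysis} --- written for the $k=3$ tower $\tow$ --- specializes cleanly to the $k=2$ case (the classical Scherk surface), in particular that one still obtains the conformal identification \eqref{Gdec}, the images \eqref{images_under_G} of the symmetry loci, and the decay estimates on the wings needed for Lemma~\ref{towsol}. Since the $k=2$ surface is the original and best-understood Scherk example, none of this is genuinely hard, but it is what must be checked to make the transfer rigorous; everything downstream is a verbatim repetition of the arguments already carried out for $\Sigma_{m,\xi}$.
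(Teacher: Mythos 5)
Your plan is correct and mirrors the paper's own proof of Theorem~\ref{thm:ConstructSecond} in Section~\ref{sec:OnlyCatsDesing}: replace the six-ended tower $\tow$ by the classical four-ended Scherk surface at wing angle $\omega_0$, drop the central disc $\B^2$, build the initial surfaces $\Xi_{n,\xi}$ by wrapping the half tower's core around the equator and transferring the defining functions of its two remaining wings to $\pm\K_{b_{n,\xi}}$, and then re-run the regional linear theory and the Schauder fixed-point iteration of Sections~\ref{sec:Linear}--\ref{sec:Nonlin} verbatim with $\apr_m$ replaced by $\pri_n$. The one place where you diverge from the paper is in the linear analysis on the four-ended tower: you propose to re-derive the absence of bounded equivariant Jacobi fields and the Fredholm theory from an Enneper--Weierstrass argument parallel to Lemma~\ref{towker}, Corollary~\ref{orthogonality_to_ker_LG}, and Lemma~\ref{towsol}, whereas the paper simply cites Proposition~2.8 of \cite{KapouleasEuclideanDesing} for the kernel statement and Proposition~6.1 of \cite{KapouleasZouCloseToBdy} (which already treats four-ended towers in the free boundary setting) for the analogue of Lemma~\ref{towsol}; your route is more self-contained, the paper's is shorter. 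A small point worth checking if you carry out your version: the dislocation generator $\towcokergen$ now translates both wings of $\tow^+$ in the \emph{same} $\psi$-direction (the paper makes this explicit), in contrast to the opposite-sign dislocation of the two outer wings in the six-ended case, so make sure the equivariance constraints match the prismatic rather than antiprismatic symmetry when you define the approximate cokernel.
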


The construction proceeds largely in parallel with that of the sequence $\{\Sigma_g\}$ which has been the focus of the core of this article. 
In the following discussion, for the sake of convenience we will recycle some notation from earlier sections,
in that certain objects in the construction of $\{\Xi_n\}$ are analogues of objects in the construction of $\{\Sigma_g\}$, 
and we will apply the same notation for such objects in view of the analogy.
For example, the six-ended Karcher--Scherk tower $\tow$ used in the construction of $\{\Sigma_g\}$
will play no role in the construction of $\{\Xi_n$\};
instead, in the current section $\tow$
will denote the four-ended Karcher--Scherk tower
we will use to desingularize the union $\K_0 \cup -\K_0$.

On the other hand, $\K_b$ continues to denote the same catenoidal annuli
constructed at the beginning of Section \ref{sec:Initial}
and $\omega_0$ the intersection
angle of $\K_0$ with $\{z=0\}$,
as per \eqref{eqn:defininiton_omega} and \eqref{eqn:values_a0_h0} in Remark \ref{rem:values_for_b=0};
in particular we recall that $0<\omega_0<\pi/4$.
We refer the reader to \cite{KarcherScherk}
and Section 2 of \cite{KapouleasEuclideanDesing}
as references for the four-ended Karcher--Scherk towers.
The following facts concerning our particular $\tow$
(and jointly analogous to Proposition \ref{karcher-scherk} and Remark \ref{towasymptotics})
are easily established.

\begin{figure}
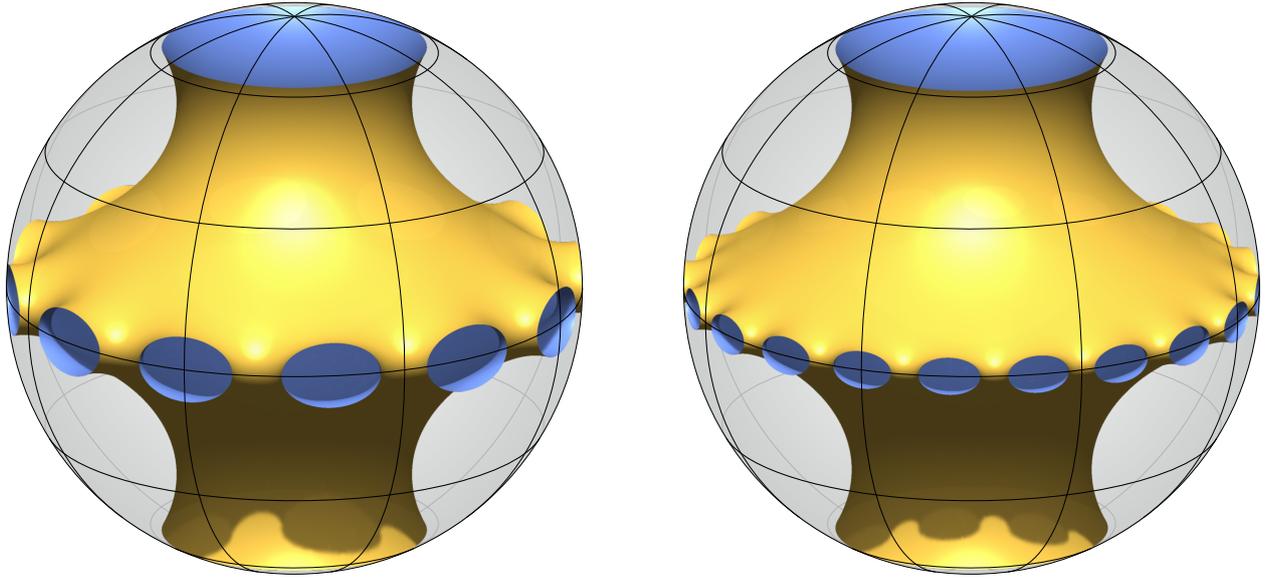
%
\includegraphics[width=\fbmsscale\textwidth,page=5]{figures-fmbs}\hfill
\includegraphics[width=\fbmsscale\textwidth,page=6]{figures-fmbs}%
\caption{Visualization of $\Xi_n$ for $n=12$ and $n=20$.}%
\label{fig:genus0n12}%
\end{figure}

\begin{proposition}
[Desingularizing model]
\label{four-ended_tower}
Let $\R^3$ be endowed with a Cartesian coordinate system $(\sigma,\psi,\theta)$.

\begin{enumerate}[label={\normalfont(\roman*)}]
\item
There exist in $\R^3$
precisely two
complete embedded
minimal surfaces
having symmetry group
\begin{equation*}
\sk{
  \refl_{\{\sigma=0\}},
  \refl_{\{\psi=0\}},
  \refl_{\{\theta=0\}},
  \refl_{\{\theta=\pi\}}
},
\end{equation*}
genus $0$ quotient by
$
 \trans^{\axis{\theta}}_{2\pi}
 =
 \refl_{\{\theta=\pi\}}\refl_{\{\theta=0\}}
$,
and exactly four ends,
each asymptotic to a plane
that intersects $\{\psi=0\}$
at angle $\omega_0$
along a line parallel to $\axis{\theta}$.
These two surfaces are congruent
by means of the isometry $\trans^{\axis{\theta}}_\pi$;
we pick one such surface and call it $\tow$.

\item
Every symmetry of $\tow$ is even i.\,e. has positive sign as defined in \eqref{symsign}.

\item
Outside of a compact set, $\tow$ consists of four normal graphs over their respective
asymptotic planes and the corresponding defining functions converge to zero exponentially with rate one (together with their derivatives of all orders).

\item
There exists $b^{\mathrm{tow}}_{\omega_0}>0$
such that the unique end of $\tow$
in $\{\sigma \geq 0\} \cap \{\psi \geq 0\}$
is asymptotic to the plane
$
 \{
   \psi=b^{\mathrm{tow}}_{\omega_0}
   + \sigma \tan \omega_0
 \}
$.

\item 
For each integer $n \geq 1$
the surface
\begin{equation*}
\towquot_{(n)}
\vcentcolon=
\tow / \sk{\trans^{\axis{\theta}}_{2n\pi}}
\end{equation*}
has genus $n-1$ and $4$ ends,
and it meets
$\{\sigma=0\}/\sk{\trans^{\axis{\theta}}_{2\pi}}$
along $n$ smooth closed simple curves,
at a constant, right angle.
In particular the surface
\begin{equation*}
\tow^+ \vcentcolon= \tow \cap \{\sigma \geq 0\}
\end{equation*}
is a free boundary minimal surface
in $\{\sigma \geq 0\}$,
and the surface
\begin{equation*}
\towquot^+_{(n)}
\vcentcolon=
\tow^+ / \sk{\trans^{\axis{\theta}}_{2n\pi}}
\end{equation*}
has
genus $0$,
$2$ ends,
and $n$ boundary components.
\end{enumerate}
\end{proposition}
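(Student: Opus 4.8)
The plan is to obtain $\tow$ from Enneper--Weierstrass data, in close analogy with the treatment of the six-ended towers in Appendix~\ref{app:Karcher--Scherk}, and then to read off properties (i)--(v) from the explicit representation together with the general analytic facts about asymptotically cylindrical ends developed in Appendix~\ref{app:CylAnalysis}. Concretely, first I would recall Karcher's conjugate or Weierstrass construction of the four-ended singly periodic towers from \cite{KarcherScherk} (see also Section~2 of \cite{KapouleasEuclideanDesing}): the relevant family is parametrized, up to rigid motion and homothety, by the single angle $\vartheta\in\interval{0,\pi/2}$ between consecutive asymptotic half-planes and the axis $\axis{\theta}$. Specializing $\vartheta=\omega_0$ (legitimate since $0<\omega_0<\pi/4$, so in particular $\vartheta\neq\pi/3$ is automatic and there is no enhanced symmetry to worry about) pins down a surface up to congruence; the residual ambiguity is exactly a translation by half a period along $\axis{\theta}$, which accounts for the ``precisely two'' surfaces related by $\trans^{\axis{\theta}}_\pi$ in item (i). The uniqueness statement (that these are the \emph{only} complete embedded minimal surfaces with the prescribed symmetry group, genus-$0$ quotient and four planar ends) I would deduce from the Pérez--Traizet classification \cite{PerezTraizetClassificationSP}, exactly as in Remark~\ref{towuniqueness}: the symmetry group together with the combinatorics of the ends isolates one member of their moduli space.

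Item (ii) follows from the reflection principle and the structure of the symmetry group: each generating reflection is through a plane, and a reflection through a plane containing the normal (equivalently, a reflection fixing a line of symmetry lying on the surface) preserves the two sides; here all four asymptotic half-planes and the symmetry planes $\{\psi=0\}$, $\{\theta=0\}$, $\{\theta=\pi\}$ contain lines on $\tow$, while $\{\sigma=0\}$ is the fixed-point set of $\refl_{\{\sigma=0\}}$ and a short check of orientations shows this reflection is also even (it exchanges the two half-towers $\tow^\pm$ but preserves the chosen normal, since $\tow$ is not a plane and $\{\sigma=0\}\cap\tow$ is a union of curves along which the conormal, not the normal, is flipped) — the argument is entirely parallel to item \ref{towsymsigns} of Proposition~\ref{karcher-scherk}. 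Item (iii), the exponential decay of the defining functions of the four ends over their asymptotic planes, is not part of the Weierstrass data per se but follows from Lemma~\ref{lem:CylMSE}: each end, quotiented by the period translation, is an asymptotically cylindrical solution of the minimal surface equation, and the lemma upgrades sub-exponential decay to decay with rate one together with all derivatives. Item (iv) is then a normalization: among the four ends exactly one lies in the quadrant $\{\sigma\geq0\}\cap\{\psi\geq0\}$, its asymptotic plane makes angle $\omega_0$ with $\{\psi=0\}$ by construction, hence has the form $\{\psi = b + \sigma\tan\omega_0\}$ for some $b\in\R$; that $b=b^{\mathrm{tow}}_{\omega_0}>0$ is the content of the four-ended analogue of Lemma~\ref{lem:btow_theta}, proved by the same explicit integration of the Weierstrass forms (and numerically $b^{\mathrm{tow}}_{\omega_0}>0$ because $\omega_0<\pi/3$).

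Item (v) is mostly topology. For the genus and end count of $\towquot_{(n)} = \tow/\sk{\trans^{\axis{\theta}}_{2n\pi}}$ I would apply the Gauss--Bonnet theorem exactly as in item \ref{genusofnontrivialquotient} of Proposition~\ref{karcher-scherk}: the quotient $\towquot_{(1)}$ has genus $0$ and four ends, so its Euler characteristic (as a surface with four punctures) determines the total Gaussian curvature, and passing to the $n$-fold cover multiplies this by $n$; combined with the four ends of each $\towquot_{(n)}$ (the period translation acts trivially on the set of ends) this forces genus $n-1$. The orthogonal intersection of $\towquot_{(n)}$ with $\{\sigma=0\}/\sk{\trans^{\axis{\theta}}_{2\pi}}$ along $n$ closed curves follows from $\refl_{\{\sigma=0\}}\in\Aut_{\R^3}(\tow)$ (a reflection plane meets a minimal surface orthogonally) together with a connectedness count identical to the one establishing connectedness of $\partial\tow^+$ in the proof of Proposition~\ref{karcher-scherk}: $\{\sigma=0\}\cap\tow$ descends to a finite union of embedded circles, and the genus-$0$ condition on $\towquot_{(1)}$ forces each fundamental period to contribute exactly one such circle, hence $n$ after passing to $\towquot_{(n)}$. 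That $\tow^+$ is free boundary minimal in $\{\sigma\geq0\}$ is then immediate, and the topological data of $\towquot^+_{(n)}$ (genus $0$, two ends, $n$ boundary components) follow from cutting $\towquot_{(n)}$ in half along those $n$ curves.

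\textbf{Main obstacle.} The step I expect to require the most care is the \emph{embeddedness} asserted in (i) (and implicitly needed for (v)), since embeddedness is genuinely global and does not follow from the local Weierstrass description; as in Appendix~\ref{app:Karcher--Scherk} one must argue directly from the Enneper--Weierstrass representation — controlling the position of the ends, checking that the conformal parametrization is injective on a fundamental domain and that the symmetries assemble the pieces without overlap — rather than quoting it. The rest is bookkeeping with Gauss--Bonnet, the reflection principle, and the asymptotic analysis of Appendices~\ref{app:Karcher--Scherk} and~\ref{app:CylAnalysis}.
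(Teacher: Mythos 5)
The paper provides no proof of Proposition~\ref{four-ended_tower}: immediately before the statement it refers the reader to \cite{KarcherScherk} and Section~2 of \cite{KapouleasEuclideanDesing}, declares the listed properties ``jointly analogous to Proposition~\ref{karcher-scherk} and Remark~\ref{towasymptotics},'' and calls them ``easily established.'' Your plan executes exactly that implied analogy -- Weierstrass or conjugate construction for existence and embeddedness, P\'erez--Traizet for uniqueness, Lemma~\ref{lem:CylMSE} for the sharp decay, Gauss--Bonnet plus the connectedness count for item~(v) -- so in outline it is the intended argument and the main bookkeeping (in particular the Euler-characteristic computations in item~(v)) checks out.

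Two places need tightening. For item~(ii), your justification conflates two distinct mechanisms: you assert that ``all four asymptotic half-planes and the symmetry planes $\{\psi=0\}$, $\{\theta=0\}$, $\{\theta=\pi\}$ contain lines on $\tow$.'' The asymptotic planes contain no points of $\tow$ at all, and $\tow\cap\{\sigma=0\}$ consists of the closed curves asserted in item~(v), not lines. More importantly, ``contains a line of $\tow$'' is the criterion that makes the $\pi$-rotation about that line (a reflection through a \emph{line}) a symmetry, and such symmetries are \emph{odd} -- that is precisely why $\sgn_{\tow_\vartheta}\refl_{\axis{x}}=-1$ in item~\ref{towsymsigns} of Proposition~\ref{karcher-scherk}. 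The clean argument here is simply that every generator of the group is a reflection through a \emph{plane}, and any plane of reflection symmetry of a surface meets it orthogonally along the fixed locus, so the reflection fixes the unit normal there and hence preserves the sides globally. The structural reason all symmetries of the four-ended tower are even, unlike the six-ended case, is that $\{\psi=0\}$ is not an asymptotic plane of $\tow$, so $\refl_{\{\psi=0\}}$ itself is available as a symmetry, whereas for the six-ended tower only the composite line-reflection $\refl_{\axis{x}}=\refl_{\{y=0\}}\refl_{\{z=0\}}$ is. For item~(iv), the parenthetical ``because $\omega_0<\pi/3$'' imports the positivity threshold specific to the six-ended formula \eqref{eqn:btow_theta} of Lemma~\ref{lem:btow_theta}, which was derived from the six-ended Enneper--Weierstrass data and has no a priori bearing on the four-ended family. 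The four-ended Scherk data yield a different (simpler) offset formula, and you should either compute it and verify positivity directly for $\vartheta=\omega_0$, or argue positivity from the embeddedness and the arrangement of the ends (if $b^{\mathrm{tow}}_{\omega_0}\le 0$ the four asymptotic planes would intersect pairwise in a way incompatible with having exactly one end in each closed quadrant $\{\pm\sigma\ge 0\}\cap\{\pm\psi\ge 0\}$).
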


In analogy with the six-ended case
(that is: for the half tower with three ends)
we define the function
\begin{equation*}
\towcokergen
\in
C^\infty_{\Aut_{\R^3}(\tow^+)}(\tow^+)
\quad
\mbox{(cf. \eqref{towcokerandgen})}
\end{equation*} 
generating equivariant $\partial_\psi$
dislocations of the wings:
$\towcokergen$
is the scalar normal projection
(after a choice of global unit normal for $\tow^+$)
of the velocity
of a one-parameter family
of $\Aut_{\R^3}(\tow^+)$-equivariant
deformations of $\tow^+$
that fix the core identically
and translate the ends
in the $\psi$ direction.
In particular,
sufficiently far from
$\axis{\theta}$,
$\towcokergen$
agrees up to a sign
with the (scalar function associated to the) Jacobi field
induced by the ambient Killing field
$\partial_\psi$,
the signs being such that
$\towcokergen$
is $\Aut_{\R^3}(\tow^+)$-equivariant.

To avoid misunderstandings, we explicitly note that in the present section $\towcokergen$ has the same sign along the two wings (at corresponding points), while the opposite is actually true when considering either pair of ``outer'' wings of the six-ended Karcher--Scherk tower which we dealt with in Section \ref{sec:Initial}.

We then define
\begin{equation*}
\towcoker
\vcentcolon=
-J_{\tow^+}\towcokergen
\in
C^\infty_{\Aut_{\R^3}(\tow^+)}(\tow^+)
\end{equation*}
to be the correspondingly induced
linearized mean curvature,
$J_{\tow^+}$ being the Jacobi operator of $\tow^+$.

Since the Robin operator $B^{\mathrm{Robin}}_{\tow^+}$ for $\tow^+$ as a free boundary minimal surface in $\{\sigma \geq 0\}$ coincides with the Neumann conormal derivative operator,
we can (as we did for the six-ended tower in Section~\ref{sec:Linear}) reduce the $\Aut_{\R^3}(\tow^+)$-equivariant Robin boundary value problem for $J_{\tow^+}$ on the half tower $\tow^+$
to the problem of inverting $J_{\tow}$ with fully $\Aut_{\R^3}(\tow)$-equivariant data on the complete tower $\tow$.
Analogously to Lemma \ref{towker} we know
(in this specific case as a corollary of Proposition~2.8 in \cite{KapouleasEuclideanDesing})
that $\tow$ has no nontrivial
bounded and equivariant
Jacobi fields.
We then obtain the following
analogue of Proposition \ref{towsol}
(essentially a special case
of Proposition 6.1 
in \cite{KapouleasZouCloseToBdy},
which also applies the four-ended
towers in the free boundary setting).

\begin{proposition}
[Fredholm properties of the boundary value problem on $\tow^+$]
\label{two-ended_half_tow_sol}
For any $\alpha,\beta \in \interval{0,1}$
the map
\begin{align*}
C^{2,\alpha,\beta}_{\Aut_{\R^3}(\tow^+)}(\tow^+)
  &\to
  C^{0,\alpha,\beta}_{\Aut_{\R^3}(\tow^+)}(\tow^+)
    \oplus
    C^{1,\alpha}_{\Aut_{\R^3}(\tow^+)}(\partial \tow^+)
\\
u
  &\mapsto
  (J_{\tow^+}u, B^{\mathrm{Robin}}_{\tow^+}u)
\end{align*}
is Fredholm with Fredholm index $-1$,
and the pair
$(\towcoker,0)$
has nontrivial projection in the cokernel.
\end{proposition}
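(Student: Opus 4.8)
The strategy is to mirror, in a streamlined form, the analysis carried out for the six-ended tower in Section~\ref{subs:LinTow} (Lemmata~\ref{towker}--\ref{towsol}), adapting the bookkeeping to the four-ended case and to the sign conventions recorded just above the statement. As with the six-ended half tower, the first reduction is to descend to the quotient $\towquot^+_{(1)} = \tow^+/\sk{\trans^{\axis{\theta}}_{2\pi}}$ and then, since the Robin operator on $\tow^+$ is the pure Neumann conormal derivative, to reflect across $\{\sigma=0\}$ so that solving the boundary value problem on $\tow^+$ becomes equivalent to inverting $J_{\towquot_{(1)}}$ with fully $\Aut_{\R^3}(\tow)$-equivariant data on the closed quotient $\towquot_{(1)}$. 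The crucial input for Fredholmness is that the model has no obstruction in the kernel: this is exactly the statement, cited above, that $\tow$ has no nontrivial bounded equivariant Jacobi fields (which here follows from Proposition~2.8 of \cite{KapouleasEuclideanDesing} rather than requiring the Enneper--Weierstrass argument of Lemma~\ref{towker}).

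First I would set up the weighted spaces on each wing exactly as in Appendix~\ref{app:CylAnalysis}: each of the four ends of $\tow$, when quotiented, is a normal graph over a half-cylinder with exponentially decaying defining function, so the Jacobi operator $J_W$ on each wing is an exponentially small perturbation of $\Delta_W$, and Corollary~\ref{cor:wingsol} together with Remark~\ref{rem:J_on_wings} furnishes a bounded right inverse $P_W$ to $J_W\colon C^{2,\alpha,\beta}(W)\to C^{0,\alpha,\beta}(W)$ for $\beta\in\interval{0,1}$, with the asymptotic normalization $u|_W - \mu_W \in C^{2,\alpha,\beta}$. Then I would run the same decomposition-and-patching argument as in the proof of Lemma~\ref{towsol}: cut the data into a core piece (compactly supported) and four wing pieces, solve on the wings using the $P_W$, solve the remaining compactly supported core problem using the Fredholm alternative for $J_{\towquot_{(1)}}$ (the kernel being trivial by the cited no-Jacobi-fields result, so one solves orthogonally to the cokernel, which by self-adjointness is also trivial on the equivariant subspace), and glue with cutoff functions. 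This produces, for each $(E,f)$, a bounded solution $u$ with $J_{\towquot^+_{(1)}}u=E$, $B^{\mathrm{Robin}}_{\towquot^+_{(1)}}u=f$, together with a well-defined $\mu\in\R$ recording the common asymptotic value along the ends (the two ends in $\{\psi\geq 0\}$ and $\{\psi\leq0\}$ share the same $\mu$ up to sign by the symmetry $\refl_{\{\psi=0\}}$, and on the core-containing region $\mu$ vanishes). Uniqueness of the bounded solution follows, as before, by even reflection to an $\Aut_{\R^3}(\tow)$-equivariant bounded $C^2$ Jacobi field on $\tow$, which must vanish. Consequently the map in the statement has image equal to the kernel of the surjective linear functional $(E,f)\mapsto\mu$, so it is Fredholm of index $-1$.

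To finish, I would verify that $(\towcoker,0)$ has nontrivial projection in the cokernel, i.e.\ that the functional $(E,f)\mapsto\mu$ does not vanish on $(\towcoker,0)$. Since $\towcoker = -J_{\tow^+}\towcokergen$ and $\towcokergen$ is, far out along each wing, asymptotically $\pm$ the Jacobi field generating the ambient $\partial_\psi$ translation --- here \emph{with the same sign on both wings}, as emphasized before the statement --- the dislocation generator $\towcokergen$ converges to a \emph{nonzero} constant along the distinguished wing in $\{\sigma\geq0\}\cap\{\psi\geq0\}$. Plugging $(E,f)=(\towcoker,0)$ into the construction above, the asymptotic value $\mu$ of the resulting solution equals $\towcokergen|_{W}(\infty)\neq 0$ (this is the same computation as in the opening lines of the proof of Lemma~\ref{towsol}, item~\ref{lem:towsol-iii}). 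Hence $(\towcoker,0)$ is not in the image, giving the asserted nontrivial cokernel projection.

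\textbf{Main obstacle.} The only genuinely delicate point is the same one that was delicate in Lemma~\ref{towsol}: ensuring that the core problem on $\towquot_{(1)}$ really has trivial cokernel on the equivariant subspace, so that the Fredholm alternative applies cleanly. In the six-ended case this was handled by transporting to the round sphere via the Enneper--Weierstrass conformal diffeomorphism and invoking Corollary~\ref{orthogonality_to_ker_LG}; in the four-ended case one instead leans on the cited result of Kapouleas (Proposition~2.8 of \cite{KapouleasEuclideanDesing}) that there are no bounded equivariant Jacobi fields, combined with the self-adjointness of $J_{\towquot_{(1)}}$ on the appropriate $L^2$ space and the exponential decay of $|A_{\tow}|^2$ along the ends (which guarantees the relevant Jacobi fields produced by the Fredholm argument are genuinely bounded and hence must vanish). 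Everything else --- the weighted estimates, the cutoff gluing, the index count, and the cokernel-projection computation --- is a routine adaptation of the six-ended argument and of the treatment in \cite{KapouleasZouCloseToBdy}.
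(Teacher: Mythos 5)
The paper offers no proof of this proposition — it reduces to the complete tower, cites Proposition~2.8 of \cite{KapouleasEuclideanDesing} for the absence of bounded equivariant Jacobi fields, and then declares the result an analogue of Proposition~\ref{towsol} (or a special case of Proposition~6.1 in \cite{KapouleasZouCloseToBdy}). Your proposal fills this in by replicating the decomposition-and-patching structure of Lemma~\ref{towsol}, and the overall architecture (reflect to the full tower; solve on the wings with $P_W$ via Remark~\ref{rem:J_on_wings} and Corollary~\ref{cor:wingsol}; solve the compactly supported core residual; record $\mu$; identify the Fredholm index as $-1$; compute the nonzero value of the obstruction functional on $(\towcoker,0)$ from the nonzero asymptotic constant of $\towcokergen$) is exactly what the paper is pointing to. That part is sound.

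There is, however, one genuine gap. In the six-ended case the core problem is solved by transporting $J_{\towquot}$ via the Enneper--Weierstrass conformal diffeomorphism $G$ to a Schr\"odinger operator $L_G$ on the \emph{compact} sphere, so that the Fredholm alternative (index $0$, cokernel $=$ kernel, orthogonality supplied by Corollary~\ref{orthogonality_to_ker_LG}) applies cleanly. You deliberately drop this step and instead assert that for compactly supported data one can ``solve the core problem using the Fredholm alternative for $J_{\towquot_{(1)}}$'' and that ``the cokernel $\ldots$ by self-adjointness is also trivial.'' On the noncompact surface $\towquot_{(1)}$ this reasoning is incomplete: self-adjointness alone does not give Fredholm index $0$, and in the weighted H\"older (or $L^2$) spaces relevant here the index depends on the weight and on the number of ends. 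Moreover, the crucial step — that the cokernel elements are \emph{bounded} Jacobi fields, so that the Kapouleas result annihilates them — is asserted but not justified: a Jacobi field with subexponential growth on a surface with cylindrical ends need not be bounded, since in the zeroth Fourier mode it can grow linearly (both $1$ and $\rho$ are harmonic on the half-cylinder, as the explicit solution formula~\eqref{eq:0modeSol} in the proof of Lemma~\ref{lem:cylsol} shows). Your parenthetical about the exponential decay of $\abs{A_\tow}^2$ does not by itself preclude this. To close the argument you would need an explicit indicial-roots/growth analysis showing the cokernel lives among decaying Jacobi fields; or — much more simply — just run the conformal transplantation exactly as in Lemma~\ref{towsol}: the four-ended Karcher--Scherk tower admits an Enneper--Weierstrass parametrization over a four-punctured sphere (cf.\ \cite{KarcherScherk} and Section~2 of \cite{KapouleasEuclideanDesing}), the conformal factor and Gauss map have the same structural properties as in Appendix~\ref{app:Karcher--Scherk}, and the argument of Corollary~\ref{orthogonality_to_ker_LG} goes through verbatim.

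A small secondary remark: you say the two ends ``share the same $\mu$ up to sign'' and ``on the core-containing region $\mu$ vanishes.'' In the four-ended case every element of $\Aut_{\R^3}(\tow)$ has positive sign (Proposition~\ref{four-ended_tower}\thinspace(ii)), so equivariance forces the two ends of $\towquot_{(1)}^+$ to have \emph{equal} asymptotic constants, not constants of opposite sign; and there is no third ``core-containing'' wing as in the six-ended case (that role was played by $W^0$ there, which has no analogue here). The intended single obstruction scalar $\mu$ is correct, but the bookkeeping you copied over from the three-ended half tower doesn't transpose literally.
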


In analogy with the six-ended case
we define for each
$n \in \N^{\ast}$
and $\xi \in \R$
an $\Aut_{\R^3}(\tow^+)$-equivariant deformation
\begin{equation*}
\towbent^+_{n,\xi}
\quad
\mbox{(cf. \eqref{towbent})}
\end{equation*}
of $\tow^+$
that smoothly interpolates
between the identically fixed core
and the wings dislocated
(as described above)
by translation $\trans^{\axis{\psi}}_{\pm\xi/n}$
starting at distance
$O(1)$ from $\axis{\theta}$
and furthermore straightened
to exact half planes
at distance $O(n^{3/4})$
from $\axis{\theta}$.
We also define
(as in \eqref{bmxi})
\begin{align*}
\kappa_{n,\xi}
  &\vcentcolon=
  \frac{b^{\mathrm{tow}}_{\omega_0}}{n}
  +\frac{\xi}{n^2},
\\
b_{n,\xi}
  &\vcentcolon=
  \sin \kappa_{n,\xi},
\\
P^1_{n,\xi}
  &\vcentcolon=
  \{\psi = \kappa_{n,\xi} + \sigma \tan \omega_0\},
\end{align*}
so that in particular
the end of the
rescaled, deformed tower
$n^{-1}\towbent_{n,\xi}^+$
inside
$\{\sigma \geq 0\} \cap \{\psi \geq 0\}$
is asymptotic to $P^1_{n,\xi}$ (eventually coinciding with it).

Just as in the construction
of the initial surfaces $\Sigma_{m,\xi}$,
we can use
an arc-length parametrization
of a profile curve of $\K_{b_{n,\xi}}$
to identify a subset
of
$
 P^1_{n,\xi}
 /
 \sk{\trans^{\axis{\theta}}_{2\pi}}
$
with $\K_{b_{n,\xi}}$
outside an
$O(n^{-1})$ neighborhood of the equator $\Sp^1$.
In this way
we can transfer the defining function
of the wing of
$n^{-1}\towbent^+_{n,\xi}$
over $P^1_{n,\xi}$
to $\K_{b_n,\xi}$
and thereby obtain the corresponding
graph over $\K_{b_{n,\xi}}$
, away from $\Sp^1$.
By means of the reflection $\refl_{\{z=0\}}$
we likewise obtain a graph over
$-\K_{b_{n,\xi}}$,
away from the equator.
Referring to the symmetry group
of $\tow$ in Proposition \ref{four-ended_tower},
it is easy to see that
the
$\Aut_{\R^3}(\tow^+ \cap \{\psi>0\})$-equivariance
of the defining function,
the $\Ogroup(2)$-invariance
of $\K_{b_{n,\xi}}$,
and the application of the scale factor $n$
imply $\pri_n$-invariance
of the union of the above two graphs.
We also observe that
\eqref{Phintertwine}
implies that the map $\Phi$
from \eqref{PhiR3def}
intertwines $\Aut_{\R^3}(\tow^+)$
with $\pri_n$,
and we next apply $\Phi$
to an $O(1)$ neighborhood of $\axis{\theta}$
to wrap the core of
$n^{-1}\towbent_{n,\xi}^+$
around a neighborhood of $\Sp^1$.
Finally we construct a connected surface,
namely the initial surface
\label{Xi_initsurf_def}
\begin{equation*}
\gls{Xinxi}%\Xi_{n,\xi}
\quad
\text{(cf. \eqref{initsurfdef}),}
\end{equation*}
that smoothly and $\pri_n$-equivariantly
interpolates between this core
and each of the two graphs
over $\pm \K_{b_{n,\xi}}$.
As 
\glsuserii{m} %$n$
 here takes the role
that $m$ played
in the construction of $\Sigma_{m,\xi}$,
we assume $n$ is as large as needed
to ensure that $\Xi_{n,\xi}$
is a well-defined, smooth,
and embedded surface.

Parallel to definitions made for $\Sigma_{m,\xi}$
we define the corresponding regions
\begin{equation*}
\towr_{n,\xi}^1 \subset \towr_{n,\xi} \subset \Xi_{n,\xi},
\quad
\catr_{n,\xi} \subset \Xi_{n,\xi}
\quad
\mbox{(cf. \eqref{eqn:definition_regions} and \eqref{towr1})}
\end{equation*}
so that, essentially,
$\towr_{n,\xi}$
is
the intersection with $\Xi_{n,\xi}$
of an $O(n^{-1/2})$
neighborhood of the equatorial circle $\Sp^1$,
$\towr_{n,\xi}^1$
is a marginally smaller subdomain of $\towr_{n,\xi}$
in $\Xi_{n,\xi}$,
and
$\catr_{n,\xi}$
is the intersection with
$\Xi_{n,\xi} \cap \{z>0\}$
of the complement of an $O(n^{-3/4})$
neighborhood of $\Sp^1$.
(The disc region
$\discr_{n,\xi} \subset \Sigma_{m,\xi}$
of course finds no analogue in $\Xi_{n,\xi}$.)

We also define the maps (cf. \eqref{regionalprojections})
\begin{align*}
\varpi_{\towr_{n,\xi}}
  \colon
  M_{n,\xi}
  &\to
  \towquot^+_{(n)},
&
\varpi_{\catr_{n,\xi}}
  \colon
  K_{n,\xi}
  &\to
  \K_{b_{n,\xi}}
\end{align*}
exactly as we did in Section \ref{sec:Initial} (so that, in particular, the map
$\varpi_{\catr_{n,\xi}}$
is simply the nearest-point projection in $\R^3$); note that 
the maps $\varpi_{\towr_{n,\xi}}$,
$\varpi_{\catr_{n,\xi}}$
are diffeomorphisms onto their images. 
We use them to define,
for any $k \in \N^{\ast}=\left\{1,2,3,\ldots \right\}$
and $\alpha,\beta \in \interval{0,1}$,
the norms
\begin{equation*}
\nm{{}\cdot{}}_{k,\alpha,\beta}
\quad
\mbox{(cf. \eqref{globalnorms})}
\end{equation*}
on $\pri_n$-equivariant
(equivalently: $\pri_n$-invariant, since all elements of $\pri_n$ have sign $+1$)
functions on $\Xi_{n,\xi}$
just as in \eqref{globalnorms},
but with the term corresponding
to the disc omitted
(and with $m$ replaced by $n$).
We also define
on $\Xi_{n,\xi}$
the smooth, compactly supported,
$\pri_n$-equivariant
function
\begin{equation*}
H_{\Xi_{n,\xi}}^{\mathrm{dislocate}}
\vcentcolon=
\varpi_{\towr_{n,\xi}}^*\towcoker
\end{equation*}
extended to be constantly zero
on $\Xi_{n,\xi} \setminus \towr_{n,\xi}$.
(In fact, we slightly abuse notation
in the above definition
in that by $\towcoker$
we really mean the unique function
on $\towquot^+_{(n)}$ whose
pullback under the canonical
projection $\tow^+ \to \towquot^+_{(n)}$
is $\towcoker$.)

With the foregoing definitions
it is straightforward to verify
the following analogue
of Proposition~\ref{initsurfbasicprops}
and Proposition~\ref{initsurfcomp}
(jointly)
by mirroring the proofs
of the latter two results,
in particular
using Proposition~\ref{four-ended_tower}
in place of Proposition~\ref{karcher-scherk}
and Remark~\ref{towasymptotics}.

\begin{proposition}
[Initial surfaces]
\label{genus-zero_init_surfs}
For each $c>0$
there exists $n_0>0$
such that for every $\xi \in \IntervaL{-c,c}$
and every integer $n>n_0$
the initial surface 
\gls{Xinxi} %$\Xi_{n,\xi}$
has the following properties.
\begin{enumerate}[label={\normalfont(\roman*)}]
\item 
$\Xi_{n,\xi}$ is a connected
smooth surface with boundary
and is properly embedded in $\B^3$.

\item
$\Xi_{n,\xi}$ has genus $0$
and $n+2$ boundary components.

\item
$\Xi_{n,\xi}$ meets $\partial \B^3$ orthogonally.

\item
$\Xi_{n,\xi}$ has symmetry group
$\Aut_{\B^3}(\Xi_{n,\xi})=\pri_n$,
and every symmetry is even
(namely: it has positive sign in the sense
of definition \eqref{symsign}).

\item
For any $\alpha,\beta \in \interval{0,1}$
the mean curvature $H_{\Xi_{n,\xi}}$
of $\Xi_{n,\xi}$ satisfies the estimate
\begin{equation*}
\nm{
  H_{\Xi_{n,\xi}}
  -\xi H^{\mathrm{dislocate}}_{\Xi_{n,\xi}}
}_{0,\alpha,\beta}
\leq
\frac{C}{1-\beta}
\end{equation*}
for some $C>0$
independent of $n$, $c$, $\xi$ and $\alpha,\beta$.
\end{enumerate}
Furthermore,
for each integer $k \geq 0$,
as $n \to \infty$
the region $\catr_{n,\xi}$
converges in $C^k$ to $\K_0$
and for any $p \in \Sp^1$
the translated and rescaled region
$n(\towr_{n,\xi}-p)$
converges in $C^k$
on compact subsets
to $\tow^+$ (modulo ambient isometry),
with the convergence
in both cases
uniform
in $\xi \in \IntervaL{-c,c}$.
\end{proposition}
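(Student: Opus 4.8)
The plan is to transcribe, essentially verbatim, the proofs of Proposition~\ref{initsurfbasicprops} and Proposition~\ref{initsurfcomp}, replacing the six-ended Karcher--Scherk tower and its ancillary data by the four-ended tower $\tow$ of Proposition~\ref{four-ended_tower}, and suppressing throughout every reference to the disc region $\discr$ and the disc model $\B^2$. By construction $\Xi_{n,\xi}$ is the union of a core region (the image under $n^{-1}\Phi$ of a rescaled, quotiented, dislocated and straightened half-tower $\towbent^+_{n,\xi}$, wrapped around $\Sp^1$), two graphs over $\pm\K_{b_{n,\xi}}$ of the transplanted wing defining functions, and $\pri_n$-equivariant interpolating collars joining the core to each graph. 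Items (i)--(iv) of Proposition~\ref{genus-zero_init_surfs} are then local or topological statements about this assembly, while item (v) and the ``Furthermore'' part are quantitative and follow from the asymptotics of $\tow$ in Proposition~\ref{four-ended_tower} together with Lemma~\ref{lem:mc_and_met_of_graphs_euc} and the metric-distortion estimates for $n^{-1}\Phi$ furnished by \eqref{Phullback}.

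First I would dispose of items (i)--(iv). Connectedness and smoothness of $\Xi_{n,\xi}$ are immediate from the interpolation construction for $n$ large, and proper embeddedness follows, again for $n$ large, from the embeddedness of $\tow$ (Proposition~\ref{four-ended_tower}(i)) and of each $\K_b$ (Lemma~\ref{lem:catenoid-K_b}) together with the smallness of the interpolation and dislocation perturbations. For (ii) one counts boundary circles and computes the Euler characteristic of the pieces glued along two circles: by Proposition~\ref{four-ended_tower}(v) the core contributes $n$ boundary curves near $\Sp^1$ and has two ends, each glued to the inner circle of one copy of $\pm\K_{b_{n,\xi}}$, whose outer circles provide the two remaining boundary components, so $\Xi_{n,\xi}$ has $n+2$ boundary components and Euler characteristic $-n$, hence genus $0$; this plays the role of the doubling argument that gave genus $2m$ in the proof of Proposition~\ref{initsurfbasicprops}. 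Item (iii) holds because $\pm\K_b$ meet $\partial\B^3$ orthogonally, $\tow^+$ meets $\{\sigma=0\}$ orthogonally (Proposition~\ref{four-ended_tower}(v)), the map $n^{-1}\Phi$ preserves this orthogonality by \eqref{Phullback} and the description of $\Phi$, and the interpolation is supported away from $\Sp^1$. For (iv), the containment $\pri_n\leq\Aut_{\B^3}(\Xi_{n,\xi})$ follows from the $\Ogroup(2)$-invariance of $\K_b$, the intertwining relations \eqref{Phintertwine} (which make $\Phi$ carry $\Aut_{\R^3}(\tow^+)$ onto $\pri_n$) and the symmetry group of $\tow$ in Proposition~\ref{four-ended_tower}(i); equality is obtained, as in the six-ended case, by noting that any symmetry must preserve the boundary component closest to $\Sp^1$ (or, alternatively, by the ad hoc diameter argument at the end of Section~\ref{subs:ProofMainThm}, adapted to the absence of $\B^2$); and that every symmetry is even follows from Proposition~\ref{four-ended_tower}(ii) on the tower region and from inspecting the action of the generators of $\pri_n$ on the catenoidal regions, noting that $\refl_{\{z=0\}}\in\pri_n$ exchanges the two copies $\pm\K_{b_{n,\xi}}$ without exchanging the two sides of the connected, two-sided surface $\Xi_{n,\xi}$.

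For item (v) and the convergence claims I would mirror the proof of Proposition~\ref{initsurfcomp}. The mean curvature of $\Xi_{n,\xi}$ vanishes identically where $\Xi_{n,\xi}$ coincides with $\pm\K_{b_{n,\xi}}$ or with a straightened half-plane wing, so $H_{\Xi_{n,\xi}}$ is supported on the interpolation and dislocation collars. Near the equator one transfers the four-ended analogue of Lemma~\ref{lemmatowbent} (dislocation and straightening estimates for $\towbent^+_{n,\xi}$), composes with the $O(n^{-1})$ metric distortion of $n^{-1}\Phi$ coming from \eqref{Phullback} and the bound $|\omega_{b_{n,\xi}}-\omega_0+\kappa_{n,\xi}|\leq C\kappa_{n,\xi}=O(n^{-1})$, and reads off that $H_{\Xi_{n,\xi}}-\xi H^{\mathrm{dislocate}}_{\Xi_{n,\xi}}$ is of order $n^{-1}$ times a function growing at most linearly in $\dist_{\axis{\theta}}$; pairing this against the weight $e^{-\beta\dist_{\axis{\theta}}}$ as in item \ref{HestTow} of Proposition~\ref{initsurfcomp} yields the stated bound $C/(1-\beta)$. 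Away from the equator the catenoidal region is a graph over $\K_{b_{n,\xi}}$ whose defining function is exponentially small in a positive power of $n$ (the relevant part of the wing lies at rescaled distance $\gtrsim n^{1/4}$ from $\axis{\theta}$), so by Lemma~\ref{lem:mc_and_met_of_graphs_euc} its mean curvature is negligible; since moreover $b_{n,\xi}=\sin\kappa_{n,\xi}\to 0$ uniformly for $\xi\in\IntervaL{-c,c}$ and $\K_b$ depends smoothly on $b$ (Lemma~\ref{lem:catenoid-K_b}), it follows that $\catr_{n,\xi}\to\K_0$ in every $C^k$, uniformly in $\xi$. Finally $n(\towr_{n,\xi}-p)\to\tow$ in $C^k$ (modulo an ambient isometry depending on $p$ and on the wrapping) because $n^{-1}\Phi$ rescaled by $n$ converges to the ambient identity with $O(n^{-1})$ distortion near $\axis{\theta}$ while the dislocation displacement is $O(\xi/n)=O(c/n)\to 0$. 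I expect the only point requiring genuine care to be the near-equatorial mean-curvature estimate in item (v): establishing the four-ended analogue of Lemma~\ref{lemmatowbent} and subtracting the first-order dislocation term so that the residual is quadratically small in $\xi/n$, exactly as in item \ref{towbent_mc_disloc_est} of that lemma; everything else is the routine bookkeeping already carried out in the six-ended case.
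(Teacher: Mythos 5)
Your proposal is correct and follows essentially the same route as the paper, whose entire proof of Proposition~\ref{genus-zero_init_surfs} consists of the instruction to mirror the proofs of Propositions~\ref{initsurfbasicprops} and~\ref{initsurfcomp}, with Proposition~\ref{four-ended_tower} replacing Proposition~\ref{karcher-scherk} and Remark~\ref{towasymptotics}, and with the disc region discarded. Your fleshed-out version (the Euler-characteristic count giving genus $0$ with $n+2$ boundary circles, the orthogonality and symmetry arguments via \eqref{Phintertwine}, and the mean-curvature estimate via the four-ended analogue of Lemma~\ref{lemmatowbent} together with Lemma~\ref{lem:mc_and_met_of_graphs_euc}) is exactly the bookkeeping the paper intends and leaves implicit.
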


Exploiting the above convergence
and Proposition \ref{two-ended_half_tow_sol}
in place of Proposition \ref{towsol}
(and discarding the disc)
we next obtain the following analogue
of Proposition \ref{globalsol}.
For the continuity assertion
we must first choose
diffeomorphisms
$\varsigma_{n,\xi}\colon\Xi_{n,0} \to \Xi_{n,\xi}$
exactly as in
\eqref{Sigma_0_to_Sigma_xi}
and
\eqref{weighted_ests_for_Sigma_0_to_Sigma_xi}
(but with each $\Sigma$ replaced by $\Xi$,
each $m$ replaced by $n$,
and $\apr_m$ replaced by $\pri_n$).

\begin{proposition}
[Solutions on the initial surface modulo
approximate cokernel]
Assume $0<\alpha<1$, $0<\beta<\gamma<1$, and $c>0$.
There exists $n_0>0$ such that
for any integer $n>n_0$ and any $\xi \in \IntervaL{-c,c}$
there is a linear map
\begin{equation*}
\glsuserii{initialresol} %P_{\Xi_{n,\xi}}
\colon
C^{0,\alpha}_{\pri_n}(\Xi_{n,\xi})
\to
C^{2,\alpha}_{\pri_n}(\Xi_{n,\xi}) \oplus \R
\end{equation*}
such that
if $E \in C^{0,\alpha}_{\pri_n}(\Xi_{n,\xi})$ 
and $(u,\mu)=P_{\Xi_{n,\xi}}E$,
then
\begin{enumerate}[label={\normalfont(\roman*)}]
\item
\(
\nm{u}_{2,\alpha,\beta}+\abs{\mu}
  \leq 
  C\nm{E}_{0,\alpha,\gamma}
\)
for some constant $C>0$ independent of
$c$, $n$, $n_0$, $\xi$, and $E$;
\item
\(\displaystyle
\left\{
\begin{aligned}
n^{-2}J_{\Xi_{n,\xi}}u
  &=
  E
    +\mu H^{\mathrm{dislocate}}_{\Xi_{n,\xi}}
&&\text{ in }\Xi_{n,\xi},
\\
B_{\Xi_{n,\xi}}^{\mathrm{Robin}}u
  &=0
&&\text{ on }
\partial\Xi_{n,\xi};
\end{aligned}\right.
\)
\item
the map
\begin{align*}
  \R
  \oplus C^{0,\alpha}_{\pri_n}(\Xi_{n,0})
&\to 
C^{2,\alpha}_{\pri_n}(\Xi_{n,0})
  \oplus \R
\\
(
  \xi,
  E_0
)
&\mapsto
(
  \varsigma_{n,\xi}^*u_\xi,
  \mu_\xi
),
\end{align*}
where $(u_\xi,\mu_\xi)\vcentcolon=P_{\Xi_{n,\xi}}\varsigma_{n,\xi}^{-1*}E_0$, is continuous.
\end{enumerate}
\end{proposition}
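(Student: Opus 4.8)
The plan is to mirror, essentially verbatim, the proof of Proposition~\ref{prop:OutputLinearTheory} (i.e.\ Proposition~\ref{globalsol}), with the one simplification that there is no disc region $\discr_{n,\xi}$ to handle and hence only two model building blocks, namely the half tower $\tow^+$ (in its two-ended, genus-zero quotient $\towquot^+_{(n)}$) and the single catenoidal annulus $\K_{b_{n,\xi}}$. First I would record the two model resolvents: on the catenoid, the analogue of Lemma~\ref{lem:catker} gives an inverse $P^n_{\K_{b_{n,\xi}}}$ of $J_{\K_{b_{n,\xi}}}$ with homogeneous Dirichlet data on the lower boundary circle and homogeneous Robin data on the upper one, with $\pyr_n$-equivariance (note $\pyr_n < \pri_n$ is the largest subgroup of $\pri_n$ preserving $\K_0$, playing exactly the role $\pyr_m < \apr_m$ played before), and bounded operator norm independent of $n$; on the tower, Proposition~\ref{two-ended_half_tow_sol} (which is the four-ended analogue of Lemma~\ref{towsol}) furnishes a bounded right inverse $P^n_{\tow}$ on weighted H\"older spaces, modulo the one-dimensional approximate cokernel spanned by $\towcoker$, with operator norm uniform in $n$.

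Next I would paste together these two operators exactly as in the proof of Proposition~\ref{globalsol}. Given data $E$, decompose it with cutoff functions in $\dist_{\axis{\theta}}$ into a piece $E_\towr$ supported in $\towr_{n,\xi}$ and a remainder $E-E_\towr$ supported away from $\towr^1_{n,\xi}$; transplant $E_\towr$ to $\towquot^+_{(n)}$, apply $P^n_\tow$ to get $(u_\tow,\mu)$, cut off and transplant back to get $u_\towr$; transplant $E-E_\towr$ to $\K_{b_{n,\xi}}$, apply $n^2 P^n_{\K_{b_{n,\xi}}}$, cut off and transplant back to get $u_\catr$; set $\widetilde{P}_{n,\xi}E := (u_\towr + u_\catr,\ \mu)$. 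One then checks, using the regionwise comparison estimates of the analogue of Proposition~\ref{initsurfcomp} (the Riemannian metric, mean curvature, Jacobi operator, and Robin operator comparisons, which follow from the $C^k$-convergence statements in Proposition~\ref{genus-zero_init_surfs} by the same arguments as in Section~\ref{sec:Initial}), that $\widetilde{P}_{n,\xi}$ is an approximate right inverse: $\lVert I - L_{n,\xi}\widetilde{P}_{n,\xi}\rVert \to 0$ as $n \to \infty$, where $L_{n,\xi}(u,\mu) = (n^{-2}J_{\Xi_{n,\xi}}u - \mu H^{\mathrm{dislocate}}_{\Xi_{n,\xi}},\ n^{-1}B^{\mathrm{Robin}}_{\Xi_{n,\xi}}u)$ — here it is genuinely simpler than before because the boundary of $\Xi_{n,\xi}$ imposes only homogeneous Robin conditions on all $n+2$ components (there is no inhomogeneous equatorial boundary slot, so $f=0$ throughout). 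Then $L_{n,\xi}\widetilde{P}_{n,\xi}$ is invertible for large $n$ with inverse bounded uniformly in $n$, and we take $P_{\Xi_{n,\xi}} := \widetilde{P}_{n,\xi}(L_{n,\xi}\widetilde{P}_{n,\xi})^{-1}$. Properties (i) and (ii) are then immediate, the former from the uniform bounds on the two model resolvents together with the weighted norm definitions and the mean curvature estimate in Proposition~\ref{genus-zero_init_surfs}, the latter by construction.

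For the continuity claim (iii) I would argue exactly as in the last line of the proof of Proposition~\ref{globalsol}: the family $\varsigma_{n,\xi}$ was chosen (mirroring \eqref{Sigma_0_to_Sigma_xi} and \eqref{weighted_ests_for_Sigma_0_to_Sigma_xi}) so that $(\xi,p)\mapsto \iota_{\Xi_{n,\xi}}\circ\varsigma_{n,\xi}(p)$ is smooth and the pullbacks $\varsigma_{n,\xi}^{\pm *}$ are uniformly bounded on the weighted spaces; since each ingredient entering $P_{\Xi_{n,\xi}}$ (the cutoffs, the model resolvents — whose continuous dependence on $\xi$ comes through $b_{n,\xi}$ and the smooth dependence of $\K_{b_{n,\xi}}$ on $b$ from Lemma~\ref{lem:catenoid-K_b}, and through the smooth $\xi$-dependence of $\towbent^+_{n,\xi}$ — and the transplantation maps) depends continuously on $\xi$, so does $(\xi,E_0)\mapsto(\varsigma_{n,\xi}^*u_\xi,\mu_\xi)$. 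Since every element of $\pri_n$ has sign $+1$ (equivariance $=$ invariance), there are no sign bookkeeping subtleties, and since all ingredients are manifestly $\pri_n$-equivariant the output lands in the equivariant space; alternatively, as in Remark~\ref{rem:equiv_in_quot} and the discussion in the proof of Lemma~\ref{towsol}, one can apply the equivariant projector at the end without cost.

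The main obstacle, such as it is, is not any new analysis but rather the faithful verification that the regionwise comparison estimates (the analogue of Proposition~\ref{initsurfcomp}) actually hold for $\Xi_{n,\xi}$; this is where the $C^k$-convergence statements at the end of Proposition~\ref{genus-zero_init_surfs} must be leveraged — via Lemma~\ref{lem:mc_and_met_of_graphs_euc} applied to the wings of $\towbent^+_{n,\xi}$ transplanted over $\K_{b_{n,\xi}}$ — exactly as in the proof of Proposition~\ref{initsurfcomp}, but now with only the single catenoidal model and, crucially, with the key qualitative input (no nontrivial bounded equivariant Jacobi fields on the four-ended tower, hence one-dimensional cokernel) supplied by Proposition~2.8 of \cite{KapouleasEuclideanDesing} rather than by the spherical-harmonics argument of Lemma~\ref{towker}. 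Everything else is a transcription, with $m \rightsquigarrow n$, $\apr_m \rightsquigarrow \pri_n$, $\pyr_m \rightsquigarrow \pyr_n$, and the disc terms deleted.
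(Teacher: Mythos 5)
Your proposal is correct and takes exactly the route the paper intends: it transcribes the proof of Proposition~\ref{globalsol} with the disc region deleted, $m \rightsquigarrow n$, $\apr_m \rightsquigarrow \pri_n$, the six-ended half-tower resolvent replaced by the four-ended one (Proposition~\ref{two-ended_half_tow_sol}), and the regionwise comparison estimates supplied by the $C^k$-convergence statements in Proposition~\ref{genus-zero_init_surfs} together with Lemma~\ref{lem:mc_and_met_of_graphs_euc}; the paper itself offers no separate argument and simply asserts the statement as ``the following analogue of Proposition~\ref{globalsol}.''

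One small slip worth correcting: $\pyr_n$ is \emph{not} a subgroup of $\pri_n$. The reflections in $\pyr_n$ are through the planes $\{y = x\tan(\pi/(2n))\}$ and its $\cyc_n$-rotates, while the reflections in $\pri_n$ preserving $\{z\geq 0\}$ are through $\{y=0\}$ and its $\cyc_n$-rotates, i.e.\ the planes $\{y = x\tan(k\pi/n)\}$; these two collections are offset by half a sector and never coincide. The correct subgroup of $\pri_n$ preserving $\K_0$ is $\sk{\rot_{\axis{z}}^{2\pi/n},\ \refl_{\{y=0\}}} < \Ogroup(2)$, a dihedral group of order $2n$ that is conjugate to $\pyr_n$ inside $\Ogroup(2)$ but not equal to it. This does not damage your argument — the proof of Lemma~\ref{lem:catker} uses only the cyclic rotation count (for the nodal-domain bound) and the $\Ogroup(2)$-invariant ODE analysis, neither of which sees the particular reflection axis, so the resolvent bound transfers unchanged to the conjugate group — but the inclusion as you wrote it is false and should not be asserted.
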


All of the machinery of Section \ref{sec:Nonlin}
now carries over with only notational changes
to produce a solution to the nonlinear problem,
namely for any $n$ sufficiently large a parameter $\xi_n$
and a free boundary minimal graph
(with respect to the auxiliary metric $h$)
over the initial surface $\Xi_{n,\xi_n}$.
Just as in Section \ref{sec:Nonlin}
we then obtain bounds on the parameters $\xi_n$
and the defining functions for the graphs,
which, in conjunction with
Proposition \ref{genus-zero_init_surfs},
allow to complete the proof of
Theorem \ref{thm:ConstructSecond}.

\section{A Morse index bound and related conjectures}\label{sec:Index}

In this final section, we will look back at the two families of free boundary minimal surfaces we constructed and discuss some of their finer geometric properties. In particular, we shall be concerned with the study of their Morse index, whose definition has been recalled above in Section \ref{sec:Nota}. However, we will firstly focus on their \emph{equivariant} Morse index instead, which -- as already appeared in \cite{CarlottoFranzSchulz20} and \cite{FranzIndex} -- may sometimes be simpler to compute (or estimate), and yet provides enough significant information for some purposes. 

To that aim, let us start by recalling the relevant notion, in the special case of our interest (where, among other things, all properly embedded surfaces are automatically two-sided); the reader is referred to Section 3 of \cite{FranzIndex} for further details. 
Given a finite group $G$ of isometries of $\B^3$, and a $G$-equivariant free boundary minimal surface $\Sigma$ therein, we shall define its $G$-equivariant Morse index as the maximal dimension of a linear subspace of $C^{\infty}_G(\Sigma)$ where the standard Jacobi form $Q_{\Sigma}(\cdot,\cdot)$ is negative definite; 
consistently with the notation we employed throughout the paper, here $C^{\infty}_G(\Sigma)$ denotes the space of $G$-equivariant smooth functions on $\Sigma$, i.\,e. those smooth functions $u$ satisfying the identity $u \circ \mathsf{M}=(\sgn_\Sigma \mathsf{M})u$ for all $\mathsf{M} \in G$. 
By appealing to a suitable equivariant counterpart of the standard spectral theorem for compact self-adjoint operators one can equivalently characterize the $G$-equivariant Morse index by looking at the number of negative eigenvalues of the elliptic problem \eqref{eq:RobinEingenvProblem} on $L^2_G(\Sigma)$. 

Our first result in this section ensures, as we had mentioned in the introduction of the present paper, that the free boundary minimal surfaces constructed in Theorem \ref{thm:ConstructFirst} cannot possibly be obtained by means of a one-parameter min-max scheme, and thus (in some sense) exhibit some higher complexity than the family of surfaces constructed by Ketover in \cite{KetoverFB} (see also Appendix~\ref{app:convergence_large_g} below), and conjecturally of the Kapouleas--Li surfaces constructed in \cite{KapouleasLiDiscCCdesing}.

\begin{proposition}
\label{prop:DistiguishByEquivIndex} 
If $g\in\N$ is sufficiently large, then the surface $\Sigma_g^{\mathrm{CSW}}$ constructed in Theorem \ref{thm:ConstructFirst} has $\apr_{g+1}$-equivariant index greater than or equal to $2$ while the surface $\Sigma_g^{\mathrm{Ket}}$ from \cite{KetoverFB} has $\apr_{g+1}$-equivariant index equal to $1$.
\end{proposition}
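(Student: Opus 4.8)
The plan is to establish the lower bound on the equivariant index of $\Sigma_g^{\mathrm{CSW}}$ by exhibiting two linearly independent $\apr_{g+1}$-equivariant functions on which the index form $Q_{\Sigma_g}$ is negative, and to obtain the upper bound (together with a matching lower bound) for $\Sigma_g^{\mathrm{Ket}}$ by combining the variational characterization of Ketover's surfaces with a degeneration/convergence argument. For the lower bound on $\Sigma_g^{\mathrm{CSW}}$, the first observation is that -- by estimate \eqref{est_on_defining_function_of_final_surf} and the regionwise comparison in Proposition \ref{initsurfcomp} -- the surface $\Sigma_g$ is, after rescaling near the equator by a factor $m=g+1$, a $C^{2,\alpha}$-small perturbation of the half tower $\tow^+$, while away from the equator it is close to $\B^2 \cup \K_{b_{m,\xi_m}} \cup -\K_{b_{m,\xi_m}}$. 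The first negative direction comes from the core: by Remark \ref{rem:NegativeDirection} there is a compactly supported function on $\towquot$, odd/even according to the required equivariance and hence descending to an $\apr_m$-equivariant function, on which $Q_{\towquot}$ is negative; transplanting it via $\varpi_{\towr_{m,\xi_m}}$ (and extending by zero), the Jacobi-operator comparison \ref{JacOpCompTow}--\ref{JacOpCompCat} and metric comparison \ref{MetCompTow}--\ref{MetCompDiscAndCat} of Proposition \ref{initsurfcomp} show that the corresponding quadratic form on $\Sigma_g$ stays negative for $g$ large. The second negative direction comes from the catenoidal annuli: the $\Ogroup(2)$-invariant Jacobi field $v_{\mathrm{d}}$ on $\K_0$ associated to varying the waist parameter violates the Robin condition at $C_\perp$ (as used in the proof of Lemma \ref{lem:catker}), and an elementary computation using \eqref{eqn:20211125} shows that the natural test function built from $v_{\mathrm{d}}$ on $\K_{b_{m,\xi_m}}$ (say, with a compactly supported cutoff near the lower boundary, combined symmetrically with its mirror image on $-\K$) has negative index-form value; since this function is supported away from the equatorial region, its interaction with the first test function is $O(e^{-cm^{1/4}})$ by the weighted estimates, so the two together span a $2$-dimensional subspace on which $Q_{\Sigma_g}$ is negative definite.

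For $\Sigma_g^{\mathrm{Ket}}$, the upper bound $\le 1$ is the more delicate half. Ketover's surfaces are produced by an equivariant one-parameter (i.e. $\Z_2$-sweepout) min-max construction with symmetry group $\apr_{g+1}$, so by the standard index bounds for min-max minimal surfaces -- in the equivariant setting, as developed in the references cited in the excerpt -- the $\apr_{g+1}$-equivariant Morse index is at most $1$. For the lower bound $\ge 1$ one must exhibit a single negative equivariant direction; this follows because $\Sigma_g^{\mathrm{Ket}}$ is \emph{not} $\apr_{g+1}$-equivariantly stable: as $g\to\infty$ it converges (as discussed in Appendix \ref{app:convergence_large_g}) with multiplicity one to a union of a disc and a catenoidal annulus (or, depending on the resolution of the gap identified there, to $\K_0\cup\B^2\cup-\K_0$), and in either case the limit configuration supports an equivariant Jacobi field of definite sign coming from the catenoidal piece, which a cutoff argument (exactly as in Remark \ref{rem:NegativeDirection}) converts into a negative direction for $Q_{\Sigma_g^{\mathrm{Ket}}}$ for all large $g$. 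Combining the two bounds gives equivariant index exactly $1$.

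I expect the main obstacle to be the upper bound for $\Sigma_g^{\mathrm{Ket}}$: one needs the precise statement of the equivariant min-max index estimate in the form that the number of sweepout parameters bounds the equivariant index (not the full index), and one must be careful that the relevant symmetry group used in Ketover's construction is exactly $\apr_{g+1}$ and that no extra care is needed about multiplicity or bubbling -- issues which are precisely the subject of Appendix \ref{app:convergence_large_g}. A secondary technical point is ensuring that the two test functions for $\Sigma_g^{\mathrm{CSW}}$ are genuinely $\apr_{g+1}$-equivariant (with the correct sign behaviour under $\refl_{\axis{x}}$, recalling item \ref{initsurfsymsigns} of Proposition \ref{initsurfbasicprops} and the sign data \ref{towsymsigns} of Proposition \ref{karcher-scherk}) and that the cutoff errors in both constructions are controlled by the exponential weights, which is routine given the estimates already assembled in Sections \ref{sec:Initial}--\ref{sec:Nonlin}. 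Everything else reduces to bookkeeping with the weighted H\"older norms and the comparison estimates of Proposition \ref{initsurfcomp}.
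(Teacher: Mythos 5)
Your overall strategy for $\Sigma_g^{\mathrm{CSW}}$ --- producing one negative direction from the tower model and one from the catenoidal model, and showing both survive transplantation to the initial surfaces and then to the final surfaces via the comparison estimates of Proposition~\ref{initsurfcomp} together with the smallness estimate \eqref{est_on_defining_function_of_final_surf} --- is exactly the paper's strategy. The tower test function (the compactly supported equivariant function from Remark~\ref{rem:NegativeDirection}, lifted to $\towquot^+_{(g+1)}$ and then to $\towr_{g+1,\xi_{g+1}}$) also matches the paper's Lemma~\ref{lem:negative-direction_M}.

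The gap is in your choice of catenoidal test function. You propose the rotationally-symmetric Jacobi field $v_{\mathrm{d}}$ (waist-parameter variation) and claim that ``an elementary computation using \eqref{eqn:20211125}'' shows its index-form value is negative. But since $J_{\K_0}v_{\mathrm{d}}=0$ and $v_{\mathrm{d}}|_{C_0}=0$, the second formula in \eqref{eq:JacobiQuadraticForm} gives
\[
Q_{\K_0}(v_{\mathrm{d}},v_{\mathrm{d}})
=\int_{C_\perp} v_{\mathrm{d}}\,B^{\mathrm{Robin}}_{\K_0}v_{\mathrm{d}},
\]
and the fact that this boundary term is \emph{nonzero} (which is what the proof of Lemma~\ref{lem:catker} extracts from \eqref{eqn:20211125}) does not determine its \emph{sign}. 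The inequality $\partial F/\partial a(a_0,0)>0$ is the transversality condition used in the implicit function theorem for $\alpha(b)$; it says the orthogonality defect is first-order detected by varying $a$, but translating that into the sign of $v_{\mathrm{d}}\cdot B^{\mathrm{Robin}}v_{\mathrm{d}}$ at $C_\perp$ would require a separate computation that you neither perform nor outline, and it is not obvious the sign comes out negative. The paper instead takes the ambient coordinate function $u=z$ in Lemma~\ref{lem:negative-direction_K}: this is harmonic on $\K_0$ (not a Jacobi field), vanishes on $C_0$, and satisfies $B^{\mathrm{Robin}}_{\K_0}z=0$ on $C_\perp$, so $Q_{\K_0}(z,z)=-\int_{\K_0}|A_{\K_0}|^2 z^2<0$ \emph{manifestly}; cutting off near $C_0$ costs only $O(\varepsilon)$, which is easily absorbed. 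You should replace your $v_{\mathrm{d}}$-based test function with this one, or else supply the missing sign computation.

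A secondary remark: for $\Sigma_g^{\mathrm{Ket}}$ the paper simply cites \cite{FranzIndex} (where equivariant index $=1$ is a special case of the main theorem) rather than re-deriving it. Your sketch --- equivariant min-max one-parameter sweepout gives an upper bound $\le 1$, and a transplanted negative direction from the catenoidal piece of the limit $\B^2\cup\K_{\mathrm{crit}}$ gives the lower bound --- is plausible in outline, but it duplicates work already available in the literature; the paper's citation is the intended route and is cleaner.
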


The second clause, namely the fact that each surface $\Sigma_g^{\mathrm{Ket}}$ from \cite{KetoverFB} has equivariant index equal to~$1$ has been obtained in \cite{FranzIndex} (in fact it follows as a basic special case of the main theorem there), so we shall rather be concerned with the proof of the first clause instead. In turn, that follows from combining the next few lemmata. 

\begin{lemma}
\label{lem:negative-direction_K}
Let $\K_0$ be the catenoidal annulus constructed in Lemma \ref{lem:orthogonal-0} (equivalently: in Lemma \ref{lem:catenoid-K_b} for $b=0$). 
There exist $\varepsilon>0$ and a smooth, $\Aut_{\B^3}(\K_0)$-equivariant function $v_1$ on $\K_0$ with support in $\K_0\cap\{z\geq\varepsilon\}$ such that $Q_{\K_0}(v_1,v_1)<0$. 
\end{lemma}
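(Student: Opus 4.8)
The plan is to exploit the well-known instability of the full catenoid together with the geometry of $\K_0$, producing an explicit negative direction for $Q_{\K_0}$ supported away from the equator. First I would recall that $\K_0$ is a rotationally symmetric minimal annulus meeting $\Sp^2=\partial\B^3$ along the equator $\Sp^1=\{z=0\}$ and orthogonally along an upper circle of latitude $\{z=h_{a_0,0}\}$, where $a_0>1$ is the waist parameter from Lemma \ref{lem:orthogonal-0}. On such a catenoid (viewed as the complete minimal surface $\K$ before truncation) the function $\nu_{\K}\cdot\partial_z$, i.e. the Jacobi field generated by vertical translations, together with the dilation Jacobi field $v_{\mathrm{d}}$ (varying the waist parameter, cf. the discussion in Lemma \ref{lem:catker}), span the space of $\Ogroup(2)$-invariant Jacobi fields; and it is classical that the catenoid has index one, with the instability localized near the waist. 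The key point is that for $\K_0$ the waist circle $\{z=s_{a_0,0}/a_0\}$ — where $s_{a_0,0}=\cosh^{-1}(a_0)$, so the waist sits at positive height — is \emph{strictly above the equator}, so the negative direction of the complete catenoid can be realized by a function vanishing near $\{z=0\}$.

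Concretely I would take a compactly supported test function $\varphi$ on the full catenoid $\K$ with $Q_{\K}(\varphi,\varphi)<0$ — such a $\varphi$ exists because $\K$ has index one, and by a cutoff argument it can be chosen with support in an arbitrarily small neighborhood of the waist circle, hence (since the waist lies at height $s_{a_0,0}/a_0>0$) with support in $\{z\geq\varepsilon\}$ for some $\varepsilon>0$, and in particular with support disjoint from a neighborhood of both $\Sp^1$ and the orthogonal circle $\{z=h_{a_0,0}\}$. Restricting $\varphi$ to $\K_0$ and noting that $Q_{\K_0}$ with the Robin boundary term agrees with the interior integral $\int_{\K_0}(|\nabla\varphi|^2-|A_{\K_0}|^2\varphi^2)$ whenever $\varphi$ vanishes near $\partial\K_0$ (the Robin boundary term disappearing), we get $Q_{\K_0}(\varphi,\varphi)=Q_{\K}(\varphi,\varphi)<0$. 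To upgrade $\varphi$ to an $\Aut_{\B^3}(\K_0)$-equivariant function $v_1$, I would symmetrize: since $\K_0$ is $\Ogroup(2)$-invariant and $\varphi$ can be chosen $\Ogroup(2)$-invariant (e.g. by averaging a localized bump over the rotation group, or simply by taking $\varphi$ to depend only on $z$ from the start, which is the natural choice near the waist), equivariance with respect to $\Aut_{\B^3}(\K_0)=\pyr_{\infty}$-type symmetries is automatic for rotationally invariant functions, provided the relevant symmetries preserve the sides of $\K_0$; reflections through vertical planes do preserve the sides, so a rotationally invariant $\varphi$ is already equivariant. Smoothness of $v_1$ is clear by construction.

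The main obstacle — though a mild one — is verifying that the negative direction of the complete catenoid can genuinely be localized above the equator, i.e. that one may choose $\varepsilon>0$ with the test function supported in $\{z\geq\varepsilon\}$ while keeping the quadratic form negative. This requires a quantitative version of the statement that the catenoid's instability is concentrated at the waist: one shows that the Rayleigh quotient of the $z$-independent test function $\psi_R$ equal to $1$ on the waist region of extent $R$ and cut off linearly outside tends, as $R\to\infty$ (or for suitably chosen finite $R$), to a negative value because $\int_{\K}|A_{\K}|^2\,dV=4\pi$ dominates the gradient contribution of a slowly varying cutoff. Since the waist of $\K_0$ is at the fixed positive height $s_{a_0,0}/a_0$, any such test function of small enough extent automatically has support in $\{z\geq\varepsilon\}$; alternatively one invokes that the first eigenfunction of the Jacobi operator on the complete catenoid decays and can be approximated in the relevant energy norm by compactly supported functions concentrated near the waist. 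Either route is routine; I would present the linear-cutoff computation explicitly since it is short and self-contained, using $|A_{\K_0}|^2 = 2/(\cosh^2(a_0 z - s_{a_0,0})\cdot(\text{profile factor}))$ expressed via $r_{a_0,0}$.
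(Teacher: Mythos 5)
Your proposal takes a genuinely different route from the paper, and unfortunately the different route has a fatal flaw at its core.

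The key error is the claim that the complete catenoid's negative direction ``can be chosen with support in an arbitrarily small neighborhood of the waist circle.'' This is false: it is a classical fact that sufficiently short Dirichlet pieces of the catenoid are \emph{stable}. In the standard arclength parametrization (profile $\cosh s$, rotationally symmetric Jacobi fields spanned by $1 - s\tanh s$ and $\tanh s$), a symmetric piece $\{\abs{s}\leq T\}$ is Dirichlet-unstable only for $T > T_0 \approx 1.1997$. The ``localize at the waist'' heuristic works for scaling-down the catenoid inside a fixed ball, not for shrinking the support. More concretely, your plan requires a compactly supported test function inside $\K_0$, hence one satisfying Dirichlet conditions at \emph{both} boundary circles $C_0$ and $C_\perp$. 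But translating $\K_0$ to arclength coordinates via $s = a_0 z - \cosh^{-1}(a_0)$, its profile occupies the interval $[-\cosh^{-1}(a_0),\, a_0h_{a_0,0}-\cosh^{-1}(a_0)] \approx [-1.49,\, 0.54]$ (using the values of Remark \ref{rem:values_for_b=0}). Solving the ODE explicitly shows that the Jacobi field vanishing at the left endpoint has no further zero in the interval, so the first Dirichlet eigenvalue on $\K_0$ is strictly positive, and the higher angular modes only add to the potential. In short: no compactly supported test function inside $\K_0$ has negative Dirichlet energy, in any equivariance class. Your fallback appeal to ``$\int_{\K}\abs{A_{\K}}^2 = 4\pi$ dominating a slowly-varying cutoff as $R\to\infty$'' cannot help, because there is no room in $\K_0$ to send $R\to\infty$; and approximating the first eigenfunction of the complete catenoid by compactly supported functions would require support extending well past $C_\perp$.

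What the paper does instead is fundamentally different and exploits the free-boundary nature of $C_\perp$ rather than any Dirichlet instability. It uses the harmonic height function $u = z$, which vanishes on $C_0$ and satisfies the Robin condition $B^{\mathrm{Robin}}_{\K_0}u = 0$ on $C_\perp$ while being \emph{nonzero} there. Integrating by parts, the Robin boundary term in $Q_{\K_0}$ cancels the boundary contribution from $\int \abs{\nabla u}^2$, leaving $Q_{\K_0}(u,u) = -\int_{\K_0}\abs{A_{\K_0}}^2 u^2 < 0$: the negativity comes from the interaction of the free boundary with the vertical-translation Jacobi field, not from interior curvature alone. The cutoff $\varphi_\varepsilon$ in the paper then kills $u$ only near $C_0$, leaving $\varphi_\varepsilon u$ unchanged (and nonzero, still satisfying the Robin condition) at $C_\perp$; the perturbation $Q_{\K_0}(\varphi_\varepsilon u,\varphi_\varepsilon u) - Q_{\K_0}(u,u)$ is controlled by $\abs{\Delta\varphi_\varepsilon}\leq C\varepsilon^{-2}$ against the area $O(\varepsilon)$ of the thin annulus $\{\varepsilon\leq z\leq 2\varepsilon\}$, and so vanishes linearly in $\varepsilon$. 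To repair your argument you would have to replace the compactly supported test function by one that is nonzero at $C_\perp$ and satisfies the Robin condition there, at which point you essentially recover the paper's proof.
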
%

\begin{proof}
The height function $u=z$ is harmonic on $\K_0$, and satisfies the Robin boundary condition 
%$\eta_{\K_0}\cdot\nabla_{\K_0}u= u$ 
$B^{\mathrm{Robin}}_{\K_0}u=0$ 
as defined in \eqref{Robin_op_general_def} 
on the upper boundary circle of $\K_0$ as well as the Dirichlet boundary condition $u=0$ along the equatorial boundary component. 
In particular, the corresponding index form defined in \eqref{eq:JacobiQuadraticForm} reads  
\begin{align*}
Q_{\K_0}(u,u)&=\int_{\K_0}\bigl(-u\Delta_{\K_0} u-\abs{A_{\K_0}}^2u^2\bigr)+\int_{\partial\K_0}
\bigl(u B^{\mathrm{Robin}}_{\K_0}u\bigr)
=-\int_{\K_0}\abs{A_{\K_0}}^2u^2.
\end{align*}%
\begin{figure}%
\centering
\pgfmathsetmacro{\b}{0}
\pgfmathsetmacro{\a}{2.3328}
\pgfmathsetmacro{\h}{0.8703}
\pgfmathsetmacro{\eps}{0.12}
\begin{tikzpicture}[line cap=round,line join=round,baseline={(0,0)},scale=\unitscale,remember picture]
\draw[thick,domain=\b:\h,variable=\z,samples=50]plot({cosh(\a*\z-\a*\b-acosh(\a*sqrt(1-\b*\b)))/\a},{\z});
\draw[thick,domain=\b:\h,variable=\z,samples=50]plot({-cosh(\a*\z-\a*\b-acosh(\a*sqrt(1-\b*\b)))/\a},{\z});
\draw(-1/\a,0.6)node[right]{$\K_0$};
\draw plot[hdash](0,\h)node[left]{$h_{a_0,0}$};
\draw plot[bullet]({sqrt(1-\h*\h)},\h);
\draw plot[bullet]({-sqrt(1-\b*\b)},\b);
\draw plot[bullet]({-sqrt(1-\h*\h)},\h);
\draw[->](-1.5,0)--(1.5,0)coordinate(0-brace) ;
\draw[->](0,-0)--(0,1.12)node[below right,inner sep=0]{$~z$};
\draw plot[vdash](0,0)node[below]{$0$};
\draw plot[vdash](1,0)node[below]{$1$};
\draw plot[vdash](-1,0);
\draw(1,0)arc(0:180:1); 
\draw plot[hdash](0,\eps)node[left]{$\varepsilon$};
\draw plot[hdash](0,2*\eps)node[left]{$2\varepsilon$};
\draw[densely dotted]
(0,\eps)--++(1.5,0)coordinate(eps-brace)
(0,2*\eps)--++(1.5,0)coordinate(2eps-brace)
(0,\h)--++(1.5,0)coordinate(h-brace);
\draw(1.5,\eps/2)node[right,inner sep=0]{$\left.\tikz[remember picture]{\path(0-brace)--(eps-brace);}\right\}\varphi=0$};
\draw(1.5,\h/2+\eps)node[right,inner sep=0]{$\left.\tikz[remember picture]{\path(2eps-brace)--(h-brace);}\right\}\varphi=1$};
\end{tikzpicture}
\caption{Defining a cutoff function $\varphi$ on $\K_0$.}%
\label{fig:cutoff_on_K0}%
\end{figure}
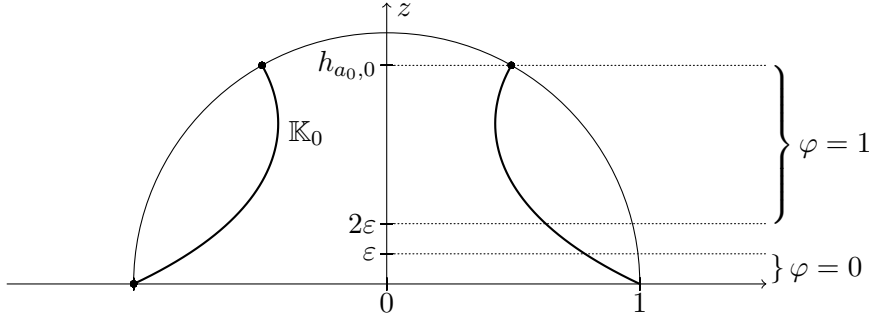%
Given $\varepsilon>0$, let $\varphi_\varepsilon$ be a cutoff function on $\K_0$ which depends only on the height $z$, such that $\varphi_\varepsilon=0$ for $z\in[0,\varepsilon]$ and $\varphi_\varepsilon=1$ for $z\in[2\varepsilon,h_{a_0,0}]$ and such that $\varphi_\varepsilon$ is increasing in $z\in[\varepsilon,2\varepsilon]$ (cf. Figure~\ref{fig:cutoff_on_K0}). 
Then, $\sk{\nabla_{\K_0} u,\nabla_{\K_0}\varphi_\varepsilon}\geq0$ and  
\begin{align*}
\int_{\K_0}\bigl(-\varphi_\varepsilon u\Delta_{\K_0}(\varphi_\varepsilon u)\bigr)
=\int_{\K_0}\Bigl(-2\varphi_\varepsilon u\sk{\nabla_{\K_0} u,\nabla_{\K_0}\varphi_\varepsilon}-u^2\varphi_\varepsilon \Delta_{\K_0}\varphi_\varepsilon\Bigr) 
&\leq\int_{\K_0\cap\{z\in[\varepsilon,2\varepsilon]\}}u^2\abs{\Delta_{\K_0}\varphi_\varepsilon}. 
\end{align*}
Choosing $\varphi_\varepsilon$ such that $\abs{\Delta_{\K_0}\varphi_\varepsilon}\leq C\varepsilon^{-2}$ for some constant $C<\infty$ depending only on the geometry of $\K_0$, we obtain 
$u^2\abs{\Delta_{\K_0}\varphi_\varepsilon}\leq4C$ in $\K_0\cap\{z\in[\varepsilon,2\varepsilon]\}$. 
Moreover, a straightforward application of the coarea formula ensures that for all sufficiently small $\varepsilon>0$ the area of $\K_0\cap\{z\in[\varepsilon,2\varepsilon]\}$ is bounded from above by $2\pi \varepsilon/\sin(\omega_0)$, where $\omega_0$ is the contact angle defined in \eqref{eqn:defininiton_omega} (specified to $b=0$, cf. Remark \ref{rem:values_for_b=0}).
Therefore, 
\begin{align*}
Q_{\K_0}(\varphi_\varepsilon u,\varphi_\varepsilon u)
&\leq\int_{\K_0\cap\{z\in[\varepsilon,2\varepsilon]\}}u^2\abs{\Delta_{\K_0}\varphi_\varepsilon}
-\int_{\K_0}\abs{A_{\K_0}}^2\varphi_\varepsilon^2u^2
\leq \left(\frac{8\pi C}{\sin(\omega_0)}\right)\varepsilon
-\int_{\K_0}\abs{A_{\K_0}}^2\varphi_\varepsilon^2u^2.
\end{align*}
In particular, we obtain $Q_{\K_0}(\varphi_\varepsilon u,\varphi_\varepsilon u)<0$ provided that $\varepsilon>0$ is chosen sufficiently small. 
Setting $v_1=\varphi_\varepsilon u$ completes the proof. 
\end{proof}

To move on, we recall that $\towquot=\towquot_{(1)}$ denotes the quotient of the six-ended Karcher--Scherk tower with respect to vertical translations (generated by that of length $2\pi$) as defined by \eqref{towquotient}.

\begin{lemma}
\label{lem:negative-direction_M}
There exist $R>0$ and a smooth, $\Aut(\towquot)$-equivariant function $\tilde{v}_2$ on $\towquot$ with compact support in $\towquot\cap\axis{z}_{\leq 2R}$ such that $Q_{\towquot}(\tilde{v}_2,\tilde{v}_2)<0$. 
\end{lemma}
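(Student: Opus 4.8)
The plan is to mimic the argument in Remark~\ref{rem:NegativeDirection}, which already produced a bounded $\Aut(\towquot)$-equivariant function $u$ on $\towquot$ with $Q_{\towquot}(u,u)<0$ and with both $\int_{\towquot}\abs{A_{\tow}}^2u^2$ and $\int_{\towquot}\abs{\nabla_{\tow}u}^2$ finite, and then to apply the cutoff argument alluded to there to truncate $u$ to compact support without destroying the strict negativity. First I would invoke Remark~\ref{rem:NegativeDirection} to fix such a $u$; the strict inequality $Q_{\towquot}(u,u)<0$ together with the two finiteness statements gives a definite ``budget'' $-Q_{\towquot}(u,u)=\delta>0$ against which the cutoff error must be controlled.

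For the truncation, recall the notation \eqref{eqn:definition_cutoff} for the cutoff functions $\cutoff{a}{b}$ and set, for $R\geq R_{\mathrm{tow}}$,
\begin{equation*}
\chi_R\vcentcolon=\cutoff{2R}{R}\circ\dist_{\axis{z}}|_{\towquot},
\end{equation*}
which is $\Aut(\towquot)$-invariant, equal to $1$ on $\{\dist_{\axis{z}}\leq R\}$, equal to $0$ on $\{\dist_{\axis{z}}\geq 2R\}$, and with $\abs{\nabla_{\tow}\chi_R}\leq C/R$ supported on the annular region $\{R\leq\dist_{\axis{z}}\leq 2R\}$ of each wing. Since $\chi_R u$ is $\Aut(\towquot)$-equivariant (the product of an equivariant function with an invariant one), it is an admissible test function for $Q_{\towquot}$. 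Expanding,
\begin{align*}
Q_{\towquot}(\chi_R u,\chi_R u)
&=\int_{\towquot}\Bigl(\abs{\nabla_{\tow}(\chi_R u)}^2-\abs{A_{\tow}}^2\chi_R^2 u^2\Bigr)\\
&=\int_{\towquot}\chi_R^2\Bigl(\abs{\nabla_{\tow}u}^2-\abs{A_{\tow}}^2u^2\Bigr)
+\int_{\towquot}\Bigl(u^2\abs{\nabla_{\tow}\chi_R}^2+2\chi_R u\sk{\nabla_{\tow}u,\nabla_{\tow}\chi_R}\Bigr).
\end{align*}
By dominated convergence (using the finiteness of $\int\abs{\nabla_{\tow}u}^2$ and $\int\abs{A_{\tow}}^2u^2$) the first integral converges to $Q_{\towquot}(u,u)=-\delta$ as $R\to\infty$. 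For the error terms, on the support of $\nabla_{\tow}\chi_R$ we have $\abs{\nabla_{\tow}\chi_R}\leq C/R$; moreover on the wings $u$ converges exponentially to a finite constant, so $u$ is bounded there, whence $\int u^2\abs{\nabla_{\tow}\chi_R}^2\leq C R^{-2}\cdot\area(\{R\leq\dist_{\axis{z}}\leq 2R\})\leq C R^{-2}\cdot R=C/R\to 0$ (the area of a unit-width band of each asymptotically cylindrical wing grows linearly in $R$), while by Cauchy--Schwarz the cross term is bounded by $2\bigl(\int_{\{R\leq\dist_{\axis{z}}\leq 2R\}}\abs{\nabla_{\tow}u}^2\bigr)^{1/2}\bigl(\int u^2\abs{\nabla_{\tow}\chi_R}^2\bigr)^{1/2}$, and the first factor tends to $0$ by the finiteness of $\int\abs{\nabla_{\tow}u}^2$. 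Hence $Q_{\towquot}(\chi_R u,\chi_R u)\to -\delta<0$, so we may fix $R$ large enough that $Q_{\towquot}(\chi_R u,\chi_R u)<0$, and then $\tilde v_2\vcentcolon=\chi_R u$ (smooth, $\Aut(\towquot)$-equivariant, compactly supported in $\towquot\cap\axis{z}_{\leq 2R}$) is the desired function.

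The main obstacle is really just bookkeeping rather than conceptual: one must be careful that the function $u$ from Remark~\ref{rem:NegativeDirection} genuinely lies in the right function class (bounded, with the stated integrability, and equivariant) so that $\chi_R u$ is an admissible variation, and that the linear growth of the band areas on the wings is exactly balanced by the $R^{-2}$ decay of $\abs{\nabla_{\tow}\chi_R}^2$ — the product being $O(R^{-1})$, which is where the argument would fail if $u$ were only, say, $L^2$ but not bounded on the ends. Since Remark~\ref{rem:NegativeDirection} explicitly records the boundedness and finite limits of $u$ along each wing, this point is already in hand, and the rest is the routine cutoff estimate above.
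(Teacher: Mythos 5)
Your proposal is correct and follows exactly the route the paper intends: the paper's own proof of this lemma is a one-line pointer to Remark~\ref{rem:NegativeDirection}, whose concluding sentence ("a standard cutoff argument allows us to construct compactly supported functions where the quadratic form $Q_{\towquot}(\cdot,\cdot)$ is negative") is precisely the truncation you carry out. Your explicit verification — dominated convergence for the main term, and the $O(R^{-1})$ bound on the cutoff error obtained by balancing the $R^{-2}$ decay of $\abs{\nabla_{\tow}\chi_R}^2$ against the linear-in-$R$ growth of the band areas along the asymptotically cylindrical wings, together with Cauchy--Schwarz on the cross term — supplies the details the paper treats as routine.
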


\begin{proof}
These assertions actually follow at once from our discussion in the first part of Section \ref{subs:LinTow}, as we explained in Remark \ref{rem:NegativeDirection}. 
\end{proof}

\begin{remark}
\label{rem:negative_direction_standard_Scherk}
For later reference, we also explicitly note that the very same conclusion as in Lemma~\ref{lem:negative-direction_M} holds when $\towquot$ denotes the period quotient of any standard Scherk tower ($k=2$). In that case, the constant function $1$ does satisfy the equivariance constraints and so we just need to multiply it by a cutoff function and (exactly as in Remark \ref{rem:NegativeDirection}) note that metric annuli on $\towquot$ between radii $R$ and $2R$ along the wings have \emph{linearly} growing area.
\end{remark}

We are now in the position to proceed with the proof of Proposition \ref{prop:DistiguishByEquivIndex}. 
Basically, we need to show one can effectively ``transplant'' the function $\tilde{v}_2$ in the previous lemma from the model block $\towquot$ to the initial surfaces $\Sigma_{m,\xi}$ and then to the actual minimal surfaces we constructed, so as to get a negative direction for the Jacobi forms of such surfaces. 
Roughly speaking, this conclusion relies on the fact that the map $\Phi$ defined in \eqref{PhiR3def} is close to the identity near the equatorial circle of $\Sp^2=\partial\B^3$. 

\begin{proof}[Proof of Proposition \ref{prop:DistiguishByEquivIndex}]
As already mentioned after the statement
of the proposition,
the $\apr_{g+1}$-equivariant index of $\Sigma_g^{\mathrm{Ket}}$
has been computed in \cite{FranzIndex},
so we turn to $\Sigma_g^{\mathrm{CSW}}$.
In the notation of \eqref{deformed_init_surf_inc}, the latter surface is by construction the graph
\begin{equation*}
\Sigma_g^{\mathrm{CSW}}
=
\iota_{g+1,\xi_{g+1},u_{g+1}}
  (\Sigma_{g+1,\xi_{g+1}})
\end{equation*}
with respect to the auxiliary metric
$h$ (defined in \eqref{auxmet})
of some function
$u_{g+1}\colon\Sigma_{g+1,\xi_{g+1}}\to\R$
over the initial surface
$\Sigma_{g+1,\xi_{g+1}}$
(defined in \eqref{initsurfdef})
for some $\xi_{g+1} \in \R$,
for which function and parameter
we have the estimate
\begin{equation}
\label{est_on_u_and_xi_for_index}
(g+1)^2\nm{u_{g+1}}_{2,0,1/2}
  + \abs{\xi_{g+1}}
\leq
C
\end{equation}
for some $C>0$ independent of $g$
(this is indeed the content of equation
\eqref{est_on_defining_function_of_final_surf}
in the proof of Theorem \ref{thm:ConstructFirst}).

Recall the quantity
$b_{m,\xi}$ from \eqref{bmxi}.
It follows from Lemma \ref{lem:catenoid-K_b},
Lemma \ref{lem:negative-direction_K}
and the estimate above for $\xi_{g+1}$
that there exist
$\epsilon>0$,
$\delta(\epsilon)>0$,
$C(\epsilon)>0$,
and, for all sufficiently large $g$,
functions
$v^{\K}_g \in C^\infty_{\Ogroup(2)}(\K_{b_{g+1,\xi_{g+1}}})$ such that 
\begin{align*}
Q_{\K_{b_{g+1,\xi_{g+1}}}}\bigl(v^\K_g,v^\K_g\bigr)
&<-2\delta(\epsilon), 
&
\nm{v^\K_g}_{C^2} &\leq C(\epsilon)
\end{align*}
and $v^\K_g$ has support contained in $\{z \geq \epsilon\}$. 
For each $n \in \N^{\ast}$
recall also the canonical projection
$
 \varpi_{(n)}
 \colon
 \R^3
 \to
 \R^3/\sk{\trans^{\axis{z}}_{2n\pi}}
$
and (as in Section \ref{sec:Initial})
let
$
 \pi_{(n)}
 \colon
 \R^3/\sk{\trans^{\axis{z}}_{2n\pi}}
 \to
 \R^3/\sk{\trans^{\axis{z}}_{2\pi}}
$
be the unique map
such that
$\varpi_{(1)}=\pi_{(n)} \circ \varpi_{(n)}$.
For each integer $g \geq 0$
we define the functions
\begin{equation*}
v_g^\tow
\vcentcolon=
\pi_{(g+1)}|_{\towquot_{(g+1)}}^*\tilde{v}_2,
\end{equation*}
with $\tilde{v}_2$ as in the statement of
Lemma \ref{lem:negative-direction_M}.
We then have
\begin{align*}
\varpi_{(g+1)}|_{\tow^+}^* v^\tow_g
&\in C^\infty_{\Aut_{\R^3}(\tow^+)}(\tow^+),
&
Q_{\towquot^+_{(g+1)}}(v^\tow_g,v^\tow_g)
&<-2\delta(R),
&
\nm{v^\tow_g}_{C^2} 
&\leq C(R)
\end{align*}
and $v^\tow_g$ has support contained in $\varpi_{(g+1)}(\axis{z}_{\leq 2R})$, 
for $R$ as in the statement of
Lemma \ref{lem:negative-direction_M},
some $\delta(R)$, $C(R)>0$
independent of $g$,
and
\begin{equation*}
Q_{\towquot^+_{(g+1)}}(u,u)
\vcentcolon=
\int_{\towquot^+_{(g+1)}}
  \Bigl(
    \abs[\big]{\nabla_{\towquot^+_{(g+1)}}u}^2
    -\abs[\big]{A_{\towquot^+_{(g+1)}}}^2u^2
  \Bigr).
\end{equation*}
(As we mentioned in Section \ref{sec:Nota}, since $\towquot^+_{(g+1)}$
is a free boundary minimal surface
in the quotiented half space
$
 \{x \geq 0\}
 /
 \sk{\trans^{\axis{z}}_{2(g+1)\pi}}
$,
whose boundary is totally geodesic,
this is the standard
quadratic form corresponding
to the second variation of area
of $\towquot^+_{(g+1)}$
through surfaces with boundary
on $\{x=0\}$.)

Recalling also the maps
$\varpi_{\catr_{m,\xi}}, \varpi_{\towr_{m,\xi}}$
defined in \eqref{regionalprojections},
by taking $g$ sufficiently large
in terms of $\epsilon$ and $R$
we can ensure that
the support of $v^\K_g$
is contained in the image of
$\varpi_{\catr_{g+1,\xi_{g+1}}}$
and the support of $v^\tow_g$
in the image of
$\varpi_{\towr_{g+1,\xi_{g+1}}}$.
Thus there are unique functions 
$v^\catr_g, v^\towr_g
  \in
  C^\infty_{\apr_{g+1}}(\Sigma_{g+1,\xi_{g+1}})$ such that
\begin{align*}
v^\catr_g|_{\catr_{g+1,\xi_{g+1}}}
&=\varpi_{\catr_{g+1,\xi_{g+1}}}^*v^\K_g, 
&
v^\catr_g|_{
    \Sigma_{g+1,\xi_{g+1}}
    \setminus
    \apr_{g+1}(\catr_{g+1,\xi_{g+1}})
    }&=0,
\\
v^\towr_g|_{\towr_{g+1,\xi_{g+1}}}
&=\varpi_{\towr_{g+1,\xi_{g+1}}}^*v^\tow_g,
&
v^\towr_g|_{
    \Sigma_{g+1,\xi_{g+1}}
    \setminus
    \apr_{g+1}(\towr_{g+1,\xi_{g+1}})
    }
&=0.
\end{align*}
Moreover, possibly taking $g$ even larger, $v^\catr_g$ is supported outside an $\epsilon/2$ neighborhood of $\Sp^1$, 
while $v^\towr_g$ is supported inside an $4R/g$ neighborhood of $\Sp^1$.
In particular $v^\catr_g$ and $v^\towr_g$ have disjoint supports for sufficiently large $g$;
since each of these supports is nonempty, the set $\{v^\catr_g,v^\towr_g\}$ is then linearly independent.

Even though the initial surface
$\Sigma_{g+1,\xi_{g+1}}$
is not minimal,
for the purposes of this proof
we define the quadratic form
$Q_{\Sigma_{g+1,\xi_{g+1}}}$
by \eqref{eq:JacobiQuadraticForm}.
By taking $g$ large enough
we can then ensure, by continuity, that
\begin{equation*}
Q_{\Sigma_{g+1,\xi_{g+1}}}\bigl(v^\catr_g,v^\catr_g\bigr)
  <
  -\delta(\epsilon)
\quad \mbox{and} \quad
Q_{\Sigma_{g+1,\xi_{g+1}}}\bigl(v^\towr_g,v^\towr_g\bigr)
  <
  -\delta(R),
\end{equation*}
where we emphasize
that the earlier introduced
strictly positive constants
$\delta(\epsilon)$
and $\delta(R)$
do not depend on $g$.
Specifically, for the first inequality
we use
items
\ref{MetCompDiscAndCat},
\ref{JacOpCompCat},
and \ref{BdyOpCompCat}
of Proposition \ref{initsurfcomp}
and we take $g$ large in terms
of the implicated universal constants,
including in particular
the area of $\K_0$
and the length of $\partial \K_0$.
Similarly
we apply
items \ref{MetCompTow},
\ref{JacOpCompTow},
and \ref{BdyOpCompTow}
of Proposition \ref{initsurfcomp},
and we also use the facts
that the Jacobi form
is invariant under
scaling of the ambient metric
and that
\begin{equation*}
Q_{\towquot^+_{(g+1)}}\bigl(v^\tow_g,v^\tow_g\bigr)
=
(g+1)Q_{\towquot}\bigl(\tilde{v}_2,\tilde{v}_2\bigr),
\end{equation*}
in order to accommodate the second inequality by taking $g$ sufficiently large in terms of universal constants,
including in particular $R$ and the length of $\partial\towquot$. 

\pagebreak[1]

Finally we claim that the functions $v^\catr_g$ and $v^\towr_g$ remain negative directions on $\Sigma_g$
(seen as a normal graph over $\Sigma_{g+1,\xi_{g+1}}$, in our usual sense) 
for $g$ sufficiently large, which, in view of the linear independence observed above, will complete the proof.
Indeed, it is clear from the construction of the initial surfaces
and from the uniform bound on $\xi_{g+1}$
in \eqref{est_on_u_and_xi_for_index}
that for each $k$ the $C^k$ norms of the second fundamental forms
of $\catr_{g+1,\xi_{g+1}}$ and $(g+1)\towr_{g+1,\xi_{g+1}}$
are uniformly bounded in $g$.
Of course each $C^k$ norm,
with respect to the ambient Euclidean metric,
of $h$ (the auxiliary metric we employed) and $h^{-1}$
are likewise uniformly bounded in $g$.
The bound in
\eqref{est_on_u_and_xi_for_index}
on the defining function $u_{g+1}$
then implies
that the induced metrics
and second fundamental forms
of $\catr_{g+1,\xi_{g+1}}$
and
$\iota_{g+1,\xi_{g+1},u_{g+1}}(\catr_{g+1,\xi_{g+1}})$
are $O((g+1)^{-2})$-close
in $C^1$ and $C^0$ respectively,
and likewise
the induced metrics
and second fundamental forms
of $(g+1)\towr_{g+1,\xi_{g+1}}$
and
$(g+1)\iota_{g+1,\xi_{g+1},u_{g+1}}(\towr_{g+1,\xi_{g+1}})$
are $O((g+1)^{-1})$-close
in $C^1$ and $C^0$ respectively,
all assuming large enough $g$.
The conclusion now follows
just as for the (regionwise) comparison above
between the index forms on the initial surfaces
and the model surfaces.
\end{proof}

\begin{remark}
\label{rem:equivariant-index-genus0}
By using the conclusion of Remark \ref{rem:negative_direction_standard_Scherk}
in place of Lemma \ref{lem:negative-direction_M}
the above proof is easily adapted
(with essentially just notational modifications)
to establish that
each free boundary minimal surface $\Xi_n$ of Theorem \ref{thm:ConstructSecond},
with $n$ sufficiently large,
also has equivariant index at least $2$.
\end{remark}

In general, the task of determining the Morse index of a minimal surface is a remarkably delicate one, and especially so in the free boundary case. 
To the best of our knowledge, within the class of free boundary minimal surfaces in $\B^3$ the value of the index has only been computed in the case of equatorial discs (which is trivial) and for critical catenoids (see the partly different arguments in \cite{Devyver2019}, \cite{Smith-Zhou2019} and \cite{Tran2020}). 
(On an incidental yet related note, we further remark that the Morse index of the critical Möbius band in $\B^4$ has proven in \cite{Medvedev2023} to equal five.) 
In particular, getting back to the three-dimensional Euclidean unit ball, we do not know the values of the index for \emph{any} infinite family, so that it is still unclear whether e.\,g. the growth rate with respect to the topological data (namely: the genus and the number of boundary components) should follow simpler, sometimes possibly even linear, or more subtle (and less universal) laws. 
That a linear, or rather affine, lower bound holds was established in \cite{AmbCarSha18-Index} and, independently, in \cite{Sar17}: 
the Morse index of any free boundary minimal surface in $\B^3$, say $\Sigma$, of genus $g$ and having $b$ boundary components satisfies the inequality
\[
\morseindex(\Sigma)\geq \frac{1}{3}(2g+b-1),
\]
an estimate that (albeit sub-optimal in the case of ``low topological complexity'') has not yet been refined in any way. This result was then complemented by the one of Lima (see \cite[Theorem 4]{Lim17}), that is an affine upper bound with a large (yet computable) numerical constant.
The network of conjectures that we are about to present in the second part of this section, partly based on numerical evidence using Brakke's \cite{Brakke1992} surface evolver, aims at shedding some new light on these delicate matters.

Setting up the simulation of a free boundary minimal surface using Brakke's surface evolver requires a (rough) initial triangulation of some surface with the desired topology, which can then be gradually refined. 
The free boundary condition is modelled by confining the boundary edges and vertices to the unit sphere using the appropriate ``level-set constraint'' \cite[Section~5.1]{Brakke1992}. 
The complexity of the simulation can be reduced dramatically by prescribing the expected symmetries using additional level-set constraints. 
The remaining (equivariant) instability of the surface in question can be overcome through a careful alternation between motion by mean curvature \cite[Section~6.4]{Brakke1992} and ``Hessian minimization'' \cite[Section~6.6]{Brakke1992}. 
The Morse index is then legible as the number of negative eigenvalues of the corresponding matrix of second derivatives of area.

\begin{conjecture}\label{conj:genusg-bc3-KL}
For every integer $g\geq2$ there exists a free boundary minimal surface $\Sigma_g^{\mathrm{KL}}$ in $\B^3$ with the folowing properties: 
\begin{enumerate}[label={\normalfont(\roman*)}]
\setlength{\itemsep}{-.5ex}
\item\label{conj:genusg-bc3-KL-i} $\Sigma_g^{\mathrm{KL}}$ has $3$ boundary components, genus $g$ and antiprismatic symmetry $\apr_{g+1}$.
\item\label{conj:genusg-bc3-KL-ii} $\area(\Sigma_g^{\mathrm{KL}})<\area(\B^2)+\area(
\gls{Kcrit} %\K_{\mathrm{crit}}
)$ and $\Sigma_g^{\mathrm{KL}}\to\B^2\cup\K_{\mathrm{crit}}$ in the sense of varifolds as $g\to\infty$. 
\item\label{conj:genusg-bc3-KL-iii} $\Sigma_g^{\mathrm{KL}}$ is congruent to the surface constructed by Kapouleas--Li \cite{KapouleasLiDiscCCdesing}  for all sufficiently large $g$ as well as to the surface $\Sigma_g^{\mathrm{Ket}}$ from \cite{KetoverFB}. 
\item\label{conj:genusg-bc3-KL-iv} The Morse index of $\Sigma_g^{\mathrm{KL}}$ is equal to $2g+6$ and its $\apr_{g+1}$-equivariant index is equal to $1$. 
\end{enumerate}
\end{conjecture}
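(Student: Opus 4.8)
\textbf{Plan for Conjecture \ref{conj:genusg-bc3-KL}.} Since this is stated explicitly as a \emph{conjecture}, the ``proof proposal'' below is really a description of the partial evidence we currently have, the numerical experiments underpinning it, and what a complete proof would require; we do not claim to resolve it here. Item \ref{conj:genusg-bc3-KL-i} together with the existence assertion and the second half of \ref{conj:genusg-bc3-KL-iii} (that $\Sigma_g^{\mathrm{KL}}$ is congruent to $\Sigma_g^{\mathrm{Ket}}$) would follow immediately if one grants Open Question~4 of \cite{LiSurvey}, which predicts that Ketover's min-max surfaces coincide with the Kapouleas--Li gluing-desingularization surfaces; the topology and symmetry claims are then exactly what is established for $\Sigma_g^{\mathrm{Ket}}$ in Appendix~\ref{app:convergence_large_g} and for the Kapouleas--Li family in \cite{KapouleasLiDiscCCdesing}. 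Thus the first conceptual obstacle is the identification problem for these two constructions, which at present is open.

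For item \ref{conj:genusg-bc3-KL-ii}, the plan would be to adapt the area computation already present in our article. The varifold limit $\B^2 \cup \K_{\mathrm{crit}}$ has area $\area(\B^2) + \area(\K_{\mathrm{crit}}) \approx 2.6671\,\pi$ (cf. Remark~\ref{rem:AreaConj}), and by the monotonicity formula a symmetric portion of the $k=2$ Karcher--Scherk tower used in the Kapouleas--Li desingularization has strictly smaller area than the corresponding portion of its asymptotic planes; hence one expects $\area(\Sigma_g^{\mathrm{KL}})$ to be monotonically increasing in $g$ and bounded above by the area of the limit varifold, giving the strict inequality for every $g\geq 2$. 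Making this rigorous requires uniform control, in $g$, of the gluing region, which is exactly the kind of estimate carried out in Section~\ref{sec:Initial} (cf. Corollary~\ref{cor:area_K}) adapted to the interior-singularity setting of \cite{KapouleasLiDiscCCdesing}; the varifold convergence itself is part of the output of that construction.

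The genuinely hard part, and the main obstacle, is item \ref{conj:genusg-bc3-KL-iv}: the assertion that $\morseindex(\Sigma_g^{\mathrm{KL}}) = 2g+6$ exactly. The equivariant index statement ($\apr_{g+1}$-equivariant index equal to $1$) is already known for $\Sigma_g^{\mathrm{Ket}}$ from the main theorem of \cite{FranzIndex}, so granting the identification in \ref{conj:genusg-bc3-KL-iii} that clause is settled. The full (non-equivariant) index, however, is a quantity for which — as we note in the text — no infinite family of free boundary minimal surfaces in $\B^3$ has a proven formula. The natural route, mirroring \cite{KapouleasWiygulIndex} for the Lawson surfaces $\Sigma_{g,1}$, would be: decompose equivariant eigenspaces according to the irreducible representations of $\apr_{g+1} \cong \dih_{2(g+1)}$; on each isotypic sector estimate the Jacobi form by comparison with the model pieces $\B^2$, $\K_{\mathrm{crit}}$ and the $k=2$ tower, using the regionwise comparison estimates in the spirit of Proposition~\ref{initsurfcomp}; and then sum the per-sector contributions, the affine dependence on $g$ arising from the growing number of sectors that ``see'' a fixed negative direction on the catenoidal neck. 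The lower bound $\morseindex(\Sigma_g^{\mathrm{KL}}) \geq \tfrac{1}{3}(2g+2)$ from \cite{AmbCarSha18-Index} is consistent with but far from $2g+6$, and closing the gap to an exact value — in particular ruling out extra negative directions not visible in the models — is precisely what the numerical evidence from Brakke's evolver \cite{Brakke1992} suggests but what we are not able to prove; this is the step we flag as the principal open difficulty, and it is why the statement is recorded as a conjecture rather than a theorem.
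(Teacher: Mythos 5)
Since the statement is labeled and treated in the paper explicitly as a \emph{conjecture}, there is no proof to match against, only a block of ``heuristics and motivation''; you correctly recognize this and structure your answer accordingly, and you identify the same genuine open obstacles as the paper does (the unresolved congruence between the Ketover and Kapouleas--Li families, and the absence of any proven exact index formula for an infinite family in this setting). Your proposal is therefore consistent with the paper in its assessment of what is known and what is open, but you offer a genuinely different motivating argument for item \ref{conj:genusg-bc3-KL-iv}.

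Where the paper arrives at the value $2g+6$ by an informal analogy with the Costa--Hoffman--Meeks family — decomposing the known index $2g+3$ as $2(g+1)+\morseindex(\text{catenoid})$ and then substituting $\morseindex(\K_{\mathrm{crit}})=4$ for $\morseindex(\text{catenoid})=1$ — you instead sketch the outline of a \emph{proof strategy}: decompose into isotypic sectors under the irreducible representations of $\apr_{g+1}\cong\dih_{2(g+1)}$, control each sector by regionwise comparison with the models as in Proposition~\ref{initsurfcomp}, and show the affine growth comes from a fixed negative direction replicated across $\cyc_{g+1}$ sectors. This is closer to the template of \cite{KapouleasWiygulIndex} (which the paper invokes only as an analogy, not as a program) and is the more substantive of the two heuristics, since it points toward an actual argument rather than a pattern match; the paper's analogy, on the other hand, directly produces the numerical prediction without any analysis. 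The paper ultimately rests the conjecture on the numerical data of Table~\ref{table:index}, which you are also implicitly relying on. On item \ref{conj:genusg-bc3-KL-ii}, a small clarification: for $\Sigma_g^{\mathrm{Ket}}$ the strict area inequality is an \emph{input} to the min-max construction (it is assumption \ref{prop:convergence_large_g-iii} of Proposition~\ref{prop:convergence_large_g}, coming from the sweepout width bound), so it does not require the monotonicity-formula argument you transplant from Remark~\ref{rem:AreaConj}; that argument is stated there for the $\{\Sigma_g^{\mathrm{CSW}}\}$ family with $k=3$ towers, and while your adaptation to $k=2$ is reasonable as a further consistency check, it is not what the paper relies on for the Ketover side.
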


\begin{figure}
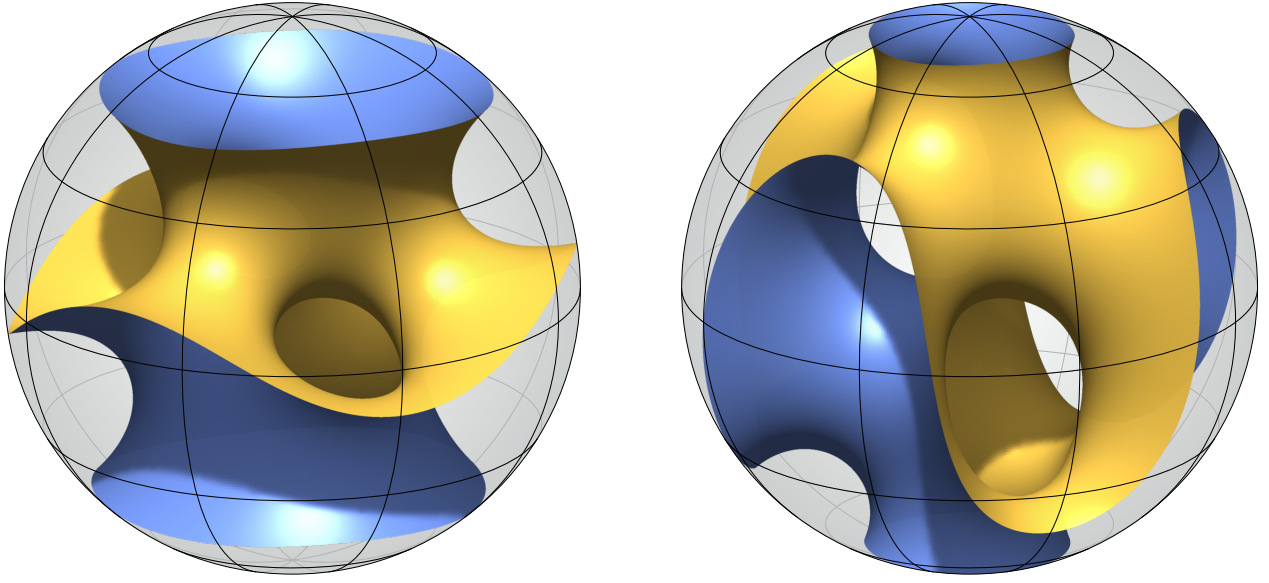
%
\includegraphics[width=\fbmsscale\textwidth,page=3]{figures-fmbs}\hfill
\includegraphics[width=\fbmsscale\textwidth,page=4]{figures-fmbs}%
\caption{The conjectural free boundary minimal surfaces $\Sigma_2^{\mathrm{KL}}$ (left) and $\Sigma_2^{\mathrm{CSW}}$ (right).}%
\label{fig:genus2}%
\end{figure}

\emph{Heuristics and motivation.} 
As discussed in Appendix \ref{app:convergence_large_g}, the surfaces $\Sigma_g^{\mathrm{Ket}}$ and $\Sigma_g^{\mathrm{KL}}$ for sufficiently large $g$  satisfy \ref{conj:genusg-bc3-KL-i} and the convergence stated in \ref{conj:genusg-bc3-KL-ii} but it is open whether they are actually congruent. 
Our conjecture about their existence for low genus $g\geq2$ is based on numerical simulations which we visualize for $g=2$ in Figure~\ref{fig:genus2} (left image) and for $g=11$ in Figure~\ref{fig:genus11} (left image).  
A motivation for \ref{conj:genusg-bc3-KL-iv} is the fact that by \cite{Nayatani1993,Morabito2009} the complete Costa--Hoffman--Meeks surface $\Sigma_g^{\mathrm{CHM}}$ of genus $g$ in $\R^3$ has Morse index equal to $2g+3$.  
Recalling that the complete catenoid in $\R^3$ has index $1$, we obtain 
\begin{align}\label{eqn:formula:indexCHM}
\morseindex(\Sigma_g^{\mathrm{CHM}})=2g+3=2(g+1)+\morseindex(\text{catenoid}).
\end{align}
Here, we recover the factor $(g+1)$ which is also the order of the cyclic subgroup $\cyc_{g+1}<\apr_{g+1}$. 
In \cite{Devyver2019,Smith-Zhou2019,Tran2020} it was shown that the critical catenoid $\K_{\mathrm{crit}}$ in $\B^3$ has index $4$. 
Hence, replacing the contribution of the complete catenoid with that of the critical catenoid in equation \eqref{eqn:formula:indexCHM} we obtain  
\begin{align}\label{eqn:formula:indexKL}
\morseindex(\Sigma_g^{\mathrm{KL}})=2(g+1)+\morseindex(\K_{\mathrm{crit}})=2g+6
\end{align}
as conjectured in the first part of \ref{conj:genusg-bc3-KL-iv}. 
Moreover, the numerical data presented in Table \ref{table:index} are consistent with \eqref{eqn:formula:indexKL}. 
The surface $\Sigma_g^{\mathrm{Ket}}$ has been constructed via equivariant min-max methods and it follows from \cite{FranzIndex} that its equivariant index is equal to $1$. 
So if \ref{conj:genusg-bc3-KL-iii} is true then the second part of \ref{conj:genusg-bc3-KL-iv} follows. 

\begin{conjecture}\label{conj:genusg-bc3-CSW}
For every integer $g\geq2$ there exists a free boundary minimal surface $\Sigma_g^{\mathrm{CSW}}$ in $\B^3$ with the following properties: 
\begin{enumerate}[label={\normalfont(\roman*)}]
\setlength{\itemsep}{-.5ex}
\item\label{conj:genusg-bc3-CSW-i} $\Sigma_g^{\mathrm{CSW}}$ has $3$ boundary components, genus $g$ and antiprismatic symmetry $\apr_{g+1}$.
\item\label{conj:genusg-bc3-CSW-ii} $\area(\Sigma_g^{\mathrm{KL}})<\area(\Sigma_g^{\mathrm{CSW}})<\area(\B^2)+2\area(\K_{0})$ and $\Sigma_g^{\mathrm{CSW}}\to \K_{0}\cup\B^2\cup-\K_{0}$ in the sense of varifolds as $g\to\infty$. 
\item\label{conj:genusg-bc3-CSW-iii} $\Sigma_g^{\mathrm{CSW}}$ coincides with the surface constructed in Theorem \ref{thm:ConstructFirst} for all sufficiently large $g$. 
\item\label{conj:genusg-bc3-CSW-iv} The Morse index of $\Sigma_g^{\mathrm{CSW}}$ is greater than $3g+6$ and its $\apr_{g+1}$-equivariant index is equal to $2$. 
\end{enumerate}
\end{conjecture}

\emph{Heuristics and motivation.} 
We proved that the surfaces constructed in Theorem \ref{thm:ConstructFirst} satisfy \ref{conj:genusg-bc3-CSW-i} and the convergence stated in \ref{conj:genusg-bc3-CSW-ii}. 
It remains to determine the lowest value of $g\in\N$ for which properties \ref{conj:genusg-bc3-CSW-i} and \ref{conj:genusg-bc3-CSW-ii} are true. 
We conjecture that the answer to this question is again $g=2$, based on numerical simulation visualized for $g=2$ in Figure~\ref{fig:genus2} (right image) and for $g=11$ in Figure~\ref{fig:genus11} (right image). 
The numerical data presented in Table \ref{table:index} suggest that the index of $\Sigma_g^{\text{CSW}}$ is always odd with growth rate alternating between $2$ and $4$ but the explicit dependence on $g$ is not evident. 
Therefore, we only infer the lower bound $\morseindex(\Sigma_g^{\text{CSW}})>3g+6$ from the numerical data.
In Proposition \ref{prop:DistiguishByEquivIndex} above we proved that the surfaces constructed in Theorem \ref{thm:ConstructFirst} have equivariant index at least $2$ and we conjecture here that it is actually equal to $2$. 

\begin{conjecture}\label{conj:genus0-FPZ}
For every integer $n\geq2$ there exists a free boundary minimal surface $\Xi_n^{\mathrm{FPZ}}$ in $\B^3$ with the following properties: 
\begin{enumerate}[label={\normalfont(\roman*)}]
\setlength{\itemsep}{-.5ex}
\item\label{conj:genus0-FPZ-i} $\Xi_n^{\mathrm{FPZ}}$ has $n$ boundary components, genus zero and prismatic symmetry $\pri_{n}$ except for the case $n=2$ where the surface is congruent to the critical catenoid.
\item\label{conj:genus0-FPZ-ii} $\area(\Xi_n^{\mathrm{FPZ}})<2\area(\B^2)$ and $\Xi_n^{\mathrm{FPZ}}\to\B^2$ with multiplicity $2$ in the sense of varifolds as $n\to\infty$. 
\item\label{conj:genus0-FPZ-iii} $\Xi_n^{\mathrm{FPZ}}$ is congruent to the surface of genus zero constructed by Folha--Pacard--Zolotareva \cite{FolPacZol17} for all sufficiently large $n\in\N$. 
\item\label{conj:genus0-FPZ-iv} The Morse index of $\Xi_n^{\mathrm{FPZ}}$ is equal to $2n$ and its $\pri_{n}$-equivariant index is equal to $1$. 
\end{enumerate}
\end{conjecture}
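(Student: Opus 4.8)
\textbf{Proof proposal for Conjecture \ref{conj:genus0-FPZ}.}

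The plan is to treat the four clauses in order of increasing difficulty, recognizing that the bulk of this is genuinely open (it is a conjecture, after all) but that two of the four items are already within reach of results in this paper. First I would dispose of item \ref{conj:genus0-FPZ-ii}: the area bound $\area(\Xi_n^{\mathrm{FPZ}})<2\area(\B^2)=2\pi$ and the varifold convergence to $\B^2$ with multiplicity $2$ should follow from the monotonicity-type heuristic already invoked in Remark \ref{rem:AreaConj}, combined with the fact that the $k=2$ Karcher--Scherk (Scherk) towers used as models desingularize a \emph{double} plane rather than the triple configuration $\K_0\cup\B^2\cup-\K_0$; in the construction sketched in Section \ref{sec:OnlyCatsDesing} one would replace the union $\K_0\cup-\K_0$ (which has area $\approx2.79\,\pi$) by the double disc, and the analogue of Proposition \ref{genus-zero_init_surfs} would give smooth multiplicity-one convergence away from the equator, hence multiplicity-two convergence as varifolds to $\B^2$ globally. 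The strict area inequality is then a consequence of the Scherk tower having strictly less area than its asymptotic planes on any symmetric compact portion.

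Next I would address item \ref{conj:genus0-FPZ-iv}, at least partially. The $\pri_n$-equivariant index being at least $2$ is immediate from Remark \ref{rem:equivariant-index-genus0} (which asserts precisely this for the surfaces $\Xi_n$ of Theorem \ref{thm:ConstructSecond}, using Remark \ref{rem:negative_direction_standard_Scherk} in place of Lemma \ref{lem:negative-direction_M}); wait — the conjecture claims equivariant index \emph{equal to} $1$, which would \emph{contradict} that lower bound, so in fact the honest statement is that item \ref{conj:genus0-FPZ-iv} as written is inconsistent with Remark \ref{rem:equivariant-index-genus0} unless $\Xi_n^{\mathrm{FPZ}}$ is \emph{not} congruent to $\Xi_n$ of Theorem \ref{thm:ConstructSecond}. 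Consequently the correct heuristic to offer here is the parallel with Conjecture \ref{conj:genusg-bc3-KL}\ref{conj:genusg-bc3-KL-iv}: the Folha--Pacard--Zolotareva surfaces arise from a one-parameter min-max / gluing construction desingularizing the double disc, so one expects equivariant index $1$; the full (non-equivariant) index $2n$ should then be extracted by the same bookkeeping as for $\Sigma_g^{\mathrm{KL}}$, namely decomposing the eigenfunctions of negative eigenvalue into a contribution from the (single) approximate cokernel/core region and a contribution that splits into $n$ rotated copies along the necks, giving $n$ times the index $2$ of the relevant building block, i.\,e. $2n$.

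Items \ref{conj:genus0-FPZ-i} and \ref{conj:genus0-FPZ-iii} are the hard part, and here I do not expect a short proof. For \ref{conj:genus0-FPZ-iii} one would need a uniqueness theorem: any free boundary minimal surface in $\B^3$ with genus $0$, exactly $n$ boundary components, symmetry group $\pri_n$, and area below $2\pi$ must coincide with the Folha--Pacard--Zolotareva surface for $n$ large — this is entirely parallel to Open Question 4 of \cite{LiSurvey} and to clause \ref{conj:genusg-bc3-KL-iii} above, and the only realistic route is a compactness-and-rigidity argument in the spirit of \cite{KapouleasWiygulUnique}: show that the space of such surfaces is, for each large $n$, a single point by ruling out bubbling and identifying the varifold limit via item \ref{conj:genus0-FPZ-ii}, then invoking a quantitative version of the implicit function theorem to conclude the gluing-constructed surface is the only one near that limit. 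Clause \ref{conj:genus0-FPZ-i}, asserting \emph{existence} down to $n=2$ (with the $n=2$ case degenerating to the critical catenoid, which has genus $0$ and two boundary components), is genuinely beyond current gluing technology — the models degenerate as $n$ decreases and one would need either a global variational construction (equivariant min-max with controlled topology, as in \cite{KetoverFB}, but here the topology control is exactly the delicate missing ingredient flagged in Appendix \ref{app:convergence_large_g}) or explicit numerical continuation as the stated heuristic relying on Brakke's evolver suggests. Accordingly, the honest form of this proof is: prove \ref{conj:genus0-FPZ-ii} rigorously and the lower bound in \ref{conj:genus0-FPZ-iv} rigorously for the surfaces of Theorem \ref{thm:ConstructSecond}, and present \ref{conj:genus0-FPZ-i}, \ref{conj:genus0-FPZ-iii}, and the exact-value claims in \ref{conj:genus0-FPZ-iv} as conjectural, supported by the Costa--Hoffman--Meeks analogy $\morseindex=2g+3$ and the critical-catenoid index $4$, which together with a neck-counting argument would predict $2n$.
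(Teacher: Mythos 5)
The statement you are attempting to justify is a conjecture, so the paper offers heuristics and motivation rather than a proof; but even as heuristics, your proposal has a gap that makes it unreliable. You repeatedly conflate the family $\Xi_n^{\mathrm{FPZ}}$ of the present conjecture with the family $\Xi_n^{\mathrm{CSW}}$ of Theorem \ref{thm:ConstructSecond} and Conjecture \ref{conj:genus0-CSW}. These are different surfaces with different topology and different varifold limits: $\Xi_n^{\mathrm{FPZ}}$ has $n$ boundary components near the equator and converges to $\B^2$ with multiplicity $2$, whereas $\Xi_n^{\mathrm{CSW}}$ has $n+2$ boundary components and converges to $\K_0\cup-\K_0$. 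The confusion drives two concrete errors. First, you propose to establish item \ref{conj:genus0-FPZ-ii} by ``replacing $\K_0\cup-\K_0$ by the double disc'' in the Section \ref{sec:OnlyCatsDesing} construction and invoking an analogue of Proposition \ref{genus-zero_init_surfs}; but the paper does no such thing (and such a replacement would be a genuinely different gluing problem, with the singular locus on $\partial\B^3$, closer in spirit to Kapouleas--Zou). The paper's actual basis for items \ref{conj:genus0-FPZ-i} and \ref{conj:genus0-FPZ-ii} at large $n$ is a direct citation of Folha--Pacard--Zolotareva's Theorem 1.1 in \cite{FolPacZol17}, which already proves exactly that. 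Second, you apply Remark \ref{rem:equivariant-index-genus0} (which asserts equivariant index at least $2$ \emph{for $\Xi_n^{\mathrm{CSW}}$}) to $\Xi_n^{\mathrm{FPZ}}$, derive a contradiction with the conjectured value $1$, and only then discover the families must differ; this discovery should have been the starting point, not a ``wait'' moment, since the two conjectures were stated side by side with distinct topology and limits.

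You also miss the paper's actual heuristics for the remaining clauses: the equivariant-index-one claim is conditional (\emph{if} Ketover's $\pri_n$ min-max surfaces from \cite[Section 5]{KetoverFB} coincide with $\Xi_n^{\mathrm{FPZ}}$, \emph{then} \cite{FranzIndex} gives equivariant index $1$); the full index $2n$ rests on the numerical data in Table \ref{table:index-genus0}, not on the Costa--Hoffman--Meeks / critical-catenoid bookkeeping you propose (which the paper reserves for the three-boundary-component families $\Sigma_g^{\mathrm{KL}}$ and $\Sigma_g^{\mathrm{CSW}}$ in Conjectures \ref{conj:genusg-bc3-KL} and \ref{conj:genusg-bc3-CSW}); and the $n=2$ clause, which you do not address at all, is handled via McGrath's uniqueness theorem \cite{McGrath2018} together with the known index $4$ of the critical catenoid. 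Your closing discussion of why items \ref{conj:genus0-FPZ-i} and \ref{conj:genus0-FPZ-iii} are genuinely open is reasonable in spirit, but only after the foregoing confusions are cleared does it align with the paper.
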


\emph{Heuristics and motivation.} 
The boundary components of $\Xi_n^{\mathrm{FPZ}}$ are all aligned along the equator and historically this is the first infinite family of free boundary minimal surfaces ever described (see \cite{FraSch16}). 
Folha--Pacard--Zolotareva \cite[Theorem 1.1]{FolPacZol17} proved the existence of free boundary minimal surfaces satisfying \ref{conj:genus0-FPZ-i} and the convergence stated in \ref{conj:genus0-FPZ-ii} for all sufficiently large $n\in\N$. 
In \cite[Section~5]{KetoverFB} Ketover describes a variational construction of similar $\pri_{n}$-equivariant surfaces but the number of their boundary components is not controlled explicitly in the sense that additional boundary components could appear during the min-max procedure. 
If these surfaces are congruent to $\Xi_n^{\mathrm{FPZ}}$ then it follows from \cite{FranzIndex} that their equivariant index is equal to $1$. 
The numerical data presented in Table~\ref{table:index-genus0} suggest that the Morse index of $\Xi_n^{\mathrm{FPZ}}$ is equal to $2n$.  
It is rather difficult to simulate $\Xi_n^{\mathrm{FPZ}}$ for $n\geq8$ because then the surface is already extremely close to the doubling of the equatorial disc which means that the half necks along the boundary become too tiny. 
Therefore, we obtain fewer data points compared to the other families of free boundary minimal surfaces. 
The case $n=2$ is special because a result of McGrath \cite{McGrath2018} implies that a $\pri_2$-equivariant free boundary minimal surface in $\B^3$ with genus zero and two boundary components is congruent to the critical catenoid which is known to have index $4$ by \cite{Devyver2019,Smith-Zhou2019,Tran2020}. 

\begin{conjecture}\label{conj:genus0-CSW}
For every integer $n\geq3$ there exists a free boundary minimal surface $\Xi_n^{\mathrm{CSW}}$ in $\B^3$ with the following properties: 
\begin{enumerate}[label={\normalfont(\roman*)}]
\setlength{\itemsep}{-.5ex}
\item\label{conj:genus0-CSW-i} $\Xi_n^{\mathrm{CSW}}$ has $n+2$ boundary components, genus zero and prismatic symmetry $\pri_{n}$ except for the case $n=4$ where the the surface has octahedral symmetry (see Figure \ref{fig:genus0n3}).
\item\label{conj:genus0-CSW-ii} $\area(\Xi_n^{\mathrm{CSW}})<2\area(\K_{0})$ and $\Xi_n^{\mathrm{CSW}}\to \K_{0}\cup-\K_{0}$ in the sense of varifolds as $n\to\infty$. 
\item\label{conj:genus0-CSW-iii} $\Xi_n^{\mathrm{CSW}}$ coincides with the surface constructed in Theorem \ref{thm:ConstructSecond} for all sufficiently large $n\in\N$. 
\item\label{conj:genus0-CSW-iv} For $n\geq6$ the Morse index of $\Xi_n^{\mathrm{CSW}}$ is equal to $2n+6$ and its $\pri_{n}$-equivariant index is equal to $2$. 
\end{enumerate}
\end{conjecture}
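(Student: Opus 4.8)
\textbf{Proof proposal for Conjecture \ref{conj:genus0-CSW}.}
Since this is a conjecture rather than a theorem, there is no proof to reconstruct; instead I shall describe the programme by which one would hope to establish its four clauses, indicating which parts are already secured by the present paper and which are genuinely open. For the ``sufficiently large $n$'' regime, clauses \ref{conj:genus0-CSW-i}, \ref{conj:genus0-CSW-ii} and \ref{conj:genus0-CSW-iii} are already proven: Theorem \ref{thm:ConstructSecond} produces, for $n \geq n_0$, a properly embedded free boundary minimal surface $\Xi_n$ with genus zero, exactly $n+2$ boundary components, prismatic symmetry group $\pri_n$, and varifold convergence $\Xi_n \to \K_0 \cup -\K_0$ away from $\K_0 \cap -\K_0$; one simply sets $\Xi_n^{\mathrm{CSW}} \vcentcolon= \Xi_n$ in that range. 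The area bound $\area(\Xi_n^{\mathrm{CSW}}) < 2\area(\K_0)$ would follow from the heuristic already recorded in Remark \ref{rem:AreaConj}: by the monotonicity formula a symmetric portion of the four-ended Karcher--Scherk tower has strictly smaller area than the corresponding portion of its two asymptotic planes, so the gluing only removes area relative to $2\K_0$; making this rigorous requires tracking the area defect of the tower region against the excised catenoidal caps in the construction of Section \ref{sec:OnlyCatsDesing}, which is routine but was not carried out there.

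The substantive new content is twofold: extending existence down to all $n \geq 3$ (the low-genus-complexity regime), and the index statement \ref{conj:genus0-CSW-iv}. The first is, as the paper itself acknowledges for the companion Conjectures \ref{conj:genusg-bc3-KL}--\ref{conj:genus0-FPZ}, beyond current gluing technology: the construction of Section \ref{sec:OnlyCatsDesing} only controls the error terms when $n$ is large (the initial surface must well-approximate its constituent models, which needs the scale separation $m^{-1}$, here $n^{-1}$, to be small), so numerical evidence via Brakke's evolver is the only available support, together with the special feature that at $n=4$ the symmetry enhances to the octahedral group --- this last point would be verified simply by checking that the four asymptotic planes of $\tow$ at angle $\omega_0$, when wrapped by $\Phi$ around the equator with $n=4$ copies, produce a configuration invariant under the full octahedral group, since $4\omega_0$ combined with the reflections of $\pri_4$ generates the extra symmetries.

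The index assertion is the part I would regard as the main obstacle, and the one where the methods of the present article give the most leverage. The lower bound direction is essentially already in hand: Remark \ref{rem:equivariant-index-genus0} establishes that $\Xi_n^{\mathrm{CSW}}$ has $\pri_n$-equivariant index at least $2$ for $n$ large, by transplanting the negative direction $\tilde v_2$ on the quotiented two-ended Scherk tower (Remark \ref{rem:negative_direction_standard_Scherk}) to the initial surface and then to $\Xi_n$, exactly as in the proof of Proposition \ref{prop:DistiguishByEquivIndex}. To get the conjectured value $2n+6$ for the full Morse index one would argue analogously to the heuristic \eqref{eqn:formula:indexKL} but now with \emph{two} critical catenoids: each of the two catenoidal sheets $\pm\K_0$ contributes, in the large-$n$ limit, roughly $\morseindex(\K_{\mathrm{crit}})=4$ modes (by \cite{Devyver2019}, \cite{Smith-Zhou2019}, \cite{Tran2020}) but the shared cyclic symmetry $\cyc_n < \pri_n$ forces the catenoidal directions to come in $n$-fold families, giving a term $2 \cdot (n/?)$; reconciling the numerics ($2n+6$) with such a count, and rigorously localizing the low eigenfunctions of $J_{\Xi_n}$ either in the two catenoidal regions or in the equatorial tower region, is precisely the delicate spectral-localization analysis that has only been completed for Lawson surfaces in \cite{KapouleasWiygulIndex} and would have to be reproduced here. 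The upper bound --- that there are no further negative directions --- is the genuinely hard half: it requires showing that $Q_{\Xi_n}$ is non-negative on the orthogonal complement of the model-induced negative modes, which in turn needs a region-wise comparison of $Q_{\Xi_n}$ with $Q_{\K_0} \oplus Q_{\B^2\text{-less tower}} \oplus Q_{-\K_0}$ together with a Dirichlet-to-Neumann matching argument along the gluing interfaces, exactly the scheme sketched at the end of the proof of Proposition \ref{prop:DistiguishByEquivIndex} but carried out quantitatively and in both directions. I expect this step to be the crux, and it is the reason the statement is offered as a conjecture rather than a theorem.
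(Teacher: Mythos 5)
Since Conjecture~\ref{conj:genus0-CSW} is left unproven in the paper (there is only a ``Heuristics and motivation'' block, not a proof), there is no proof to reconstruct, and you are right to frame your answer as a programme. Your identification of what is already secured matches the paper's own account closely: Theorem~\ref{thm:ConstructSecond} settles \ref{conj:genus0-CSW-i}, the varifold convergence in \ref{conj:genus0-CSW-ii}, and \ref{conj:genus0-CSW-iii} for all sufficiently large $n$; Remark~\ref{rem:equivariant-index-genus0} delivers the lower bound $\geq 2$ on the $\pri_n$-equivariant index; and everything else --- the strict area inequality, existence down to $n=3$, the exact index value $2n+6$, and equality in the equivariant index --- is left to numerical evidence (Table~\ref{table:index-genus0}, Figures~\ref{fig:genus0n3} and~\ref{fig:genus0n12}) and the monotonicity-formula heuristic of Remark~\ref{rem:AreaConj}, exactly as you say.

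Two minor inaccuracies in your writeup are worth flagging. First, the explanation you propose for the octahedral symmetry enhancement at $n=4$ --- that ``$4\omega_0$ combined with the reflections of $\pri_4$ generates the extra symmetries'' --- does not work: $\omega_0 \approx 0.141\pi$ carries no arithmetic relation to the octahedral angles, and in any case the gluing construction (and hence any reasoning anchored on the wrapped half planes $\Pi^0,\Pi^1$) is only available for $n$ large, far from $n=4$. The paper's own suggestion is quite different: that $\Xi_4^{\mathrm{CSW}}$ may be congruent to a surface Ketover produced by equivariant min-max, in which case the enhanced symmetry is an output of that variational construction rather than something to be read off the tower asymptotics; numerically, the $6$ boundary components of $\Xi_4^{\mathrm{CSW}}$ arrange themselves on the face centers of an inscribed cube, which is the real source of the extra symmetry. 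Second, Remark~\ref{rem:AreaConj} as stated concerns the six-ended tower $\tow$ and the family $\{\Sigma_g\}$; the heuristic does transfer to the four-ended tower used in Section~\ref{sec:OnlyCatsDesing}, but that transfer (and the comparison with the excised catenoidal annuli) is precisely what has not been carried out, so it is an open step rather than a consequence of the remark as written. You also do not mention the paper's dichotomy~\eqref{eqn:index-genus0-CSW} for $n\in\{3,4,5\}$ and its link to Conjecture~\ref{conj:genus0-max}\,\ref{conj:genus0-max-iii}, which is part of the motivation for restricting \ref{conj:genus0-CSW-iv} to $n\geq 6$, but since the conjecture as stated only asserts the index value for $n\geq 6$ this omission is cosmetic.
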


\begin{figure}
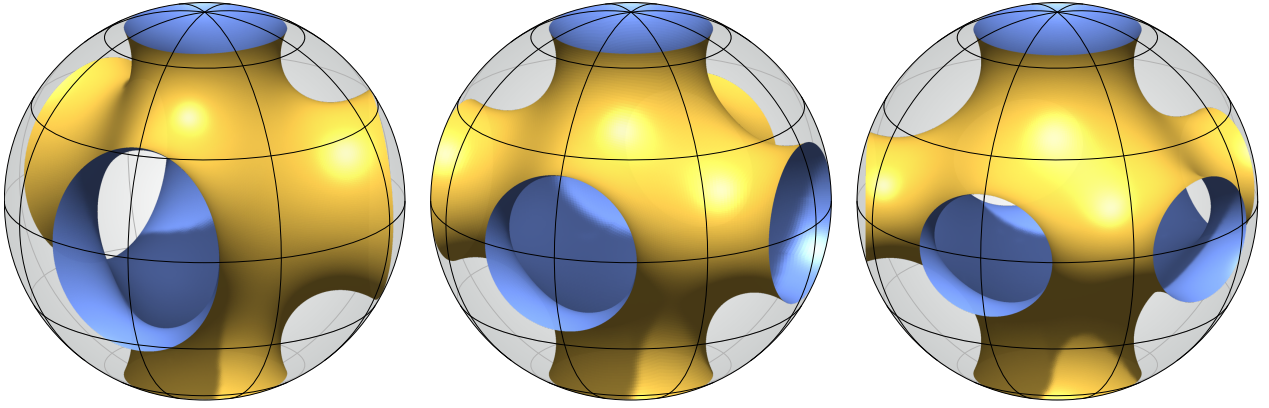
%
\includegraphics[width=0.32\textwidth,page=7]{figures-fmbs}\hfill
\includegraphics[width=0.32\textwidth,page=8]{figures-fmbs}\hfill
\includegraphics[width=0.32\textwidth,page=9]{figures-fmbs}%
\caption{Visualization of the free boundary minimal surfaces 
$\Xi_3^{\mathrm{CSW}}$, 
$\Xi_4^{\mathrm{CSW}}$ and 
$\Xi_5^{\mathrm{CSW}}$ (left to right) which are conjectured to be congruent to  
$\Gamma_5^{\mathrm{max}}$,
$\Gamma_6^{\mathrm{max}}$ and 
$\Gamma_7^{\mathrm{max}}$ respectively.}%
\label{fig:genus0n3}%
\end{figure}	 

\emph{Heuristics and motivation.} 
For all sufficiently large $n\in\N$ we proved that the surfaces constructed in Theorem \ref{thm:ConstructSecond} satisfy \ref{conj:genus0-CSW-i} and the convergence stated in \ref{conj:genus0-CSW-ii}. 
For small $n\geq3$ the existence of free boundary minimal surfaces satisfying \ref{conj:genus0-CSW-i} and \ref{conj:genus0-CSW-iv} remains open and our conjecture is again based on simulations, which are visualized for $n\in\{3,4,5\}$ in Figure \ref{fig:genus0n3} and for $n\in\{12,20\}$ in Figure~\ref{fig:genus0n12}. 
It is conceivable that $\Xi_4^{\mathrm{CSW}}$ is congruent to a free boundary minimal surface constructed by Ketover \cite[Theorem 6.1]{KetoverFB} using equivariant min-max methods. 
We exclude the case $n=2$ here because we expect that $\Xi_2^{\mathrm{CSW}}$ would be congruent to the surface  $\Xi_4^{\mathrm{FPZ}}$ described in Conjecture~\ref{conj:genus0-FPZ} by means of a rotation by angle $\pi/2$ around a horizontal axis. 
In Remark \ref{rem:equivariant-index-genus0} we explain why the surfaces constructed in Theorem \ref{thm:ConstructSecond} have equivariant index at least $2$ and we conjecture here that it is actually equal to $2$. 
Regarding their full Morse index, the numerical data presented in Table \ref{table:index-genus0} suggest
\begin{align}\label{eqn:index-genus0-CSW}
\morseindex(\Xi_n^{\mathrm{CSW}})&=
\begin{cases}
	2n+6, & \text{ if $n\geq6$, } \\
	3\bigl((n+2)-1\bigr), & \text{ if $n\in\{3,4,5\}$.  }
\end{cases}
\end{align}
The dichotomy in \eqref{eqn:index-genus0-CSW} indicates that $\Xi_3^{\mathrm{CSW}}$, $\Xi_4^{\mathrm{CSW}}$ and $\Xi_5^{\mathrm{CSW}}$ can also be seen as members of another family of free boundary minimal surfaces described in Conjecture \ref{conj:genus0-max} \ref{conj:genus0-max-iii}--\ref{conj:genus0-max-iv} below.

\begin{table} 
\flushright
\pgfmathsetmacro{\xmax}{18}
\pgfmathsetmacro{\ymax}{65}
\pgfmathsetmacro{\yscale}{0.163}
\begin{tikzpicture}[line cap=round,line join=round,baseline={(0,0)},scale={(\textwidth-2em)/(\xmax+1)/1cm},mark size=1.5pt]
\begin{scope}[yscale=\yscale]
\draw[help lines,ultra thin](0,0)grid[xstep=1,ystep=1](\xmax+.5,\ymax+1);
\draw(0,0)grid[xstep=\xmax+1,ystep=10](\xmax+.5,\ymax+1);
\draw[->](0,0)--(\xmax+.5,0);
\draw[->](0,0)--(0,\ymax+1);
\foreach\index in {10,20,...,\ymax}{\draw plot[hdash](0,\index)node[left]{$\index$};}
\foreach\genus in {2,...,\xmax}{\draw plot[vdash](\genus,0);}
\draw[color={cmyk,1:magenta,0.5;yellow,1},semithick] plot[mark=*] coordinates {
( 2,13)
( 3,19)
( 4,23)
( 5,27)
( 6,31)
( 7,33)
( 8,35)
( 9,39)
(10,41)
(11,45)
(12,47)
(13,49)
(14,53)
(15,55)
(16,59)
(17,61) 
(18,65) 
}; 
\draw[domain=2:18,variable=\g,samples=17,color={cmyk,1:magenta,0.5;cyan,1},semithick] plot[mark=*](\g,2*\g+6);
\path(2,3*2+6)coordinate(3g+6start)(\xmax,3*\xmax+6)coordinate(3g+6end)
(2,2*2+6)coordinate(2g+6start)(\xmax,2*\xmax+6)coordinate(2g+6end);
\end{scope} 
\draw[dotted,semithick](3g+6start)--(3g+6end)node[near end,sloped,below]{$3g+6$};
\draw[color={cmyk,1:magenta,0.5;cyan,1}](2g+6start)--(2g+6end)node[pos=0.8,sloped,below]{$2g+6$};
\begin{scope}[yscale=1.66,shift={(0,0.15)},every node/.style={inner sep=-3pt}]
\draw (0.3,-1*\baselineskip)node[anchor=base]{genus $g$}; 
\draw(-1,-1.35*\baselineskip)--++(\xmax+1.5,0);
\draw[color={cmyk,1:magenta,0.5;yellow,1}] (0.3,-2*\baselineskip)node[anchor=base]{$\operatorname{index}\bigl(\Sigma_g^{\text{CSW}}\bigr)$}; 
\draw(-1,-2.35*\baselineskip)--++(\xmax+1.5,0);
\draw[color={cmyk,1:magenta,0.5;cyan,1}] (0.3,-3*\baselineskip)node[anchor=base]{$\operatorname{index}\bigl(\Sigma_g^{\text{KL}}\bigr)$}; 
\foreach[count=\k]\indexCSW in {13,19,23,27,31,33,35,39,41,45,47,49,53,55,59,61,65}{
\pgfmathsetmacro{\genus}{int(\k+1)}
\pgfmathsetmacro{\indexKL}{int(2*\genus+6)}
\draw (\genus,-1*\baselineskip)node[anchor=base]{$\genus$}; 
\draw[color={cmyk,1:magenta,0.5;yellow,1}] (\genus,-2*\baselineskip)node[anchor=base]{$\indexCSW$}; 
\draw[color={cmyk,1:magenta,0.5;cyan,1}] (\genus,-3*\baselineskip)node[anchor=base]{$\indexKL$}; 
}
\end{scope} 
\end{tikzpicture}
\caption{Numerically computed Morse indices of $\apr_{g+1}$-invariant free boundary minimal surfaces with genus $g$ and $b=3$ boundary components.}
\label{table:index}%
\bigskip
\bigskip
\pgfmathsetmacro{\ymax}{42}
\begin{tikzpicture}[line cap=round,line join=round,baseline={(0,0)},scale={(\textwidth-2em)/(\xmax+1)/1cm},mark size=1.5pt]
\begin{scope}[yscale=\yscale]
\draw[help lines,ultra thin](0,0)grid[xstep=1,ystep=1](\xmax+.5,\ymax+1);
\draw(0,0)grid[xstep=\xmax+1,ystep=10](\xmax+.5,\ymax+1);
\draw[->](0,0)--(\xmax+.5,0);
\draw[->](0,0)--(0,\ymax+1);
\foreach\index in {10,20,...,40}{\draw plot[hdash](0,\index)node[left]{$\index$};}
\foreach\Cn in {2,...,\xmax}{\draw plot[vdash](\Cn,0);}
\draw[color={cmyk,1:magenta,0.5;yellow,1},semithick] plot[mark=*] coordinates {
(3  ,12)
(4  ,15)
(5  ,18)
(6  ,18)
(7  ,20)
(8  ,22)
(9  ,24)
(10 ,26)
(11 ,28)
(12 ,30)
(13 ,32)
(14 ,34)
(15 ,36)
(16 ,38)
(17 ,40)
(18 ,42)
}; 
\draw[domain=2:7,variable=\n,samples=6,color={cmyk,1:magenta,0.5;cyan,1},semithick] plot[mark=*](\n,2*\n);
\draw[color={cmyk,1:magenta,0.5;cyan,1},densely dotted,semithick]
(7,2*7)coordinate(2nstart)--(\xmax,2*\xmax)coordinate(2nend); 
\draw[dotted,semithick](5.2,3*5.2+3)coordinate(3n+3start)--(13,3*13+3)coordinate(3n+3end);
\path(2,2*2+6)coordinate(2n+6start)--(\xmax,2*\xmax+6)coordinate(2n+6end);
\end{scope} 
\path[color={cmyk,1:magenta,0.5;cyan,1}](2nstart)--(2nend)node[pos=0.59,sloped,below]{$2n$}; 
\path[color={cmyk,1:magenta,0.5;yellow,1}](2n+6start)--(2n+6end)node[pos=0.75,sloped,above]{$2n+6$};
\path(3n+3start)--(3n+3end)node[pos=0.41,sloped,above]{$3(n+1)\qquad=3(b-1)$};
\begin{scope}[yscale=1.66,shift={(0,0.15)},every node/.style={inner sep=-3pt}]
\draw (0.3,-1*\baselineskip)node[anchor=base]{$n$}; 
\draw(-1,-1.35*\baselineskip)--++(\xmax+1.5,0);
\draw[color={cmyk,1:magenta,0.5;yellow,1}](0.3,-2*\baselineskip)node[anchor=base]{$\operatorname{index}\bigl(\Xi_n^{\mathrm{CSW}}\bigr)$}; 
\draw(-1,-2.35*\baselineskip)--++(\xmax+1.5,0);
\draw[color={cmyk,1:magenta,0.5;cyan,1}] (0.3,-3*\baselineskip)node[anchor=base]{$\operatorname{index}\bigl(\Xi_n^{\text{FPZ}}\bigr)$}; 
\foreach[count=\k]\indexCSW in {~,12,15,18,18,20,22,24,26,28,30,32,34,36,38,40,42}{
\pgfmathsetmacro{\Cn}{int(\k+1)}
\draw(\Cn,-1*\baselineskip)node[anchor=base]{$\Cn$}; 
\draw[color={cmyk,1:magenta,0.5;yellow,1}](\Cn,-2*\baselineskip)node[anchor=base]{$\indexCSW$};  
}
\foreach\Cn in {2,...,7}{
\pgfmathsetmacro{\indexFPZ}{int(2*\Cn)}
\draw[color={cmyk,1:magenta,0.5;cyan,1}] (\Cn,-3*\baselineskip)node[anchor=base]{$\indexFPZ$}; 
}
\end{scope} 
\end{tikzpicture}
\caption{Numerically computed Morse indices of $\pri_n$-invariant free boundary minimal surfaces with genus zero and $b=n+2$ (top) respectively $b=n$ (bottom) boundary components.}
\label{table:index-genus0}%
\end{table}

The following conjecture is best explained in the context of the Steklov eigenvalue problem, because the existence of free boundary minimal surfaces  in $\B^3$ satisfying items \ref{conj:genus0-max-i} and \ref{conj:genus0-max-ii} below has already been discussed in the literature (cf. \cite{FraSch16,Karpukhin-Stern, Girouard2021}). 
However, in the context of the present discussion, the essence of Conjecture \ref{conj:genus0-max} is property \ref{conj:genus0-max-iv}. 

\begin{conjecture}\label{conj:genus0-max}
For every integer $b\geq3$ there exists a free boundary minimal surface $\Gamma_b^{\mathrm{max}}$ in $\B^3$ with the following properties: 
\begin{enumerate}[label={\normalfont(\roman*)}]
\setlength{\itemsep}{-.5ex}
\item\label{conj:genus0-max-i} $\Gamma_b^{\mathrm{max}}$ has $b$ boundary components, genus zero and maximizes the area among all embedded free boundary minimal surfaces in $\B^3$ with the same topology.
\item\label{conj:genus0-max-ii} $\area(\Gamma_b^{\mathrm{max}})<\area(\partial\B^3)$ and $\Gamma_b^{\mathrm{max}}\to\partial\B^3$ in the sense of varifolds as $b\to\infty$. 
\item\label{conj:genus0-max-iii}For $b\in\{5,6,7\}$ the surface $\Gamma_{b}^{\mathrm{max}}$ is congruent to the surface $\Gamma_{b-2}^{\mathrm{CSW}}$ described in Conjecture~\ref{conj:genus0-CSW}. 
\item\label{conj:genus0-max-iv} For all $b\geq3$ the Morse index of $\Gamma_b^{\mathrm{max}}$ is equal to $3(b-1)$. 
\end{enumerate}
\end{conjecture}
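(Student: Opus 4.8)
The plan is to construct $\Gamma_b^{\mathrm{max}}$ literally as an area maximizer and then to read off properties \ref{conj:genus0-max-ii}--\ref{conj:genus0-max-iv} from the variational characterization together with a degeneration analysis. Let $\mathcal{F}_{0,b}$ denote the set of properly embedded (a priori not necessarily connected) free boundary minimal surfaces in $\B^3$ of genus $0$ with exactly $b$ boundary components, and put $A(b)\vcentcolon=\sup\{\area(\Sigma)\st\Sigma\in\mathcal{F}_{0,b}\}$. The first step is the a priori bound $A(b)<\area(\partial\B^3)=4\pi$. Combining the classical free boundary identity $2\area(\Sigma)=\operatorname{length}(\partial\Sigma)$ (integrate $\operatorname{div}_\Sigma X=2$ for $X$ the position field and use $X|_{\partial\Sigma}=\eta_\Sigma$) with the Gauss--Bonnet theorem, the fact that $\kappa_g\equiv1$ on $\partial\Sigma$, and $K_\Sigma=-\tfrac12\abs{A_\Sigma}^2$, one obtains
\[
2\area(\Sigma)=\operatorname{length}(\partial\Sigma)
=2\pi(2-b)+\tfrac12\int_\Sigma\abs{A_\Sigma}^2,
\]
so that $A(b)<4\pi$ is equivalent to the total curvature bound $\int_\Sigma\abs{A_\Sigma}^2<4\pi(b+2)$ for every $\Sigma\in\mathcal{F}_{0,b}$. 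Establishing this last bound — presumably through the extended Gauss map of the surface doubled across $\Sp^2$, or via a calibration-type comparison with the round sphere — is already a nontrivial input and the first point that would require serious work.

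Granting $A(b)<\infty$, a maximizing sequence $\{\Sigma_j\}\subset\mathcal{F}_{0,b}$ has uniformly bounded area and genus, so by the compactness theory for free boundary minimal surfaces in $\B^3$ with a priori area and genus bounds a subsequence converges, smoothly with finite multiplicity away from finitely many points, to a free boundary minimal surface $\Sigma_\infty$, possibly together with necks degenerating at the bad points. The essential difficulty is \textbf{topological control of the limit}: a priori $\Sigma_\infty$ could carry higher multiplicity, could have higher genus, or could lose boundary components, and one must show that no nontrivial degeneration occurs, so that $\Sigma_\infty\in\mathcal{F}_{0,b}$ realizes $A(b)$ and may be taken connected. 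The natural mechanism for excluding neck degeneration is \emph{catenoidal replacement}: wherever a neck pinches one reglues a small free boundary catenoidal piece in the spirit of the desingularization carried out in this paper, producing a competitor in $\mathcal{F}_{0,b}$ (or with even more boundary components) of strictly larger area — consistent with Corollary \ref{cor:area_K}, which records that inserting such a catenoidal annulus adds more area than it removes — thereby contradicting maximality unless the degeneration was trivial. Excluding a drop in the number of boundary components should instead follow from near-extremality $\operatorname{length}(\partial\Sigma_j)\to 2A(b)$ close to $8\pi$: each component is a curve of geodesic curvature $1$ in $\Sigma_j$ enclosing a controlled spherical area, and a bounded number of such components cannot account for a total boundary length near $8\pi$.

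For the asymptotics in \ref{conj:genus0-max-ii} the upper estimate $\area(\Gamma_b^{\mathrm{max}})<4\pi$ is already covered by the a priori bound. The lower estimate $A(b)\to4\pi$ requires an explicit family of competitors: free boundary minimal surfaces of genus $0$ with $b$ boundary components whose area tends to $4\pi$ and whose varifold limit as $b\to\infty$ is $\partial\B^3$ with multiplicity one. The natural candidate is a new gluing construction in which one desingularizes the round sphere $\Sp^2$ itself by $b$ small free boundary catenoidal necks arranged in a nearly uniform configuration on $\Sp^2$ and then perturbs the resulting surface to exact minimality inside $\B^3$ — the $b$ necks, once $b$ is large, providing the stiffness that prevents the near-boundary sheet from collapsing inward, exactly as the $n$ necks along $\Sp^1$ stabilize the doubled equatorial disc in the Folha--Pacard--Zolotareva construction \cite{FolPacZol17} (cf. Conjecture \ref{conj:genus0-FPZ}). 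Carrying this out — in particular linearizing about the \emph{non-minimal} background $\Sp^2$, identifying and handling the correct substitute kernel, and confirming that the mean curvature error can be absorbed — is \textbf{the main obstacle}, and is conceptually new relative to everything in the present article.

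Finally, recognizing $\Gamma_5^{\mathrm{max}},\Gamma_6^{\mathrm{max}},\Gamma_7^{\mathrm{max}}$ as the surfaces $\Xi_3^{\mathrm{CSW}},\Xi_4^{\mathrm{CSW}},\Xi_5^{\mathrm{CSW}}$ of Theorem \ref{thm:ConstructSecond} and Conjecture \ref{conj:genus0-CSW}, as claimed in \ref{conj:genus0-max-iii}, would demand both that these $\Xi$'s attain the supremum in their topological class and a uniqueness statement for area maximizers of the corresponding topology — precisely the kind of rigidity that the main theorem of this paper shows can fail without extra hypotheses — so one would have to leverage maximality together with the $\pri_n$ (respectively octahedral) symmetry, most likely with computer-assisted verification for these finitely many cases. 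For the index formula \ref{conj:genus0-max-iv}, $\morseindex(\Gamma_b^{\mathrm{max}})=3(b-1)$, the lower bound should be reachable by combining the Ambrozio--Carlotto--Sharp estimate $\morseindex\geq\tfrac13(b-1)$ \cite{AmbCarSha18-Index} with the additional negative directions forced by area maximization and by the degeneration picture, while the matching upper bound should come from producing exactly $3(b-1)$ linearly independent negative directions concentrated near the $b-1$ essential necks of the degenerating configuration (three per neck, as for a free boundary catenoidal annulus) together with a perturbation and eigenvalue-counting argument transplanting them from the model pieces — the same mechanism underlying the equivariant index bounds of Proposition \ref{prop:DistiguishByEquivIndex} and Remark \ref{rem:equivariant-index-genus0}. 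Given the number of currently unavailable ingredients — the total curvature bound, topological compactness with control, the near-boundary doubling construction, and the uniqueness statements — this conjecture is best regarded as a sizeable research program rather than a routine consequence of known techniques.
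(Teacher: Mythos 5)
This is a \emph{conjecture}, not a theorem; the paper offers no proof, only a paragraph of ``heuristics and motivation.'' Still, the heuristics the paper invokes follow a route \emph{orthogonal} to yours, and the comparison is worth making explicit.

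You propose to maximize area directly over the class of properly embedded free boundary minimal surfaces of genus zero with $b$ boundary components, arguing compactness by hand, ruling out neck degeneration by ``catenoidal replacement,'' and producing competitors by a new gluing construction near $\partial\B^3$. The paper instead works through the \emph{Steklov eigenvalue} framework: one maximizes the scale-invariant first Steklov eigenvalue $\overline{\sigma}_1(\Sigma,g)$ over \emph{all Riemannian metrics} on a fixed genus-zero surface with $b$ boundary components. Matthiesen--Petrides \cite{Matthiesen-Petrides} supplies existence of a maximizing metric; Fraser--Schoen \cite{FraSch16} turns the first eigenfunctions into a free boundary branched minimal immersion into some $\B^n$; Karpukhin--Kokarev--Polterovich \cite{Karpukhin2014} bounds the multiplicity by three, so $n=3$; and \cite[Prop.~8.1]{FraSch16} upgrades to an \emph{embedding} in genus zero. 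The link to your area-maximization problem is the identity $\operatorname{length}(\partial\Sigma)=2\area(\Sigma)$ together with the Fraser--Li conjecture that $\sigma_1=1$ for every free boundary minimal surface in $\B^3$ --- without it, the Steklov maximizer and the area maximizer need not coincide. Item \ref{conj:genus0-max-ii} is then quoted from Karpukhin--Stern \cite{Karpukhin-Stern} and Girouard--Lagac\'e \cite{Girouard2021}, not derived from a gluing construction. In short: the paper's route rests on a single open input (Fraser--Li) plus known spectral theory, whereas your route requires several currently unavailable ingredients (a priori total curvature bound, a compactness theory for area maximizers, topological control of degenerations, and a doubling-of-$\Sp^2$ gluing construction).

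Two specific concerns about your proposal. First, your catenoidal-replacement mechanism cites Corollary~\ref{cor:area_K}, but that corollary only records that a particular rotationally symmetric annulus has area greater than $\pi$; it does not say that inserting a catenoidal neck in a degenerating configuration raises area, and it does not apply at interior pinch points. The actual analytic justification would need to be built from scratch. Second, your claim that the $L^2$ total-curvature bound $\int_\Sigma|A_\Sigma|^2<4\pi(b+2)$ ``is equivalent to'' $\area(\Sigma)<4\pi$ is a correct algebraic reformulation (via Gauss--Bonnet), but it is still the same open inequality, merely restated; you would still owe a proof of it. Both items \ref{conj:genus0-max-iii} and \ref{conj:genus0-max-iv} are treated by the paper as numerical/heuristic only, consistent with this being labeled a conjecture.
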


\emph{Heuristics and motivation.} 
On any given surface $\Sigma$ with nonempty boundary $\partial\Sigma$ and outer unit conormal $\eta$ along $\partial\Sigma$, the spectrum of the \emph{Steklov eigenvalue problem} 
\begin{align}\label{eqn:Steklov}
\left\{
\begin{aligned}
\Delta_{\Sigma} u&=0, &&\text{ in $\Sigma$, }\\
\eta\cdot\nabla_{\Sigma}u&=\sigma u &&\text{ on $\partial\Sigma$ } 
\end{aligned}\right. 
\end{align} 
is discrete, and given by a sequence of eigenvalues $0=\sigma_0<\sigma_1\leq\sigma_2\leq\ldots\to\infty$. 
Fraser and Schoen \cite{FraSch11} observed that an immersed surface $\Sigma\subset\B^n$ is a free boundary minimal surface if and only if the ambient coordinate functions restricted to $\Sigma$ solve \eqref{eqn:Steklov} with eigenvalue $\sigma=1$. 
A conjecture by Fraser and Li \cite{FraLi14} states that in this case $\sigma=1$ is actually the \emph{first} Steklov eigenvalue, which can be characterized variationally as
\begin{align*}
\sigma_1(\Sigma,g)
&=\inf\Biggl\{
\frac{\int_{\Sigma}\abs{\nabla_{\Sigma} u}_{g}^2}{\int_{\partial\Sigma}u^2}
\st u\in C^{\infty}(\Sigma),~ \int_{\partial\Sigma}u=0
\Biggr\}
\end{align*}
depending on $\Sigma$ and the Riemannian metric $g$ on $\Sigma$. 
Fraser and Schoen \cite{FraSch16} proved that if $g_{\mathrm{max}}$ is a smooth metric on $\Sigma$ maximizing the \emph{scale-invariant} first Steklov eigenvalue 
\begin{align}\label{eqn:scale-invariant-Steklov-ev}
\overline{\sigma}_1(\Sigma,g)
&\vcentcolon=\sigma_1(\Sigma,g)\operatorname{length}(\partial\Sigma,g)
\end{align}
then there exist independent first eigenfunctions $u_1,\ldots,u_n$ which give a free boundary branched minimal immersion $U=(u_1,\ldots,u_n)$ into the unit ball $\B^n$ of $\R^n$ for some $n\geq3$ such that $U$ is an isometry on $\partial\Sigma$ up to a rescaling of the metric.  
In the special case that $\Sigma$ is a surface of genus zero, a result obtained independently by Fraser and Schoen \cite[Theorem 2.3]{FraSch16}, Jammes \cite[Theorem 1.5]{Jamm14} and by Karpukhin, Kokarev and Polterovich \cite[Corollary 1.3]{Karpukhin2014} implies that the multiplicity of the first Steklov eigenvalue is at most $n=3$. 
Moreover, again in the genus zero case, \cite[Proposition 8.1]{FraSch16} states that a free boundary minimal immersion $(u_1,u_2,u_3)\colon\Sigma\to\B^3$ by first Steklov eigenfunctions must actually be an embedding.

Given $b\geq2$, one can also consider an embedded free boundary minimal surface $\Gamma_b^{\max}$ in $\B^3$ with genus zero, $b$ boundary components and largest possible area. 
By an elementary computation (see \cite[Theorem 5.4]{FraSch11}, cf. \cite{LiSurvey}) the boundary length of any free boundary minimal surface in $\B^3$ coincides with twice its area. 
If the aforementioned conjecture by Fraser--Li is true, i.\,e. if $\sigma_1=1$ on any free boundary minimal surface in $\B^3$, then the induced metric on $\Gamma_b^{\mathrm{max}}$ is a maximizer of \eqref{eqn:scale-invariant-Steklov-ev}, provided that a smooth maximizer exists.   
A result by Karpukhin and Stern \cite[Corollary 1.4]{Karpukhin-Stern} then implies item \ref{conj:genus0-max-ii} (see also the work by Girouard and Lagacé \cite[Corollary~1.4]{Girouard2021}). 

Item \ref{conj:genus0-max-iii} is again based on numerical simulations. 
In \cite{Oudet2021} Kao, Osting and Oudet developed numerical methods to maximize the scale-invariant first Steklov eigenvalue on surfaces of genus zero. 
Their results confirm that the corresponding free boundary minimal surfaces have prismatic (i.\,e. ``bipyramidal'') symmetry $\pri_{b-2}$ for $b\in\{5,7\}$ boundary components respectively octahedral symmetry for $b=6$ boundary components (cf. \cite[Table 2]{Oudet2021}). 
It is important to note that $\Gamma_b^{\mathrm{max}}$ does not necessarily exhibit any symmetries. 
In fact, we have numerical evidence which indicates that $\Gamma_{61}^{\mathrm{max}}$ has trivial symmetry group. 
Therefore, we do not consider the equivariant index in the context of Conjecture \ref{conj:genus0-max}.   

Our conjectured formula \ref{conj:genus0-max-iv} for the Morse index of $\Gamma_b^{\mathrm{max}}$ is consistent with the second case in equation \eqref{eqn:index-genus0-CSW} (see also Table \ref{table:index-genus0}). 
A ``translation'' tangential to $\partial\B^3$ of any boundary component of a free boundary minimal surface with largest possible area should decrease its area to second order and for each boundary component there are two such independent directions of translation. 
A third independent deformation which decreases area to second order can be conceived as a ``pinching'' of the neck which forms around any boundary component of a surface of genus zero.   
Therefore, the growth rate of the Morse index of $\Gamma_b^{\mathrm{max}}$ depending on $b$ should indeed be at least three. 
One could interpret any ambient rotation of $\Gamma_b^{\mathrm{max}}$ 
in $\B^3$ as being generated by a suitable linear combination of 
tangential translations of its boundary components as described above.
Since there is a three-dimensional subspace of such ambient rotations 
which clearly does not contribute to the Morse index, the expected 
formula is indeed $3b-3$ rather than $3b$.

%%%%%%%%%%%%%%%%%%%%%%%%%%%%%%%%%%%%%%%%%%%%%%%%%%%%%%%%
\appendix
%%%%%%%%%%%%%%%%%%%%%%%%%%%%%%%%%%%%%%%%%%%%%%%%%%%%%%%%

\section{Parametrization of the Karcher--Scherk towers}
\label{app:Karcher--Scherk}In this appendix we present an explicit parametrization of $\tow_\vartheta$ (cf. Proposition \ref{karcher-scherk}), specifically its Enneper--Weierstrass representation, and we use this parametrization to compute the asymptotic planes of the tower. 
To begin we shall briefly recall the structure of the Enneper--Weierstrass representation of minimal surfaces in $\R^3$;
we refer the reader to \cite[Section 1.4]{KarcherTokyo} or \cite[Section 1.6]{CM11} for a complete introduction to the classical theory, 
though we will rather follow the normalization convention of \cite{KarcherScherk}.

Let $(x,y,z)\colon \Omega \to \R^3$ be a two-sided minimal immersion with image $\Sigma$;
we take $\Omega$ to be a Riemann surface, possibly with boundary, 
by pulling back the conformal structure induced on $\Sigma$ by the ambient Euclidean metric and a choice of unit normal.
Let $v\colon\Omega\to\C$ be the stereographic projection of the Gauss map and let $dh$ be the complex-valued one-form on $\Omega$ which at any point is the differential of the locally defined
(and unique up to an additive imaginary constant)
holomorphic (analytic) function having real part $2z$.
Then $v$ is a meromorphic function and $dh$ is a holomorphic differential, and the \emph{Enneper--Weierstrass representation} of $\Sigma$ reads:
\begin{align}\label{EWrep}
x(w)&=\Re\int^{w}_{0}\biggl(\frac{1}{v}-v\biggr) \, dh, &
y(w)&=\Re\int^{w}_{0}i\biggl(\frac{1}{v}+v\biggr) \, dh, &
z(w)&=\Re\int^{w}_{0}2 \, dh,
\end{align}
where $0\in\Omega$ simply denotes a reference basepoint.
Conversely, one can start with data a Riemann surface $\Omega$,
a meromorphic function $v$, and a holomorphic one-form $dh$,
and one can attempt to define a minimal immersion by \eqref{EWrep};
indeed, if the integrals in \eqref{EWrep} are path-independent,
then \eqref{EWrep} defines a possibly branched conformal minimal immersion $(x,y,z)\colon \Omega \to \R^3$. 
A particularly important special case, which suffices for the purposes of this appendix, is when $\Omega\subset \R^2\equiv \C$ with the standard complex (thus: conformal) structure; 
if such a domain $\Omega$ contains the origin then it is rather customary to take it as the basepoint for the integrals above (which justifies the corresponding notation).

In Section 2.5.1 of \cite{KarcherScherk} Karcher presents a one-parameter family of Enneper--Weierstrass data, which we will analyze and verify represents the family $\tow_\vartheta$ of Proposition \ref{karcher-scherk}. 
(The same family and data are also briefly studied in Section 5.3.2 of \cite{ChenTraizetKSI}.) 
Karcher's data are indexed by $\phi \in \interval{0,\pi/2}$, a distinguished angle in the domain 
(as visualized in Figure \ref{fig:Karcher-Scherk-Weierstrass}):
\begin{align}\label{eqn:Karcher-data}
\begin{aligned}
\Omega_\phi
&=\{w\in\C\st\abs{w}\leq1\}\setminus\{\pm i,\pm e^{i\phi},\pm e^{-i\phi}\}, 
\\
v(w)&=\frac{w^2+r}{1+r w^2}, 
\\
dh&=\frac{1+r^2+(w^2+w^{-2})r}{\bigl(w^2+w^{-2}-2\cos(2\phi)\bigr)(w^2+1)}dw,
\end{aligned}
\end{align}
where $r\in\interval{-1,1}$ depends on $\phi$ and is uniquely specified by means of the equation
\begin{align}\label{eqn:20211219-r}
\frac{4r}{(1-r)^2}&=\frac{2\sin(\phi)-1}{\cos^2(\phi)}.
\intertext{Solving equation \eqref{eqn:20211219-r} for $(1-r)\in\interval{0,2}$ yields}\notag
1-r&=\frac{2 \cos \phi}{\cos \phi + \sqrt{1-(1-\sin \phi)^2}}.
\end{align}
(In general equation \eqref{eqn:20211219-r} has two real solutions for $1-r$, with exactly the one above yielding $r \in \interval{-1,1}$, confirming that $r$ is well-defined by \eqref{eqn:20211219-r}.) 
To simplify expressions involving $1-\sin\phi$ we introduce the ($\phi$-dependent) angle $\vartheta\in \interval{0,\pi/2}$ by letting.
\begin{equation}
\label{eqn:theta_phi}
\cos(\vartheta) \vcentcolon=  1 - \sin(\phi).
\end{equation}
We will soon see that $\vartheta$ can be -- so to say -- interpreted as an angle in the target,
namely the wing angle for $\tow_\vartheta$
(see Figure \ref{fig:Karcher-Scherk-Weierstrass}).
With this definition we obtain 
\begin{equation}
\label{eqn:r_expr}
r=\frac{\sin\vartheta-\cos\phi}{\sin\vartheta+\cos\phi}.
\end{equation}
From \eqref{EWrep}, \eqref{eqn:Karcher-data} and \eqref{eqn:20211219-r}
it follows that
\begin{equation}
\label{eqn:complex_integrals}
\begin{aligned}
  x(w) &= -(1-r^2) \Re \int_0^w
    \frac{\zeta^2-1}
      {(\zeta^2-e^{2\phi i})(\zeta^2-e^{-2\phi i})}
      \, d\zeta,
    \\
  y(w) &= -(1+r^2) \Im \int_0^w
    \frac{(\zeta^2-e^{2\xi i})(\zeta^2-e^{-2\xi i})}
      {(\zeta^2-e^{2\phi i})(\zeta^2-e^{-2\phi i})
        (\zeta^2+1)} \, d\zeta,
  \\
  z(w) &= 2 \Re \int_0^w
    \frac{r\zeta^4 + (r^2+1)\zeta^2 + r}
      {(\zeta^2-e^{2\phi i})(\zeta^2-e^{-2\phi i})
        (\zeta^2+1)} \, d\zeta,
\end{aligned}
\end{equation}
where $\xi \in \Interval{0, 2\pi}$ is uniquely specified by
\begin{equation}
e^{2\xi i}
  =
  \frac{-2r}{r^2+1} + i\frac{r^2-1}{r^2+1}.
\end{equation}
Note that each integrand is in fact holomorphic on the domain $\Omega_\phi$,
which is simply connected.
Integrating and employing \eqref{eqn:r_expr}
and \eqref{eqn:theta_phi}, we obtain
\begin{equation}\label{eqn:original_parametrisation}
\begin{aligned}
x(w) &= \frac{\sin \vartheta}{(\sin \vartheta + \cos \phi)^2}
  \log \frac{\abs{w+e^{i\phi}}\abs{w+e^{-i\phi}}}
        {\abs{w-e^{i\phi}}\abs{w-e^{-i\phi}}}, \\
y(w) &= \frac{1}{(\sin \vartheta + \cos \phi)^2}
        \log \frac{\abs{w-i}}{\abs{w+i}}
  + \frac{\cos \vartheta}{(\sin \vartheta + \cos \phi)^2}
    \log \frac{\abs{w-e^{i\phi}}\abs{w+e^{-i\phi}}}
              {\abs{w+e^{i\phi}}\abs{w-e^{-i\phi}}}, \\
z(w) &= \frac{1}{(\sin \vartheta + \cos \phi)^2}
  \biggl(
\sum_{\epsilon_1, \epsilon_2 = \pm 1} 
    \epsilon_1\epsilon_2 \Arg
    \frac{\epsilon_1 e^{\epsilon_2 i\phi} - w}
         {\epsilon_1e^{\epsilon_2i\phi}}
  -\sum_{\epsilon=\pm 1}
    \epsilon\Arg\frac{\epsilon i - w}{\epsilon i}
  \biggr),
\end{aligned}
\end{equation}
where each instance of $\Arg \frac{\zeta-w}{\zeta}$ measures the (counterclockwise) signed angle in $\intervaL{-\pi,\pi}$ from the vector connecting $0$ to $\zeta$ to the vector connecting $w$ to $\zeta$. 
We also introduce the rescaled parametrization 
\begin{align}\label{eqn:rescaled_parametrization}
(X,Y,Z)\vcentcolon=(\sin \vartheta + \cos \phi)^2(x,y,z).
\end{align} 

\begin{figure}[t]
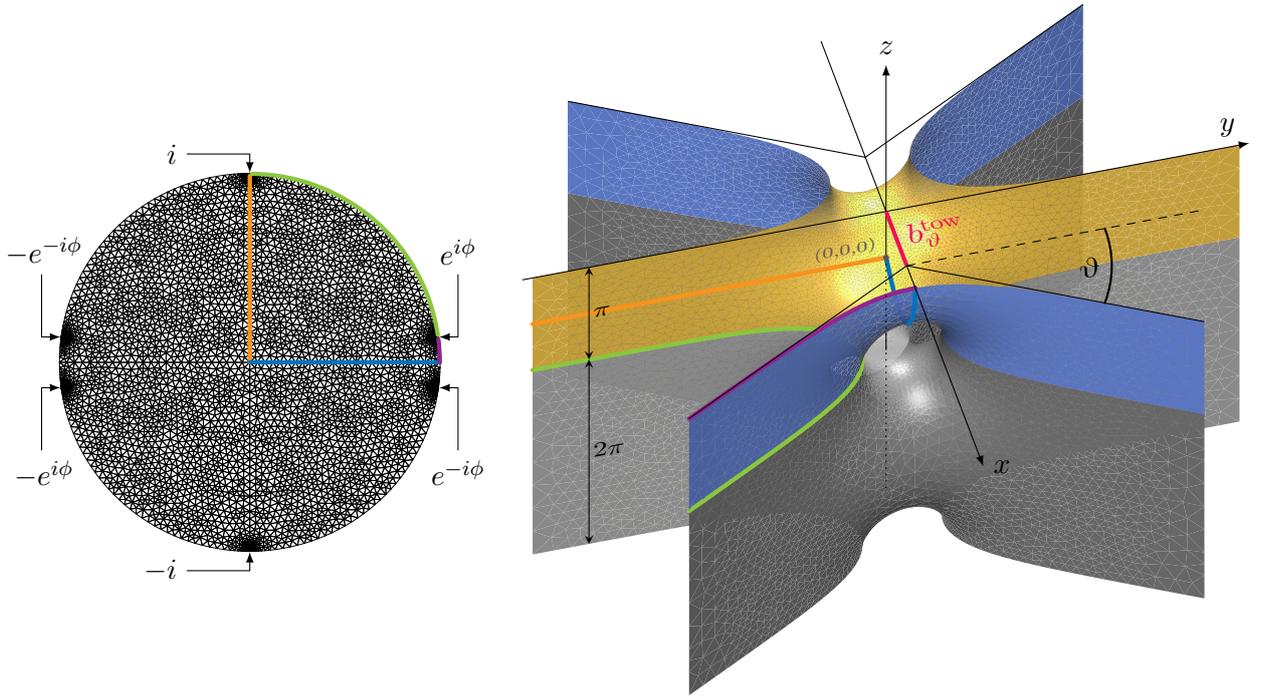
%
\pgfmathsetmacro{\angle}{30}
\pgfmathsetmacro{\phipar}{asin(1-cos(\angle))}
\pgfmathsetmacro{\globalscale}{2.5}
\begin{tikzpicture}[line cap=round,line join=round,scale=\globalscale,baseline={(0,0)}]
\draw(0,0)node[scale=\globalscale]{\includegraphics[page=4]{figures-KarcherScherk}};
\draw(0,0)node[scale=-\globalscale]{\includegraphics[page=4]{figures-KarcherScherk}};
\draw(0,0)node[scale=\globalscale,xscale=-1]{\includegraphics[page=4]{figures-KarcherScherk}};
\draw(0,0)node[scale=\globalscale,yscale=-1]{\includegraphics[page=4]{figures-KarcherScherk}};
\draw[latex-](\phipar:1.005)-|++(0.1,0.33)node[above]{$e^{i\phi}$};
\draw[latex-](-\phipar:1.005)-|++(0.1,-0.33)node[below]{$e^{-i\phi}$};
\draw[latex-](\phipar:-1.005)-|++(-0.1,-0.33)node[below]{$-e^{i\phi}$\!};
\draw[latex-](-\phipar:-1.005)-|++(-0.1,0.33)node[above]{$-e^{-i\phi}$\!};
\draw[latex-](0,1.005)|-++(-0.33,0.1)node[left]{$i$};
\draw[latex-](0,-1.005)|-++(-0.33,-0.1)node[left]{$-i$};
\begin{scope}[ultra thick]
\pgfmathsetmacro{\gap}{2/3}
\draw[color={cmyk,1:magenta,0.5;yellow,1}](0,0)--(0,{1-rad(\gap)});
\draw[color={cmyk,1:magenta,0.5;cyan,1}](0,0)--(1,0);
\draw[color={cmyk,1:yellow,1;cyan,0.5}](\gap+\phipar:1)arc(\gap+\phipar:90-\gap:1);
\draw[color={cmyk,1:magenta,1;cyan,0.5}](1,0)arc(0:-\gap+\phipar:1); 
\end{scope}
\end{tikzpicture}%
\hfill%
\pgfmathsetmacro{\thetaO}{45}%
\pgfmathsetmacro{\phiO}{75}%
\pgfmathsetmacro{\radius}{9}%
\pgfmathsetmacro{\globalscale}{0.55}%
\pgfmathsetmacro{\offset}{(sin(\angle)-tan(\angle)/2)*ln(2/cos(\angle)-1)-2*sin(\angle)*ln(1/cos(\angle)-1)}%
\tdplotsetmaincoords{\thetaO}{\phiO}%
\begin{tikzpicture}[tdplot_main_coords,line cap=round,line join=round,scale=\globalscale,baseline={(0,-2)}]
\draw(0,0,-2*pi)node[scale=\globalscale]{\includegraphics[page=3]{figures-KarcherScherk}};
\draw(0,0,0)node[scale=\globalscale]{\includegraphics[page=2]{figures-KarcherScherk}};
\draw(0,0,-2*pi+pi/2)--++(0,0,pi/2);
\draw(0,0,0)node[scale=\globalscale]{\includegraphics[page=1]{figures-KarcherScherk}};
\coordinate(bottom)at (current bounding box.south);
\pgfresetboundingbox
\path(bottom);
\draw[-latex](0,0,-pi/2)--(0,0,5)node[above]{$z$};
\draw[dotted](0,0,-pi/2)--(0,0,-3*pi);
\draw[-latex](-2/3*\radius,0,0)--(\radius,0,0)node[right]{$x$};
\draw[-latex](0,-\radius,0)--(0,\radius,0)node[above,pos=0.97]{$y$};
\pgfmathsetmacro{\radius}{\radius*0.8175}
\draw
(\offset,0,0)--({\radius*sin(\angle)+\offset},{\radius*cos(\angle)},0)
(\offset,0,0)--({\radius*sin(\angle)+\offset},{-\radius*cos(\angle)},0)
(-\offset,0,0)--({-\radius*sin(\angle)-\offset},{\radius*cos(\angle)},0)
(-\offset,0,0)--({-\radius*sin(\angle)-\offset},{-\radius*cos(\angle)},0)
;
\tdplottransformmainscreen{0}{1}{0}
\pgfmathsetmacro{\VecHx}{\tdplotresx}
\pgfmathsetmacro{\VecHy}{\tdplotresy}
\tdplottransformmainscreen{0}{0}{1}
\pgfmathsetmacro{\VecVx}{\tdplotresx}
\pgfmathsetmacro{\VecVy}{\tdplotresy}
\path(0,-\radius,0)--++(0,0,-4*pi)
coordinate[pos=0](0pi)
coordinate[pos=0.25](1pi)
coordinate[pos=0.5](2pi)
coordinate[pos=0.75](3pi)
coordinate[pos=1](4pi)
;
\draw[stealth-stealth](0pi)--(1pi)node[midway,right,inner sep=1.5pt,cm={\VecHx ,\VecHy ,\VecVx ,\VecVy ,(0,0)}]{$\pi$};
\draw[stealth-stealth](1pi)--(3pi)node[midway,right,inner sep=1.5pt,cm={\VecHx ,\VecHy ,\VecVx ,\VecVy ,(0,0)}]{$2\pi$};
\fill[black!66] (0,0,-pi/2)circle(2pt)node[above left,cm={\VecHx ,\VecHy ,\VecVx ,\VecVy ,(0,0)}]{$\scriptstyle (0,0,0)$};
\tdplottransformmainscreen{0}{1}{0}
\pgfmathsetmacro{\VecHx}{\tdplotresx}
\pgfmathsetmacro{\VecHy}{\tdplotresy}
\tdplottransformmainscreen{-1}{0}{0}
\pgfmathsetmacro{\VecVx}{\tdplotresx}
\pgfmathsetmacro{\VecVy}{\tdplotresy}
\begin{scope}[color={cmyk,1:magenta,1;yellow,0.5}]
\path[very thick](0,0,0)--(\offset,0,0)node[midway,right,cm={\VecHx ,\VecHy ,\VecVx ,\VecVy ,(0,0)}]{\Large$b^{\text{\text{tow}}}_\vartheta$};
\fill(0,-0.05pt,0)--++(0,0.1pt,0)--++(\offset,0,0)--++(0,-0.1pt,0)--cycle;
\end{scope}
\tdplotdrawarc[thick]{(\offset,0,0)}{2/3*\radius}{90}{90-\angle}{cm={\VecHx ,\VecHy ,\VecVx ,\VecVy ,(0,0)},anchor=-10}{\Large$\vartheta$}
\draw[dashed,thin](\offset,0,0)--++(0,\radius,0);
\end{tikzpicture}
\caption{Domain $\Omega_\phi$ (left) and corresponding image (right, colored region) of the rescaled Weierstrass parametrization \eqref{eqn:rescaled_parametrization} of a Karcher--Scherk tower with wing angle $\vartheta=\pi/6$ and corresponding domain angle $\phi\approx0.043\,\pi$. 
One and a half vertical periods are displayed. 
To clarify the asymptotics, the $x$-$y$-axes are displayed at height $z=\pi/2$.}%
\label{fig:Karcher-Scherk-Weierstrass}%
\end{figure}

\begin{lemma}[Symmetries]\label{lem:Karcher-Scherk_symmetries} 
The map $(X,Y,Z)\colon\Omega_\phi\to\R^3$ defined in \eqref{eqn:original_parametrisation}--\eqref{eqn:rescaled_parametrization} satisfies   
\begin{align}
\label{eqn:symmetry_under_complex_conj}
(X,Y,Z)(\overline{w})&=(X,-Y,Z)(w), 
\\ 
\label{eqn:symmetry_antipodal}
(X,Y,Z)(-w)&=-(X,Y,Z)(w).
\end{align}
\end{lemma}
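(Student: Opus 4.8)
The plan is to read off both symmetries directly from the explicit formulas \eqref{eqn:original_parametrisation}, exploiting the fact that each of $X$, $Y$, $Z$ is built out of expressions of the shape $\log\lvert w - \zeta\rvert$ and $\Arg\frac{\zeta - w}{\zeta}$ where $\zeta$ ranges over the six points $\{\pm i, \pm e^{i\phi}, \pm e^{-i\phi}\}$, and this set of poles is invariant under both complex conjugation $w \mapsto \overline w$ and the antipodal map $w \mapsto -w$. The key elementary observations are: (a) $\lvert \overline w - \overline\zeta\rvert = \lvert w - \zeta\rvert$ and $\lvert -w - (-\zeta)\rvert = \lvert w - \zeta\rvert$, so the moduli appearing in the logarithmic terms are simply permuted among themselves; and (b) $\Arg\frac{\overline\zeta - \overline w}{\overline\zeta} = -\Arg\frac{\zeta - w}{\zeta}$ (conjugation reverses oriented angles) while $\Arg\frac{-\zeta - (-w)}{-\zeta} = \Arg\frac{\zeta - w}{\zeta}$. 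One then tracks, term by term, how the relevant permutation of $\{\pm i, \pm e^{\pm i\phi}\}$ acts on each sum.

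For \eqref{eqn:symmetry_under_complex_conj}: under $w \mapsto \overline w$, the involution on the pole set is $i \leftrightarrow -i$ and $e^{i\phi} \leftrightarrow e^{-i\phi}$, $-e^{i\phi} \leftrightarrow -e^{-i\phi}$. In the formula for $X$ one checks that the numerator pair $\{w + e^{i\phi}, w + e^{-i\phi}\}$ is preserved and likewise the denominator pair $\{w - e^{i\phi}, w - e^{-i\phi}\}$, so $X(\overline w) = X(w)$. In the formula for $Y$, the first term $\log\frac{\lvert w - i\rvert}{\lvert w + i\rvert}$ changes sign because $\pm i$ are swapped, and in the second term the numerator fraction $\frac{\lvert w - e^{i\phi}\rvert\,\lvert w + e^{-i\phi}\rvert}{\lvert w + e^{i\phi}\rvert\,\lvert w - e^{-i\phi}\rvert}$ gets inverted under $e^{i\phi} \leftrightarrow e^{-i\phi}$, hence also changes sign; therefore $Y(\overline w) = -Y(w)$. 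Finally, for $Z$ each $\Arg$-term picks up an overall minus sign from (b), and the index set of each sum is merely permuted (the signs $\epsilon_1\epsilon_2$ and $\epsilon$ are invariant under the relevant substitutions), so $Z(\overline w) = Z(w)$. For \eqref{eqn:symmetry_antipodal}: under $w \mapsto -w$ the pole set is preserved with $i \mapsto -i$, $e^{i\phi}\mapsto -e^{i\phi}$, $e^{-i\phi}\mapsto -e^{-i\phi}$; in $X$ the numerator moduli $\lvert {-w} + e^{\pm i\phi}\rvert = \lvert w - e^{\pm i\phi}\rvert$ and the denominator moduli $\lvert {-w} - e^{\pm i\phi}\rvert = \lvert w + e^{\pm i\phi}\rvert$, so the ratio is inverted and $X(-w) = -X(w)$; an entirely analogous bookkeeping gives $Y(-w) = -Y(w)$; and for $Z$ the $\Arg$-terms are invariant by (b) but the sums reorganize so that $Z(-w) = -Z(w)$ — most transparently, one uses $\Arg\frac{\zeta - w}{\zeta} + \Arg\frac{-\zeta - w}{-\zeta} = \Arg\frac{w^2 - \zeta^2}{-\zeta^2}$-type identities, or simply the fact that the rescaled parametrization is odd because the Enneper--Weierstrass integrands in \eqref{eqn:complex_integrals} are even functions of $\zeta$ (each integrand involves only $\zeta^2$), so that replacing the upper limit $w$ by $-w$ reverses the sign of each integral.

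Indeed, the cleanest route to \eqref{eqn:symmetry_antipodal} is this last remark: every integrand in \eqref{eqn:complex_integrals} is an even function of $\zeta$, i.e. invariant under $\zeta \mapsto -\zeta$, so substituting $\zeta \mapsto -\zeta$ in $\int_0^{-w}$ shows $\int_0^{-w} = -\int_0^{w}$ for each of the three integrals, whence $(X,Y,Z)(-w) = -(X,Y,Z)(w)$ after multiplying by the ($w$-independent) rescaling factor. For \eqref{eqn:symmetry_under_complex_conj} the analogous clean argument is that each integrand $\omega(\zeta)\,d\zeta$ in \eqref{eqn:complex_integrals} has \emph{real} coefficients in the expansion in $\zeta$ (the first and third are real-coefficient rational functions, and the middle one too, since $e^{2\xi i} + e^{-2\xi i} = -4r/(r^2+1) \in \R$ and $e^{2\xi i}e^{-2\xi i} = 1$), so $\omega(\overline\zeta) = \overline{\omega(\zeta)}$; therefore $\int_0^{\overline w}\omega(\zeta)\,d\zeta = \overline{\int_0^{w}\omega(\zeta)\,d\zeta}$ (changing variables $\zeta \mapsto \overline\zeta$ along a conjugated path), and taking $\Re$ of the $x$- and $z$-integrands gives $X(\overline w) = X(w)$, $Z(\overline w) = Z(w)$, while taking $\Im$ of the $y$-integrand gives $Y(\overline w) = -Y(w)$.

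I do not anticipate a genuine obstacle here — the statement is a direct symmetry check on explicit formulas. The only point requiring a little care is making sure the conjugation argument for \eqref{eqn:symmetry_under_complex_conj} is legitimate across the (multiply punctured but simply connected) domain $\Omega_\phi$: one should note that $\Omega_\phi$ is itself invariant under both $w \mapsto \overline w$ and $w \mapsto -w$, that $0 \in \Omega_\phi$ is a fixed point of conjugation and is mapped to $0$ under negation, and that since each integrand is holomorphic on the simply connected domain $\Omega_\phi$ the integrals are path-independent, so the change of variables in the path of integration is unambiguous. With that noted, I would simply present the "even integrand" and "real-coefficient integrand" observations and conclude. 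I would phrase the write-up as a short computation referencing \eqref{eqn:complex_integrals} rather than \eqref{eqn:original_parametrisation}, since the former makes the parity structure manifest.
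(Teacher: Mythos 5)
Your final route—observing that each integrand in \eqref{eqn:complex_integrals} is an even function of $\zeta$ (so the integral is odd, giving \eqref{eqn:symmetry_antipodal}) and has real coefficients so that it commutes with complex conjugation (giving \eqref{eqn:symmetry_under_complex_conj}), together with the symmetry of the domain $\Omega_\phi$ under $w\mapsto\overline w$ and $w\mapsto -w$—is precisely the paper's proof. The longer term-by-term check on \eqref{eqn:original_parametrisation} that you sketch first is correct but superfluous once the parity/reality of the integrands has been noted.
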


\begin{proof}
Let $f\colon\Omega_\phi\to\C$ be any of the three integrands in \eqref{eqn:complex_integrals}. 
Then it is straightforward to verify that $f$ commutes with complex conjugation, that is  $f(\overline{\zeta})=\overline{f(\zeta)}$. 
This proves \eqref{eqn:symmetry_under_complex_conj} since in \eqref{eqn:complex_integrals}, we take the imaginary part of the integral for $Y$ and the real part for $X$ and $Z$. 

Since the integrands in \eqref{eqn:complex_integrals} are even functions in the sense that $f(\zeta)=f(-\zeta)$, the corresponding integrals are odd functions and \eqref{eqn:symmetry_antipodal} follows. 
Indeed, if $\gamma\subset\Omega_\phi$ is a suitable path from $0$ to $w$, then $-\gamma\subset\Omega_\phi$ is a suitable path from $0$ to $-w$ thanks to the symmetry of the domain $\Omega_\phi$. 
\end{proof}

As a result of the rescaling \eqref{eqn:rescaled_parametrization} the image of the parametrization in question has ``vertical range'' equal to $\pi$ independently of the parameter $\phi$. 
More precisely: 

\begin{lemma}\label{lem:vertical_height}
Let $Z\colon\Omega_\phi\to\R$ be as in \eqref{eqn:original_parametrisation}--\eqref{eqn:rescaled_parametrization}. 
Then, $\abs{Z}\leq\pi/2$ and $\abs{Z(w)}=\pi/2\;\Leftrightarrow\;\abs{w}=1$. 
In fact $Z(e^{i\alpha})=\pi/2$ if $\alpha\in\Interval{0,\phi}$ and $Z(e^{i\alpha})=-\pi/2$ if $\alpha\in\interval{\phi,\pi/2}$. 
\end{lemma}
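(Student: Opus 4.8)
\textbf{Proof proposal for Lemma~\ref{lem:vertical_height}.}
The plan is to analyze the third component $Z$ of the Weierstrass parametrization \eqref{eqn:original_parametrisation} directly, exploiting the fact that (up to the positive rescaling factor in \eqref{eqn:rescaled_parametrization}, which does not affect $Z$ on $\{|w|=1\}$ since there $z=Z/(\sin\vartheta+\cos\phi)^2$, but we work with $Z$ throughout) the formula for $Z$ is a signed sum of six $\Arg$ terms, each of the geometric form $\Arg\frac{\zeta-w}{\zeta}$ for $\zeta$ one of the six punctures $\pm e^{\pm i\phi},\pm i$ on the unit circle. The key elementary observation is the following: if $\zeta$ lies on the unit circle and $w$ lies in the closed unit disc, then $\Arg\frac{\zeta-w}{\zeta}$ is the angle, measured at $\zeta$, between the ray from $\zeta$ to $0$ and the ray from $\zeta$ to $w$, and this angle is confined to $\Interval{-\pi/2,\pi/2}$ because $0$ and $w$ both lie in the disc while $\zeta$ is on its boundary (the chord from $\zeta$ subtending any two interior points cannot turn more than a right angle relative to the diameter through $\zeta$). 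More precisely, since $|w|\le 1$, the point $w$ lies (weakly) on the same side of the tangent line to the circle at $\zeta$ as the center $0$, which pins each $\Arg$-summand into a half-open quadrant.

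First I would establish the interior bound $|Z|<\pi/2$ on $\{|w|<1\}$ and the boundary behavior together. Group the six $\Arg$ terms according to their signs $\epsilon_1\epsilon_2$ and $-\epsilon$ in \eqref{eqn:original_parametrisation}: there are three with coefficient $+1$ (coming from $e^{i\phi}$, $-e^{-i\phi}$, and one of the $\pm i$ terms) and three with coefficient $-1$. I would show that the positive part and the negative part each lie in $\interval{-3\pi/2,3\pi/2}$ trivially, but that is too weak; instead one should pair terms exploiting the reflection symmetries \eqref{eqn:symmetry_under_complex_conj} and \eqref{eqn:symmetry_antipodal}. By \eqref{eqn:symmetry_antipodal}, $Z(-w)=-Z(w)$, so it suffices to bound $Z$ from above by $\pi/2$; by \eqref{eqn:symmetry_under_complex_conj}, $Z(\bar w)=Z(w)$, so we may restrict to $\Im w\ge 0$. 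The cleanest route: verify that $Z$ is, on $\overline{\Omega_\phi}$, the (unique up to additive constant, which we have normalized by the basepoint $0$) real part of a locally defined holomorphic primitive of $2\,dh$; hence $e^{iZ}$ extends, and $Z$ being harmonic its extrema on the closed region occur on the boundary $\{|w|=1\}$, where one computes $Z$ explicitly.

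The second and main step is the explicit computation of $Z$ on the unit circle. Write $w=e^{i\alpha}$. Each puncture is $\zeta=e^{i\beta}$ for $\beta\in\{\phi,-\phi,\pi-\phi,-\pi+\phi,\pi/2,-\pi/2\}$, and the elementary identity $\Arg\frac{e^{i\beta}-e^{i\alpha}}{e^{i\beta}} = \frac{\alpha-\beta}{2} - \frac{\pi}{2}\sgn(\alpha-\beta)$ (valid for $\alpha\ne\beta$, $\alpha,\beta\in(-\pi,\pi]$, modulo the usual branch bookkeeping) reduces the whole sum to a piecewise-linear function of $\alpha$ whose slopes cancel identically: indeed the $\frac{\alpha-\beta}{2}$ contributions sum with the signed coefficients to a constant because $\sum \epsilon_1\epsilon_2 = \sum\epsilon = 0$. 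What survives is the jump part $-\frac{\pi}{2}\sum(\text{coeff})\sgn(\alpha-\beta_\zeta)$, which, as $\alpha$ crosses each puncture angle, toggles $Z$ by $\pm\pi$ but stays locked at $\pm\pi/2$ in between — and a direct sign count on the interval $\alpha\in\Interval{0,\phi}$ (where $\alpha$ is below all six puncture-angles except none strictly, so all $\sgn(\alpha-\beta)$ relevant terms are fixed) gives $Z=\pi/2$, while on $\alpha\in\interval{\phi,\pi/2}$ exactly the sign at the $e^{i\phi}$ puncture has flipped, subtracting $\pi$ to give $Z=-\pi/2$. This yields the stated boundary values and, combined with the maximum principle for the harmonic function $Z$, the inequality $|Z|\le\pi/2$ with equality exactly on $\{|w|=1\}$.

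The step I expect to be the main obstacle is the careful branch-cut bookkeeping in the $\Arg$-sum when $w\to e^{i\alpha}$ approaches the unit circle near or between the punctures: the six-term sum \eqref{eqn:original_parametrisation} is continuous on $\Omega_\phi$ but each summand individually jumps, so one must track which summands are evaluated on which branch and confirm that the net discontinuities occur precisely at the six punctures and are precisely $\pm\pi$. The safest way to discharge this is to avoid the circle-valued $\Arg$ altogether on the interior: take $\log(2\,dh)$-style primitives, note $Z=2\Re\!\int_0^w dh$ is globally well-defined and harmonic on the simply connected $\Omega_\phi$, read off its boundary trace by the piecewise-linear computation above performed along an arc staying strictly inside, and then pass to the limit using continuity of $Z$ up to $\{|w|=1\}\setminus\{\text{punctures}\}$ together with the a priori bound $|Z|\le\pi/2$ that forces the one-sided limits at the punctures to be $\pm\pi/2$ as well. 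Everything else — the rescaling in \eqref{eqn:rescaled_parametrization}, the reduction via symmetries — is routine.
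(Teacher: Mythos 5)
Your proposal is correct and follows essentially the same strategy as the paper: restrict attention to the boundary arc $\alpha\in\Interval{0,\pi/2}$ via the symmetries \eqref{eqn:symmetry_under_complex_conj}--\eqref{eqn:symmetry_antipodal}, evaluate $Z(e^{i\alpha})$ explicitly from the six-term $\Arg$ sum, and invoke the maximum principle for the harmonic function $Z$ to get the strict interior bound. The only difference is organizational: the paper pairs up the $\Arg$ terms and applies a single identity of the form $\Arg(1+e^{i\sigma})-\Arg(1-e^{i\sigma})=\tfrac{\pi}{2}\sgn\sigma$, whereas you write each individual $\Arg$ summand as a linear-in-$\alpha$ part plus a step term, observe that the linear parts cancel up to a constant because the signed coefficients sum to zero, and then track the step discontinuities across the punctures. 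Both routes require the same careful sign/branch bookkeeping, which you correctly flag as the delicate point; the paper's pairing just packages that bookkeeping a bit more compactly. Your opening geometric remark (each summand lies in $\Interval{-\pi/2,\pi/2}$ because the chord at a boundary point subtends at most a right angle relative to the diameter) is correct but is not actually used in either proof.
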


\begin{proof}
Let $w\in\Omega_\phi$ with $\abs{w}=1$. 
By Lemma \ref{lem:Karcher-Scherk_symmetries} we may assume $w=e^{i\alpha}$ for some $\alpha\in\Interval{0,\pi/2}\setminus\{\phi\}$. 
Then, using the third formula in \eqref{eqn:original_parametrisation},
\begin{align*}
Z(e^{i\alpha})
&=\Arg\bigl(1-e^{i(\alpha-\phi)}\bigr)
-\Arg\bigl(1+e^{i(\alpha-\phi)}\bigr) 
\\&\hphantom{{}={}}
-\Arg\bigl(1-e^{i(\alpha+\phi)}\bigr) 
+\Arg\bigl(1+e^{i(\alpha+\phi)}\bigr) 
\\&\hphantom{{}={}}
-\Arg\bigl(1-e^{i(\alpha-\frac{\pi}{2})}\bigr) 
+\Arg\bigl(1+e^{i(\alpha-\frac{\pi}{2})}\bigr).
\end{align*}
By elementary geometric considerations in the complex plane we have for any angle $\sigma$
\begin{align*}
\Arg(1+e^{i\sigma})-\Arg(1-e^{-i\sigma})&=
\begin{cases}
\hphantom{-}\frac{\pi}{2} & \text{ if $0<\sigma<\pi$,} \\
-\frac{\pi}{2} & \text{ if $-\pi<\sigma<0$.} \\
\end{cases}
\end{align*}
Therefore, $Z(e^{i\alpha})=\pi/2$ if $0\leq\alpha<\phi$ and $Z(e^{i\alpha})=-\pi/2$ if $\phi<\alpha<\pi/2$. 
This proves $\abs{Z}=\pi/2$ on the boundary $\partial\Omega_\phi$ (except at the six points
where $Z$ is undefined).
It follows immediately from the definition of $Z$
that we then also have
$
\limsup_{w \to \omega} \abs{Z(\omega)}
=
\pi/2
$
for each $\omega \in \{\pm i, \pm e^{i\phi}, \pm e^{-i\phi}\}$.
Now, for each $\epsilon>0$,
define the closed subset $\Omega_\phi^\epsilon$
to be $\Omega_\phi$ less the six open discs
with radius $\epsilon$
and centers $\pm i, \pm e^{i\phi}, \pm e^{-i\phi}$.
Then $Z$ is continuous on each
$\Omega_\phi^\epsilon$
and harmonic on the interior.
Since we may take $\epsilon>0$ as small as desired,
by the maximum principle together
with the preceding information
on the behavior of
$Z|_{\partial \Omega_\phi^\epsilon}$
we conclude that $\abs{Z} \leq \pi/2$
on $\Omega_\phi$.
Since $Z$ is not constant,
we moreover have
$\abs{Z} < \pi/2$
in the interior of $\Omega_\phi$. 
\end{proof} 

In the following statement and throughout this appendix, we shall generically refer to an \emph{end} as the image of any small connected open set containing a puncture (whenever such an image is unbounded, i.\,e. the punctures are not removable) and we shall then say that a surface parametrized by $(X,Y,Z)\colon\Omega\subset\C\to\R^3$ has an end asymptotic to an affine plane $\Pi\subset\R^3$ if there exists a point $w_0$ in the closure of $\Omega$ such that
\[
\Omega\ni w\to w_0\quad\Rightarrow\quad\abs{(X,Y,Z)}(w)\to\infty \text{ and } \dist_{\Pi}\circ(X,Y,Z)(w)\to 0
\] 
recalling notation \eqref{eqn:definition_distancefunction} concerning the distance from a set in Euclidean $\R^3$. 
For the Karcher--Scherk towers we will later strengthen this notion of asymptotics to the property of being graphical over the asymptotic plane in question with exponential decay (cf. Lemmata \ref{lem:exp_asy} and \ref{lem:CylMSE}). 

For the remainder of this appendix it will be convenient to distinguish the real-valued functions 
$X,Y,Z$ defined in \eqref{eqn:rescaled_parametrization} from the standard coordinates in the target which we  still denote by $x,y,z$. 

\begin{lemma}[Ends]\label{lem:btow_theta}
The image of $(X,Y,Z)\colon\Omega_\phi\to\R^3$ as given in \eqref{eqn:original_parametrisation}--\eqref{eqn:rescaled_parametrization} has six ends. 
Two of them are asymptotic to the plane $\{x=0\}$ and the other four are asymptotic to the planes 
$\{x=\pm b^{\mathrm{tow}}_\vartheta + y\tan \vartheta\}$ respectively $\{x=\pm b^{\mathrm{tow}}_\vartheta - y\tan \vartheta\}$, where 
\begin{align}\label{eqn:btow_theta}
\gls{btow}=\bigl(\sin(\vartheta)-\tfrac{1}{2}\tan(\vartheta)\bigr)\log\bigl(2\sec(\vartheta)-1\bigr)
-2\sin(\vartheta)\log\bigl(\sec(\vartheta)-1\bigr).
\end{align}
\end{lemma}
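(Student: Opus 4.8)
\textbf{Proof proposal for Lemma \ref{lem:btow_theta}.}

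The plan is to analyze each of the six punctures of $\Omega_\phi$ in turn, determining for each the asymptotic plane of the corresponding end by extracting the leading-order behavior of $(X,Y,Z)$ as $w$ approaches the puncture. The six punctures split naturally into two groups: the pair $\{\pm i\}$ and the quadruple $\{\pm e^{i\phi},\pm e^{-i\phi}\}$. By the symmetries \eqref{eqn:symmetry_under_complex_conj} and \eqref{eqn:symmetry_antipodal} of Lemma \ref{lem:Karcher-Scherk_symmetries}, it suffices to treat just one puncture from each group, say $w=i$ and $w=e^{i\phi}$, and then transport the conclusion to the others: complex conjugation $w \mapsto \overline{w}$ sends $(X,Y,Z)$ to $(X,-Y,Z)$, and the antipodal map $w\mapsto -w$ sends $(X,Y,Z)$ to $-(X,Y,Z)$, so the four planes obtained from $\{\pm e^{i\phi},\pm e^{-i\phi}\}$ are $\{x = \pm b + y\tan\vartheta\}$ and $\{x = \pm b - y\tan\vartheta\}$ for a single constant $b$, while the pair $\{\pm i\}$ gives two copies of $\{x=0\}$ (since $X$ stays bounded there while one of $Y,Z$ blows up).

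First I would handle the puncture at $w=i$. Inspecting \eqref{eqn:original_parametrisation}, as $w\to i$ the only divergent term in $Y$ is $\frac{1}{(\sin\vartheta+\cos\phi)^2}\log\frac{|w-i|}{|w+i|}$, which tends to $-\infty$, while $X$ has no factor involving $|w\mp i|$ and so remains bounded, and $Z$ also stays bounded (by Lemma \ref{lem:vertical_height} in fact $|Z|\leq\pi/2$ everywhere). Hence along this end $X\to$ (a finite value) and I must check that value is $0$: this follows because $\{X=0\}$ is a plane of symmetry — indeed $\refl_{\{x=0\}}\in\Aut_{\R^3}(\tow)$ by Proposition \ref{karcher-scherk} item \ref{item:towverticalplaneofsym}, and the two ends at $\pm i$ are exchanged by $\refl_{\axis{x}}$ (equivalently by $w\mapsto -w$ composed with conjugation), forcing both to be asymptotic to $\{x=0\}$. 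This gives the first two ends.

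Next, the puncture at $w=e^{i\phi}$. Here both $X$ and $Y$ diverge: in $X$ the term $\frac{\sin\vartheta}{(\sin\vartheta+\cos\phi)^2}\log\frac{1}{|w-e^{i\phi}|}$ blows up to $+\infty$, and in $Y$ the term $\frac{\cos\vartheta}{(\sin\vartheta+\cos\phi)^2}\log\frac{|w-e^{i\phi}|}{1}$ blows up to $-\infty$, with all other contributions bounded; meanwhile $Z$ remains bounded. Writing $t\vcentcolon= -\log|w-e^{i\phi}|\to+\infty$, the leading behavior is $X \sim \frac{\sin\vartheta}{(\sin\vartheta+\cos\phi)^2}\,t + (\text{const}_X) + o(1)$ and $Y \sim -\frac{\cos\vartheta}{(\sin\vartheta+\cos\phi)^2}\,t + (\text{const}_Y) + o(1)$. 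Eliminating $t$ yields the asymptotic line $X = -\tan\vartheta\cdot Y + c$ for a constant $c$ built from $\text{const}_X$ and $\text{const}_Y$; since the end is invariant under the vertical translation $\trans^{\axis{z}}_\pi$ (Proposition \ref{karcher-scherk}, item \ref{item:towperiodicityandquotient} after rescaling), this line sweeps out a plane $\{X = -Y\tan\vartheta + c\}$. Passing back to the unscaled coordinates via \eqref{eqn:rescaled_parametrization} scales the whole configuration uniformly by $(\sin\vartheta+\cos\phi)^{-2}$, so the plane reads $\{x = -y\tan\vartheta + b^{\mathrm{tow}}_\vartheta\}$ with $b^{\mathrm{tow}}_\vartheta = c/(\sin\vartheta+\cos\phi)^2$ appropriately normalized. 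The symmetries then produce the remaining three planes $\{x=\pm b^{\mathrm{tow}}_\vartheta \pm y\tan\vartheta\}$ exactly as in the statement.

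The main obstacle — and the only genuinely computational step — is pinning down the constant term $c$ (hence the closed form \eqref{eqn:btow_theta} for $b^{\mathrm{tow}}_\vartheta$). This requires carefully evaluating the bounded remainders in the expansions of $X$ and $Y$ at $w=e^{i\phi}$: one must expand $\log|w+e^{i\phi}|$, $\log|w\pm e^{-i\phi}|$, $\log|w\mp i|$ at $w=e^{i\phi}$ to leading order, assemble the finite pieces, convert $\cos\phi$ back to $\vartheta$ via $\cos\phi = 1-\cos\vartheta$ hence $\sin\vartheta + \cos\phi = 1 + \sin\vartheta - \cos\vartheta$ together with the identity \eqref{eqn:r_expr}, and simplify. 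The arguments $2\sec\vartheta - 1$ and $\sec\vartheta - 1$ appearing in \eqref{eqn:btow_theta} should emerge from the ratios $|e^{i\phi}+e^{i\phi}|\,|e^{i\phi}+e^{-i\phi}|$ and $|e^{i\phi}-e^{-i\phi}|$ and similar boundary-point evaluations of the $\log$-terms, once rewritten in terms of $\vartheta$. I would organize this by first computing, at $w=e^{i\phi}$, the three quantities $\log|2e^{i\phi}| = \log 2$, $\log|e^{i\phi}+e^{-i\phi}| = \log(2\cos\phi)$, $\log|e^{i\phi}-e^{-i\phi}| = \log(2\sin\phi)$, $\log|e^{i\phi}-i|$ and $\log|e^{i\phi}+i|$, substituting $\sin\phi = 1-\cos\vartheta$ and $\cos\phi = \sqrt{1-(1-\cos\vartheta)^2} = \sqrt{2\cos\vartheta - \cos^2\vartheta}$, and then checking that the algebra collapses to \eqref{eqn:btow_theta}; a numerical cross-check at $\vartheta = \omega_0 \approx 0.141\pi$ against the stated value $b^{\mathrm{tow}}_{\omega_0}\approx 1.95$ (Remark \ref{towasymptotics}) provides a reassuring consistency test.
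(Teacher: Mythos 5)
Your overall strategy matches the paper's: use the symmetries of Lemma \ref{lem:Karcher-Scherk_symmetries} to reduce to the two representative punctures $i$ and $e^{i\phi}$, note that $X$ stays bounded (indeed zero) at $w\to i$, and extract the asymptotic plane at $w\to e^{i\phi}$ from the coefficients of the divergent $\log|w-e^{i\phi}|$ terms. The paper's execution is more economical: instead of introducing $t$ and eliminating, it observes that $X+Y\tan\vartheta$ is exactly the linear combination in which the divergent parts cancel (coefficients $-\sin\vartheta$ on the $\log$ term in $X$, $+\cos\vartheta$ in $Y$, hence $+\sin\vartheta$ in $Y\tan\vartheta$), and then simply evaluates the remaining finite expression at $w=e^{i\phi}$. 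That said, there are three concrete problems in your writeup.

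First, and most seriously, you have confused the unscaled coordinates $(x,y,z)$ of \eqref{eqn:original_parametrisation} with the rescaled $(X,Y,Z)$ of \eqref{eqn:rescaled_parametrization}. The leading coefficient you write, $\sin\vartheta/(\sin\vartheta+\cos\phi)^2$, is the coefficient in $x$, not in $X$; for $X$ the coefficient is $\sin\vartheta$ (the rescaling absorbs the $(\sin\vartheta+\cos\phi)^{-2}$). The lemma is stated about the image of $(X,Y,Z)$, so the plane is already $\{x = -y\tan\vartheta + c\}$ with $c = \lim(X+Y\tan\vartheta)$; there is no further ``passing back to unscaled coordinates,'' and your $b^{\mathrm{tow}}_\vartheta = c/(\sin\vartheta+\cos\phi)^2$ is wrong even under the most charitable reading of your notation. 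Carrying this through would produce a formula for $b^{\mathrm{tow}}_\vartheta$ off by a power of $(\sin\vartheta+\cos\phi)$.

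Second, you write $\cos\phi = 1-\cos\vartheta$, but \eqref{eqn:theta_phi} says $\cos\vartheta = 1-\sin\phi$, i.e.\ $\sin\phi = 1-\cos\vartheta$. You quote the correct version ($\sin\phi=1-\cos\vartheta$, $\cos\phi=\sqrt{1-(1-\cos\vartheta)^2}$) later in the same paragraph, so the proposal is internally inconsistent here; with the wrong identity the algebra does not collapse to \eqref{eqn:btow_theta}.

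Third, your primary argument for $X(i)=0$ cites Proposition \ref{karcher-scherk} item \ref{item:towverticalplaneofsym}, but that proposition's proof explicitly appeals to Appendix~\ref{app:Karcher--Scherk} (including this lemma), so the reference is circular. Your parenthetical alternative — the map $w\mapsto -\overline{w}$ fixes $i$ and, by Lemma \ref{lem:Karcher-Scherk_symmetries}, sends $X$ to $-X$ — is the non-circular version of this argument; alternatively one can simply evaluate $X(i)$ directly from \eqref{eqn:original_parametrisation} using $|i\pm e^{\pm i\phi}|^2 = 2\pm 2\sin\phi$, as the paper implicitly does.
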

 
\begin{proof}
By the symmetries shown in Lemma \ref{lem:Karcher-Scherk_symmetries} it suffices to consider the limits 
\begin{align*}
\lim_{w \to i}\bigl(X,Y\bigr)(w) &=(0,-\infty), &
\lim_{w \to e^{i\phi}}\bigl(X,Y\bigr)(w) &=(\infty,-\infty).
\end{align*}
We directly obtain that the ends corresponding to the limits $w\to\pm i$ are asymptotic to the plane $\{x=0\}$. 
To determine the asymptotic planes for the remaining wings, and in particular their offset $b^{\text{\text{tow}}}_\vartheta$ from the origin (see Figure \ref{fig:Karcher-Scherk-Weierstrass}), we compute  
\begin{align*}
b^{\text{\text{tow}}}_\vartheta\vcentcolon=
\lim_{w \to e^{i\phi}}\Bigl(X(w) + Y(w)\tan(\vartheta)\Bigr)
&=\sin(\vartheta)\log\frac{\abs{e^{i\phi}+e^{-i\phi}}^2}{\abs{e^{i\phi}-e^{-i\phi}}^2}
+\tan(\vartheta)\log\frac{\abs{e^{i\phi}-i}}{\abs{e^{i\phi}+i}}
\\
&=2\sin(\vartheta)\log\abs{\cot\phi}
+\frac{1}{2}\tan(\vartheta)\log\biggl(\frac{1-\sin(\phi)}{1+\sin(\phi)}\biggr),
\end{align*}
where we used $\abs{e^{i\phi}\pm i}^2=(\cos(\phi))^2+(\sin(\phi)\pm1)^2=2\pm2\sin(\phi)$. 
Recalling the relation \eqref{eqn:theta_phi} between $\vartheta$ and $\phi$, in particular $\cot(\phi)=\sqrt{2\sec(\vartheta)-1}/(\sec(\vartheta)-1)$, we obtain
\begin{align*}
b^{\text{\text{tow}}}_\vartheta
&=\bigl(\sin(\vartheta)-\tfrac{1}{2}\tan(\vartheta)\bigr)\log\bigl(2\sec(\vartheta)-1\bigr)
-2\sin(\vartheta)\log\bigl(\sec(\vartheta)-1\bigr).
\qedhere
\end{align*}
\end{proof}

\begin{lemma}[Gauss map]\label{lem:gaussmap}
The image of the Gauss map $N\colon\Omega_\phi\to\Sp^2$ corresponding to the parametrization \eqref{eqn:original_parametrisation}--\eqref{eqn:rescaled_parametrization} is contained in the southern hemisphere and $N(w)$ is horizontal (i.\,e. on the equator) if and only if $\abs{w}=1$. 
\end{lemma}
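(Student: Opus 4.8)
The plan is to read off the Gauss map directly from the Enneper--Weierstrass data. Recall that, for a minimal surface parametrized as in \eqref{EWrep}, if $v\colon\Omega\to\C\cup\{\infty\}$ denotes the stereographic projection of the Gauss map $N$, then
\[
N
=
\frac{1}{1+\abs{v}^2}
\bigl(2\Re v,\; 2\Im v,\; \abs{v}^2-1\bigr),
\]
with our normalization conventions (the ones of \cite{KarcherScherk}); in particular the third component of $N(w)$ equals $(\abs{v(w)}^2-1)/(\abs{v(w)}^2+1)$, which is nonpositive exactly when $\abs{v(w)}\leq1$ and vanishes exactly when $\abs{v(w)}=1$. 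So the entire lemma reduces to the single claim that for $v$ as in \eqref{eqn:Karcher-data}, namely
\[
v(w)=\frac{w^2+r}{1+rw^2},
\qquad r\in\interval{-1,1},
\]
one has $\abs{v(w)}\leq 1$ for all $w\in\Omega_\phi$ (equivalently for all $\abs{w}\leq1$), with equality precisely on $\abs{w}=1$.

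First I would observe that $w\mapsto w^2$ maps the closed unit disc onto itself, sending the boundary circle onto the boundary circle, so it suffices to prove that the M\"obius-type map $u\mapsto (u+r)/(1+ru)$ maps the closed unit disc into itself with the unit circle going to the unit circle; this is where the constraint $r\in\interval{-1,1}$ (guaranteed by \eqref{eqn:20211219-r}) is essential. The cleanest route is the explicit identity
\[
1-\abs[\bigg]{\frac{u+r}{1+ru}}^2
=
\frac{(1-r^2)\bigl(1-\abs{u}^2\bigr)}{\abs{1+ru}^2},
\]
valid whenever $1+ru\neq 0$ (which holds on $\abs{u}\le1$ since $\abs{r}<1$). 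Since $1-r^2>0$, the right-hand side is nonnegative for $\abs{u}\le1$ and is zero precisely when $\abs{u}=1$. Substituting $u=w^2$ gives $\abs{v(w)}\le1$ on the closed disc with equality exactly when $\abs{w^2}=1$, i.e. $\abs{w}=1$. Feeding this into the formula for the third component of $N$ yields both assertions of the lemma: $N(w)$ lies in the closed southern hemisphere, and $N(w)$ is on the equator if and only if $\abs{w}=1$. (One should note that $v$ is genuinely defined and finite on $\Omega_\phi$ away from the zeros of $1+rw^2$, which lie on $\abs{w}=1/\sqrt{\abs{r}}>1$ and hence outside the closed disc; at those hypothetical poles $\abs{v}=\infty$ anyway, consistent with the strict inequality in the interior.)

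There is no serious obstacle here; the only point requiring a moment's care is the verification of the displayed algebraic identity and the observation that $1+ru$ does not vanish on the closed unit disc, both of which are elementary consequences of $\abs{r}<1$. If one prefers to avoid even that short computation, an alternative is to invoke the Schwarz--Pick lemma: $u\mapsto(u+r)/(1+ru)$ is an automorphism of the open unit disc whenever $\abs{r}<1$, hence extends to a homeomorphism of the closed disc carrying boundary to boundary, and one then only needs to check that $v$ is real and equal to $+1$ (not $-1$) at, say, $w=1$ to pin down the sign and conclude that the image of the Gauss map meets the \emph{southern} (rather than northern) hemisphere; but the explicit identity above already settles the sign directly and is self-contained, so that is the version I would write.
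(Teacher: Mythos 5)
Your proof is correct and follows essentially the same approach as the paper's: express the third component of $N$ in terms of $\abs{v}^2$ via stereographic projection, and verify $\abs{v}\le1$ on the closed disc (with equality exactly on $\abs{w}=1$) through the identity $\abs{w^2+r}^2-\abs{1+rw^2}^2=(1-r^2)(\abs{w}^4-1)$, which is precisely the paper's displayed computation after your substitution $u=w^2$. The structural split into $w\mapsto w^2$ followed by a M\"obius map, and the Schwarz--Pick alternative, are harmless elaborations of the same argument.
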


\begin{proof}
Recalling that $v\colon\Omega_\phi\to\C$ given in \eqref{eqn:Karcher-data} is the stereographic projection of $N$, we have  
\begin{align*}
N=\frac{1}{1+\abs{v}^2}\Bigl(2\Re(v),\,2\Im(v),\,\abs{v}^2-1\Bigr).	
\end{align*} 
To prove the claim it suffices to show $\abs{v}^2=\abs{w^2+r}^2/\abs{1+rw^2}^2\leq1$. 
Indeed, given any $r\in\interval{-1,1}$ and any $w\in\Omega_\phi$, we have
\(\abs{w^2+r}^2-\abs{1+rw^2}^2=(1-r^2)\bigl(\abs{w}^4-1\bigr)\leq0\)
with equality if and only if $\abs{w}=1$.  
\end{proof}

\begin{lemma}[Vertical graphicality]\label{lem:graphicality}
The image of the map $(X,Y,Z)\colon\Omega_\phi\to\R^3$ defined in  \eqref{eqn:original_parametrisation}--\eqref{eqn:rescaled_parametrization} is a graph over a domain in the horizontal plane $\{z=0\}$.  
\end{lemma}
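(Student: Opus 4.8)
\textbf{Proof plan for Lemma \ref{lem:graphicality}.}
The plan is to show that the Gauss map of the parametrized surface never points horizontally on the interior of $\Omega_\phi$, which immediately gives vertical graphicality. First I would recall from Lemma \ref{lem:gaussmap} that the image of the Gauss map $N\colon\Omega_\phi\to\Sp^2$ lies in the (closed) southern hemisphere and that $N(w)$ lies on the equator precisely when $\abs{w}=1$. Hence for $w$ in the \emph{interior} $\{\abs{w}<1\}$ of $\Omega_\phi$ the normal $N(w)$ has a strictly negative third component; equivalently the $z$-coordinate, which by \eqref{EWrep} is (up to the positive rescaling factor in \eqref{eqn:rescaled_parametrization}) a harmonic function, has everywhere nonvanishing differential on the interior, since a branch point of the minimal immersion would force $N$ to be undefined there but the Enneper--Weierstrass data \eqref{eqn:Karcher-data} are manifestly unbranched on the open disc. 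This shows the immersion is, locally, a graph over $\{z=0\}$.

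The remaining, genuinely global point is to upgrade ``locally a graph'' to ``globally a graph over a single domain in $\{z=0\}$'', i.e. to rule out two distinct points of $\Omega_\phi$ mapping to the same point of $\{z=0\}$. Here I would argue as follows. By Lemma \ref{lem:vertical_height} we have $\abs{Z}\leq\pi/2$ with equality exactly on $\partial\Omega_\phi=\{\abs{w}=1\}$, and more precisely $Z(e^{i\alpha})$ is locally constant on each of the six boundary arcs between consecutive punctures, taking the value $+\pi/2$ or $-\pi/2$ alternately. Consider the horizontal projection $P\vcentcolon=(X,Y)\colon\Omega_\phi\to\R^2$. Because $\partial Z$ is nonzero on the interior and $Z$ is harmonic, the level sets of $Z$ inside $\Omega_\phi$ are smooth arcs, each running between two boundary points; restricted to each such level arc, the pair $(X,Y)$ together with $Z$ gives coordinates adapted to the surface, and one checks using the conformality of the immersion (so that $dX\wedge dY$ is a positive multiple of the area form wherever the Gauss map is not horizontal, its sign being fixed by the southern-hemisphere statement of Lemma \ref{lem:gaussmap}) that $P$ is a local diffeomorphism with \emph{orientation-consistent} Jacobian throughout the interior. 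A degree/monotonicity argument along the $Z$-foliation then forces $P$ to be injective: two interior points with the same image under $(X,Y)$ would have to have the same $Z$-value (else the surface would not be graphical near one of them), hence lie on a common level arc of $Z$, and along that arc injectivity of $P$ is a one-dimensional statement that follows from the consistent sign of the derivative.

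The step I expect to be the main obstacle is precisely this last global injectivity argument: proving that the consistent local orientation of $P$ propagates to genuine injectivity, rather than to $P$ being merely an immersion with overlaps. I would handle it by combining the boundary information from Lemma \ref{lem:vertical_height} (which pins down the asymptotic ``fan'' of the six ends and shows the boundary curve of the image, suitably compactified, is embedded) with the fact that an orientation-preserving proper local homeomorphism from a planar domain whose boundary maps injectively must itself be injective. In practice, one foliates $\Omega_\phi$ by the level sets of $Z$, notes that the image of each leaf under $P$ is an embedded arc (by the one-dimensional monotonicity along the leaf), and then observes that leaves with distinct $Z$-values have disjoint images because the surface is a graph over $\{z=0\}$ near every interior point; the boundary analysis of Lemma \ref{lem:btow_theta}, identifying the six asymptotic half-planes, shows the extreme leaves $Z=\pm\pi/2$ bound the projected region without self-overlap, closing the argument.
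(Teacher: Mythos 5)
Your local part is fine: Lemma \ref{lem:gaussmap} does place the Gauss map strictly in the open southern hemisphere on $\{\abs{w}<1\}$, hence the horizontal projection $P=(X,Y)$ is an orientation-consistent local diffeomorphism there. But the global step contains a genuine gap. You assert that ``two interior points with the same image under $(X,Y)$ would have to have the same $Z$-value (else the surface would not be graphical near one of them)''. This is circular: if $P(p_1)=P(p_2)$ with $Z(p_1)\neq Z(p_2)$, the two image points are vertically stacked \emph{distinct} points of $\R^3$, which is perfectly compatible with the surface being a graph in a small neighborhood of each of them separately. Local graphicality near a point is a local statement and says nothing about whether the surface returns above the same horizontal location at a different height. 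Similarly, on a common $Z$-level arc you claim $P$-injectivity ``follows from the consistent sign of the derivative'', but along such an arc the projection is merely an immersed planar curve; immersedness does not preclude self-intersection, and there is no scalar quantity you have identified that is monotone along the leaf. Finally, the topological fact you invoke (an orientation-preserving proper local homeomorphism with injective boundary behavior is injective) is not directly usable here, because $\Omega_\phi$ has six boundary punctures near which $P$ blows up; establishing the required properness and the embeddedness of the ``boundary at infinity'' of the image is precisely what is in question and is not supplied by Lemmata \ref{lem:vertical_height} and \ref{lem:btow_theta} alone.

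For comparison, the paper avoids these issues by restricting first to the closed first-quadrant piece $\Omega_\phi^+$ and establishing there two pieces of structure that your argument lacks: (a) $X$ and $Y$ have definite signs on $\Omega_\phi^+$ (nonnegative and nonpositive, respectively), so images of the four symmetric quadrants can only meet along their shared boundary lines (by Lemma \ref{lem:Karcher-Scherk_symmetries}); and (b) the level sets of $X$ in $\Omega_\phi^+$ are connected arcs running between two specific boundary pieces (Claim 2 of the paper's proof, which requires monotonicity along the boundary and harmonicity). Given (b), the paper's injectivity argument is a mean-value-theorem argument: two interior points in $\Omega_\phi^+$ with the same $(X,Y)$ lie on a single $X$-level arc, $Y$ restricted to that arc has a critical point by the mean value theorem, and at that critical point the surface normal is horizontal, contradicting Lemma \ref{lem:gaussmap}. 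This is the key structural input that a general degree argument on a punctured disc cannot cheaply replace. If you wish to keep a foliation-based route you would need an analogous connectedness/monotonicity statement for the foliation you use (and the paper's natural choice of foliating by $X$ rather than $Z$ is what makes the monotonicity visible from the explicit formulas \eqref{eqn:original_parametrisation}).
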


\begin{remark}
In particular, Lemma \ref{lem:graphicality} implies that the map $(X,Y,Z)\colon\Omega_\phi\to\R^3$ is an embedding. 
Karcher \cite{KarcherScherk,KarcherTokyo} concludes the embeddedness of his singly periodic minimal surfaces indirectly by analyzing the corresponding conjugate surface. 
The authors of \cite{Bucaj2013} found an alternative, more direct approach to prove the embeddedness of the Karcher--Scherk surfaces with higher dihedral symmetry and remark that a similar approach might also work for the less symmetric surfaces $\tow_\vartheta$ but to the best of our knowledge this has not been established, yet.   
Our argument is yet again different, using only straightforward properties of the Enneper--Weierstrass parametrization \eqref{eqn:complex_integrals}--\eqref{eqn:original_parametrisation}. 
\end{remark}

\begin{proof}
\emph{Claim 1.} $X(w)=0\,\Leftrightarrow\,\Re(w)=0$.

By \eqref{eqn:original_parametrisation}, the condition $X(w)=0$ is equivalent to 
$\abs{w+e^{i\phi}}\abs{w+e^{-i\phi}}
=\abs{w-e^{i\phi}}\abs{w-e^{-i\phi}}$. 
Abbreviating $a\vcentcolon=\Re(w)$, $b\vcentcolon=\Im(w)$, $c\vcentcolon=\cos(\phi)$ and $s\vcentcolon=\sin(\phi)$, we have 
\begin{align}\label{eqn:20220221}
\begin{aligned}
\abs[\big]{w\pm e^{i\phi}}^2
\abs[\big]{w\pm e^{-i\phi}}^2	
&=\Bigl((a\pm c)^2+(b\pm s)^2\Bigr)
\Bigl((a\pm c)^2+(b\mp s)^2\Bigr)
\\
&=(a\pm c)^4+2(b^2+s^2)(a\pm c)^2 
+(b^2-s^2)^2.
\end{aligned}
\end{align}
Hence, $\abs{w+e^{i\phi}}\abs{w+e^{-i\phi}}-\abs{w-e^{i\phi}}\abs{w-e^{-i\phi}}=f\bigl((a+c)^2\bigr)-f\bigl((a-c)^2\bigr)$ where $f\colon\Interval{0,\infty}\to\R$ is given by $f(t)=t^2+2(b^2+s^2)t$. 
Since $f$ is injective, $f\bigl((a+c)^2\bigr)=f\bigl((a-c)^2\bigr)$ is equivalent to $(a+c)^2=(a-c)^2$ which in turn is equivalent to $0=a=\Re(w)$ since by assumption $c=\cos(\phi)\neq0$. 

\emph{Claim 2.}
Let $\Omega_\phi^+=\{w\in\Omega_\phi\st \Re(w)\geq0,~\Im(w)\geq0\}$ be the intersection of the domain $\Omega_\phi$ with the first quadrant. 
Then the level sets of the restriction $X\colon\Omega_\phi^+\to\R$ are connected and $X|_{\Omega_\phi^+}\geq0$.

\begin{figure}%
\centering
\pgfmathsetmacro{\angle}{30}
\pgfmathsetmacro{\phipar}{asin(1-cos(\angle))}
\pgfmathsetmacro{\globalscale}{2.5*2}
\begin{tikzpicture}[line join=round,scale=\globalscale,baseline={(0,0)}]
\begin{scope}[line width=4pt]
\pgfmathsetmacro{\gap}{0}
\draw[color={cmyk,1:magenta,0.5;yellow,1}](0,0)--(0,{1-rad(\gap)})
node[midway,left]{$\gamma_0$};
\draw[color={cmyk,1:magenta,0.5;cyan,1}](0,0)--(1,0)
node[near end,below]{$\gamma_1$}arc(0:-\gap+\phipar:1);
\draw[color={cmyk,1:yellow,1;cyan,0.5}](\gap+\phipar:1)arc(\gap+\phipar:90-\gap:1)
node[midway,above right,inner sep=1pt,circle]{$\gamma_2$};
\end{scope}
\draw[latex-](\phipar:1)++(\phipar:0.005)-|++(0.1,0.33)node[above]{$e^{i\phi}$};
\begin{scope}
\clip(1,0)arc(0:90:1)|-cycle;
\fill[black!10](0,0)rectangle(1,1);
\draw(  0.0763,  0.0000)..controls(  0.0757,  0.1033)and(  0.0804,  0.2518)..(  0.0833,  0.3130)..controls(  0.0919,  0.4756)and(  0.0926,  0.4961)..(  0.1110,  0.6886)..controls(  0.1147,  0.7297)and(  0.1387,  0.9296)..(  0.1490,  0.9887);
\draw(  0.1518,  0.0000)..controls(  0.1516,  0.1133)and(  0.1580,  0.2247)..(  0.1655,  0.3081)..controls(  0.1800,  0.4604)and(  0.1939,  0.5416)..(  0.2145,  0.6499)..controls(  0.2342,  0.7488)and(  0.2614,  0.8593)..(  0.2914,  0.9565);
\draw(  0.2256,  0.0000)..controls(  0.2242,  0.0912)and(  0.2353,  0.2289)..(  0.2457,  0.3008)..controls(  0.2635,  0.4304)and(  0.2800,  0.4961)..(  0.3080,  0.6010)..controls(  0.3447,  0.7270)and(  0.3787,  0.8127)..(  0.4217,  0.9066);
\draw(  0.2972,  0.0000)..controls(  0.2986,  0.1248)and(  0.3059,  0.1829)..(  0.3194,  0.2724)..controls(  0.3344,  0.3667)and(  0.3644,  0.4791)..(  0.4017,  0.5764)..controls(  0.4508,  0.6989)and(  0.4795,  0.7508)..(  0.5364,  0.8439);
\draw(  0.3659,  0.0000)..controls(  0.3659,  0.0977)and(  0.3784,  0.2029)..(  0.3957,  0.2783)..controls(  0.4100,  0.3501)and(  0.4463,  0.4623)..(  0.4953,  0.5599)..controls(  0.5116,  0.5968)and(  0.5829,  0.7158)..(  0.6337,  0.7734);
\draw(  0.4311,  0.0000)..controls(  0.4291,  0.0910)and(  0.4498,  0.2121)..(  0.4646,  0.2644)..controls(  0.4803,  0.3256)and(  0.5082,  0.4071)..(  0.5556,  0.4926)..controls(  0.5900,  0.5535)and(  0.6416,  0.6293)..(  0.7142,  0.6999);
\draw(  0.4926,  0.0000)..controls(  0.4951,  0.0850)and(  0.4964,  0.1068)..(  0.5156,  0.2008)..controls(  0.5341,  0.2849)and(  0.5717,  0.3691)..(  0.6005,  0.4175)..controls(  0.6547,  0.5117)and(  0.7162,  0.5766)..(  0.7785,  0.6273);
\draw(  0.5503,  0.0000)..controls(  0.5482,  0.0574)and(  0.5638,  0.1577)..(  0.5781,  0.2034)..controls(  0.6038,  0.2945)and(  0.6365,  0.3506)..(  0.6740,  0.4053)..controls(  0.7139,  0.4645)and(  0.7745,  0.5215)..(  0.8295,  0.5582);
\draw(  0.6041,  0.0000)..controls(  0.6025,  0.0760)and(  0.6222,  0.1600)..(  0.6313,  0.1880)..controls(  0.6475,  0.2419)and(  0.6744,  0.2998)..(  0.7072,  0.3464)..controls(  0.7709,  0.4285)and(  0.7886,  0.4423)..(  0.8690,  0.4945);
\draw(  0.6536,  0.0000)..controls(  0.6546,  0.0592)and(  0.6611,  0.1120)..(  0.6780,  0.1674)..controls(  0.6994,  0.2363)and(  0.7242,  0.2779)..(  0.7556,  0.3198)..controls(  0.7920,  0.3681)and(  0.8523,  0.4150)..(  0.8993,  0.4369);
\draw(  0.6996,  0.0000)..controls(  0.6971,  0.0402)and(  0.7122,  0.1283)..(  0.7239,  0.1596)..controls(  0.7377,  0.2049)and(  0.7666,  0.2574)..(  0.7885,  0.2825)..controls(  0.8277,  0.3296)and(  0.8475,  0.3482)..(  0.9224,  0.3858);
\draw(  0.7422,  0.0000)..controls(  0.7424,  0.0353)and(  0.7462,  0.0807)..(  0.7583,  0.1285)..controls(  0.7705,  0.1756)and(  0.7968,  0.2270)..(  0.8215,  0.2561)..controls(  0.8456,  0.2880)and(  0.8982,  0.3284)..(  0.9401,  0.3409);
\draw(  0.7819,  0.0000)..controls(  0.7814,  0.0350)and(  0.7876,  0.0789)..(  0.7887,  0.0861)..controls(  0.7925,  0.1284)and(  0.8135,  0.1810)..(  0.8445,  0.2252)..controls(  0.8660,  0.2583)and(  0.9251,  0.2975)..(  0.9531,  0.3026);
\draw(  0.8192,  0.0000)..controls(  0.8197,  0.0418)and(  0.8213,  0.0609)..(  0.8260,  0.0968)..controls(  0.8283,  0.1170)and(  0.8423,  0.1689)..(  0.8713,  0.2078)..controls(  0.8999,  0.2430)and(  0.9274,  0.2583)..(  0.9628,  0.2701);
\draw(  0.8553,  0.0000)..controls(  0.8552,  0.0482)and(  0.8549,  0.0491)..(  0.8581,  0.0971)..controls(  0.8661,  0.1454)and(  0.8777,  0.1706)..(  0.8957,  0.1945)..controls(  0.9221,  0.2237)and(  0.9370,  0.2315)..(  0.9700,  0.2430);
\draw(  0.8917,  0.0000)..controls(  0.8894,  0.0365)and(  0.8862,  0.0912)..(  0.8875,  0.1048)..controls(  0.8891,  0.1344)and(  0.9009,  0.1618)..(  0.9153,  0.1818)..controls(  0.9407,  0.2083)and(  0.9428,  0.2073)..(  0.9754,  0.2202);
\draw(  0.9333,  0.0000)..controls(  0.9249,  0.0419)and(  0.9274,  0.0305)..(  0.9144,  0.0796)..controls(  0.9106,  0.1190)and(  0.9095,  0.1266)..(  0.9250,  0.1609)..controls(  0.9410,  0.1865)and(  0.9527,  0.1922)..(  0.9794,  0.2015);
\draw(  0.9995,  0.0308)..controls(  0.9766,  0.0292)and(  0.9432,  0.0731)..(  0.9463,  0.0715)..controls(  0.9291,  0.1003)and(  0.9285,  0.1291)..(  0.9352,  0.1447)..controls(  0.9418,  0.1637)and(  0.9592,  0.1815)..(  0.9824,  0.1864);
\draw(  0.9973,  0.0732)..controls(  0.9759,  0.0721)and(  0.9555,  0.0907)..(  0.9519,  0.1029)..controls(  0.9453,  0.1138)and(  0.9447,  0.1350)..(  0.9496,  0.1443)..controls(  0.9557,  0.1625)and(  0.9713,  0.1708)..(  0.9847,  0.1743);
\draw(  0.9956,  0.0933)..controls(  0.9812,  0.0925)and(  0.9693,  0.1010)..(  0.9658,  0.1069)..controls(  0.9551,  0.1214)and(  0.9569,  0.1270)..(  0.9602,  0.1439)..controls(  0.9682,  0.1588)and(  0.9744,  0.1610)..(  0.9863,  0.1648);
\draw(  0.9944,  0.1054)..controls(  0.9882,  0.1034)and(  0.9765,  0.1102)..(  0.9742,  0.1127)..controls(  0.9641,  0.1216)and(  0.9665,  0.1313)..(  0.9669,  0.1381)..controls(  0.9719,  0.1532)and(  0.9796,  0.1546)..(  0.9875,  0.1573);
\draw(  0.9935,  0.1135)..controls(  0.9860,  0.1130)and(  0.9785,  0.1167)..(  0.9757,  0.1224)..controls(  0.9717,  0.1283)and(  0.9718,  0.1353)..(  0.9742,  0.1408)..controls(  0.9772,  0.1473)and(  0.9828,  0.1502)..(  0.9884,  0.1516);
\draw(  0.9929,  0.1193)..controls(  0.9877,  0.1186)and(  0.9817,  0.1217)..(  0.9803,  0.1247)..controls(  0.9763,  0.1297)and(  0.9767,  0.1354)..(  0.9789,  0.1399)..controls(  0.9796,  0.1420)and(  0.9859,  0.1477)..(  0.9891,  0.1472);
\draw(  0.9924,  0.1233)..controls(  0.9885,  0.1226)and(  0.9843,  0.1254)..(  0.9835,  0.1267)..controls(  0.9794,  0.1303)and(  0.9812,  0.1369)..(  0.9814,  0.1371)..controls(  0.9837,  0.1421)and(  0.9856,  0.1426)..(  0.9896,  0.1438);
\draw(  0.9920,  0.1262)..controls(  0.9887,  0.1262)and(  0.9880,  0.1265)..(  0.9857,  0.1283)..controls(  0.9833,  0.1310)and(  0.9832,  0.1337)..(  0.9837,  0.1358)..controls(  0.9839,  0.1378)and(  0.9872,  0.1412)..(  0.9900,  0.1413);
\draw(  0.9917,  0.1283)..controls(  0.9899,  0.1279)and(  0.9880,  0.1290)..(  0.9867,  0.1303)..controls(  0.9865,  0.1303)and(  0.9847,  0.1339)..(  0.9855,  0.1349)..controls(  0.9861,  0.1377)and(  0.9881,  0.1390)..(  0.9902,  0.1394);
\draw(  0.9915,  0.1298)..controls(  0.9901,  0.1295)and(  0.9883,  0.1305)..(  0.9876,  0.1315)..controls(  0.9868,  0.1325)and(  0.9866,  0.1345)..(  0.9872,  0.1356)..controls(  0.9885,  0.1376)and(  0.9884,  0.1373)..(  0.9904,  0.1380);
\end{scope}
\end{tikzpicture}
\caption{Level sets of $X$ restricted to the domain $\Omega_\phi^+$ (cf. proof of Lemma \ref{lem:graphicality}, Claim 3).}%
\label{fig:levelsets}%
\end{figure}
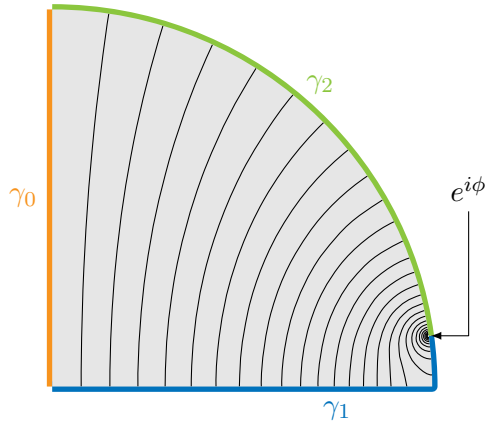

We divide $\partial\Omega_\phi^+\setminus\{i,e^{i\phi}\}=\gamma_0\cup\gamma_1\cup\gamma_2$ into the union of the three piecewise smooth, disjoint subsets 
\begin{align*}
\gamma_0&\vcentcolon=\{w\in\partial\Omega_\phi^+\st \Re(w)=0,~w\neq i\},   \\
\gamma_1&\vcentcolon=\{w\in\partial\Omega_\phi^+\st \Re(w)>0,~\Im(w)=0\}\cup\{e^{i\alpha}\st \alpha\in\interval{0,\phi}\}, \\
\gamma_2&\vcentcolon=\{e^{i\alpha}\st \alpha\in\interval{\phi,\tfrac{\pi}{2}}\}
\end{align*}
as visualized in Figure \ref{fig:levelsets}. 
By Claim 1, $X$ vanishes identically on $\gamma_0$. 
Moreover, we claim that with respect to the canonical (counterclockwise) orientation, $X$ is strictly increasing (from $0$ to $+\infty$) along $\gamma_1$ and strictly decreasing (from $+\infty$ to $0$) along $\gamma_2$. 
Indeed, since $\abs{e^{i\alpha}+e^{\pm i\phi}}=2\cos\bigl((\alpha\mp\phi)/2\bigr)$
and $\abs{e^{i\alpha}-e^{\pm i\phi}}=2\abs[\big]{\sin\bigl((\alpha\mp\phi)/2\bigr)}$ we have 
\begin{align}\label{eqn:X(exp(ia))}
X(e^{i\alpha})
&=(\sin \vartheta)\log\abs[\Big]{ 
\cot\Bigl(\frac{\alpha-\phi}{2}\Bigr)
\cot\Bigl(\frac{\alpha+\phi}{2}\Bigr)
}
\end{align}
using the first formula in \eqref{eqn:original_parametrisation}. 
Then, the monotonicity along  $\gamma_2$ and along the circular arc of $\gamma_1$ is evident from \eqref{eqn:X(exp(ia))}.  
Along the straight piece of $\gamma_1$ the argument $w$ is real-valued in the interval $\intervaL{0,1}$ and for such arguments, 
the first integrand in \eqref{eqn:complex_integrals} is real valued as well with the right sign:  
recalling $\abs{r}<1$ from \eqref{eqn:20211219-r}, we have indeed for any $\zeta\in[0,1]$ 
\begin{align}\label{eqn:X_increasing}
\frac{-(1-r^2)(\zeta^2-1)}{(\zeta^2-e^{2\phi i})(\zeta^2-e^{-2\phi i})}
=\frac{-(1-r^2)(\zeta^2-1)}{\zeta^4-2\zeta^2\cos(2\phi)+1}\geq0.
\end{align}
Since $X\colon\Omega_\phi^+\to\R$ is harmonic, its level sets do not contain any closed curves. 
Hence, appealing to the monotonicity of $X$ along the boundary pieces $\gamma_1$ and $\gamma_2$, each level set must connect a point on $\gamma_1$ to a point on $\gamma_2$. 
In particular, the level sets in question are connected. 
Moreover, we obtain $X\geq0$ on $\partial\Omega_\phi^+$ and hence in all of $\Omega_\phi^+$ by the maximum principle for harmonic functions.  

\emph{Claim 3.} The image of the restricted map $(X,Y,Z)\colon\Omega_\phi^+\to\R^3$ is a graph over a domain in the horizontal plane. 

Towards a contradiction, suppose that there exist $w_0,w_1\in\Omega_\phi^+$ such that $(X,Y)(w_0)=(X,Y)(w_1)$ but $Z(w_0)\neq Z(w_1)$. 
By Sard's theorem we may additionally assume that the image of $(X,Y,Z)$ intersects the vertical plane $P=\{(x,y,z)\in\R^3\st x=X(w_0)\}$ transversally. 
Then, by Claim 2 there exists a smooth curve $\gamma\colon[0,1]\to\Omega_\phi^+$ such that $\gamma(0)=w_0$, $\gamma(1)=w_1$ and such that $X(\gamma(t))$ is constant in $t\in[0,1]$, i.\,e. $(X,Y,Z)\circ\gamma\colon[0,1]\to P$ is a well-defined, smooth curve. 
The mean value theorem yields $t_*\in\interval{0,1}$ such that the derivative of $Y\circ\gamma$ vanishes at $t_*$. 
This implies that the normal vector $N(\gamma(t_*))$ is horizontal. 
However, since $X\circ\gamma$ is constant, $\gamma(t_*)$ must be in the interior of $\Omega_\phi$ by the same monotonicity argument as in the proof of Claim 2 and we obtain a contradiction with Lemma \ref{lem:gaussmap}. 

\emph{Conclusion}. 
For any $w\in\Omega_\phi^+$ we have $\Im(w)\geq0$ by definition and therefore 
\begin{align*}
\abs{w-i}&\leq\abs{w+i},    &
\abs{w-e^{i\phi}}&\leq\abs{w+e^{i\phi}},   &
\abs{w+e^{-i\phi}}&\leq\abs{w-e^{-i\phi}}
\end{align*}
for any $\phi\in\interval{0,\pi/2}$. 
Then, $Y\leq0$ in $\Omega_\phi^+$ follows directly from the expression for $y(w)$ in \eqref{eqn:original_parametrisation}.
Moreover, in Claim 2 we showed $X\geq0$ in $\Omega_\phi^+$.
Hence, the images of $(X,Y,Z)$ and $-(X,Y,Z)$, respectively $(X,-Y,Z)$, restricted to $\Omega_\phi^+$ intersect only along the common boundary of $\Omega_\phi^+$ and $-\Omega_\phi^+$, respectively its complex conjugate. 
The statement then follows from Claim 3 and Lemma~\ref{lem:Karcher-Scherk_symmetries}.
\end{proof}

\begin{lemma}[Exponential asymptotics]\label{lem:exp_asy}
The unit normal vector $N$ of $(X,Y,Z)\colon\Omega_\phi\to\R^3$ defined in  \eqref{eqn:original_parametrisation}--\eqref{eqn:rescaled_parametrization} converges exponentially with rate $1$ along the ends of the image, i.\,e. there exist constants $C,\delta>0$ such that the following implications hold.
\begin{align*}
\begin{aligned}
\abs{w\mp i}&<\delta &&\Rightarrow &
\abs[\big]{N(w)-\lim_{\quad\mathclap{w\to\pm i}\quad}N(w)}&\leq C e^{\pm Y(w)}
\\
\abs{w\mp e^{i\phi}}&<\delta &&\Rightarrow &
\abs[\big]{N(w)-\lim_{\quad\mathclap{w\to\pm e^{i\phi}}\quad}N(w)}&\leq C e^{\mp\sin(\vartheta)X(w)\pm\cos(\vartheta)Y(w)}
\\
\abs{w\mp e^{-i\phi}}&<\delta &&\Rightarrow &
\abs[\big]{N(w)-\lim_{\quad\mathclap{w\to\pm e^{-i\phi}}\quad}N(w)}&\leq C e^{\mp\sin(\vartheta)X(w)\mp\cos(\vartheta)Y(w)}
\end{aligned}
\end{align*}
\end{lemma}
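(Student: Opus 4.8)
The plan is to exploit the explicit Enneper--Weierstrass representation \eqref{eqn:Karcher-data}--\eqref{eqn:original_parametrisation} together with the facts already established in Lemmata \ref{lem:Karcher-Scherk_symmetries}--\ref{lem:graphicality}. By the symmetries \eqref{eqn:symmetry_under_complex_conj} and \eqref{eqn:symmetry_antipodal} it suffices to treat the two ends $w \to i$ and $w \to e^{i\phi}$; the remaining four follow by composing the reflection $w \mapsto \bar{w}$ and the point reflection $w \mapsto -w$, which act on the target by $(X,Y,Z) \mapsto (X,-Y,Z)$ and $(X,Y,Z)\mapsto -(X,Y,Z)$ respectively, hence permute the asymptotic planes identified in Lemma \ref{lem:btow_theta} and correspondingly relabel the exponents in the statement (note that a sign flip in $Y$ or in $(X,Y)$ flips the exponents in exactly the pattern displayed). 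So the whole content reduces to two local estimates near the two punctures $i$ and $e^{i\phi}$.

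The key computation is the local behavior of the stereographic Gauss map $v(w) = (w^2+r)/(1+rw^2)$ near each puncture. First I would record the limiting normal directions: as $w \to \pm i$ one has $w^2 \to -1$ so $v(\pm i) = (r-1)/(1-r) = -1$, giving a definite point $N(\pm i)$ on the equator (consistent with Lemma \ref{lem:gaussmap}); as $w \to \pm e^{i\phi}$ one has $w^2 \to e^{2i\phi}$ and $v(\pm e^{i\phi})$ is again a unimodular number (the denominator $1+re^{2i\phi}$ is nonzero since $|r|<1$), hence another equatorial point. Since $N$ depends real-analytically on $v$ via $N = (1+|v|^2)^{-1}(2\Re v, 2\Im v, |v|^2-1)$ and this map is smooth near $|v|=1$, it is enough to show $|v(w) - v(w_0)| \leq C\,e^{-(\text{rate})}$ in terms of the target coordinates near each puncture $w_0$. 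A direct Taylor expansion of $v$ at $w_0$ gives $v(w) - v(w_0) = c_0 (w - w_0) + O((w-w_0)^2)$ with $c_0 = v'(w_0) \neq 0$, so the task becomes: show $|w - w_0|$ decays exponentially in the relevant linear combination of $X,Y$.

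That last point is exactly where the explicit formulas \eqref{eqn:original_parametrisation} do the work. Near $w = i$ only the single term $\log\frac{|w-i|}{|w+i|}$ in $Y(w)$ blows up (to $-\infty$), while $X(w)$ and the $\Arg$-terms stay bounded; thus $Y(w) = \frac{1}{(\sin\vartheta+\cos\phi)^2}\log|w-i| + O(1)$, i.e. $|w-i| \asymp e^{(\sin\vartheta+\cos\phi)^2 Y(w)}$ — but I must reconcile the normalization, since in the statement $Y$ denotes the \emph{rescaled} coordinate \eqref{eqn:rescaled_parametrization}, for which the factor $(\sin\vartheta+\cos\phi)^2$ is absorbed and one gets cleanly $|w-i| \asymp e^{Y}$, yielding the claimed rate. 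Near $w = e^{i\phi}$ two logarithmic terms blow up simultaneously: $\log|w-e^{i\phi}|$ appears in $X$ with coefficient $\sin\vartheta$ (times the normalization) and in $Y$ with coefficient $-\cos\vartheta$ (after accounting for the combination of the four $\log$ factors, exactly as in the computation of $b^{\mathrm{tow}}_\vartheta$ in Lemma \ref{lem:btow_theta}), while $Z$ and all bounded-denominator terms stay $O(1)$; solving the resulting linear system for $\log|w-e^{i\phi}|$ gives $|w - e^{i\phi}| \asymp e^{-\sin\vartheta\, X + \cos\vartheta\, Y}$ in the rescaled coordinates, which is the asserted exponent. Combining with $|v(w)-v(w_0)| \lesssim |w-w_0|$ and the smoothness of $v \mapsto N$ finishes the proof.

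The main obstacle I anticipate is purely bookkeeping: one must carefully isolate, near each puncture, precisely which $\log$ or $\Arg$ terms in \eqref{eqn:original_parametrisation} are singular and verify that the \emph{remaining} terms are genuinely bounded (and in fact real-analytic) on a punctured neighborhood — for the $\Arg$ terms near $e^{i\phi}$ this requires checking that the relevant branch does not wind — and then track the normalization factor $(\sin\vartheta+\cos\phi)^2$ consistently between the raw parametrization $(x,y,z)$ and the rescaled $(X,Y,Z)$ so that the exponents come out exactly as written, with coefficient $1$ on $Y$ in the first line and coefficients $\pm\sin\vartheta$, $\pm\cos\vartheta$ in the second and third. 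There is also a minor subtlety in passing from the estimate on $v$ to the estimate on $N$: one should note that $N(w_0)$ lies on the equator $\{|v|=1\}$, where the map $v \mapsto N$ is a local diffeomorphism, so the Lipschitz bound $|N(w)-N(w_0)| \leq C|v(w)-v(w_0)|$ is valid with $C$ depending only on a fixed neighborhood, after shrinking $\delta$.
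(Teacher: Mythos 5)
Your proposal follows essentially the same route as the paper's proof: reduce to the two punctures $i$ and $e^{i\phi}$ by the symmetries of Lemma \ref{lem:Karcher-Scherk_symmetries}, extract the leading linear behavior of $v(w)$ near each puncture, compare $\log|w-w_0|$ with the appropriate linear combination of $X,Y$ via the explicit logarithmic terms in \eqref{eqn:original_parametrisation}, and finish by using that inverse stereographic projection has bounded Lipschitz constant on $\{|v|\le 1\}$ (the paper invokes Lemma \ref{lem:gaussmap} for this; your ``local diffeomorphism near the equator after shrinking $\delta$'' accomplishes the same). One bookkeeping slip worth flagging, since you explicitly listed this as the anticipated obstacle: from \eqref{eqn:original_parametrisation} the coefficient of $\log|w-e^{i\phi}|$ in $X$ is $-\sin\vartheta$ and in $Y$ is $+\cos\vartheta$ (you wrote the opposite signs); with these corrected the combination $-\sin\vartheta\,X+\cos\vartheta\,Y=\log|w-e^{i\phi}|+O(1)$ follows directly and yields the exponent you already stated in the conclusion.
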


\begin{proof}
We recall that the Enneper--Weierstrass datum $v\colon\Omega_\phi\to\C$ given by $v(w)=(w^2+r)/(1+rw^2)$ in \eqref{eqn:Karcher-data} is the stereographic projection of the Gauss map $N$. 
By Lemma \ref{lem:gaussmap}, the image of $N$ is contained in the southern hemishpere, where the distance between any pair of points is comparable to the distance of their image under stereographic projection. 
Therefore, it suffices to prove exponential convergence of $v(w)$ along the ends. 
Moreover, as in the proof of Lemma \ref{lem:btow_theta}, it suffices to consider the limits $w\to i$ and $w\to e^{i\phi}$ due to the symmetries listed in Lemma \ref{lem:Karcher-Scherk_symmetries}. 

As $w\to i$ we have $v(w)\to -1$ and $Y(w)\to-\infty$. 
Using formulae \eqref{eqn:Karcher-data} and \eqref{eqn:original_parametrisation}, we compute 
\begin{align*}
v(w)+1&=\frac{(w^2+1)(1+r)}{1+r w^2},   
&
e^{-Y(w)}&=\frac{\abs{w+i}}{\abs{w-i}}
\left(\frac{\abs{w+e^{i\phi}}\abs{w-e^{-i\phi}}}{\abs{w-e^{i\phi}}\abs{w+e^{-i\phi}}}\right)^{\cos\vartheta}  
\end{align*}
where $r\in\interval{-1,1}$ was defined in \eqref{eqn:20211219-r}, and obtain  
\begin{align*}
\abs{v(w)+1}e^{-Y(w)}
&=\frac{\abs{1+r}}{\abs{1+rw^2}} \abs{w+i}^2
\left(\frac{\abs{w+e^{i\phi}}\abs{w-e^{-i\phi}}}{\abs{w-e^{i\phi}}\abs{w+e^{-i\phi}}}\right)^{\cos\vartheta}			
\end{align*}
which converges to $4(1+r)/(1-r)$ as $w\to i$. 
In particular, there exist constants $C,\delta>0$ such that 
$\abs{v(w)+1}e^{-Y(w)}\leq C$ whenever $\abs{w-i}<\delta$ and the desired exponential rate of convergence follows. 

As $w\to e^{i\phi}$ we have $v(w)\to(e^{2i\phi}+r)/(1+e^{2i\phi}r)=\vcentcolon\zeta$ and compute 
\begin{align*}
v(w)-\zeta&=\frac{w^2+r-(1+rw^2)\zeta}{1+rw^2}
=\frac{(1-r\zeta)w^2-(\zeta-r)}{1+rw^2}
=\frac{(1-r\zeta)(w^2-e^{2i\phi})}{1+rw^2}
\end{align*}
where we used that $(\zeta-r)=e^{2i\phi}(1-r^2)/(1+e^{2i\phi}r)=e^{2i\phi}(1-r\zeta)$. 
Moreover, 
\begin{align*}
&\sin(\vartheta)X(w)-\cos(\vartheta)Y(w)
\\
&=\sin^2(\vartheta)\log \frac{\abs{w+e^{i\phi}}\abs{w+e^{-i\phi}}}{\abs{w-e^{i\phi}}\abs{w-e^{-i\phi}}}
-\cos(\vartheta)\log \frac{\abs{w-i}}{\abs{w+i}}
-\cos^2(\vartheta)
\log\frac{\abs{w-e^{i\phi}}\abs{w+e^{-i\phi}}}{\abs{w+e^{i\phi}}\abs{w-e^{-i\phi}}}
\\
&=\left(\log\frac{\abs{w+e^{i\phi}}}{\abs{w-e^{i\phi}}}
+\bigl(\sin^2(\vartheta)-\cos^2(\vartheta)\bigr)\log
\frac{\abs{w+e^{-i\phi}}}{\abs{w-e^{-i\phi}}}
-\cos(\vartheta)\log \frac{\abs{w-i}}{\abs{w+i}}\right)
\to\infty  
\end{align*}
as $w\to e^{i\phi}$. 
Using $\sin^2(\vartheta)-\cos^2(\vartheta)=-\cos(2\vartheta)$, we obtain 
\begin{align*}
\abs{v(w)-\zeta}e^{\sin(\vartheta)X(w)-\cos(\vartheta)Y(w)}
&=
\frac{\abs{1-r\zeta}}{\abs{1+rw^2}}
\abs{w+e^{i\phi}}^2 
\left(\frac{\abs{w-e^{-i\phi}}}{\abs{w+e^{-i\phi}}}\right)^{\cos(2\vartheta)}
\left(\frac{\abs{w+i}}{\abs{w-i}}\right)^{\cos(\vartheta)}
\end{align*}
which again stays bounded as $w\to e^{i\phi}$ such that the claimed exponential rate of convergence follows. 
\end{proof}

It turns out that, in order to reconcile the notation in this appendix (which is consistent with \cite{KarcherScherk}) with that employed in the core of the paper (which is much more convenient for our specific purposes), we will have to switch $X$ and $Y$ and so rather consider the map 
\begin{equation}
\label{standardtowerparamPRELIM}
\begin{aligned}
F\colon \Omega_{\phi}
&\to\R^3 \\
w&\mapsto (Y,X,Z)(w).
\end{aligned}
\end{equation}
(To avoid ambiguities, we iterate that the standard coordinates in $\R^3$ are still denoted by $x,y,z$; here we simply agree that the function $Y$ is now placed in the slot of the $x$-coordinate.)

That said, the following propositon ensures that one can actually extend the domain of the parametrization defined in \eqref{eqn:original_parametrisation}--\eqref{eqn:rescaled_parametrization} to the Riemann sphere with six punctures, provided we take the quotient in the target with respect to vertical translations of length an integer multiple of $2\pi$.

\begin{proposition}\label{prop:towerEWsummary}
There exists a well-defined map
\begin{equation}
\label{standardtowerparam}
\begin{aligned}
G\colon(\C \cup \{\infty\})\setminus\{\pm i, \pm e^{i\phi}, \pm e^{-i\phi}\}
&\to\R^3/\sk{\trans^{\axis{z}}_{2\pi}} \\
w&\mapsto (Y,X,Z)(w)
\end{aligned}
\end{equation}
that extends the map $F$ in \eqref{standardtowerparamPRELIM} once we quotient by $\trans^{\axis{z}}_{2\pi}$ in the target and is an embedding, whose image we shall denote by $\widetilde{\Gamma}_{\phi}$.  Moreover,  
\begin{enumerate}[label={\normalfont(\roman*)}]
\item\label{prop:towerEWsummary-i} 
$\widetilde{\Gamma}_{\phi}$ is a complete $\refl_{\{x=0\}}$-invariant minimal surface in the quotient $\R^3/\sk{\trans^{\axis{z}}_{2\pi}}$; 
\item\label{prop:towerEWsummary-ii} 
$\widetilde{\Gamma}_{\phi}$ has genus $0$ and exactly six ends with two ends asymptotic to the cylinder $\{y=0\}$ 
and the remaining four ends asymptotic to $\{y=\pm b^{\mathrm{tow}}_\vartheta + x \tan \vartheta\}$ and $\{y=\pm b^{\mathrm{tow}}_\vartheta - x \tan \vartheta\}$.
\item\label{prop:towerEWsummary-iii} 
$\widetilde{\Gamma}_{\phi}$ meets $\{x=0\} \cap \{z=\pi/2\}$ at a single point, which lies in $\{y>0\}$ 
(cf. Figure \ref{fig:halftower}).
\end{enumerate}
\end{proposition}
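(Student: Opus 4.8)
The plan is to assemble Proposition \ref{prop:towerEWsummary} from the explicit work already carried out in Lemmata \ref{lem:Karcher-Scherk_symmetries}--\ref{lem:exp_asy}, with the only genuinely new ingredient being the extension of the parametrization across the six punctures once one passes to the quotient by $\trans^{\axis{z}}_{2\pi}$. First I would record that $F$ in \eqref{standardtowerparamPRELIM} is a conformal minimal immersion on $\Omega_\phi$ by the general Enneper--Weierstrass theory (path-independence of the integrals in \eqref{eqn:complex_integrals} is automatic since $\Omega_\phi$ is simply connected and the integrands are holomorphic there), and that by Lemma \ref{lem:graphicality} it is in fact an embedding. The multivaluedness of the extension to the full sphere is confined to the $z$-component: indeed $X$ and $Y$ in \eqref{eqn:original_parametrisation} are single-valued real-analytic functions on $(\C\cup\{\infty\})\setminus\{\pm i,\pm e^{i\phi},\pm e^{-i\phi}\}$ (each is a finite $\R$-linear combination of $\log$ of moduli of M\"obius factors), while $z$ is a sum of $\Arg$-terms whose total monodromy around each puncture is an integer multiple of $2\pi$ — this is exactly the statement that $dh$ in \eqref{eqn:Karcher-data} has purely imaginary periods with the $z$-periods lying in $2\pi\Z$, which can be checked by residue computation at $\pm i,\pm e^{\pm i\phi}$ (the residues of $dh$ at these simple poles are purely imaginary, and one verifies the normalization so that $\oint 2\,dh \in 2\pi i \Z$). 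Hence $G$ in \eqref{standardtowerparam} is well-defined into $\R^3/\sk{\trans^{\axis{z}}_{2\pi}}$ and manifestly extends $F$ after the quotient.

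Next I would prove injectivity of $G$, i.e.\ embeddedness of $\widetilde\Gamma_\phi$. The map $F$ itself is an embedding of $\Omega_\phi$ by Lemma \ref{lem:graphicality}; in the quotient one must rule out that two points of $\Omega_\phi$ (or, after the extension, of the punctured sphere) with the same $(X,Y)$ but $z$-values differing by a nonzero multiple of $2\pi$ get identified. But Lemma \ref{lem:vertical_height} gives $\abs{Z}\le\pi/2 < \pi$ on all of $\Omega_\phi$, so no two points of $\Omega_\phi$ can be $\trans^{\axis{z}}_{2\pi}$-related; and the points outside $\overline{\Omega_\phi}$, i.e.\ $\abs{w}>1$, are handled by Lemma \ref{lem:Karcher-Scherk_symmetries}\eqref{eqn:symmetry_antipodal}: $w\mapsto -1/\overline w$ (inversion in the unit circle composed with conjugation), or rather the relevant symmetry relating $\abs{w}<1$ and $\abs{w}>1$, together with \eqref{eqn:symmetry_under_complex_conj}, shows that the exterior disc maps to the mirror image of the interior disc across $\{x=0\}$ and the two pieces meet only along $\{\abs w =1\}$, which maps into $\{z=\pm\pi/2\}$; combining with graphicality (Lemma \ref{lem:graphicality}) over $\{z=0\}$ and the height bound again gives injectivity. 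Completeness and minimality in \ref{prop:towerEWsummary-i} are then immediate: minimality because $G$ is conformal and harmonic (inherited from $F$), completeness because the induced metric blows up at the six punctures (the surface is proper, its ends being unbounded as shown in Lemma \ref{lem:btow_theta}), and $\refl_{\{x=0\}}$-invariance is precisely \eqref{eqn:symmetry_under_complex_conj} rewritten in the swapped coordinates of \eqref{standardtowerparamPRELIM}.

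For \ref{prop:towerEWsummary-ii}, the genus-$0$ statement is clear since the quotient surface is conformally the sphere with six punctures; the count and location of the ends is exactly Lemma \ref{lem:btow_theta}, translated through the coordinate swap $(X,Y)\leftrightarrow(Y,X)$ so that the asymptotic planes $\{x=0\}$ and $\{x=\pm b^{\mathrm{tow}}_\vartheta \pm y\tan\vartheta\}$ of Lemma \ref{lem:btow_theta} become the cylinder $\{y=0\}$ and the tilted planes $\{y=\pm b^{\mathrm{tow}}_\vartheta \pm x\tan\vartheta\}$ asserted here (the "cylinder" terminology being appropriate because we are now in $\R^3/\sk{\trans^{\axis{z}}_{2\pi}}$). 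One should note that the exponential rate-one convergence to these planes, needed for the "asymptotically planar" language, is exactly Lemma \ref{lem:exp_asy} (the decay of $N$ forces graphical decay of the ends over the asymptotic planes, made quantitative later via Lemma \ref{lem:CylMSE}). Finally \ref{prop:towerEWsummary-iii}: the slice $\{x=0\}\cap\{z=\pi/2\}$ corresponds, via $G$ and the identifications \eqref{images_under_G}, to $\{\Im w = 0\}\cap\{\abs w = 1\}$ within the closure of a fundamental domain, which is the single boundary point $w=1$; and $Y(1)>0$ follows by plugging $w=1$ into the second line of \eqref{eqn:original_parametrisation} (all three logarithmic terms there evaluate to expressions of definite sign at $w=1$, giving a positive total), which in the swapped coordinates says the point lies in $\{y>0\}$.

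The main obstacle I expect is the period computation underlying well-definedness of $G$ — i.e.\ verifying that the real periods of the $x,y$ components vanish and that the $z$-periods are precisely in $2\pi\Z$ (not some incommensurate lattice), which is what makes the quotient by $\trans^{\axis{z}}_{2\pi}$ exactly the right identification. This is a residue calculation at the six simple poles of $dh$ and of $(v^{-1}\mp v)dh$, and while elementary it requires care with the branch choices of $\Arg$ in \eqref{eqn:original_parametrisation}; everything else is either already proved in the lemmas or a routine consequence of the Enneper--Weierstrass formalism combined with the height bound $\abs Z \le \pi/2$.
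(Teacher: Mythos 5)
Your overall structure matches the paper's proof — well-definedness of $G$, injectivity, the conformal-minimal and completeness/genus observations, the end count from Lemma \ref{lem:btow_theta}, and item \ref{prop:towerEWsummary-iii} — and your alternative route to well-definedness via residues (observing that the $z$-periods of $dh$ lie in $2\pi\Z$) is valid, if slightly heavier than the paper's direct observation that each $\Arg$ in \eqref{eqn:original_parametrisation} has monodromy in $2\pi\Z$ with coefficient $\pm 1$. However, there are two concrete errors.

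First, the embeddedness step. You identify "the relevant symmetry relating $\abs{w}<1$ and $\abs{w}>1$" but then misdescribe it: you claim the exterior disc maps to ``the mirror image of the interior disc across $\{x=0\}$.'' That is wrong, and if it were true it would not give injectivity — a surface's mirror image across $\{x=0\}$ need not be disjoint from the surface off $\{x=0\}$. The paper's key identity is that the \emph{inversion} $w\mapsto 1/\overline{w}$ preserves both $X$ and $Y$ and shifts $Z$ by $\pi$ modulo $2\pi$: one checks directly from \eqref{eqn:original_parametrisation} that $\abs{1/\overline{w}\pm e^{i\alpha}}=\abs{w\pm e^{i\alpha}}/\abs{w}$, so the logarithmic expressions for $X$ and $Y$ are unchanged, while the $\Arg$ expression for $Z$ gains exactly $\pi$. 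Thus the image of the exterior disc is the $\trans^{\axis{z}}_\pi$-translate of the image of the interior disc, and since the interior image lies in $\{\abs{z}\le \pi/2\}$ (Lemma \ref{lem:vertical_height}), the two can only meet where $\abs{z}=\pi/2$, i.e.\ on $\abs{w}=1$. That — together with Lemma \ref{lem:graphicality} applied separately to each half — is what makes $G$ injective. Your citation of \eqref{eqn:symmetry_antipodal} (the map $w\mapsto -w$) is not the relevant symmetry here, and the ``$-1/\overline{w}$'' you name is the antipodal map of $\Sp^2$, not the required inversion $1/\overline{w}$.

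Second, item \ref{prop:towerEWsummary-iii}. After the coordinate swap $G(w)=(Y,X,Z)(w)$, the $y$-coordinate of $G(1)$ is $X(1)$, not $Y(1)$. You assert $Y(1)>0$ by examining the second line of \eqref{eqn:original_parametrisation}, but this is false: at $w=1$ we have $\abs{1-i}=\abs{1+i}$ and $\abs{1-e^{\pm i\phi}}\abs{1+e^{\mp i\phi}}=\abs{1+e^{\pm i\phi}}\abs{1-e^{\mp i\phi}}$, so $Y(1)=0$ — which is consistent with $G(1)$ lying on $\{x=0\}$, as it must by \eqref{images_under_G}. What is needed is $X(1)>0$, and this follows from the monotonicity \eqref{eqn:X_increasing} along $[0,1]$ (as the paper notes), which gives $X(1)>X(0)=0$.
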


The image of $G$ corresponds to a full vertical period of the Karcher--Scherk tower like the gray region of the surface visualized on the right in Figure \ref{fig:Karcher-Scherk-Weierstrass} modulo an identification between the top and the bottom boundary curves. Note that, since we take the quotient with respect to vertical translations, the ends of the image of $G$ are asymptotic to cylinders rather than vertical planes. For later reference (cf. Appendix \ref{app:CylAnalysis} below) it is indeed convenient to also consider the corresponding six half cylinders, namely the quotients of the sets $\{x\geq 0\}$, $\{x\leq 0\}$,
$\{y=b^{\mathrm{tow}}_\vartheta + x \tan \vartheta, x\geq 0\}$, $\{y=-b^{\mathrm{tow}}_\vartheta + x \tan \vartheta, x\leq 0\}$, $\{y= b^{\mathrm{tow}}_\vartheta - x \tan \vartheta, x\leq 0\}$ and $\{y= -b^{\mathrm{tow}}_\vartheta - x \tan \vartheta, x\geq 0\}$. We will generically employ the letter $\Lambda$ to denote any of these.

Lastly, a clarification: both in the previous statement and in the following proof we will still employ $x,y,z$ to denote the \emph{quotiented} coordinates in $\R^3/\sk{\trans^{\axis{z}}_{2\pi}}$.

\begin{proof}
Lemma \ref{lem:vertical_height} and Lemma \ref{lem:gaussmap} imply that the image of $F\colon\Omega_\phi\to\R^3$ is contained in $\{\abs{z}\leq\pi/2\}$ and meets the horizontal planes $\{z=\pi/2\}$ and $\{z=-\pi/2\}$ orthogonally along the boundary pieces of $\Omega_\phi$. 
In particular, these are planes of symmetry: 
we may reflect the image of $F\colon\Omega_\phi\to\R^3$ across the plane $\{z=\pi/2\}$ and translate the resulting surface periodically by $2\pi$ in the vertical direction to obtain a complete, embedded, singly periodic minimal surface henceforth denoted $\Gamma_{\phi}$.  

The parametrization \eqref{eqn:original_parametrisation}--\eqref{eqn:rescaled_parametrization} is manifestly well-defined on the domain of $G$ with branch cuts for each $\Arg(\cdot)$ appearing in the expression for $Z(w)$. 
The branch cuts can then be removed at the cost of taking the quotient, i.\,e. this can be done modulo $2\pi$ in the third component. 
Hence, taking the quotient with respect to vertical translations $\trans^{\axis{z}}_{2\pi}$ in the target, $G$ is well-defined. 

Furthermore,
with the aid of the elementary identities
\begin{equation*}
\log
  \frac{\abs{-\overline{w}^{-1}+\zeta}}
    {\abs{-\overline{w}^{-1}-\zeta}}
=
-\log \frac{\abs{w+\zeta}}{\abs{w-\zeta}},
\quad
\Arg \frac{\zeta+\overline{w}^{-1}}{\zeta-\overline{w}^{-1}}
  -\pi
\equiv
\Arg \frac{\zeta-w}{\zeta+w} \bmod 2\pi
\quad
(\abs{\zeta}=1),
\end{equation*}
it is readily verified directly from
\eqref{eqn:original_parametrisation}
that the first two coordinates of $G(-1/\overline{w})$
coincide with the corresponding ones of $-G(w)$,
while the third one differs from that of $G(w)$
by addition of $\pi$ (modulo $2\pi$, for we are working in the quotient).

We conclude from Lemma \ref{lem:graphicality} (and Lemma \ref{lem:vertical_height}) that $G$ is actually a (conformal) diffeomorphism onto its image.
In particular, $\widetilde{\Gamma}_{\phi}$ has genus $0$ because it is parametrized over the punctured Riemann sphere. 
By the unique continuation property for minimal surfaces, $\widetilde{\Gamma}_{\phi}$ coincides with $ \Gamma_{\phi}/\sk{\trans^{\axis{z}}_{2\pi}}$. 
Hence, up to switching $x$ and $y$, it has the same asymptotic behavior as described in Lemma \ref{lem:btow_theta}. 

It follows from the embeddedness of $\widetilde{\Gamma}_{\phi}$ together with the symmetry \eqref{eqn:symmetry_under_complex_conj} and Lemma \ref{lem:vertical_height} that the inverse image under $G$ of $\{x=0\} \cap \{z=\pi/2\}$ is just the singleton set $\{1\}$. 
We recall that \eqref{eqn:X_increasing} implies $X(1)>X(0)=0$ and hence $G(1)\in\{y>0\}$.  
\end{proof}

With slight yet convenient abuse of notation, in the core of the present paper (in particular in Section \ref{sec:Initial}) 
we will \emph{a posteriori} modify the target of the map $G$ above to $\widetilde{\Gamma}_{\phi}=\towquot_{\vartheta}$ (where the link between the angles $\phi$ and $\vartheta$ is given by equation \eqref{eqn:theta_phi}) so that it upgrades to a conformal diffeomorphism.

\section{Analysis on asymptotically cylindrical surfaces}\label{app:CylAnalysis}

We shall start here with the discussion of the most basic elliptic boundary value problem on a half cylinder. 
From \eqref{towquotient} we recall that $\towquot_{\vartheta}$ denotes the quotient of the Karcher--Scherk tower with respect to vertical translations (of length an integer multiple of $2\pi$). 
As we are about to see, the ends of $\towquot_{\vartheta}$ approach the corresponding asymptotic cylinders at \emph{exponential} rate, and thus this model case is arbitrarily well approximated when we deal with the actual analysis on the (bent) tower.

\begin{lemma}[Poisson problem on the half cylinder with Dirichlet data]
\label{lem:cylsol}
Let $\alpha,\beta \in \interval{0,1}$ and let $\cyl$ be the upper unit cylinder, equipped with standard coordinates $(\theta,\rho)\in\Sp^1\times\Interval{0,\infty}$.   
\begin{enumerate}[label={\normalfont(\roman*)}]
\item\label{lem:cylsol-i} 
For any data $E \in C^{0,\alpha}(\cyl,e^{-\beta \rho })$
and $f \in C^{2,\alpha}(\partial \cyl)$
there is a unique bounded function $u$ on $\cyl$
such that $\Delta_\cyl u = E$ and $u|_{\partial \cyl}=f$;
moreover there exists a unique real number $\mu$
such that
\begin{equation}
\abs{\mu}+\nm{u-\mu: C^{2,\alpha}(\cyl,e^{-\beta \rho })}
\leq
C\left(\nm{E: C^{0,\alpha}(\cyl,e^{-\beta \rho })}
       +\nm{f: C^{2,\alpha}(\partial \cyl)}
\right)
\end{equation}
for some constant $C>0$ which is independent of the data $E$ and $f$. 
\item\label{lem:cylsol-ii}
In particular there exists a unique bounded linear map
$P_\cyl\colon C^{0,\alpha}(\cyl,e^{-\beta \rho }) \to C^{2,\alpha}(\cyl,e^{-\beta \rho })$
such that for all $E \in C^{0,\alpha}(\cyl,e^{-\beta \rho })$ we have $\Delta_\cyl (P_\cyl E)=E$ and $(P_\cyl E)|_{\partial \cyl}$ is a constant depending on $E$.
\end{enumerate}
\end{lemma}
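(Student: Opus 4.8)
The plan is to reduce the problem to Fourier analysis on the circle $\Sp^1$, decomposing the data and solution into Fourier modes in the angular variable $\theta$. Writing $u(\theta,\rho) = \sum_{n \in \Z} u_n(\rho) e^{in\theta}$ and similarly for $E$ and $f$, the equation $\Delta_\cyl u = E$ with $\Delta_\cyl = \partial_\theta^2 + \partial_\rho^2$ becomes the family of ODEs $u_n'' - n^2 u_n = E_n$ on $\Interval{0,\infty}$ with boundary condition $u_n(0) = f_n$ and the requirement that $u_n$ be bounded. First I would solve each of these scalar ODEs explicitly. For $n=0$ the bounded solutions of $u_0'' = E_0$ are $u_0(\rho) = \mu + \int_\rho^\infty (\rho - t) E_0(t)\, dt$ adjusted to match $f_0$ at $\rho=0$; since $E_0$ decays like $e^{-\beta\rho}$, the integral converges and decays at the same rate, and the constant $\mu$ is forced to be $f_0 - \int_0^\infty (-t) E_0(t)\,dt$ (up to sign conventions) — this is precisely the $\mu$ in the statement. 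For $n\neq 0$ the bounded Green's function for $\partial_\rho^2 - n^2$ with Dirichlet condition at $\rho=0$ is explicit (built from $e^{-|n|\rho}$ and $\sinh(|n|\rho)$), giving a solution $u_n$ that decays exponentially; here there is no indeterminate constant.

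Next I would assemble these modewise solutions and prove the claimed weighted Schauder estimate. The clean way is to split $u = \mu + v$ where $v = u - \mu$ is exponentially decaying, so that the weighted norm $\nm{v : C^{2,\alpha}(\cyl, e^{-\beta\rho})}$ is what must be controlled. One obtains interior and boundary elliptic estimates for $\Delta_\cyl$ on dyadic annular pieces $\cyl \cap \{2^k \leq \rho \leq 2^{k+2}\}$ (and a fixed collar near $\partial\cyl$), each of which is, after a translation in $\rho$, a fixed compact piece of the full cylinder, so the standard Schauder constant is uniform in $k$. Multiplying the $k$-th estimate by $e^{\beta 2^k}$ and summing — using that $E$ and hence $u$ already decay like $e^{-\beta\rho}$ away from the origin, as extracted from the ODE solutions above — produces the weighted bound. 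Uniqueness of the bounded solution is immediate: a bounded harmonic function on $\cyl$ vanishing on $\partial\cyl$ must, by the maximum principle applied to $\pm u - \varepsilon\rho$ on $\cyl \cap \{\rho \leq R\}$ and letting $R\to\infty$, be identically zero; uniqueness of $\mu$ then follows since two admissible $\mu$'s would differ by a bounded, exponentially-decaying-to-a-constant function, forcing the constant to be zero.

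Finally, part \ref{lem:cylsol-ii} is a formal consequence: define $P_\cyl E$ to be the unique bounded solution $u$ from part \ref{lem:cylsol-i} with homogeneous boundary data $f=0$. Linearity of $P_\cyl$ follows from uniqueness, boundedness $C^{0,\alpha}(\cyl, e^{-\beta\rho}) \to C^{2,\alpha}(\cyl, e^{-\beta\rho})$ follows from the estimate once one checks that $u$ itself (not just $u-\mu$) lies in the weighted space — which it does because with $f=0$ the boundary condition $u_0(0)=0$ together with the decay of $E_0$ forces $u_0$, and hence $\mu$, to be controlled by $\nm{E : C^{0,\alpha}(\cyl,e^{-\beta\rho})}$, and $u - \mu$ and the constant $\mu$ are separately in (or contribute boundedly to) the weighted norm when $\mu$ is so bounded; the claim that $u|_{\partial\cyl}$ is a constant is just the statement $f=0$.

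The main obstacle I anticipate is bookkeeping rather than conceptual: carefully tracking the constant $\mu$ through the $n=0$ mode and verifying that with inhomogeneous $f$ the function $u$ decomposes as $\mu$ plus something in the weighted $C^{2,\alpha}$ space with norm controlled linearly by the data — in particular making sure the weighted estimate on the dyadic pieces can be summed without loss, which relies essentially on $\beta < 1$ (so that the $n=\pm1$ modes, decaying like $e^{-\rho}$, beat the weight $e^{\beta\rho}$) and on $\beta > 0$ (so the geometric series in $k$ converges). These two endpoint restrictions are exactly the hypothesis $\beta \in \interval{0,1}$, and I would flag where each is used.
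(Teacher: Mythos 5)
Your proof takes the same broad route as the paper's: Fourier decomposition in $\theta$, reduction to the scalar ODEs $u_n'' - n^2 u_n = E_n$, explicit treatment of the $n=0$ mode to isolate $\mu$, and translated Schauder estimates to bootstrap a weighted $C^0$ bound to weighted $C^{2,\alpha}$. Your uniqueness argument via the barrier $\pm u-\varepsilon\rho$ and $R\to\infty$ is a legitimate alternative to the paper's odd reflection plus Liouville.

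The step that is not actually carried out is the \emph{assembly} of the modewise solutions into the weighted $C^0$ bound on $u-\mu$, which is the crucial input to the Schauder step. You write that the decay of $u$ ``is extracted from the ODE solutions above,'' but the ODE analysis controls each $|u_n(\rho)|$ separately, and what you need is $\sum_n |u_n(\rho)| \leq C e^{-\beta\rho}$. That sum does converge: the Green's function for $\partial_\rho^2-n^2$ has amplitude $O(|n|^{-1})$ and its convolution with $e^{-\beta s}$ produces another factor $O((|n|-\beta)^{-1})$, while $\sum_n|f_n|<\infty$ by the $C^{2}$ regularity of $f$ — but none of this is stated, and the implicit constant carries a $(1-\beta)^{-1}$ blowup that is exactly where $\beta<1$ enters and should be flagged. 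The paper avoids the infinite-mode summation entirely by a low/high split at a fixed cutoff $N$: the finitely many modes $|n|<N$ are treated individually (explicit formula at $n=0$, comparison barrier for $1\leq|n|<N$, as you propose), while the entire tail $u_{\mathrm{high}}$ is bounded in a single step using the weighted Schauder estimate \eqref{weightedschauderoncyl} reinforced by the Poincar\'e-type inequality \eqref{highfrequencyestimate}, which absorbs the $C^0$ term precisely because $u_{\mathrm{high}}$ is slicewise $L^2$-orthogonal to low frequencies. Both approaches work, but yours leaves the summability argument as a genuine gap rather than bookkeeping.

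A secondary correction: your reason for needing $\beta>0$ (``so the geometric series in $k$ converges'') is off — the weighted norm is a supremum over annuli, not a sum. The actual role of $\beta>0$ is to force $E_0$ to decay, without which a second antiderivative of $E_0$ has no bounded branch and the $n=0$ problem has no bounded solution and no well-defined $\mu$.
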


Before we proceed with the proofs, let us convene on the lighter notation to be employed (cf. Section~\ref{sec:Nota}) and recall some basic facts on weighted H\"older spaces on manifolds with cylindrical ends. 
Recalling definition \eqref{eqn:definition_weighted_norm} we set $\nm{{}\cdot{}}_{k,\alpha,\beta}\vcentcolon=\nm{{}\cdot{}: C^{k,\alpha}(\cyl,e^{-\beta \rho })}$
and $C^{k,\alpha,\beta}(\cyl)\vcentcolon=C^{k,\alpha}(\cyl,e^{-\beta \rho })$. 
For any $t_2\geq t_1\geq 0$ we shall further define the sets 
\begin{align*}
\cyl(t_1,t_2)\vcentcolon=\cyl \cap \{t_1 \leq \rho  \leq t_2\}.
\end{align*}
The weighted Hölder spaces and norms are well-defined for any real $\beta$. 
Given any $\beta_1<\beta_2$ and $\alpha_1,\alpha_2\in\interval{0,1}$ as well as nonnegative integers $k_1 \leq k_2$ such that $k_1+\alpha_1<k_2+\alpha_2$, we have compactness of the embedding 
$C^{k_2,\alpha_2,\beta_2}(\cyl)\hookrightarrow C^{k_1,\alpha_1,\beta_1}(\cyl)$.
Indeed, suppose $\{v_n\}$ is a bounded sequence in $C^{k_2,\alpha_2,\beta_2}(\cyl)$. 
Then by compactness of the embedding $C^{k_2,\alpha_2}(\cyl(0,T)) \hookrightarrow C^{k_1,\alpha_1}(\cyl(0,T))$
for each $T>0$, a standard diagonal argument gives that
$\{v_n\}$ has a subsequence $\{w_n\}$
such that $\{w_n|_{\cyl(0,N)}\}$
converges in $C^{k_1,\alpha_1}(\cyl(0,N))$
for every integer $N \geq 1$.
On the other hand, clearly
\begin{equation}
\nm{w_n|_{\cyl\setminus \cyl(0,N)}}_{k_1,\alpha_1,\beta_1}
\leq
e^{N(\beta_1-\beta_2)}
\sup \{\nm{v_j}_{k_2,\alpha_2,\beta_2}\}
\end{equation}
for all $n$.
Together these facts imply
that $\{w_n\}$ is Cauchy in $C^{k_1,\alpha_1,\beta_1}(\cyl)$, and thus converges in that Banach space.

For any $\alpha \in \interval{0,1}$, $\beta \in \R$ and $u \in C^{2,\alpha}_{\mathrm{loc}}(\cyl)$ the weighted Schauder estimate 
\begin{equation}
\label{weightedschauderoncyl}
\nm{u}_{2,\alpha,\beta}
\leq C\bigl(\nm{u}_{0,0,\beta}
  +\nm{\Delta_\cyl u}_{0,\alpha,\beta}
  +\nm{u|_{\partial \cyl}}_{2,\alpha}
\bigr)
\end{equation}
holds for some constant $C>0$ depending on $\alpha,\beta$ but not on $u$.
Indeed, by standard Schauder estimates there exists a constant $K=K(\alpha,\beta)>0$ such that for any $u \in C^{2,\alpha}_{\mathrm{loc}}(\cyl)$ we have 
\begin{align*}
\nm{u|_{\cyl(t+1,t+3)}}_{2,\alpha}
&\leq K\left(\nm{u|_{\cyl(t,t+4)}}_0
    +\nm{\Delta_\cyl u|_{\cyl(t,t+4)}}_{0,\alpha}
  \right)
  \mbox{ for all } t \geq 0,
  \\
\nm{u|_{\cyl(0,3)}}_{2,\alpha}
&\leq K\left(\nm{u|_{\cyl(0,4)}}_0
    +\nm{\Delta_\cyl u|_{\cyl(0,4)}}_{0,\alpha}
    +\nm{u|_{\partial \cyl}}_{2,\alpha}
  \right),
\end{align*}
whence we obtain \eqref{weightedschauderoncyl} with $C=Ke^{3\beta}>0$.

\begin{proof}[Proof of Lemma \ref{lem:cylsol}]
We first verify that \ref{lem:cylsol-i} implies \ref{lem:cylsol-ii}.
Given $E$ we set $f\vcentcolon=0$, and take $u$ and $\mu$ as guaranteed by \ref{lem:cylsol-i}. 
Then $P_\cyl E\vcentcolon=u-\mu$ has the desired properties. 
If $P_1$ and $P_2$ are two such operators, then for any datum $E$ the functions $P_1E-(P_1E)|_{\partial \cyl}$ and $P_2E-(P_2E)|_{\partial \cyl}$ both solve the Poisson problem $\Delta_{\Lambda} u=E$ with trivial boundary data,
so the uniqueness claim in \ref{lem:cylsol-i} implies that
\[
P_1E-P_2 E= (P_1E)|_{\partial \cyl}-(P_2E)|_{\partial \cyl}
\]
but the exponential decay of the left-hand side forces $(P_1E)|_{\partial \cyl}=(P_2E)|_{\partial \cyl}$ hence, in turn, $P_1E=P_2 E$. Thus
in the end we conclude that
$P_1=P_2$.

We then turn our attention to \ref{lem:cylsol-i} for the remainder of the proof and start with the uniqueness claim. 
If $u_1$ and $u_2$ are two bounded functions on $\cyl$ satisfying $u_1{}|_{\partial \cyl}=u_2{}|_{\partial \cyl}$ and $\Delta_\cyl u_1 = \Delta_\cyl u_2$,
then their difference $u\vcentcolon=u_1-u_2$ is a bounded harmonic function vanishing on $\partial \cyl$. 
Moreover, $u$ defines a bounded harmonic function on the upper half plane which is periodic in the horizontal direction and vanishes on the boundary. 
By the reflection principle for harmonic functions, we can then extend it by odd reflection to a bounded entire harmonic function on $\R^2$. 
Liouville's theorem then implies that this function must be constant. 
Since $u$ vanishes on a line, it follows that it vanishes everywhere, establishing the asserted uniqueness.

For existence with the claimed estimates we first reduce as follows to the case where the datum $E$ is compactly supported. 
Recalling the notation \eqref{eqn:definition_cutoff} for cutoff functions we set $E_n\vcentcolon=(\cutoff{n+1}{n} \circ \rho )E$ for each $n\in\N$, so that each $E_n$ is compactly supported, 
$E_n\to E$ in $C^0$ and $\nm{E_n}_{0,\alpha,\beta} \leq C\nm{E}_{0,\alpha,\beta}$
for a constant $C>0$ independent of $E$ and $n$. 
Assuming that the claims of the present lemma hold for compactly supported $E$, we then obtain a sequence of functions
$u_n$ on $\cyl$ and a sequence of reals $\mu_n$ solving $\Delta_\cyl u_n = E_n$ with $u_n|_{\partial \cyl}=f$ and satisfying the estimate
\[
\abs{\mu_n}+\nm{u_n-\mu_n}_{2,\alpha,\beta}
\leq  C\bigl(\nm{E}_{0,\alpha,\beta} + \nm{f}_{2,\alpha}\bigr).
\]
By compactness of the embedding $C^{2,\alpha,\beta}(\cyl) \times \R \hookrightarrow C^2(\cyl) \times \R$, 
the sequence $\{(u_n-\mu_n, \mu_n)\}$ has a converging subsequence (which we do not rename), namely 
$\mu_n \to \mu$ in $\R$ and $u_n-\mu_n \to v$ in $C^2$ for some $\mu \in \R$ and $v \in C^{2,\alpha,\beta}(\cyl)$.
Then, the pair $u\vcentcolon=v+\mu$ and $\mu$ satisfies all the claims in~\ref{lem:cylsol-i}.
Thus we may indeed assume that $E$ is compactly supported.
A conformal change transforms the given Poisson problem to a Poisson problem on the unit disc (with the same boundary data but new interior data which nevertheless are $C^{0,\alpha}$ on the disc), so a bounded solution $u \in C^{2,\alpha}_{\mathrm{loc}}(\cyl)$ exists.  

For the following estimates, we will make use of the inequality
\begin{equation}
\label{highfrequencyestimate}
\nm{u}_{C^0}
\leq
C\lambda_k^{-\frac{2}{2+\dim M}}\nm{du}_{C^0},
\end{equation}
which holds for any $C^1$ function $u$ on a closed Riemannian manifold $(M,g)$
provided $u$ is $L^2(g)$-orthogonal
to the direct sum of the eigenspaces corresponding to the first
$k-1$ eigenvalues of the Laplacian on $(M,g)$, henceforth denoted $V\vcentcolon=V_1\oplus V_2 \oplus \ldots\oplus V_{k-1}$
assuming $k \geq 2$;
here $\lambda_k$ is the $k$\textsuperscript{th} eigenvalue, and $C>0$ is a constant depending on just $M$ (its volume and curvature).
This inequality follows immediately from the variational characterization of the eigenvalues in concert with the bound 
$\nm{u}_{C^0}^{2+\dim M}\leq C\nm{u}_{L^2}^2\nm{du}_{C^0}^{\dim M}$,
which can itself be established by bounding below $\abs{u}$, in terms of $\nm{du}_{C^0}$  (assumed nonzero since $k \geq 2$),
on a neighborhood of a point where it is maximized, to get in turn a lower bound on $\nm{u}_{L^2}$.

Applying \eqref{highfrequencyestimate} with $M=\Sp^1$ (isometric to each cross section of $\cyl$) in \eqref{weightedschauderoncyl}
and taking $k$ sufficiently large in terms of the universal constants appearing in the two estimates, we obtain
\begin{equation}
\label{highfrequencyschauder}
\nm{u}_{2,\alpha,\beta}
\leq C\bigl(\nm{\Delta_\cyl u}_{0,\alpha,\beta}
  +\nm{u|_{\partial \cyl}}_{2,\alpha}\bigr)
\end{equation}
provided that $u|_{\{\rho =t\}}$ is $L^2(\Sp^1)$ orthogonal to the subspace $V$ for all $t \geq 0$.

Let $\{e_n\}_{n \in \Z}$ be an Hilbertian basis of $L^2(\Sp^1,\R)$, consisting of eigenfunctions of $\Delta_{\Sp^1}$ so that, in particular $\Delta_{\Sp^1}e_n=-n^2e_n$. 
For each integer $n$ and any bounded continuous function $v$ on $\cyl$ we define the function $v_n\colon\Interval{0,\infty}\to\R$ by
\begin{equation}
\label{fouriercoeffdef}
v_n(t)\vcentcolon=\int_{\{\rho =t\}} (e_n \circ \theta)v|_{\{\rho =t\}}.
\end{equation}
By the Parseval identity and the H\"older inequality on $\Sp^1$, we get
\begin{equation}\label{eq:Parseval}
\sum_{n\in\Z}\abs{v_n(t)}^2=\nm{v(t)}^2_{L^2}\leq 2\pi\nm{v(t)}^2_{L^\infty}
\end{equation}
where we have denoted by $v(t)$ the restriction of the function $v$ to the set $\{\rho =t\}$; in particular, if $\abs{v(t)}$ is bounded by a constant $C$ (or, respectively, by $Ce^{-\beta t}$) then, apart from a multiplicative factor $\sqrt{2\pi}$ the same bound holds true for $v_n(t)$, for any $n\in\Z$. 
We further note that one can derive a H\"older bound on each function $v_n$ given a corresponding bound on $v$. 
In particular, from \eqref{eq:Parseval} it follows at once that for any $n\in\Z$ there holds
\begin{equation}\label{eq:HolderParsF}
\nm{f_n(e_n\circ\theta)}_{2,\alpha}\leq C \nm{f}_{2,\alpha},
\end{equation}
with the Fourier coefficients $f_n$ of $f$ defined in the obvious way, 
and again directly from \eqref{eq:Parseval} together with the very definition \eqref{fouriercoeffdef} we further get
\begin{equation}\label{eq:HolderParsE}
\nm{(E_n\circ \rho )(e_n \circ \theta)}_{0,\alpha,\beta}\leq C\nm{E}_{0,\alpha,\beta}
\end{equation}
when considering the decomposition of the datum $E$ instead.
Furthermore, performing this spectral decomposition for the function $u$ we have that $u_n \in C^{2,\alpha}_{\mathrm{loc}}(\Interval{0,\infty})$ for any $n\in\Z$ and
\begin{equation}
\label{ODEsys}
\begin{cases}
\ddot{u}_n-n^2u_n=E_n,  & (\text{equivalently: }
\Delta_\cyl\bigl((u_n \circ \rho )(e_n \circ \theta)\bigr)
  =(E_n \circ \rho )(e_n \circ \theta),)  \\
u_n(0)=f_n, \\
u_n \mbox{ is bounded.}
\end{cases}
\end{equation}
For any function $v$ on the cylinder $\cyl$, later to be specified to be $u$ or $E$, and the usual boundary datum $f$ we introduce the decompositions $v=v_{\mathrm{low}}+v_{\mathrm{high}}$ and $f=f_{\mathrm{low}}+f_{\mathrm{high}}$, where 
\begin{align*}
v_{\mathrm{low}}(\theta,\rho)&\vcentcolon=\sum_{\abs{n}<N} (v_n \circ \rho )(e_n \circ \theta), 
&
f_{\mathrm{low}}(\theta)&\vcentcolon=\sum_{\abs{n}<N} f_n(e_n \circ \theta),
\end{align*}
where $v_{\mathrm{high}}$ and $f_{\mathrm{high}}$ are, in turn, defined by these four equalities and $N$ has been chosen large enough
(in terms of universal constants only, independently of $u$) 
so that estimate \eqref{highfrequencyschauder} holds with $u_{\mathrm{high}}$ in place of $u$.
Then, by \eqref{ODEsys}, we get at once that $\Delta_\cyl u_{\mathrm{low}}=E_{\mathrm{low}}$ and $u_{\mathrm{low}}|_{\partial \cyl}=f_{\mathrm{low}}$, hence by linearity
\[\left\{
\begin{aligned}
\Delta_\cyl u_{\mathrm{high}}
  &= E_{\mathrm{high}}, \\
u_{\mathrm{high}}|_{\partial \cyl}
  &=f_{\mathrm{high}}.
\end{aligned}\right.
\]
Therefore, since the triangle inequality gives
\begin{align}\notag
\nm{E_{\mathrm{high}}}_{0,\alpha,\beta}
&\leq\nm{E}_{0,\alpha,\beta}+\nm{E_{\mathrm{low}}}_{0,\alpha,\beta}, & 
\nm{f_{\mathrm{high}}}_{2,\alpha}
&\leq\nm{f}_{2,\alpha}+\nm{f_{\mathrm{low}}}_{2,\alpha}, 
\intertext{it suffices to note (from \eqref{eq:HolderParsE} and \eqref{eq:HolderParsF}) that}
\label{eq:LowModeData}
\nm{E_{\mathrm{low}}}_{0,\alpha,\beta}
&\leq C(N)\nm{E}_{0,\alpha,\beta}, &
\nm{f_{\mathrm{low}}}_{2,\alpha}
&\leq C(N)\nm{f}_{2,\alpha},
\end{align}
to finally conclude, appealing to \eqref{highfrequencyschauder}, that
\begin{equation}\label{eq:EstimateHighModes} 
\nm{u_{\mathrm{high}}}_{2,\alpha,\beta}
\leq C(\nm{E}_{0,\alpha,\beta}+\nm{f}_{2,\alpha}).
\end{equation}
It remains to estimate the finite sum $u_{\mathrm{low}}$, for which it obviously suffices to estimate each $u_n$ with $\abs{n}<N$, 
and by virtue of \eqref{weightedschauderoncyl} and \eqref{eq:LowModeData} it in fact suffices to suitably bound $\abs{e^{\beta t}u_n(t)}$.
The boundary value problem \eqref{ODEsys} 
(with boundedness a sort of boundary condition at infinity)
can easily be solved explicitly in terms of $E_n$ and $f_n$,
from which expressions, and the bounds \eqref{eq:HolderParsE}, \eqref{eq:HolderParsF},
one can obtain the required estimate. 
For $n \neq 0$ one can alternatively apply the maximum principle, comparing $u_n$ with the function
\begin{equation}
v(t)\vcentcolon=\frac{\sqrt{2\pi}}{(1-\beta^2)}\bigl(\nm{f}_0+\nm{E}_{0,0,\beta}\bigr)e^{-\beta t}.
\end{equation}
Then $u_n(0) \leq v(0)$ and $\ddot{v}-n^2v \leq E_n$ pointwise provided $\abs{n} \geq 1$. 
Since $E_n$ is compactly supported (because so is $E$) we have $\ddot{u}_n(t)-n^2u_n(t)=0$ for all sufficiently large $t$ and since $u_n$ is bounded, we obtain $u_n(t)=C_n e^{-n t}$ for some real $C_n$ and all $t$ sufficiently large.  
Then it follows, again assuming $\abs{n} \geq 1$,  that $u_n(L)<v(L)$ for $L$ sufficiently large.
By the maximum principle applied on $[0,L]$ (for a fixed, large value of $L$) we then have
$u_n(t) \leq v(t)$ pointwise, 
and analogously we have $u_n(t) \geq -v(t)$,
but in conjunction with \eqref{weightedschauderoncyl} these inequalities imply the desired estimate for $u_n$.

Finally to dispense with $u_0$ (which, we recall, is bounded in $C^0$) we identify $\mu$ as the constant value it takes outside the support of $E_0$ and we appeal to the explicit expression of the solution, that is 
\begin{align}\notag
u_0(t)&=f_0+\int_0^t \int_0^s E_0(\tau) \, d\tau \, ds  -t\int_0^\infty E_0(s) \, ds,
\intertext{which can be rewritten (by just changing the order of integration in the double integral) as}
\label{eq:0modeSol}
u_0(t)&=f_0 - \int_0^t sE_0(s) \, ds - t\int_t^\infty E_0(s) \, ds.
\intertext{Letting $t\to+\infty$ in \eqref{eq:0modeSol} we get indeed}\notag
\mu&=f_0 - \int_0^\infty sE_0(s) \, ds
\shortintertext{hence}\notag
u_0(t)-\mu&=\int_t^\infty (s-t)E_0(s) \, ds,
\end{align}
yielding the remaining bound on
$\abs{\mu}+\sup \abs{e^{\beta t}(u_0(t)-\mu)}$, and thereby
ending the proof.
\end{proof}

For the following Lemma, which states that the asymptotics of $\towquot_{\vartheta}$ are exponential with rate $1$, we recall the notation \eqref{eqn:definition_weighted_norm} for weighted Hölder norms. 

\begin{lemma}\label{lem:CylMSE}
Let $W_0$ be any end of $\towquot_{\vartheta}$ and let $\cyl$ be its asymptotic half cylinder as described in Proposition \ref{prop:towerEWsummary}~\ref{prop:towerEWsummary-ii}, which we equip with standard cylindrical coordinates $(\theta,\rho)\in\Sp^1\times\Interval{0,\infty}$ and a unit normal vector field $\nu$ in $\R^3/\sk{\trans^{\axis{z}}_{2\pi}}$. 
If $R$ is chosen sufficiently large, then $W_0\supset\graph(w\nu)$, where the defining function $w\colon\Sp^1\times\Interval{R,\infty}\to\R$ satisfies 
\begin{align*}
\nm[\big]{w:C^{k}(\Sp^1\times\Interval{R,\infty},\,e^{-\rho})}&\leq C(k)
\end{align*}
for any $k\in\N$ and some finite constant $C(k)$. We then let $W\vcentcolon=\graph(w\nu)$ and refer to it as a wing of $\towquot_{\vartheta}$.
\end{lemma}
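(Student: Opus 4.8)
The strategy is to leverage the explicit Enneper--Weierstrass parametrization of $\towquot_\vartheta$ from Appendix~\ref{app:Karcher--Scherk}, together with the exponential convergence of the Gauss map established in Lemma~\ref{lem:exp_asy}, and then promote this into exponential graphicality over the asymptotic cylinder by a standard ODE argument along the end. Fix an end $W_0$, say (by Lemma~\ref{lem:Karcher-Scherk_symmetries}) the one corresponding to the puncture $w_0 \in \{\pm i, \pm e^{\pm i\phi}\}$, and let $\cyl$ be its asymptotic cylinder with unit normal $\nu$; by Lemma~\ref{lem:btow_theta} and Proposition~\ref{prop:towerEWsummary}\ref{prop:towerEWsummary-ii} we know which flat cylinder this is. The first step is to pull back: near $w_0$ the parametrization $G$ from \eqref{standardtowerparam} is a conformal diffeomorphism, and I would introduce a local conformal coordinate $\zeta$ vanishing at $w_0$ so that the induced metric $G^*g_{\towquot_\vartheta}$ has the form $e^{2\omega}\,|d\zeta|^2$ with $\omega$ controlled; the key point is that, as $w\to w_0$, the surface recedes to infinity and—thanks to Lemma~\ref{lem:exp_asy}—the unit normal $N(w)$ approaches the constant normal $\nu_\infty$ of $\cyl$ at an exponential rate with exponent~$1$ in the appropriate linear combination of the target coordinates $X,Y$ (which is precisely the distance $\rho$ along the asymptotic cylinder).

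The second step is to transfer these estimates to graphical form over $\cyl$. Write points of $\towquot_\vartheta$ near the end as $p = q + w(q)\nu$ with $q \in \cyl$, $q = (\theta,\rho)$; this is legitimate for $\rho \geq R$ with $R$ large because the tangent plane of $\towquot_\vartheta$ is exponentially close to that of $\cyl$, so the nearest-point projection to $\cyl$ is a diffeomorphism on the far end (here one uses that the wing is embedded—Lemma~\ref{lem:graphicality}/Proposition~\ref{prop:towerEWsummary}—so there is no self-overlap). To get the decay $|w| \leq C e^{-\rho}$ I would note that the tangential gradient of $w$ along the end is comparable, via the parametrization, to $|N(w) - \nu_\infty|$, which Lemma~\ref{lem:exp_asy} bounds by $C e^{-\rho}$ (after identifying the relevant linear combination $\mp\sin\vartheta\,X \pm \cos\vartheta\,Y$, resp.\ $\pm Y$, with $-\rho + O(1)$ along the end). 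Integrating $|\nabla w| \leq C e^{-\rho}$ from $\rho = \infty$ inward—using that $w \to 0$ as $\rho \to \infty$, which follows from the very definition of the asymptotic cylinder and the convergence of the defining function to zero—yields $|w| \leq C e^{-\rho}$, i.e.\ the $k=0$ bound.

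The third step upgrades this to all orders $k$: since $w$ satisfies the minimal surface equation written as a graph over $\cyl$,
\begin{equation*}
\Delta_\cyl w = N(w,\nabla w, \nabla^2 w),
\end{equation*}
where $N$ is the usual quasilinear remainder vanishing to second order in $(w,\nabla w)$ together with their derivatives, I would apply the weighted Schauder estimate \eqref{weightedschauderoncyl} on unit-length pieces $\cyl(\rho-1,\rho+3)$ of the cylinder. Starting from the $C^0$ bound $|w| \leq Ce^{-\rho}$, a bootstrap using \eqref{weightedschauderoncyl} with weight $e^{-\rho}$ gives $\nm{w : C^{2,\alpha}(\cyl(R,\infty), e^{-\rho})} \leq C$; differentiating the equation and iterating then yields the $C^k$ bounds for every $k$, since each derivative of the nonlinearity is controlled by lower-order weighted norms of $w$ already estimated. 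This is entirely parallel to—and in fact a prerequisite for—the exponential decay asserted in item~\ref{towasymp} of Proposition~\ref{karcher-scherk} and used throughout Section~\ref{sec:Linear}.

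The main obstacle is the second step: carefully identifying the correct exponential rate and verifying that the linear combination of target coordinates appearing in Lemma~\ref{lem:exp_asy} matches the cylindrical distance $\rho$ along the \emph{specific} asymptotic cylinder of $W_0$, with the right constant in the exponent (namely exactly~$1$, not some $\vartheta$-dependent rate). This requires tracking, for each of the six ends separately, how the asymptotic plane sits relative to the axis of periodicity, using the explicit offsets $b^{\mathrm{tow}}_\vartheta$ and slopes $\pm\tan\vartheta$ from Lemma~\ref{lem:btow_theta}; the symmetries \eqref{eqn:symmetry_under_complex_conj}--\eqref{eqn:symmetry_antipodal} reduce this to two representative cases. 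Once the rate is pinned down, the ODE integration and the Schauder bootstrap are routine.
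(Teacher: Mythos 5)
Your proposal follows essentially the same route as the paper's proof: invoke Lemma~\ref{lem:exp_asy} for the exponential convergence of the Gauss map (which gives both graphicality far out and the gradient decay $\abs{\nabla w}\leq Ce^{-\rho}$), integrate from infinity using that $w\to 0$ by Proposition~\ref{prop:towerEWsummary}~\ref{prop:towerEWsummary-ii} to get the $C^0$ and $C^1$ weighted bounds, and then bootstrap to arbitrary $C^k$ via the minimal surface equation and the local weighted Schauder estimates. The paper compresses the last step to a single sentence, whereas you spell out the quasilinear structure and the iteration, but there is no difference in substance.
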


\begin{proof}
It follows from the analysis presented in Appendix \ref{app:Karcher--Scherk}, and most importantly from Lemma~\ref{lem:exp_asy}, which states that the unit normal vector along $W$ converges for $\rho\to\infty$ with exponential rate $1$, that $W$ is globally the normal graph of a smooth function $w$, provided that $R$ is sufficiently large, and
 the gradient of the defining function decays exponentially along $\cyl$ so that $\abs{\partial w/\partial\rho}\leq C e^{-\rho}$ for all $\rho\geq R$. 
This implies the desired bounds (for $k=1$) on $w$ since we know from Proposition \ref{prop:towerEWsummary}~\ref{prop:towerEWsummary-ii} that $w(\theta,\rho)$ must decay to zero for $\rho\to\infty$. 
The higher-order bounds then follow from minimality, i.\,e. exploiting the minimal surface equation in a standard fashion.
\end{proof}

We exploit the previous results to discuss solvability of the Poisson problem along an end $W$ of the actual (quotient) tower $\towquot$. 
By Lemma~\ref{lem:CylMSE}, $W$ is a normal graph over its asymptotic half cylinder $\cyl$ in $\R^3/\sk{\trans^{\axis{z}}_{2\pi}}$ 
with coordinates $(\theta,\rho)\in\Sp^1\times\Interval{R,\infty}$ and defining function $w\in C^{k}(\cyl,e^{-\rho})$ provided that $R>0$ is chosen sufficiently large. 
In the following, we implicitly pull back all functions and operators on $W$ to $\Sp^1\times\Interval{R,\infty}$. 
For example, given the canonical parametrization $\varphi\colon\Sp^1\times\Interval{R,\infty}\to W$ 
defined by $\varphi(p)=p+w(p)\nu(p)$ where $\nu$ denotes the unit normal along $\Lambda$, and given a function $v\colon\Sp^1\times\Interval{R,\infty}\to\R$
we simply write $\Delta_W v$ rather than $\varphi^*\Delta_W(v\circ \varphi^{-1})$ when applying the Laplace--Beltrami operator on $W$.  
Since the defining function $w$ is exponentially decreasing
there exists a constant $C$ such that
for all $\beta,\gamma \in \R$
with $\beta<\gamma+2$
and for all
$
 v\in C^{2,\alpha}(\Sp^1\times\Interval{R,\infty},
 e^{-\gamma\rho})
$
\begin{align}
\label{eqn:20220405-1}
\nm{\Delta_W v-\Delta_\cyl v}_{0,\alpha,\beta}
\leq C e^{(\beta-\gamma-2)R}\nm{v}_{2,\alpha,\gamma}.
\end{align}
This estimate follows from the standard formula for the Laplace--Beltrami operator in local coordinates. 
Indeed, with respect to the coordinates $(x^1,x^2)=(\theta,\rho)\in\Sp^1\times\Interval{R,\infty}$, the Riemannian metric $g$ on $W$ and its inverse are  given by 
\begin{align*}
g_{ij}&=\delta_{ij}+\partial_i w\,\partial_j w, &
g^{ij}&=\delta_{ij}-\frac{\partial_i w\,\partial_j w}{1+\abs{\nabla_{\Lambda} w}^2}
\end{align*}
which means that $g_{ij}-\delta_{ij}$
and all its coordinate derivatives
decay like $e^{-2\rho}$;
the estimate then follows in view of the identity
\begin{align*}
\Delta_W v-\Delta_\Lambda v	
&=g^{ij}\biggl(\frac{\partial^2 v}{\partial x^i\partial x^j}
-\Gamma^{k}_{ij}\frac{\partial v}{\partial x^k}\biggr)
-\delta_{ij}\frac{\partial^2 v}{\partial x^i\partial x^j}. 
\end{align*}

\begin{corollary}\label{cor:wingsol}
 Let $R>0$ and $W$ as in Lemma \ref{lem:CylMSE}, and let $\alpha,\beta\in\interval{0,1}$.
If $R$ is sufficiently large, then:
\begin{enumerate}[label={\normalfont(\roman*)}]
\item\label{cor:wingsol-i} 
For any data $E \in C^{0,\alpha}(W,e^{-\beta\rho})$ and $f \in C^{2,\alpha}(\partial W)$ there is a unique bounded $u\colon W\to\R$ such that $\Delta_W u = E$ and $u|_{\partial W}=f$;
moreover there exists a unique $\mu\in\R$ such that
\begin{align*}
\abs{\mu}+\nm{u-\mu}_{2,\alpha,\beta}
&\leq C\Bigl(\nm{E}_{0,\alpha,\beta}+\nm{f}_{2,\alpha}\Bigr)
\end{align*}
for some constant $C>0$ which is independent of the data $E$ and $f$. 
\item\label{cor:wingsol-ii}
There exists a unique bounded linear map
$P_W\colon C^{0,\alpha}(W,e^{-\beta\rho})\to C^{2,\alpha}(W,e^{-\beta\rho})$
such that for all $E \in C^{0,\alpha}(W,e^{-\beta\rho})$ we have $\Delta_W(P_W E)=E$ and $(P_W E)|_{\partial W}$ is a constant depending on $E$.
\end{enumerate}
\end{corollary}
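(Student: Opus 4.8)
The plan is to obtain Corollary~\ref{cor:wingsol} by treating the Laplace--Beltrami operator $\Delta_W$ of the wing as an exponentially small perturbation of the flat half-cylinder Laplacian $\Delta_\cyl$, and then invoking Lemma~\ref{lem:cylsol}. As a first step I would recast the latter as an invertibility statement. Identifying $W$ with $\Sp^1 \times \Interval{R,\infty}$ via the canonical parametrization $\varphi$ (so that $\partial W = \Sp^1 \times \{R\}$ and $C^{k,\alpha,\beta}(W)$ is pulled back to $C^{k,\alpha}(\cyl, e^{-\beta\rho})$ with $\cyl = \Sp^1 \times \Interval{R,\infty}$), Lemma~\ref{lem:cylsol} (which applies to this $\cyl$ after the harmless translation $\rho \mapsto \rho - R$, the factors $e^{-\beta R}$ cancelling between the two sides of the estimate, so that all constants stay bounded independently of $R$) says precisely that the linear map
\begin{align*}
T_\cyl \colon \R \oplus C^{2,\alpha,\beta}(\cyl) &\to C^{0,\alpha,\beta}(\cyl) \oplus C^{2,\alpha}(\partial \cyl), \\
(\mu, v) &\mapsto \bigl(\Delta_\cyl v,\ (v+\mu)|_{\partial \cyl}\bigr)
\end{align*}
is a Banach space isomorphism with $\nm{T_\cyl^{-1}}$ bounded in terms of $\alpha$ and $\beta$ only: given $(E,f)$, item~\ref{lem:cylsol-i} of Lemma~\ref{lem:cylsol} produces the unique bounded solution $u$ and the associated constant $\mu$, and $v \vcentcolon= u - \mu$ gives $T_\cyl(\mu,v) = (E,f)$ with the quantitative bound, while injectivity follows from the uniqueness of $u$ and the exponential decay of $v$.

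Next I would introduce the analogous map $T_W$, with $\Delta_W$ in place of $\Delta_\cyl$ and the same domain and target. The difference $T_W - T_\cyl$ affects only the first target slot, sending $(\mu,v) \mapsto \bigl((\Delta_W - \Delta_\cyl)v,\ 0\bigr)$, and by estimate \eqref{eqn:20220405-1} applied with $\gamma = \beta$ (so that the exponent $\beta - \gamma - 2 = -2$ is negative) one gets $\nm{T_W - T_\cyl} \leq C e^{-2R}$ as an operator norm. Choosing $R$ large enough that $C e^{-2R}\nm{T_\cyl^{-1}} < 1$, the standard Neumann series argument shows that $T_W = T_\cyl\bigl(I + T_\cyl^{-1}(T_W - T_\cyl)\bigr)$ is invertible with $\nm{T_W^{-1}}$ bounded independently of the data. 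For given $(E,f)$ this furnishes $(\mu, v) = T_W^{-1}(E, f)$, and $u \vcentcolon= v + \mu$ is then a bounded function with $\Delta_W u = E$ and $u|_{\partial W} = f$, satisfying $\abs{\mu} + \nm{u-\mu}_{2,\alpha,\beta} = \abs{\mu} + \nm{v}_{2,\alpha,\beta} \leq C\bigl(\nm{E}_{0,\alpha,\beta} + \nm{f}_{2,\alpha}\bigr)$, which is the estimate asserted in \ref{cor:wingsol-i}. For \ref{cor:wingsol-ii} I would specialize to $f = 0$ and set $P_W E \vcentcolon= u - \mu = v$; then $\Delta_W(P_W E) = \Delta_W u = E$, the boundary value $(P_W E)|_{\partial W} = -\mu$ is a constant depending linearly and boundedly on $E$, and $P_W E \in C^{2,\alpha,\beta}(W)$ with the corresponding bound. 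Uniqueness of $P_W$ follows from the uniqueness clause in \ref{cor:wingsol-i} together with the exponential decay of $P_W E$, exactly as in the passage from item~\ref{lem:cylsol-i} to item~\ref{lem:cylsol-ii} of Lemma~\ref{lem:cylsol}.

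The one point requiring genuine care, and the main obstacle, is upgrading the injectivity of $T_W$ (uniqueness of $(\mu,v)$) to uniqueness of the bounded solution $u$ among \emph{all} bounded functions on $W$, not merely those of the form constant-plus-exponentially-decaying. For this I would argue, exactly as in the proof of Lemma~\ref{lem:cylsol}, that any bounded $u$ with $\Delta_W u = E$ (and $E$ exponentially decaying) is automatically of that form: decomposing along the cross-sectional circles into Fourier modes, the nonzero modes satisfy perturbed ODEs $\ddot u_n - n^2 u_n = g_n$ with exponentially decaying right-hand sides—the coupling between modes being controlled by \eqref{eqn:20220405-1} and absorbed by taking $R$ large—hence decay exponentially, while the zero mode satisfies $\ddot u_0 = g_0$ with $g_0$ decaying and $u_0$ bounded, forcing $u_0$ to approach a constant $\mu$; thus $u - \mu$ decays and lies in the class to which injectivity of $T_W$ applies. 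I do not expect any essentially new difficulty here beyond carefully organizing this asymptotic bootstrap, since all the needed analytic ingredients are already in place in Lemma~\ref{lem:cylsol} and in \eqref{eqn:20220405-1}.
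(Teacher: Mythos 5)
Your main argument — recasting Lemma~\ref{lem:cylsol}\ref{lem:cylsol-i} as invertibility of a map $T_\cyl$ on $\R \oplus C^{2,\alpha,\beta}$, comparing it to $T_W$ via \eqref{eqn:20220405-1} with $\gamma=\beta$, and invoking a Neumann series once $R$ is large enough that the operator discrepancy is small — is exactly the route the paper takes, including the same reduction of \ref{cor:wingsol-ii} to \ref{cor:wingsol-i}.

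Where you diverge from the paper is in the final step (showing that \emph{any} bounded solution $u$ automatically lies in $\R \oplus C^{2,\alpha,\beta}(W)$), and here your description has a soft spot. You propose a mode-by-mode ODE argument along the lines of the proof of Lemma~\ref{lem:cylsol}, speaking of ``perturbed ODEs $\ddot u_n - n^2 u_n = g_n$'' for $\Delta_W$, with inter-mode coupling ``absorbed by taking $R$ large.'' But the Fourier decomposition along cross-sectional circles diagonalizes $\Delta_\cyl$, not $\Delta_W$: the perturbation genuinely mixes modes, so there are no decoupled ODEs for the $u_n$ to satisfy, and absorbing the coupling would amount to setting up a fixed-point iteration you have not spelled out. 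The paper avoids this entirely by applying \eqref{eqn:20220405-1} once more, now with $\gamma = 0$: since $u$ is bounded, local elliptic estimates give $u \in C^{2,\alpha,0}(W)$, so $(\Delta_W - \Delta_\cyl)u \in C^{0,\alpha,\beta}$ and hence $\Delta_\cyl u = E - (\Delta_W - \Delta_\cyl)u$ is exponentially decaying; then the \emph{already-proven} Lemma~\ref{lem:cylsol}\ref{lem:cylsol-i} for the flat half-cylinder (which is where the Fourier analysis legitimately lives) immediately says $u = \mu + v$ with $v$ exponentially decaying. I'd encourage you to reorganize your last paragraph along these lines: move the perturbation to the right-hand side first, and only then decompose — at which point nothing new is needed beyond what Lemma~\ref{lem:cylsol} already delivers.
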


\begin{proof}
Exactly as in the proof of Lemma \ref{lem:cylsol}, one can show at once that \ref{cor:wingsol-i} implies \ref{cor:wingsol-ii}, so it suffices to prove \ref{cor:wingsol-i}.  For the sake of convenience, let us set $\cyl_R\vcentcolon=\cyl(R,\infty)$.
Item \ref{lem:cylsol-i} of Lemma \ref{lem:cylsol}
asserts, in particular, invertibility of the map
\begin{align*}
T_{\cyl_R}
  \colon
  \R
    \oplus C^{2,\alpha,\beta}(\cyl_R)
  &\to
  C^{0,\alpha,\beta}(\cyl_R)
    \oplus  C^{1,\alpha}(\partial \cyl_R)
\\
(\mu, v)
  &\mapsto
  \bigl(
    \Delta_{\cyl} (v+\mu),~
    (v+\mu)|_{\partial \cyl_R}
  \bigr);
\end{align*}
if we now define the map $T_W$
in exactly the same way as $T_{\cyl_R}$
but with each instance of $\cyl_R$
replaced by $W$,
then \eqref{eqn:20220405-1}
with $\gamma=\beta$
ensures that
the operator norm of
$
 (1,\varphi^*)
   T_W
   (\varphi^{-1*},\varphi|_{\partial \cyl_R}^{-1*})
 -T_{\cyl_R}
$
is bounded by $C e^{-2R}$.
We therefore conclude that $T_W$
is itself invertible
for all sufficiently large $R$.
For this we are also using the fact that,
by virtue of the $C^k$ estimates
for $g_{ij}-\delta_{ij}$ preceding
the statement of the corollary,
taking $R$ sufficiently large
in terms of a universal constant
guarantees that
$
 \nm{v}_{k,\alpha,\beta}
 \leq 2\nm{\varphi^*v}_{k,\alpha,\beta}
 \leq 4\nm{v}_{k,\alpha,\beta}
$
for each $k=0,1,2$
and every $v: W \to \R$.

Thus we have confirmed in item \ref{cor:wingsol-i}
existence, uniqueness within the domain of $T_W$,
and the claimed estimate.
To complete the proof
it remains only to establish
that any bounded solution $u$
of $\Delta_Wu=E$ with $u|_{\partial W}=f$
actually belongs to the domain of $T_W$
whenever $(E,f)$ belongs to its target,
but this follows from the analogous assertion
of item \ref{lem:cylsol-i} of Lemma \ref{lem:cylsol}
for $\Delta_\cyl$ by again appealing to
\eqref{eqn:20220405-1},
now with $\gamma=0$.
(Note that uniqueness among bounded solutions
is now a consequence of uniqueness
within the domain of $T_W$.)
\end{proof}

\section{Graphical deformations of immersed hypersurfaces}
\label{app:graphs}
Let $(M,g)$ be a geodesically complete smooth Riemannian manifold,
let $\phi\colon\Sigma \to M$ be a smooth two-sided immersion
of a smooth manifold $\Sigma$ without boundary,
with $\dim M = 1 + \dim \Sigma$,
and let $\nu_g$,
a section of $\phi^*TM$,
be a unit normal
to $\phi$ with respect to $g$.
We shall consider the scalar-valued
second fundamental form
$A[\phi,g,\nu_g]$ of $\phi$ with respect to $g$
and $\nu_g$ and, correspondingly, the
scalar-valued mean curvature 
$H[\phi,g,\nu_g]$, which
is then the $\phi^*g$ trace of
$A[\phi,g,\nu]$.
Note that signs in these definitions are chosen so
to enforce agreement with the conventions discussed in Section \ref{sec:Nota} (cf. equation \eqref{eq:PointwiseJacobi}).

For any function $u\colon \Sigma \to \R$, we consider the corresponding normal graph over $\Sigma$, i.\,e.
we define the deformed map
$\phi[u,g,\nu_g]\colon \Sigma \to M$ by 
\begin{equation}
\label{deformed_phi}
\phi[u,g,\nu_g](p)
\vcentcolon=
\exp^{(M,g)}_{\phi(p)} u(p)\nu_g(p),
\end{equation}
where $\exp^{(M,g)}\colon TM \to M$
is the exponential map of $(M,g)$.
For $u$ suitably small,
$\phi[u,g,\nu_g]$
is also an immersion,
and in this appendix we are interested
in the variation
with respect to $\phi$ and $u$
of the induced metric
and mean curvature
(possibly with respect to an ambient metric
different from the metric $g$ used to define
$\phi[u,g,\nu_g]$),
as well as of the set $\phi[u,g,\nu_g]^{-1}(\Gamma)$
and corresponding intersection angle,
under special conditions,
for a given embedded hypersurface
$\Gamma \subset M$.

\begin{remark}
\label{scaling_of_MC_and_deformed_map}
Note that the following scaling laws hold true for any $\lambda>0$:
\[
\nu_{\lambda^2g}=\lambda^{-1}\nu_g, \
H[\phi,\lambda^2g,\nu_{\lambda^2g}] =\lambda^{-1}H[\phi,g,\nu], \
\phi[\lambda u, \lambda^2g, \nu_{\lambda^2g}]=\phi[u,g,\nu_g].
\]
\end{remark}

Of course one could also consider variations
with respect to $g$,
and indeed variations with respect to $\phi$
can be reduced to the former
via a suitable diffeomorphism,
but we have no need for such a level of generality.
In fact, while it would be possible to treat
all cases of interest to us in a unified way,
the discussion would become excessively complicated
for our purposes,
so we rather split the results we need
between
Lemma \ref{lem:mc_and_met_of_graphs_euc}
and
Lemma \ref{basic_MC_bdy_lemma}.

\begin{remark}
Concerning the application of the following results in the main core of this article, whenever $\Sigma$ is a smooth (compact) submanifold with boundary we preliminarily consider an extension 
$\Sigma \subset \Sigma'$
for some smooth manifold $\Sigma'$ 
without boundary
(and of the same dimension as $\Sigma$),
and correspondingly $\phi=\phi'|_\Sigma$
for some smooth two-sided immersion $\phi'\colon \Sigma' \to M$.
In that case, i.\,e. if $\Sigma$ has a (smooth) boundary, by a smooth function on $\Sigma$
we mean a function which has a smooth extension
to an open neighborhood of $\Sigma$ in $\Sigma'$.
\end{remark}

\paragraph{Euclidean case with variable base immersion.}
First we consider the special case when
$(M,g)=(\R^n,g_{\mathrm{euc}})$
is the Euclidean space of dimension $n \geq 2$.
In this context, with $\Sigma$ and
$\phi \colon \Sigma\to M$
as in the first paragraph of the present appendix, we set
\begin{align*}
&\begin{aligned}
g_\phi&\vcentcolon=\phi^*g_{\mathrm{euc}},
\\
A_\phi&\vcentcolon=A[\phi,g_{\mathrm{euc}},\nu],
\end{aligned}
&\begin{aligned}
\phi[u]&\vcentcolon=\phi[u,g_{\mathrm{euc}},\nu]
=\phi+u\nu,
\\
g[\phi,u]&\vcentcolon=\phi[u]^*g_{\mathrm{euc}},
\end{aligned}
\end{align*}
interpreting $\nu$ as a map $\Sigma \to \R^n$
in the right-most expression for $\phi[u]$. Furthermore, we shall introduce the following convenient notation: given $p \in \Sigma$, for any nonnegative integer $\ell$
and any tensor field $S$ (possibly a function)
on $\Sigma$
set
\begin{equation}
\label{eq:PointwiseNorm}
\abs{S}_\ell
\vcentcolon=
\sum_{j=0}^\ell \abs{(D_{\phi^*g}^jS)(p)}_{\phi^*g},
\end{equation}
where $D_{\phi^*g}$ indicates covariant differentiation
via the Levi-Civita connection on $\Sigma$
induced by $\phi^*g$ and $\abs{{}\cdot{}}_{\phi^*g}$
indicates the appropriate norm at $p$ induced by $\phi^*g$.

We require only quite coarse estimates,
but, since it is easy in this setting to write down more detailed information,
we briefly do so now before stating 
Lemma \ref{lem:mc_and_met_of_graphs_euc}.
An elementary computation yields
\begin{equation}
\label{gphiu_euc}
g[\phi,u]_{ab}
=
g_{ab}
  -2uA_{ab}
  +u^2A_{ac}A_{bd}g^{cd}
  +u_{,a}u_{,b},
\end{equation}
where we have used abstract-index notation
(and in particular $u_{,a}$
represents the one-form $du$)
and have written
$g_{ab}$ for $g_\phi$, $g^{ab}$ for $(g_\phi)^{-1}$,
and $A_{ab}$ for $A_\phi$.
As it is well-known,
$\phi[u]$ is an immersion
for $u$ pointwise sufficiently small
in terms of $A_\phi$,
assuming also $u \in C^1_{\mathrm{loc}}(\Sigma)$.
If, moreover,
$u \in C^2_{\mathrm{loc}}(\Sigma)$,
then in this case
we set
\begin{align*}
A[\phi,u]
&\vcentcolon=
  A\bigl[\phi[u], g_{\mathrm{euc}},\nu[u]\bigr],
&
H[\phi,u]
&\vcentcolon=
  H\bigl[\phi[u], g_{\mathrm{euc}},\nu[u]\bigr],
\end{align*}
where $\nu[u]$ is the unit normal for $\phi[u]$
such that $(p,t) \mapsto \nu[tu](p)$
is continuous on $\Sigma \times \IntervaL{0,1}$
and $\nu[0]=\nu$.
Another elementary computation yields
\begin{equation}
\label{Aphiu_euc}
\begin{aligned}
\bigl(1+\abs{du}_g^2\bigr)^{1/2}A[\phi,u]_{ab}
  &=
  A_{ab} + u_{;ab} - uA_{ac}A_{bd}g^{cd}
    + uu_{,c}\widetilde{g}^{cd}A_{ab;d}
\\
  &\hphantom{{}={}}
    + u_{,c}u_{,a}A_{bd}\widetilde{g}^{cd}
    - uu_{,c}u_{,a}A_{be}A_{df}g^{ef}\widetilde{g}^{cd}
\\
  &\hphantom{{}={}}
    + u_{,c}u_{,b}A_{ad}\widetilde{g}^{cd}
    - uu_{,c}u_{,b}A_{ae}A_{df}g^{ef}\widetilde{g}^{cd}
\\
  &\hphantom{{}={}}
    -u^2u_{,c}A_{ab;e}A_{df}g^{ef}\widetilde{g}^{cd},
\end{aligned}
\end{equation}
with notation as above,
$\widetilde{g}^{ab}|_p$ denoting
the metric on $T_p^*\Sigma$
dual (or inverse) to
\begin{equation} 
\label{gtilde_euc}
\widetilde{g}_{ab}
=
g[\phi,u]_{ab}
  -u_{,a}u_{,b},
\end{equation}
at the point $p$, and each semicolon indicating covariant differentiation
with respect to the Levi-Civita connection
induced on $\Sigma$ by $g_\phi$.

\begin{lemma}
[Mean curvature and induced metrics of graphs in the Euclidean space]
\label{lem:mc_and_met_of_graphs_euc}
Let
$k$ be a nonnegative integer
and
$
 \phi_1,\phi_2
 \colon
 \Sigma
 \to
 \R^n
$
smooth, two-sided immersions
of a smooth manifold $\Sigma$ of dimension $n-1$
(with $n \geq 2$)
with
$\nu_1,\nu_2$
corresponding choices of unit normals.
Given $p\in\Sigma$, any nonnegative integer $\ell$ and any tensor field $S$ (possibly a function) we employ the notation $|S|_{\ell}$ as per \eqref{eq:PointwiseNorm}, with $\phi_1^*g$ in lieu of $\phi^*g$.
There exist
\[
\epsilon=\epsilon(k,n,\abs{A_{\phi_1}}_{k+1}),
~
C=C(k,n,\abs{A_{\phi_1}}_{k+1})
>
0
\]
(respectively
nonincreasing and nondecreasing in $\abs{A_{\phi_1}}_{k+1}$)
such that 
$\phi_1[u]$ and $\phi_2[u]$
are well-defined immersions
on a neighborhood of $p$
and for
any smooth function $u\colon \Sigma \to \R$,
provided
\begin{equation*}
\abs{u}_{k+2}
  + \abs{g_{\phi_2}-g_{\phi_1}}_{k+1}
  + \abs{A_{\phi_2}-A_{\phi_1}}_{k+1}
<
\epsilon,
\end{equation*}
the following estimates hold:
\begin{enumerate}[label={\normalfont(\roman*)}]
\item
\label{met_comp_euc_same_def_funcs}
  $
   \abs{g[\phi_2,u]-g[\phi_1,u]}_{k+1}
   \leq
   C(
      \abs{g_{\phi_2}-g_{\phi_1}}_{k+1}
      + \abs{A_{\phi_2}-A_{\phi_1}}_{k+1}
    )
  $
\item
\label{met_comp_euc_same_background}
  $
   \abs{g[\phi_1,u]-g[\phi_1,0]}_{k+1}
   \leq
   C\abs{u}_{k+2}
  $
\item
  $
   \abs{(g[\phi_2,u]-g[\phi_1,u])-(g[\phi_2,0]-g[\phi_1,0])}_{k+1}
   \leq
   C
   \abs{u}_{k+2}
     (\abs{g_{\phi_2}-g_{\phi_1}}_{k+1}+\abs{A_{\phi_2}-A_{\phi_1}}_{k+1})
  $
\item
  $
   \abs{H[\phi_2,u]-H[\phi_1,u]}_k
   \leq
   C(
      \abs{g_{\phi_2}-g_{\phi_1}}_{k+1}
      + \abs{A_{\phi_2}-A_{\phi_1}}_{k+1}
    )
  $
\item
\label{mc_comp_euc_same_background}
  $
   \abs{H[\phi_1,u]-H[\phi_1,0]}_k
   \leq
   C\abs{u}_{k+2}
  $
\item
\label{mc_quad_comp_euc}
  $\abs[\big]{(H[\phi_2,u]-H[\phi_1,u])-(H[\phi_2,0]-H[\phi_1,0])}_k
   \leq C\abs{u}_{k+2}\bigl(\abs{g_{\phi_2}-g_{\phi_1}}_{k+1}+\abs{A_{\phi_2}-A_{\phi_1}}_{k+1}\bigr)$
\end{enumerate}
\end{lemma}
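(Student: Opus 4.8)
The plan is to derive all six estimates from the explicit local formulas \eqref{gphiu_euc}--\eqref{gtilde_euc}, treating $g[\phi,u]$ and $H[\phi,u]$ as smooth functions of a finite jet of the data $(g_\phi, A_\phi, u)$ and applying the mean value theorem (Taylor's theorem with remainder) in the appropriate variables. First I would fix $p\in\Sigma$ and choose a local chart (e.g.\ $g_{\phi_1}$-normal coordinates at $p$) so that all tensors become arrays of functions, and all the pointwise norms $\abs{\cdot}_\ell$ become equivalent, up to constants $C(k,n,\abs{A_{\phi_1}}_{k+1})$, to sums of absolute values of coordinate derivatives up to order $\ell$; the slightly delicate bookkeeping point is that covariant derivatives with respect to $g_{\phi_2}$ and $g_{\phi_1}$ differ by Christoffel-symbol terms controlled by $\abs{g_{\phi_2}-g_{\phi_1}}_{k+1}$, but since we only ever compare quantities on a neighborhood where $\abs{g_{\phi_2}-g_{\phi_1}}_{k+1}<\epsilon$, these corrections are harmless and get absorbed into $C$.

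Next I would record the structural observations that make the estimates essentially algebraic. From \eqref{gphiu_euc}, $g[\phi,u] - g_\phi = -2uA_\phi + (\text{terms at least quadratic in } u \text{ or } du)$, which is a polynomial in the components of $A_\phi$, $u$, and $du$ with universal coefficients; moreover it vanishes identically when $u\equiv 0$, which immediately gives \ref{met_comp_euc_same_background} by factoring out a $u$. Similarly, inverting $g_\phi$ and using \eqref{Aphiu_euc}, $H[\phi,u]$ is, for $u$ (and its jet) small relative to $\abs{A_\phi}$, a smooth function of $(g_\phi)^{-1}$, $A_\phi$, the Hessian $\nabla^2 u$, $du$, $u$, and $\nabla A_\phi$; the key point for \ref{mc_comp_euc_same_background} is again that $H[\phi,u]-H[\phi,0]$ has no zeroth-order term in $u$, so it is divisible by the $(k+2)$-jet of $u$. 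For the ``same defining function'' comparisons \ref{met_comp_euc_same_def_funcs} and (iv) I would view $g[\cdot,u]$ and $H[\cdot,u]$ as functions of the pair $(g_\phi, A_\phi)$ and apply the mean value theorem along the segment $t\mapsto (g_{\phi_1}+t(g_{\phi_2}-g_{\phi_1}),\, A_{\phi_1}+t(A_{\phi_2}-A_{\phi_1}))$: the derivative of the map is bounded (in $C^{k+1}$, resp.\ $C^k$) by $C(k,n,\abs{A_{\phi_1}}_{k+1})$ on the $\epsilon$-neighborhood, because smooth functions have bounded derivatives on compact sets and $\epsilon$ is chosen small enough to keep us in a fixed such set (this is also where the monotonicity of $\epsilon$ and $C$ in $\abs{A_{\phi_1}}_{k+1}$ enters, since enlarging $\abs{A_{\phi_1}}_{k+1}$ only shrinks the admissible neighborhood and enlarges the Lipschitz constant).

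For the two ``mixed'' or second-difference estimates (iii) and \ref{mc_quad_comp_euc} I would use the standard trick of writing the second difference as a double integral: set $\Psi(s,t) \vcentcolon= H\bigl[\phi_1 + t(\phi_2-\phi_1)\text{-data},\, su\bigr]$ (more precisely, interpolate the data $(g,A)$ in $t$ and the defining function in $s$, keeping $s$ out of the arguments only through $su$ and its jet), so that
\begin{equation*}
\bigl(H[\phi_2,u]-H[\phi_1,u]\bigr)-\bigl(H[\phi_2,0]-H[\phi_1,0]\bigr)
=\int_0^1\!\!\int_0^1 \partial_s\partial_t \Psi(s,t)\, ds\, dt,
\end{equation*}
and the mixed partial $\partial_s\partial_t\Psi$ is bounded in $C^k$ by $C\,\abs{u}_{k+2}\bigl(\abs{g_{\phi_2}-g_{\phi_1}}_{k+1}+\abs{A_{\phi_2}-A_{\phi_1}}_{k+1}\bigr)$ because differentiating once in $s$ brings down a factor controlled by the $(k+2)$-jet of $u$ (which vanishes at $s=0$, giving the $u$ factor) and differentiating once in $t$ brings down a factor controlled by the data differences; the same argument with $g[\cdot,\cdot]$ in place of $H[\cdot,\cdot]$ gives (iii). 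The main obstacle, and the only place requiring genuine care rather than routine multivariable calculus, is the bookkeeping needed to make ``$\Psi$ is a smooth function of a finite jet of the interpolated data, with derivatives bounded uniformly on the $\epsilon$-ball'' precise: one must check that the denominators appearing in \eqref{Aphiu_euc}--\eqref{gtilde_euc} (essentially powers of $1+\abs{du}_g^2$ and of $\det g[\phi,u]$) stay bounded away from zero on the admissible neighborhood, which is exactly what the smallness of $\abs{u}_{k+2}$ relative to $\abs{A_{\phi_1}}_{k+1}$ buys us, and that all the covariant-derivative manipulations are legitimate after passing to coordinates. Once this is set up, each of \ref{met_comp_euc_same_def_funcs}--\ref{mc_quad_comp_euc} follows by a one-line mean value theorem or double-integral estimate, and finally one passes from the pointwise estimates at an arbitrary $p$ to the statement by taking suprema.
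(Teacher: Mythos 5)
Your proposal is correct and follows essentially the same route as the paper's proof: both express $g[\phi,u]$ and $H[\phi,u]$ as smooth functions of the relevant finite jets of $(g_\phi, A_\phi, u)$ via the explicit formulas \eqref{gphiu_euc}--\eqref{gtilde_euc}, fix a $g_{\phi_1}$-adapted frame/coordinates at $p$, and obtain items (i)--(vi) from single and double integral (mean-value) representations along the linear interpolations in the data and in $u$, with uniformity of the constants coming from $C^2$ bounds of these smooth functions on a fixed compact set determined by $\abs{A_{\phi_1}}_{k+1}$ and $\epsilon$. The paper makes the same construction explicit by introducing the maps $\mathbf{g}$ and $\mathbf{g}^{(\ell)}$ and the compact convex set $K_\delta$, but the underlying mechanism matches yours.
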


\begin{proof}
As indicated above
(and as it is also clear from \eqref{gphiu_euc}),
$\phi[u]$ is an immersion
for sufficiently small $u$.
The first estimates then follow from the smooth dependence
at any $p \in \Sigma$,
as exhibited by \eqref{gphiu_euc}
and
\eqref{Aphiu_euc}
(along with \eqref{gtilde_euc}),
of $g[\phi,u](p)$
on $g_\phi(p)$, $A_\phi(p)$, $u(p)$, and $du(p)$.

More in detail,
writing
$\mathrm{Sym}^{(+)}_{n-1}$
for the set of (positive-definite) symmetric
$(n-1) \times (n-1)$ real matrices,
there exists a smooth function
(given by \eqref{gphiu_euc})
\begin{equation*}
\mathbf{g}
\colon
\mathrm{Sym}^+_{n-1}
  \times
  \mathrm{Sym}_{n-1}
  \times \R 
  \times \R^{n-1}
\to
\mathrm{Sym}_{n-1}
\end{equation*}
such that
for all $p \in \Sigma$
\begin{equation*}
g[\phi,u]|_p=\mathbf{g}(g_\phi|_p,A_\phi|_p,u(p),du|_p),
\end{equation*}
where by each evaluation at $p$
we really mean the corresponding tensor
(or scalar or vector)
expressed with respect to a
$g_{\phi_1}$-orthonormal basis at $p$.

Of course we then also have
\begin{equation}
\label{FTC}
\begin{aligned}
g[\phi_2,u]|_p-g[\phi_1,u]|_p
  &=
  \int_0^1
    \partial_s
    \mathbf{g}(g_s|_p, A_s|_p, u(p), du|_p)
    \, ds,
\\
g[\phi_1,u]|_p-g[\phi_1,0]|_p
  &=
  \int_0^1 \partial_t
    \mathbf{g}(
      g_{\phi_1}|_p, A_{\phi_1}|_p, tu(p), tdu|_p
    ) \, dt,
\\
((g[\phi_2,u]-g[\phi_1,u])-(g[\phi_2,0]-g[\phi_1,0]))|_p
  &=
  \int_0^1 
    \int_0^1
      \partial_s \partial_t
      \mathbf{g}(g_s|_p, A_s|_p, tu(p), tdu|_p)
    \, dt
  \, ds,
\end{aligned}
\end{equation}
where we have set
\begin{align*}
g_s &\vcentcolon= sg_{\phi_1}+(1-s)g_{\phi_2}, & 
A_s &\vcentcolon= sA_{\phi_1}+(1-s)A_{\phi_2}.
\end{align*}
Write $\abs{{}\cdot{}}$ as usual for the Euclidean norm on each $\R^d$ but also for the Euclidean (or Frobenius) norm on $\mathrm{Sym}_{n-1}$,
let $I$ denote the $(n-1) \times (n-1)$ identity matrix, and, for each $\delta>0$, set
\begin{equation*}
\begin{aligned}
K_\delta
&\vcentcolon=
\{G \in \mathrm{Sym}_{n-1} \st \abs{G-I} \leq \delta\}
  \times
  \{B \in \mathrm{Sym}_{n-1} \st \abs{B} \leq \nm{A_{\phi_1}}+\delta\}
\\
&\hphantom{\vcentcolon=}
  \times
  \IntervaL{-\delta,\delta}
  \times
  \{v \in \R^{n-1} \st \abs{v} \leq \delta\}.
\end{aligned}
\end{equation*}
Then $K_\delta$ is convex and compact
and, for $\delta$ sufficiently small,
contained in the domain of $\mathbf{g}$,
so that in particular we have a $C^2$
bound on $\mathbf{g}|_{K_\delta}$.
Using \eqref{FTC},
the first three items of the lemma now follow
with $k=-1$
(which is not actually included
in the lemma's statement but makes sense
for these items).

The higher-$k$ cases can be likewise secured
by differentiating
\eqref{gphiu_euc}
to obtain smooth functions
(with matrix values and arguments)
$\mathbf{g}^{(\ell)}$ such that
for all $p \in \Sigma$
\begin{equation*}
D_{\phi_1^*g}^\ell g[\phi,u]|_p
=
\mathbf{g}^{(\ell)}
  (
    g_\phi|_p,
    \ldots,
    D_{\phi_1^*g}^\ell g_\phi|_p,~
    A_\phi|_p,
    \ldots,
    D_{\phi_1^*g}^\ell A_\phi|_p,~
    u(p),
    \ldots,
    D_{\phi_1^*g}^{\ell+1}u|_p
  )
\end{equation*}
and then appropriately restricting their domains
and applying the obvious analogues
of \eqref{FTC}.
Clearly we can also ensure invertibility
of $g[\phi,u]$ and control the norm
of $g[\phi,u]^{-1}$
(and its derivatives)
by additional restrictions.
In view of \eqref{Aphiu_euc}
we can then prove
the final three items
in essentially the same fashion.
\end{proof}

\paragraph{Generators of deformations under alternative metrics.}
Suppose
again
that $(M,g)$ is a complete Riemannian manifold.
Suppose further that $h$ is another Riemannian metric on $M$
such that $(M,h)$ is also complete
and let $\nu_h$ be that unit normal to $\phi$
with respect to $h$ which has pointwise positive
inner product (with respect to either $g$ or $h$)
with $\nu_g$,
so that
\begin{equation*}
\nu_h=\frac{N}{\abs{N}_h},
\quad \mbox{where} \quad
N \vcentcolon= ((\nu_g)^{\flat_g})^{\sharp_h},
\end{equation*}
where $(\nu_g)^{\flat_g}$
is the one-form
(section of $\phi^*T^*M$)
dual to $\nu_g$ under the metric $g$,
$((\nu_g)^{\flat_g})^{\sharp_h}$
is the vector field
(section of $\phi^*TM$)
dual to this last one-form under the metric $h$,
and $\abs{N}_h$ is the $h$ norm of $N$.
Then there is a unique function
$\nu_h^\perp\colon\Sigma \to \interval{0,\infty}$
and there is a unique vector field
$\nu_h^\top$ on $\Sigma$
satisfying
\begin{equation}
\label{nu_h_ortho_decomp}
\nu_h = \nu_h^\perp \nu_g + d\phi \, \nu_h^\top.
\end{equation}
(Note that $\nu_h^\top$ is a section of $T\Sigma$,
so that $d\phi \, \nu_h^\top$
is a section of $\phi^*TM$.)

Given any function $u \colon \Sigma \to \R$,
we define on $\Sigma$ the pointwise rescaled function
$u_{h,g}$ and the vector field $u_{h,g}^\top$ by
\begin{equation}
\label{u_h}
u_{h,g}
  =
  \frac{1}{\nu_h^\perp}u
\quad \mbox{and} \quad
u_{h,g}^\top
  \vcentcolon= 
  u_{h,g}\nu_h^\top.
\end{equation}
The geometric motivation for these definitions
is that,
in view of \eqref{nu_h_ortho_decomp},
$u_{h,g}$ is the unique function such that
$u$ is the (scalar) $g$-normal component
of the initial velocity field
of the one-parameter family
$\phi[tu_{h,g},h,\nu_h]$,
and $d\phi \, u_{h,g}^\top$
is then the corresponding tangential component:
\begin{equation*}
\frac{d}{dt}\bigg\vert_{t=0}\phi[tu_{h,g},h,\nu_h]
=u\nu_g + d\phi \, u_{h,g}^\top.
\end{equation*}

\begin{remark}
\label{scaling_of_u_h}
Note that the following scaling laws hold true for any $\lambda>0$:
\[
 u_{\lambda^2 h, \lambda^2 g}
 =
 u_{h,g}, \
 u_{\lambda^2 h, \lambda^2 g}^\top
 =
 \lambda^{-1}u_{h,g}^\top.
\]
\end{remark}

\begin{remark}
\label{conformal_special_case}
Note that
\begin{equation*}
h=\rho^2g
~\Rightarrow~
(
  u_{h,g}=\rho u
  \mbox{ and }
  u_{h,g}^\top=0
),
\end{equation*}
in the special case that $h$ is conformal to $g$.
\end{remark}

\paragraph{Mean curvature and intersection with a given hypersurface
under graphical deformations.}

The lemma below 
contains conditions
under which the deformed map $\phi[u_{h,g},h,\nu_h]$
is an immersion
and, in that event,
presents fundamental information
on its mean curvature,
the intersection of its image
with a hypersurface of $M$
under certain further assumptions,
and its unit normal along this intersection.
In our application we will take
$h$ to be conformal to $g$,
but in the lemma below
we allow the possibility that $\nu_h^\top \neq 0$,
since this more general situation
can be treated with no additional effort
and could be useful in a setting other than $\B^3$.
At each point of $\Sigma$
the quantities of interest
depend on $u$, $\phi$, $g$, and $h$
only in a neighborhood of $p$
(or $\phi(p)$),
and so all assertions of the lemma
are actually local in nature.
However,
in an effort to keep the statement simple,
we present our assumptions and estimates
in global terms.

\begin{lemma}[Variation of mean curvature
and intersection with a given hypersurface]
\label{basic_MC_bdy_lemma}
Let $\alpha \in \interval{0,1}$.
With notation as above,
assume that
\begin{equation}
\label{assumptions_for_basic_MC_bdy_lemma}
\begin{aligned}
\nm{A_\Sigma}_2
  +\nm{\Riem[g]}_2
  +\nm{\overline{D}^g h}_3
&\leq
1,
\\
\nm{h}_0
  +\nm{h^{-1}}_0
&\leq
100 \sqrt{\dim M},
\end{aligned}
\end{equation}
where
$A_\Sigma \vcentcolon= A[\phi,g,\nu_g]$,
$\Riem[g]$ is the Riemann curvature tensor of $(M,g)$,
$\overline{D}^g$ is the Levi-Civita connection induced by $g$,
and each $\nm{{}\cdot{}}_k$
is the $C^k$ norm induced by $g$ on the appropriate vector bundle.
There exists
$\epsilon(\dim M)>0$,
depending on just the dimension of $M$,
such that the following statements hold for any function $u$ on $\Sigma$
satisfying $\nm{u}_{2,\alpha}<\epsilon(\dim M)$.
Here and below
$
 \nm{{}\cdot{}}_{k,\alpha}
 =
 \nm{{}\cdot{}}_{C^{k,\alpha}(\Sigma,\phi^*g)}
$.
\begin{enumerate}[label={\normalfont(\roman*)}]
\item \label{deform_well_defined}
      The map
      $\phi_u \vcentcolon= \phi[u_{h,g}, h, \nu_h]$
      (as defined in \eqref{deformed_phi},
      with $u_{h,g}$ itself defined in \eqref{u_h})
      is an immersion
      with well-defined mean curvature
      \begin{equation*}
      H_u
      \vcentcolon=
      H\bigl[\phi[u_{h,g},h,\nu_h],g,\nu_u\bigr]
      \end{equation*}
      relative to $g$ and the unit normal $\nu_u$
      chosen such that
      the map $(p,t) \mapsto \nu_{tu}(p)$
      is continuous on $\Sigma \times \IntervaL{0,1}$
      and 
      $\nu^{\vphantom{|}}_0=\nu_g$.
\item \label{deform_var_of_H} 
      We have
      \begin{equation*} 
      \left.\frac{d}{dt}\right|_{t=0}
        H_{tu}
      =
      -J_\Sigma u + u_{h,g}^\top H_0.
      \end{equation*}
\item \label{deform_quad_est_of_H}
      There holds the quadratic estimate
      \begin{equation*}
      \nm{H_u-H_0 + J_\Sigma u}_{0,\alpha}
      \leq
      C(\dim M)\nm{u}_{2,\alpha}\bigl( 
        \nm{u}_{2,\alpha}
        +\nm{H_0}_{1,\alpha}
      \bigr)
      \end{equation*}
      for some constant $C(\dim M)$
      depending on $\dim M$
      but independent of $u$.
      The term $\nm{H_0}_{1,\alpha}$
      on the right-hand side can be omitted
      in case $u_{h,g}^\top$ vanishes identically.
\item \label{deform_stays_on_Gamma}
      Suppose $\Gamma \subset M$
      is a geodesically complete, embedded,
      two-sided hypersurface.
      Assume that $\Gamma$
      is totally geodesic under $h$,
      that
      $g|_p=h|_p$
      (as metrics on $T_pM$)
      for each $p \in \Gamma$,
      and that
      $\nu_g|_p \in T_{\phi(p)}\Gamma$
      for each $p \in \phi^{-1}(\Gamma)$.
      Then
      $
       \phi_u(\phi^{-1}(\Gamma))
       \subset
       \Gamma
      $.
\item \label{deform_Robin_cond_implies_orthogonality} 
      Continue to make the assumptions of
      item \ref{deform_stays_on_Gamma}.
      Let $\nu^{\vphantom{|}}_\Gamma$
      be a choice of unit normal to $\Gamma$
      relative to $g$ (and $h$),
      let $A_\Gamma$ be the
      scalar-valued second fundamental form
      of $\Gamma$ with respect to $g$
      and $\nu^{\vphantom{|}}_\Gamma$
      (with sign convention as above, see Section \ref{sec:Nota}),
      and let $\eta$ be the $\phi^*g$
      unit conormal to $\Sigma$
      on $\phi^{-1}(\Gamma)$
      such that
      $
       d\phi \, \eta
       =
       \nu^{\vphantom{|}}_\Gamma \circ \phi
      $.
      If $u$ satisfies the Robin condition
      \begin{equation*}
      \eta u - (A_\Gamma \circ \phi)(\nu_g,\nu_g)u
      =
      0
      \quad \mbox{ on }  \phi^{-1}(\Gamma),  
      \end{equation*}
      then $\nu_u|_p \in T_{\phi_u(p)}\Gamma$
      for each $p \in \phi^{-1}(\Gamma)$.
      \end{enumerate}
\end{lemma}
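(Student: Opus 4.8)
\textbf{Proof plan for Lemma \ref{basic_MC_bdy_lemma}.}
The plan is to treat the five items in order, since each builds on the structure established in the previous ones. For item \ref{deform_well_defined}, I would first record that the hypotheses \eqref{assumptions_for_basic_MC_bdy_lemma} give uniform two-sided bounds on $h$, $h^{-1}$, the curvature of $(M,g)$, and the covariant derivatives of $h$, so that the exponential map $\exp^{(M,h)}$ and its derivatives are controlled on a uniform scale; combined with the bound on $A_\Sigma$ this means that for $\nm{u}_{2,\alpha}$ smaller than some $\epsilon(\dim M)$ the normal displacement $u_{h,g}\nu_h$ stays in the domain where $\exp^{(M,h)}$ is a diffeomorphism (here one uses that $\nu_h^\perp$ is bounded below in terms of $\nm{h}_0,\nm{h^{-1}}_0$, hence $|u_{h,g}|\le C(\dim M)|u|$). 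A standard argument then shows $\phi_u$ is an immersion, the mean curvature $H_u$ relative to $g$ is well-defined, and $\nu_u$ is well-defined by continuity along the path $t\mapsto\nu_{tu}$.

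For item \ref{deform_var_of_H} I would compute the first variation of mean curvature directly. The velocity field of the family $t\mapsto\phi_{tu}$ at $t=0$ is, by the defining property of $u_{h,g}$ recorded just above \eqref{u_h}, equal to $u\nu_g + d\phi\,u_{h,g}^\top$. The normal part $u\nu_g$ contributes $-J_\Sigma u$ to $\frac{d}{dt}H_{tu}$ by the classical second-variation identity (equivalently \eqref{eq:PointwiseJacobi}), while the tangential part $d\phi\,u_{h,g}^\top$ generates a reparametrization of $\phi(\Sigma)$ and thus contributes the Lie derivative $u_{h,g}^\top H_0$ (the derivative of the scalar mean curvature function, pulled back, along the tangential flow). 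Adding the two gives the stated formula.

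Item \ref{deform_quad_est_of_H} is then obtained by writing $H_u - H_0 + J_\Sigma u = \int_0^1\big(\frac{d}{dt}H_{tu} - \frac{d}{dt}\big|_{t=0}H_{tu}\big)\,dt$ and estimating the integrand via item \ref{deform_var_of_H} applied along the family $\phi_{tu}$: the key point is that $\frac{d}{dt}H_{tu}$ is an explicit smooth (rational, with controlled denominators) expression in $u_{h,g}$, its first two derivatives, the geometry of $\phi$, and the geometry of $h$ relative to $g$, all uniformly bounded by \eqref{assumptions_for_basic_MC_bdy_lemma}; hence $t\mapsto\frac{d}{dt}H_{tu}$ is Lipschitz in $t$ with constant $\le C(\dim M)\nm{u}_{2,\alpha}(\nm{u}_{2,\alpha}+\nm{H_0}_{1,\alpha})$ in the relevant H\"older norm, the factor $\nm{H_0}_{1,\alpha}$ entering only through the tangential term $u_{h,g}^\top H_0$ in item \ref{deform_var_of_H} (and so droppable when $\nu_h^\top\equiv 0$, e.g. in the conformal case of Remark \ref{conformal_special_case}). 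This bookkeeping of which terms are genuinely quadratic is the one place requiring care, but it is routine once the first-variation formula is in hand.

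For items \ref{deform_stays_on_Gamma} and \ref{deform_Robin_cond_implies_orthogonality} I would argue as follows. Since $\Gamma$ is totally geodesic for $h$ and $h|_\Gamma=g|_\Gamma$, for any $p\in\phi^{-1}(\Gamma)$ the assumption $\nu_g|_p\in T_{\phi(p)}\Gamma$ gives $\nu_h|_p\in T_{\phi(p)}\Gamma$ as well (the $\flat_g$–$\sharp_h$ construction preserves tangency to $\Gamma$ because the two metrics agree along $\Gamma$), so the $h$-geodesic $t\mapsto\exp^{(M,h)}_{\phi(p)}t\,u_{h,g}(p)\nu_h(p)$ is a geodesic of $\Gamma$ in its induced metric and therefore stays in $\Gamma$; evaluating at $t=1$ yields $\phi_u(p)\in\Gamma$, which is item \ref{deform_stays_on_Gamma}. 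For item \ref{deform_Robin_cond_implies_orthogonality}, the orthogonality $\nu_u|_p\in T_{\phi_u(p)}\Gamma$ amounts to the vanishing along $\phi^{-1}(\Gamma)$ of the function $p\mapsto g(\nu_u, \nu_\Gamma\circ\phi_u)$; I would differentiate this identity along the family $\phi_{tu}$ at $t=0$ and observe that, exactly as in \cites{KapouleasWiygul2017,KapouleasZouCloseToBdy}, the derivative equals $\eta u - (A_\Gamma\circ\phi)(\nu_g,\nu_g)u$ up to a factor that is nonzero under our smallness assumptions — the conormal term coming from the tangential variation of the boundary curve and the $A_\Gamma$ term from the second fundamental form of $\Gamma$ in metric $g$ — so the Robin condition forces the identity to persist for all $t$, hence in particular at $t=1$. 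The main obstacle is not conceptual but organizational: keeping the decomposition \eqref{nu_h_ortho_decomp}–\eqref{u_h} straight through the differentiations, and making sure every constant depends only on $\dim M$ by systematically invoking \eqref{assumptions_for_basic_MC_bdy_lemma}.
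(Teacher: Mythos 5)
Your treatment of items \ref{deform_well_defined}--\ref{deform_stays_on_Gamma} is essentially the paper's approach: the geometric bounds in \eqref{assumptions_for_basic_MC_bdy_lemma} control $\exp^{(M,h)}$ and its derivatives; the initial velocity of $t\mapsto\phi_{tu}$ decomposes as $u\nu_g+d\phi\,u_{h,g}^\top$ and the first variation formula gives item \ref{deform_var_of_H}; the quadratic estimate is a Taylor remainder; and item \ref{deform_stays_on_Gamma} follows because $\nu_h=\nu_g$ along $\phi^{-1}(\Gamma)$ and $\Gamma$ is $h$-totally geodesic. One small bookkeeping correction for item \ref{deform_quad_est_of_H}: the identity you write should carry the extra term $u_{h,g}^\top H_0$ on the right, since $\frac{d}{dt}\big|_{t=0}H_{tu}=-J_\Sigma u+u_{h,g}^\top H_0$; it is this isolated term, not a Lipschitz constant of $\partial_t H_{tu}$, that produces the factor $\|H_0\|_{1,\alpha}$ (the paper's double-integral remainder is genuinely $O(\|u\|_{2,\alpha}^2)$).

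Item \ref{deform_Robin_cond_implies_orthogonality} has a genuine gap. You compute the vanishing of $f(p,t):=g(\nu_{tu}|_p,\nu_\Gamma\circ\phi_{tu}(p))$ and of $\partial_t f$ at $t=0$, and then assert this ``forces the identity to persist for all $t$''. That inference does not follow: two vanishing conditions at one parameter value do not propagate without an evolution equation in $t$. What the paper does is replace $f$ by the vector field $Y:=d\Phi_u\overline{\eta}$, project it onto the $\Gamma$-tangential component $Y^\top$, and observe — because $T_u=d\Phi_u\partial_t$ is an $h$-geodesic field tangent to the $h$-totally-geodesic hypersurface $\Gamma$ — that $Y^\top$ satisfies the second-order linear (Jacobi-type) ODE
\begin{equation*}
D^h_{\partial_t}D^h_{\partial_t}Y^\top
=u_{h,g}^2\,(R^h\circ\Phi_u)(T,Y^\top)\,T
\end{equation*}
along each $t$-fiber over $\phi^{-1}(\Gamma)$, with initial data $Y^\top(\cdot,0)=0$ (from $\nu_g\in T\Gamma$) and $D^h_{\partial_t}Y^\top(\cdot,0)=0$ (from the Neumann condition $\eta u_{h,g}=0$, which the paper first shows is \emph{equivalent} to your Robin condition on $u$ using $\eta\nu_h^\perp=(A_\Gamma\circ\phi)(\nu_g,\nu_g)$). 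Uniqueness for this linear ODE then gives $Y^\top\equiv0$, hence the orthogonality at $t=1$. Your sketch needs an analogous closed evolution equation for $f$ in $t$; without it, the conclusion is unsupported.
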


\begin{proof}
We start by introducing the map
\begin{align*}
\Phi_u \colon \Sigma \times \R &\to M \\
(p,t) &\mapsto \phi_{tu}(p).
\end{align*}
Writing $\overline{D}^h$ for the $h$
Levi-Civita connection on $TM$,
there is a unique connection $D^h$ on $\Phi_u^*TM$
satisfying the chain rule
$
 D^h_V (X \circ \Phi_u)
 =
 (\overline{D}^h_{d\Phi_u V}X) \circ \Phi_u
$
for any smooth sections
$V$ of $T(\Sigma \times \R)$
and $X$ of $TM$;
$D^h$ is torsion-free in the sense that
$
 D^h_V d\Phi_u W - D^h_W d\Phi_u V
 =
 d\Phi_u [V,W]
$
and compatible with $h$ in the sense that
$
 V(h \circ \Phi_u)(X,Y)
 =
 (h \circ \Phi_u)(D^h_V X, Y)
   +(h \circ \Phi_u)(X, D^h_V Y)
$
for any smooth sections $V,W$ of $T(\Sigma \times \R)$
and $X,Y$ of $\Phi_u^*TM$.

Defining $T_u$, a section of $\Phi_u^*TM$,
by
$T_u \vcentcolon= d\Phi_u \partial_t$,
we have $D^h_{\partial_t}T_u=0$
and,
for any section $V$ of $T\Sigma$,
$D^h_{\partial_t}d\Phi_uV = D^h_V T_u$.
We also compute
\begin{equation}
\label{system_for_pushforward}
\left\{\begin{aligned}
d\Phi_uV|_{t=0}
  &=
  d\phi \, V,
\\
D^h_{\partial_t}d\Phi_uV|_{t=0}
  &=
  (Vu_{h,g})\nu_h + S_hV,
\\
D^h_{\partial_t}D^h_{\partial_t}\Phi_*V
  &=
  (R^h \circ \Phi)(T_u,\Phi_*V)T_u,
\end{aligned}\right.
\end{equation}
where
$S_h$ is the shape operator of $\Sigma$
relative to $h$
(obtained by raising via $\phi^*h$
an index of $A[\phi,h,\nu_h]$)
and
$R^h$ is the Riemann curvature tensor of $(M,h)$
(with suitable sign and ordering conventions).
Item \ref{deform_well_defined}
now follows in view of the bounds
assumed on the background geometry
($g$, $h$, and $A_\Sigma$).

Item \ref{deform_var_of_H}
is an immediate consequence
of the usual formula for the variation of mean curvature
in conjunction with
the definition of $\phi_u$,
in item \ref{deform_well_defined}
via \eqref{deformed_phi},
and the definition of $u_{h,g}$, in \eqref{u_h}.
To prove item \ref{deform_quad_est_of_H},
we use the identity
\begin{equation*}
(H_u-H_0-\partial_t|_{t=0}H_{tu})|_p
=
\int_0^1 \int_0^t \partial_s^2 H_{su} \, ds \, dt
\end{equation*}
and the bound
$
 \nm{\partial_s^2H_{su}}_{0,\alpha}
 \leq
 C(\dim M)\nm{u}_{2,\alpha}^2
$,
which can be obtained with the aid of
\eqref{system_for_pushforward}
for all $u$ sufficiently small
in terms of the assumed bounds
on the background geometry.

Item \ref{deform_stays_on_Gamma}
is obvious,
given that $\nu_h=\nu_g$ on $\phi^{-1}(\Gamma)$
under the assumptions.
For item \ref{deform_Robin_cond_implies_orthogonality}
we first claim that the Robin condition posited on $u$
is equivalent to the Neumann condition
$\eta u_{h,g} = 0$ on $u_{h,g}$
(and we note that $\eta$
is also a $\phi^*h$ unit conormal
to $\Sigma$ on $\phi^{-1}(\Gamma)$).
This equivalence follows immediately
from the definition \eqref{u_h},
the fact
that
$
 g|_\Gamma
 =
 h|_\Gamma
$
(and so $\nu_h=\nu_g$ on $\phi^{-1}(\Gamma)$),
and the computation
\begin{equation*}
\left.\eta \nu_h^\perp\right|_{\phi^{-1}(\Gamma)}
=
(A_\Gamma \circ \phi)(\nu_g,\nu_g),
\end{equation*}
which in turn follows from the assumption that $\Gamma$
is totally geodesic with respect to $h$.

Next we will show that the Neumann condition
$\eta u_{h,g} = 0$ implies the conclusion
of item \ref{deform_Robin_cond_implies_orthogonality}.
To this end
let $\overline{\eta}$ be any smooth extension
of $\eta$ to all of $\Sigma$
and let $\overline{\nu}^{\vphantom{|}}_\Gamma$
be any smooth extension of $\nu^{\vphantom{|}}_\Gamma$
to all of $M$
and define sections $X,Y,T$ of $\Phi_u^*TM$ by
\begin{align*}
X
&\vcentcolon=
  \overline{\nu}^{\vphantom{|}}_\Gamma \circ \Phi_u,
\\
Y
&\vcentcolon=
  d\Phi_u\overline{\eta},
  \quad \mbox{and}
\\
T(p,t)
&\vcentcolon=
  d\Phi_1|_{(p,tu(p))} \partial_t.
\end{align*}
In particular
$T({}\cdot{},0)=\nu_h$,
$(h \circ \Phi_u)(T,T)=1$,
$T_u = d\Phi_u \partial_t = u_{h,g}T$,
and
$D^h_{\partial_t}T=0$.
(Here and below
$u_{h,g}$ has been extended fiberwise constantly
to $\Sigma \times \R$.)
By the assumption that $\Gamma$
is totally geodesic under $h$
we also have
$(h \circ \Phi_u)(X,T)=0$
and
$D^h_{\partial_t}X=0$
on $\phi^{-1}(\Gamma) \times \R$.
Given any section $Z$ of $\Phi_u^*TM$,
we further define the section
\begin{equation}
\label{Z_perp_def}
Z^\top
  \vcentcolon=
  Z-(h \circ \Phi_u)(X,Z)X.
\end{equation}
Note that
$
 D^h_{\partial_t}(Z^\top)
 =
 (D^h_{\partial_t}Z)^\top
$
on $\phi^{-1}(\Gamma) \times \R$.
We will complete the proof
by showing that $Y^\top$ vanishes
on $\phi^{-1}(\Gamma) \times \R$.

The assumptions of item \ref{deform_stays_on_Gamma}
that
$g|_\Gamma=h|_\Gamma$
and 
$\nu_g|_p \in T_{\phi(p)}\Gamma$
for each
$p \in \phi^{-1}(\Gamma)$
clearly imply
\begin{equation}
\label{Y_top_initial_value}
Y^\top({}\cdot{},0)=0
\mbox{ on }
\phi^{-1}(\Gamma).
\end{equation}
We also compute
\begin{equation*}
D^h_{\partial_t}Y
=
D^h_{\overline{\eta}}(u_{h,g}T)
=
(\overline{\eta} u_{h,g})T
  +u_{h,g} D^h_{\overline{\eta}}T,
\end{equation*}
so that in particular
\begin{equation}
\label{D_dt_Y_initial_normal}
(h \circ \phi)(\nu_h,~ D^h_{\partial_t}Y|_{t=0})
=
\eta u_{h,g}
\mbox{ on } 
\phi^{-1}(\Gamma)
\end{equation}
and, for any smooth vector field $W$ on $\Sigma$,
\begin{equation}
\label{D_dt_Y_initial_tangential}
(h \circ \phi)(d\phi_u W,~  D^h_{\partial_t}Y|_{t=0})
=
-u_{h,g} A_\Sigma^h(\eta, W)
\mbox{ on } \phi^{-1}(\Gamma),
\end{equation}
where $A_\Sigma^h \vcentcolon= A[\phi,h,\nu_h]$
is the second fundamental form
of $\phi$ relative to the ambient metric
$h$ and the choice of unit normal $\nu_h$.
However, $d\phi \, \eta = X \circ \phi$
on $\phi^{-1}(\Gamma)$,
so the assumption that $\Gamma$
is totally geodesic with respect to $h$
implies that
$A_\Sigma^h(\eta,W)=0$
whenever $W$ is $\phi^*h$ orthogonal to $\eta$.
(Here we use the fact that for each
$p \in \phi^{-1}(\Gamma)$
and each $w \in T_p\Sigma$
with $\phi^*h(w,\eta|_p)=0$
our assumptions
ensure the existence of a curve
which lies on $\Sigma$,
passes through $p$ with velocity $w$,
and has image under $\phi$ contained in $\Gamma$.)

Recall that
$D^h_{\partial_t}Y^\top=(D^h_{\partial_t}Y)^\top$
on $\phi^{-1}(\Gamma) \times \R$
and that $X(\cdot,0)=d\phi \, \eta$
on $\phi^{-1}(\Gamma)$,
so that
$D^h_{\partial_t}Y^\top(\cdot,0)$
is 
by \eqref{Z_perp_def}
obviously orthogonal to
$d\phi \, \eta$ everywhere on $\phi^{-1}(\Gamma)$.
Thus, provided $\eta u_{h,g} = 0$,
it follows from
\eqref{D_dt_Y_initial_normal}
and \eqref{D_dt_Y_initial_tangential}
that
\begin{equation}
\label{Y_top_initial_derivative}
D^h_{\partial_t}Y^\top({}\cdot{},0)=0
\mbox{ on } \phi^{-1}(\Gamma).
\end{equation}

Next we compute
\begin{equation*}
D^h_{\partial_t}D^h_{\partial_t}Y
=
D^h_{\partial_t}D^h_{\overline{\eta}}u_{h,g}T
=
u_{h,g}^2(R^h \circ \Phi_u)(T,Y)T.
\end{equation*}
Since $T$ is everywhere
on $\phi^{-1}(\Gamma) \times \R$
tangential to $\Gamma$
and since $\Gamma$ is $h$ totally geodesic,
using the Codazzi equation
we obtain
\begin{equation}
\label{Y_top_ODE}
D^h_{\partial_t}D^h_{\partial_t}Y^\top
=
u_{h,g}^2(R^h \circ \Phi_u)(T,Y^\top)T
\mbox{ on } \phi^{-1}(\Gamma)
\end{equation}
The proof is now concluded by the observation that the ODE system
\eqref{Y_top_ODE}
subject to the initial conditions
\eqref{Y_top_initial_value}
and \eqref{Y_top_initial_derivative}
has only the trivial solution
$Y^\top=0$
on $\phi^{-1}(\Gamma) \times \R$.
\end{proof}

\section{Asymptotic behavior of the Ketover free boundary minimal surfaces}\label{app:convergence_large_g}

Considering a pair of free boundary minimal surfaces with the same topology and symmetry group, one might ask whether any of the two surfaces can be obtained using a suitable equivariant min-max approach. 
In fact, as mentioned in the introduction of the present article, Ketover \cite{KetoverFB} used equivariant min-max theory to construct a family $\{\Sigma_g^{\mathrm{Ket}}\}_{g\in\N}$ of free boundary minimal surfaces satisfying the properties \ref{prop:convergence_large_g-i}--\ref{prop:convergence_large_g-iii} given in Proposition \ref{prop:convergence_large_g} below. 
The construction involves sweepouts of $\B^3$ whose slices are equivariantly isotopic to the free boundary minimal surfaces $\Sigma_g^{\mathrm{KL}}$ constructed by Kapouleas--Li \cite{KapouleasLiDiscCCdesing} as well as to our surfaces $\Sigma_g^{\mathrm{CSW}}$ found in Theorem~\ref{thm:ConstructFirst} (for any given $g$ for which the corresponding surfaces exist).
Min-max theory does \emph{not} answer the question whether or not the resulting free boundary minimal surface $\Sigma_g^{\mathrm{Ket}}$ coincides with either $\Sigma_g^{\mathrm{KL}}$ or $\Sigma_g^{\mathrm{CSW}}$.
Nevertheless, one can show that $\Sigma_g^{\mathrm{Ket}}$ and $\Sigma_g^{\mathrm{KL}}$ 
have the same asymptotic behavior as the genus $g$ tends to infinity, namely convergence in the sense of varifolds to the union of the horizontal disc and the critical catenoid. 
This statement is part of \cite[Theorem~1.1]{KetoverFB}; in the proof however (cf. section~4.3 of \cite{KetoverFB}) the possibility that $\Sigma_g^{\mathrm{Ket}}$ has the same asymptotic behavior as our surface $\Sigma_g^{\mathrm{CSW}}$, namely convergence to $\K_0\cup\B^2\cup -\K_0$ for $g\to\infty$, is actually not excluded.  
More precisely, the argument for Claim~4 in the proof of Proposition \ref{prop:convergence_large_g} below is missing in \cite{KetoverFB}, and the scope of this appendix is to fill this gap exploiting
the study of the catenoidal annuli in Section \ref{subsec:building_blocks}. 
Following the ideas in \cite{KetoverFB} we formulate a full proof of the convergence result Proposition~\ref{prop:convergence_large_g} with a similar approach as in Section~3 of \cite{Buzano2021}. 
Especially \cite[Lemma 2.9]{Buzano2021}, which we restate here for the convenience of the reader, is crucial for several of the arguments we are about to present. 
As defined in \eqref{cycdef}, $\cyc_{g+1}$ denotes the cyclic group of order $g+1$, identified with the subgroup of $\SOgroup(3)$ corresponding to rotations around the $z$-axis of angles multiple of $2\pi/(g+1)$. 

\begin{lemma}[{\cite[Lemma 2.9]{Buzano2021}}]\label{lem:genus}
Given $1\leq g\in\N$, let $\Sigma\subset\R^3$ be any closed, connected, embedded $\cyc_{g+1}$-equivariant surface of genus $\gamma\in\{1,\ldots,g\}$. 
If $\Sigma$ is disjoint from the axis of rotation then $\gamma=1$. 
If $\Sigma$ intersects the axis of rotation then the number of intersections is $4$ and $\gamma=g$. 
\end{lemma}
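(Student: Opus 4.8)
\textbf{Proof plan for Lemma \ref{lem:genus}.}
The plan is to reduce the statement to a counting argument on the quotient orbifold $\Sigma/\cyc_{g+1}$ via the Riemann--Hurwitz formula. First I would fix generators: let $\rho$ denote the generator of $\cyc_{g+1}$ acting as rotation by $2\pi/(g+1)$ about the axis $\axis{z}$, and note that since $\Sigma$ is embedded, connected and $\cyc_{g+1}$-invariant, the action of $\cyc_{g+1}$ on $\Sigma$ is by orientation-preserving diffeomorphisms (as $\cyc_{g+1} < \SOgroup(3)$). The fixed-point set of any nontrivial element of $\cyc_{g+1}$ on $\R^3$ is exactly the axis $\axis{z}$, so the fixed-point set of any nontrivial $\rho^j$ on $\Sigma$ is precisely $\Sigma \cap \axis{z}$, which is a finite set of points because $\Sigma$ is a smooth embedded surface not containing the axis (if it did contain a segment of the axis, analyticity of minimal-type surfaces is not available here, but a connected embedded surface invariant under a rotation and containing the axis would have to be singular along the axis, so we may exclude this — or simply note the hypothesis forces a finite intersection). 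Denote by $N \vcentcolon= \#(\Sigma \cap \axis{z})$.

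The two cases of the lemma then correspond to $N = 0$ and $N > 0$. When $N = 0$, the action of $\cyc_{g+1}$ on $\Sigma$ is free, so the quotient $\Sigma' \vcentcolon= \Sigma/\cyc_{g+1}$ is a closed orientable surface and $\Sigma \to \Sigma'$ is an unbranched $(g+1)$-fold cover; the multiplicativity of the Euler characteristic gives $\chi(\Sigma) = (g+1)\chi(\Sigma')$, i.e. $2 - 2\gamma = (g+1)(2 - 2\gamma')$ where $\gamma' = \genus(\Sigma') \geq 0$. Since $1 \leq \gamma \leq g$ we have $2 - 2\gamma \in \{2-2g, \ldots, 0\}$, and the only way $(g+1)$ divides $2 - 2\gamma$ with this constraint and with $2 - 2\gamma' \leq 2$ is $2 - 2\gamma' = 0$ and hence $2 - 2\gamma = 0$, giving $\gamma = 1$ (the value $2-2\gamma' = 2$, i.e. $\Sigma'$ a sphere, would force $2-2\gamma = 2(g+1) > 0$, impossible for $\gamma \geq 1$). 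When $N > 0$, each of the $N$ intersection points is a fixed point of the whole group $\cyc_{g+1}$ (since the axis is pointwise fixed only by rotations, but every $\rho^j$ fixes the axis setwise and fixes each axis point), so $\Sigma \to \Sigma'$ is branched precisely over the $N$ images of these points, each with full ramification index $g+1$. The Riemann--Hurwitz formula reads $2 - 2\gamma = (g+1)(2 - 2\gamma') - N\bigl((g+1) - 1\bigr) = (g+1)(2-2\gamma') - Ng$. Rearranging, $2\gamma - 2 = Ng - (g+1)(2 - 2\gamma')$. To pin down $\gamma = g$ I would argue that $\gamma' = 0$: the quotient surface $\Sigma'$ inherits, via the projection along the axis or via the reflection/symmetry structure implicit in the setup, a planar-type description — more carefully, one shows that a connected embedded $\cyc_{g+1}$-invariant surface meeting the axis has quotient of genus zero because the fundamental domain is a disc-with-slits region; this is where I would borrow the argument structure of \cite[Lemma 2.9]{Buzano2021}. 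Granting $\gamma' = 0$, we get $2\gamma - 2 = Ng - 2(g+1) = (N-2)g - 2$, so $\gamma = \frac{(N-2)g}{2}$; since $1 \leq \gamma \leq g$ and $N$ is even (an embedded surface crosses a line an even number of times, counting transversality which one may arrange), the only possibility is $N - 2 = 2$, i.e. $N = 4$ and $\gamma = g$.

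The main obstacle I anticipate is justifying $\gamma' = 0$ in the branched case and, relatedly, that $N$ is even and that the ramification is total at each axis point — these are exactly the geometric inputs that make the Riemann--Hurwitz bookkeeping collapse to a unique answer, and they rely on the embeddedness and connectedness of $\Sigma$ in an essential way rather than on pure covering-space theory. For the parity of $N$ one uses that $\Sigma$ separates a neighborhood of the axis into two components and an orientation/intersection-number argument; for totality of ramification one uses that the local isotropy at an axis point is the full $\cyc_{g+1}$. For $\gamma' = 0$ I would exhibit the quotient explicitly or cite the computation in \cite{Buzano2021} verbatim, since the present Lemma \ref{lem:genus} is literally quoted from there as \cite[Lemma 2.9]{Buzano2021}; thus in the write-up it suffices to invoke that reference, and the role of the present appendix is only to record the statement in the form needed for Proposition \ref{prop:convergence_large_g}. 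Accordingly, the cleanest proof is: \emph{this is \cite[Lemma 2.9]{Buzano2021}}, preceded by the short remark above identifying the fixed-point set of the $\cyc_{g+1}$-action with $\Sigma \cap \axis{z}$ so that the hypotheses match.
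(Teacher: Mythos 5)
The paper does not prove Lemma \ref{lem:genus} at all: it states it as a direct quotation of \cite[Lemma 2.9]{Buzano2021} and relies entirely on that citation. Your final paragraph correctly identifies this and reaches the same conclusion, namely that the appropriate move is to invoke \cite{Buzano2021}, so your proposal matches the paper's (trivial) handling of this step.

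Your auxiliary Riemann--Hurwitz sketch is a reasonable outline but has two points worth flagging. First, the claim that $\cyc_{g+1}$ acts on $\Sigma$ by orientation-preserving diffeomorphisms does not follow solely from $\cyc_{g+1} < \SOgroup(3)$: an orientation-preserving ambient isometry preserving an embedded surface can still swap its two sides, and one needs a little care (for instance using the parity of $g+1$, or the local graph structure at an axis point) to rule this out. Second, the step you flagged as the main obstacle — showing $\gamma' = 0$ in the branched case — is in fact a free consequence of the same bookkeeping once you know $N$ is even and positive (hence $N \geq 2$): if $\gamma' \geq 1$ then $2-2\gamma' \leq 0$ and Riemann--Hurwitz gives $2-2\gamma \leq -Ng \leq -2g$, i.e. $\gamma \geq g+1$, contradicting the hypothesis $\gamma \leq g$. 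So the only genuinely geometric input needed beyond the hypotheses is the evenness of $N$ (and totality of ramification), not the genus of the quotient. This does not affect the correctness of your conclusion, since you defer to \cite{Buzano2021} anyway, but it slightly rearranges where the geometric content of the argument actually lives.
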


\begin{corollary}\label{cor:genus}
Given $1\leq g\in\N$, let $\Omega\subset\R^3$ be any convex, bounded, $\cyc_{g+1}$-equivariant domain with piecewise smooth boundary and let $\Sigma\subset\Omega$ be any connected, properly embedded, $\cyc_{g+1}$-equivariant surface of genus $\gamma\in\{1,\ldots,g\}$, possibly with boundary. 
Then $\Sigma$ has genus $\gamma\in\{1,g\}$ and if $\Sigma$ intersects the axis of rotation then $\gamma=g$. 
\end{corollary}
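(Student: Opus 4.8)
The plan is to reduce Corollary \ref{cor:genus} to Lemma \ref{lem:genus} by a doubling (or capping-off) construction, which converts the properly embedded surface-with-boundary $\Sigma \subset \Omega$ into a closed embedded surface in $\R^3$ to which the lemma applies, all the while respecting the $\cyc_{g+1}$-equivariance and keeping track of the genus. First I would observe that since $\Omega$ is convex, bounded, and $\cyc_{g+1}$-equivariant with piecewise smooth boundary, its boundary $\partial\Omega$ is a topological sphere on which $\cyc_{g+1}$ acts with exactly two fixed points (the intersections of $\partial\Omega$ with the axis of rotation, if that axis meets $\Omega$, which it does by convexity and equivariance). The boundary $\partial\Sigma$ is then a (possibly empty) $\cyc_{g+1}$-equivariant collection of closed curves in $\partial\Omega$.

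The key step is to produce from $\Sigma$ a closed embedded $\cyc_{g+1}$-equivariant surface $\widehat\Sigma \subset \R^3$ without changing the relevant genus count. Concretely: first take $\Sigma \cup \refl_{\partial\Omega}\text{-type reflection}$—but since $\partial\Omega$ need not be a sphere about which reflection is an isometry, a cleaner route is to cap off each boundary curve $\gamma \subset \partial\Sigma$ by a disc in $\partial\Omega$. The curves of $\partial\Sigma$ divide $\partial\Omega \cong \Sp^2$ into regions; choosing a $\cyc_{g+1}$-equivariant selection of these regions to glue onto $\Sigma$ yields a closed embedded surface $\widehat\Sigma$. I would then argue that the genus is controlled: capping off boundary components of a connected surface can only decrease the genus or leave it unchanged, and more importantly, because $\partial\Sigma$ lies in a sphere, a careful accounting (using that $\Sigma$ is connected and that the caps are planar pieces of $\Sp^2$, hence add no handles) shows $\genus(\widehat\Sigma) = \genus(\Sigma) = \gamma$. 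I would also need to check that $\widehat\Sigma$ can be taken embedded and $\cyc_{g+1}$-invariant — embeddedness follows since $\Sigma$ lies in the interior of $\Omega$ except along $\partial\Sigma \subset \partial\Omega$, and the caps lie in $\partial\Omega$, so no new self-intersections are created after a small equivariant perturbation pushing the caps slightly inward; invariance follows from choosing the collection of capping regions $\cyc_{g+1}$-equivariantly, which is possible since the group acts on the set of complementary regions.

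Once $\widehat\Sigma$ is in hand, I would apply Lemma \ref{lem:genus} directly: it forces $\genus(\widehat\Sigma) \in \{1, g\}$, with $\genus(\widehat\Sigma) = g$ precisely when $\widehat\Sigma$ meets the axis of rotation. Since $\genus(\widehat\Sigma) = \gamma$ and since $\widehat\Sigma$ meets the axis if and only if $\Sigma$ does (the caps lying in $\partial\Omega$ contain the two axis points $\partial\Omega \cap \axis{z}$, but these are not new intersection points of the surface with the axis in the interior — here one must be slightly careful and instead note that if $\Sigma$ is disjoint from the axis then so is a slightly-pushed-in $\widehat\Sigma$, giving $\gamma = 1$; and if $\Sigma$ meets the axis then $\widehat\Sigma$ does too, giving $\gamma = g$), the conclusion of the corollary follows.

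The main obstacle I anticipate is the genus bookkeeping in the capping-off step, together with making the construction rigorously $\cyc_{g+1}$-equivariant and embedded simultaneously. One must ensure that gluing in the planar caps from $\partial\Omega \cong \Sp^2$ does not inadvertently add handles — this requires a Mayer–Vietoris or Euler-characteristic argument using that $\partial\Omega$ is a sphere and that the caps are discs (each complementary region of a system of disjoint simple closed curves in $\Sp^2$ is a planar surface, and one picks the gluing so that the resulting closed surface has the same first Betti number as $\Sigma$ modulo the contribution of $\partial\Sigma$). A secondary subtlety is the degenerate cases where $\partial\Sigma$ meets the axis-points of $\partial\Omega$, or where $\Sigma$ has no boundary at all (in which case the corollary is immediate from the lemma with $\Omega$ replaced by a large ball, after noting $\Sigma$ is automatically closed). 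I would handle these by a small equivariant perturbation argument to reduce to the generic configuration, exactly in the spirit of the proof of \cite[Lemma 2.9]{Buzano2021}.
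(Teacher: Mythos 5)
Your overall strategy — cap off $\partial\Sigma$ to build a closed embedded $\cyc_{g+1}$-equivariant surface and apply Lemma \ref{lem:genus} — is the same as the paper's, but the particular capping construction you propose fails to control the genus, which is precisely the content of the corollary. You suggest attaching to $\Sigma$ a $\cyc_{g+1}$-equivariant subcollection of the complementary regions of $\partial\Sigma$ in $\partial\Omega\cong\Sp^2$. Those regions are planar but need not be discs, and the Euler-characteristic count does not give back $\genus(\Sigma)$: if $\partial\Sigma$ consists of $n$ circles and you attach $k$ regions $R_1,\ldots,R_k$ having $b_i$ boundary circles each (so $\sum_i b_i=n$), then $\chi\bigl(\Sigma\cup\bigcup_i R_i\bigr)=\chi(\Sigma)+\sum_i(2-b_i)=2-2\gamma+2(k-n)$, so the capped surface has genus $\gamma+(n-k)$. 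This equals $\gamma$ exactly when $k=n$, i.e.\ when every attached region is a disc — but for nested boundary curves that is unachievable: with three parallel latitude circles the adjacency graph of the four complementary regions is a path, any admissible two-coloring selects two regions one of which is an annulus, and attaching an annulus across two boundary circles of a connected surface raises the genus by one. Your claim that ``the caps are planar pieces of $\Sp^2$, hence add no handles'' is therefore false. Insisting instead on a disjoint choice of discs in $\partial\Omega$, one per boundary curve, is also blocked by nesting: the middle of three nested circles bounds two discs on $\Sp^2$, both of which contain another component of $\partial\Sigma$.

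The paper evades both obstructions simultaneously. It first uses the convexity of $\Omega$ to extend $\Sigma$ radially, without changing the genus, to a surface properly embedded in a round ball $B\supset\Omega$, and then caps each boundary circle $\beta\subset\partial B$ by the set $D_\beta=[1,R_\beta]\beta\cup R_\beta S_\beta$: a radial conical collar joined to a dilated copy of the smaller spherical cap $S_\beta$ bounded by $\beta$, with the radius $R_\beta=1+\area(S_\beta)^{1/2}$. Each $D_\beta$ is a genuine disc (so $\chi$ increases by exactly one per boundary component, preserving the genus), and the curve-dependent radii render the $D_\beta$ pairwise disjoint even for nested curves, so the capped-off surface is embedded; equivariance is automatic since $S_\beta$ and $R_\beta$ are determined canonically by $\beta$. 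This escape from $\partial\Omega$ into the larger ball is the ingredient your proposal is missing.
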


To avoid ambiguities, we wish to stress that -- as in the core of this article -- when writing \emph{properly embedded} we mean that the surface in question is compact, embedded, that its boundary lies in the boundary of the ambient domain and that there are no additional interior contact points. 
In particular, it follows from the convexity of $\Omega$ that any such surface must be orientable.

For our purposes, we shall say that an open domain $\Omega\subset\R^3$ (as in the statement of the corollary) has \emph{piecewise smooth} boundary if there exist finitely many smooth closed curves lying on $\partial\Omega$ whose complement consists of (finitely many) open domains each admitting a smooth parametrization up to (and including) the boundary. For instance, that is the case if we consider the domain obtained by intersecting the open unit ball in $\R^3$ with a open half space. One could easily work in greater generality, but at the cost of unnecessarily tedious details.

\begin{proof} 
By definition, the genus of $\Sigma$ is equal to the genus of the closed surface which is obtained by closing up each boundary component of $\Sigma$ by gluing in a topological disc. 
In view of Lemma~\ref{lem:genus} it suffices to show that this procedure can be done while preserving embeddedness and $\cyc_{g+1}$-equivariance. 
Without loss of generality, we may assume that $\Omega$ is a ball $B$ around the origin. 
Indeed, possibly after a translation, we have $0\in\Omega$. Then we may choose a ball $B\supset\Omega$ around $0$ and since $\Omega$ is convex, $\Sigma$ can be extended radially to a properly embedded, $\cyc_{g+1}$-equivariant surface in $B$ having the same genus as $\Sigma$. 
Such an extension will not in general be smooth; however, as we are about to describe we will take care of the smoothening at the end of the proof.

Let $\beta\subset\partial\Sigma\subset\partial B$ be any boundary component of $\Sigma$ and let $S_\beta$ be the one of the two domains in $\partial B$ bounded by $\beta$ which has smaller area. (If both have equal area, the choice is arbitrary and in that case the choice causes no problems for the following argument.)  
Given $R_\beta\geq1$, let $D_\beta\vcentcolon=[1,R_\beta]\beta\cup R_\beta S_\beta$, where the multiplication is to be understood as scaling. 
Then, $D_\beta$ is a topological disc. 
Choosing $R_\beta=1+\area(S_\beta)^{1/2}$ we can achieve that the corresponding discs $D_\beta$, as one varies $\beta$, are pairwise disjoint. 
Indeed, let $\beta_1$ and $\beta_2$ be two arbitrary boundary components of $\Sigma$ which we label such that $\area(S_{\beta_1})\leq\area(S_{\beta_2})$. 
Assuming $\beta_1\neq\beta_2$ we have $\beta_1\cap\beta_2=\emptyset$. 
If $S_{\beta_1}\cap S_{\beta_2}=\emptyset$ then $D_{\beta_1}\cap D_{\beta_2}=\emptyset$ follows trivially. 
If $S_{\beta_1}\cap S_{\beta_2}\neq\emptyset$ then $S_{\beta_1}\subset S_{\beta_2}$ and $R_{\beta_1}<R_{\beta_2}$ which implies $D_{\beta_1}\cap D_{\beta_2}=\emptyset$. 
Smoothing the union $\bigcup_\beta D_\beta\cup\Sigma$ equivariantly, we obtain a closed, embedded, $\cyc_{g+1}$-equivariant surface to which Lemma~\ref{lem:genus} applies. 
\end{proof}

In the proof of Proposition \ref{prop:convergence_large_g} below we will frequently use the subadditivity of the genus: 
given smooth embedded surfaces $\Sigma_1$ and $\Sigma_2$, having at most the smooth boundary in common, we have \(\genus(\Sigma_1)+\genus(\Sigma_1)\leq\genus(\Sigma_1\cup\Sigma_2)\). 
In particular, if the surface $\Sigma$ is a subset of the surface $\Gamma$, then $\genus(\Gamma\setminus\Sigma)\leq\genus(\Gamma)-\genus(\Sigma)\leq \genus(\Gamma)$. 

\begin{proposition}\label{prop:convergence_large_g}
Let $\{\Gamma_g\}_{g\geq 1}$ be a sequence of properly embedded, free boundary minimal surfaces in the Euclidean unit ball $\B^3$ with the following properties for each $1\leq g\in\N$: 
\begin{enumerate}[label={\normalfont(\roman*)}]
\item\label{prop:convergence_large_g-i} $\Gamma_g$ is $\dih_{g+1}$-equivariant, and contains the horizontal axes 
$\xi_1,\ldots,\xi_{g+1}$, where we define $\xi_\ell=\{(r\cos(\ell\pi/(g+1)),r\sin(\ell\pi/(g+1)),0)\st r\in[-1,1]\}$ for each $\ell\in\{1,\ldots,g+1\}$;
\item\label{prop:convergence_large_g-ii} $\genus(\Gamma_g)\leq g$ 
and the number of boundary components of $\Gamma_g$ is bounded uniformly in $g$;
\item\label{prop:convergence_large_g-iii} $\area(\B^2)<\area(\Gamma_g)<\area(\B^2\cup\K_{\mathrm{crit}})$, 
where 
$\gls{Kcrit} %\K_{\mathrm{crit}}
\subset\B^3$ 
denotes the critical catenoid and $\B^2\subset\B^3$ the horizontal unit disc. 
\end{enumerate}
Then $\Gamma_g$ converges to $\B^2\cup\K_{\mathrm{crit}}$ as $g\to\infty$ in the sense of varifolds and locally smoothly (with multiplicity one) away from the intersection circle $\B^2\cap\K_{\mathrm{crit}}$. 
Moreover, $\Gamma_g$ has genus $g$ and exactly three boundary components for all sufficiently large $g$.  
\end{proposition}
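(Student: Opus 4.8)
The strategy is a compactness-and-classification argument, following the blueprint of \cite{KetoverFB} (and the analogous reasoning in \cite{Buzano2021}), with the missing step supplied by the area analysis of catenoidal annuli from Section \ref{subsec:building_blocks}. First I would fix the symmetry group $\dih_{g+1}$ and exploit the uniform area bound \ref{prop:convergence_large_g-iii} together with the uniform genus bound \ref{prop:convergence_large_g-ii}. By standard compactness for free boundary minimal surfaces with uniformly bounded area and genus (Sharp-type compactness in the free boundary setting), after passing to a subsequence $\Gamma_g$ converges as varifolds to a (possibly non-smooth, integral) stationary free boundary varifold $V_\infty$ in $\B^3$, with smooth multiplicity-one convergence away from finitely many points where curvature concentrates; each such concentration point carries, after rescaling, a nontrivial complete minimal surface (or half-surface, if the point lies on $\partial\B^3$) in $\R^3$ or a half-space. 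The rotational symmetry forces all concentration points to lie on the rotation axis $\axis{z}$ (otherwise, by the $\cyc_{g+1}$-action, there would be $g+1$ of them, violating the genus bound for large $g$ via Corollary \ref{cor:genus}).

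Next I would identify $V_\infty$. Because each $\Gamma_g$ contains the $g+1$ equatorial diameters $\xi_1,\dots,\xi_{g+1}$, which become dense in $\B^2$ as $g\to\infty$, the support of $V_\infty$ must contain the equatorial disc $\B^2$; stationarity and the free boundary condition, combined with the fact that $V_\infty$ is a limit of embedded surfaces symmetric under $\rot_{\axis{x}}^{\pi}$, then force $V_\infty$ to be $\B^2$ (with some multiplicity) together with a rotationally symmetric free boundary minimal piece $W$ meeting $\B^2$ along the equator. Here is the crux: $W$ is a (union of) rotationally symmetric free boundary minimal annuli in $\B^3$, hence by the elementary ODE classification each component is a suitably rescaled and translated catenoidal annulus as in Lemma \ref{lem:catenoids}. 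There are then exactly two candidates compatible with the symmetry and with meeting $\Sp^2$ orthogonally: either the single critical catenoid $\K_{\mathrm{crit}}$ (symmetric across $\{z=0\}$), giving $V_\infty = \B^2 \cup \K_{\mathrm{crit}}$, or the pair $\K_0 \cup -\K_0$, giving $V_\infty = \K_0 \cup \B^2 \cup -\K_0$. The upper area bound in \ref{prop:convergence_large_g-iii} is precisely what rules out the second alternative: by Corollary \ref{cor:area_K} each of $\K_0$, $-\K_0$ has area strictly greater than $\pi = \area(\B^2)$, so $\area(\K_0\cup\B^2\cup -\K_0) > 3\pi > \area(\B^2)+\area(\K_{\mathrm{crit}})$ (recall $\area(\K_{\mathrm{crit}})\approx 1.2\pi$, so the right-hand side is $\approx 2.2\pi$), while lower semicontinuity of area under varifold convergence would give $\area(V_\infty)\leq \limsup \area(\Gamma_g) < \area(\B^2\cup\K_{\mathrm{crit}})$. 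Hence $V_\infty = \B^2 \cup \K_{\mathrm{crit}}$ with multiplicity one on each sheet, the multiplicity-one assertion following because the genus bound prevents sheet doubling away from the axis (doubling would, by Corollary \ref{cor:genus} applied in a slightly smaller convex subdomain avoiding the singular circle, force arbitrarily large genus). Since the subsequential limit is unique, the full sequence converges.

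Finally I would pin down the topology of $\Gamma_g$ for large $g$. Away from the intersection circle $\B^2 \cap \K_{\mathrm{crit}}$ the convergence is smooth with multiplicity one, so for large $g$ the surface $\Gamma_g$ consists of: a nearly-flat disc piece close to $\B^2$; two nearly-catenoidal pieces close to the upper and lower halves of $\K_{\mathrm{crit}}$; and a transition region near the circle $\B^2\cap\K_{\mathrm{crit}}$, which sits near the rotation axis' ``waist'' but in fact at positive distance from $\axis{z}$, where the genus is concentrated. Counting boundary components: $\partial\Gamma_g \subset \Sp^2$ converges to $\partial\B^2 \cup \partial\K_{\mathrm{crit}}$, i.e. to three circles (the equator and the two latitude circles of $\K_{\mathrm{crit}}$); smooth multiplicity-one convergence near each of these circles shows $\partial\Gamma_g$ has exactly three components for large $g$. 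For the genus, I would apply Corollary \ref{cor:genus}: $\Gamma_g$ is a connected, properly embedded, $\cyc_{g+1}$-equivariant surface which (by \ref{prop:convergence_large_g-i}) intersects the axis of rotation, so its genus is exactly $g$; combined with \ref{prop:convergence_large_g-ii} this is consistent, but the point is that the transition region must carry genus $g$ and not less. Connectedness of $\Gamma_g$ for large $g$ can be seen from the fact that the disc and the two catenoidal pieces are joined through the genus-$g$ neck region; alternatively one checks that a disconnected limit-configuration would contradict either the symmetry (a $\cyc_{g+1}$-orbit of small components violating the genus bound) or the containment of the diameters. \textbf{The main obstacle} I anticipate is making rigorous the passage from varifold convergence to the structural description of $V_\infty$ near the singular circle $\B^2\cap\K_{\mathrm{crit}}$: one must rule out the possibility that curvature concentrates along that whole circle in a way that produces a different (non-catenoidal) limit, or that the limit picks up extra multiplicity there. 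This is handled by the usual sheeting/removable-singularity argument for minimal surfaces with bounded area away from a codimension-two set, together with the equivariant genus bound of Corollary \ref{cor:genus} to exclude multiplicity $\geq 2$; but the bookkeeping — especially ensuring the transition region stays away from $\axis{z}$ so that Lemma \ref{lem:genus} can be invoked on a convex subdomain — is the delicate part, and it is exactly the step \cite{KetoverFB} left implicit.
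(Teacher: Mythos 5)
Your outline reproduces the broad strategy of the paper — varifold compactness, identification of the limit as a rotationally symmetric configuration, Corollary \ref{cor:area_K} to rule out the alternative limit $\K_0\cup\B^2\cup -\K_0$, and a topology count — and you correctly identify that Corollary \ref{cor:area_K} is the ingredient that closes the gap left by \cite{KetoverFB}. However, the proposal contains a genuine and consequential error in its treatment of the concentration set, and this error is not just a ``bookkeeping delicacy'' to be postponed: it is the central structural fact the proof hinges on.

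You invoke Sharp-type compactness to conclude that ``curvature concentrates at finitely many points'' and then that ``rotational symmetry forces all concentration points to lie on the rotation axis.'' Both statements are false here. Sharp-type compactness requires uniform bounds on genus (or Morse index), and you have $\genus(\Gamma_g)\leq g\to\infty$ — that theorem simply does not apply. In fact, the correct picture is the opposite of what you assert: the set $\Lambda$ of curvature concentration is a circle of positive radius in $\B^2$, lying at positive distance from the axis $\xi_0$, and this circle is precisely where $\B^2$ meets $\K_{\mathrm{crit}}$. The $\cyc_{g+1}$-action does not produce ``$g+1$ concentration points, violating the genus bound''; rather it produces a whole invariant circle, and the genus near that circle does blow up — which is consistent with $\genus(\Gamma_g)=g\to\infty$ and is exactly what happens. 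Your proposal is also internally inconsistent: having declared that concentration is at finitely many axial points, you later acknowledge the singular circle $\B^2\cap\K_{\mathrm{crit}}$ as ``the main obstacle,'' which is incompatible with the earlier claim.

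What is actually needed (and what the paper does in its Claims 1--3) is: define $\Lambda$ via the Choi--Schoen $\varepsilon$-regularity threshold, using only the area bound; show via Ilmanen's localized Gauss--Bonnet estimate that any point of $\Lambda$ off the axis produces unbounded genus on its neighborhoods (Claim 1); then use Corollary \ref{cor:genus} — now applied twice, to control the number of off-axis concentration circles (at most one) and to handle axial points via White's regularity — to pin down $\Lambda\subset\B^2\cup\xi_0$ with $\Lambda\cap\B^2$ a single circle (Claims 2--3). Only then does Claim 4 (the area argument via Corollary \ref{cor:area_K}) show that this circle cannot be the equator, and a balancing/blow-up argument at the circle identifies $\Gamma_\infty\setminus\B^2$ as $\K_{\mathrm{crit}}$. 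Your related remark that multiplicity $\geq 2$ is excluded ``by Corollary \ref{cor:genus}'' is also off the mark: the paper rules out the limit being a higher-multiplicity disc by the area bound (three copies of $\B^2$ would exceed $\area(\B^2\cup\K_{\mathrm{crit}})$, and parity forces odd multiplicity) together with Nitsche's theorem for the multiplicity-one disc case. I would encourage you to revisit the structure theory carefully, because as written the early part of the argument would incorrectly conclude that $\Gamma_\infty$ is smooth away from the axis, which is precisely what is not true.
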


\begin{proof}
In what follows, we shall consider any subsequence of $\{\Gamma_g\}_{g\geq 1}$ without relabelling; we further recall the notation $\N^{\ast}=\left\{1,2,3,\ldots\right\}$ for the set of positive integers.
Since we assume uniform bounds on the area and thus on the boundary length of $\Gamma_g$ in \ref{prop:convergence_large_g-iii}, there 
exists a further subsequence which converges in the sense of varifolds to a stationary integral varifold $\Gamma_{\infty}$.
Assumption \ref{prop:convergence_large_g-i} implies that the support of $\Gamma_{\infty}$ is rotationally symmetric around the vertical axis 
\begin{align*}
\xi_0\vcentcolon=\{(0,0,t)\st t\in\R\}
\end{align*}
and contains the horizontal disc $\B^2$ (cf. Claim 1 in section 3 of \cite{Buzano2021}). 
With slight abuse of notation, let $A_g$ (in lieu of $A_{\Gamma_g}$) denote the second fundamental form of $\Gamma_g$ and $B_r(x)$ the open ball of radius $r>0$ around some $x\in\B^3$. 
As in \cite[(4.26)]{KetoverFB} we consider the set 
\begin{align}\label{eqn:blow-up-set}
\Lambda\vcentcolon=\biggl\{x\in\B^3\st\inf_{r>0}\Bigl(\liminf_{g\to\infty}\int_{\Gamma_g\cap B_r(x)}\abs{A_g}^2 \Bigr)\geq\varepsilon_0\biggr\},
\end{align}
where $\varepsilon_0>0$ is given by the $\varepsilon$-regularity theorem of Choi--Schoen \cite[Proposition 2]{ChoSch85} (for boundary points one appeals to \cite[Theorem 5.1]{FraLi14} instead). 
In particular, given any $x_0\in\B^3\setminus\Lambda$ there exists $r>0$ 
and a constant $C$, which depends only on the background geometry and hence can be chosen uniformly in $g$, such that 
\begin{align}\label{eqn:Choi-Schoen}
\max_{0\leq\sigma\leq r}\sigma^2\sup_{B_{r-\sigma}(x_0)}\abs{A_g}^2\leq C. 
\end{align}
The uniform bound \eqref{eqn:Choi-Schoen} implies (this is now standard, but for a reference see e.\,g. \cite{Langer1985}) that the convergence $\Gamma_g\to\Gamma_{\infty}$ is smooth in $B_{r/2}(x_0)$. 
As a consequence, the limit $\Gamma_{\infty}$ is a smooth, embedded minimal surface away from $\Lambda$ and, away from $\Lambda$, $\Gamma_{\infty}$ meets the sphere $\partial\B^3$ orthogonally. 
In what follows we analyze the structure of the singular set $\Lambda$.

\emph{Claim 1.} $\Gamma_g$ restricted to any neighborhood of any $x_0\in\Lambda\setminus\xi_0$ has unbounded genus as $g\to\infty$. 

\begin{proof}[Proof of Claim 1]
If $x_0\in(\Lambda\setminus\xi_0)\setminus\partial\B^3$ is an interior point then there exists $r_0>0$ such that $B_{r_0}(x_0)\subset\B^3$. 
Moreover, there exists $0<r<r_0$ such that $\partial B_r(x_0)$ intersects $\Gamma_g$ transversally for all $g$ by Sard's Theorem. 
In particular, the genus of $\Gamma_g\cap B_{r}(x_0)$ is well-defined. 
Given any $\gamma,g_0\in\N$, $g_0\geq 1$, we assume towards a contradiction that $\genus(\Gamma_g\cap B_{r}(x_0))\leq\gamma$ for all $g\geq g_0$. 
Then,  Ilmanen's \cite[Lecture 3]{Ilmanen1998} localized Gauss--Bonnet estimate implies  
\begin{align}\label{eqn:20210304-1}
\sup_{g\geq g_0}\int_{\Gamma_g\cap B_{r/2}(x_0)}\abs{A_g}^2\leq C.
\end{align}
Let $c$ be the horizontal circle around $\xi_0$ passing through $x_0$. 
Then $c\subset\Lambda$ since $\Lambda$ inherits the rotational symmetry of $\Gamma_{\infty}$.   
Given any $n\in\N^\ast$ there exist  points $x_1,\ldots,x_n\in c\cap B_{r/2}(x_0)$ with pairwise distance at least $2\delta>0$ depending only on $r$, $n$ and the radius of $c$. 
By definition \eqref{eqn:blow-up-set},
\begin{align}\label{eqn:20210304-2}
\sup_{g\geq g_0}\int_{\Gamma_g\cap B_{r/2}(x_0)}\abs{A_g}^2 
\geq\sup_{g\geq g_0}\sum^{n}_{k=1}
\int_{\Gamma_g\cap B_{\delta}(x_k)}\abs{A_g}^2 \geq\frac{n\varepsilon_0}{2}. 
\end{align}
Choosing $n>2C/\varepsilon_0$ we obtain a contradiction with \eqref{eqn:20210304-1}. 
Thus, there exists a subsequence of indices $g\to\infty$ along which $\genus(\Gamma_g\cap B_{r}(x_0))\to\infty$.

If $x_0\in(\Lambda\setminus\xi_0)\cap\partial\B^3$ is a boundary point then Ilmanen's localized Gauss--Bonnet estimate does not apply directly. 
However, we may extend $\Gamma_g$ across $\partial\B^3$ by spherical inversion:  
for some $r_0\in\interval{0,1}$ we set $\tilde\Gamma_g\vcentcolon=\Gamma_g\cup I(\Gamma_g\setminus B_{1-r_0})$, where the map $I\colon\R^3\setminus\{0\}\to\R^3\setminus\{0\}$ given by $x\mapsto\abs{x}^{-2}x$ is conformal satisfying $I^*g_{\mathrm{euc}}=\phi^{4}g_{\mathrm{euc}}$ for $\phi(x)=\abs{x}^{-1}$. 
In particular, $\tilde\Gamma_g$ inherits the uniform area bound.  
Moreover, the mean curvature $\tilde{H}$ of $I(\Gamma_g\setminus B_{1-r_0})$ satisfies 
$\abs{\tilde{H}\circ I}=\abs{4\phi^{-3}\partial_\nu\phi}$ (cf. \cite[(B.2)]{CarLi19}), where $\nu$ is a choice of unit normal on $\Gamma_g$. 
Therefore, the mean curvature of $\tilde\Gamma_g$ is continuous across the free boundary on $\partial\B^3$ and bounded uniformly with respect to $g$. 
With a similar formula, one checks that the second fundamental form is also continuous across the interface. 
Hence, Ilmanen's argument can be applied at $x_0$ for $\tilde\Gamma_g$. 
Arguing as above in \eqref{eqn:20210304-1}--\eqref{eqn:20210304-2}, the localized Gauss--Bonnet argument then yields a subsequence $g\to\infty$ along which $\genus(\tilde\Gamma_g\cap B_{r}(x_0))\to\infty$ for some fixed $0<r<r_0$.  
A priori, $\genus(\tilde\Gamma_g\cap B_{r}(x_0)\cap\B^3)$ could be bounded, but then, by construction $\genus(\tilde\Gamma_g\cap B_{r}(x_0)\setminus\B^3)$ stays bounded as well and $\tilde\Gamma_g\cap B_{r}(x_0)\cap\partial\B^3$ would contain an unbounded number of closed curves contradicting the assumption that $\partial\Gamma_g$ has a uniformly bounded number of connected components. 
(Here we use that the genus is defined as the maximum number of disjoint simple closed curves which can be removed from a surface without disconnecting it.)
Therefore, $\genus(\Gamma_g\cap B_{r}(x_0))=\genus(\tilde\Gamma_g\cap B_{r}(x_0)\cap\B^3)$ is unbounded in $g$ as claimed. 
\end{proof}

\begin{remark}
The argument used to prove Claim 1 does not apply if $x_0\in\Lambda\cap\xi_0$ because then the radius of the circle $c$ would vanish. 
This subtlety seems to have been neglected in \cite{KetoverFB}.  
We further, explicitly note that $\Gamma_g\cap B_r(x_0)$ may a priori be disconnected, so for instance consisting of a certain, eventually large number of connected components having say genus equal to one. This aspects also needs to be dealt with in our discussion. 
\end{remark}

\emph{Claim 2.}
$\Lambda\subset\B^2\cup\xi_0$.  
Moreover, $\Lambda\cap\B^2\setminus\xi_0$ consists of at most one circle, $\Lambda\cap \xi_0$ is finite and disjoint from $\partial\B^3$, 
and $\Gamma_{\infty}$ is a smooth minimal surface away from $\Lambda\cap\B^2$ (not only away from $\Lambda$).

\begin{proof}[Proof of Claim 2]
Towards a contradiction, suppose that there exists $x_0\in\Lambda\setminus(\B^2\cup\xi_0)$. 
Appealing to the dihedral symmetry, we may assume that $x_0$ is in the upper half ball.  
By Sard's Theorem there exists $0<r<\dist(x_0,\B^2)$ such that the restriction of $\Gamma_g$ to the domain $\Omega_r\vcentcolon=\{(x,y,z)\in\B^3\st z\geq r\}$ is properly embedded in $\Omega_r$. 
Since $x_0\in\Omega_r$ there exists some large $g\in\N^{\ast}$ such that at least one connected component $Q_g$ of $\Gamma_g\cap\Omega_r$ satisfies $\genus(Q_g)\geq1$ by Claim 1. 
Then, $Q_g$ must be $\cyc_{g+1}$-equivariant. 
If not, the orbit of $Q_g$ under the dihedral group $\dih_{g+1}$ would have $2(g+1)$ connected components of genus at least one and since all of them are contained in $\Gamma_g$ by \ref{prop:convergence_large_g-i}, this would contradict assumption \ref{prop:convergence_large_g-ii}. 
Hence, Corollary~\ref{cor:genus} implies $\genus(Q_g)\in\{1,g\}$.
If $\genus(Q_g)=g$, the dihedral symmetry of $\Gamma_g$ implies $\genus(\Gamma_g)\geq2\genus(Q_g)=2g$ in contradiction with assumption~\ref{prop:convergence_large_g-ii}. 
If $\genus(Q_g)=1$ then $\genus(\Gamma_g\setminus Q_g)\leq g-1$ and thus, as a consequence of Corollary~\ref{cor:genus}, the connected component $O_g$ of $\Gamma_g\setminus Q_g$ containing the origin must have genus zero. 
We claim that $\rot_{\xi_1}^{\pi}Q_g\subset O_g$ which leads to a contradiction because $\genus(O_g)=0$ and $\genus(\rot_{\xi_1}^{\pi}Q_g)=1$. 
Since $\Gamma_g$ is connected, there exists a path $\rho\subset\Gamma_g\setminus\rot_{\xi_1}^{\pi} Q_g$ connecting the origin to $\rot_{\xi_1}^{\pi}Q_g$. 
If $\rho$ is disjoint from $Q_g$ then $\rot_{\xi_1}^{\pi}Q_g\subset O_g$ follows since we defined $O_g$ to be the connected component of $\Gamma_g\setminus Q_g$ containing the origin. 
If $\rho$ intersects $Q_g$ before it reaches $\rot_{\xi_1}^{\pi}Q_g$, then we simply replace $\rho$ by $\rot_{\xi_1}^{\pi}\rho$. 
This completes the proof of $\Lambda\subset\B^2\cup\xi_0$.  

Being rotationally symmetric, $\Lambda\cap\B^2\setminus\xi_0$ is a union of circles. 
Suppose, $\Lambda\cap\B^2\setminus\xi_0$ contains two circles $c_1,c_2$ with radii $0<r_1<r_2\leq 1$. 
Let $r\in\interval{r_1,r_2}$ such that the restriction of $\Gamma_g$ to the ball $B_r$ of radius $r$ around the origin is properly embedded in  $B_r$. 
Let $x_0\in c_1$ and $0<\varepsilon<\min\{r_1,r-r_1\}/4$. 
If $g\in\N^{\ast}$ is sufficiently large, Claim 1 implies that $\Gamma_g\cap B_\varepsilon(x_0)$ has a connected component $U_{g,\varepsilon}$ with $\genus(U_{g,\varepsilon})\geq1$. 
The $\cyc_{g+1}$-orbit $\cyc_{g+1}U_{g,\varepsilon}$ of $U_{g,\varepsilon}$ must be connected because otherwise, 
$\genus(\cyc_{g+1}U_{g,\varepsilon})\geq g+1$ in contradiction to $\genus(\Gamma_g)=g$. 
In particular, we may consider the connected component of $\Gamma_g\cap B_r$ containing $\cyc_{g+1}U_{g,\varepsilon}$ which has at least genus $2$ by construction and therefore must have full genus $g$, by Corollary~\ref{cor:genus}. 
Consequently, $\genus(\Gamma_g\setminus B_r)=0$ in contradiction with Claim~1 applied to neighborhoods of points in $c_2$. 

Now let $x_0\in\Lambda\cap\xi_0\setminus\B^2$ be arbitrary. 
Then there exist $0<r<\dist(x_0,\B^2)$ and $g_0\in\N^{\ast}$ such that any connected component of $\Gamma_g\cap B_r(x_0)$ has genus $0$ or $1$ for all $g\geq g_0$:
otherwise, Corollary \ref{cor:genus} would imply $\genus(\Gamma_g\cap B_r(x_0))\geq g$, and so $\genus(\Gamma_g)\geq2g$ by dihedral symmetry.
Once the genus is uniformly bounded, White's result \cite[Theorem~1.1]{White2018} implies that the limit $\Gamma_{\infty}\cap B_r(x_0)$ is smooth. 
Furthermore, given this conclusion, we note that $\Lambda\cap\xi_0$ has to be finite because any point $p\in\Lambda\cap\xi_0$ would force (thanks to the maximum principle and to the fact that $\Gamma_{\infty}$ is rotationally symmetric around the vertical axis) to contain a whole flat horizontal disc passing through $p$, and the number of those discs is bounded by virtue of the uniform area bound we are assuming. 

Lastly, we need to note that -- straight from the same smoothness conclusion and again the maximum principle -- the set $\Lambda$ cannot possible contain the north or south pole.
\end{proof}

\emph{Claim 3.} 
$\Lambda\cap\B^2=\vcentcolon c$ is a circle of positive radius and $\Gamma_{\infty}$ is not just a multiple of $\B^2$. Moreover,  $\Gamma_{\infty}\setminus \B^2$ has at least one pair of isometric connected components.
Each connected component of  $\Gamma_{\infty}\setminus \B^2$ is a rotationally symmetric, minimal annulus meeting $\partial\B^3$ orthogonally along one of its boundary components and with $c$ as the other boundary component.  

\begin{proof}[Proof of Claim 3]
Towards a contradiction, suppose $\Lambda\cap \B^2\subset\{0\}$ (possibly including the case $\Lambda\cap \B^2=\emptyset$). 
Then Claim 2 implies that $\Lambda$ is a discrete set contained in the vertical axis $\xi_0$ and that $\Gamma_{\infty}\setminus\{0\}$ is a smooth, embedded minimal surface. 
By \cite{Gulliver1976} (see also \cite[Proposition 1]{ChoSch85}) the singularity at the origin is removable and $\Gamma_{\infty}$ is in fact a smooth, embedded free boundary minimal surface in $\B^3$. 
In particular, $\Gamma_{\infty}$ is connected and since it contains $\B^2$ it must be a multiple of $\B^2$. 
The dihedral symmetry implies that the multiplicity $m$ must be odd. 
In fact $m=1$ by the upper area bound in \ref{prop:convergence_large_g-iii} recalling that $\area(\B^2\cup\K_{\mathrm{crit}})<3\area(\B^2)$. At that stage, by now standard arguments (see e.\,g. Claim 4 in the proof of Theorem 2.3 in \cite{Sharp}) ensure that the convergence of $\Gamma_g$ to $\Gamma_\infty$ must be smooth and graphical (with multiplicity one) at all points, which in particular implies that the surface $\Gamma_g$ must be a topological disc at least for $g$ sufficiently large; 
then Nitsche's theorem \cite{Nitsche1985} and the dihedral symmetry assumption imply $\Gamma_g=\B^2$ for sufficiently large $g$ (cf. \cite[Proposition 2.1]{KetoverFB}). 
This however would contradict the lower area bound in \ref{prop:convergence_large_g-iii} and establishes the first part of Claim 3. 

Since $\Gamma_{\infty}\setminus \B^2$ is nonempty, the dihedral symmetry implies that it has at least two connected components, one in the upper and one in the lower half ball. 
Let $Q$ be the closure of any connected component of $\Gamma_{\infty}\setminus \B^2$. 
Then, $Q$ is a minimal surface which is properly embedded in a half ball and therefore (e.\,g. by the maximum principle) must intersect $\partial\B^3\setminus \B^2$. 
Moreover, $Q$ meets $\partial\B^3\setminus \B^2$ orthogonally because the convergence $\Gamma_g\to\Gamma_\infty$ is smooth away from $\Lambda$ and $\Lambda$ is disjoint from $\partial\B^3\setminus \B^2$.  
Consequently, $Q$ also intersects $\B^2$ because a free boundary minimal surface in $\B^3$ can not be contained in a half ball due to the Frankel property \cite[Lemma 2.4]{FraLi14}.
The intersection $Q\cap\B^2$ must coincide with $c$ again thanks to the smooth convergence away from $\Lambda$.
\end{proof}

\emph{Claim 4.} 
The radius of the circle $c=\Lambda\cap\B^2$ is strictly smaller than $1$. 

\begin{proof}[Proof of Claim 4]
Suppose that $c$ coincides with the equator $\B^2\cap\partial\B^3$. 
By Claim 3, $\Gamma_{\infty}\setminus \B^2$ has a pair $\Theta,\Theta'$ of isometric connected components. 
By Corollary \ref{cor:area_K} we have $\area(\Theta)=\area(\Theta')>\pi$. 
This implies $\area(\Gamma_{\infty})\geq\area(\B^2\cup\Theta\cup\Theta')>3\pi$ which contradicts the fact that $\area(\Gamma_{g})<3\pi$ for all $g$ by assumption~\ref{prop:convergence_large_g-iii}. 
\end{proof}

\emph{Conclusion.}
Assumption \ref{prop:convergence_large_g-ii} and Claim 1 imply $1\leq\genus(\Gamma_g)\leq g$ provided that $g$ is sufficiently large. 
Since $0\in\Gamma_g$ by assumption \ref{prop:convergence_large_g-i}, Corollary \ref{cor:genus} then yields $\genus(\Gamma_g)=g$ as claimed. 
Moreover, Lemma \ref{lem:genus} implies that once all boundary components of $\Gamma_g$ are closed up $\cyc_{g+1}$-equivariantly by topological discs, the resulting surface intersects the vertical axis $\xi_0$ exactly four times. 
This means that at most three boundary components of $\Gamma_g$ can wind around the vertical axis $\xi_0$ and by \ref{prop:convergence_large_g-i} at most two of them can be disjoint from $\B^2$. 
Appealing to Claim 3, the dihedral symmetry
and the fact that the convergence $\Gamma_g\to\Gamma_\infty$ is smooth away from $\Lambda$ and $\Lambda$ is disjoint from $\partial\B^3$ (which also builds on Claim 4), we obtain that $\Gamma_\infty\setminus\B^2$ consists of exactly two minimal annuli. As a result, $\Gamma_g$ has exactly three boundary components if $g$ is sufficiently large. 

It remains to determine the exact shape of $\Gamma_\infty$. 
By Claim 4, the singular set $c$ has a positive distance from the sphere $\partial\B^3$ and hence from the boundary of $\Gamma_\infty$. 
Therefore, the blow-up of $\Gamma_\infty$ around $x_0\in c$ is a stationary varifold $W$ in $\R^3$ without boundary. 
Since $\Gamma_\infty$ is rotationally symmetric, the support of $W$ is of the form $X\times\R$ for some $X\subset\R^2$.
Since $\Gamma_\infty\setminus\B^2$ has exactly two components, the profile $X$ consists of exactly $4$ rays emerging from the origin $x_0$; two of them correspond to the horizontal disc $\B^2\subset\Gamma_\infty$ and hence form a straight line $\xi$. 
Stationarity implies that the configuration must be balanced, i.\,e. the union of the remaining two rays must again form a straight line $\zeta$. 
Since $X$ is symmetric with respect to $\xi$, the intersection of $\xi$ with $\zeta$ must be orthogonal. 
We conclude that $\Gamma_\infty$ is the union of the horizontal disc $\B^2$ with a smooth, rotationally symmetric free boundary minimal surface $S$ which intersects $\B^2$ orthogonally. 
According to \cite{FraSch11}, such a surface $S$ coincides with the critical catenoid $\K_{\text{crit}}$. 
Recalling that the choice of our initial subsequence was arbitrary, the convergence $\Gamma_g\to\B^2\cup\K_{\text{crit}}$ as $g\to\infty$ follows. 
\end{proof}

\begin{remark}[Behavior for \emph{low} genus]
If $g\in\N^{\ast}$ is sufficiently large, then Proposition \ref{prop:convergence_large_g} provides full control on the topology of the free boundary minimal surface $\Sigma_g^{\mathrm{Ket}}$ which have been constructed in \cite{KetoverFB} via equivariant min-max methods. 
For small values of $g\in\N$ this approach still yields the existence of certain $\dih_{g+1}$-equivariant free boundary minimal surfaces $\Sigma_g^{\mathrm{Ket}}$ but without any control on their boundary connectivity. 
While numerical simulations (cf. Conjecture \ref{conj:genusg-bc3-KL} and Figure \ref{fig:genus2}, left image) confirm that $\Sigma_g^{\mathrm{Ket}}$ does have exactly three boundary components for all integers $g\geq2$, we lack any evidence for this statement in the case $g=1$. 
It is conceivable that the min-max construction of $\Sigma_1^{\mathrm{Ket}}$ loses the upper and the lower boundary component of the corresponding model surfaces, so that $\partial\Sigma_1^{\mathrm{Ket}}$ is in fact connected. 
In this sense, the conclusion of Proposition \ref{prop:convergence_large_g} may -- loosely speaking -- be regarded as sharp. 
\end{remark}

%===== GLOSSARY =====================================

\clearpage
\printglossaries 
\clearpage 

%===== BIBLIOGRAPHY =====================================

\setlength{\parskip}{1ex plus 1pt minus 1pt}
\bibliography{fbms-bibtex}

%% Addresses and affiliations
\printaddress

\end{document}